\DocumentMetadata{pdfversion=1.7, pdfstandard=A-2b, lang=en-US}
\documentclass{ucbthesis}
\usepackage{rotating}
\usepackage{import}
\usepackage{preamble}

\begin{document}

\title{Comparison of Logarithmic Hochschild Homologies}
\author{Joseph Stahl}
\degreesemester{Summer}
\degreeyear{2026}
\degree{Doctor of Philosophy}
\chair{Martin Olsson}
\othermembers{Professor Sug Woo Shin \\
    Professor Demian Pouzo}
\numberofmembers{3}
\field{Mathematics}

\maketitle
\copyrightpage

\begin{abstract}

We construct a comparison morphism $\HHRog\to\HHOls$ between Rognes' logarithmic Hochschild homology and Olsson's logarithmic Hochschild homology which generalizes the comparison between Gabber's log cotangent complex $\LGab$ and Olsson's log cotangent complex $\LOls$ constructed by Olsson in \cite{Olssoncotangent}. In particular, Olsson's comparison is recovered as the first associated graded piece of our comparison morphism $\HHRog\to\HHOls.$ In the development of this comparison, we also prove a descent result for the cotangent complex generalizing a result of Bhatt, and construct a notion of HKR completed Hochschild homology for a general morphism of derived stacks which is functorial in the morphism and generalizes existing constructions of Hochschild homology when the base is affine.

\end{abstract}

\begin{frontmatter}

\begin{dedication}
\null\vfil
\begin{center}
To Minhyun Kay, with infinite gratitude\\\vspace{12pt}
\end{center}
\vfil\null
\end{dedication}

\tableofcontents
\clearpage

\begin{acknowledgements}
This thesis has benefited greatly from the help of many people, some of whom I will no doubt fail to mention. I would like to thank Harrison Chen, Ben Antieau, Rune Haugseng, and Bertrand To\"en for answering many of the questions I had when I was confused.

Special thanks go to Can Yaylali, for patiently helping me understand the fundamentals of derived algebraic geometry; to Peter Haine, who taught me nearly everything I know about how to work with $\infty$-categories; and to Sam Moore, with whom I shared many enlightening conversations about Hochschild homology.

I would also like to thank my advisor, Martin Olsson, for his invaluable guidance. He introduced me to logarithmic algebraic geometry and originally suggested the topic of comparing the two versions of logarithmic Hochschild homology. I am extremely grateful for all the help and advice he has given me throughout this endeavor, especially as I changed my approach to the problem, got stuck, and explored various other topics.

I also wish to thank my parents, who have given more to me than can be expressed in words. I would not be where I am today without them.

Finally, I must thank my partner, Mary. Through every challenge, her faith in me gave me the determination I needed to push through. I hope to repay the love and support she has given me.
\end{acknowledgements}

\end{frontmatter}

\pagestyle{headings}

\chapter{Introduction}

In recent years, Hochschild homology has become a more frequently used tool in algebraic geometry, especially as techniques develop to include more and more tools from homotopy theory. For instance, the study of integral $p$-adic Hodge theory in \cite{BMS} and further development of prismatic cohomology in \cite{bhatt2022prisms} have shown deep connections between (topological) Hochschild homology and various cohomology theories for schemes and stacks.

This connection extends into the world of logarithmic geometry. In \cite{Rognes_2009}, Rognes develops the notion of (topological) Hochschild homology for prelogarithmic rings and establishes basic results for these new objects. Rognes' theory was further developed by several others, particularly in \cite{BLPO_HKR} where it is shown that one can interpret logarithmic Hochschild homology of a morphism of (pre)logarithmic rings as functions on the derived self-intersection of a variant of the diagonal, similarly to Hochschild homology of ordinary rings or (reasonable) schemes.

More recently, Olsson has also developed a notion of logarithmic Hochschild homology in \cite{Olssonloghh}, by developing a theory of Hochschild homology for a nice morphism from an algebraic space to an algebraic stack, and specializing the the situation where the stack in question is the log stack of \cite{OlssonLogGeomandAlgStacks}. Hablicsek et. al have also developed an approach to logarithmic Hochschild homology via a ``logarithmic derived loop space'' $LX^{\log}$ in \cite{logHHformality}. Their variant of log Hochschild homology agrees with Olsson's under suitable hypotheses (see \cite[Proposition 4.11]{logHHformality}). Notably, both approaches share a close relationship to the interpretation of Hochschild homology as ``(functions on the) derived self-intersection of the diagonal.''

Our main theorem provides a comparison morphism between the two variants of logarithmic Hochschild homology, and as a corollary we can deduce that the two agree under mild conditions on the morphism of log schemes. In particular, we prove the following:
\begin{theorem}[\ref{Main Theorem}]
    Let $f : X\to S$ be a map of Noetherian fs log schemes of finite Krull dimension. Then there exists a morphism
    \[
    c_f : \HHRog(X/S)\to\HHOls(X/S)
    \]
    in $\DF(\ZZ)$ such that on associated graded pieces, $c_f$ induces shifts of wedge powers of the comparison morphism $\eta : \LGab_{X/S}\to \LOls_{X/S}$ constructed in \cite{Olssoncotangent}
    \[
    \gr_{\HKR}^i c_f = \bigwedge^i \eta[i] : \bigwedge^i \LGab_{X/S}[i]\to\bigwedge^i \LOls_{X/S}[i].
    \]
\end{theorem}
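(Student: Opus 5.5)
The plan is to realize both sides as HKR-filtered objects produced by one functorial construction, and then build $c_f$ from a map between the two geometric inputs. I would take the following descriptions as models.

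Rognes' $\HHRog(X/S)$ is obtained, following \cite{BLPO_HKR}, from ordinary Hochschild homology of the log diagonal, that is, of the replete (or repletion) diagonal. Strict-étale locally on charts $P\to Q$ it is the derived tensor product $A\otimes^{\mathbb{L}}_{A\otimes_{R}^{\mathbb L} A}A$ formed with respect to the repletion $(A\otimes_R A)^{\mathrm{rep}}$. Its HKR filtration has graded pieces $\bigwedge^i \LGab_{X/S}[i]$.

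Olsson's $\HHOls(X/S)$ is the HKR-completed Hochschild homology of the morphism $X\to \mathcal{L}og_S$ into Olsson's log stack. Its HKR filtration has graded pieces $\bigwedge^i\LOls_{X/S}[i]$, because $\LOls_{X/S}=L_{X/\mathcal{L}og_S}$.

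\textbf{Step 1: a functorial construction.} I would define HKR-completed Hochschild homology $\HH(Y/\mathcal{Z})$ for an arbitrary morphism of derived stacks $Y\to\mathcal{Z}$. It should be functorial in commutative squares, and on graded pieces it should give $\bigwedge^i L_{Y/\mathcal Z}[i]$, naturally in the square. The definition is a filtered limit, via smooth or flat descent on $\mathcal{Z}$, of the affine-base HKR-completed Hochschild homology. To check that the graded pieces are correct and that the limit is well behaved, I need descent for the cotangent complex: $L_{Y/\mathcal Z}$ must be computed as a limit over a hypercover of $\mathcal{Z}$. This is the generalization of Bhatt's descent result mentioned in the abstract.

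\textbf{Step 2: identify $\HHRog$ in this framework.} Here I would use the Noetherian, fs, and finite Krull dimension hypotheses, which control convergence of the completed filtrations and the finite-dimensional descent. Locally on fs charts, the log diagonal of Rognes/Gabber corresponds to a morphism of derived (pre)log rings. I would show that $\HHRog(X/S)$ is the HKR-completed Hochschild homology of an auxiliary morphism $X\to\mathcal{Z}_{\mathrm{Rog}}$, for example a stack built from the chart monoid algebras $[\Spec\ZZ[P]/\Spec\ZZ[P^{\mathrm{gp}}]]$-type data. I would also show that $L_{X/\mathcal{Z}_{\mathrm{Rog}}}\simeq\LGab_{X/S}$, which Olsson essentially establishes in \cite{Olssoncotangent}. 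These local identifications then glue by strict-étale descent.

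\textbf{Step 3: build the map.} Olsson's map $\eta$ comes from a morphism of stacks $\mathcal{Z}_{\mathrm{Rog}}\to\mathcal{L}og_S$ compatible with $X$, or from a correspondence in which one leg induces an equivalence on cotangent complexes. Functoriality from Step 1 applied to this square gives a filtered map $c_f$. Naturality of the graded-piece identification then shows $\gr^i c_f=\bigwedge^i\eta[i]$, since on $\gr^1$ the map is exactly the induced map on relative cotangent complexes, which is $\eta$ by construction in \cite{Olssoncotangent}.

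I expect the main obstacle to be Step 2, matching Rognes' repletion-based definition with Hochschild homology of a morphism to a stack canonically enough to glue. The difficulty is that repletion is not obviously a stacky fiber product. I would try first to show, chart by chart, that the replete diagonal is the derived fiber product $X\times_{X\times_S X\times \mathcal{Z}}X$ for a suitable torus quotient stack $\mathcal Z$. I would then verify independence of chart by the cotangent-complex descent from Step 1.
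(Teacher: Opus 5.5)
Your Step 1 is essentially what the paper builds in Chapter 4. Step 2, however, is a genuine gap, and the whole argument rests on it.

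Realizing $\HHRog(X/S)$ as $\HH(\underline{X}/\mathcal{Z}_{\mathrm{Rog}})$ for a stack with $L_{\underline{X}/\mathcal{Z}_{\mathrm{Rog}}}\simeq\LGab_{X/S}$ is exactly what the paper's introduction leaves as a conjecture. It would need a derived log stack $\bfR\cLog_S$, which is not available.

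Your candidate does not supply such a stack. The stacks assembled on charts from $\cA_P=[\Spec\ZZ[P]/\Spec\ZZ[P^{\gp}]]$ are Olsson's $\cS_{(f,\theta)}=\cA_M\times_{\cA_P}\Spec k$. These are \'etale over $\cLog_S$, so Hochschild homology relative to them is $\HHOls$, with graded pieces $\LOls$ rather than $\LGab$. To get $\LGab$ you would have to do three things:
\begin{enumerate}
\item replace them by something like the derived fiber products $\cA_M\times^{R}_{\cA_P}\Spec k$;
\item prove that $\Spec A\times_{\mathcal{Z}}\Spec A$ is the spectrum of the repletion, naturally in the chart;
\item make the construction independent of charts and functorial in $f$.
\end{enumerate}
Cotangent-complex descent does not give the third point. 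It shows that $L$ is a sheaf, not that Hochschild homologies relative to different chart stacks are coherently identified. On Olsson's side that identification comes from \'etaleness over the global stack $\cLog_S$, and there is no global derived counterpart.

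There are also two smaller problems.
\begin{itemize}
\item The direction is reversed. A map $\mathcal{Z}_{\mathrm{Rog}}\to\cLog_S$ under $\underline{X}$ induces $\HH(\underline{X}/\cLog_S)\to\HH(\underline{X}/\mathcal{Z}_{\mathrm{Rog}})$, which is $\LOls\to\LGab$ on $\gr^1$. The classical chart stack must map to the derived one, not the other way around.
\item The identity $\gr^1 c_f=\eta$ does not hold ``by construction.'' Olsson builds $\eta$ from a smooth simplicial cover of $\cLog_S$ together with functoriality of Gabber's complex, not from a map of stacks, so a compatibility check is needed.
\end{itemize}

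The paper avoids putting any stack on the Rognes side. For a fine prelog chart $(f,\theta):(k,P)\to(A,M)$ it proceeds as follows.
\begin{itemize}
\item It computes $\Spec A\times_{\cS_{(f,\theta)}}\Spec A=\Spec B_{(f,\theta)}$ explicitly.
\item It gives $B_{(f,\theta)}$ the prelog structure $M\oplus M^{\gp}/P^{\gp}$. The multiplication map then factors through $(B_{(f,\theta)},M\oplus M^{\gp}/P^{\gp})\to(A,M)$, and this map is replete.
\item The universal property of the repletion (the initial replete factorization) yields a canonical, functorial map $(A,M)\otimes^{\rep}_{(k,P)}(A,M)\to(B_{(f,\theta)},M\oplus M^{\gp}/P^{\gp})$. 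This gives $\HHRog\to A\otimes_{B_{(f,\theta)}}A\simeq\HHOls$.
\item On $\PolyPreLog$ both HKR filtrations are Whitehead filtrations, so this map is automatically filtered.
\item Left Kan extension and integral log-\'etale descent globalize it.
\item $\gr^1$ is identified with $\eta$ on free prelog algebras over free prelog rings. There both complexes are $\Omega^1$ and the map is the identity, so the higher graded maps are its wedge powers.
\end{itemize}
If you want to keep your geometric picture, this is the piece to borrow: work chart-wise on free objects, produce the map via the repletion's universal property, and only then extend.
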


We use the language and theory of derived algebraic geometry in a crucial way throughout the thesis, and we begin with a review of this theory, following the development in \cite{toen2006homotopical} and the exposition of \cite{yaylali2022notes} and \cite{AntieauDAGnotes}. We establish some basic results showing that the theory works as expected, and conclude with an examination of the cotangent complex. In particular, we describe in detail how to view the cotangent complex as a functor whose inputs are morphisms of derived stacks $(\cX\to\cS)$ and whose output $\LL_{\cX/\cS}$ is an object of $\QC(\cX).$ We show that in this generality, the cotangent complex satisfies fpqc descent, generalizing a result of Bhatt.

Following our discussion of derived algebraic geometry, we review the classical and derived theories of logarithmic geometry. Most of this section is a review, although we provide a precise universal property of the repletion which we had not seen in the literature previously. We also discuss both variants of the logarithmic cotangent complex, as well as Olsson's comparison between the two found in \cite{Olssoncotangent}.

After reviewing the geometric theory necessary and establishing the descent result for the cotangent complex previously mentioned, the bulk of the work falls to defining the Hochschild homology of a morphism of derived stacks in an appropriate way, so that we can generalize Olsson's construction of logarithmic Hochschild homology to the $\infty$-categorical setting. We define \ref{relativeHH} what we call the ``HKR-completed Hochschild homology'' of a general morphism of derived stacks, and show that it satisfies many of the properties one expects of Hochschild homology, including that it agrees with the Hochschild homology of a map of (animated) rings, admits a complete, descending, $\NN$-indexed filtration whose associated graded pieces are given by shifts of exterior powers of the cotangent complex (when it exists), and for sufficiently nice morphisms may be computed as the derived self-intersection of the diagonal.

To be precise, we define functors
\[\sHH, \HH : \Arr(\dStk)^{\op}\to\QC\]
which realize Hochschild homology of a morphism of derived stacks $f : \cX\to\cS$ functorially in the morphism (as opposed to just the source), such that $\sHH(\cX/\cS)\in\QC(\cX)$ and $\HH(\cX/\cS)\in\QC(\cS).$ We also prove various compatibility results for $\sHH$ and $\HH;$ in particular, we show that $f_*\sHH(\cX/\cS)\simeq\HH(\cX/\cS),$ and that if $\cS = \Spec k$ is affine, that $\HH(\cX/\cS)$ agrees with the usual definition(s) of $\HH(\cX/k)\in\QC(\Spec k).$ We also prove that if the morphism $\cX\to\cS$ is flat, that $\sHH(\cX/\cS)$ and $\HH(\cX/\cS)$ agree with the notions defined in \cite{Olsson_2016} in an appropriate sense.

We show that our constructions of $\sHH$ and $\HH$ are endowed with an HKR filtration (generalizing the usual HKR filtration), with associated graded pieces given by shifts of powers of the cotangent complex. As a result, we generalize the HKR isomorphism for logarithmic Hochschild homology in \cite{Olsson_2016} to non-smooth morphisms.

This filtration is the main tool which allows us to obtain the comparison between Rognes' logarithmic Hochschild homology $\HH^{\Rog}$ and Olsson's logarithmic Hochschild homology $\HH^{\Ols}.$ Following Olsson's comparison between Gabber's logarithmic cotangent complex and his own, we construct a morphism $\HH^{\Rog}\to\HH^{\Ols}$ which yields Olsson's comparison between the cotangent complexes on associated gradeds using the fact that Gabber's cotangent complex is the animation of the functor of logarithmic differentials. Specifically, we construct for sufficiently nice maps of prelog rings $(k,P)\to (A,M)$ a canonical morphism
\[
\HH^{\Rog}((A,M)/(k,P))\to\HH^{\Ols}((A,M)/(k,P))
\]
induced by the universal property of the repletion, and show that for a free polynomial prelog algebra over a free prelog ring, the map is canonically an equivalence, and that the associated graded pieces of the induced map correspond to the comparison morphism between log cotangent complexes constructed in \cite[Section 8]{Olssoncotangent}.

Applying the discussion in \cite[Remark 7.29]{BLPO_HKR}, we deduce that these maps on associated gradeds, while not equivalences in general, do become so after dividing Nisnevich sheafification, and deduce that the morphism $\HH^{\Rog}\to\HH^{\Ols}$ also becomes an equivalence after dividing Nisnevich sheafification. 

We conjecture that there should also be a more robust version of this comparison obtained by geometric considerations. Both logarithmic Hochschild homology theories are closely related to logarithmic modifications of the ``derived self-intersection of the diagonal'' interpretation of ordinary Hochschild homology. However, this would require more theory than we have established currently. We conjecture that for a sufficiently nice map of log schemes $X\to S,$ $\HH^{\Rog}(X/S)$ can be obtained as $R\Gamma(X,\Delta^*\Delta_*\OX)$ for a map $X\to\bfR\cLog_{S},$ where $\bfR\cLog_S$ is a derived version of Olsson's stack of logarithmic structures, and that $\HH^{\Ols}(X/S)$ will be closely related to the classical truncation of $\bfR\cLog_{S}.$ The theory of $\bfR\cLog_S$ has recently been developed in \cite{zhang2026derivedlogarithmicdeformationtheory}, and we hope this further development of this theory will lead to a geometric comparison between the logarithmic Hochschild homology theories.

\section{Conventions}

\begin{enumerate}
    \item We use the theory of $\infty$-categories throughout the work in essential ways. We refer the reader to \cite{HTT} and \cite{HA} for the relevant background. We attempt to work model-independently with regard to $\infty$-categories as much as possible, but when specific models are needed, we follow the conventions of \cite{HTT} unless otherwise stated.
    \item We will refer to classical algebraic objects (e.g., commutative rings, commutative monoids, etc.) as \emph{discrete} or \emph{classical} in order to distinguish them from their animated counterparts. We will not consider topological rings, monoids, etc.. However, for their geometric counterparts, we will use solely the term \emph{classical}; i.e., a classical scheme, classical algebraic space, classical algebraic stack. 
    \item A classical algebraic or Artin stack will mean a stack whose diagonal is representable by algebraic spaces, and which admits a smooth surjection from a scheme. We assume the reader is familiar with the classical theory of algebraic spaces and stacks at the level of \cite{Olsson_2016}, and recommend \cite{Olsson_2016} and \cite{stacks-project} for references.
    \item We will denote a (pre)log scheme $(\underline{X},\cM_{X},\alpha_X)$ simply by $X,$ generally eliding the (pre)log structure, unless there is risk of confusion or we need to emphasize the particular (pre)log structure. For a (pre)log scheme $X,$ the underlying scheme will be denoted $\underline{X}.$
    \item Logarithmic structures will not be assumed to be fine (or saturated) in general. However, when dealing with classical log structures, we will often specialize to this case.
    \item For a discrete commutative ring $k,$ we will let $\Mod_k$ denote its ordinary $1$-category of modules, and we will let $\cMod_k$ denote the $\infty$-category of $k$-modules (meaning $k$-module objects in spectra or equivalently the stabilization of the category $\cMod_k^{\cn}$ of connective or animated $k$-modules).
    \item We choose to use \emph{homological} indexing throughout, except when otherwise stated. This conforms with many sources' approaches to Hochschild homology and to $\infty$-categorical theory in general, although it then differs from the standard algebraic geometry conventions.
    \item Given an object $X$ in a stable $\infty$-category $\cC,$ we let $X[1]$ be the cofiber of the canonical map $X\to 0.$ In particular, there is a canonical pushout square
    \[\begin{tikzcd}
        X\ar[r]\ar[d] &0\ar[d]\\
        0\ar[r] &X[1].
    \end{tikzcd}\]
    \item For any $\infty$-category $\cC,$ the category $\Arr(\cC)$ is defined to be the $\infty$-category $\Fun(\Delta^1,\cC).$ If $\cC$ is an ordinary category, we may use $\Arr(\cC)$ to refer to $\Fun([1],\cC)$ as a $1$-category or the $\infty$-category $\N(\Fun([1],\cC))\simeq\Fun(\Delta^1,\N(\cC)),$ depending on the context.
    \item By definition, all filtrations will be decreasing; see \ref{filtrations}.
\end{enumerate}
\chapter{Derived Algebraic Geometry}

There are several closely related, but distinct, conventions for the foundations of derived algebraic geometry. We begin by reviewing the basics of derived algebraic geometry as established by To\"{e}n-Vezzosi in \cite{toen2004homotopical} and \cite{toen2006homotopical}, and as laid out in more modern language in \cite{yaylali2022notes}. Our exposition follows \cite{yaylali2022notes} closely.

\section{Affine Derived Algebraic Geometry}

Animated rings will play the role in derived algebraic geometry analogous to the role that discrete commutative rings play in classical algebraic geometry. In particular, animated rings will provide the local models for general derived algebraic geometry. As such, we begin by recalling the most important definitions and results about these objects.

\subsection{Animated Rings}

\begin{mydef}
    Let $k$ be a discrete commutative ring. The $\infty$-category $\aRing_k$ of \emph{animated $k$-algebras} is the animation of the category $\CRing_k$ of discrete $k$-algebras. If $k = \ZZ,$ then we simply write $\aRing_k = \aRing.$ If $A$ is an animated $k$-algebra, then we also let $\aRing_A := \left(\aRing_k\right)_{A/}.$
\end{mydef}

\begin{remark}
    The category $\aRing_k$ admits several equivalent descriptions, as described in \ref{AppendixA}. Convenient ones include the $\infty$-category obtained by freely adjoint sifted colimits to $\Poly_k$ (in the $\infty$-categorical sense), the underlying $\infty$-category of the model category of simplicial commutative rings with its standard model structure (see \cite{Quillen_1967a} for a more detailed exposition or \cite{MathewSCR} for an overview), or the full sub-$\infty$-category of $\Fun(\Poly_k^{\op},\Ani)$ spanned by the finite-product preserving functors, where $\Poly_k$ is the category of polynomial $k$-algebras in finitely many variables, and $\Ani$ is the $\infty$-category of anima/spaces.
\end{remark}

Using the fact that $\aRing$ is equivalent to the $\infty$-category associated to the simplicial model category $\sCRing,$ it follows from \cite[5.5.3.11]{HA} and \cite[A.3.7.6]{HA} that $\aRing$ is presentable.

Because $\aRing_k$ is defined to be the animation of a $1$-category, it satisfies a universal property: namely, given a presentable $\infty$-category $\cC,$ the canonical functor $\Poly_k\to\aRing_k$ induces an equivalence of $\infty$-categories
\[
\Fun_{\mathrm{sift}}(\aRing_k,\cC)\to\Fun(\Poly_k,\cC)
\]
where $\Fun_{\mathrm{sift}}$ denotes the full sub-$\infty$-category of $\Fun$ spanned by functors which preserve sifted colimits. As functors between $\infty$-categories can be difficult to construct, this provides us with a very useful method for building functors out of $\aRing_k.$

There are other variants of ``derived commutative ring'' that one could take as the building blocks for derived algebraic geometry, including dg-algebras and connective $E_\infty$-algebras. Despite the fact that our focus will be on animated rings and their geometric counterparts, it will benefit us to make use of some aspects of the theory of $E_\infty$-rings.

For $k$ a discrete commutative ring, let $\CAlgcn_k$ denote the $\infty$-category of connective $E_\infty$-$k$-algebras (see \cite[Chapter 7]{HA} for details). The embedding $\Poly_k\to\CAlgcn_k$ then induces a sifted colimit preserving functor $\theta : \aRing_k\to\CAlgcn_k$ via the universal property of animation. The functor $\theta$ is conservative and admits both a left adjoint $\theta^L$ and a right adjoint $\theta^R,$ see \cite[25.1.2.2]{SAG}. 

\begin{mydef}
    Let $A\in\aRing$ be an animated ring, and let $i\in\ZZ.$ The \emph{$i$th homotopy group of $A$} is defined to be $\pi_i(A) := \pi_i(\theta(A)),$ where $\pi_i : \CAlgcn_{\ZZ}\to\Sp\to\Ab$ are the homotopy group functors.
\end{mydef}

\begin{remark}
    The definition of $\pi_i(A)$ above agrees with the $i$th homotopy group of the underlying anima of $A.$ Additionally, since $\theta(A)$ is connective for any animated ring $A,$ $\pi_i(A)\simeq 0$ for $i < 0.$ Finally, $\pi_*(A)$ has the structure of a graded ring. 
\end{remark}

\begin{mydef}\label{truncated animated rings}
    Let $A\in\aRing_k$ be an animated $k$-algebra, and let $n\geq 0$ be an integer. We say that $A$ is \emph{$n$-truncated} if $\pi_m(A)\simeq 0$ for $m > n,$ and we let $\left(\aRing_k\right)_{\leq n}$ denote the full sub-$\infty$-category of $\aRing_k$ spanned by the $n$-truncated animated $k$-algebras. 
\end{mydef}

\begin{remark}
    By \cite[Remark 25.1.3.3]{SAG}, an animated $k$-algebra $A$ is $n$-truncated if and only if the underlying $E_\infty$-$k$-algebra $\theta(A)$ is $n$-truncated, if and only if $\Map_{\aRing_k}(B,A)$ is $n$-truncated for all $B\in\aRing_k.$
\end{remark}

In fact, if $A$ is an animated ring and $n\geq 0$ is an integer, then the inclusion
\[
\left(\aRing_A\right)_{\leq n}\to\aRing_A
\]
is the right adjoint in an adjunction
\[\begin{tikzcd}
    \tau_{\leq n} : \aRing_A\ar[r,shift left] &\ar[l,shift left]\left(\aRing_A\right)_{\leq n}.
\end{tikzcd}\]
If $n = 0,$ $\left(\aRing_A\right)_{\leq 0}$ is equivalent to the category of discrete commutative $\pi_0 A$ algebras, and we also write $\pi_0$ for $\tau_{\leq 0}.$

\begin{mydef}
    Let $f : A\to B$ be a map of animated $k$-algebras. We say $B$ is \emph{locally of finite presentation over $A$} if it is compact as an animated $A$-algebra; i.e., the functor $\Map_{\aRing_A}(B,-) : \aRing_A\to\Ani$ commutes with filtered colimits.
\end{mydef}

\begin{warning}
    If $f : A\to B$ is locally of finite presentation, then $\pi_0 f : \pi_0 A\to\pi_0 B$ is finitely presented, but the converse is not true, even if $A$ and $B$ are discrete. Instead, $f : A\to B$ is locally of finite presentation if and only if $\pi_0 f : \pi_0 A\to\pi_0 B$ is locally of finite presentation and its cotangent complex is perfect.
\end{warning}

\begin{mydef}
    Let $f : A\to B$ be a map of animated $k$-algebras. We say that $f$ is \emph{flat} (respectively \emph{faithfully flat}, \emph{\'etale}, \emph{smooth}) if the following conditions hold:
    \begin{enumerate}
        \item The morphism $\pi_0 f : \pi_0 A\to\pi_0 B$ of discrete commutative rings is flat (respectively faithfully flat, \'etale, smooth), and
        \item For all $i,$ the canonical morphism $\pi_i A\otimes_{\pi_0 A}\pi_0 B\to\pi_i B$ is an isomorphism.
    \end{enumerate}
\end{mydef}

\subsection{Modules, Animated and General}

The ``forgetful'' functor $\theta$ allows us to define modules over an animated ring in a particularly efficient manner. 

\begin{mydef}
    Let $A\in\aRing$ be an animated ring. The \emph{$\infty$-category of $A$-modules} is defined to be $\cMod_A := \cMod_{\theta(A)}(\Sp);$ i.e., the category of $\theta(A)$-module objects in the $\infty$-category $\Sp$ of spectra.
\end{mydef}

\begin{remark}
    The category $\cMod_A$ can be equivalently described as the stablization of the category of the animation of simplicial commutative $A_\bullet$-modules, where $A_\bullet$ is a presentation of $A$ by a simplicial commutative ring. In fact, $\cMod_A^{\cn}$ is the category of animated $A$-modules. Moreover, if $k$ is a discrete commutative ring, then $\cMod_k\simeq\cD(k)$ is equivalent to the derived $\infty$-category of $k$ as defined in \cite{HA}.
\end{remark}

\begin{mydef}
    Let $M\in\cMod_A^{\cn}.$
    \begin{enumerate}
        \item We say that $M$ is \emph{flat} if $\pi_0 M$ is a flat $\pi_0 A$-module and the natural map $\pi_i A\otimes_{\pi_0 A} \pi_0 M\to\pi_i M$ is an isomorphism of $\pi_0 A$-modules for all $i.$
        \item We say that $M$ is \emph{projective} if for all $N\in\cMod_A^{\cn},$ we have
        \[
        \pi_0\Map_{\cMod_A}(M,N[1])\cong\Hom_{\h\cMod_A}(M,N[1])\cong\Ext^1(M,N)\cong 0.
        \]
    \end{enumerate}
\end{mydef}

By \cite[7.2.2.14]{HA} and \cite[7.2.2.18]{HA}, a projective $A$-module $M$ is automatically flat, and a flat module $M$ is projective if and only if $\pi_0 M$ is a projective $\pi_0 A$-module.

\begin{mydef}
    Let $M$ be an $A$-module. We say $M$ is \emph{finite projective} if it is projective and $\pi_0 M$ is a finitely presented $\pi_0 A$-module.
\end{mydef}

We will make extensive use of the following construction and a number of related variants. Fix a base discrete commutative ring $k.$ There is an $\infty$-category $\MOD_k(\Sp)$ whose objects are pairs $(A,M)$ such that $A\in\CAlgcn_k$ and $M\in\cMod_A(\Sp),$ which comes equipped with a Cartesian fibration $\MOD_k(\Sp)\to\CAlgcn_k,$ given on objects by forgetting the $A$-module $M.$ A morphism $(A,M)\to (B,N)$ in $\MOD_k(\Sp)$ is equivalent to the data of a map $A\to B$ in $\CAlgcn_k$ and a map $M\otimes_A B\to N$ in $\cMod_B(\Sp).$ The precise construction of this Cartesian fibration can be found in \cite[Section 4.5]{HA}. Note that because the categories $\cMod_A(\Sp)$ are presentable for all $A,$ this fibration is both Cartesian and coCartesian.

Let $\MOD_k := \MOD_k(\Sp)\times_{\CAlgcn_k}\aRing_k,$ and let $\MOD_k^{\cn}$ be the full sub-$\infty$-category spanned by those $(A,M)$ such that $M$ is connective. We can realize this $\infty$-category as the animation of a $1$-category, which will be convenient for when we need to construct functors in or out of it.

\begin{proposition}[\protect{\cite[Proposition~25.2.1.2]{SAG}}]\label{SAG 25.2.1.2}
    The $\infty$-category $\MOD_k^\cn$ is the animation of $\mathsf{MOD}_k,$ the ordinary category of pairs $(A,M),$ where $A\in\CRing,$ $M\in\Mod_A,$ and a morphism $(A,M)\to (B,N)$ is given by a pair $(f,\phi),$ where $f : A\to B$ is a morphism of commutative rings, and $\phi : M\otimes_A B\to N$ is a morphism of $B$-modules.
\end{proposition}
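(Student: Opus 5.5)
The plan is to use the standard recognition criterion for animations, \cite[Proposition 5.5.8.22]{HTT} (see also \cite[Proposition 25.1.1.5]{SAG}). A presentable $\infty$-category $\cD$ is equivalent to $\cP_\Sigma(\cC_0)$ for a small full subcategory $\cC_0\subseteq\cD$ provided two things hold. First, $\cC_0$ is closed under finite coproducts in $\cD$. Second, every object of $\cC_0$ is compact and projective, meaning $\Map_\cD(C,-)$ commutes with sifted colimits, and $\cC_0$ generates $\cD$ under sifted colimits.

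I would take $\cC_0$ to be the full subcategory of $\MOD_k^\cn$ spanned by the pairs $(P, P^{\oplus m})$, with $P$ a polynomial $k$-algebra in finitely many variables and $m\ge 0$. These pairs also form the compact projective objects of the $1$-category $\mathsf{MOD}_k$, relativized over $k$. Once the criterion is verified, the argument finishes by identifying the animation of $\mathsf{MOD}_k$ with $\cP_\Sigma$ of those compact projectives. This reduces to checking that the mapping spaces between objects of $\cC_0$ are discrete and agree with the $1$-categorical Hom sets.

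The key steps, in order, are as follows.

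\textbf{Presentability.} $\MOD_k^\cn$ is presentable. It is the pullback of presentable categories along accessible functors, with the connective part cut out as a coreflective subcategory fiberwise.

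\textbf{Mapping spaces.} For $(A,M)$ in $\MOD_k^\cn$, the fibration structure gives
\[
\Map((P,P^{\oplus m}),(A,M)) \simeq \textstyle\coprod_{f\in\Map(P,A)}\Map_{\cMod_A}(A^{\oplus m},M).
\]
More precisely, it is the total space of a fibration over $\Map_{\aRing_k}(P,A)\simeq (\Omega^\infty A)^n$ with fiber $(\Omega^\infty M)^m$, so the whole space is $(\Omega^\infty A)^n\times(\Omega^\infty M)^m$.

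\textbf{Compact projectivity.} Sifted colimits in $\MOD_k^\cn$ are computed on the pair of underlying objects $(\Omega^\infty A,\Omega^\infty M)$. This holds because the forgetful functors to $\Ani$ preserve sifted colimits and the coCartesian fibration is compatible with them. Given that, the formula from the previous step shows that each object of $\cC_0$ is compact projective. When $A,M$ are discrete, the same formula recovers $\Hom_{\mathsf{MOD}_k}$, so $\cC_0$ is equivalent to the nerve of the corresponding $1$-category.

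\textbf{Coproducts.} The coproduct of two objects of $\cC_0$ is $(P\otimes P',\,P^{\oplus m}\otimes_P(P\otimes P')\oplus P'^{\oplus m'}\otimes_{P'}(P\otimes P'))$. This is again of the required form.

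\textbf{Generation.} It remains to show $\cC_0$ generates $\MOD_k^\cn$ under sifted colimits. The functor $(A,M)\mapsto(\Omega^\infty A,\Omega^\infty M)$ is conservative, since equivalences are detected on $\pi_*$ of both components. It also preserves sifted colimits, and it has a left adjoint of free type that lands in $\cC_0$ on finite sets. The monadicity argument of \cite[Proposition 25.1.1.5]{SAG} therefore applies verbatim. Each $(A,M)$ is then the geometric realization of its bar resolution by filtered colimits of objects in $\cC_0$.

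I expect the main obstacle to be the sifted-colimit compatibility used in the compact projectivity step. One must check that a sifted colimit of pairs $(A_i,M_i)$ is $(\colim A_i,\colim M_i\otimes_{A_i}\colim A_j)$, and that this has underlying anima computed as a sifted colimit. I would prove it by reducing to simplicial objects and filtered colimits separately. The base change $M_i\otimes_{A_i}A$ along the colimit maps cancels out in the diagonal, because $\Delta^{\op}$ is sifted and tensor products commute with colimits in each variable.
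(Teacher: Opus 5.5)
Your argument is correct. It follows the standard projective-generation route behind \cite[Proposition~25.2.1.2]{SAG}, which the paper cites without giving its own proof: the pairs $(P,P^{\oplus m})$ are compact projective and closed under finite coproducts, and they generate under sifted colimits because $(A,M)\mapsto(\Omega^\infty A,\Omega^\infty M)$ to $\Ani\times\Ani$ is conservative, preserves sifted colimits and has a left adjoint; then \cite[Proposition~5.5.8.22]{HTT} applies. One small correction: the compact projective objects of the $1$-category $\mathsf{MOD}_k$ are the \emph{retracts} of the pairs $(P,P^{\oplus m})$ (e.g.\ $(P,Q)$ with $Q$ finite projective but not free), not the pairs themselves, which is harmless since the animation depends only on the idempotent completion of the generating subcategory.
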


\begin{cor}[\protect{\cite[Corollary~25.2.1.3]{SAG}}]
    Let $\mathsf{PolyFree}_k\subseteq\MOD_k^{\cn}$ be the full sub-$\infty$-category spanned by objects $(P,F)$ where $P\in\Poly_k$ and $F$ is a free $P$-module of finite rank. If $\cE$ is an $\infty$-category admitting sifted colimits, then restriction
    \[
    \Fun_{\mathrm{sift}}(\MOD_k^{\cn},\cE)\to\Fun(\mathsf{PolyFree}_k,\cE)
    \]
    is an equivalence of $\infty$-categories.
\end{cor}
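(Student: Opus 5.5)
This corollary is a formal consequence of Proposition \ref{SAG 25.2.1.2} together with the general theory of animation. The plan is to identify $\mathsf{PolyFree}_k$ as (the nerve of) the correct full subcategory of compact projective generators of the $1$-category $\mathsf{MOD}_k$. Once that is done, we invoke the universal property of the $\infty$-category freely generated under sifted colimits, $\mathcal{P}_\Sigma(\mathcal{C})\simeq\Fun^{\times}(\mathcal{C}^{\op},\Ani)$, as in \cite[5.5.8.15]{HTT}.

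\textbf{Step 1.} Recall how animation is defined. For a $1$-category $\mathcal{D}$ generated under sifted colimits by a small full subcategory $\mathcal{D}^0$ of compact projective objects closed under finite coproducts, $\mathrm{Ani}(\mathcal{D})=\mathcal{P}_\Sigma(\mathcal{D}^0)$. We check that the objects $(P,F)$ with $P=k[x_1,\dots,x_n]$ and $F=P^{m}$ form such a subcategory of $\mathsf{MOD}_k$. Note first that $\Hom_{\mathsf{MOD}_k}((k[x_1,\dots,x_n],P^m),(B,N))\cong B^n\times N^m$, because a map $(f,\phi)$ consists of $f$, determined by $n$ elements of $B$, and a $B$-linear map $B^m\to N$. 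This functor is corepresented and commutes with filtered colimits and reflexive coequalizers, since those are computed on underlying sets. Hence these objects are compact projective. Coproducts satisfy $(P,F)\sqcup(P',F')=(P\otimes_k P',\,F\otimes_P(P\otimes P')\oplus F'\otimes_{P'}(P\otimes P'))$, which stays in the class. They generate under sifted colimits because every $(A,M)$ admits a canonical free resolution. This is exactly the content underlying Proposition \ref{SAG 25.2.1.2}, so we may cite it.

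\textbf{Step 2.} Transport along Proposition \ref{SAG 25.2.1.2}. The identification $\MOD_k^{\cn}\simeq\mathcal{P}_\Sigma(\mathsf{PolyFree}_k)$ carries the Yoneda embedding to the inclusion $\mathsf{PolyFree}_k\subseteq\MOD_k^{\cn}$. Here a point needs checking: the image of $(P,F)$ in $\MOD_k^{\cn}$ is the pair $(P,F)$ with $P$ viewed as an animated ring and $F$ as a connective module. This matches the full subcategory named in the statement, and the mapping spaces agree and are discrete, being $B^n\times N^m$ computed in $\Omega^\infty$ terms. Then \cite[5.5.8.15]{HTT} states that for any $\mathcal{E}$ admitting sifted colimits, restriction along the Yoneda embedding gives an equivalence $\Fun_{\mathrm{sift}}(\mathcal{P}_\Sigma(\mathcal{C}),\mathcal{E})\simeq\Fun(\mathcal{C},\mathcal{E})$, with inverse given by left Kan extension.

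The main obstacle is the compatibility in Step 2. We must make sure that the equivalence of Proposition \ref{SAG 25.2.1.2} restricts to the identity on $\mathsf{PolyFree}_k$. Equivalently, mapping spaces in $\MOD_k^{\cn}$ out of $(P,F)$ must be the expected discrete sets. Concretely, $\Map((P,F),(B,N))\simeq\Map_{\aRing_k}(P,B)\times\Map_{\cMod_B}(F\otimes_P B,N)\simeq\Omega^\infty B^{n}\times\Omega^\infty N^{m}$. This follows from the Cartesian fibration description of morphisms in $\MOD_k$, and I expect it to be the only nonformal verification.
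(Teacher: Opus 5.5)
Your proposal is correct and follows the same route as the source the paper cites: the paper gives no independent argument and defers to \cite[Corollary~25.2.1.3]{SAG}, whose proof is precisely to combine Proposition~\ref{SAG 25.2.1.2} with \cite[5.5.8.15]{HTT}. Your additional check, that the full sub-$\infty$-category on the objects $(P,F)$ has discrete mapping spaces agreeing with the $1$-categorical Hom sets, is what the paper's remark that $\mathsf{PolyFree}_k$ is a $1$-category records; note that this discreteness holds when the target lies in $\mathsf{PolyFree}_k$, not for an arbitrary target $(B,N)$.
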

\begin{remark}
    The proof of \cite[25.2.1.2]{SAG} has some broken references in the current version of \cite{SAG}. We refer to \cite[2.34]{yaylali2022notes} for indications as to what those references should be. Note that $\mathsf{PolyFree}_k$ is in fact a $1$-category.
\end{remark}

Let $A\in\aRing$ be arbitrary. We also define $\MOD_A := \MOD_{\ZZ}\times_{\aRing}\aRing_A$ and $\MOD_A^{\cn} := \MOD_{\ZZ}^{\cn}\times_{\aRing}\aRing_A.$ In particular, we can identify the objects of $\MOD_A$ (respectively $\MOD_A^{\cn}$) with pairs $(B,M)$ such that $B\in\aRing_A$ and $M\in\cMod_B$ (respectively $M\in\cMod_B^{\cn}$).

\section{Derived Stacks and Geometricity}

We now proceed with the global theory of derived stacks and the analogue of algebraic stacks, which are called geometric stacks.

\begin{mydef}
    The category $\dAff$ of \emph{affine derived schemes} is the opposite of the category of animated rings, $\aRing.$ If $k$ is a fixed animated ring, then we will write $\dAff_k := \left(\aRing_{k/}\right)^{\op}.$ For $A\in\aRing_{k/},$ we denote the corresponding object of $\dAff_k$ by $\Spec A.$
\end{mydef}

The $\infty$-categorical Yoneda embedding allows us to view $\dAff_k$ as a subcategory of $\PSh_{\left(\aRing_k\right)^{\op}} := \Fun(\aRing_k,\Ani),$ where $\Spec A$ is identified with $\Map_{\aRing_k}(A,-).$

\begin{mydef}\label{fpqc and etale topologies on aRing}
    Let $A$ be an animated $k$-algebra.
    \begin{enumerate}
        \item The \emph{fpqc topology} on $\dAff_k$ is defined by the property that a sieve $\cC\subseteq(\dAff_A)_{/\Spec B}\simeq\dAff_B$ is a covering sieve if and only if it contains a finite family $(\Spec B_i\to\Spec B)_{i\in I}$ such that the induced map $B\to \prod_{i\in I} B_i$ is faithfully flat.
        \item The \emph{\'etale topology} on $\dAff_k^{\et}$ (the full subcategory of $\Spec A\to\Spec k$ with $k\to A$ \'etale) is defined by the property that a sieve $\cC\subseteq(\dAff^{\et}_A)_{/\Spec B}\simeq\dAff^{\et}_B$ is a covering sieve if and only if it contains a finite family $(\Spec B_i\to\Spec B)_{i\in I}$ such that the induced map $B\to\prod_{i\in I} B_i$ is faithfully flat (hence \'etale).
    \end{enumerate}
\end{mydef}
\begin{proof}
    A map $f : A\to B$ of animated rings is faithfully flat (respectively faithfully flat and \'etale) if and only if $\theta(f)$ is so, and the collection of all such morphisms satisfies the properties of \cite[A.3.2.1]{SAG} if and only if its image under $\theta$ does, using the fact that $\theta$ admits both a left and right adjoint. The result then follows from \cite[B.6.1.3]{SAG}, \cite[B.6.1.7]{SAG}, and \cite[B.6.2.1]{SAG}.
\end{proof}

\begin{remark}
    For any $B\in\aRing,$ $\Spec B$ is an fpqc sheaf, by \cite[25.1.2.2]{SAG} and \cite[D.6.3.5]{SAG}.
\end{remark}

\begin{mydef}
    We say that a morphism $\Spec B\to\Spec A$ of affine derived schemes is \emph{flat} (respectively, \emph{smooth}, \emph{\'etale}, \emph{locally of finite presentation}) if the underlying morphism $A\to B$ in $\aRing$ is so.
\end{mydef}

Given a discrete ring $A,$ we will denote by $\Spec_{\cl} A$ its corresponding classical scheme: in particular, $\Spec_{\cl} A = \Hom_{\CRing}(A,-) : \CRing\to\Set$ is a set-valued functor, whereas $\Spec A$ is a functor valued in anima even if $A$ is discrete. In particular, $\Map_{\aRing}(A,B)$ need not be discrete for a general $B\in\aRing$ (although we do have $\Map_{\aRing}(B,A)\simeq\Hom_{\CRing}(\pi_0 B,A)$ for any $B\in\aRing,$ $A$ discrete by \ref{truncated animated rings} and the following remark).

With these topologies on $\dAff_k$ in hand, we can now define the geometric objects we will consider.

\begin{mydef}
    Let $k\in\aRing.$ A \emph{derived stack over $k$} is a sheaf on $\dAff_k^{\et}$ valued in $\Ani.$ The $\infty$-category of derived stacks over $k$ will be denoted $\dStk_k,$ and if $k = \ZZ,$ we will write $\dStk := \dStk_{\ZZ}.$
\end{mydef}

Analogously to the classical situation, general derived stacks are too unweildy, and we require derived stacks which are more closely connected to the world of ``animated algebra.'' The derived replacement of a cover is that of effective epimorphisms, which we now define.

\begin{mydef}
    Let $k\in\aRing.$ A map $f : \cX\to\cY$ in $\dStk_k$ is an \emph{effective epimorphism} if the natural map 
    \[
    \colim_{\Delta} \check{C}(\cX/\cY)_{\bullet}\to\cY
    \]
    is an equivalence in $\dStk_k$ after sheafification.
\end{mydef}

We now define the notion of a geometric derived stack, which is the derived analogue of a classical algebraic stack. Unlike in the classical theory, there are varying levels of geometricity, defined inductively.

\begin{mydef}
    Let $\cX$ be a derived stack, and let $f : \cX\to\cY$ be a morphism of derived stacks.
    \begin{enumerate}
        \item \begin{enumerate}
            \item We say $\cX$ is \emph{$(-1)$-geometric} or \emph{affine} if it is equivalent to an affine derived scheme.
            \item We say $f : \cX\to\cY$ is \emph{$(-1)$-geometric} or \emph{affine} if for all maps $\Spec A\to\cY,$ the fiber product $\cX\times_{\cY}\Spec A$ is affine.
            \item A $(-1)$-geometric morphism $f : \cX\to\cY$ of derived stacks is \emph{smooth} (respectively, \emph{\'etale}) if for all maps $\Spec A\to\cY,$ the map $\Spec B\simeq \cX\times_{\cY}\Spec A\to\Spec A$ corresponds to a smooth (respectively \'etale) morphism $A\to B$ of animated rings.
        \end{enumerate}
        \item Let $n\geq 0.$
        \begin{enumerate}
            \item A \emph{(smooth) $n$-atlas} of a $\cX$ is a family $\{\Spec A_i\to\cX\}_{i\in I}$ of morphisms of derived stacks such that
            \begin{enumerate}
                \item every $\Spec A_i\to\cX$ is $(n-1)$-geometric and smooth, and
                \item the induced morphism $\coprod\Spec A_i\to\cX$ is an effective epimorphism.
            \end{enumerate}
            If each morphism $\Spec A_i\to\cX$ is \'etale, we call the $n$-atlas an \emph{\'etale $n$-atlas}.
            \item We say $\cX$ is \emph{$n$-geometric} (respectively, \emph{$n$-DM}) if
            \begin{enumerate}
                \item it has an $n$-atlas (respectively, \'etale $n$-atlas), and
                \item the diagonal $\cX\to\cX\times\cX$ is $(n-1)$-geometric.
            \end{enumerate}
            \item We say $f : \cX\to\cY$ is \emph{$n$-geometric} (respectively, \emph{$n$-DM}) if for all maps $\Spec A\to\cY,$ the base change $\cX\times_{\cY}\Spec A$ is $n$-geometric (respectively, $n$-DM).
        \end{enumerate}
        \item Let $n\geq 0.$
        \begin{enumerate}
            \item An $n$-geometric morphism $f : \cX\to\cY$ is \emph{smooth}, if for all maps $\Spec A\to\cY,$ $\cX\times_{\cY}\Spec A$ has an $n$-atlas given by a family of affine derived schemes $\{\Spec A_i\to\cX\times_{\cY}\Spec A\}$ such that the induced map $A\to A_i$ is smooth for all $i.$
            \item An $n$-DM morphism $f : \cX\to\cY$ is \emph{\'etale} if for all maps $\Spec A\to\cY,$ $\cX\times_{\cY}\Spec A$ has an \'etale $n$-atlas given by a family of affine derived schemes $\{\Spec A_i\to\cX\times_{\cY}\Spec A\}$ such that the induced map $A\to A_i$ is \'etale for all $i.$
            \item We say $f : \cX\to\cY$ is \emph{geometric} (respectively, \emph{DM}) if it is $n$-geometric (respectively, $n$-DM) for some $n\geq -1.$
        \end{enumerate}
    \end{enumerate}
\end{mydef}

The following is a collection of basic properties of geometric stacks and geometric morphisms. Most of these are easily shown by induction or follow immediately from the definitions, but we refer the reader to \cite[Proposition 1.3.3.3]{toen2006homotopical}, \cite[Lemma 4.25]{AG2014}, and \cite[Lemma 4.30]{AG2014} for the nontrivial arguments.

\begin{lemma}\label{geometricity facts}
    Let $f : \cX\to\cY$ and $g : \cY\to\cZ$ be morphisms of derived stacks. Then:
    \begin{enumerate}
        \item If $g\circ f$ is $n$-geometric, so is $\cX\times_{\cZ}\cY\to\cY.$
        \item $f$ is $n$-geometric if and only if $\cX\times_{\cY}\Spec A\to\Spec A$ is $n$-geometric for all $\Spec A\to\cY.$
        \item If $f$ and $g$ are $n$-geometric, then so is $g\circ f.$
        \item If $\cX$ and $\cY$ are $n$-geometric, then $f$ is $n$-geometric.
        \item If $g\circ f$ is $n$-geometric and $g$ is $(n+1)$-geometric, then $f$ is $n$-geometric.
        \item If $\cX$ is $n$-geometric and $\cY\to\cY\times\cY$ is $n$-geometric, then $f$ is $n$-geometric. If moreover $f$ is a smooth effective epimorphism, then $\cY$ is $(n+1)$-geometric.\footnote{Compare this to the result that any map from a classical algebraic space to a classical algebraic stack is representable (by classical algebraic spaces).}
        \item If $\cX$ and $\cY$ are $n$-geometric, then $\cX\to\cX\times_{\cY}\cX$ is $(n-1)$-geometric.
    \end{enumerate}
\end{lemma}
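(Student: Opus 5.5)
We argue by induction on $n$ throughout, using the definitions directly and the fact that geometric stacks over $\Spec A$ are stable under pullback. The base case $n=-1$ uses only that affine derived schemes are stable under fiber products, since $\Spec B\times_{\Spec A}\Spec C\simeq\Spec(B\otimes_A C)$. For the inductive step we assume all seven statements for $n-1$.

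Items (1) and (2) are formal and need no induction. For (1), I read the hypothesis as "$g$ is $n$-geometric", which makes the statement simply stability of $n$-geometric morphisms under base change: for $\Spec A\to\cY$ we have
\[
(\cX\times_{\cZ}\cY)\times_{\cY}\Spec A\simeq \cX\times_{\cZ}\Spec A.
\]
For (2), note that for $\Spec B\to\Spec A$ the composite base change is again a base change along $\Spec B\to\cY$; this gives the "if" direction, and the "only if" direction is immediate from the definition.

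For (3), fix $\Spec A\to\cZ$ and set $\cY_A:=\cY\times_{\cZ}\Spec A$, which is $n$-geometric, and $\cX_A:=\cX\times_{\cZ}\Spec A$.
\begin{enumerate}
\item Choose an $n$-atlas $\{\Spec B_i\to\cY_A\}$.
\item Each $\cX_A\times_{\cY_A}\Spec B_i$ is $n$-geometric, so it has its own atlas $\{\Spec C_{ij}\}$.
\item The composites $\Spec C_{ij}\to\cX_A$ are smooth and $(n-1)$-geometric by the inductive hypothesis for (3), since $(n-1)$-geometric smooth maps are stable under base change and composition.
\item Their coproduct is an effective epimorphism, because effective epimorphisms are stable under base change and composition in an $\infty$-topos.
\end{enumerate}
The diagonal of $\cX_A$ factors as
\[
\cX_A\to\cX_A\times_{\cY_A}\cX_A\to\cX_A\times\cX_A.
\]
The second map is a base change of the diagonal of $\cY_A$, which is $(n-1)$-geometric. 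The first map is $(n-1)$-geometric after pulling back to the atlas, where it becomes diagonals of $n$-geometric stacks. Applying inductive (3) to this composite finishes the step.

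Items (4), (5) and (7) then follow by the usual graph trick.
\begin{itemize}
\item For (7), factor $\cX\to\cX\times_{\cY}\cX\to\cX\times\cX$. Since the second map is a base change of $\Delta_\cY$, we reduce to showing that the first is $(n-1)$-geometric; this is checked locally on atlases.
\item For (4), write $f$ as $\cX\to\cX\times\cY\to\cY$. The graph is a base change of $\Delta_{\cY}$ and so is $(n-1)$-geometric, hence $n$-geometric. The projection is a base change of $\cX\to\ast$. Then apply (3).
\item For (5), factor $f$ as $\cX\to\cX\times_{\cZ}\cY\to\cY$. The graph is a base change of $\Delta_{g}$, which is $n$-geometric by (7) at level $n+1$ in relative form. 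The projection is a base change of $g\circ f$.
\end{itemize}

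Item (6) is the main obstacle, specifically its second clause: showing that a smooth effective epimorphism from an $n$-geometric stack makes $\cY$ $(n+1)$-geometric. The first clause is the graph trick again, since the graph of $f$ is a base change of the $n$-geometric diagonal $\Delta_\cY$. For the second clause, compose an $n$-atlas of $\cX$ with $f$; the composites are smooth and $n$-geometric, and the coproduct remains an effective epimorphism. Smoothness of these composites is where I would invoke \cite[Proposition 1.3.3.3]{toen2006homotopical} and \cite[Lemma 4.30]{AG2014}, which verify that smooth $n$-geometric morphisms compose. That result in turn requires knowing smoothness is local on the source for atlases, which is the genuinely nontrivial input and the step I expect to be hardest.
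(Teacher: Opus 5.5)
Your strategy (induction on $n$, the graph factorization, and deferring the smoothness bookkeeping to \cite{toen2006homotopical} and \cite{AG2014}) matches the paper, which only says the items follow by induction from the definitions and cites those sources. As written, however, your induction does not close at (5) and (7).

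In (7) there is no actual reduction. The map $\cX\to\cX\times_{\cY}\cX$ you ``reduce to'' is exactly the map (7) is about, so all the content sits in the unexplained phrase ``checked locally on atlases''. Meanwhile you derive (5) at level $n$ from a relative (7) at level $n+1$: if $g$ is $(n+1)$-geometric, then $\Delta_g$ is $n$-geometric. The natural way to obtain that from your factorization is by cancellation, which is (5) at level $n$ itself, so the argument is circular.

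The missing idea is cancellation against an \emph{affine} map, which needs only (3) at level $n$. If $h\circ k$ is $n$-geometric and $h$ is affine, write $k=\mathrm{pr}_2\circ\gamma_k$. Here $\mathrm{pr}_2$ is a base change of $h\circ k$, and the graph $\gamma_k$ is a base change of $\Delta_h$, which is affine because diagonals of affine maps are affine.

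Relative (7) at level $n+1$ then follows directly. Pulling back along $\Spec B\to\cY\times_{\cZ}\cY$ reduces to showing that $\cY_B\to\cY_B\times_{\Spec B}\cY_B$ is $n$-geometric, where $\cY_B=\cY\times_{\cZ}\Spec B$. This map composed with $\cY_B\times_{\Spec B}\cY_B\to\cY_B\times\cY_B$ is the $n$-geometric diagonal $\Delta_{\cY_B}$. The second map is a base change of the affine map $\Spec B\to\Spec B\times\Spec B$, so affine cancellation applies. Now (5) at level $n$ follows as you say. Finally, (7) at level $n$ is immediate from (5) at level $n-1$ applied to $\cX\to\cX\times_{\cY}\cX\to\cX\times\cX$, since the second map is a base change of $\Delta_{\cY}$.

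The seven items at level $n-1$ are also not a sufficient inductive hypothesis for the steps you take. You use, without listing them:
\begin{itemize}
\item that $(n-1)$-geometric (smooth) maps are $n$-geometric (smooth);
\item that smooth $(n-1)$-geometric maps are stable under base change and composition, already in (3) and not only in (6);
\item in (4) and in the first clause of (6), that $\cX\to\ast=\Spec\ZZ$ is $n$-geometric whenever $\cX$ is.
\end{itemize}
The last is the case $\cY=\Spec\ZZ$ of (4) itself, so it must be proved separately. Use the atlas $\{\Spec(A_i\otimes B)\}$ of $\cX\times\Spec B$ and the diagonal $\Delta_{\cX}\times\Delta_{\Spec B}$. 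All three facts should be added to the inductive package.

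The step you expect to be hardest is actually easy. With the paper's atlas-based definition of smoothness, composition of smooth $n$-geometric maps follows by the same induction. Take atlases $\{\Spec B_i\}$ of $\cY_A$ with $A\to B_i$ smooth, and $\{\Spec C_{ij}\}$ of $\cX\times_{\cY}\Spec B_i$ with $B_i\to C_{ij}$ smooth. Then $\{\Spec C_{ij}\}$ is an atlas of $\cX_A$ with $A\to C_{ij}$ smooth. Locality of smoothness on the source is only needed to make the notion independent of the chosen atlas (or of $n$), which the paper treats separately afterwards.

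Finally, in (1) no reinterpretation is needed. The map $\cX\times_{\cZ}\cY\to\cY$ is the base change of $g\circ f$ along $g$, and your displayed identity uses exactly that $g\circ f$ is $n$-geometric.
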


We also take the time to prove a few basic lemmas which allow us to detect geometricity by examining one atlas, instead of considering all possible morphisms from affines.

\begin{lemma}\label{geometricity detected by single atlas}
    Let $f : \cX\to\cY$ be a map of $n$-geometric derived stacks. Then $f$ is $(n-1)$-geometric if and only if there exists an $n$-atlas $\{\Spec A_i\to\cY\}$ such that $\cX_{A_i} := \cX\times_{\cY}\Spec A_i$ is $(n-1)$-geometric for all $i.$
\end{lemma}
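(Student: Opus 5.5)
The forward direction is immediate: if $f$ is $(n-1)$-geometric, then by definition $\cX\times_{\cY}\Spec A$ is $(n-1)$-geometric for every map $\Spec A\to\cY$, and in particular for the members of any $n$-atlas of $\cY$. Such an atlas exists because $\cY$ is $n$-geometric. So the content lies in the converse. Fix an $n$-atlas $\{\Spec A_i\to\cY\}$ with each $\cX_{A_i}$ $(n-1)$-geometric, and an arbitrary map $\Spec B\to\cY$. We must show that $\cX_B := \cX\times_{\cY}\Spec B$ is $(n-1)$-geometric. This requires checking two things: that the diagonal of $\cX_B$ is $(n-2)$-geometric, and that $\cX_B$ admits an $(n-1)$-atlas.

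For the diagonal, I would avoid the atlas and use Lemma \ref{geometricity facts} directly. Since $\cX$ and $\cY$ are $n$-geometric, part (7) says $\cX\to\cX\times_{\cY}\cX$ is $(n-1)$-geometric. This is the wrong level, so I would argue inductively on $n$ instead, proving the lemma simultaneously for all $n$. The diagonal of $\cX_B$ over $\Spec B$ is the base change of $\cX\to\cX\times_{\cY}\cX$. The map $\cX\times_{\cY}\cX\to\cY$ becomes $(n-1)$-geometric after pulling back to each $\Spec A_i$, because $\cX_{A_i}\times_{A_i}\cX_{A_i}$ is $(n-1)$-geometric. Similarly, $\Delta$ pulled back to $\cX_{A_i}\times_{A_i}\cX_{A_i}$ is the diagonal of an $(n-1)$-geometric stack, hence $(n-2)$-geometric. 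The idea is to apply the inductive hypothesis to the map $\cX\to\cX\times_{\cY}\cX$, using the atlas of $\cX\times_{\cY}\cX$ obtained by composing atlases of the $\cX_{A_i}\times\cX_{A_i}$. Since $\cX_B\to\cX_B\times_B\cX_B$ is a base change of $\Delta$ along an affine-to-$(\cX\times_{\cY}\cX)$ map, this gives the diagonal claim. The base case $n=0$, where $(n-1)$-geometric means affine, is handled by affine descent: affineness is fpqc local on the target.

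For the atlas, form $\Spec B\times_{\cY}\Spec A_i$. Because $\Spec A_i\to\cY$ is $(n-1)$-geometric and smooth, this fiber product is $(n-1)$-geometric. Choose $(n-1)$-atlases $\{\Spec C_{ij}\to\Spec B\times_{\cY}\Spec A_i\}$, then pull back along $\cX_B\times_{\cY}\Spec A_i\simeq\cX_{A_i}\times_{A_i}(\Spec A_i\times_{\cY}\Spec B)\to\Spec B\times_{\cY}\Spec A_i$ to get $\cX_{A_i}\times_{A_i}\Spec C_{ij}$. This is a base change of the $(n-1)$-geometric stack $\cX_{A_i}$ to an affine over $A_i$, so it is $(n-1)$-geometric. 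Pick $(n-1)$-atlases $\{\Spec D_{ijk}\}$ of these. The composites $\Spec D_{ijk}\to\cX_B$ are smooth, being composites of smooth maps, and they are effective epimorphisms, since effective epimorphisms are stable under base change and composition in an $\infty$-topos. They are also $(n-2)$-geometric, because the diagonal of $\cX_B$ has just been shown to be $(n-2)$-geometric, so any map from an affine is $(n-2)$-geometric by Lemma \ref{geometricity facts}(6) in its inductive form.

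The main obstacle is the level bookkeeping. The composed atlas maps go through $\Spec C_{ij}\to\Spec B\times_{\cY}\Spec A_i$, which is only $(n-2)$-geometric, and through base changes of $\Spec A_i\to\cY$, which is $(n-1)$-geometric. Naive composition therefore does not give $(n-2)$-geometric maps. This is why I would settle the diagonal first and then invoke the ``any map from an affine to a stack with $(n-2)$-geometric diagonal is $(n-2)$-geometric'' principle, rather than tracking the levels through the composites.
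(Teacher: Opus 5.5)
Your forward direction and your atlas construction are fine. The gap is in the diagonal step. The inductive hypothesis, the lemma at level $n-1$, applies only to a map between $(n-1)$-geometric stacks together with an $(n-1)$-atlas of its target. But $\Delta\colon\cX\to\cX\times_{\cY}\cX$ is in general a map between stacks that are only $n$-geometric. Moreover, the cover of $\cX\times_{\cY}\cX$ you propose is an $n$-atlas rather than an $(n-1)$-atlas: its members factor through $\cX_{A_i}\times_{\Spec A_i}\cX_{A_i}\to\cX\times_{\cY}\cX$, which is a base change of the $(n-1)$-geometric map $\Spec A_i\to\cY$. Detecting $(n-2)$-geometricity of $\Delta$ on an $n$-atlas is a two-level drop, and the lemma does not provide that at any level. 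Already for $n=1$ you would need $\cX$ itself to be $0$-geometric. So the induction does not close as written. For comparison, the paper's proof consists only of your atlas step (refine $\Spec B$ by a smooth affine cover via \ref{refinement through atlas}, pull back, compose atlases) and does not address the diagonal of $\cX_B$ at all. You are therefore right that something more is needed; it just has to be set up differently.

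The repair is to base change to an affine before invoking the inductive hypothesis. For $\Spec E\to\cX\times_{\cY}\cX$, set $W:=\cX\times_{\cX\times_{\cY}\cX}\Spec E$, which is $(n-1)$-geometric by \ref{geometricity facts}(7). Just as with your $\Spec C_{ij}$, choose $(n-1)$-atlases of the stacks $\Spec E\times_{\cY}\Spec A_i$. This produces smooth maps of affines $\{\Spec E_k\to\Spec E\}$ that are jointly an effective epimorphism. These maps are affine, so for $n\geq 1$ they form an $(n-1)$-atlas of $\Spec E$. Each $W\times_{\Spec E}\Spec E_k$ is a fiber of the relative diagonal $\cX_{A_i}\to\cX_{A_i}\times_{\Spec A_i}\cX_{A_i}$, hence $(n-2)$-geometric. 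The lemma at level $n-1$ now applies to $W\to\Spec E$, so $W$ is $(n-2)$-geometric. Thus $\Delta$ is $(n-2)$-geometric, and so is the diagonal of $\cX_B$, which is a base change of $\Delta$ followed by the affine map $\cX_B\times_{\Spec B}\cX_B\to\cX_B\times\cX_B$.

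Incidentally, your worry about levels in the atlas step is unnecessary. The map $\cX_{A_i}\times_{\Spec A_i}\Spec C_{ij}\simeq\cX_B\times_{\Spec B}\Spec C_{ij}\to\cX_B$ is a base change of the map of affines $\Spec C_{ij}\to\Spec B$, hence affine. So $\Spec D_{ijk}\to\cX_B$ is $(n-2)$-geometric by \ref{geometricity facts}(3), without using the diagonal.
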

\begin{proof}
    The only if direction is trivial. On the other hand, suppose that there exists an $n$-atlas $\{\Spec A_i\to\cY\}_{i\in I}$ such that $\cX_i := \cX\times_{\cY}\Spec A_i$ is $(n-1)$-geometric for all $i.$ Let $\Spec B\to\cY$ be an arbitrary map of derived stacks. We must show that $\cX_B := \cX\times_{\cY}\Spec B$ is $(n-1)$-geometric. Choose an \'etale atlas $\{\Spec B_j\to\Spec B\}_{j\in J}$ such that for all $j,$ there exists $i_j$ with the composition $\Spec B_j\to\Spec B\to\cY$ factoring through $\Spec A_{i_j}\to\cY.$

    Let $\cX_j := \cX\times_{\cY}\Spec B_j,$ and note that we have the following commutative diagram where every square is a pullback
    \[\begin{tikzcd}
        \cX_j\ar[rr]\ar[dd]\ar[rd] &&\Spec B_j\ar[dd]\ar[rd]\\
        & \cX_{i_j}\ar[dd]\ar[rr] &&\Spec A_{i_j}\ar[dd]\\
        \cX_B\ar[rr]\ar[rd] &&\Spec B\ar[rd]\\
        &\cX\ar[rr] &&\cY
    \end{tikzcd}\]
    Thus, since $\cX_j\simeq\cX_{i_j}\times_{A_{i_j}}\Spec B_j,$ we see that $\cX$ is $(n-1)$-geometric.

    So, take a smooth $(n-1)$-atlas $\{\Spec C_{(j,k)}\to\cX_j\}_{k\in K_j}$ of $\cX_j.$ It follows that
    \[\coprod_{j\in J}\{\Spec C_{(j,k)}\to\cX_j\to\cX_B\}_{k\in K_j}\]
    is an $(n-1)$-atlas of $\cX_B,$ so that $\cX_B$ is $(n-1)$-geometric, as desired.
\end{proof}

\begin{cor}
    Let $f : \cX\to\cY$ be a map of $n$-geometric derived stacks. Suppose that there exists an $(n-1)$-geometric stack $Y$ and a smooth effective epimorphism $Y\to\cY$ such that $X := \cX\times_{\cY} Y$ is $(n-1)$-geometric. Then $f$ is $(n-1)$-geometric.
\end{cor}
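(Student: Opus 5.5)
Since $Y$ is $(n-1)$-geometric, it has a smooth $(n-1)$-atlas $\{\Spec A_i\to Y\}_{i\in I}$. The plan is to show that the composites $\{\Spec A_i\to Y\to\cY\}$ form an $n$-atlas of $\cY$, and then apply \ref{geometricity detected by single atlas}. That lemma reduces the claim to checking that each $\cX\times_{\cY}\Spec A_i$ is $(n-1)$-geometric.

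First, the composites form an $n$-atlas. Each $\Spec A_i\to Y$ is $(n-2)$-geometric and smooth, hence also $(n-1)$-geometric and smooth, since $m$-geometric implies $(m+1)$-geometric by the inductive definition. The map $Y\to\cY$ is smooth by hypothesis. For it to be $(n-1)$-geometric, we use \ref{geometricity facts}(6): $Y$ is $(n-1)$-geometric, and the diagonal of $\cY$ is $(n-1)$-geometric because $\cY$ is $n$-geometric. So $Y\to\cY$ is $(n-1)$-geometric, and by \ref{geometricity facts}(3) each composite $\Spec A_i\to\cY$ is $(n-1)$-geometric.

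Smoothness of the composite needs a little care. I would argue it by base changing along $\Spec B\to\cY$ and combining smooth atlases: an atlas of $Y\times_{\cY}\Spec B$ smooth over $B$, pulled back along the smooth maps $\Spec A_i\to Y$, gives smooth covers. Finally, $\coprod_i\Spec A_i\to Y\to\cY$ is a composite of effective epimorphisms, hence an effective epimorphism of sheaves.

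Second, each $\cX\times_{\cY}\Spec A_i$ is $(n-1)$-geometric. We have $\cX\times_{\cY}\Spec A_i\simeq X\times_Y\Spec A_i$. The map $X\to Y$ is a map of $(n-1)$-geometric stacks, so it is $(n-1)$-geometric by \ref{geometricity facts}(4). Its base change to the affine $\Spec A_i$ is therefore $(n-1)$-geometric as a stack, by the definition of geometric morphisms or \ref{geometricity facts}(2).

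Applying \ref{geometricity detected by single atlas} then gives that $f$ is $(n-1)$-geometric. The main obstacle is the bookkeeping in the first step, namely that the composite $\Spec A_i\to\cY$ is smooth in the sense of the definition. If that is awkward, I would bypass the atlas lemma and repeat its proof directly with $Y$ in place of $\coprod\Spec A_i$. Given $\Spec B\to\cY$, choose an étale cover $\{\Spec B_j\to\Spec B\}$ over which the map lifts to $Y$; such a cover exists because $Y\to\cY$ is a smooth effective epimorphism, so it has sections étale-locally. Then $\cX\times_{\cY}\Spec B_j\simeq X\times_Y\Spec B_j$ is $(n-1)$-geometric, and gluing atlases over the cover as in the proof above shows $\cX\times_{\cY}\Spec B$ is $(n-1)$-geometric.
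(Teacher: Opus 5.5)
Your proof is correct and matches the paper's own argument: you compose an atlas of $Y$ with $Y\to\cY$ to get an $n$-atlas of $\cY$ whose fibers $\cX\times_{\cY}\Spec A_i\simeq X\times_Y\Spec A_i$ are $(n-1)$-geometric, then invoke \ref{geometricity detected by single atlas}. The smoothness of the composites that you flagged as delicate follows at once from the paper's stability of good properties under composition (\cite[Proposition~1.3.6.3]{toen2006homotopical}), so your fallback argument is not needed.
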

\begin{proof}
    Choose an atlas $\{\Spec A_i\to Y\}$ of $Y.$ Then $\{\Spec A_i\to Y\to\cY\}$ is an atlas of $\cY$ such that $\cX\times_{\cY}\Spec A_i$ is $(n-1)$-geometric for all $i;$ then the result follows from \ref{geometricity detected by single atlas}.
\end{proof}

\begin{mydef}
    Let $\cP$ be a property of (morphisms of) affine derived schemes that is stable under equivalences\footnote{in the sense that any equivalence has property $\cP$}, pullbacks, compositions, and is smooth-local on the source and target, in the sense that the following are satisfied:
    \begin{enumerate}
        \item If $f : \Spec B\to\Spec A$ is a map of affine derived schemes, and $\{p_i : \Spec B_i\to\Spec B\}$ is any smooth cover (i.e., each $p_i$ is smooth and $\coprod \Spec B_i\to\Spec B$ is an effective epimorphism), then $f$ has property $\cP$ if and only if\footnote{In \cite{toen2006homotopical}, only the if direction is assumed. We assume both directions, which aligns more with classical algebraic geometry, and we do not see any issues in adding the additional condition with any of the developed theory.} each composition $f\circ p_i : \Spec B_i\to\Spec A$ has property $\cP$.
        \item If $f : \Spec B\to\Spec A$ is a map of affine derived schemes, and $\{\Spec A_i\to\Spec A\}$ is a smooth cover, then $f$ has property $\cP$ if and only if all the projections $\Spec B\times_{\Spec A}\Spec A_i\to\Spec A_i$ have property $\cP.$
        \item $\cP$ is stable under smooth postcomposition.\footnote{Note that stability under postcomposition is not explicitly assumed in \cite{toen2006homotopical}, but is assumed in the stacks project: see \cite[\href{https://stacks.math.columbia.edu/tag/06FB}{Tag 06FB}]{stacks-project}.}
    \end{enumerate}
    Such a property is called a \emph{good property.}
\end{mydef}

Examples of good properties include ``locally of finite presentation,'' ``flat,'' and of course, ``smooth.''

We may generalize the notion of a morphism with a good property to geometric morphisms of derived stacks as follows.

\begin{mydef}\label{local properties}
    Let $\cP$ be a good property of morphisms of affine derived schemes. Then a morphism $f : \cX\to\cY$ of derived stacks \emph{has property $\cP$} if it is $n$-geometric (for some $n$) and for an affine $(n-1)$-atlas $U_i$ of the pullback along an affine derived scheme $\Spec B$ the corresponding morphism $U_i\to\Spec B$ has property $\cP.$
\end{mydef}

\begin{remark}
    Note that if $f : \cX\to\cY$ is a map of affine derived schemes and $\cP$ is a good property of morphisms of affine derived schemes, then $f$ has property $\cP$ (in the sense of the above definition) if and only if $f$ has property $\cP$ when $\cP$ is considered as a property of morphisms of affine derived schemes.
\end{remark}

\begin{lemma}\label{refinement through atlas}
    Suppose that $\cX$ is an $n$-geometric derived stack, with given $n$-atlas $\{\Spec A_i\to\cX\}_{i\in I},$ and let $\Spec B\to\cX$ be arbitrary. Then there exists a smooth collection of morphisms $\{\Spec B_j\to\Spec B\}$ such that
    \begin{enumerate}
        \item $\coprod_j\Spec B_j\to\Spec B$ is an effective epimorphism, and 
        \item for all $j,$ there exists $i_j\in I$ and a morphism $\Spec B_j\to\Spec A_{i_j}$ such that
        \[\begin{tikzcd}
            \Spec B_j\ar[r]\ar[d] &\Spec B\ar[d]\\
            \Spec A_{i_j}\ar[r] &\cX
        \end{tikzcd}\]
        commutes.
    \end{enumerate}
\end{lemma}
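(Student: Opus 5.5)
We pull the atlas back along $\Spec B\to\cX$ and then refine the resulting cover of $\Spec B$ by affines. Set $\cU := \coprod_{i\in I}\Spec A_i$, with its map $p : \cU\to\cX$, and form the fiber product
\[
\cU_B := \cU\times_{\cX}\Spec B\simeq\coprod_{i\in I}\left(\Spec A_i\times_{\cX}\Spec B\right).
\]
Each $\Spec A_i\to\cX$ is $(n-1)$-geometric and smooth. By definition of these notions for morphisms, each $\cU_{B,i} := \Spec A_i\times_{\cX}\Spec B$ is an $(n-1)$-geometric derived stack. Moreover, the projection $\cU_{B,i}\to\Spec B$ is smooth, meaning that $\cU_{B,i}$ admits an $(n-1)$-atlas $\{\Spec C_{i,k}\to\cU_{B,i}\}_{k\in K_i}$ with each composite $B\to C_{i,k}$ a smooth map of animated rings. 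This uses \ref{geometricity facts}(1) and the definition of smooth $n$-geometric morphisms.

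Take the family to be $\{\Spec B_j\to\Spec B\}$ with $j$ ranging over the pairs $(i,k)$, $B_j := C_{i,k}$, and the map to $\Spec B$ the composite $\Spec C_{i,k}\to\cU_{B,i}\to\Spec B$. Each such map is smooth by construction, so the family is a smooth collection. Condition (2) is immediate: set $i_j := i$, and let $\Spec B_j\to\Spec A_{i}$ be the composite $\Spec C_{i,k}\to\cU_{B,i}\to\Spec A_i$. The square commutes because $\cU_{B,i}$ is the fiber product of $\Spec A_i$ and $\Spec B$ over $\cX$.

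The main point is condition (1), that $\coprod_j\Spec B_j\to\Spec B$ is an effective epimorphism. I would factor it as
\[
\coprod_{i,k}\Spec C_{i,k}\to\coprod_i\cU_{B,i} = \cU_B\to\Spec B .
\]
The second map is a base change of the effective epimorphism $\cU\to\cX$. Effective epimorphisms in an $\infty$-topos are stable under base change by \cite[6.2.3.15]{HTT}, and the \'etale derived stacks form an $\infty$-topos. The first map is a coproduct of effective epimorphisms, each one an atlas of $\cU_{B,i}$. Coproducts of effective epimorphisms are effective epimorphisms, since colimits are universal and effective epimorphisms are exactly the maps that are surjective on $\pi_0$ after sheafification (\cite[7.2.1.14]{HTT}). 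Composites of effective epimorphisms are again effective epimorphisms (\cite[6.2.3.12]{HTT}). Hence the total map is an effective epimorphism.

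One subtlety to check is that the paper's definition of effective epimorphism, via the Čech nerve colimit after sheafification, matches the topos-theoretic notion. It does, since sheafification is left exact and preserves colimits. If one also wants the family to be finite, the ring $B$ is quasi-compact in the \'etale topology, so finitely many of the $\Spec B_j$ should already suffice.
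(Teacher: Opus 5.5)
Your proposal is correct and follows the same route as the paper: pull the atlas back along $\Spec B\to\cX$, choose $(n-1)$-atlases of the $(n-1)$-geometric fiber products $\Spec A_i\times_{\cX}\Spec B$, and compose with the projections. Beyond the paper's argument, you make two points explicit that it leaves implicit: why the composite family is an effective epimorphism (base change, coproducts, and composites of effective epimorphisms in the $\infty$-topos of derived stacks), and a choice of the atlas supplied by the definition of smoothness, so that each $B\to C_{i,k}$ is smooth by definition.
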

\begin{proof}
    For all $i,$ let $U_i = \Spec A_i\times_{\cX}\Spec B.$ Since $\{\Spec A_i\to\cX\}_{i\in I}$ is an $n$-atlas of $\cX,$ each $U_i$ is $(n-1)$-geometric. We can then choose an $(n-1)$-atlas $\{\Spec B_{(i,j)}\to U_i\}_{j\in J_i}.$ It follows that we obtain a commutative diagram
    \[\begin{tikzcd}
        \Spec B_{(i,j)}\ar[r]\ar[d] &\Spec B\ar[d]\\
        \Spec A_i\ar[r] &\cX
    \end{tikzcd}\]
    and that $\{\Spec B_{(i,j)}\to\Spec B\}_{i\in I, j\in J_i}$ is a collection of smooth morphisms satisfying the desired properties.
\end{proof}

\begin{lemma}\label{stability under smooth precomposition}
    Let $\cP$ be a good property of affine derived schemes, and suppose that $f : \cX\to\cY$ is an $n$-geometric morphism of derived stacks which has property $\cP.$ If $g : \cX'\to\cX$ is any smooth $n$-geometric morphism of derived stacks, then $f\circ g$ has property $\cP.$
\end{lemma}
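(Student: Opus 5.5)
We check the definition of property $\cP$ for $f\circ g$ directly: we must exhibit, for every affine $\Spec B\to\cY$, an affine atlas of $\cX'\times_{\cY}\Spec B$ whose members map to $\Spec B$ via morphisms with property $\cP$. First observe that $f\circ g$ is $n$-geometric by Lemma~\ref{geometricity facts}(3). So fix $\Spec B\to\cY$ and set $\cX_B := \cX\times_{\cY}\Spec B$ and $\cX'_B := \cX'\times_{\cY}\Spec B\simeq\cX'\times_{\cX}\cX_B$. Since $f$ has property $\cP$, there is an affine $(n-1)$-atlas $\{\Spec C_i\to\cX_B\}$ with each $\Spec C_i\to\Spec B$ having $\cP$.

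Next, pull $g$ back along each $\Spec C_i\to\cX_B\to\cX$. The base change $V_i := \cX'\times_{\cX}\Spec C_i$ is $n$-geometric by Lemma~\ref{geometricity facts}(1)--(2). Because $g$ is smooth, it has an affine atlas $\{\Spec D_{ik}\to V_i\}$ in which each $C_i\to D_{ik}$ is a smooth map of animated rings. Then $\Spec D_{ik}\to\Spec B$ factors as $\Spec D_{ik}\to\Spec C_i\to\Spec B$: a smooth map followed by a map with $\cP$.

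To conclude that $\Spec D_{ik}\to\Spec B$ has $\cP$, we use locality on the source (condition (1) in the definition of a good property). Take the cover of $\Spec D_{ik}$ consisting of the single smooth morphism $\Spec D_{ik}\to\Spec D_{ik}$; this alone does not help. Instead, we cover $\Spec C_i\sqcup\Spec D_{ik}$ smoothly, or, more cleanly, note that $\Spec D_{ik}\to\Spec C_i$ is smooth and apply the ``if'' direction of (1) to the smooth cover of $\Spec D_{ik}$ by itself together with smooth-locality on the source in the form: $\cP$ is preserved by smooth precomposition on affines. This last fact is the affine case of the lemma. I would derive it from (1) by observing that, Zariski-locally, $\Spec D_{ik}$ is a smooth cover of its image open in $\Spec C_i$, and that $\cP$ restricts to opens by pullback stability. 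Possibly it must simply be taken as part of the meaning of ``smooth-local on the source'', as in Toën--Vezzosi.

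Finally, we check that the composite family $\{\Spec D_{ik}\to V_i\to\cX'_B\}$ is an $(n-1)$-atlas of $\cX'_B$. Each map is smooth and $(n-1)$-geometric, being a composite of atlas maps and base changes of atlas maps. Effective epimorphisms are stable under pullback and composition, so $\coprod V_i\to\cX'_B$ and $\coprod\Spec D_{ik}\to\coprod V_i$ are effective epimorphisms, and hence so is the composite. The main obstacle is the affine step in the previous paragraph; I would handle it via the Zariski-local argument, and if that fails, take it from the source-locality axiom.
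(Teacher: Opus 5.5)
Your reduction is sound and is essentially the paper's. You pull the smooth affine atlas of $g$ back over a $\cP$-atlas $\{\Spec C_i\to\cX_B\}$, and then check that each composite $\Spec D_{ik}\to\Spec C_i\to\Spec B$ has $\cP$. Your check that $\{\Spec D_{ik}\to V_i\to\cX'_B\}$ is an atlas is fine. It even spares you the paper's last step, which passes back to an arbitrary atlas of $\cX'$ via the ``if'' half of condition (1).

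The gap is the affine step, which you flag yourself but do not prove.
\begin{itemize}
\item Applying the ``if'' half of (1) to the cover of $\Spec D_{ik}$ by itself is circular. That half deduces $\cP$ for a map from $\cP$ for its composites with the cover members, and here the only composite is the map in question.
\item The Zariski argument is also circular. Restricting the \emph{source} $\Spec C_i$ to an open $U$ is precomposition with an open immersion, not a base change along the target $\Spec B$. So ``$\cP$ restricts to opens'' does not follow from pullback stability; it is a special case of the statement you are trying to prove.
\end{itemize}

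The missing idea is the ``only if'' half of condition (1), which the paper deliberately adds to To\"en--Vezzosi's definition (see its footnote). You apply it after enlarging your single smooth map to a cover:
\begin{itemize}
\item The family $\{\Spec D_{ik}\to\Spec C_i,\ \id_{\Spec C_i}\}$ is a smooth cover of $\Spec C_i$, because $\Spec D_{ik}\sqcup\Spec C_i\to\Spec C_i$ has a section and is therefore an effective epimorphism.
\item Since $\Spec C_i\to\Spec B$ has $\cP$, the ``only if'' half gives $\cP$ for every composite with a member of this cover, in particular for $\Spec D_{ik}\to\Spec B$.
\end{itemize}
This is exactly how the paper argues: it takes ``a smooth cover $\{\Spec D_\alpha\to\Spec B_i\}$ which includes the morphism''. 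Your remark about covering $\Spec C_i\sqcup\Spec D_{ik}$ was close, but the cover must be \emph{of} $\Spec C_i$, and you need the other direction of (1).

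Alternatively, apply condition (3) to $\id_{\Spec D_{ik}}$, which has $\cP$ because it is an equivalence. This shows the smooth map $\Spec D_{ik}\to\Spec C_i$ itself has $\cP$, and closure under composition finishes. With either one-line fix your argument is complete.
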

\begin{proof}
    We must show that for any $\Spec A\to\cY,$ there exists an $n$-atlas $\{\Spec C_j\to\cX_A'\}$ of $\cX_A' := \cX'\times_{\cY}\Spec A$ such that the compositions $\Spec C_j\to\Spec A$ all have property $\cP.$ As such, we may assume that $\cY\simeq\Spec A$ is affine. Let $\{\Spec B_i\to\cX\}$ be an $n$-atlas such that the compositions $\Spec B_i\to\Spec A$ have property $\cP.$

    Let $\{\Spec C_k\to\cX'\}$ be an $n$-atlas of $\cX',$ and consider the pullback $U_{(i,j)} := \Spec C_j\times_{\cX}\Spec B_i.$ If $\{\Spec C'_{(i,j,k)}\to U_{(i,j)}\}$ is an $n$-atlas of $U_{(i,j)},$ then the compositions $\{\Spec C'_{(i,j,k)}\to\Spec C_j\}$ form a smooth cover as well. If we can then show that $\Spec C'_{(i,j,k)}\to\Spec A$ has property $\cP$ for any tuple $(i,j,k),$ it will follow by locality on the source that $\Spec C_j\to\Spec A$ also has property $\cP.$ However, this follows from the fact that the maps $\Spec C'_{(i,j,k)}\to\Spec B_i$ are all smooth, so that we may find a smooth cover $\{\Spec D_\alpha\to\Spec B_i\}$ which includes the morphism $\Spec C'_{(i,j,k)}\to\Spec B_i,$ and the fact that all $\Spec B_i\to\Spec A$ have property $\cP$ so that all the compositions $\Spec D_\alpha\to\Spec A$ have property $\cP$ as well.
\end{proof}

\begin{cor}\label{P for one atlas implies P for any atlas}
    Let $\cP$ be a good property of affine derived schemes, and suppose that $\cX\to\Spec A$ is a morphism of $n$-geometric derived stacks which has property $\cP.$ Then for any $n$-atlas $\{\Spec B_j\to\cX\},$ the compositions $\Spec B_j\to\Spec A$ have property $\cP.$
\end{cor}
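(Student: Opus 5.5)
The plan is to reduce to the case where we have two atlases and compare them via \ref{refinement through atlas} and \ref{stability under smooth precomposition}. By \ref{local properties}, the hypothesis that $\cX\to\Spec A$ has property $\cP$ gives us \emph{some} $n$-atlas $\{\Spec C_i\to\cX\}_{i\in I}$ such that each composition $\Spec C_i\to\Spec A$ has property $\cP$. (Since the target is already affine, the pullback along $\Spec A\to\Spec A$ is $\cX$ itself.) Now let $\{\Spec B_j\to\cX\}_{j\in J}$ be an arbitrary $n$-atlas. The goal is to show that each $\Spec B_j\to\Spec A$ has property $\cP$.

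The most direct route is to note that each $p_j : \Spec B_j\to\cX$ is, by definition of an $n$-atlas, $(n-1)$-geometric and smooth. It is therefore in particular a smooth $n$-geometric morphism, using that $(n-1)$-geometric morphisms are $n$-geometric, which follows from the inductive definitions and \ref{geometricity facts}. Applying \ref{stability under smooth precomposition} with $g = p_j$ and $f : \cX\to\Spec A$ shows that $f\circ p_j : \Spec B_j\to\Spec A$ has property $\cP$ as a morphism of derived stacks. Both source and target are affine, so by the remark following \ref{local properties} this is the same as having $\cP$ as a morphism of affine derived schemes, which is what we want.

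If one prefers not to rely on the smooth-morphism conventions hidden in \ref{stability under smooth precomposition}, there is an alternative argument that I would write out explicitly:
\begin{enumerate}
    \item Fix $j$ and apply \ref{refinement through atlas} to $\Spec B_j\to\cX$ and the good atlas $\{\Spec C_i\to\cX\}$. This gives a smooth cover $\{\Spec B_{jk}\to\Spec B_j\}$ together with factorizations $\Spec B_{jk}\to\Spec C_{i_k}\to\cX$.
    \item Show that each map $\Spec B_{jk}\to\Spec C_{i_k}$ is smooth. It factors as $\Spec B_{jk}\to \Spec C_{i_k}\times_{\cX}\Spec B_j\to\Spec C_{i_k}$, where the first map is an atlas chart of the fiber product and the second is the base change of the smooth map $p_j$.
    \item Conclude that $\Spec B_{jk}\to\Spec A$ has $\cP$, by stability of $\cP$ under smooth precomposition on affines. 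This comes from locality on the source: include $\Spec B_{jk}\to\Spec C_{i_k}$ in a smooth cover, exactly as in the proof of \ref{stability under smooth precomposition}.
    \item Finally, apply locality on the source (condition (1) of a good property) to the cover $\{\Spec B_{jk}\to\Spec B_j\}$ to deduce that $\Spec B_j\to\Spec A$ has $\cP$.
\end{enumerate}

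The main obstacle I expect is step 2: checking that the composite $\Spec B_{jk}\to\Spec C_{i_k}$ is smooth as a map of affines when it passes through the possibly non-affine, $(n-1)$-geometric fiber product. I would handle this by induction on $n$, using that smoothness of a geometric morphism is detected on an atlas. A second, smaller subtlety is the step in (3) of extending a single smooth map to a smooth cover; adjoining the identity of $\Spec C_{i_k}$ suffices.
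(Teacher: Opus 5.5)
Your argument is correct and matches how the paper intends this corollary to follow. The paper states it without proof immediately after \ref{stability under smooth precomposition}: apply that lemma to the smooth, $(n-1)$-geometric (hence $n$-geometric) atlas maps $\Spec B_j\to\cX$, then use that for a map of affines the stack-theoretic and affine notions of $\cP$ coincide. Your alternative step-by-step argument is also sound, but it is essentially an unwinding of the proof of that lemma rather than a different route.
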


\begin{proposition}[\protect{\cite[Proposition~1.3.6.3]{toen2006homotopical}}]
    Let $\cP$ be a good property of morphisms of affine derived schemes. Then:
    \begin{enumerate}
        \item as a property of morphisms of derived stacks, $\cP$ is stable under equivalences, pullbacks, and compositions.
        \item if $f : \cX\to\cY$ is a morphism of $n$-geometric derived stacks, $f$ has property $\cP$ if and only if there exists an $n$-atlas $\{\Spec A_i\to\cY\}$ such that $\cX\times_{\cY}\Spec A_i\to\Spec A_i$ has property $\cP$ for all $i.$
    \end{enumerate}
\end{proposition}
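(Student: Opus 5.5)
We argue by induction on the geometricity level, using the definition of $\cP$ for morphisms of derived stacks (\ref{local properties}) together with \ref{stability under smooth precomposition} and \ref{P for one atlas implies P for any atlas}, which say that once $\cP$ holds for one atlas it holds for every atlas and survives smooth precomposition. Part (2) is established first, because it underpins the stability statements in (1).

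\textbf{Part (2).} The forward direction is immediate, since $\cP$ is defined via arbitrary affine base changes. For the converse, fix an $n$-atlas $\{\Spec A_i\to\cY\}$ such that each $\cX_{A_i}\to\Spec A_i$ has $\cP$, and take an arbitrary $\Spec B\to\cY$. The goal is an atlas of $\cX_B$ whose compositions to $\Spec B$ have $\cP$.
\begin{enumerate}
\item By \ref{refinement through atlas}, choose a smooth cover $\{\Spec B_j\to\Spec B\}$ such that each $\Spec B_j\to\cY$ factors through some $\Spec A_{i_j}$.
\item Then $\cX_{B_j}\simeq\cX_{A_{i_j}}\times_{\Spec A_{i_j}}\Spec B_j$.
\item Take an atlas $\{\Spec C_\alpha\to\cX_{A_{i_j}}\}$ with each $\Spec C_\alpha\to\Spec A_{i_j}$ having $\cP$.
\item The base changes $\Spec C_\alpha\otimes_{A_{i_j}}B_j$ form an atlas of $\cX_{B_j}$, and each has $\cP$ over $\Spec B_j$ because affine $\cP$ is stable under pullback.
\item Composing with the atlases of $\cX_{B_j}$ gives an atlas $\{\Spec D_\beta\to\cX_B\}$, exactly as in the proof of \ref{geometricity detected by single atlas}; geometricity of $f$ itself comes from that lemma.
\item It remains to see that each $\Spec D_\beta\to\Spec B$ has $\cP$. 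It factors as $\Spec D_\beta\to\Spec B_j\to\Spec B$, where the first map has $\cP$ and the second is smooth. So the question is whether $\cP$ descends along the smooth cover $\{\Spec B_j\to\Spec B\}$ on the target.
\end{enumerate}

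\textbf{The main obstacle (step 6).} Target locality (condition 2 of a good property) applies only to base changes of a single affine map $\Spec D\to\Spec B$, whereas here we only know the composite with the smooth map $\Spec B_j\to\Spec B$ has $\cP$ after restricting to $B_j$. I would handle this with source locality (condition 1) applied to the smooth cover $\Spec D_\beta\times_{\Spec B}\Spec B_k\to\Spec D_\beta$:
\begin{itemize}
\item Each piece maps to $\Spec B_k$ with $\cP$, by base change along the affine pullback.
\item It then maps to $\Spec B$ by smooth postcomposition (condition 3).
\item Hence $\Spec D_\beta\to\Spec B$ has $\cP$.
\end{itemize}
Finally, \ref{P for one atlas implies P for any atlas} shows that the condition does not depend on which atlas of $\cX_B$ is chosen.

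\textbf{Part (1).}
\begin{itemize}
\item \emph{Equivalences.} An equivalence $\cX\to\cY$ has the trivial atlas over each affine.
\item \emph{Pullbacks.} These are formal, because the definition only references base changes to affines, and $(\cX\times_{\cY}\cY')\times_{\cY'}\Spec A\simeq\cX\times_{\cY}\Spec A$.
\item \emph{Compositions.} Given $f:\cX\to\cY$ and $g:\cY\to\cZ$ with $\cP$, reduce to $\cZ=\Spec A$. Take an atlas $\{\Spec B_i\to\cY\}$ with $\cP$ over $\Spec A$.
\begin{enumerate}
\item The base change $\cX_{B_i}\to\Spec B_i$ has $\cP$, so choose an atlas $\{\Spec C_{ik}\}$ of $\cX_{B_i}$ with each $\Spec C_{ik}\to\Spec B_i$ having $\cP$.
\item The composites $\Spec C_{ik}\to\Spec A$ have $\cP$ by composition of affine $\cP$-maps.
\item The $\Spec C_{ik}$ form an atlas of $\cX$, since smooth effective epimorphisms compose and are stable under pullback.
\end{enumerate}
Geometricity of the composite is \ref{geometricity facts}(3).
\end{itemize}
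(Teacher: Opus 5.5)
Your treatment of (1) matches the paper's argument. Your proof of (2) is correct but follows a different route. The step you call the ``main obstacle'' in step 6 is not one. You already have $\Spec D_\beta\to\Spec B_j$ with $\cP$ and $\Spec B_j\to\Spec B$ smooth, which is exactly the hypothesis of condition 3 (stability under smooth postcomposition). So $\Spec D_\beta\to\Spec B$ has $\cP$ immediately.

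Drop the source-locality detour. Its first bullet is unjustified as written: $\Spec D_\beta\times_{\Spec B}\Spec B_k\to\Spec B_k$ is the base change of $\Spec D_\beta\to\Spec B$, which is the very map in question. An honest version would have to pass through $\Spec B_j\times_{\Spec B}\Spec B_k$ and use condition 3 anyway. A smaller point: $f$ in (2) is $n$-geometric simply by \ref{geometricity facts}(4), not by the single-atlas lemma.

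The paper argues (2) top-down rather than bottom-up:
\begin{enumerate}
\item It fixes an arbitrary atlas $\{\Spec C_k\to\cX_B\}$.
\item It pulls this atlas back along the smooth cover $\{\Spec D_{(i,\alpha)}\to\Spec B\}$, obtained by refining through the given atlas of $\cY$. The pulled-back maps form atlases of $\cX\times_{\cY}\Spec D_{(i,\alpha)}$, which has $\cP$ over $\Spec D_{(i,\alpha)}$ by base change.
\item Corollary~\ref{P for one atlas implies P for any atlas} then gives $\cP$ for $\Spec C_k\otimes_B D_{(i,\alpha)}\to\Spec D_{(i,\alpha)}$.
\item Target locality for the single affine map $\Spec C_k\to\Spec B$ concludes.
\end{enumerate}
So target locality does apply, once you start from an atlas of $\cX_B$ rather than assembling one from the $\cX_{B_j}$.

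The two routes trade different axioms. Yours needs only base change and condition 3 (the axiom the paper adds following the Stacks Project), and it produces the single atlas the definition asks for. The paper's avoids condition 3 and instead uses target locality plus source locality (via the Corollary). In return it shows at once that every atlas of $\cX_B$ works.
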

\begin{proof}
    We provide a proof for the convenience of the reader, as this is claimed in \cite{toen2006homotopical}, but not proven, and we did not manage to find a proof of the second point which does not use our slightly strengthened version of what smooth local on the source-and-target means.
    \begin{enumerate}
        \item First, let $f : \cX\to\cY$ be an equivalence of derived stacks, and let $\Spec A\to\cY$ be arbitrary. The pullback of an equivalence is an equivalence, so we obtain an equivalence $\cX\times_{\cY}\Spec A\simeq \Spec A'\to\Spec A.$ Since $\cP$ is stable under equivalences for affine derived schemes, $\Spec A'\to\Spec A$ has property $\cP,$ and thus, so does $f.$
        
        Next, suppose $f : \cX\to\cY$ is a morphism of derived stacks which has property $\cP,$ and suppose that
        \[\begin{tikzcd}
            \cX'\ar[d]\ar[r,"f'"] &\cY'\ar[d]\\
            \cX\ar[r,"f"] &\cY
        \end{tikzcd}\]
        is a Cartesian diagram of derived stacks. We wish to show that $f'$ has property $\cP.$ This is immediate: given $\Spec A\to\cY',$ $\cX'\times_{\cY'}\Spec A\simeq \cX\times_{\cY}\Spec A,$ so the result follows. Thus a good property is stable under pullbacks.

        Finally, suppose we are given $f : \cX\to\cY$ and $g : \cY\to\cZ$ morphisms of derived stacks such that both $f$ and $g$ have property $\cP.$ We wish to show that $g\circ f$ has property $\cP.$ Let $\Spec A\to\cZ$ be any morphism, and set $\cY_A := \cY\times_{\cZ}\Spec A$ and $\cX_A := \cX\times_{\cZ}\Spec A.$ Since $g$ has property $\cP,$ we may find an atlas $\{\Spec B_i\to\cY_A\}$ such that the compositions $\Spec B_i\to\Spec A$ all have property $\cP.$ If we then set $\cX_{B_i} := \cX_A\times_{\cY_A}\Spec B_i\simeq\cX\times_{\cY}\Spec B_i,$ we may then find an atlas $\{\Spec C_{(i,j)}\to\cX_{B_i}\}$ of $\cX_{B_i}$ such that the compositions $\Spec C_{(i,j)}\to\Spec B_i$ all have property $\cP,$ as $f$ has property $\cP.$ The desired result then follows from the assumption that $\cP$ is stable under compositions of morphisms of affine derived schemes.
        \item Let $f : \cX\to\cY$ be a morphism of $n$-geometric derived stacks (hence $f$ is $n$-geometric as well). The ``only if'' direction is trivial, so suppose conversely that there exists an $n$-atlas $\{\Spec A_i\to\cY\}$ such that $\cX_i := \cX\times_{\cY}\Spec A_i\to\Spec A_i$ has property $\cP$ for all $i.$ Note that it is enough to suppose the existence of an atlas $\{U_{(i,i')}\to\cX_i\}$ such that the compositions $U_{(i,i')}\to\Spec A_i$ all have property $\cP,$ since given any $\Spec B\to\Spec A_i,$ we can simply base change $U_{(i,i')}\times_{\Spec A_i}\Spec B$ to obtain an atlas for $\cX_i\times_{\Spec A_i}\Spec B$ with the desired property.

        We must show that for any $\Spec B\to\cY,$ there exists an $n$-atlas $\{\Spec C_k\to\cX\times_{\cY}\Spec B\}$ such that the compositions $\Spec C_k\to\Spec B$ all have property $\cP.$ Let $U_i' := \Spec A_i\times_{\cY}\Spec B,$ and let $\cX_i' := \cX_i\times_{\Spec A_i} U_i \simeq \cX_i\times_{\cY} \Spec B.$ Then because $\cP$ is stable under pullbacks, $\cX_i'\to U_i'$ has property $\cP,$ as it is the base change of $\cX_i\to\Spec A_i.$ Choose an $n$-atlas $\{\Spec D_{(i,\alpha)}\to U_i\}$ of $U_i,$ and let $\cX'_{(i,\alpha)} := \cX_i'\times_{U_i}\Spec D_{(i,\alpha)}.$ Then once again the maps $\cX'_{(i,\alpha)}\to \Spec D_{(i,\alpha)}$ have property $\cP$ by base change.
        
        Finally, if $\{\Spec C_k\to\cX_B\}$ is any $n$-atlas, consider the collection of morphisms $\{\Spec C_k\otimes_B D_{(i,\alpha)}\to\cX'_{(i,\alpha)}\}.$ This collection forms an $n$-atlas of $\cX'_{(i,\alpha)}$ since the $\Spec C_k$ form an $n$-atlas of $\cX_B,$ \ref{P for one atlas implies P for any atlas} implies that the maps $\Spec C_k\otimes_B D_{(i,\alpha)}\to\Spec D_{(i,\alpha)}$ all have property $\cP.$ A diagram of all the derived stacks would now look like the following:
        \[
        \begin{tikzcd}[column sep=1ex,row sep=1ex]
        {\Spec C_k\otimes_B D_{(i,\alpha)}} \arrow[rrdd, two heads] \arrow[rr, two heads] &  & {\cX'_{(i,\alpha)}} \arrow[rd, two heads] \arrow[rr] &                                                    & {\Spec D_{(i,\alpha)}} \arrow[rd, two heads] &                                       &                    \\
                                                                       &  &                                                      & \cX_i' \arrow[dd] \arrow[rr] \arrow[rd, two heads] &                                              & U_i' \arrow[dd] \arrow[rd, two heads] &                    \\
                                                                       &  & \Spec C_k \arrow[rr, two heads]                      &                                                    & \cX_B \arrow[dd] \arrow[rr]                  &                                       & \Spec B \arrow[dd] \\
                                                                       &  &                                                      & \cX_i \arrow[rd, two heads] \arrow[rr]             &                                              & \Spec A_i \arrow[rd, two heads]       &                    \\
                                                                       &  &                                                      &                                                    & \cX \arrow[rr]                               &                                       & \cY               
        \end{tikzcd}\]
        where the double headed arrows indicate that the map in question participates in an $n$-atlas or smooth cover.
        
        Then the fact that $\cP$ is local on the target for morphisms of affine derived schemes implies that the maps $\Spec C_k\to\Spec B$ all have property $\cP,$ so the desired result follows.
    \end{enumerate}
\end{proof}

It is also useful to be able to detect properties of a map of geometric stacks via a sufficiently nice map between ``covers'' of those stacks.

\begin{mydef}
     Let $f : \cX\to\cY$ be a morphism of $n$-geometric derived stacks, with $n\geq 0.$ A \emph{chart for $f$} is a commutative diagram
     \[\begin{tikzcd}
         X\ar[rr,bend left, "h"]\ar[r,"g"]\ar[rd] &\cX'\ar[r,"f'"]\ar[d] &Y\ar[d,"p"]\\
         &\cX\ar[r,"f"] &\cY
     \end{tikzcd}\]
     such that $X$ and $Y$ are disjoint unions of $(n-1)$-geometric derived stacks\footnote{The ``disjoint union'' part of this condition is an artifact of our definition of $(-1)$-geometric derived stacks, and this is one instance where the conventions of \cite{AG2014} are slightly more convenient. If $n\neq 0,$ a disjoint union of $(n-1)$-geometric derived stacks is again $(n-1)$-geometric. However, if $n = 0,$ a disjoint union of $(-1)$-geometric stacks need not be $(-1)$-geometric, as an arbitrary disjoint union of affines need not be affine.}, the square is Cartesian, and $g$ and $p$ are smooth effective epimorphisms.
\end{mydef}

The following lemma is a standard sort of argument, but we provide the details for the convenience of the reader and to reassure ourselves that properties of morphisms of geometric derived stacks behave as one would expect from experience of classical algebraic geometry.

\begin{lemma}
    Suppose that $\cP$ is a good property of affine derived schemes, and let $f : \cX\to\cY$ be a morphism of $n$-geometric derived stacks, with $n\geq 0.$ Consider diagrams of the form
    \begin{equation}\label{square-chart}\begin{tikzcd}
        U\ar[r,"h"]\ar[d] &V\ar[d]\\
        \cX\ar[r,"f"] &\cY
    \end{tikzcd}\end{equation}
    where $U$ and $V$ are disjoint unions of $(n-1)$-geometric stacks and the vertical morphisms are smooth. Then the following are equivalent:
    \begin{enumerate}
        \item[1.] For all maps $\Spec A\to\cY,$ and for any $n$-atlas $\{\Spec B_i\to \cX\times_{\cY}\Spec A\},$ the compositions $\Spec B_i\to\Spec A$ have property $\cP$ for all $i.$
        \item[1'.] For all maps $\Spec A\to\cY,$ there exists an $n$-atlas $\{\Spec B_i\to \cX\times_{\cY}\Spec A\}$ such that the compositions $\Spec B_i\to\Spec A$ have property $\cP$ for all $i$ (i.e., $f$ has property $\cP$).
        \item[2.] For all $n$-atlases $\{\Spec A_i\to\cY\}$ of $\cY$ and for all families of $n$-atlases $\{\Spec B_{(i,j)}\to\cX\times_{\cY}\Spec A_i\}_{j\in J_i}$ of the $\cX\times_{\cY}\Spec A_i,$ the compositions $\Spec B_{(i,j)}\to\Spec A_i$ have property $\cP$ for all $i, j\in J_i.$
        \item[2'.] For all $n$-atlases $\{\Spec A_i\to\cY\}$ of $\cY$ there exists a family of $n$-atlases $\{\Spec B_{(i,j)}\to\cX\times_{\cY}\Spec A_i\}_{j\in J_i}$ of $\cX\times_{\cY}\Spec A_i,$ such that the compositions $\Spec B_{(i,j)}\to\Spec A_i$ have property $\cP$ for all $i, j\in J_i.$
        \item[2''.] There exists an $n$-atlas $\{\Spec A_i\to\cY\}$ of $\cY$ and for each $i$ an $n$-atlas $\{\Spec B_{(i,j)}\to\cX\times_{\cY}\Spec A_i\}_{j\in J_i}$ of $\cX\times_{\cY}\Spec A_i,$ such that all the compositions $\Spec B_{(i,j)}\to\Spec A_i$ have property $\cP.$
        \item[3.] For all diagrams as in \ref{square-chart} such that in addition $U\to\cX\times_{\cY} V$ is smooth, the morphism $h$ has property $\cP$ (in the sense of 1').
        \item[3'.] For some diagram as in \ref{square-chart} with $U\to\cX$ an effective epimorphism, the morphism $h$ has property $\cP$ (in the sense of 1').
        \item[4.] For all diagrams as in \ref{square-chart} such that in addition $U\to\cX\times_{\cY} V$ is smooth and $U$ and $V$ are disjoint unions of affine derived schemes, the morphism $h$ has property $\cP$ (in the sense of 1').
        \item[4'.] For some diagram as in \ref{square-chart} with $U$ and $V$ disjoint unions of affine derived schemes and $U\to\cX$ an effective epimorphism, the morphism $h$ has property $\cP$ (in the sense of 1').
    \end{enumerate}
\end{lemma}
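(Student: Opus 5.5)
The plan is to organize the nine conditions into a cycle of implications, grouping the trivial ones first. The implications $1\Rightarrow 1'$, $2\Rightarrow 2'\Rightarrow 2''$, $3\Rightarrow 4$ and $3'\Rightarrow 4'$ hold by specialization. Here $2'\Rightarrow 2''$ uses that $\cY$ has at least one $n$-atlas. The remaining routine links are $4\Rightarrow 4'$ (and $3\Rightarrow 3'$), obtained by exhibiting one admissible square. For this, pick an $n$-atlas $\{\Spec A_i\to\cY\}$ and set $V=\coprod\Spec A_i$. For each $i$ pick an $n$-atlas of $\cX\times_{\cY}\Spec A_i$ and let $U$ be the disjoint union of its members. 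Then $U\to\cX\times_{\cY}V$ is smooth, and $U\to\cX$ is a composite of effective epimorphisms, being a base change of one followed by one. So the substantive implications are $1'\Rightarrow 1$, $1\Rightarrow 2$, $2''\Rightarrow 1'$, $1\Rightarrow 3$, and $4'\Rightarrow 1'$ (or $3'\Rightarrow 1'$).

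For $1'\Rightarrow 1$, apply \ref{P for one atlas implies P for any atlas} to the base change $\cX\times_{\cY}\Spec A\to\Spec A$. This map has property $\cP$ by stability under pullback, so every $n$-atlas of the source composes to a $\cP$-morphism. Then $1\Rightarrow 2$ is immediate, since condition 2 only quantifies over particular maps $\Spec A_i\to\cY$. For $2''\Rightarrow 1'$, each $\cX\times_{\cY}\Spec A_i\to\Spec A_i$ has property $\cP$ by the definition of that property. Part 2 of the preceding Proposition (T\"oen--Vezzosi 1.3.6.3) then gives that $f$ has $\cP$.

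For $1\Rightarrow 3$, factor $h$ as $U\to\cX\times_{\cY}V\to V$. The second map is a base change of $f$, so it has $\cP$ by part 1 of the Proposition. The first map is smooth by hypothesis, and it is $n$-geometric by \ref{geometricity facts} since all stacks involved are $n$-geometric. By \ref{stability under smooth precomposition} the composite $h$ then has $\cP$.

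The main obstacle is the descent direction $4'\Rightarrow 1'$, and likewise $3'\Rightarrow 1'$. Here we know only that $h$ has $\cP$ and that $U\to\cX$ and $V\to\cY$ are smooth, with $U\to\cX$ an effective epimorphism. I would not assume $V\to\cY$ is surjective; instead I would replace $V$ by $\cY$ via locality on the source. Fix $\Spec A\to\cY$ and an $n$-atlas $\{\Spec B_k\to\cX_A\}$. It suffices to show each $\Spec B_k\to\Spec A$ has $\cP$. Using \ref{refinement through atlas}, with $U$ refined to an atlas of affines in case 3', pass to a smooth cover $\{\Spec B'_{k,\beta}\to\Spec B_k\}$ such that each $\Spec B'_{k,\beta}\to\cX$ factors through some component $\Spec C\subseteq U$. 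Then $\Spec B'_{k,\beta}$ maps to $\Spec C\times_{\cY}\Spec A$. That map factors through the pullback $\Spec C\times_{\Spec D}(\Spec D\times_{\cY}\Spec A)$, where $\Spec D$ is the component of $V$ receiving $\Spec C$. The projection $\Spec D\times_{\cY}\Spec A\to\Spec A$ is smooth, and $\Spec C\to\Spec D$ has $\cP$, so the base-changed map to $\Spec D\times_{\cY}\Spec A$ has $\cP$. Composing with the smooth projection keeps $\cP$ by the postcomposition axiom. What remains is to show that the map $\Spec B'_{k,\beta}\to\Spec C\times_{\cY}\Spec A$ is smooth. It is, since both $\Spec B'_{k,\beta}$ and $\Spec C\times_{\cY}\Spec A$ are smooth over $\cX_A$, and using precomposition by smooth maps we get that $\Spec B'_{k,\beta}\to\Spec A$ has $\cP$. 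Locality on the source (axiom 1 of a good property) then gives the result for $\Spec B_k\to\Spec A$. The bookkeeping of these nested fiber products, and verifying the smoothness claims through geometricity levels, is where I expect the real work to be.
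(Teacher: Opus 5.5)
Your decomposition is sound, and it differs from the paper's. The paper closes the cycle $(1)\Rightarrow(3')\Rightarrow(3)\Rightarrow(2)\Rightarrow(2')\Rightarrow(2'')\Rightarrow(1)$, and its main work is $(3')\Rightarrow(3)$, done in the style of Stacks Project Tag 06FM:
\begin{enumerate}
\item it enlarges $V$ so that $V\to\cY$ is an effective epimorphism;
\item it shows $U\times_{\cX}U'\to V\times_{\cY}V'$ has $\cP$ by factoring through $U\times_{\cY}V'$;
\item it then descends along a cube over an arbitrary $\Spec A\to V'$.
\end{enumerate}
The paper also redoes $(2'')\Rightarrow(1)$ by a diagram chase rather than quoting the To\"en--Vezzosi proposition.

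You instead get $(1)\Rightarrow(3)$ almost for free by factoring $h$ through $\cX\times_{\cY}V$: the second map is a base change of $f$, and you finish with \ref{stability under smooth precomposition}. You reuse the proposition and \ref{P for one atlas implies P for any atlas} for the $(1)$--$(2'')$ block. All the descent is then concentrated in one source-local argument $(3')\Rightarrow(1')$, which needs neither $V\to\cY$ surjective nor $U\to\cX\times_{\cY}V$ smooth. This is shorter and avoids both the enlargement of $V$ and the cube. The paper's route instead gives a direct comparison of two charts, as in the classical reference.

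One step, though, rests on a false principle. A map between two stacks that are both smooth over $\cX_A$ need not be smooth: over $\Spec\ZZ$, the closed immersion $\Spec\ZZ\to\Spec\ZZ[t]$ given by $t\mapsto 0$ is a counterexample. Also, the statement of \ref{refinement through atlas} records no smoothness of $\Spec B'_{k,\beta}\to\Spec C\times_{\cX}\Spec B_k$, so you cannot extract it from the lemma as stated.

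Build the refinement by hand instead. For each affine piece $\Spec C$ of (an affine refinement of) $U$, let $\{\Spec B'\to\Spec C\times_{\cX}\Spec B_k\}$ be an atlas of that fiber product.
\begin{itemize}
\item These $\Spec B'$ are smooth over $\Spec B_k$ and jointly cover it, because $\coprod\Spec C\to\cX$ is a smooth effective epimorphism.
\item The map $\Spec B'\to\Spec C\times_{\cX}\cX_A\simeq\Spec C\times_{\cY}\Spec A$ is an atlas map followed by the base change of the smooth map $\Spec B_k\to\cX_A$, hence smooth.
\end{itemize}
With this, the rest of your argument goes through.

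Two smaller fixes are needed.
\begin{itemize}
\item $\Spec D\times_{\cY}\Spec A$ is generally not affine, so the postcomposition axiom must first be promoted to stacks: a $\cP$-morphism followed by a smooth geometric morphism has $\cP$. Prove this by choosing an atlas of the target whose members are smooth over $\Spec A$. Once you have it, $V$ need not be affine, and case $(3')$ requires no refinement of $V$.
\item $(3')\Rightarrow(4')$ is not a specialization. The trivial direction is $(4')\Rightarrow(3')$, and together with your $(3')\Rightarrow(1')$ it closes the cycle.
\end{itemize}
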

\begin{proof}
    Trivially, the following implications hold:
    \[\begin{tikzcd}
        (1.) \arrow[d, Rightarrow] \arrow[r, Rightarrow] & (2.) \arrow[d, Rightarrow]  & (4.) \arrow[d, Rightarrow] & (3.) \arrow[d, Rightarrow] \arrow[l, Rightarrow] \\
        (1'.) \arrow[r, Rightarrow]                      & (2'.) \arrow[d, Rightarrow] & (4'.) \arrow[r,Rightarrow] & (3'.)\\
                                                 & (2''.)                      &                                                  &                                                 
    \end{tikzcd}\]
    \begin{enumerate}
        \item ($(3'.)\implies (3.)$) This is analogous to (part of) \cite[\href{https://stacks.math.columbia.edu/tag/06FM}{Tag 06FM}]{stacks-project}. Our proof is mostly identical with only a few adjustments, reproduced here for the convenience of the reader.

        Without loss of generality, we may assume that $V\to\cY$ is an effective epimorphism. Indeed, we can take some $(n-1)$-atlas $\{\Spec A_i\to V\}_{i\in I}$ of $V$ and consider the collection of maps $\{\Spec A_i\to V\to\cY\}_{i\in I}.$ We may then extend the collection of smooth $(n-1)$-geometric maps $\{\Spec A_i\to\cY\}_{i\in I}$ to an $n$-atlas $\{\Spec \tilde{A}_i\to\cY\}_{i\in \tilde{I}}$ of $\cY$ such that $I\subseteq\tilde{I}$ and if $i\in I\subseteq\tilde{I},$ then $\tilde{A}_i = A_i.$ Letting $\tilde{V} = V\coprod \left(\coprod_{i\in \tilde{I}\setminus I}\Spec\tilde{A}_i\right)$ we obtain a smooth morphism $V\to\tilde{V},$ and since $\cP$ is a good property and thus closed under smooth postcomposition, the composition $U\to\tilde{V}$ also satisfies $\cP.$ So going forward, assume that $V\to\cY$ is an effective epimorphism.
        
        Suppose that
        \[\begin{tikzcd}
            U\ar[r,"h"]\ar[d] &V\ar[d]\\
            \cX\ar[r,"f"] &\cY
        \end{tikzcd}\]
        is a diagram as in $(4'.),$ and let
        \[\begin{tikzcd}
            U'\ar[r,"h'"]\ar[d] &V'\ar[d]\\
            \cX\ar[r,"f"] &\cY
        \end{tikzcd}\]
        be a diagram as in $(3.).$ We must show that $h'$ has property $\cP.$ Consider the commutative diagram
        \[\begin{tikzcd}
            U\ar[dd,"h"] &\ar[l] U\times_{\cX} U'\ar[d]\ar[dd, bend left=49, "{(h,h')}"]\ar[r] & U'\ar[dd,"h'"]\\
            &\ar[lu] U\times_{\cY} V'\ar[d]\\
            V &\ar[l] V\times_{\cY} V'\ar[r] &V'
        \end{tikzcd}\]
        and note that all geometric stacks appearing in the diagram are $(n-1)$-geometric by \ref{geometricity facts}. The horizontal arrows are smooth by base change. The diagram 
        \[\begin{tikzcd}
            U\times_{\cX} U'\ar[r]\ar[d] &U'\ar[d]\\
            U\times_{\cY} V'\ar[r] &\cX\times_{\cY} V'
        \end{tikzcd}\]
        is Cartesian, so the left vertical arrow is smooth because $U'\to\cX\times_{\cY} V'$ is by assumption. Because $\cP$ is preserved under base change, it follows that $U\times_{\cY} V'\simeq U\times_{V} (V\times_{\cY} V')\to V\times_{\cY} V'$ has property $\cP,$ and thus $(h,h')$ has $\cP$ since $\cP$ is stable under smooth precomposition by \ref{stability under smooth precomposition}.

        Finally, consider the commutative cube
        \[\begin{tikzcd}
            Z\ar[dd]\ar[rd]\ar[rr] &&W\ar[dd]\ar[rd]\\
            &U\times_{\cX} U'\ar[dd]\ar[rr] && V\times_{\cY} V'\ar[dd]\\
            U_A\ar[rr]\ar[rd] &&\Spec A\ar[rd]\\
            &U'\ar[rr] &&V'
        \end{tikzcd}\]
        in which all faces are pullbacks. All vertical arrows are smooth effective epimorphisms, and since $U\times_{\cX} U'\to V\times_{\cY}V'$ has property $\cP,$ so does $Z\to W.$ 

        Choose $(n-1)$-atlases $\{\Spec B_i\to U_A\}$ and $\{\Spec C_j\to W\}.$ Considering the diagram
        \[\begin{tikzcd}
            \Spec B_i\otimes_A C_j\ar[r]\ar[d] &Z\times_W\Spec C_j\ar[r]\ar[d] &\Spec C_j\ar[d]\\
            \Spec B_i\times_{U_A} Z\ar[r]\ar[d] & Z\ar[r]\ar[d] &W\ar[d]\\
            \Spec B_i\ar[r] &U_A\ar[r] &\Spec A
        \end{tikzcd}\]
        such that all squares are pullbacks, we find that $\Spec B_i\otimes_A C_j\to\Spec C_j$ has property $\cP$ because $Z\times_W \Spec C_j\to\Spec C_j$ does (by base change) and because of \ref{stability under smooth precomposition}. Then because $W\to\Spec A$ is an effective epimorphism and $\{\Spec C_j\to W\}$ is an $(n-1)$-atlas, the collection of compositions $\{\Spec C_j\to W\to\Spec A\}$ form a smooth cover, and similarly for $\{\Spec B_i\otimes_A C_j\to\Spec B_i\times_{U_A}Z\to\Spec B_i\}$ for any $i.$ Thus, because $\cP$ is smooth local on the target for maps of derived affine schemes, it follows that $\Spec B_i\to\Spec A$ has property $\cP$ for any $i,$ which proves that $h'$ has property $\cP,$ as desired.
        
        \item ($(3.)\implies (2.)$) Let $\{\Spec A_i\to\cY\}$ be any $n$-atlas of $\cY,$ and for any $i,$ let $\{\Spec B_{(i,j)}\to \cX_i\}_{j\in J_i}$ be any $n$-atlas of $\cX_i := \cX\times_{\cY}\Spec A_i.$ Set $V = \coprod_i\Spec A_i$ with induced morphism $V\to\cY,$ $U_i = \coprod_{j\in J_i}\Spec B_{(i,j)}$ with induced morphism $U_i\to \cX_i,$ and $U = \coprod_i U_i$ with induced morphism $U\to \cX\times_{\cY} V\simeq\coprod_i\cX_i.$

        By $(3.),$ we then have that $U\to V$ has property $\cP,$ and thus so does any pullback $U_i\to\Spec A_i.$ The result then follows from \ref{P for one atlas implies P for any atlas}.
        \item ($(2''.)\implies (1.)$) Suppose that we are given an $n$-atlas $\{\Spec A_i\to\cY\}$ of $\cY$ and for each $i$ an $n$-atlas $\{\Spec B_{(i,j)}\to\cX_i\}$ of $\cX_i := \cX\times_{\cY}\Spec A_i$ such that all compositions $\Spec B_{(i,j)}\to\Spec A_i$ have property $\cP.$ Additionally, let $\Spec C\to\cY$ be arbitrary, and let $\{\Spec D_k\to\cX_C\}$ be an arbitrary $n$-atlas of $\cX_C := \cX\times_{\cY}\Spec C.$ Consider the commutative diagram
        
        \[\begin{tikzcd}
            \cZ''_{(i,j,k)}\ar[r]\ar[dddd] &\cZ'_{(i,j)}\ar[dd]\ar[rd]\\
            &&\Spec B_{(i,j)}\ar[dd]\\
            &\cZ_i\ar[dd]\ar[rd]\ar[rr] &&\cW_i\ar[dd]\ar[rd]\\
            &&\cX_i\ar[dd]\ar[rr]&&\Spec A_i\ar[dd]\\
            \Spec D_k\ar[r] &\cX_C\ar[rd]\ar[rr] &&\Spec C\ar[rd]\\
            &&\cX\ar[rr] &&\cY
        \end{tikzcd}\]
        where all squares are Cartesian. Then it follows that for any tuple $(i,j),$ the map $\cZ'_{(i,j)}\to\cW_i$ has property $\cP,$ as $\Spec B_{(i,j)}\to\Spec A_i$ was assumed to have $\cP.$ Because $\Spec D_k\to\cX_C$ is smooth, so is $\cZ''_{(i,j,k)}\to\cZ'_{(i,j)},$  and thus $\cZ''_{(i,j,k)}\to\cW_i$ has property $\cP$ as $\cP$ is stable under smooth precomposition by \ref{stability under smooth precomposition}. Arguing as we did at the end of the proof of $(3'.)\implies (3.),$ it follows that $\Spec D_k\to\Spec C$ has property $\cP$ for any $k.$
        \item ($(1.)\implies (3'.)$) Finally, assume $(1.)$ holds, and choose an $n$-atlas $\{\Spec A_i\to\cY\}$ of $\cY$ and for each $i$ an $n$-atlas $\{\Spec B_{(i,j)}\to\cX_i\}_{j\in J_i}$ of $\cX_i := \cX\times_{\cY}\Spec A_i$ such that the composite morphisms $\Spec B_{(i,j)}\to\Spec A_i$ all have property $\cP.$ If we set $U = \coprod_i \coprod_{j\in J_i}\Spec B_{(i,j)}$ and $V = \coprod_i\Spec A_i$ with $h : U\to V$ the induced map. Then $U$ and $V$ are $0$-geometric, and the map $h$ clearly has property $\cP,$ as we can verify $(2''.)$ using the natural $0$-atlas $\{\Spec A_i\to V\}$ and the maps $\Spec B_{(i,j)}\to\Spec A_i$ which have $\cP.$
    \end{enumerate}
\end{proof}

Properties of morphisms of affine derived schemes which satisfy these conditions include ``locally of finite presentation,''  ``flat,'' and ``smooth.'' However, ``\'etale'' does not satisfy the conditions, since it is not smooth-local on the base. We now recall important properties of morphisms of derived stacks which are not defined directly from good properties of affine derived schemes.

\begin{mydef}
    A morphism of derived stacks is \emph{\'etale} if it is DM and \'etale.
\end{mydef}

\begin{mydef}
    A morphism of derived stacks $f : \cX\to\cY$ is
    \begin{enumerate}
        \item an \emph{open immersion} if it is flat (as in \ref{local properties}), locally of finite presentation, and a monomorphism in the sense of \cite{HA} (this is equivalent to $\cX\to\cX\times_{\cY}\cX$ being an equivalence).
        \item a \emph{closed immersion} if it is affine and for any $\Spec B\to\cY,$ the corresponding morphism $\cX\times_{\cY}\Spec B\simeq\Spec C\to\Spec B$ induces a surjection $\pi_0 B\to\pi_0 C$ of discrete rings.
    \end{enumerate}
\end{mydef}

\begin{remark}
    \begin{enumerate}
        \item Open immersions are local on the target in the fpqc topology. One checks that if $A\to B$ is a map of animated rings such that $\Spec B\to\Spec A$ is an open immersion, and $A\to A'$ is faithfully flat, then the pushout $A'\to A'\otimes_A B$ is also flat, locally of finite presentation, and an epimorphism in $\aRing$ (a monomorphism $\Spec A'\otimes_A B\to\Spec A'.$
        \item Closed immersions are local on the target in the fpqc topology. Indeed, if $A\to B$ is a map of animated rings and $A\to A'$ is faithfully flat, then $\pi_0 A\to\pi_0 B$ is surjective if and only if $\pi_0 A'\to\pi_0(B\otimes_A A')$ is so, since by flatness $\pi_0(B\otimes_A A')\cong\pi_0 B\otimes_
        {\pi_0 A}\pi_0 A'.$ Now, the result follows from the classical result.
    \end{enumerate}
\end{remark}

\begin{proposition}
    Suppose $\cX$ and $\cY$ are geometric stacks, and suppose $f : \cX\to\cY$ is affine. Then $f$ is a closed immersion if and only if $\cX\times_{\cY}\Spec A\to\Spec A$ is a closed immersion of affine derived schemes for all $\Spec A\to\cY$ in an atlas of $\cY.$
\end{proposition}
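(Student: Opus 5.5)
The ``only if'' direction is immediate from the definition: if $f$ is a closed immersion, then for every $\Spec A\to\cY$ (in particular each member of an atlas) the base change $\cX\times_{\cY}\Spec A\simeq\Spec C$ is affine and $\pi_0 A\to\pi_0 C$ is surjective. So the content is the converse. Fix an $n$-atlas $\{\Spec A_i\to\cY\}$ such that each $\cX_i := \cX\times_{\cY}\Spec A_i\simeq\Spec C_i\to\Spec A_i$ is a closed immersion, i.e.\ $\pi_0 A_i\to\pi_0 C_i$ is surjective. Let $\Spec B\to\cY$ be arbitrary and write $\cX_B\simeq\Spec C$, which is affine because $f$ is affine by hypothesis. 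We must show $\pi_0 B\to\pi_0 C$ is surjective.

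First I would apply \ref{refinement through atlas} to $\Spec B\to\cY$ and the chosen atlas. This gives a smooth family $\{\Spec B_j\to\Spec B\}$ with $\coprod_j\Spec B_j\to\Spec B$ an effective epimorphism, and factorizations $\Spec B_j\to\Spec A_{i_j}\to\cY$. Using pasting of pullbacks, $\cX\times_{\cY}\Spec B_j\simeq\Spec C_{i_j}\times_{\Spec A_{i_j}}\Spec B_j\simeq\Spec(C_{i_j}\otimes_{A_{i_j}}B_j)$. On $\pi_0$ we have $\pi_0(C_{i_j}\otimes_{A_{i_j}}B_j)\cong\pi_0C_{i_j}\otimes_{\pi_0A_{i_j}}\pi_0 B_j$, so surjectivity of $\pi_0A_{i_j}\to\pi_0C_{i_j}$ gives surjectivity of $\pi_0B_j\to\pi_0(C\otimes_B B_j)$. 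Here I also use $\cX\times_{\cY}\Spec B_j\simeq\Spec C\times_{\Spec B}\Spec B_j$.

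Next I would descend surjectivity along the smooth cover. By quasi-compactness of $\Spec B$, I would refine to finitely many $B_j$ whose images cover, and set $B' = \prod_j B_j$. Then $B\to B'$ is smooth and $\pi_0 B\to\pi_0 B'$ is faithfully flat. This is the step I expect to need the most care. The point to check is that an effective epimorphism from a smooth family forces joint surjectivity on $\Spec\pi_0$, and hence faithful flatness after passing to a finite subfamily. I would argue this on $\pi_0$, using that effective epimorphisms of sheaves are detected on points of the \'etale site, so every point of $\Spec\pi_0B$ lifts after an \'etale extension, and flatness then gives openness of images. Since $\pi_0 B'$ is flat over $\pi_0 B$, we get $\pi_0(C\otimes_B B')\cong\pi_0C\otimes_{\pi_0B}\pi_0B'$. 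We have shown that $\pi_0 B'\to\pi_0C\otimes_{\pi_0B}\pi_0B'$ is surjective.

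Finally, I would apply classical faithfully flat descent: the cokernel $Q$ of $\pi_0B\to\pi_0C$ satisfies $Q\otimes_{\pi_0B}\pi_0B'=0$, so $Q=0$. Equivalently, this is the argument in the preceding remark that closed immersions are fpqc local on the target. Therefore $\pi_0B\to\pi_0C$ is surjective and $f$ is a closed immersion.
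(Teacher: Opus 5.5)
Your proof is correct and takes the same approach as the paper. You refine an arbitrary $\Spec B\to\cY$ through the atlas via \ref{refinement through atlas}, base change the $\pi_0$-surjectivity to the pieces, and descend it along the cover using the fpqc-locality of closed immersions from the preceding remark. Your one addition is an explicit argument that the smooth effective-epimorphism family can be shrunk to a finite faithfully flat cover, a step the paper leaves implicit.
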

\begin{proof}
    The ``only if'' direction is trivial. For the ``if'' direction, suppose that we are given an $n$-atlas $\{\Spec A_i\to\cY\}$ of $\cY$ such that for all $i,$ the maps $\cX\times_{\cY}\Spec A_i\to\Spec A_i$ are affine and induce surjections $\pi_0 A_i\to\pi_0 B_i,$ where $\cX\times_{\cY}\Spec A_i\simeq\Spec B_i.$ Then $f$ is affine by \ref{geometricity detected by single atlas}, and the previous remark implies the result when combined with \ref{refinement through atlas}.
\end{proof}

\begin{lemma}[\protect{\cite[1.2.9.4]{toen2006homotopical}}]
    Let $A\in\aRing_k,$ and let $f\in\pi_0 A.$ The canonical map $A\to A[f^{-1}]$ induces an open immersion $j : \Spec A[f^{-1}]\to\Spec A.$
\end{lemma}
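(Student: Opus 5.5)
We must check the three conditions in the definition of open immersion: $j$ is flat, locally of finite presentation, and a monomorphism (equivalently, $A[f^{-1}]\otimes_A A[f^{-1}]\simeq A[f^{-1}]$ via the multiplication map). First I will fix a model for $A[f^{-1}]$. Let $\ZZ[t]\to A$ classify a lift of $f$ to a point of $\pi_0$ of the underlying anima of $A$; this is possible because $\Map_{\aRing}(\ZZ[t],A)$ is the underlying anima of $A$. Then set
\[
A[f^{-1}] := A\otimes_{\ZZ[t]}\ZZ[t,t^{-1}],
\]
a pushout in $\aRing$. The pushout is independent of the choice of lift up to equivalence, since the component of $f$ in $\pi_0$ is all that matters. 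All three properties are stable under base change in $\aRing$: compactness of pushouts, flatness by the Tor-independence computation below, and monomorphisms because pushouts preserve codiagonal equivalences. It therefore suffices to treat the universal case $\ZZ[t]\to\ZZ[t,t^{-1}]$ of discrete rings, and then check that base change behaves correctly.

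Local finite presentation follows from the presentation $\ZZ[t,t^{-1}]\simeq\ZZ[t]\otimes_{\ZZ[s]}\ZZ$. Here $\ZZ[s]\to\ZZ[t,s']$ sends $s\mapsto ts'-1$, and the pushout is an honest quotient because $ts'-1$ is a nonzerodivisor. So $\ZZ[t,t^{-1}]$ is a finite colimit of compact objects $\ZZ[t,s']$, $\ZZ[s]$, $\ZZ$ in $\aRing_{\ZZ[t]}$ and is hence compact. Base change then shows $A\to A[f^{-1}]$ is compact in $\aRing_A$, since $\Map_{\aRing_A}(A\otimes_{\ZZ[t]}B,-)\simeq\Map_{\aRing_{\ZZ[t]}}(B,-)$, and filtered colimits in $\aRing_A$ are computed underlying.

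For flatness, I use that $\ZZ[t]\to\ZZ[t,t^{-1}]$ is a flat map of discrete rings. The base change spectral sequence $\Tor^{\pi_*\ZZ[t]}_{*}(\pi_*A,\ZZ[t,t^{-1}])\Rightarrow\pi_*(A[f^{-1}])$ then degenerates, which gives $\pi_i(A[f^{-1}])\cong\pi_iA\otimes_{\pi_0A}\pi_0A[f^{-1}]$. It also gives $\pi_0(A[f^{-1}])\cong(\pi_0A)_f$, which is flat over $\pi_0A$. A cleaner way to say the same thing is that underlying modules of the pushout are the relative tensor product $A\otimes_{\ZZ[t]}\ZZ[t,t^{-1}]$, and $\ZZ[t,t^{-1}]$ is a filtered colimit of free $\ZZ[t]$-modules (the colimit of $\ZZ[t]\xrightarrow{t}\ZZ[t]\xrightarrow{t}\cdots$). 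Hence $A[f^{-1}]\simeq\colim(A\xrightarrow{f}A\xrightarrow{f}\cdots)$ as $A$-modules, and its homotopy is $\pi_*A[f^{-1}]$.

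For the monomorphism condition I use the same telescope description. The multiplication map is
\[
A[f^{-1}]\otimes_AA[f^{-1}]\simeq A[f^{-1}]\otimes_{\ZZ[t]}\ZZ[t,t^{-1}]\to A[f^{-1}],
\]
and this is an equivalence because it reduces to the universal case $\ZZ[t,t^{-1}]\otimes_{\ZZ[t]}\ZZ[t,t^{-1}]\simeq\ZZ[t,t^{-1}]$, a classical flat localization with vanishing higher Tor. An epimorphism of rings corresponds to a monomorphism of affine derived schemes, so $\Spec A[f^{-1}]\to\Spec A$ is a monomorphism, and therefore $j$ is an open immersion.

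I expect the main obstacle to be bookkeeping rather than any real difficulty: making sure the Tor/telescope identification of the underlying module of a pushout of animated rings is legitimate, which I would justify via $\theta$ preserving pushouts (it preserves sifted colimits and coproducts). Once that identification is in place, every step reduces to the discrete localization $\ZZ[t]\to\ZZ[t,t^{-1}]$.
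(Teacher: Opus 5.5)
Your argument is correct, and it is essentially the standard proof in the references the paper cites instead of giving a proof of its own (\cite[1.2.9.4]{toen2006homotopical} and \cite[Lemma~3.20]{yaylali2022notes}). That proof realizes $A[f^{-1}]$ as the base change of $\ZZ[t]\to\ZZ[t,t^{-1}]$ and then checks three things: flatness via $\pi_*A[f^{-1}]\cong(\pi_*A)[f^{-1}]$, finite presentation via a finite presentation of $\ZZ[t,t^{-1}]$, and the monomorphism property via $A[f^{-1}]\otimes_A A[f^{-1}]\simeq A[f^{-1}]$. One small slip: the presentation should read $\ZZ[t,t^{-1}]\simeq\ZZ[t,s']\otimes_{\ZZ[s]}\ZZ$, not $\ZZ[t]\otimes_{\ZZ[s]}\ZZ$; to stay inside $\aRing_{\ZZ[t]}$, write it as $\ZZ[t,s']\otimes_{\ZZ[t,s]}\ZZ[t]$.
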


\begin{remark}
    A proof of the above lemma can also be found as \cite[Lemma~3.20]{yaylali2022notes} in the animated language.
\end{remark}

The following is relatively straightforward to show directly from the definitions. We refer the interested reader to \cite[Lemma~3.32]{yaylali2022notes}.

\begin{lemma}\label{open immersions local on target}
    A map $f : \cX\to\cY$ of derived stacks is an open immersion if and only if for any $\Spec A\to\cY,$ $\cX\times_{\cY}\Spec A\to\Spec A$ is a monomorphism and there is an effective epimorphism $\coprod_{i\in I}\Spec A_i\to\cX\times_{\cY}\Spec A$ such that each $\Spec A_i\to\Spec A$ is an open immersion.
\end{lemma}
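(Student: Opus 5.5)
The plan is to prove the two directions separately, reducing everything to the affine case via base change. Being an open immersion means three things: flat, locally of finite presentation (both good properties in the sense of \ref{local properties}), and a monomorphism. All three are stable under pullback. The property of being a monomorphism, i.e.\ $\cX\to\cX\times_{\cY}\cX$ an equivalence, is moreover detected after base change along all $\Spec A\to\cY$. This is because fiber products in $\dStk$ commute with base change, and a map of sheaves is an equivalence iff it is so after pulling back to every affine over the target.

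For the ``only if'' direction, assume $f$ is an open immersion and fix $\Spec A\to\cY$. Then $\cX_A:=\cX\times_{\cY}\Spec A\to\Spec A$ is again a monomorphism, flat and locally of finite presentation. Since $f$ is geometric, $\cX_A$ is geometric. Its diagonal is an equivalence, so $\cX_A$ is $0$-geometric (indeed a derived algebraic space). I would choose an atlas $\{\Spec A_i\to\cX_A\}$ with each composite $\Spec A_i\to\Spec A$ flat and locally of finite presentation, using \ref{P for one atlas implies P for any atlas}.

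The key step is then to refine this atlas so that each $\Spec A_i\to\Spec A$ is an open immersion. I would first try the following. A flat, finitely presented monomorphism $U\to\Spec A$ is an open immersion on $\pi_0$ by the classical result (EGA IV 17.9.1: flat, lfp monomorphisms of schemes/algebraic spaces are open immersions). Hence $t_0(\cX_A)$ is an open subscheme of $\Spec \pi_0A$. Cover it by basic opens $D(f_j)$ with $f_j\in\pi_0 A$, and consider $\Spec A[f_j^{-1}]\to\Spec A$, which are open immersions by \cite[1.2.9.4]{toen2006homotopical}. Flatness of $\cX_A\to\Spec A$ means it is determined by its classical truncation together with the higher homotopy: flat objects over $A$ are equivalent to flat objects over $\pi_0A$ (étale/flat invariance of the topos under nilthickenings, applied to the Postnikov tower). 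Using this, I would show that $\Spec A[f_j^{-1}]\to\Spec A$ factors through $\cX_A$. The resulting map $\coprod_j \Spec A[f_j^{-1}]\to\cX_A$ is then an effective epimorphism, since it is a flat lfp map that is surjective on $\pi_0$-points, hence étale-locally split. This lifting from $\pi_0$ to $A$ is the step I expect to be the main obstacle.

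For the ``if'' direction, assume that for every $\Spec A\to\cY$ the base change is a monomorphism covered by open immersions $\Spec A_i\to\Spec A$. Then $f$ is a monomorphism by the base-change criterion above. Next, each $\cX_A$ is covered via an effective epimorphism by affines that are open in $\Spec A$. Their pairwise fiber products over $\cX_A$ agree with those over $\Spec A$, since $\cX_A\to\Spec A$ is a monomorphism, and these are again open subschemes, covered by basic opens. Hence the $\Spec A_i\to\cX_A$ are $(-1)$-geometric over affines after refining to basic opens. They are also smooth, since open immersions are étale, hence smooth. So they form a $0$-atlas, $\cX_A$ is $0$-geometric, and $f$ is geometric. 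Finally, the composites $\Spec A_i\to\Spec A$ are flat and locally of finite presentation. By \ref{local properties} this gives that $f$ is flat and lfp, and therefore an open immersion.
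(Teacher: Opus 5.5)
The paper does not prove this lemma itself; it defers to \cite[Lemma~3.32]{yaylali2022notes}. So the comparison is with what the statement requires.

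Your ``if'' direction is essentially fine. No refinement to basic opens is needed: for any $\Spec B\to\cX_A$, the monomorphism gives $\Spec A_i\times_{\cX_A}\Spec B\simeq\Spec(A_i\otimes_A B)$ directly, and refining would reintroduce the lifting problem below. Note also that ``open immersions of affines are \'etale'' rests on the vanishing of the cotangent complex of the epimorphism $A\to A_i$.

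The genuine gap is in the ``only if'' direction, at the step you flagged. The principle you invoke there is false: flat $A$-algebras are not equivalent to flat $\pi_0A$-algebras. For instance, $\Map_{\aRing_A}(A[x],A[x])$ is the underlying anima of $A[x]$, with $\pi_1\cong\pi_1(A)[x]$, whereas the corresponding mapping set over $\pi_0A$ is discrete. Topological invariance holds only for \'etale objects, and flatness is not what makes $\Spec A[f_j^{-1}]\to\Spec A$ factor through $\cX_A$.

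The missing idea is that $\cX_A\to\Spec A$ is a monomorphism, so $\LL_{\cX_A/\Spec A}\simeq 0$ by the paper's lemma on monomorphisms. This is the same fact you implicitly used in the other direction. With it you can finish as follows.
\begin{enumerate}
\item For $D(f)\subseteq t_0\cX_A$, put $B=A[f^{-1}]$ and $V=\cX_A\times_{\Spec A}\Spec B$. This is a monomorphism into $\Spec B$ with $t_0V\simeq\Spec\pi_0B$.
\item Take finitely many atlas charts $\Spec C_i\to V$ jointly surjective on $t_0$, and set $C=\prod_i C_i$. The cofiber sequence gives $\LL_{C/B}\simeq\LL_{C/V}$, which is finite projective. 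Hence $B\to C$ is smooth by the differential criterion, and it is faithfully flat.
\item A smooth surjection acquires a section after an \'etale cover $B\to B'$. Get it classically on $\pi_0$, then lift along the Postnikov tower of $B'$; the obstructions vanish because $\LL_{C/B}$ is projective.
\item Hence the structure map $B\to B'$ lies in $V(B')$. \'Etale descent for the subsheaf $V\subseteq\Spec B$ then gives $\id_B\in V(B)$, i.e.\ $V\simeq\Spec B$.
\end{enumerate}
Do not quote the paper's topological-invariance lemma for opens here. It is proved via ``open immersion $=$ \'etale monomorphism'', which itself uses the statement at hand.

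Your effective-epimorphism argument also fails as stated. Flat finitely presented surjections need not split \'etale-locally, e.g.\ $\Spec k[x]/(x^p-a)\to\Spec k$ with $a\notin k^p$. Argue instead as follows. For an $R$-point of $\cX_A$, the $f_j$ generate the unit ideal of $\pi_0R$. So Zariski-locally on $R$ the point factors through some $\Spec A[f_j^{-1}]$. This factorization agrees with the given point because $\cX_A\to\Spec A$ is a monomorphism.
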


We also have the following useful alternate characterization of open immersions relating them to \'etale morphisms.

\begin{proposition}
    Let $f : \cX\to\cY$ be a map of derived stacks. Then $f$ is an open immersion if and only if it is an \'etale monomorphism.
\end{proposition}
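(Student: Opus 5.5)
We prove both implications, reducing to the affine case via \ref{open immersions local on target} and the base-change stability of the relevant properties. First note that being a monomorphism is checked after base change to affines: $f$ is a monomorphism if and only if $\cX\to\cX\times_{\cY}\cX$ is an equivalence. This condition is stable under pullback and can be tested after pulling back along all $\Spec A\to\cY$, since equivalences of sheaves are detected on affine points. Likewise, both ``open immersion'' and ``\'etale'' are compatible with base change. So in each direction we may fix $\Spec A\to\cY$ and set $\cX_A := \cX\times_{\cY}\Spec A$.

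($\Rightarrow$) Suppose $f$ is an open immersion. By \ref{open immersions local on target}, $\cX_A\to\Spec A$ is a monomorphism and admits an effective epimorphism $\coprod_i\Spec A_i\to\cX_A$ with each $\Spec A_i\to\Spec A$ an open immersion of affines. The main affine input is that an open immersion $A\to A_i$ of animated rings is \'etale. It is flat and locally of finite presentation, so it suffices to show $\LL_{A_i/A}\simeq 0$. Since $A\to A_i$ is an epimorphism in $\aRing$, the multiplication $A_i\otimes_A A_i\to A_i$ is an equivalence. Hence
\[\LL_{A_i/A}\otimes_{A_i}(A_i\otimes_A A_i)\simeq\LL_{(A_i\otimes_A A_i)/A_i}\simeq\LL_{A_i/A_i}\simeq 0,\]
and pulling back along the diagonal gives $\LL_{A_i/A}\simeq 0$. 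Flatness plus finite presentation plus vanishing cotangent complex gives \'etaleness; this is where I would invoke the standard characterization, checked on $\pi_0$ plus the flatness condition on higher homotopy.

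It remains to show $\cX_A$ is DM with an \'etale atlas. The cover $\{\Spec A_i\to\cX_A\}$ consists of monomorphisms into $\cX_A$, so the fiber products $\Spec A_i\times_{\cX_A}\Spec A_j\simeq\Spec A_i\times_{\Spec A}\Spec A_j$ are affine. Thus each $\Spec A_i\to\cX_A$ is $(-1)$-geometric and \'etale, since it is a base change of $\Spec A_j\to\Spec A$ on each piece. The diagonal of $\cX_A$ is a monomorphism, hence this is an \'etale $0$-atlas, $\cX_A$ is $0$-DM, and $\cX_A\to\Spec A$ is \'etale.

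($\Leftarrow$) Suppose $f$ is an \'etale monomorphism. Then $\cX_A\to\Spec A$ is a monomorphism with an \'etale atlas $\{\Spec A_i\to\cX_A\}$ such that each $A\to A_i$ is \'etale. Each composite $\Spec A_i\to\Spec A$ is then flat and locally of finite presentation. By \ref{open immersions local on target}, it suffices to show each $\Spec A_i\to\Spec A$ is a monomorphism, after possibly refining the cover by Zariski localizations.

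This is the main obstacle. The composite is an \'etale map of affines, but it need not be a monomorphism: the $\Spec A_i\to\cX_A$ are merely \'etale, not monomorphisms. My plan is to reduce to the classical statement. \'Etale maps of animated rings are determined by their $\pi_0$, by the topological invariance of the \'etale site (deformation theory with $\LL=0$), and monomorphisms of \'etale $A$-algebras correspond to classical ones. Classically, an \'etale monomorphism is an open immersion (EGA IV 17.9.1). So I would first show that $\pi_0$ of the situation yields a classical \'etale monomorphism of algebraic spaces $t_0\cX_A\to\Spec_{\cl}\pi_0 A$. That map is then an open immersion onto an open $U\subseteq\Spec_{\cl}\pi_0 A$.

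Next, I lift $U$ to the derived open substack $V\subseteq\Spec A$, covered by basic opens $\Spec A[g^{-1}]$. This lifting uses the lemma on $A\to A[f^{-1}]$ and the equivalence of small \'etale sites of $A$ and $\pi_0 A$. Both $\cX_A$ and $V$ are \'etale over $\Spec A$ with the same $\pi_0$, so by that equivalence $\cX_A\simeq V$. Hence $\cX_A$ is covered by open immersions $\Spec A[g^{-1}]\to\Spec A$, and \ref{open immersions local on target} concludes.
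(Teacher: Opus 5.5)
Your forward direction is the paper's argument with the omitted details filled in. The paper only says that the affine open immersions supplied by \ref{open immersions local on target} are ``in particular \'etale.'' You justify this with the epimorphism/cotangent complex computation, and you also check that the cover is an \'etale atlas.

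One slip there: the absolute diagonal of $\cX_A$ is not a monomorphism in general. Already $\Spec\ZZ[x]\to\Spec\ZZ[x]\times\Spec\ZZ[x]$ is not one, since $\HH(\ZZ[x]/\ZZ)\not\simeq\ZZ[x]$. What $0$-DM requires, and what is true, is that this diagonal is affine. It is the equivalence $\cX_A\simeq\cX_A\times_{\Spec A}\cX_A$ followed by a base change of the diagonal of $\Spec A$.

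Your converse takes a genuinely different and much heavier route. In the paper an open immersion is \emph{defined} as flat (in the sense of \ref{local properties}), locally of finite presentation, and a monomorphism. So for an \'etale monomorphism, the \'etale atlases $\{\Spec A_i\to\cX_A\}$ with $A\to A_i$ \'etale already witness flatness and local finite presentation, and the proof is one line. The ``main obstacle'' you identify exists only because you verify the criterion of \ref{open immersions local on target}. That criterion needs the atlas maps to $\Spec A$ to be monomorphisms themselves, which the definition does not.

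Your detour through $t_0$, EGA IV 17.9.1, basic opens $\Spec A[g^{-1}]$, and topological invariance of the \'etale site is sound. It also proves something stronger: $\cX_A$ is Zariski-locally a basic open of $\Spec A$, so the flat/lfp/mono notion agrees with the Zariski-local one. The cost is that it relies on the equivalence between \'etale stacks over $\Spec A$ and over $\Spec\pi_0 A$ for non-affine objects. In the paper that is a later lemma, and its open-immersion clause is proved \emph{using} this proposition. To avoid circularity you must invoke only its \'etale clause, or an external source.
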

\begin{proof}
    If $f$ is an open immersion, it is certainly a monomorphism. To show that $f$ is \'etale, we must show that for all $\Spec A\to\cY,$ $\cX\times_{\cY}\Spec A$ has an \'etale atlas $\{\Spec A_i\to\cX\times_{\cY}\Spec A\}$ such that the induced maps $A\to A_i$ are \'etale for all $i.$ By \ref{open immersions local on target}, there exists an effective epimorphism $\coprod_{i}\Spec A_i\to\cX\times_{\cY}\Spec A$ such that the induced maps $\Spec A_i\to\Spec A$ are open immersions, and in particular, \'etale.

    Conversely, suppose that $f$ is an \'etale monomorphism. To show that $f$ is an open immersion, we must show that it is flat, locally of finite presentation, and a monomorphism. It is a monomorphism by assumption. Let $\Spec A\to\cY$ be an arbitrary map. Then there exists an \'etale $n$-atlas $\{\Spec A_i\to\cX\times_{\cY}\Spec A\}$ such that the induced maps $A\to A_i$ are all \'etale. However, \'etale maps are flat and locally of finite presentation, so the result follows.
\end{proof}

We now consider separation conditions on (morphisms of) derived stacks.

\begin{mydef}
    A derived stack is \emph{separated} if the diagonal is a closed immersion.
\end{mydef}

\begin{mydef}
    A derived stack $\cX$ is \emph{quasi-compact} if there exists an $n$-atlas consisting of a single affine. A morphism $f : \cX\to\cY$ is \emph{quasi-compact} if for all maps $\Spec A\to\cY,$ the base change $\cX\times_{\cY}\Spec A$ is quasi-compact.
\end{mydef}

\begin{remark}
    Notice that with this definition, a quasi-compact stack is necessarily geometric.
\end{remark}

The following lemmas are easy to verify.

\begin{lemma}
    Let $\cX$ be a derived stack. The following are equivalent:
    \begin{enumerate}
        \item $\cX$ is quasi-compact. 
        \item There exists an $n$-atlas of $\cX$ consisting of a finite number of affines.
    \end{enumerate}
\end{lemma}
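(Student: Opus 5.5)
Both implications come down to the observation that a finite disjoint union of affine derived schemes is again affine. Recall that $\cX$ is quasi-compact when it admits an $n$-atlas $\{\Spec A\to\cX\}$ consisting of a single affine; such an atlas is in particular finite, so (1) implies (2) at once.

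For (2) implies (1), let $\{\Spec A_i\to\cX\}_{i=1}^m$ be a finite $n$-atlas and set $A := \prod_{i=1}^m A_i$ in $\aRing$. The first step is to identify $\coprod_i\Spec A_i\simeq\Spec A$ in $\dStk$. The finite product $\prod_i A_i$ is computed on underlying anima, so $\pi_0 A\cong\prod_i\pi_0 A_i$. Choosing the idempotents $e_i\in\pi_0 A$ gives open immersions $\Spec A[e_i^{-1}]\simeq\Spec A_i\to\Spec A$, by the lemma on localizations \cite[1.2.9.4]{toen2006homotopical}. These are pairwise disjoint, since $A_i\otimes_A A_j\simeq 0$ for $i\neq j$, and jointly they form an \'etale cover, since $A\to\prod_i A_i$ is an equivalence and hence faithfully flat. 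Because $\dStk$ consists of sheaves for the \'etale topology, whose covering sieves include these finite families, the induced map $\coprod_i\Spec A_i\to\Spec A$ is an equivalence after sheafification. I expect this step to be the main point requiring care, namely checking that finite coproducts in the sheaf category agree with $\Spec$ of the product. It follows from the sheaf condition applied to the finite cover together with the disjointness of the pieces.

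Next I check that the single map $\Spec A\to\cX$ is an $n$-atlas. It is an effective epimorphism because it is equivalent to $\coprod_i\Spec A_i\to\cX$, which is one by assumption. It remains to see that $\Spec A\to\cX$ is $(n-1)$-geometric and smooth. For any $\Spec B\to\cX$ we have
\[
\Spec A\times_{\cX}\Spec B\simeq\coprod_i\left(\Spec A_i\times_{\cX}\Spec B\right),
\]
since colimits are universal in the $\infty$-topos $\dStk$. A finite disjoint union of $(n-1)$-geometric stacks is again $(n-1)$-geometric: its atlases are unions of atlases, its diagonal is built from the individual diagonals and empty intersections, and for $n-1=-1$ this reduces to the affineness argument above. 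Smoothness is checked on the same atlas pieces. Hence $\{\Spec A\to\cX\}$ is an $n$-atlas consisting of a single affine, so $\cX$ is quasi-compact.
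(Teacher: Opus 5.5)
Your argument is correct. The paper gives no proof of its own (it lists this among the lemmas that are ``easy to verify''), and collapsing a finite atlas to the single affine $\Spec\prod_i A_i\simeq\coprod_i\Spec A_i$ is exactly the intended verification, in line with the paper's footnote on disjoint unions in the definition of a chart. Two facts you use without stating them are worth making explicit. First, $\Spec 0$ is initial in $\dStk$, because the empty family covers $\Spec 0$. Second, a summand of an affine is affine, since it is cut out by an idempotent of $\pi_0$. The second fact is what makes your ``diagonal built from the individual diagonals'' step for finite disjoint unions go through.
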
 

\begin{lemma}
    Let $\cX\to\cY$ be a quasi-compact morphism of derived stacks. Then for any morphism $\cZ\to\cY,$ the base change $\cX\times_{\cY}\cZ\to\cZ$ is quasi-compact. 
\end{lemma}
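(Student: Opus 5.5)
The claim is that quasi-compactness is stable under base change along an arbitrary $\cZ\to\cY$. Since quasi-compactness of a morphism is defined by testing against all affines mapping to the target, the plan is to reduce to the hypothesis on $f$ by pasting pullback squares. Write $\cX_{\cZ} := \cX\times_{\cY}\cZ$ and let $f_{\cZ}\colon\cX_{\cZ}\to\cZ$ be the projection. By definition we must show that for every map $\Spec A\to\cZ$ the fiber product $\cX_{\cZ}\times_{\cZ}\Spec A$ is quasi-compact.

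Fix $\Spec A\to\cZ$ and consider the composite $\Spec A\to\cZ\to\cY$. By the pasting law for pullbacks in the $\infty$-topos $\dStk$, the outer rectangle formed from the squares defining $\cX_{\cZ}$ and $\cX_{\cZ}\times_{\cZ}\Spec A$ is Cartesian. This gives a canonical equivalence
\[
\cX_{\cZ}\times_{\cZ}\Spec A\simeq\cX\times_{\cY}\Spec A.
\]
The right-hand side is quasi-compact because $f$ is quasi-compact, applied to the affine $\Spec A\to\cY$.

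It then remains to note that quasi-compactness of a derived stack is invariant under equivalence. An $n$-atlas consisting of a single affine can be transported along an equivalence: composing with an equivalence preserves smoothness, $(n-1)$-geometricity and effective epimorphisms, and the diagonal condition is likewise invariant. This completes the argument.

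There is no real obstacle here. The only point needing care is checking that the definition of a quasi-compact morphism really only involves affines mapping to the target, so no atlas of $\cZ$ or geometricity of $\cZ$ is required. That is true of the definition as stated, so the pasting argument suffices.
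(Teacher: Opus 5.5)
Your argument is correct and is the intended verification: the paper states this lemma without proof (it is among those it calls ``easy to verify''), and the pasting-law equivalence $(\cX\times_{\cY}\cZ)\times_{\cZ}\Spec A\simeq\cX\times_{\cY}\Spec A$ reduces the claim directly to the hypothesis on $\cX\to\cY$. Your remark about the diagonal is unnecessary, since the paper's definition of a quasi-compact stack only asks for an $n$-atlas consisting of a single affine, but it does no harm.
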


\begin{lemma}
    Let
    \[\begin{tikzcd}
        \cX\ar[r]\ar[d] &\cZ\ar[d] &\ar[l]\cY\ar[d]\\
        \cX'\ar[r] &\cZ' &\ar[l] \cY'
    \end{tikzcd}\]
    be a commutative diagram of derived stacks such that $\cX\to\cX',$ $\cY\to\cY',$ and $\cZ\to\cZ'$ are quasi-compact. Then $\cX\times_{\cZ}\cY\to\cX'\times_{\cZ'}\cY$ is quasi-compact.
\end{lemma}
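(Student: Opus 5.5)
The plan is to factor the map $\cX\times_{\cZ}\cY\to\cX'\times_{\cZ'}\cY'$ (I read the target as $\cX'\times_{\cZ'}\cY'$) as a composite of three maps:
\[
\cX\times_{\cZ}\cY\xrightarrow{\;a\;}\cX\times_{\cZ'}\cY\xrightarrow{\;b\;}\cX'\times_{\cZ'}\cY\xrightarrow{\;c\;}\cX'\times_{\cZ'}\cY'.
\]
I would then show each of $a$, $b$, $c$ is quasi-compact, and finish by proving that quasi-compact morphisms are stable under composition.

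The maps $b$ and $c$ are easy. The map $b$ is the base change of $\cX\to\cX'$ along the projection $\cX'\times_{\cZ'}\cY\to\cX'$. The map $c$ is the base change of $\cY\to\cY'$ along $\cX'\times_{\cZ'}\cY'\to\cY'$. Both are therefore quasi-compact by the preceding lemma on base change. The map $a$ is the base change of the relative diagonal $\Delta_{\cZ/\cZ'}:\cZ\to\cZ\times_{\cZ'}\cZ$ along $\cX\times_{\cZ'}\cY\to\cZ\times_{\cZ'}\cZ$. So $a$ is quasi-compact as soon as $\Delta_{\cZ/\cZ'}$ is.

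I expect the diagonal step to be the main obstacle. The hypothesis that $\cZ\to\cZ'$ is quasi-compact does not by itself make the diagonal quasi-compact. Already for classical schemes, take $\cZ$ a non-quasi-separated quasi-compact scheme over a point: the fibre product of two points over $\cZ$ can fail to be quasi-compact. So I would first try to extract quasi-compactness of $\Delta_{\cZ/\cZ'}$ from geometricity. With the paper's definition, quasi-compactness forces the fibres $\cZ\times_{\cZ'}\Spec A$ to be $n$-geometric with a single-affine atlas. By \ref{geometricity facts} the diagonal is then $(n-1)$-geometric, and I would try to show inductively that it is quasi-compact, using that an affine diagonal is quasi-compact. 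If this fails, I would add the hypothesis that $\cZ\to\cZ'$ is quasi-separated (diagonal quasi-compact), as in the classical statement. In the special case where $\cZ\to\cZ'$ is a monomorphism, $a$ is an equivalence and no extra hypothesis is needed.

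For the composition step, let $f:\cA\to\cB$ and $g:\cB\to\cC$ be quasi-compact and fix $\Spec R\to\cC$. Choose a single affine atlas $\Spec S\to\cB_R$, and base change $\cA_R\to\cB_R$ along it. This gives a quasi-compact stack covered by a single affine $\Spec T$. The composite $\Spec T\to\cA_R$ is smooth, since smooth maps compose, and it is an effective epimorphism, being a composite of effective epimorphisms. The geometricity level is controlled by \ref{geometricity facts}. Hence $\cA_R$ is quasi-compact, which proves that quasi-compact morphisms compose and completes the argument.
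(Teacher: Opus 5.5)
Your factorization $\cX\times_{\cZ}\cY\xrightarrow{a}\cX\times_{\cZ'}\cY\xrightarrow{b}\cX'\times_{\cZ'}\cY\xrightarrow{c}\cX'\times_{\cZ'}\cY'$ is the right one, and $b$, $c$ are fine. The step you flag for $a$, however, is a genuine gap that cannot be closed. Your inductive attempt to get quasi-compactness of $\Delta_{\cZ/\cZ'}$ from geometricity must fail, because the lemma itself is false as stated, with either reading of the target.

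Here is a counterexample. Take a field $k$, and let $\cZ$ be the classical scheme obtained by gluing two copies of $\Spec k[x_1,x_2,\dots]$ along the complement $U$ of the origin $V(x_1,x_2,\dots)$. Let $\cX,\cY$ be the two charts with their open immersions into $\cZ$, and let $\cX'=\cY'=\cZ'=\Spec k$. For every $\Spec A\to\Spec k$, the base change $\cZ_A$ is a derived scheme with a single affine atlas, namely the disjoint union of the two charts. So $\cZ\to\Spec k$ is quasi-compact in the paper's sense, and $\cX,\cY$ are affine. But open immersions are flat monomorphisms, so $\cX\times_{\cZ}\cY\simeq U$. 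This admits no smooth effective epimorphism from a single affine, because its underlying space is not quasi-compact. Hence $\cX\times_{\cZ}\cY\to\cX'\times_{\cZ'}\cY'=\Spec k$ is not quasi-compact. Nor is $\cX\times_{\cZ}\cY\to\cX'\times_{\cZ'}\cY=\cY$, which is the open immersion $U\subset\Spec k[x_1,x_2,\dots]$.

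Two smaller points. First, your classical illustration is misstated. Over a scheme, the fibre product of two field-valued points is empty or affine; what fails is the fibre product of two affine opens, as above. Two points do work over a stack: for an infinite discrete group $G$, $\Spec k\times_{\mathrm{B}G}\Spec k\simeq\coprod_{g\in G}\Spec k$.

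Second, the only provable version is your fallback: add the hypothesis that $\Delta_{\cZ/\cZ'}$ is quasi-compact (for instance, $\cZ\to\cZ'$ quasi-separated, as in the classical statement). With it your argument is complete, since $a,b,c$ are then base changes of quasi-compact maps. In the composition step, the one thing to make explicit is that the total fibre over $\Spec R$ is geometric. This follows by descending geometricity along the single-affine atlas of the intermediate fibre.

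The paper states the lemma as ``easy to verify'' and gives no proof, so the missing hypothesis is a defect of the statement rather than of your approach. As a proof of the lemma as written, though, the proposal does not (and cannot) succeed.
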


The derived generalizations of quasi-separated stacks and morphisms are more subtle. 

\begin{mydef}
    Let $\cX$ be a derived stack, and let $f : \cX\to\cY$ be a morphism of derived stacks.
    \begin{enumerate}
        \item We say $\cX$ is \emph{$0$-quasi-separated} if the diagonal $\cX\to\cX\times\cX$ is quasi-compact.
        \item We say $f : \cX\to\cY$ is \emph{$0$-quasi-separated} if for any $\Spec A\to\cY,$ $\cX\times_{\cY}\Spec A$ is $0$-quasi-separated.
        \item Suppose $k > 0.$ We say $\cX$ is \emph{$k$-quasi-separated} if the diagonal morphism $\cX\to\cX\times\cX$ is $(k-1)$-quasi-separated.
        \item Suppose $k > 0.$ We say $f : \cX\to\cY$ of derived stacks is \emph{$k$-quasi-separated} if for any $\Spec A\to\cY,$ $\cX\times_{\cY}\Spec A$ is $k$-quasi-separated.
        \item If $\cX$ is a $n$-geometric derived stack, we say $\cX$ is \emph{quasi-separated} if it is $k$-quasi-separated for all $0\leq k\leq n.$
        \item If $f : \cX\to\cY$ is $n$-geometric, we say $f$ is \emph{quasi-separated} if for any $\Spec A\to\cY,$ $\cX\times_{\cY}\Spec A$ is quasi-separated.
    \end{enumerate}
\end{mydef}

\begin{remark}
    Note that if $f : \cX\to\cY$ is $k$-quasi-separated, then the base change $\cX\times_{\cY}\cZ\to\cZ$ is $k$-quasi-separated for any $\cZ\to\cY.$ Additionally, one can use induction to prove that given a commutative diagram
    \[\begin{tikzcd}
        \cX\ar[r]\ar[d] &\cZ\ar[d] &\ar[l]\cY\ar[d]\\
        \cX'\ar[r] &\cZ' &\ar[l] \cY'
    \end{tikzcd}\]
    of derived stacks such that $\cX\to\cX',$ $\cY\to\cY',$ and $\cZ\to\cZ'$ are $k$-quasi-separated for some $k\leq 0,$ the induced map $\cX\times_{\cZ}\cY\to\cX'\times_{\cZ'}\cY'$ is also $k$-quasi-separated.
\end{remark}

\begin{lemma}\label{qsep iff qsep rel affine}
    Let $\cX\to\Spec A$ be a morphism of geometric derived stacks. Then $\cX$ is quasi-separated if and only if $\cX\to\Spec A$ is quasi-separated. 
\end{lemma}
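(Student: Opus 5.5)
The plan is to compare the iterated diagonals of $\cX$ over $\ZZ$ with those of $\cX$ over $\Spec A$, and to prove by induction on $k$ that $\cX$ is $k$-quasi-separated if and only if $\cX\to\Spec A$ is. First I unwind the relative notion. For any $\Spec B\to\Spec A$, the base change $\cX_B := \cX\times_{\Spec A}\Spec B$ must be $k$-quasi-separated. Taking $B = A$ shows that quasi-separatedness of $\cX\to\Spec A$ includes quasi-separatedness of $\cX$ itself, which gives the ``if $\cX\to\Spec A$ quasi-separated then $\cX$ quasi-separated'' direction for free. For the converse, I must show that $\cX_B$ is $k$-quasi-separated for all $k\le n$, given that $\cX$ is.

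The key tool is the standard factorization of the absolute diagonal:
\[
\cX\xrightarrow{\ \Delta_{\cX/A}\ }\cX\times_{\Spec A}\cX\longrightarrow\cX\times\cX .
\]
The second map is the base change of the diagonal $\Spec A\to\Spec A\times\Spec A$, which is affine. Since $\Spec A\otimes\Spec A$ is affine, this is a map of affine derived schemes, so it is quasi-compact and $k$-quasi-separated for every $k$; all of its iterated diagonals are again maps of affines. Similarly, $\cX_B\to\cX$ is a base change of $\Spec B\to\Spec A$, a map of affines.

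The argument then runs as follows. For $k=0$, quasi-compactness of $\Delta_{\cX}$ should be equivalent to quasi-compactness of $\Delta_{\cX/A}$. One direction uses base change: $\cX\times_{\Spec A}\cX\simeq(\cX\times\cX)\times_{\Spec A\times\Spec A}\Spec A$, and $\Delta_{\cX/A}$ is the base change of $\Delta_{\cX}$ along this. The other direction composes $\Delta_{\cX/A}$ with a quasi-compact map of affine type. Then $\cX_B\to\cX_B\times\cX_B$ factors as $\Delta_{\cX_B/B}$, which is a base change of $\Delta_{\cX/A}$, followed by an affine map. So $\cX_B$ is $0$-quasi-separated, using the lemmas above on stability of quasi-compactness under base change and fibered products of quasi-compact maps. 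For the inductive step, I apply the same reasoning to the diagonal morphisms themselves, using the preceding remark that $k$-quasi-separated maps are stable under base change and under maps of fibered products. Concretely, $k$-quasi-separatedness of $\cX$ means the diagonal is $(k-1)$-quasi-separated, and I transfer this through the factorization.

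The main obstacle is that composing a $(k-1)$-quasi-separated map with an affine map stays $(k-1)$-quasi-separated. The earlier lemmas only give base change and products, not composition. To handle this, I would rewrite the relevant composite as a map of fibered products of the form $\cX\times_{\cZ}\cY\to\cX'\times_{\cZ'}\cY'$ with affine or identity legs, and then invoke the preceding remark. If that does not suffice, I would prove separately, by induction, that a composite of a $k$-quasi-separated map with a map between affines is $k$-quasi-separated, reducing to the quasi-compact case where an atlas of a single affine pulls back to a single affine.
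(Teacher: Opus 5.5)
The direction ``$\cX\to\Spec A$ quasi-separated $\Rightarrow$ $\cX$ quasi-separated'' (take $B=A$) is fine and matches the paper. The converse has a gap at the very first transfer step: you claim that $\Delta_{\cX/A}$ is the base change of $\Delta_\cX$ along $\cX\times_{\Spec A}\cX\to\cX\times\cX$. That is a classical fact which silently uses that the diagonal of the base is a monomorphism, and it fails for derived affines. Using $\cX\times_{\Spec A}\cX\simeq(\cX\times\cX)\times_{\Spec A\times\Spec A}\Spec A$, the base change you describe is
\[
\cX\times_{\cX\times\cX}(\cX\times_{\Spec A}\cX)\simeq\cX\times_{\Spec A}\bigl(\Spec A\times_{\Spec A\times\Spec A}\Spec A\bigr)\simeq\cX\times_{\Spec A}\Spec\bigl(A\otimes_{A\otimes A}A\bigr).
\]
This is $\cX$ pulled back to the derived loop space $\Spec\HH(A/\ZZ)$. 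It differs from $\cX$ in general, already for $\cX=\Spec A$ with $A=\ZZ[x]$, where $\HH(A/\ZZ)\not\simeq A$. The diagonal $\Spec A\to\Spec A\times\Spec A$ is a closed immersion but not a monomorphism in $\dStk$. So both your $k=0$ case and your inductive step break at the point where they transfer the hypothesis from $\Delta_\cX$ to $\Delta_{\cX/A}$.

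The missing idea is the tool you only hold in reserve: the statements about maps of fibered products (the quasi-compactness lemma and the subsequent remark on $k$-quasi-separatedness). The paper applies them directly to the diagonal of $\cX_B$, identified with
\[
\cX\times_{\Spec A}\Spec B\to(\cX\times\cX)\times_{\Spec(A\otimes A)}\Spec(B\otimes B),
\]
the map induced by the three diagonals $\Delta_\cX$, $\Delta_{\Spec A}$, $\Delta_{\Spec B}$. The last two are maps of affines, so quasi-compactness (resp.\ $(k-1)$-quasi-separatedness) of $\Delta_\cX$ passes straight to $\Delta_{\cX_B}$. No relative diagonal, base change or composition is needed. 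This is safe here because the middle leg $\Delta_{\Spec A}$ has relative diagonal $\Spec A\to\Spec\HH(A/\ZZ)$, again a map of affines, and that is what the fibered-product statement really uses.

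You could also salvage your own route by applying the same statement to $\Delta_{\cX/A}$, viewed as the map $\cX\times_{\Spec A}\Spec A\to(\cX\times\cX)\times_{\Spec A\times\Spec A}\Spec A$ induced by $\Delta_\cX$, $\Delta_{\Spec A}$ and $\id_{\Spec A}$.

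Finally, the step you flagged as the main obstacle is harmless. If $g$ is affine, then the base change of $g\circ f$ along any $\Spec C$ equals the base change of $f$ along an affine. So $g\circ f$ inherits quasi-compactness or $(k-1)$-quasi-separatedness from $f$ directly by definition.
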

\begin{proof}
    If $\cX\to\Spec A$ is $k$-quasi-separated, then we have $\cX\times_{\Spec A}\Spec B$ quasi-separated for any $\Spec B\to\Spec A.$ Taking $B = A$ and the identity map shows that $\cX$ is $k$-quasi-separated.

    Conversely, suppose that $\cX$ is $k$-quasi-separated, let $\Spec B\to\Spec A$ be arbitrary, and consider the diagonal map
    \[
    \cX\times_{\Spec A}\Spec B\to(\cX\times_{\Spec A}\Spec B)\times (\cX\times_{\Spec A}\Spec B)\simeq(\cX\times \cX)\times_{\Spec A\otimes A}\Spec B\otimes B.
    \]
    Our previous observations tell us that if $\cX$ is $0$-quasi-separated, then this map will be quasi-compact as required, and that if $\cX$ is $k$-quasi-separated for $k > 0,$ then this map is $(k-1)$-quasi-separated (because $\Spec A$ and $\Spec B$ are automatically $k$-separated for any $k$).
\end{proof}

\begin{proposition}
    Let $f : \cX\to\cY$ be a morphism of derived geometric stacks. Then $f$ is $k$-quasi-separated if and only if $\cX\to\cX\times_{\cY}\cX$ is $(k-1)$-quasi-separated.
\end{proposition}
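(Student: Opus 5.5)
The plan is to reduce both conditions to statements over affines and compare them there. The basic tool is that the relative diagonal commutes with base change along maps $\Spec A\to\cY$. Concretely, for any $\Spec A\to\cY$ set $\cX_A := \cX\times_{\cY}\Spec A$. There is a canonical Cartesian square
\[\begin{tikzcd}
    \cX_A\ar[r]\ar[d] &\cX_A\times_{\Spec A}\cX_A\ar[d]\\
    \cX\ar[r] &\cX\times_{\cY}\cX
\end{tikzcd}\]
because $\cX_A\times_{\Spec A}\cX_A\simeq(\cX\times_{\cY}\cX)\times_{\cY}\Spec A$. So the diagonal of $\cX_A\to\Spec A$ is the base change of $\Delta_f$ along $(\cX\times_{\cY}\cX)\times_{\cY}\Spec A\to\cX\times_{\cY}\cX$.

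By definition, $f$ is $k$-quasi-separated iff every $\cX_A$ is $k$-quasi-separated. By \ref{qsep iff qsep rel affine}, this holds iff every $\cX_A\to\Spec A$ is $k$-quasi-separated. I would first establish the analogous statement relative to $\Spec A$: a morphism $\cZ\to\Spec A$ is $k$-quasi-separated iff its relative diagonal $\cZ\to\cZ\times_{\Spec A}\cZ$ is $(k-1)$-quasi-separated, where for $k=0$ "$(-1)$-quasi-separated" means quasi-compact. To compare the absolute and relative diagonals over an affine, factor the absolute diagonal as
\[
\cZ\to\cZ\times_{\Spec A}\cZ\to\cZ\times\cZ.
\]
The second map is a base change of $\Spec A\to\Spec A\times\Spec A$, which is a closed immersion and hence affine. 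Affine maps are quasi-compact and $j$-quasi-separated for all $j$, since their base changes to affines are affine and the diagonals of affines are affine. Composition and cancellation for quasi-compactness and $j$-quasi-separatedness then show that the absolute diagonal has the property iff the relative one does. This uses the remark on fiber products of $k$-quasi-separated maps and stability of these properties under composition, both of which I would verify inductively on $k$ alongside the main statement.

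Next, I would pass from "for all $A$" to the global diagonal. The key fact needed is that quasi-compactness and $j$-quasi-separatedness of a morphism $g:\cU\to\cV$ can be tested after base change along $\cV_A := \cV\times_{\cY}\Spec A\to\cV$, ranging over all $\Spec A\to\cY$. One direction is stability under base change. For the converse, any $\Spec B\to\cV$ composes to $\Spec B\to\cY$, and $\cU\times_{\cV}\Spec B\simeq\cU_B\times_{\cV_B}\Spec B$ via the graph $\Spec B\to\cV_B$. Applying this with $g=\Delta_f$ and $\cV=\cX\times_{\cY}\cX$ gives: $\Delta_f$ is $(k-1)$-quasi-separated iff each $\Delta_{\cX_A/\Spec A}$ is. Combined with the previous paragraph, this proves the proposition.

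The main obstacle is the affine comparison in the second step, namely showing that the closed-immersion factor $\cZ\times_{\Spec A}\cZ\to\cZ\times\cZ$ neither creates nor destroys $(k-1)$-quasi-separatedness. That requires a cancellation lemma: if $h\circ g$ has the property and $h$ is affine, then $g$ has it. I would prove it by writing $g$ as the graph $\cU\to\cU\times_{\cW}\cV$, whose target carries the base-changed property, followed by the projection. The graph is a base change of the diagonal of $h$, which is a closed immersion. This induction on $k$ needs care with indexing, and it is where I would spend most effort.
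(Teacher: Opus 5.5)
Your argument is correct and follows the same route as the paper. You base change along $\Spec A\to\cY$ and identify the pulled-back relative diagonal with the diagonal of $\cX_A\to\Spec A$. You then test $\cX\to\cX\times_{\cY}\cX$ on affines $\Spec B\to\cX\times_{\cY}\cX$, taking $\Spec A\to\cY$ to be the composite.

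The one real difference is that you explicitly compare the absolute diagonal of $\cX_A$ with its diagonal relative to $\Spec A$, using the closed immersion $\Spec A\to\Spec A\times\Spec A$ and an affine cancellation lemma. The paper passes over this step silently when it invokes Lemma~\ref{qsep iff qsep rel affine}, so your version is the more complete of the two.
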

\begin{proof}
    Suppose that $\cX\to\cX\times_{\cY}\cX$ is $(k-1)$-quasi-separated. Then the base change $\cX\times_{\cY}\Spec A\to(\cX\times_{\cY}\cX)\times_{\cY}\Spec A$ is $(k-1)$-quasi-separated for any $\Spec A\to\cY,$ and this map is equivalent to the diagonal of $\cX\times_{\cY}\Spec A\to\Spec A.$ However, \ref{qsep iff qsep rel affine} then tells us that this means $\cX\times_{\cY}\Spec A$ is $k$-quasi-separated, so that $\cX\to\cY$ is $k$-quasi-separated.

    Conversely, $f$ is quasi-separated if $\cX\times_{\cY}\Spec A$ is $k$-quasi-separated for any $\Spec A\to\cY.$ Then \ref{qsep iff qsep rel affine} implies that this is the case if and only if $\cX\times_{\cY}\Spec A\to\Spec A$ is $k$-quasi-separated. In particular, $f$ is $k$-quasi-separated if and only if 
    \[
    \cX\times_{\cY}\Spec A\to(\cX\times_{\cY}\Spec A)\times_A (\cX\times_{\cY}\Spec A)\simeq (\cX\times_{\cY}\cX)\times_{\cY}\Spec A
    \]
    is $(k-1)$-quasi-separated, if and only if
    \[
    (\cX\times_{\cY}\Spec A)\times_{(\cX\times_{\cY}\cX)\times_{\cY}\Spec A}\Spec B\simeq \cX\times_{\cX\times_{\cY}\cX}\Spec B
    \]
    is $(k-1)$-quasi-separated for any $\Spec B\to(\cX\times_{\cY}\cX)\times_{\cY}\Spec A.$ We must show that for any $\Spec B\to\cX\times_{\cY}\cX,$ $\cX\times_{\cX\times_{\cY}\cX}\Spec B$ is $(k-1)$-quasi-separated. Fixing an aribitrary $\Spec B\to\cX\times_{\cY}\cX$ and letting $\Spec A\to\cY$ be $\Spec B\to\cX\times_{\cY}\cX\to\cY,$ it follows that $\cX\to\cX\times_{\cY}\cX$ is indeed $(k-1)$-quasi-separated, so we are finished.
\end{proof}

\begin{proposition}
    Let $f : \cX\to\cY$ be a morphism of derived stacks. There exists a factorization
    \[\cX\to\cI\to\cY\]
    of $f$ such that $\cX\to\cI$ is an effective epimorphism and $\cI\to\cY$ is a monomorphism. The space of such factorizations is contractible, and we call any such factorization an \emph{epi-mono factorization}, and write $\im(f)$ for any such $\cI,$ and say that $\im(f)$ is the \emph{image of $f.$}
\end{proposition}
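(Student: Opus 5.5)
The plan is to deduce the statement from the fact that $\dStk$ is an $\infty$-topos. Derived stacks are sheaves of anima on the site $\dAff^{\et}$, so $\dStk$ is a left exact localization of the presheaf $\infty$-category $\Fun(\aRing,\Ani)$, hence an $\infty$-topos. (One should first arrange size issues, e.g.\ by working with a fixed universe or accessible sheaves, so that the topos results apply.) In any $\infty$-topos, the classes of effective epimorphisms and monomorphisms form a factorization system: this is the case $n=-1$ of the $n$-connective / $n$-truncated factorization system of \cite[Example 5.2.8.16]{HTT} (see also \cite[Proposition 7.2.1.14]{HTT}). In the language of that reference, effective epimorphisms are the $0$-connective morphisms and monomorphisms are the $(-1)$-truncated morphisms. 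Existence of a factorization and contractibility of the space of factorizations are then part of the definition of a factorization system, via \cite[Proposition 5.2.8.17]{HTT}.

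The concrete steps, in order, are as follows.

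\begin{enumerate}
\item Check that the notion of effective epimorphism defined above agrees with Lurie's notion in the topos $\dStk$. Lurie's condition is that $\colim_{\Delta}\check{C}(\cX/\cY)_\bullet\to\cY$ is an equivalence. Since all the $\check{C}(\cX/\cY)_n$ are fiber products of sheaves, they are sheaves, and the colimit computed in $\dStk$ is the sheafification of the presheaf colimit. So the two conditions coincide.
\item Check that the monomorphisms of the statement are the $(-1)$-truncated maps of \cite[5.5.6]{HTT}, i.e.\ those whose diagonal is an equivalence. This matches the definition used earlier for open immersions.
\item Produce the factorization explicitly. Set $\cI := |\check{C}(\cX/\cY)_\bullet|$, the geometric realization computed in $\dStk$. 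Then $\cX\to\cI$ is an effective epimorphism and $\cI\to\cY$ is a monomorphism; this is exactly \cite[Proposition 7.2.1.14]{HTT} for $n=-1$.
\item Obtain uniqueness from orthogonality. Effective epimorphisms are left orthogonal to monomorphisms by \cite[Example 5.2.8.16]{HTT}, so the lifting spaces in the relevant squares are contractible. By \cite[Proposition 5.2.8.17]{HTT} the $\infty$-category of factorizations of $f$ is then a contractible Kan complex, which justifies writing $\im(f)$.
\end{enumerate}

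The main obstacle is the first step. The paper's definition of effective epimorphism is phrased via sheafification, so the Čech nerve must be formed in $\dStk$ rather than in presheaves. The key point to verify is that sheafification is left exact, so it commutes with the fiber products forming the Čech nerve. Granting this, the result is a direct citation of the topos-theoretic factorization system.
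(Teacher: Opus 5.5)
Your proposal is correct and is the same argument as the paper's, whose entire proof is the citation of \cite[5.2.8.16]{HTT} and \cite[5.2.8.17]{HTT}: effective epimorphisms and monomorphisms form a factorization system in the $\infty$-topos $\dStk$, so the space of factorizations is contractible. Your checks that the paper's sheafified definitions agree with Lurie's just make explicit what that citation uses, and the only slip is cosmetic: Lurie's system pairs $(n+1)$-connective with $n$-truncated morphisms (not ``$n$-connective/$n$-truncated''), so the case $n=-1$ gives exactly the $0$-connective/$(-1)$-truncated pair you correctly identify.
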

\begin{proof}
    This follows from \cite[5.2.8.16]{HTT} and \cite[5.2.8.17]{HTT}.
\end{proof}

We have the following useful fact about the image of a smooth geometric morphism.

\begin{proposition}[\protect{\cite[Lemma~4.40]{AG2014}}]
    Let $f : \cX\to\cY$ be a smooth geometric morphism, and let $\cX\to\cI\to\cY$ be an epi-mono factorization of $f.$ Then $\cI\to\cY$ is an open immersion.
\end{proposition}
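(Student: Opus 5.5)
To prove that $\cI\to\cY$ is an open immersion, I will use the characterization proved just above: a map is an open immersion if and only if it is an \'etale monomorphism. Since $\cI\to\cY$ is a monomorphism by construction, it remains to show that it is \'etale. Equivalently, by \ref{open immersions local on target}, it suffices to show that for every $\Spec A\to\cY$ the base change $\cI\times_{\cY}\Spec A$ is covered by open immersions into $\Spec A$.

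The first step reduces to an affine target. Epi-mono factorizations are stable under base change, since effective epimorphisms and monomorphisms are both preserved by pullback in an $\infty$-topos. Hence $\cX_A\to\cI_A\to\Spec A$ is an epi-mono factorization of the base change $f_A$, and $f_A$ remains smooth and geometric. We may therefore assume $\cY=\Spec A$. Next, choose an $n$-atlas $\{\Spec B_i\to\cX\}$. The composite $\coprod_i\Spec B_i\to\cX\to\cI$ is still an effective epimorphism, and each $\Spec B_i\to\Spec A$ is smooth. The image $\cI$ then equals the union (the image of the coproduct) of the images $\cI_i$ of the smooth maps $\Spec A\leftarrow\Spec B_i$. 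So it suffices to treat the case where $\cX=\Spec B$ is affine and $A\to B$ is smooth, and then to show that a union of open subfunctors is open.

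For affine $\Spec B\to\Spec A$ smooth, I will argue as follows. Classically, the image of $\Spec_{\cl}\pi_0B\to\Spec_{\cl}\pi_0A$ is an open subset $U$, because smooth maps are flat and locally of finite presentation, hence open. Cover $U$ by basic opens $D(f_j)$ with $f_j\in\pi_0A$, and let $U'$ be the corresponding open substack $\bigcup_j\Spec A[f_j^{-1}]$. The map $\Spec B\to\Spec A$ factors through $U'$, since a map to $\Spec A$ factors through an open substack exactly when it does so on underlying topological spaces. The resulting map $\Spec B\to U'$ is smooth and surjective on points, hence an effective epimorphism in the \'etale topology: smooth surjections admit \'etale-local sections, and in the derived setting this follows by lifting sections of $\pi_0$ via the infinitesimal lifting property for smooth morphisms. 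By uniqueness of epi-mono factorizations, $\cI\simeq U'$, which is open.

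I expect the main obstacle to be showing that a smooth surjection of derived affines is an effective epimorphism of \'etale sheaves. This requires producing \'etale-local sections of $\Spec B\to\Spec A[f^{-1}]$ beyond the classical level, which I would do using the smooth lifting property along square-zero extensions of the Postnikov tower, together with convergence of that tower. The final gluing step is routine: a union of open immersions is an \'etale monomorphism, and hence open by the preceding characterization.
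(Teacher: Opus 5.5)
Your argument is correct. It fills a gap in the exposition rather than duplicating anything: the paper gives no proof here and imports the statement verbatim from Antieau--Gepner.

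Your route is self-contained relative to the foundations set up earlier in the paper. Pullback-stability of effective epimorphisms and monomorphisms, uniqueness of epi-mono factorizations, and the lemma that open immersions are local on the target reduce you to $\cY=\Spec A$. A smooth atlas then reduces you further to a smooth map $\Spec B\to\Spec A$. There the image is forced to be the open substack $U'$ cut out by the classical image of $\Spec\pi_0 B\to\Spec\pi_0 A$, which is open because that map is flat and locally of finite presentation. The only non-formal input is that $\Spec B\to U'$ is an effective epimorphism for the \'etale topology.

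Your sketch of that input is the right one:
\begin{enumerate}
  \item For $\Spec C\to U'$, set $D=B\otimes_A C$ and take classical \'etale-local sections of the smooth surjection $\Spec\pi_0 D\to\Spec\pi_0 C$.
  \item Lift the \'etale cover uniquely to $C$, producing \'etale maps $C\to C_i$.
  \item Lift the sections up the Postnikov tower of each $C_i$. The obstruction at stage $n$ lies in $\pi_0\Map(\LL_{D/C},\pi_{n+1}C_i[n+2])$, which vanishes because $\LL_{D/C}$ is projective.
  \item Conclude using $C_i\simeq\lim_n\tau_{\leq n}C_i$.
\end{enumerate}

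One small point of care concerns the final gluing step. You can invoke the local-on-target lemma again there instead of the \'etale-monomorphism characterization. The latter, in the paper's sense, first requires checking that the union is DM-geometric; this is easy but not automatic.

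Compared with the citation, your route buys transparency. It makes explicit that the statement rests on \'etale-local splitting of smooth surjections. It also identifies $\cI\times_{\cY}\Spec A$ concretely as the open substack whose truncation is the classical image. From that, the paper's later remark that $t_0$ detects smooth effective epimorphisms follows at once, without appealing to the equivalence of \'etale sites under truncation.
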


\begin{mydef}
    Let $X$ be a derived stack. We say $X$ is a \emph{derived scheme} if it admits an atlas $\{\Spec A_i\to X\}_{i\in I}$ such that each $\Spec A_i\to X$ is an open immersion. In particular, any derived scheme $X$ is $1$-geometric. We let $\dSch$ denote the full subcategory of $\dStk$ consisting of derived schemes.
\end{mydef}

\begin{remark}
    Let $k$ be a discrete ring. Then the adjunction
    \[\begin{tikzcd}
        \pi_0 : \aRing_k\ar[r,shift left] &\ar[l, shift left] \left(\aRing_k\right)_{\leq 0}\simeq\CRing_{k} : i
    \end{tikzcd}\]
    formally induces adjunctions
    \[\begin{tikzcd}
        i_! : \PSh((\CRing_{k})^{\op})\ar[r,shift left] &\ar[l, shift left]\PSh((\aRing_k)^{\op}) : i^*
    \end{tikzcd}\]
    and
    \[\begin{tikzcd}
        i^* : \PSh((\aRing_k)^{\op})\ar[r,shift left] &\ar[l,shift left]\PSh((\CRing_{k})^{\op}) : \pi_0^*,
    \end{tikzcd}\]
    where $i^*$ and $\pi_0^*$ are given by restriction along the respective functor, and $i_!$ is obtained by applying the ``left Kan extension along $i$'' to the functor in question (that is, for $X\in\PSh((\CRing_k)^{\op}),$ $i_! X = \Lan_i X$). Because $i^*$ and $\pi_0^*$ are right adjoints, they preserve \'etale sheaves, and by composing $i_!$ with the \'etale sheafification, we obtain adjunctions
    \[\begin{tikzcd}
        L_{\et}\circ i_! : \Sh_{\et}((\CRing_{k})^{\op})\ar[r,shift left] &\ar[l, shift left]\Sh_{\et}((\aRing_k)^{\op}) : i^*
    \end{tikzcd}\]
    and
    \[\begin{tikzcd}
        i^* : \Sh_{\et}((\aRing_k)^{\op})\ar[r,shift left] &\ar[l,shift left]\Sh_{\et}((\CRing_{k})^{\op}) : \pi_0^*,
    \end{tikzcd}\]
    and we set $t_0 := i^*$ and $\iota := i_!.$ We include a few remarks about the properties of these functors, which can be found in \cite[Section 2.2.4]{toen2006homotopical}, specifically \cite[2.2.4.1]{toen2006homotopical} and \cite[2.2.4.4]{toen2006homotopical}.
    \begin{enumerate}
        \item The functor $\iota$ is fully faithful. This follows from the general properties of Kan extensions (see \cite[Section 4.3.2]{HTT}), but a proof in the model category theoretic setting can be found as \cite[2.2.4.1]{toen2006homotopical}. 
        \item The functor $t_0$ has both a left and right adjoint, and thus preserves both limits and colimits. In particular, $t_0$ preserves epimorphisms.
        \item The functor $t_0$ preserves geometricity. It also preserves the properties of being flat, being \'etale, and being smooth for morphisms between geometric derived stacks.
        \item The functor $\iota$ preserves pullbacks of geometric stacks along a flat morphism, preserves geometricity, and preserves the properties of being flat, being smooth, and being \'etale for morphisms between classical geometric (higher) stacks.
        \item If $X$ is a classical geometric (higher) stack, and $X'\to\iota(X)$ is a flat morphism of geometric derived stacks, then $X'$ is in the image of $\iota.$
    \end{enumerate}
\end{remark}

Many of the concepts we have previously defined for derived (geometric) stacks can be seen to be generalizations of the corresponding notions for classical algebraic stacks. In particular, we have the following, which can be checked from the definitions or by appealing to \cite[2.2.4.4]{toen2006homotopical}.

\begin{proposition}
    Let $f : X\to Y$ be a map of classical algebraic stacks. Then
    \begin{enumerate}
        \item If $X$ is a classical algebraic space (respectively, a classical algebraic stack), then $\iota X$ is $1$-DM (respectively, $2$-geometric). If $X$ is a classical scheme, then $\iota X$ is a derived scheme.
        \item If $X$ is quasi-compact (respectively, qcqs), then $\iota X$ is quasi-compact (respectively, quasi-compact and quasi-separated).
        \item If $f$ is flat (respectively smooth, \'etale, an open immersion, a closed immersion, quasi-compact, or quasi-separated) then so is $\iota f.$
    \end{enumerate}
\end{proposition}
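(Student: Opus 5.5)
The plan is to reduce each statement to the affine case, where it is essentially tautological, and then propagate the properties along atlases using the remark on $t_0$ and $\iota$ (the content of \cite[2.2.4.4]{toen2006homotopical}). The key input is that $\iota$ sends a discrete ring $A$ to $\Spec A$, and that $\iota$ preserves fiber products along flat morphisms. It also preserves effective epimorphisms, since it is a left adjoint composed with sheafification. For flat $A\to B$ of discrete rings, $\Spec B\otimes_A C\simeq\Spec(B\otimes_A^{\mathbb L}C)$ has no higher Tor, so it agrees with the classical fiber product.

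For (1), start with a classical algebraic space $X$. Choose an étale surjection $\coprod\Spec_{\cl}A_i\to X$ from affine schemes whose diagonal is representable by schemes, or take a two-step atlas in the quasi-separated-free case. Applying $\iota$ gives an effective epimorphism $\coprod\Spec A_i\to\iota X$. Each map $\Spec A_i\to\iota X$ pulled back along any $\Spec B\to\iota X$ is computed by first reducing to $\pi_0$ via flatness: by the last item of the remark, the pullback of a flat map to a classical stack lies in the image of $\iota$, so it is $\iota$ of a classical algebraic space étale over $\Spec\pi_0 B$, base changed flatly. By induction, that pullback is $0$-DM (disjoint union of affines up to an étale atlas), so the charts are $0$-DM and étale. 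The diagonal of $\iota X$ is $\iota$ of the representable-by-spaces diagonal. Since a monomorphism/quasi-affine-type map has already been handled, the diagonal is $0$-geometric, so $\iota X$ is $1$-DM. The stack case repeats the argument one level higher with a smooth atlas by an algebraic space, using \ref{geometricity facts}(6) to conclude $2$-geometricity. For a scheme, the Zariski cover gives open immersions $\Spec A_i\to\iota X$, because $\iota$ preserves monomorphisms that are flat (they are pulled back along flat maps) and preserves flatness and finite presentation.

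For (2) and (3), first take the affine case. If $A\to B$ is flat, smooth, or étale, the derived and classical conditions agree, since all higher homotopy vanishes and so the condition on $\pi_i$ is trivial. For closed immersions, affineness is clear and $\pi_0$-surjectivity is the classical statement. Open immersions come from flatness, finite presentation, and the monomorphism property computed via the flat diagonal. Then pass to stacks using the local criteria already proven: Proposition~1.3.6.3's second part for good properties, \ref{open immersions local on target}, and the atlas characterization of closed immersions. Quasi-compactness transfers because a finite atlas maps to a finite atlas. Quasi-separatedness follows by applying the same argument to the diagonals, which $\iota$ preserves as pullbacks along flat covers after checking locally on the target.

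The main obstacle is ensuring that $\iota$ commutes with the fiber products needed, namely those along non-flat maps $\Spec B\to\iota X$ with $B$ non-discrete. I would handle this by always base changing the flat atlas map, which is exactly the case covered by the cited preservation statement.
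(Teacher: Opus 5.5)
The paper gives no argument beyond ``the definitions or \cite[2.2.4.4]{toen2006homotopical}'', which is item (4) of the remark on $t_0$ and $\iota$. Your atlas-by-atlas check is the ``from the definitions'' route. The principle in your last paragraph is the right one: only use fiber products along flat atlas maps. However, part (1), on which (2) and (3) rest, violates that principle in two places.

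\emph{First failing step: the diagonal.} The claim that the diagonal of $\iota X$ is $\iota$ of the classical diagonal is false. The product $\iota X\times\iota X$ is a base change along the non-flat map $X\to\Spec\ZZ$, and $\iota$ does not preserve such base changes. For $X=\Spec\FFp$ we have $\iota X\times\iota X\simeq\Spec(\FFp\otimes^L_{\ZZ}\FFp)$, which has $\pi_1\neq 0$, while $\iota(X\times X)\simeq\Spec\FFp$. Your $2$-geometricity step via \ref{geometricity facts}(6) needs exactly this diagonal, so it inherits the gap. Your quasi-separatedness sentence also treats $\iota$ as preserving diagonals. Localizing on the target rescues the absolute case. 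For non-flat $f$, however, the pulled-back relative diagonal $\iota(U\times_XU)\to\iota U\times_{\iota Y}\iota U$ still has a target outside the image of $\iota$, so you need an extra input. For example, it factors through the closed immersion $\iota(U\times_YU)\simeq\iota t_0(\iota U\times_{\iota Y}\iota U)\to\iota U\times_{\iota Y}\iota U$.

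\emph{Second failing step: the charts.} Computing $\Spec A_i\times_{\iota X}\Spec B$ by ``reducing to $\pi_0$'' confuses $\iota X$ with $\pi_0^*X$. In general $\iota X(B)\not\simeq X(\pi_0 B)$; already $(\iota\Spec\ZZ[x])(B)$ is the underlying anima of $B$, not the set $\pi_0B$. Moreover, this fiber product is flat over the \emph{non-classical} $\Spec B$, so the last item of the remark does not apply. The fiber product is typically not in the image of $\iota$ at all.

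\emph{The repair.} Never apply $\iota$ to a non-flat fiber product.
\begin{itemize}
\item For the diagonal, let $U=\coprod\Spec A_i$ and pull back along the cover $\iota U\times\iota U\to\iota X\times\iota X$. This gives $\iota U\times_{\iota X}\iota U\simeq\iota(U\times_XU)$, which is legitimate because $U\to X$ is flat.
\item For the charts, \'etale-locally on $B$ the map $\Spec B\to\iota X$ lifts through some $\Spec A_j$. Then $\Spec A_i\times_{\iota X}\Spec B'\simeq\iota(\Spec A_i\times_X\Spec A_j)\times_{\Spec A_j}\Spec B'$, which is a base change over an affine of a lower-level geometric stack.
\end{itemize}
The clean packaging uses the \v{C}ech nerve $U_\bullet$ of the \'etale (resp.\ smooth) atlas. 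Its face maps are flat, so $\iota U_\bullet$ is the \v{C}ech nerve of $\iota U\to\iota X$. Since $\iota$ preserves colimits, $\iota X\simeq|\iota U_\bullet|$. By induction, $\iota U_1\rightrightarrows\iota U_0$ are \'etale (resp.\ smooth) and of lower geometric level. Then \cite[Proposition~1.3.4.2]{toen2006homotopical} yields $1$-DM (resp.\ $2$-geometric). This is the same device the paper uses in the lemma immediately after this proposition.

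A smaller point: $\iota$ does not preserve local finite presentation in general (see the Warning on that notion). For open immersions, though, \'etaleness gives $\LL=0$, so your derived-scheme step survives.
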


We also have the following global generalization of \cite[5.2.4]{cesnavicius2023purity}.

\begin{lemma}
    Let $\cX$ be a derived geometric stack. Then truncation induces equivalences of categories
    \[t_0 : \Shv^{n-\et}_{/\cX}\to\Shv^{n-\et}_{/t_0\cX}\]
    and
    \[t_0 : \Shv^{\mathrm{open}}_{/\cX}\to\Shv^{\mathrm{open}}_{t_0\cX}\]
    where $\Shv^{n-\et}_{/\cX}$ denotes the full subcategory of derived stacks $\cX'\to\cX$ which are $n$-DM and \'etale over $\cX,$ $\Shv^{n-\et}_{/t_0\cX}$ denotes the category of higher stacks $n$-DM and \'etale over $t_0\cX,$ $\Shv^{\mathrm{open}}_{/\cX}$ the full subcategory of open immersions into $\cX,$ and $\Shv^{\mathrm{open}}_{/t_0\cX}$ the full subcategory of open immersions of higher stacks into $t_0\cX.$
\end{lemma}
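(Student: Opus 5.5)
The plan is to reduce to the affine case, which is \cite[5.2.4]{cesnavicius2023purity}, and then globalize by descent along an atlas, by induction on the geometricity level $n$ of $\cX$.

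For the affine case, let $\cX=\Spec A$. Open immersions into $\Spec A$ and $n$-DM \'etale maps to $\Spec A$ are local on $\Spec A$ in the \'etale topology; this follows from \ref{open immersions local on target} and the definition of \'etale $n$-atlases. So both categories are sheaves of $\infty$-categories on the small \'etale site of $\Spec A$. That site is identified with the small \'etale site of $\Spec \pi_0 A$ via $B\mapsto\pi_0 B$, by the topological invariance of the \'etale site for animated rings; this is the content of \cite[5.2.4]{cesnavicius2023purity} for affine \'etale algebras. The key ingredient is that an \'etale $A$-algebra $B$ satisfies $\pi_i B\cong\pi_i A\otimes_{\pi_0 A}\pi_0 B$, so $B$ is recovered as the unique deformation of $\pi_0 B$, since the cotangent complex $\LL_{B/A}$ vanishes.

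Next I bootstrap from affine \'etale maps to $n$-DM \'etale maps over $\Spec A$, inducting on $n$. An $n$-DM \'etale $\cX'\to\Spec A$ is the geometric realization of a groupoid object whose terms are disjoint unions of affines \'etale over $\Spec A$, namely the Čech nerve of an \'etale atlas. By the $(n-1)$ case, the fiber products in that Čech nerve are determined by their truncations. Since $t_0$ preserves colimits and fiber products along flat maps, an equivalence of groupoid objects yields an equivalence of stacks. Essential surjectivity follows the same way: given an \'etale $n$-DM $Y\to t_0\Spec A$, lift its atlas and groupoid levelwise using the affine case and the inductive hypothesis, then realize.

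For a general geometric $\cX$, choose an atlas $U_\bullet\to\cX$ with affine terms, so that $\cX\simeq\colim U_\bullet$. Because $t_0$ preserves colimits and flat pullbacks, $t_0U_\bullet$ is an atlas of $t_0\cX$. Both $\Shv^{n-\et}_{/(-)}$ and $\Shv^{\mathrm{open}}_{/(-)}$ send colimits of stacks to limits of $\infty$-categories: \'etale and open maps descend along effective epimorphisms, by the descent of good properties proved in the \'etale-on-the-source variant and by \ref{open immersions local on target}. The functor $t_0$ is then the limit over $\Delta$ of the equivalences at the affine levels, hence an equivalence.

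For open immersions, the open case is the full subcategory of monomorphisms. Monomorphisms are detected on truncations for \'etale maps, because the diagonal of an \'etale map is again \'etale and an \'etale map is an equivalence iff its truncation is. So the open equivalence restricts from the $n$-DM \'etale one.

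The main obstacle is the descent step: showing rigorously that $\cX\mapsto\Shv^{n-\et}_{/\cX}$ satisfies descent along smooth (not merely \'etale) atlases. Smooth covers are required in general, and \'etale morphisms are not smooth-local on the target as noted above. I would first try to bypass this by using that \'etale maps are smooth-local on the target once we restrict to maps already known to be DM and locally of finite presentation with vanishing relative cotangent complex. Failing that, I would reduce to an \'etale-local statement via the equivalence with classical $t_0$ data, where descent is classical.
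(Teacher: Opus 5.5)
Your plan matches the paper's in outline. You treat the affine case by induction on $n$, lifting the \v{C}ech groupoid of an \'etale atlas levelwise from $\Spec\pi_0 A$ to $\Spec A$ and realizing it; the realization step is To\"en--Vezzosi's criterion for quotients of \'etale Segal groupoids, as in the paper. You then pass to general $\cX$ by descent, and handle the open case as the monomorphisms among \'etale maps.

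\textbf{The gap is in the globalization step.} You flag it but do not close it, and the justification you give does not apply. \'Etale is not a good property in the paper's sense, so the atlas results for good properties cannot be quoted. Descending $n$-DM \'etale morphisms along a \emph{smooth} atlas requires showing that the existence of \'etale atlases over the base descends. (For affine morphisms, \'etaleness itself is fpqc-local on the target; the DM condition is the real issue.) Your fallbacks only gesture at this. Also, the \v{C}ech nerve of an affine smooth atlas of an $n$-geometric $\cX$ does not have affine terms. So you would additionally need an induction on the geometricity of $\cX$, or a hypercover plus hypercompleteness.

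\textbf{The missing idea is that no atlas is needed.} Descend along the diagram of \emph{all} affines $\Spec A\to\cX$; this is the first of the two options the paper mentions (``descending these constructions along all affines''). Three facts make this work:
\begin{itemize}
\item $\cX\simeq\colim_{\Spec A\to\cX}\Spec A$;
\item $\cX\mapsto\dStk_{/\cX}$ turns colimits into limits;
\item by definition, $\cY\to\cX$ is $n$-DM and \'etale exactly when every base change $\cY\times_{\cX}\Spec A\to\Spec A$ is.
\end{itemize}
Together these give $\Shv^{n-\et}_{/\cX}\simeq\lim_{\Spec A\to\cX}\Shv^{n-\et}_{/\Spec A}$ with no locality argument at all.

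On the classical side, $t_0$ preserves all limits and colimits. Hence $t_0\cX\simeq\colim_{\Spec A\to\cX}\Spec\pi_0 A$, and the affine equivalences are compatible with base change. Every classical point $\Spec R\to t_0\cX$ already occurs in this diagram, since it is the same as a point $\Spec R\to\cX$ with $R$ discrete. So the same formula computes $\Shv^{n-\et}_{/t_0\cX}$, and the global $t_0$ is the limit of the affine equivalences. With this replacement your argument goes through.

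\textbf{Open case.} Your route is slightly different from the paper's and more direct. The paper factors the diagonal as $U\to\cI\to U\times_{\cX}U$ and shows $\cI\to U\times_{\cX}U$ is an equivalence. By itself that only makes the diagonal an effective epimorphism. Your observation is that the diagonal of an \'etale map is \'etale with truncation an equivalence, hence an equivalence. That is exactly the rigidity step needed to conclude that $U\to\cX$ is a monomorphism.
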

\begin{proof}
    This follows from the previous remarks regarding truncation, the equivalence of categories $\dAff_k^{\et}\to\Aff_{\pi_0 k}^{\et},$ and the fact that a morphism $\cX\to\cY$ is an open immersion if and only if it is an \'etale monomorphism.

    Suppose first that $\cX\simeq\Spec A$ is affine. Let $\cY\to\Spec A$ be $n$-DM and \'etale. We proceed by induction on $n.$ If $n = -1,$ $\cY$ is affine and \'etale over $A,$ and we have the correspondence
    \[\dStk^{(-1)-\et}_A\simeq\dAff^{\et}_A\simeq\dAff^{\et}_{\pi_0 A}\simeq\AlgStk^{(-1)-\et}_{\pi_0 A}.\]
    
    Consider the case $n = 0.$ We may choose an \'etale $0$-atlas $\{\Spec B_i\to\cY\}_{i\in I}$ by affine derived schemes \'etale over $\cY,$ and thus these are also affine over $\Spec A.$ Taking truncations, we obtain $\Spec\pi_0(B_i)$ \'etale over $\cY_0 := t_0\cY$ and \'etale over $\Spec\pi_0 B.$ On the other hand, given the data of an \'etale $0$-atlas $\{\Spec B_{0,i}\to\cY_0\}_{i\in I}$ of $\cY_0$ which is $0$-DM and \'etale over $\Spec \pi_0 A,$ we may lift each $\Spec B_{0,i}$ to an affine derived scheme $\Spec B_i$ \'etale over $\Spec A.$
    
    Because $\cY_0$ is $0$-DM, we also find that in particular the diagonal is affine. If $Z_0 = \coprod_i \Spec B_{0,i},$ this implies that the \v{C}ech nerve $\check{C}(Z_0/\cY)_\bullet$ is equivalent to a disjoint union of simplicial classical affine schemes:
    \[
    \check{C}(Z_0/\cY)_n \simeq \coprod_{x\in I^n} S_{0,x,n}\simeq \coprod_{x\in I^n}\Spec C_{0,x}^n
    \]
    It is clear that each $S_{0,x,n}$ is \'etale over $\Spec A,$ so that by the affine correspondence we may lift $S_{0,x,\bullet}$ to a \'etale simplicial scheme $Z_\bullet$ over $\Spec A,$ such that each component is a  disjoint union of \'etale affine schemes $S_{x,n}.$

    Now, $\check{C}(Z_0/\cY)_\bullet$ is a $(-1)$-\'etale Segal groupoid object (in higher stacks over $\pi_0 A),$ and it follows formally from the affine equivalence of categories that $\cZ_\bullet$ is a $(-1)$-\'etale Segal groupoid object in derived stacks over $A.$ Thus the colimit $\cY$ of $\cZ_\bullet$ is a derived $0$-stack over $A$ by \cite[Proposition 1.3.4.2]{toen2006homotopical} and $\cZ_0\to\cY$ provides an \'etale $0$-atlas so that $\cY$ is $0$-DM and \'etale over $A.$ It is a simple check to verify that this construction is inverse to truncation.

    The induction step from $n$ to $n+1$ follows in exactly the same way as the above, as after forming an \'etale $(n+1)$-atlas of $\cY_0,$ we may form the \v{C}ech nerve associated to the cover as before, except each object of the cover will be a disjoint union of $n$-DM higher stacks over $\pi_0 A,$ which we may then lift to $A$ via the inductive hypothesis.
    
    Descending these constructions along all affines (or even just affines in a cover) of the original $\cX$ gives the equivalence when the base is not affine.

    The open case follows from the \'etale case. The truncation of an open immersion is an open immersion by already observed properties of the truncation, so we need to show that if $U\to \cX$ is \'etale such that $t_0 U\to t_0\cX$ is an open immersion (equivalently, an \'etale monomorphism), then $U\to \cX$ is in fact a monomorphism. To that end, consider the epi-mono factorization of $U\to U\times_{\cX} U:$
    \[
    U\to\cI\to U\times_{\cX} U.
    \]
    The truncation of these maps is an epi-mono factorization of
    \[
    t_0 U\to t_0(U\times_{\cX} U)\cong t_0 U\times_{t_0 \cX} t_0 U,
    \]
    and since $t_0 U\to t_0\cX$ is a monomorphism, it follows that the maps
    \[
    t_0 U\to t_0\cI\to t_0 U\times_{t_0\cX} t_0 U
    \]
    are equivalences. We now appeal to the \'etale bijection: we know that $\cI\to U\times_{\cX} U$ is \'etale, and corresponds to an isomorphism $t_0\cI\to t_0(U\times_{\cX} U),$ so that it must in fact be equivalent to the equivalence $U\times_{\cX} U\to U\times_{\cX} U,$ which completes the proof.
\end{proof}

As a consequence, we find that $t_0$ also detects smooth effective epimorphisms in the sense that if $\cX\to \cY$ is a smooth morphism of geometric derived stacks such that the induced morphism $t_0 \cX\to t_0 \cY$ is an effective epimorphism, then $\cX\to \cY$ is an effective epimorphism. To see this, consider the epi-mono factorization $\cX\to U\to \cY$ of $\cX\to \cY.$ The induced factorization $t_0 \cX\to t_0 U\to t_0 \cY$ is also an epi-mono factorization, and since $t_0 \cX\to t_0 \cY$ was assumed to be an effective epimorphism, it follows that $t_0 U\to t_0 \cY$ is an equivalence. Then, by the equivalence of categories between \'etale stacks over $\cY$ and $t_0 \cY,$ it follows that $U\to \cY$ must itself be an equivalence, since $U\to \cY$ is \'etale by \cite[Lemma 4.40]{AG2014}.

One special class of derived stacks which we will consider is the following:
\begin{mydef}
    A derived stack $X$ is a \emph{derived algebraic space} if its truncation $t_0 X$ is a classical algebraic space.
\end{mydef}

The following lemma follows from the fact that the property of being a closed immersion can be checked on classical truncations.
\begin{lemma}
    Let $X$ be a derived algebraic space. Then the diagonal $X\to X\times X$ is separated.
\end{lemma}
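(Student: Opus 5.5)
The plan is to reduce the question to the classical truncation, using the hint in the sentence preceding the lemma: being a closed immersion can be detected on $t_0$. Write $\Delta : X\to X\times X$ for the diagonal. The goal is to show that the second diagonal
\[
\Delta_\Delta : X\to X\times_{X\times X} X
\]
is a closed immersion.

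First, I would record that $t_0$ commutes with fiber products, since it has both adjoints. Hence
\[
t_0(X\times_{X\times X}X)\simeq t_0X\times_{t_0X\times t_0X}t_0X,
\]
and $t_0(\Delta_\Delta)$ is the diagonal of the classical diagonal $t_0X\to t_0X\times t_0X$.

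Second, since $t_0X$ is a classical algebraic space, its diagonal is representable by schemes and is a monomorphism. The diagonal of a monomorphism is an isomorphism, so $t_0(\Delta_\Delta)$ is an isomorphism, and in particular a closed immersion. (Even without the monomorphism observation, the diagonal of a morphism of schemes that is locally of finite type is an immersion, and separatedness classically holds for representable-by-schemes diagonals in the appropriate sense.)

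Third, I would lift this back to the derived setting. A closed immersion was defined as an affine morphism that is surjective on $\pi_0$ after every affine base change. The surjectivity condition only involves $\pi_0$. For $\Spec B\to X\times_{X\times X}X$, the truncation of the base change is the base change of $t_0(\Delta_\Delta)$ along $\Spec\pi_0B$, because $t_0$ preserves pullbacks. So on $\pi_0$ the map is $\pi_0B\to\pi_0C$, and $\pi_0C$ is computed by the classical isomorphism; hence the map is surjective (indeed an isomorphism).

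The main obstacle is affineness of $\Delta_\Delta$ at the derived level, since affineness is not obviously detected by $t_0$. I would first try a flat-pullback argument using the earlier remarks on truncation. Alternatively, I would use that $X$ is $1$-geometric, as an algebraic space's derived thickening should be, so its diagonal is $0$-geometric and the second diagonal is $(-1)$-geometric by the last item of \ref{geometricity facts}, i.e. affine. If that route is unclear, I would argue that a monomorphism-up-to-truncation whose truncation is an isomorphism is, by the equivalence of étale sites (the preceding lemma), affine locally. Once affineness is in hand, the $\pi_0$-surjectivity check above finishes the proof.
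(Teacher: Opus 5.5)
Your argument is the paper's argument made explicit. The paper's entire proof is the remark that being a closed immersion can be checked on classical truncations. Implicitly, it passes to $t_0$, as you do, where the second diagonal of the classical algebraic space $t_0X$ is an isomorphism because $t_0X\to t_0X\times t_0X$ is a monomorphism.

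The difference is that the paper folds affineness into that remark. You instead split the closed-immersion condition into two parts. The first is $\pi_0$-surjectivity, which really is a statement about truncations, and you check it correctly. The second is affineness, which you get from the geometricity count: $\Delta_X$ is $0$-geometric, so its own diagonal is $(-1)$-geometric, i.e.\ affine.

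Your route is the better-justified one, since affineness is not detected on $t_0$ for arbitrary derived stacks. But it means your hypothesis that $X$ is $1$-geometric must be stated as an assumption, not as something $X$ ``should'' satisfy. The paper's definition of a derived algebraic space constrains only $t_0X$, and without some geometricity the statement fails. Let $G$ be the \'etale sheaf $A\mapsto\pi_1(A)$. Then $t_0(BG)$ is a point, but the second diagonal $BG\to BG\times G$ is a base change of the zero section of $G$. Its fibre over $a\in\pi_1(A)$ is the subfunctor ``$a$ maps to $0$ in $\pi_1$'', which is not affine because it does not preserve pullbacks. Concretely, take the trivial square-zero extension $A=\ZZ\oplus\ZZ[1]$ with $a$ a generator. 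Then $A\simeq\ZZ\times_{\ZZ\oplus\ZZ[2]}\ZZ$ is a pullback of $A$-algebras whose three corners all kill $a$, while $A$ itself does not.

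Two repairs are needed.

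First, the last item of Lemma~\ref{geometricity facts}, applied literally to $X\to X\times X$ with $n=1$, only makes the second diagonal $0$-geometric. To get $(-1)$-geometric, fix $\Spec B\to X\times_{X\times X}X$ and set $X_B:=X\times_{X\times X}\Spec B$, which is $0$-geometric because $\Delta_X$ is. Apply that item with $n=0$ to $X_B\to\Spec B$, and use
\[
X\times_{X\times_{X\times X}X}\Spec B\simeq X_B\times_{X_B\times_{\Spec B}X_B}\Spec B .
\]

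Second, drop your alternatives (a) and (c). The second diagonal is an isomorphism on $t_0$ with relative cotangent complex $\LL_X[2]$, so it is neither flat nor \'etale unless $\LL_X\simeq 0$. Hence neither the flat-pullback properties of $\iota$ nor the equivalence between \'etale objects over a derived stack and over its truncation can be applied to it.

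The parenthetical in your second step can also go. The monomorphism observation is the right argument, and a morphism representable by schemes need not be separated.
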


\section{Quasi-Coherent Sheaves}

In this section, we will review the basic theory of quasi-coherent sheaves on derived stacks. The main result will be a base change theorem for sufficiently nice derived stacks \ref{Base change for derived algebraic spaces}, which is a generalization of \cite[Proposition 1.4]{toen2012proper}. 

\subsection{Generalities on Sheaves and Hypersheaves}

\begin{mydef}
    Let $\cC$ be a presentable $\infty$-category and consider the fpqc or \'etale topology on $\aRing_k.$ A functor $F : \PSh(\dAff_k)^{\op}\to\cC$ is a \emph{sheaf} (respectively, \emph{hypersheaf}) if for any effective epimorphism $\cX\to\cY$ (respectively, any hypercover $\cX_\bullet\to\cY$), the natural map
    \[
    F(\cY)\to\lim_{n\in\Delta}F(\Cech(\cX/\cY)_n)
    \]
    (respectively,
    \[
    F(\cY)\to\lim_{n\in\Delta}F(\cX_n)\textrm{)}
    \]
    is an equivalence.
\end{mydef}

\begin{proposition}[\protect{\cite[Propositions~A.5,A.6]{Aoki2023-mt}}]
    Let $\cC$ be an $\infty$-site. Suppose that $\cB$ is a basis for $\cC$ and $F$ is a presheaf on $\cB.$ Then $F$ is a (hyper)sheaf on $\cB$ if and only if its right Kan extension is a (hyper)sheaf on $\cC.$
\end{proposition}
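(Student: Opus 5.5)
The approach is to exhibit the (hyper)sheaf condition on $\cC$ as equivalent, via the right Kan extension, to the (hyper)sheaf condition on $\cB$. The key input is that the inclusion $u : \cB\hookrightarrow\cC$ induces an equivalence of $\infty$-topoi $\Sh(\cC)\simeq\Sh(\cB)$ (the comparison lemma), where $\cB$ carries the induced topology. We first fix conventions. We take $\cB$ to be a full subcategory such that every object $X\in\cC$ admits a covering sieve generated by objects of $\cB$, and we give $\cB$ the induced topology: a sieve on $U\in\cB$ is covering iff the sieve it generates in $\cC$ is. Let $u_*F := \mathrm{Ran}_{u} F$, so that $(u_*F)(X) \simeq \lim_{(U\to X)\in (\cB_{/X})^{\op}} F(U)$. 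Since $u$ is fully faithful, $u^* u_* F\simeq F$.

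\textbf{Step 1: the easy direction.} If $u_*F$ is a (hyper)sheaf on $\cC$, then for a covering sieve $R$ on $U\in\cB$, the descent condition for $F$ along $R\cap\cB$ is the restriction of the descent condition for $u_*F$ along $R$. To identify the two limits, we check that $R\cap\cB\subseteq R$ is cofinal in the appropriate sense. This holds because each $V\in R$ is itself covered by objects of $\cB$ lying in $R$, and the limit over $R$ of $u_*F$ can be computed as an iterated right Kan extension along $(R\cap \cB)\subseteq R$, using transitivity of Kan extensions. The hypersheaf case is handled the same way, with hypercovers in $\cB$ regarded as hypercovers in $\cC$.

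\textbf{Step 2: the converse.} Suppose $F$ is a sheaf on $\cB$. For $X\in\cC$ and a covering sieve $S$ on $X$, we must show $u_*F(X)\simeq\lim_{(V\to X)\in S} u_*F(V)$. Both sides are limits of $F$ over comma categories, namely $\cB_{/X}$ versus the category of pairs $U\to V\to X$ with $V\in S$; the latter is equivalent to $(S\cap\cB)_{/X}$ after a cofinality argument. The content is therefore to show that restriction along $(S\cap \cB)_{/X}\subseteq \cB_{/X}$ induces an equivalence on limits of $F$. We do this by right Kan extending $F$ from $(S\cap\cB)_{/X}$ to $\cB_{/X}$ and checking that the result agrees with $F$ at each $U\to X$. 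At $U$, the pullback sieve $S|_U$ is covering, so $S|_U \cap \cB$ is a covering sieve of $U$ in $\cB$, and the value of the Kan extension at $U$ is exactly the descent limit of $F$ along it. This is $F(U)$ by the sheaf condition.

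\textbf{Step 3: hypersheaves.} Here we instead use the characterization that hypersheaves are the sheaves that are local with respect to $\infty$-connective morphisms. Since the comparison equivalence from Step 2 matches $\infty$-connective maps on the two sides, it restricts to hypercomplete objects, which gives the hypersheaf statement.

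The main obstacle is the cofinality bookkeeping in Step 2. In particular, one has to justify that limits over sieves, which need not be small, behave correctly; we would handle this by working in a universe where the sites are small, or by assuming $\cB$ is small as in Aoki. One also has to check that the pullback sieve $S|_U$ intersected with $\cB$ is covering in the induced topology, which is immediate from the definition of that topology.
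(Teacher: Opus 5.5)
Your Steps 1 and 2 are correct, and they already prove the sheaf case of the statement as written. Note that the statement only asserts that $F$ is a sheaf iff $u_*F$ is one; it does not assert an equivalence of $\infty$-topoi. The paper itself just cites Aoki.

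Two small corrections to those steps. In Step 1, the operative reason is that $R$ is a sieve: for $V\in R$ one has $\cB_{/V}\subseteq R\cap\cB$, so $u_*F|_R\simeq\Ran(F|_{R\cap\cB})$ on the nose. In Step 2, the fact that $U^*S\cap\cB$ is covering needs local character together with the basis hypothesis; it is not immediate from the definition of the induced topology.

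The gap is in Step 3. There is no "comparison equivalence from Step 2." Steps 1–2 show only that $u_*$ carries $\Sh(\cB)$ into $\Sh(\cC)$ with $u^*u_*\simeq\mathrm{id}$, which gives a fully faithful embedding. You never prove essential surjectivity, i.e. that $G\simeq u_*u^*G$ for every sheaf $G$ on $\cC$. This is not formal. The natural argument shows that a map of sheaves on $\cC$ which is an equivalence on $\cB$ is locally surjective, as are its iterated diagonals. That only makes the map $\infty$-connective, so it yields an equivalence only on hypercomplete objects, which is exactly what Step 3 is meant to prove. Taking the comparison lemma as the "key input" is therefore circular here, and the claim that the equivalence "matches $\infty$-connective maps" is also left unargued.

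You do not need the equivalence. Use two facts:
\begin{enumerate}
\item $u^*\colon\PSh(\cC)\to\PSh(\cB)$ preserves limits and local epimorphisms.
\item A map $\phi$ in $\PSh(\cC)$ is a local epimorphism iff $u^*\phi$ is. To see this, cover each object by objects of $\cB$ and compose sieves.
\end{enumerate}

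\emph{Forward direction.} Suppose $F$ is a hypersheaf on $\cB$; by Step 2, $u_*F$ is a sheaf. Let $\alpha\colon A\to A'$ be $\infty$-connective in $\Sh(\cC)$. By the adjunction $u^*\dashv u_*$ and the fact that $F$ is a sheaf, $\Map(A,u_*F)\simeq\Map(L_{\cB}u^*A,F)$. By fact 1 and left exactness of $L_{\cB}$, the map $L_{\cB}u^*\alpha$ is again $\infty$-connective, so $F$ inverts it. Hence $u_*F$ is hypercomplete.

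\emph{Converse.} Suppose $u_*F$ is a hypersheaf; by Step 1, $F$ is a sheaf. Let $K_\bullet\to j_{\cB}(U)$ be a hypercover in $\PSh(\cB)$. Then $u_!K_\bullet\to j(U)$ is a hypercover in $\PSh(\cC)$:
\begin{itemize}
\item its terms are still coproducts of representables;
\item since $u^*u_!\simeq\mathrm{id}$ and $u^*$ commutes with the finite limits defining matching objects, $u^*$ of each matching map of $u_!K_\bullet$ is the corresponding matching map of $K_\bullet$, hence a local epimorphism;
\item by fact 2, the matching maps of $u_!K_\bullet$ are therefore local epimorphisms.
\end{itemize}
Now $\lim_n F(K_n)\simeq\lim_n\Map(u_!K_n,u_*F)\simeq u_*F(U)=F(U)$, so $F$ satisfies hyperdescent.

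For coefficients other than spaces, reduce to this case by testing against $\Map(d,-)$.
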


\begin{remark}
    The functor informally given by
    \begin{align*}
        \aRing_k&\to\Cat_\infty\\
        A&\mapsto\cMod_A
    \end{align*}
    is a hypersheaf for the fpqc topology, by \cite[D.6.3.3]{SAG}.
\end{remark}

\begin{proposition}[\protect{\cite[Proposition~3.48]{yaylali2022notes}}]
Let $\cC$ be a presentable $\infty$-category, and let $F : \aRing_k\to\cC$ be a (hypercomplete) sheaf with respect to either the fpqc or \'etale topology on $\aRing_k.$ Let $\bfR F$ denote the right Kan extension of $F$ along $\aRing_k\to\PSh(\dAff_k)^{\op}.$ For any diagram $\cX_{-} : K\to\PSh(\dAff_k),$ where $K$ is a simplicial set, any any morphism $\colim_k\cX_k\to\cY$ which becomes an equivalence after (hyper)sheafification, the natural map
\[
\bfR F(\cY)\to\lim_{k\in K}\bfR F(\cX_k)
\]
is an equivalence.
\end{proposition}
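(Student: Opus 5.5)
We want: for $F$ a (hyper)sheaf on $\aRing_k$ and $\bfR F$ its right Kan extension along the (opposite of the) Yoneda embedding, $\bfR F$ sends colimits that become equivalences after (hyper)sheafification to limits. The plan is to reduce everything to two formal facts: $\bfR F$ sends all colimits of presheaves to limits, and $\bfR F$ inverts (hyper)sheafification equivalences.

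\textbf{Step 1: colimits go to limits.} Let $y$ denote the Yoneda embedding of $\dAff_k$ into $\PSh(\dAff_k)$. By the pointwise formula for right Kan extensions,
\[
\bfR F(\cX)\simeq\lim_{(\Spec A\to\cX)}F(A),
\]
where the limit runs over the category of affines over $\cX$. Equivalently, viewed as a functor $\PSh(\dAff_k)^{\op}\to\cC$, $\bfR F$ is the unique limit-preserving extension of $F$; this uses that $\PSh(\dAff_k)$ is freely generated under colimits by $y$ and that $\cC$ admits small limits. Hence for any diagram $\cX_{-}:K\to\PSh(\dAff_k)$ we get $\bfR F(\colim_k\cX_k)\simeq\lim_k\bfR F(\cX_k)$. (One should address the usual size issues via a universe enlargement, or note that $F$ is accessible.) This reduces the claim to showing that $\bfR F(\cY)\to\bfR F(\colim_k\cX_k)$ is an equivalence whenever $\colim_k\cX_k\to\cY$ becomes an equivalence after (hyper)sheafification.

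\textbf{Step 2: $\bfR F$ inverts local equivalences.} This is the main step. Let $W$ be the class of morphisms in $\PSh(\dAff_k)$ inverted by (hyper)sheafification. The (hyper)sheaf $\infty$-topos is the accessible localization of $\PSh$ at the strongly saturated class generated by one of the following:
\begin{itemize}
\item in the sheaf case, the covering sieve inclusions $U\hookrightarrow y(\Spec B)$, which by the covering-sieve description of Definition \ref{fpqc and etale topologies on aRing} can be replaced by \v{C}ech nerve realizations $|\Cech(\coprod_i\Spec B_i/\Spec B)|\to\Spec B$;
\item in the hypersheaf case, the hypercover realizations $|\cU_\bullet|\to\Spec B$.
\end{itemize}
The class of maps inverted by $\bfR F$ is strongly saturated, because $\bfR F$ turns colimits into limits by Step 1. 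So it suffices to check the generators. For a \v{C}ech nerve, Step 1 gives
\[
\bfR F\bigl(|\Cech_\bullet|\bigr)\simeq\lim_{\Delta}\bfR F(\Cech_n).
\]
Each $\Cech_n$ is a finite coproduct of fiber products of affines, hence equivalent to $y$ of a finite product of derived rings of the form $B_{i_0}\otimes_B\cdots\otimes_B B_{i_n}$. One caveat: a finite coproduct of representables is not representable as a presheaf. However $F$, being a sheaf, sends finite products of rings to products, and $\bfR F$ sends coproducts to products, so the two descriptions agree. The sheaf condition for $F$ then identifies this limit with $F(B)=\bfR F(\Spec B)$. The hypersheaf case is identical, with hypercovers in place of \v{C}ech nerves.

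\textbf{Step 3: conclusion.} Given $\colim_k\cX_k\to\cY$ in $W$, Step 2 gives $\bfR F(\cY)\simeq\bfR F(\colim_k\cX_k)$, and Step 1 identifies the right-hand side with $\lim_k\bfR F(\cX_k)$. The composite is the natural map, which proves the claim.

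I expect the main obstacle to be Step 2. Its subtle points are identifying the generating class of local equivalences, where one may cite \cite[6.2.2.7, 6.5.2.14]{HTT}, and the finite-coproduct bookkeeping noted above.
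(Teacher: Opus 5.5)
The paper gives no argument of its own here. The statement is quoted from \cite[Proposition~3.48]{yaylali2022notes}, so there is no internal proof to match, and your argument is correct as a self-contained proof.

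Two steps deserve one more line each.
\begin{itemize}
\item \emph{Passing from arbitrary covering sieves to \v{C}ech realizations.} A covering sieve $U\subseteq y(\Spec B)$ contains the sieve $U_0$ generated by a finite family with $B\to\prod_i B_i$ faithfully flat. The map $U_0\to U$ is the colimit, over representables $V\to U$, of the maps $U_0\times_{y(\Spec B)}V\to V$. Each of these is the \v{C}ech realization of the base-changed family, which is still finite and faithfully flat, so $U_0\to U$ lies in the generated class. Two-out-of-three then puts $U\to y(\Spec B)$ in the class as well.
\item \emph{The hypercomplete case.} Take the generating hypercovers to have each term a coproduct of affines. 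Then $\bfR F$ of each term is a product of values of $F$, and the hyperdescent hypothesis on $F$ applies verbatim.
\end{itemize}

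A shorter route avoids identifying generators of the local equivalences altogether: reduce to $\cC=\Ani$.
\begin{itemize}
\item For $c\in\cC$, set $G_c:=\Map_{\cC}(c,F(-))$. This is an $\Ani$-valued (hyper)sheaf, since $\Map_{\cC}(c,-)$ preserves limits.
\item By the pointwise formula and Yoneda, $\Map_{\cC}(c,\bfR F(\cX))\simeq\Map_{\PSh(\dAff_k)}(\cX,G_c)$.
\item Both halves of the statement are then tautological. First, $\Map(\colim_k\cX_k,G_c)\simeq\lim_k\Map(\cX_k,G_c)$. Second, $\Map(-,G_c)$ inverts every map inverted by (hyper)sheafification, because $G_c$ is a local object for that localization.
\item As $c$ varies, the functors $\Map_{\cC}(c,-)$ jointly detect equivalences, which finishes the proof.
\end{itemize}

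What each approach buys:
\begin{itemize}
\item Your version makes explicit exactly which descent properties of $F$ enter: \v{C}ech or hypercover descent, and preservation of finite products. The cost is that you need the description in \cite{HTT} of the (hyper)sheafification equivalences as a strongly saturated class generated by a specific set of maps.
\item The Yoneda reduction uses only the definition of a (hyper)sheaf as a local object.
\item Both share the same size issue: $\dAff_k$ is large. The statement sidesteps this by presupposing that $\bfR F$ exists.
\end{itemize}
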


\begin{cor}[\protect{\cite[Proposition~3.52]{yaylali2022notes}}]
    Let $\cC$ be a presentable $\infty$-category, and let $F : \aRing_k\to\cC$ be an \'etale sheaf. Let $\bfR F$ denote a right Kan extension of $F$ along the Yoneda embedding $\aRing_k\to\PSh(\dAff_k)^{\op}.$ Then, for any derived scheme $X$ over $k,$ the natural morphism
    \[
    \bfR F(X)\to\lim_{U\to X} F(U)
    \]
    is an equivalence, where the limit is taken over affine open derived subschemes $U\to X.$
\end{cor}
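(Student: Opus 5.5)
The plan is to deduce the corollary from the preceding proposition (\cite[Proposition~3.48]{yaylali2022notes}) by exhibiting $X$, after sheafification, as the colimit of its affine open derived subschemes. Let $\cU$ denote the (essentially small) poset of affine open immersions $U\to X$. Open immersions are monomorphisms, so the full subcategory of $\dStk_{/X}$ spanned by them is equivalent to a poset, and $\cU$ is a full subposet. I will consider the diagram $\cU\to\PSh(\dAff_k)$, $U\mapsto U$, and the natural map
\[
\colim_{U\in\cU} U\to X
\]
in presheaves. If this map becomes an equivalence after \'etale sheafification, the proposition gives $\bfR F(X)\simeq\lim_{U\in\cU}\bfR F(U)$. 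For affine $U=\Spec A$, the right Kan extension satisfies $\bfR F(U)\simeq F(A)$, because the Yoneda embedding is fully faithful. This yields the claimed equivalence.

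So the main step is to show that $\colim_{\cU}U\to X$ is an \'etale-local equivalence. Since both source and target are presheaves and sheafification is left exact and commutes with colimits, it suffices to check two things after sheafification: the map is an effective epimorphism, and it is a monomorphism.

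\textbf{Effective epimorphism.} By definition, $X$ admits an atlas $\{\Spec A_i\to X\}$ of open immersions, and the $\Spec A_i$ are objects of $\cU$. Hence $\coprod_i\Spec A_i\to X$ factors through $\colim_{\cU}U$. Because this coproduct map is an effective epimorphism, so is the map from the colimit.

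\textbf{Monomorphism.} I will compute the fiber product $(\colim_{\cU}U)\times_X(\colim_{\cU}U)$ using universality of colimits in the $\infty$-topos. It equals $\colim_{(U,V)\in\cU\times\cU}U\times_XV$. Each $U\times_XV$ is an open subscheme of $U$, but it is not necessarily affine. It is, however, covered by affine opens $W\subseteq U\cap V$, and these lie in $\cU$. I then need to identify the diagonal map with an equivalence. Equivalently, I can argue pointwise: for any $\Spec B\to X$, the fiber of $\colim_{\cU}U$ over it, after sheafification, is the colimit over $\cU$ of the subobjects $U\times_X\Spec B\subseteq\Spec B$. This is a filtered-type union of open subobjects. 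A colimit of a diagram of subobjects indexed by a poset closed under the relevant intersections locally computes their union, which is a $(-1)$-truncated object. I expect this step to be the main obstacle, because $\cU$ is not closed under intersections; the intersection of two affine opens need not be affine.

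To handle this, I would first try the standard cofinality argument. I would compare with the poset $\cU'$ of all open subschemes $V\subseteq X$. This poset is closed under finite intersections, so the colimit over it of subobjects is their union, computed via a \v{C}ech/hypercover argument. I would then show that, after sheafification, $\colim_{\cU}U\simeq\colim_{\cU'}V$. This comparison uses the fact that each $V$ is itself the colimit of the affine opens contained in it, and I would obtain that by induction, or by noting that the inclusion $\cU_{/V}\subseteq\cU$ gives a left Kan extension. If this gets messy, the fallback is to bypass it. Both sides are sheaves in $X$ for the Zariski topology: the left side by \ref{open immersions local on target} and descent of $\bfR F$ along the atlas, and the right side $\lim_{U}F(U)$ because it is a right Kan extension along the basis of affine opens, which is a sheaf by \cite[Propositions~A.5,A.6]{Aoki2023-mt}. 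They also agree on affines, where $\cU$ has the terminal object $X$ itself. Hence they agree on all derived schemes.
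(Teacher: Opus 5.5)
Your overall strategy is the intended one: exhibit $X$ as the sheafified colimit of its affine opens, then apply the preceding proposition \cite[Proposition~3.48]{yaylali2022notes}, using $\bfR F(U)\simeq F(U)$ for affine $U$. The effective-epimorphism half is fine. The monomorphism half, however, carries the entire content, and neither of your routes establishes it.

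\emph{The cofinality route is circular.} The poset of all opens has $X$ as a terminal object. So comparing $\colim_{\mathcal{U}}U$ with the colimit over all opens, or passing through $\mathrm{Lan}\,\iota$, just returns $\colim_{\mathcal{U}}U$ again. The input you say this comparison needs, namely that each open $V$ is the sheafified colimit of the affine opens inside it, is the statement itself for $V$. There is also no induction parameter available when $X$ is not quasi-compact and quasi-separated.

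\emph{The fallback relocates the same gap.} The step ``both sides are Zariski sheaves on $\mathrm{Open}(X)$ and agree on affine opens, hence agree'' needs restriction to the basis $\mathcal{U}$ to be conservative on sheaves. Equivalently, every sheaf on $\mathrm{Open}(X)$ must be right Kan extended from $\mathcal{U}$. The quoted result of Aoki only says that the right Kan extension of a sheaf on a basis is a sheaf. For non-hypercomplete sheaves and a basis not closed under intersections, conservativity is not a formal consequence of having a basis. Applied to $V\mapsto\bfR F(V)$, it is literally the corollary.

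The missing ingredient is that affine opens of an \emph{affine} derived scheme $V$ are closed under finite intersections, since $W_1\cap W_2=W_1\times_V W_2$ is affine. Use it as follows.
\begin{enumerate}
\item You already know $\coprod_{V\in\mathcal{U}}V\to X$ is an effective epimorphism. By descent, it suffices to check that $L(\colim_{\mathcal{U}}U)\to X$ becomes an equivalence after base change to each affine open $V$. By universality of colimits, this base change is $\colim_{U\in\mathcal{U}}(U\cap V)\to V$.
\item For any open $O\subseteq V$, the affine opens $W\subseteq O$ form a poset closed under finite intersections. Hence the fiber of the presheaf colimit $\colim_W W\to O$ over any $T\to O$ is the nerve of an empty or cofiltered poset. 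So this colimit is a Zariski covering sieve, and $\colim_W W\simeq O$ after sheafification.
\item Apply this to each $U\cap V$. This rewrites the base change as a colimit over $\mathcal{E}=\{(U,W): W\subseteq U\cap V\}$, with $W$ ranging over affine opens.
\item The projection $(U,W)\mapsto W$ onto the affine opens of $V$ is cofinal. Indeed, for each affine open $W_0\subseteq V$, the poset $\{(U,W)\in\mathcal{E}: W_0\subseteq W\}$ has initial object $(W_0,W_0)$.
\item Hence the colimit is $\colim_{W\subseteq V}W\simeq V$, by step 2 with $O=V$. The preceding proposition then finishes the proof.
\end{enumerate}
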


Similarly, if $\cX$ is a geometric (respectively DM) derived stack, we may calculate its value on any \'etale/fpqc sheaf $F$ as
\[
F(\cX)\simeq \lim_{\Spec A\to\cX} F(\Spec A)
\]
where the limit is taken over smooth (respectively \'etale) maps $\Spec A\to\cX.$

\subsection{The Category of Quasi-Coherent Sheaves}

\begin{mydef}
    Let $k$ be an animated ring, and let $\cX$ be a presheaf on $\dAff_k.$ The \emph{$\infty$-category of quasi-coherent modules on $\cX,$} denoted $\QC(\cX),$ is defined to be the value at $\cX$ of the right Kan extension of 
    \[
    \cMod_{(-)} : \aRing\to\Cat_{\infty}
    \]
    along (the opposite of) the Yoneda embedding $\aRing_k\simeq\dAff_k^{\op}\to\PSh(\dAff_k)^{\op}.$ Denote this functor by
    \[
    \QCoh : \PSh(\dAff_k)^{\op}\to\Cat_\infty.
    \]
\end{mydef}

\begin{remark}
    Somewhat more explicitly, this Kan extension is given by
    \[
    \QC(\cX)\simeq\lim_{\Spec A\to\cX}\cMod_A,
    \]
    with transition maps given by (derived) pullback/tensor product. Informally, an object of $\QC(\cX)$ consists of an $A$-module $f^*\cF$ for every $f : \Spec A\to\cX,$ such that given any composition $\Spec B\xrightarrow{g}\Spec A\xrightarrow{f}\cX,$ we have compatible equivalences
    \[
    g^*\left(f^*\cF\right)\xrightarrow{\sim}(g\circ f)^*\cF.
    \]
    Given a map $f : \Spec A\to\cX,$ we denote the canonical functor $\QC(\cX)\to\cMod_A$ by $f^*.$ Note also that in fact, $\cMod_{(-)}$ factors through $\Pr^{L,\st},$ the $\infty$-category of presentable stable $\infty$-categories with left adjoints as morphisms, and as such, $\QC(\cX)$ is also a presentable stable $\infty$-category\footnote{This uses the fact that the transition maps in the limit description of $\QC(\cX)$ preserve finite limits.}, and $f^*$ is a left adjoint, whose right adjoint is denoted $f_*.$
\end{remark}

This construction is a generalization of the derived category of sheaves of $\OX$-modules with quasi-coherent cohomology in the following sense.

\begin{proposition}[\protect{\cite[Proposition~3.57]{yaylali2022notes}}]
    Let $X$ be a classical scheme. Then there is a natural equivalence of $\infty$-categories $\Dqc(X)\simeq\QC(X),$ where $\Dqc(X)$ denotes the derived $\infty$-category of $\OX$-modules with quasi-coherent cohomology.
\end{proposition}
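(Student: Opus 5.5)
The strategy is to compare two limit presentations of the same kind of object. On the derived side, $\QC(X)\simeq\lim_{\Spec A\to X}\cMod_A$, and we may cut the indexing category down to a basis. On the classical side, we use Zariski descent for $\Dqc$. The argument then has three steps, carried out in the following order.

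\textbf{Step 1: affines.} For a discrete ring $A$, we identify $\cMod_A\simeq\cD(A)$ as in the earlier remark. For an affine scheme $U=\Spec_{\cl}A$, we have the classical equivalence $\Dqc(U)\simeq\cD(A)$ given by $R\Gamma$ and $\widetilde{(-)}$ (Bökstedt–Neeman / \cite{stacks-project}). Both equivalences are compatible with pullback along maps of affines: on one side it is the derived tensor product, on the other it is $Lf^*$. So they assemble into a natural equivalence of functors from classical affine schemes to $\Cat_\infty$. The one subtlety is that the derived-side functor is evaluated at $\iota(\Spec_{\cl}A)=\Spec A$ with $A$ discrete.

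\textbf{Step 2: reduce the derived side to open affines.} $\cMod_{(-)}$ is an fpqc (in particular étale) sheaf on $\aRing$. By \cite[Proposition~3.52]{yaylali2022notes} (the corollary recalled above), for the derived scheme $\iota X$ we get
\[
\QC(X)\simeq\lim_{U\subseteq X\ \text{open affine}}\cMod_{\cO(U)}.
\]
Open affine derived subschemes of $\iota X$ correspond to classical open affines of $X$ by the equivalence $\Shv^{\mathrm{open}}_{/\cX}\simeq\Shv^{\mathrm{open}}_{/t_0\cX}$. They have discrete rings of functions, because $\iota$ preserves flat pullbacks and the flat-over-classical property forces them into the image of $\iota$.

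\textbf{Step 3: Zariski descent for $\Dqc$.} We show $\Dqc(X)\simeq\lim_{U}\Dqc(U)$ over the same poset of open affines. The functor $U\mapsto\cD(\cO_U\text{-}\mathrm{Mod})$ is a Zariski sheaf of $\infty$-categories: derived categories of sheaves of modules glue along open covers. The quasi-coherent-cohomology condition is local. So we may pass to the full subcategories, and the limit over all opens restricts to the limit over the basis of affines by the Aoki basis result \cite[A.5, A.6]{Aoki2023-mt}. Combining this with Steps 1 and 2 gives the equivalence.

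The main obstacle is Step 3, and specifically the sheaf property of $U\mapsto\cD(\cO_U)$ at the $\infty$-categorical level, including the higher coherences. For this I would cite Lurie's descent for derived categories of sheaves, identifying $\cD(\cO_X)$ with $\cO_X$-module objects in $\Sh(X;\Sp)$. Hypercompleteness issues do not arise for quasi-coherent objects on affines. Alternatively, I would invoke \cite[Prop.~2.2.6.1]{SAG}, which directly identifies $\QC$ with $\Dqc$ for spectral/classical schemes.
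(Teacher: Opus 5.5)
The paper does not prove this itself; it only cites \cite[Proposition~3.57]{yaylali2022notes}. Your reduction is the standard argument and is correct in outline: you write both sides as limits over the poset of open affines of $X$, using the recalled \cite[Proposition~3.52]{yaylali2022notes} on the derived side and Zariski descent for $\Dqc$ plus the basis comparison on the classical side, and then match the affine pieces via $\cMod_A\simeq\cD(A)\simeq\Dqc(\Spec_{\cl}A)$. Your argument that the open affines of $\iota X$ are $\Spec$ of the discrete rings $\cO(U)$ is exactly what is needed. It uses the truncation equivalence on open immersions together with the fact that anything flat over $\iota X$ lies in the image of $\iota$.

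The step that actually needs an argument is Step~1, not Step~3. Objectwise equivalences $\cMod_{\cO(U)}\simeq\Dqc(U)$ plus homotopy-commutative restriction squares do not by themselves assemble into a natural transformation of $\Cat_\infty$-valued functors. You must construct the comparison coherently, though you only need it over the poset of open affines of $X$, not over all classical affines.

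One way to do this is as follows.
\begin{enumerate}
\item Take an open $V\subseteq X$ and $\cF\in\Dqc(V)$, regarded as an $\cO_V$-module in hypercomplete sheaves of spectra. Evaluation $U\mapsto(\cO(U),R\Gamma(U,\cF))$ is then a single functor from the opposite of the poset of affine opens $U\subseteq V$ to $\MOD$.
\item Quasi-coherence of the cohomology gives $R\Gamma(U,\cF)\otimes_{\cO(U)}\cO(U')\simeq R\Gamma(U',\cF)$, i.e.\ the functor sends inclusions to coCartesian edges. It therefore defines an object of $\lim_{U\subseteq V}\cMod_{\cO(U)}\simeq\QC(V)$, functorially in $\cF$ and in $V$.
\item This is a map of Zariski sheaves of $\infty$-categories $\Dqc(-)\to\QC(-)$ on $X$, so it suffices to check it on affine $V$. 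There $V$ is the largest index, so the limit is evaluation at $V$. The map is then $R\Gamma(V,-)\colon\Dqc(V)\to\cD(\cO(V))$, which is an equivalence by B\"okstedt--Neeman.
\end{enumerate}

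Step~3, by contrast, is routine. The derived $\infty$-category of $\cO_U$-modules is $\cO_U$-modules in \emph{hypercomplete} sheaves of spectra. Both hypercompleteness and quasi-coherence of cohomology sheaves are local conditions, so descent for $U\mapsto\Sh(U;\Sp)$ passes to these full subcategories.

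Your SAG fallback also works, but it lands in Lurie's sheaf-theoretic quasi-coherent sheaves on the \'etale site. You would still need to compare that with the functor-of-points $\QC$ used here.
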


In particular, the homotopy category of $\QC(X)$ is equivalent to the $1$-category $\sfD_{\qc}(X).$ 

The $\infty$-category $\QC(\cX)$ admits a t-structure $(\QC(\cX)_{\geq 0},\QC(\cX)_{\leq 0}),$ determined by the rule that $\cF\in\QC(\cX)_{\geq 0}$ if for any $f : \Spec A\to\cY,$ we have $f^*\cF\in\cMod_A^{\cn}$. This defines a t-structure by \cite[1.2.1.16]{HA}, and the adjoint functor theorem guarantees that the inclusion $\QC(\cX)_{\geq 0}\to\QC(\cX)$ admits a right adjoint, since $\QC(\cX)_{\geq 0}$ is stable under colimits.

When $\cX$ is geometric, we can be more explicit about the t-structure.
\begin{proposition}[\protect{\cite[Proposition~1.5.4]{GR2019}}]
    \begin{enumerate}
        \item Let $\cX$ be a derived geometric stack, and let $\{f_i : \Spec A_i\to\cX\}$ be a smooth atlas of $\cX.$ Then $\cF\in\QC(\cX)$ belongs to $\QC(\cX)_{\geq 0}$ (respectively $\QC(\cX)_{< 0}$) if and only if each $f_i^*\cF$ belongs to $\QC(\Spec A_i)_{\geq 0}\simeq\cMod_{A_i}^{\cn}$ (respectively, $\QC(\Spec A_i)_{< 0}$).
        \item Let $f : \cX\to\cY$ be a flat map of geometric derived stacks. Then $f^* : \QC(\cY)\to\QC(\cX)$ is t-exact.
    \end{enumerate}
\end{proposition}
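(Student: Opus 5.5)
For part (1), I would reduce to the definition of the t-structure and to flat pullback on affines. The "only if" direction is immediate from the definition of $\QC(\cX)_{\geq 0}$. For $\QC(\cX)_{<0}$ it is a formal consequence once part (2) is known for the smooth maps $f_i$.

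For the "if" direction, suppose each $f_i^*\cF$ is connective and let $g : \Spec B\to\cX$ be arbitrary. By \ref{refinement through atlas} there is a smooth cover $\{p_j : \Spec B_j\to\Spec B\}$ such that each $g\circ p_j$ factors through some $f_{i_j}$ via $q_j : \Spec B_j\to\Spec A_{i_j}$. Then
\[
p_j^*g^*\cF\simeq q_j^*f_{i_j}^*\cF,
\]
which is connective because $q_j^*$ is right t-exact on modules: it is $-\otimes_{A_{i_j}}B_j$. So it suffices to check that connectivity descends along a smooth, hence faithfully flat, cover of affines.

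\textbf{Affine flat case.} This is the key algebraic input. Let $A\to B$ be flat and $M\in\cMod_A$. Then
\[
\pi_n(M\otimes_A B)\cong\pi_n(M)\otimes_{\pi_0A}\pi_0B.
\]
I would prove this by the Tor spectral sequence together with the flatness condition $\pi_*B\cong\pi_*A\otimes_{\pi_0A}\pi_0B$, or by citing \cite[7.2.2.13]{HA}. Faithful flatness of $\prod_j B_j$ over $B$ then shows that $\pi_n(g^*\cF)=0$ for $n<0$ exactly when all $\pi_n(p_j^*g^*\cF)=0$. A finite subcover suffices by quasi-compactness of $\Spec B$. The same homotopy formula handles coconnective objects. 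It also gives part (2) for affine flat maps, namely that $-\otimes_AB$ is t-exact.

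\textbf{Coconnective half.} I expect this to be the main obstacle. $\QC(\cX)_{<0}$ is defined as a right orthogonal, not pointwise, so one must show that for geometric $\cX$ it is detected on a smooth atlas. I would write $\QC(\cX)\simeq\lim\QC(\Spec A)$ over the \v{C}ech nerve of the atlas. Pulling back along the atlas is faithful, and it has a right adjoint $f_*$, which is a limit of pushforwards along smooth maps of lower geometricity. By induction on $n$, using the already established t-exactness of flat pullbacks on lower-geometric pieces, $f_*$ is left t-exact. Then:

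\begin{itemize}
\item If each $f_i^*\cF$ is coconnective, then $\cF\to\lim f_{\bullet*}f_\bullet^*\cF$ is an equivalence by descent.
\item The limit of coconnective objects is coconnective.
\end{itemize}

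This shows $\cF\in\QC(\cX)_{<0}$.

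For part (2), let $f:\cX\to\cY$ be flat. Right t-exactness holds for any pullback, since connective objects are tested pointwise and $f^*$ commutes with restriction to affines. For left t-exactness, choose atlases as in the chart lemma and use part (1) to reduce to checking coconnectivity after pulling back to an atlas of $\cX$. The composite to an affine atlas of $\cY$ is flat between affines, so the affine case applies.
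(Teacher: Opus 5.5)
Your treatment of the connective half of (1) is fine. So is the ``if'' direction for $\QC(\cX)_{<0}$, provided the induction is set up so that pullback along the smooth projections $U_k\to U_0$ of the \v{C}ech nerve is already known to preserve coconnective objects. Note that the terms $U_k$ are $(n-1)$-geometric, not affine. Left $t$-exactness of $f_{k*}$ itself is formal, since $f_{k*}$ is right adjoint to the right $t$-exact $f_k^*$.

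The genuine gap is the ``only if'' direction for $\QC(\cX)_{<0}$: that $f_i^*\cF$ is coconnective whenever $\cF$ is merely right orthogonal to $\QC(\cX)_{\geq 0}$. You defer this to part (2) for the maps $f_i$. But your proof of left $t$-exactness in (2) amounts to writing $u^*f^*\cG\simeq h^*v^*\cG$ and applying the affine flat case to $h^*$. That presupposes $v^*\cG$ is already coconnective, which is exactly the ``only if'' statement on the target. For $f=f_i$ and $\cY=\cX$ this is circular. Induction on $n$ does not break the circle, because both statements concern the same $n$-geometric stack. Nothing else in your sketch supplies it: the adjunction controls $f_{i*}$, not $f_i^*$, and $f_i^*$ has in general no left adjoint through which the orthogonality could be transported.

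The missing idea is to glue the truncations by descent. This is the standard route; the paper itself gives no argument beyond the citation.
\begin{itemize}
\item Write $\QC(\cX)\simeq\lim\QC(U_\bullet)$ for the \v{C}ech nerve of $U_0=\coprod_i\Spec A_i\to\cX$. Compute the totalization over $\Delta_{\mathrm{inj}}$, so that only face maps occur.
\item The face maps are smooth maps between $(n-1)$-geometric stacks or disjoint unions of affines. By induction, pullback along them is $t$-exact.
\item Hence the levelwise truncations $\tau_{\geq 0}$ commute with the transition functors. They assemble to a map $\cF'\to\cF$ in $\QC(\cX)$ with $f_0^*\cF'\simeq\tau_{\geq 0}f_0^*\cF$.
\item By the connective half, $\cF'\in\QC(\cX)_{\geq 0}$. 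So if $\cF\in\QC(\cX)_{<0}$, the map $\cF'\to\cF$ is null.
\item Restricting to $U_0$, the truncation map $\tau_{\geq 0}f_0^*\cF\to f_0^*\cF$ is null, which forces $\tau_{\geq 0}f_0^*\cF\simeq 0$.
\end{itemize}
Equivalently, the levelwise truncations define a $t$-structure on the limit with the same connective part as the given one. It is therefore equal to the given one, and its coconnective part is detected on $U_0$. With this established at each level, your argument for (2) goes through as written.
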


\subsection{Base Change and the Spectral Stack associated to a Geometric Stack}

We will deduce our base change theorem as a corollary of Lurie's base change theorem for qcqs spectral algebraic spaces \cite[Corollary 3.4.2.2]{SAG}. To achieve this, we must establish a few comparison and compatibility results relating derived stacks and spectral stacks as in \cite{SAG}. The comparisons here are possibly well-known to the experts, but we have not seen many discussions in the literature. For the same comparisons discussed in a different language, we refer to \cite[Section 2]{https://doi.org/10.1112/jlms.12738} and \cite[Section 3]{chough2025formalderivedalgebraicgeometry}.

Recall the forgetful functor $\theta : \aRing\to\CAlgcn$ from animated rings to connective $E_\infty$-algebras. Composing with the Yoneda embedding $j : (\CAlgcn)^{\op}\to\Fun(\CAlgcn,\Ani)$ produces a functor
\[\aRing^{\op}\xrightarrow{\tilde{\theta}}\Fun(\CAlgcn,\Ani)\]
which is given on objects by $A\mapsto \Map_{\CAlgcn}(\theta(A),-).$ Forming the left Kan extension $\Theta$ of $\tilde{\theta}$ in the diagram
\[\begin{tikzcd}
    \aRing^{\op}\ar[d, hook]\ar[r,"\tilde{\theta}"] &\Fun(\CAlgcn,\Ani)\\
    \dStk\ar[ru,"\Theta"']
\end{tikzcd}\]
produces our desired ``underlying spectral stack'' functor $\Theta : \dStk\to\Fun(\CAlgcn,\Ani).$ Note that $\tilde{\theta}$ preserves all finite limits and colimits, as $j$ preserves all limits and colimits by \cite[Proposition 5.1.3.2]{HTT} and $\theta$ preserves all limits and colimits because it is both a right and left adjoint, so that $\Theta$ preserves finite limits by \cite[Proposition 6.1.5.2]{HTT}\footnote{Note that $\Theta$ is the restriction along $\dStk\to\Fun(\aRing,\Ani)$ of the left Kan extension of $\tilde{\theta}$ along the Yoneda embedding $\aRing^{\op}\to\Fun(\aRing,\Ani).$}. In particular, it follows that $\Theta(\cX\times_{\cY}\cX')\simeq\Theta(\cX)\times_{\Theta(\cY)}\Theta(\cX').$

Let $\cQ$ denote a right Kan extension of $\cMod$ along the Yoneda embedding
\[\begin{tikzcd}
    \CAlgcn\ar[r,"\cMod"]\ar[d] &\Pr^{L,\st}\\
    \PSh((\CAlgcn)^{\op})^{\op}\ar[ru,"\cQ = \Ran\cMod"'],
\end{tikzcd}\]
then the universal property of the right Kan extension induces a canonical natural transformation
\[
\rho : \cQ(\Theta(-))\to\QC(-)
\]
of functors $\Fun(\aRing,\Ani)\to\Pr^{L,\st}.$ Our goal is to show that for a geometric derived stack $\cX,$ these functors agree. 

\begin{lemma}
    Let $A\to B$ be a flat morphism of animated rings. Then $\theta(A)\to\theta(B)$ is a flat morphism of connective $E_\infty$-algebras. If moreover $A\to B$ is \'etale, then so is $\theta(A)\to\theta(B).$
\end{lemma}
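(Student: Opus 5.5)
The plan is to unwind the definitions on both sides and compare them through homotopy groups. By definition, a morphism $f : A\to B$ in $\aRing$ is flat if two things hold. First, $\pi_0 f$ is a flat map of discrete rings. Second, for every $i$ the map $\pi_i A\otimes_{\pi_0 A}\pi_0 B\to\pi_i B$ is an isomorphism. In \cite[7.2.2.10]{HA}, Lurie defines flatness of a morphism $R\to R'$ of connective $E_\infty$-rings by the same two conditions: $\pi_0 R\to\pi_0 R'$ is flat, and $\pi_i R\otimes_{\pi_0 R}\pi_0 R'\to\pi_i R'$ is an isomorphism for all $i$. In the sense of \cite[7.5.1.1]{HA}, a map is étale if it is flat and $\pi_0 R\to\pi_0 R'$ is étale. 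It therefore suffices to check that $\theta$ does not change the relevant homotopy-theoretic data.

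The first step is to recall that, by the definition given above, $\pi_i(A) := \pi_i(\theta(A))$. The second step is to check compatibility of the ring and module structures. The graded ring structure on $\pi_*(A)$, together with the $\pi_0 A$-module structure on each $\pi_i A$, is by definition the one coming from $\theta(A)$. The maps $\pi_i(f)$ are the maps $\pi_i(\theta(f))$ by functoriality. Hence the comparison maps $\pi_i A\otimes_{\pi_0 A}\pi_0 B\to\pi_i B$ and $\pi_i\theta(A)\otimes_{\pi_0\theta(A)}\pi_0\theta(B)\to\pi_i\theta(B)$ are literally the same map. The same identification shows that $\pi_0\theta(f)=\pi_0 f$ as maps of discrete commutative rings. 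Here one uses that $\theta$ commutes with $\pi_0$, i.e.\ that the truncations $\tau_{\leq 0}$ are compatible, which follows from \cite[Remark 25.1.3.3]{SAG} quoted earlier.

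Given this, flatness transfers immediately. If $f$ is also étale, then $\pi_0 f$ is étale, so $\theta(f)$ is flat with étale $\pi_0$, and it is therefore étale in Lurie's sense. One could instead use the characterization of étale maps in \cite[7.5.0.6]{HA} via finite presentation plus vanishing of the cotangent complex. The criterion via $\pi_0$ given in \cite[Proposition 7.5.1.x]{HA} avoids this, since flat maps with étale $\pi_0$ are exactly étale maps of $E_\infty$-rings.

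The only point needing care is the second step: confirming that the multiplicative structure on $\pi_*A$ used in the animated definition agrees with the one induced from $\theta(A)$. I would settle this by noting that $\theta$ preserves sifted colimits and is symmetric monoidal on underlying modules. On free objects $\Poly_k$ the structures agree trivially, and by definition we take $\pi_*A$ through $\theta$ in any case, so this is essentially a tautology.
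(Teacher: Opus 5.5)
Your proposal is correct and follows the same route as the paper: flatness and \'etaleness are defined by the same conditions on $\pi_0$ and on the maps $\pi_i A\otimes_{\pi_0 A}\pi_0 B\to\pi_i B$ for animated rings and for connective $E_\infty$-rings, and $\pi_*(A)$ is by definition $\pi_*(\theta(A))$, so the conditions transfer verbatim along $\theta$. The placeholder ``Proposition 7.5.1.x'' is not needed, because Lurie's definition of an \'etale map of $E_\infty$-rings already says exactly ``flat with \'etale $\pi_0$.''
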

\begin{proof}
    Flatness and \'etaleness are both conditions on homotopy groups. In particular, $A\to B$ is flat (respectively, \'etale) if $\pi_0 A\to\pi_0 B$ is flat (respectively, \'etale) and $\pi_* A\otimes_{\pi_0 A}\pi_0 B\to\pi_* B$ is an isomorphism of graded rings (the definitions are identical for animated rings and for $E_\infty$-rings). The result then follows from the fact that $\theta$ preserves homotopy groups. 
\end{proof}

In fact, the functor $\Theta$ preserves many nice properties of derived stacks which are desirable. For example, if $f : \cX\to\cY$ is a map of derived DM stacks, then $f$ is quasi-compact (respectively quasi-separated) if and only if $\Theta(f)$ is quasi-compact (respectively quasi-separated). In particular, this applies to derived algebraic spaces, which is the generality in which we will use this result. We refer to \cite[Lemma 3.46]{chough2025formalderivedalgebraicgeometry} for a proof, in a language closer to that of \cite{SAG}.

\begin{cor}
    Suppose $\cX$ is a geometric derived stack. Then $\rho_{\cX} : \cQ(\Theta(\cX))\to\QC(\cX)$ is an equivalence.
\end{cor}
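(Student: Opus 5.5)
We argue by induction on the geometricity level $n$ of $\cX$. The idea is to show that both $\cQ\circ\Theta$ and $\QC$ send a smooth atlas and its \v{C}ech nerve to limit diagrams, and that $\rho$ is an equivalence on affines.

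\emph{Affine case.} If $\cX\simeq\Spec A$, then $\Theta(\Spec A)\simeq\Map_{\CAlgcn}(\theta(A),-)$ is representable, so $\cQ(\Theta(\Spec A))\simeq\cMod_{\theta(A)}$. By definition $\QC(\Spec A)\simeq\cMod_A:=\cMod_{\theta(A)}(\Sp)$. Unwinding the definition of $\rho$ through the universal property of the right Kan extension shows that $\rho_{\Spec A}$ is the identity under these identifications. The same holds for disjoint unions of affines, since both functors send coproducts to products: $\QC$ does because it is a right Kan extension, and $\cQ\circ\Theta$ does because $\Theta$ is a left Kan extension and so preserves colimits of presheaves.

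\emph{Inductive step.} Let $\cX$ be $n$-geometric and choose a smooth $n$-atlas $U=\coprod_i\Spec A_i\to\cX$. Because $\Theta$ preserves finite limits, it carries the \v{C}ech nerve $\check{C}(U/\cX)_\bullet$ to the \v{C}ech nerve of $\Theta(U)\to\Theta(\cX)$. Each term $\check{C}(U/\cX)_m$ is $(n-1)$-geometric by \ref{geometricity facts}, so the inductive hypothesis gives that $\rho$ is an equivalence on every term. Since $\QC$ is right Kan extended from an fpqc hypersheaf, \cite[Proposition~3.48]{yaylali2022notes} shows that $\QC(\cX)\simeq\lim_{\Delta}\QC(\check{C}(U/\cX)_\bullet)$. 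Naturality of $\rho$ then reduces the problem to proving the analogous descent statement
\[
\cQ(\Theta(\cX))\simeq\lim_{\Delta}\cQ(\Theta(\check{C}(U/\cX)_\bullet)).
\]

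\emph{Descent for $\cQ\circ\Theta$.} This step is the main obstacle. By \cite[D.6.3.3]{SAG}, $\cMod$ is an fpqc (indeed \'etale) hypersheaf on $\CAlgcn$, so by the analogue of \cite[Proposition~3.48]{yaylali2022notes} for $E_\infty$-rings, $\cQ$ converts maps of presheaves that become equivalences after \'etale sheafification into limits. It therefore suffices to show that $\Theta$ sends effective epimorphisms of derived stacks (for the \'etale topology) to maps inducing effective epimorphisms after sheafification on $\CAlgcn$. Concretely, we need $|\Theta(\check{C}(U/\cX)_\bullet)|\to\Theta(\cX)$ to be an \'etale-local equivalence.

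Since $\Theta$ is a left Kan extension it commutes with geometric realizations of presheaves. Hence it suffices to show that $\Theta$ carries \'etale covering sieves of affines to \'etale covering sieves, and this is exactly the preceding lemma, which says that $\theta$ preserves \'etale morphisms; $\theta$ also preserves faithful flatness since that condition is read off homotopy groups. A small subtlety is that $\Theta$ is defined on sheaves rather than presheaves, so one must check it is compatible with sheafification. I would handle this by noting that $\tilde\theta$ sends \'etale \v{C}ech covers to \v{C}ech covers whose realization is sheafified to the target, and so the left Kan extension descends to a functor between sheaf categories.

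With this in hand, the two limit descriptions match termwise via $\rho$, so $\rho_{\cX}$ is an equivalence and the induction closes.
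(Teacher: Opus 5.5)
Your proposal is correct and is essentially the paper's argument. Both use descent for $\cMod$ on each side, together with the fact that $\theta$ preserves \'etale and faithfully flat maps, so that $\Theta$ carries covers of affines to covers; your induction on geometricity spells out what the paper compresses into ``compute both via the same limit over a smooth cover,'' which is needed since the \v{C}ech terms of an atlas are not affine once $n\geq 1$. The one imprecision is in your base case: coproducts in $\dStk$ are sheafified, so $\cQ\circ\Theta$ sending $\coprod\Spec A_i$ to a product already relies on your later local-equivalence argument for $\Theta$ (alternatively, take $U=\coprod\Spec A_i$ to be the presheaf coproduct throughout).
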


\begin{remark}
    We thank Marius Nielsen for pointing out to us that $\rho_{\cX} : \cQ(\Theta(\cX))\to\QC(\cX)$ is in fact an equivalence for all functors $\cX : \aRing\to\Ani.$ However, we do not need the result in this generality, and so only provide the proof in the geometric case.
\end{remark}

\begin{proof}
    By the previous lemma and \cite[Remark 6.2.3.6]{HTT}, fpqc covers by affines are sent to fpqc covers by affines via $\Theta$. Then we may compute $\cQ(\Theta(\cX))$ and $\QC(\cX)$ via the same limit by choosing a smooth cover of $\cX$ by affine derived schemes, since both $\cQ$ and $\QC$ are fpqc sheaves.
\end{proof}

Going forward, we will denote $\cQ$ by $\QC$ as well, since we will only consider its value on sufficiently nice functors (spectral DM stacks).

The following proposition follows readily.
\begin{proposition}
    The functor $\Theta$ restricts to a functor
    \[
    \Theta : \dStk^{\DM}\to\mathsf{spDM}\footnote{Here, $\mathsf{spDM}$ denotes the $\infty$-category of spectral Deligne-Mumford stacks as in \cite{SAG}, viewed as functors via the embedding of \cite[Section 1.6.4]{SAG}.}.
    \]
    Moreover, $\Theta$ is compatible with quasi-coherent sheaves in the sense that for any morphism of geometric derived stacks $f : \cX\to\cY,$ the square
    \[\begin{tikzcd}
        \QC(\Theta(\cY))\ar[r,"(\Theta f)^*"]\ar[d,"\rho_{\cY}"] &\QC(\Theta(\cX))\ar[d,"\rho_\cX"]\\
        \QC(\cY)\ar[r,"f^*"] &\QC(\cX)
    \end{tikzcd}\]
    is commutative. 
\end{proposition}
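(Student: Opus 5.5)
The proposition has two parts, and I would prove them in order: first that $\Theta$ sends derived DM stacks to spectral DM stacks, then that $\rho$ is natural. Both rest on one structural input. $\Theta$ is a left Kan extension of $\tilde{\theta}$, so it preserves colimits of presheaves. After sheafification it therefore preserves the geometric realizations of \v{C}ech nerves that exhibit a stack as a quotient of an atlas. It also preserves finite limits, as noted above, and the preceding lemma shows that it sends \'etale maps of affines to \'etale maps of affines.

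For the first claim I would induct on $n$, showing that $\Theta$ sends $n$-DM stacks to spectral DM stacks and $n$-DM morphisms to relatively DM morphisms.
\begin{enumerate}
\item The case $n=-1$ is immediate, since $\Theta(\Spec A)=\Spec\theta(A)$.
\item For the inductive step, let $\cX$ be $n$-DM with \'etale $n$-atlas $U=\coprod\Spec A_i\to\cX$. Then $\cX$ is the (sheafified) colimit of $\Cech(U/\cX)_\bullet$.
\item The groupoid object $\Cech(U/\cX)_\bullet$ has \'etale face maps, and its terms $U\times_\cX\cdots\times_\cX U$ are $(n-1)$-DM. Since $\Theta$ preserves finite limits, $\Theta(\Cech(U/\cX)_\bullet)=\Cech(\Theta U/\Theta\cX)_\bullet$. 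By induction this is an \'etale groupoid in spectral DM stacks.
\item Its colimit is $\Theta(\cX)$, provided $\Theta$ commutes with \'etale sheafification.
\end{enumerate}
To get that commutation, I would check that the right adjoint of $\Theta$ (restriction along $\theta^{\op}$) preserves \'etale sheaves. This follows because $\theta$ carries \'etale covers to \'etale covers and preserves the relevant pushouts, so \v{C}ech conditions are carried to \v{C}ech conditions.

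With that in hand, I would invoke the characterization of spectral DM stacks as quotients of \'etale equivalence relations (or \'etale groupoids) in \cite{SAG} to conclude that $\Theta(\cX)$ is spectral DM. I expect the main obstacle to be this matching of definitions. Ours is Toën–Vezzosi-style, with $n$-atlases and an $(n-1)$-DM diagonal, while Lurie's spectral DM stacks are locally spectrally ringed $\infty$-topoi, or their functors of points. I would lean on \cite[Lemma 3.46]{chough2025formalderivedalgebraicgeometry} and \cite[Section 2]{https://doi.org/10.1112/jlms.12738}, which treat this comparison, rather than reproving it.

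For the commutativity of the square, I would use that $\rho$ is a natural transformation $\cQ\circ\Theta\Rightarrow\QC$ of functors on $\PSh(\dAff)^{\op}$, produced by the universal property of the right Kan extension. Naturality applied to $f:\cX\to\cY$ gives exactly $\rho_\cX\circ(\Theta f)^*\simeq f^*\circ\rho_\cY$. To make this explicit, write both $\QC(\cY)$ and $\QC(\Theta\cY)$ as limits over $\Spec A\to\cY$. On each affine, $\rho_{\Spec A}$ is the equivalence $\cMod_{\theta(A)}(\Sp)=\cMod_A$, and pullbacks correspond because $\theta$ preserves pushouts, so $\theta(B\otimes_AC)\simeq\theta B\otimes_{\theta A}\theta C$. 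Pulling back along $f$ is then just restriction of the indexing diagram, and the two restrictions are visibly compatible. By the preceding corollary the vertical maps are equivalences for geometric $\cX,\cY$, so the square commutes up to a canonical equivalence.
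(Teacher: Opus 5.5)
Your treatment of the square is what the paper intends. By construction, $\rho$ is a natural transformation $\cQ\circ\Theta\Rightarrow\QC$ coming from the universal property of the right Kan extension, and the square is its naturality square at $f$. Your explicit limit description uses that $\cQ$ turns the presheaf colimit $\Theta(\cY)=\colim_{\Spec A\to\cY}\Spec\theta(A)$ into a limit, so it in fact shows $\rho_\cY$ is an equivalence for arbitrary $\cY$, matching the remark credited to Nielsen. For the first claim the paper gives no argument beyond ``follows readily'' from the preceding discussion: finite-limit preservation of $\Theta$, the lemma on flat and \'etale maps of affines, and \cite[Lemma~3.46]{chough2025formalderivedalgebraicgeometry}. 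Your \'etale-groupoid induction, with the matching of definitions deferred to the literature, faithfully elaborates that, except for one step.

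That step is the identification of $\Theta(\cX)$ with the quotient of $\Theta(\Cech(U/\cX)_\bullet)$. You check that restriction along $\theta$, $\theta^*\colon\Fun(\CAlgcn,\Ani)\to\Fun(\aRing,\Ani)$, preserves \'etale sheaves, and that is correct. By adjunction, however, it only shows that $\Theta=\theta_!$ carries local equivalences to local equivalences. Hence what you get is $L_{\et}\Theta(\cX)\simeq|\Theta(\Cech(U/\cX)_\bullet)|$ in \'etale sheaves.

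It does not show that $\Theta(\cX)$ itself is an \'etale sheaf, which it must be to be the functor of points of a spectral DM stack. This is not automatic. Since $\theta$ has a right adjoint $\theta^R$, one has $\Theta(\cX)\simeq\cX\circ\theta^R$. Nothing forces $\theta^R$, which is only a right adjoint, to carry \'etale covers and their \v{C}ech nerves to \'etale covers. Already $\pi_0\theta^R(R)\to\pi_0(R)$ can fail to be surjective, e.g.\ for $R$ the free $E_\infty$-$\FFp$-algebra on a class in degree $0$. So descent for $\cX$ does not simply transfer, and the paper's formulation has the same imprecision.

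The repair is to take $L_{\et}\circ\Theta$, the left adjoint of $\theta^*$ on sheaves, as the functor into $\mathsf{spDM}$. It still preserves finite limits and sends $\Spec A$ to $\Spec\theta(A)$, so your induction runs as written. The square is unaffected: $\cQ$ is right Kan extended from the \'etale sheaf $R\mapsto\cMod_R$, so it inverts local equivalences, giving $\cQ(L_{\et}\Theta(\cX))\simeq\cQ(\Theta(\cX))$ naturally in $\cX$.
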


We may then use the above comparisons to deduce a base change result. If
\[\begin{tikzcd}
    X'\ar[r,"\overline{g}"]\ar[d,"\overline{f}"] &X\ar[d,"f"]\\
    S'\ar[r,"g"] &S
\end{tikzcd}\]
is a Cartesian diagram of derived geometric stacks such that $\Theta(X'), \Theta(X), \Theta(S')$ and $\Theta(S)$ are qcqs spectral algebraic spaces, then the base change morphism $g^* f_*\to\overline{f}^* \overline{g}_*$ is an equivalence, since the same is true for $(\Theta g)^*(\Theta f)_*\to (\Theta\overline{f})_*(\Theta\overline{g})^*$ by \cite[3.4.2.2]{SAG}\footnote{This uses our earlier observation that $\Theta$ preserves finite limits, so that the corresponding square after applying $\Theta$ is also Cartesian.}, and the previous theorem identifies the $\infty$-categories of quasi-coherent sheaves on a geometric derived stack with the $\infty$-category of quasi-coherent sheaves on the corresponding ``spectral stack.''

\begin{cor}\label{Base change for derived algebraic spaces}
    Let
    \[\begin{tikzcd}
        X'\ar[r,"\overline{g}"]\ar[d,"\overline{f}"] &X\ar[d,"f"]\\
        S'\ar[r,"g"] &S
    \end{tikzcd}\]
    be a Cartesian square of qcqs derived algebraic spaces. Then the base change morphism
    \[
    g^* f_*\to\overline{f}_*\overline{g}^*
    \]
    is an equivalence of functors $\QC(X)\to\QC(S').$
\end{cor}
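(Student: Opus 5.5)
The plan is to reduce to Lurie's base change theorem for qcqs spectral algebraic spaces \cite[Corollary 3.4.2.2]{SAG}, using the functor $\Theta$ and the comparison equivalences $\rho$ established above.

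\textbf{Step 1: the square stays Cartesian.} Since $\Theta$ preserves finite limits, applying $\Theta$ to the given square yields a Cartesian square
\[
\Theta(X') \simeq \Theta(X)\times_{\Theta(S)}\Theta(S')
\]
of spectral stacks.

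\textbf{Step 2: $\Theta$ preserves the hypotheses.} Each of $X', X, S', S$ is a qcqs derived algebraic space, in particular a derived DM stack. We must show $\Theta$ of each is a qcqs spectral algebraic space.
\begin{enumerate}
    \item $\Theta$ restricts to a functor $\dStk^{\DM}\to\mathsf{spDM}$, so each image is a spectral DM stack.
    \item By \cite[Lemma 3.46]{chough2025formalderivedalgebraicgeometry}, $\Theta$ preserves and reflects quasi-compactness and quasi-separatedness of morphisms of derived DM stacks. Applying this to the structure maps to $\Spec\ZZ$ (and to the diagonals) shows the images are qcqs.
    \item It remains to see that each image is a spectral algebraic space, i.e.\ that its truncation is a classical algebraic space. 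The underlying classical stack of $\Theta(\cX)$ should agree with $t_0\cX$, because $\theta$ preserves $\pi_0$ and, for discrete targets, $\Map(\theta A,R)\simeq\Hom(\pi_0A,R)\simeq\Map(A,R)$. So the truncation is the classical algebraic space $t_0\cX$.
\end{enumerate}

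\textbf{Step 3: transport the base change equivalence.} Lurie's theorem gives that
\[
(\Theta g)^*(\Theta f)_*\to(\Theta\overline f)_*(\Theta\overline g)^*
\]
is an equivalence. The comparison square for $\rho$ identifies the pullback functors $f^*, g^*, \overline{f}^*, \overline{g}^*$ with their $\Theta$-counterparts under the equivalences $\rho_{(-)}$. By uniqueness of right adjoints, the pushforwards $f_*, \overline{f}_*$ are identified as well.

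We then need the base change transformation itself to match. It is built from the units and counits of the $(f^*,f_*)$ and $(\overline{g}^*,\overline{g}_*)$ adjunctions together with the commutativity $\overline g^* f^*\simeq\overline f^* g^*$. Both of these are transported along $\rho$, since $\rho$ is a natural transformation of functors to $\Pr^{L,\st}$. Hence $g^*f_*\to\overline f_*\overline g^*$ is conjugate to Lurie's equivalence, and is therefore an equivalence.

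\textbf{Main obstacle.} The step I expect to be hardest is Step 2(c): checking that $\Theta$ carries derived algebraic spaces to spectral algebraic spaces in Lurie's sense. Lurie's notion requires the underlying stack to be an étale sheaf with discrete, in fact trivial, automorphism groups over all connective $E_\infty$-rings, not just discrete ones.

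I would argue étale-locally. Since $t_0X$ is an algebraic space, its diagonal is a monomorphism. Étaleness and monomorphism can be detected on truncations (by the lemma on $t_0$ and open/étale sites above). Étale maps are preserved by $\theta$. Together these should make the diagonal of $\Theta(X)$ a monomorphism, which gives the algebraic-space condition.

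The compatibility of the base change map with $\rho$ in Step 3 is formal, given the commutative square stated just before the corollary.
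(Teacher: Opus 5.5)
Your main line is exactly the paper's argument. You apply $\Theta$, use that it preserves finite limits (so the square stays Cartesian) and preserves quasi-compactness and quasi-separatedness \cite[Lemma 3.46]{chough2025formalderivedalgebraicgeometry}, invoke \cite[Corollary 3.4.2.2]{SAG}, and transport the equivalence back along the comparison equivalences $\rho$. Your Step 3 spells out the formal compatibility of the base change transformation with $\rho$, which the paper leaves implicit: pullbacks are matched by the commutative square, pushforwards by uniqueness of adjoints, and the $2$-cell $\overline{g}^*f^*\simeq\overline{f}^*g^*$ by naturality of $\rho$.

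What you should correct is the ``main obstacle'' paragraph, which misreads Lurie's definition and contains a false step. A spectral Deligne--Mumford stack is a spectral algebraic space if $\Map(\mathrm{Sp\acute{e}t}\,R,-)$ is discrete for ordinary commutative rings $R$ only. Discreteness over all connective $E_\infty$-rings fails already for affines. Since $\Theta(\cX)$ and $t_0\cX$ have the same values on discrete rings, the first half of your Step 2(c) is all that is needed.

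The argument you propose for the stronger condition would fail in any case. The diagonal of a derived algebraic space is not a monomorphism, because $\cX\times_{\cX\times\cX}\cX$ is the derived loop space. For $\cX=\Spec\ZZ[x]$ it is $\Spec\HH(\ZZ[x]/\ZZ)$, and $\HH_1(\ZZ[x]/\ZZ)\cong\Omega^1_{\ZZ[x]/\ZZ}\neq 0$. Moreover, the truncation lemma you appeal to detects monomorphisms only among \'etale maps, and the diagonal is not \'etale. Delete that paragraph and your proof is complete at the paper's level of detail.
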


In particular, we have base change when $S', S,$ and $X$ are all classical algebraic spaces.

\subsection{Fibrations Associated to Quasi-Coherent Sheaves}

In this subsection, we discuss quasi-coherent sheaves (and variants thereof) from a slightly different point of view -- as a Cartesian and coCartesian fibration over $\dStk^{\op}.$ 

Via the straightening-unstraightening correspondence, the functor $\QCoh : \dStk^{\op}\to\Pr^{L,\st}$ corresponds to a Cartesian and coCartesian fibration $\QC\to\dStk^{op}$ (restricting now to only considering derived stacks rather than all presheaves). The objects of $\QC$ can be identified with pairs $(\cX,\cF),$ where $\cX$ is a derived stack and $\cF$ is an object of $\QC(\cX).$ Via this identification, a $1$-morphism $(\cX',\cF')\to (\cX,\cF)$ in $\QC$ is given by a pair $(f,\phi),$ where $f : \cX\to\cX'$ is a map of derived stacks, and $\phi : f^*\cF'\to\cF$ is a morphism in $\QC(\cX).$

It will be useful to review how limits in $\QC$ are calculated. Given a functor $F : I\to\QC,$ write $F(i) = (\cX_i,\cF_i).$ The limit of $F,$ if it exists, is given (as an object) by first calculating the limit $\cX$ of the composition
\[
I\xrightarrow{F}\QC\to\dStk^{\op}.
\]
Then $\cX$ comes equipped with maps $f_i : \cX_i\to\cX$ of derived stacks for any $i\in I.$ The functor $F$ then induces a functor
\begin{align*}
    F_{\cX} : I&\to\QC(\cX)\\
    i&\mapsto (f_i)_*\cF_i.
\end{align*}
The limit of $F$ is then given by
\[\lim_{i} (\cX_i,\cF_i)\simeq (\cX,\lim_{i} (f_i)_*\cF_i),\]
where $\lim_{i} (f_i)_*\cF_i$ is calculated in $\QC(\cX).$

Colimits are computed in a dual way.

\begin{mydef}
    Let $S\in\dStk.$ We define $\QC_S$ to be the pullback
    \[\begin{tikzcd}
        \QC_S\ar[r]\ar[d] &\QC\ar[d]\\
        (\dStk_S)^{\op}\ar[r] &\dStk^{\op},
    \end{tikzcd}\]
    where $(\dStk_S)^{\op}\to\dStk^{\op}$ is the forgetful functor. Then $\QC_S\to(\dStk_S)^{\op}$ is also a Cartesian and coCartesian fibration. Informally, the objects of $\QC_S$ are given by pairs $(\cX\to S,\cF),$ where $\cX\to S$ is a morphism of derived stacks and $\cF\in\QC(\cX),$ and the morphisms in $\QC_S$ are defined analogously to morphisms in $\QC.$ If $S = \Spec k$ is affine, we also write $\QC_k := \QC_S.$ 
\end{mydef}

We can use this perspective to define a general ``global sections'' functor $\QC_S\to\QC(S).$

\begin{mydef}
    \cite[Construction B.9.2]{Calaque2025-hm} provides a commutative square
    \[\begin{tikzcd}
        \QC_S\ar[r]\ar[d] &\QC\ar[d]\\
        (\dStk_S)^{\op}\ar[r] &\dStk^{\op}
    \end{tikzcd}\]
    where the bottom horizontal functor is the constant functor at $S;$ i.e., it factors as
    \[
    (\dStk_S)^{\op}\to\{S\}\to\dStk^{\op}.
    \]
    Thus, the upper horizontal functor factors as $\QC_S\to\QC(S)\to\QC.$ We denote the resulting functor $\QC_S\to\QC(S)$ by $\Gamma_S,$ and refer to $\Gamma_S(\cX\to S,\cF)$ as the \emph{global sections of $\cF$ over $S$}.\footnote{We may also abusively refer to the composition $\QC_S\to\QC(S)\to\QC$ as the global sections functor.}
\end{mydef}

Informally, $\Gamma_S(f : \cX\to S,\cF)$ is given by $(S,f_*\cF).$

Let $\ZZ^{\op}$ denote the $1$-category associated to the opposite of the poset $(\ZZ,\geq 0),$ and let $\ZZ^{\disc}$ denote the integers viewed as a discrete category (i.e., there are no nontrivial morphisms).

Recall that if $\cC$ is a stable $\infty$-category, then the category of filtered objects of $\cC$ is defined to be $\Fun(\ZZ^{\op},\cC),$ and the category of graded objects of $\cC$ is defined to be $\Fun(\ZZ^{\disc},\cC).$ If $\cC$ is stable and presentable, so too are $\Fun(\ZZ^{\op},\cC)$ and $\Fun(\ZZ^{\disc},\cC).$

\begin{mydef}
    Consider the composition of $\QCoh : \dStk^{\op}\to\Pr^{L,\st}$ with the functor
\[
\Fun(\ZZ^{\op},-) : \Pr^{L,\st}\to\Pr^{L,\st}
\]
(respectively, the functor
\[
\Fun(\ZZ^{\mathrm{disc}},-) : \Pr^{L,\st}\to\Pr^{L,\st}).)
\]
The Cartesian and coCartesian fibration associated to this functor is defined to be $\QCF\to\dStk^{\op}$ (respectively, $\QCGR\to\dStk^{\op}$), and the fiber of this fibration over a derived stack $\cX$ is the $\infty$-category of \emph{filtered quasi-coherent sheaves on $\cX$}
\[
\QCF(\cX) := \Fun(\ZZ^{\op},\QC(X))
\]
(respectively the $\infty$-category of \emph{graded quasi-coherent sheaves on $\cX$}
\[
\QCGR(\cX) := \Fun(\ZZ^{\disc},\QC(\cX))).
\]
\end{mydef}

The composition
\[
\Fun(\ZZ^{\op},-)\xrightarrow{\prod \operatorname{res}_{\{n\leq n+1\}^{\op}}}\prod_n\Fun(\{n\leq n+1\}^{\op},-)\xrightarrow{\prod \cofib}\prod_n\Fun(\{n\},-)\simeq\Fun(\ZZ^{\mathrm{disc}},-)
\]
induces for any $\cC\in\Pr^{L,\st}$ the map sending a filtered object of $\cC$ to its associated graded object, and admits a right adjoint which on objects maps a graded object $X^\bullet$ to the filtered object $\triv(X^\bullet),$ which has $\triv(X^\bullet)^n := X^n,$ and has all transition maps given by the zero maps. Then (the dual of) \cite[7.3.2.6]{HA} implies that the associated map of Cartesian and coCartesian fibrations $\gr : \QCF\to\QCGR$ admits a right adjoint as well.

Similarly, the restriction natural transformation $\Fun(\ZZ^{\mathrm{disc}},-)\to\Fun(\{n\},-)$ admits a right adjoint when applied to any $\cC$ (given by forming the right Kan extension along $\{n\}\to\ZZ^{\mathrm{disc}}$), so that the induced map $\QCGR\to\QC$ admits a right adjoint as well. 

\section{The Cotangent Complex}

\subsection{The Cotangent Complex of a Morphism of Affine Derived Schemes}

Classically, the cotangent complex of a morphism $k\to A$ of rings is defined by taking a simplicial resolution of $A$ by polynomial $k$-algebras, forming the K\"{a}hler differential of this simplicial resolution, and then base changing to $A.$ There is a standard choice of resolution
\[\begin{tikzcd}
    \dots\ar[r,shift left=3]\ar[r,shift left]\ar[r,shift right]\ar[r,shift right=3] &k[k[k[A]]]\ar[r,shift left=2]\ar[r]\ar[r,shift right=2] &k[k[A]]\ar[r,shift left]\ar[r,shift right] & k[A]\ar[r] &A
\end{tikzcd}\]
(we do not denote the degeneracy maps for simplicity). If we let $P_\bullet$ denote this resolution, then the cotangent complex is given by $\LL_{A/k} := \Omega^1_{P_\bullet/k}\otimes_{P_\bullet}A.$

From a more modern point of view, the above construction is an explicit description of the animation of $\Omega^1_{-/k} : \CRing_k\to\Mod_k.$ However, defining the cotangent complex as the animation of this functor is not completely satisfying for a few reasons:
\begin{enumerate}
    \item This construction only gives the cotangent complex as a $k$-module, whereas one can see from the explicit description that it admits the structure of an $A$-module.
    \item The cotangent complex is functorial not only in the target ring, but in the morphism of rings itself.
    \item This construction does not make the universal property of the cotangent complex evident.
\end{enumerate}
It will be beneficial to examine how to remedy this deficiency to obtain the full functoriality of the cotangent complex.

First, let us state what the universal property of the cotangent complex $\LL_{A/k}$ as an $A$-module is.

Fix a base discrete commutative ring $k,$ and let $\mathsf{PolyFree}_k$ denote the $1$-category of pairs $(P,F),$ where $P\cong k[x_1,\dots, x_n]$ is a finitely presented polynomial $k$-algebra, and $F\cong P^{\oplus r}$ is a finitely generated free $P$-module, and whose morphisms $(P,F)\to (P',F')$ are given by pairs $(f,\phi),$ where $f : P\to P'$ is a map of $k$-algebras and $\phi : F\otimes_P P'\to F'$ is a map of free $P'$-modules. Consider the functor
\[
\mathsf{PolyFree}_k\to\CRing\to\aRing_k
\]
which maps $(P,F)\mapsto P\oplus F,$ where $P\oplus F$ is the classical square-zero extension of $P$ by $F.$ The universal property of animation then guarantees us a functor
\[
\MOD_k^{\cn}\to\aRing_k
\]
which commutes with sifted colimits.

\begin{mydef}
    Let $(A,M)\in\MOD_k^{\cn}.$ The \emph{square zero extension of $A$ by $M$} is defined to be the image of $(A,M)$ under the animation of the square zero functor $(P,F)\mapsto P\oplus F$ above, and we denote this animated ring by $A\oplus M.$
\end{mydef}

The underlying $k$-module of $A\oplus M$ is equivalent to the direct sum of $A$ and $M$ as $k$-modules, because the forgetful functor $\aRing_k\to\cMod_k$ preserves colimits. Additionally, the canonical maps $0\to M\to 0$ in $\Mod_A$ induce maps of animated rings $A\to A\oplus M\to A$ such that the composition is equivalent to the identity. 

\begin{mydef}
    Let $k\to A$ be a map of animated rings, and let $M$ be a connective $A$-module. The \emph{module of $k$-linear derivations of $A$ into $M$} is given by
    \[
    \Der_k(A,M) := \Map_{(\aRing_k)_{/A}}(A, A\oplus M).
    \]
\end{mydef}
\begin{remark}
    There is a pullback square
    \[\begin{tikzcd}
        \Der_k(A,M)\ar[r]\ar[d] &\Map_{\aRing_k}(A,A\oplus M)\ar[d]\\
        \{\id_A\}\ar[r] &\Map_{\aRing_k}(A,A)
    \end{tikzcd}\]
    in $\cMod_k^{\cn}.$
\end{remark}

We are now ready to define the cotangent complex via its universal property.

\begin{mydef}
    The \emph{cotangent complex} of $A/k$ is a connective $A$-module $\LL_{A/k}$ such that there is a natural equivalence of functors
    \[
    \Map_{\cMod_A^{\cn}}(\LL_{A/k},-)\simeq\Der_k(A,-).
    \]
\end{mydef}

The following is well-known (for example, see \cite[Proposition~25.3.1.5]{HA}).

\begin{proposition}
    Let $k\to A$ be a morphism of animated rings. Then the cotangent complex $\LL_{A/k}$ exists.
\end{proposition}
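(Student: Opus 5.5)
The plan is to construct $\LL_{A/k}$ directly by animation and then verify the universal property by reducing to the polynomial case via sifted colimits.

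\textbf{Step 1: the polynomial case.} For $P\in\Poly_k$, we have the classical Kähler differentials $\Omega^1_{P/k}$, a free $P$-module of finite rank. For any connective $P$-module $M$, a map of animated $k$-algebras $P\to P\oplus M$ is determined by the images of the variables, because $P$ is free on finitely many generators in $\aRing_k$. Thus
\[
\Map_{\aRing_k}(P,P\oplus M)\simeq (\Omega^\infty(P\oplus M))^{n}.
\]
Taking the fiber over $\id_P$ gives $\Der_k(P,M)\simeq(\Omega^\infty M)^n\simeq\Map_{\cMod_P^{\cn}}(\Omega^1_{P/k},M)$. So $\LL_{P/k}:=\Omega^1_{P/k}$ satisfies the universal property, naturally in $P$ and $M$.

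\textbf{Step 2: the functor to $\MOD_k^{\cn}$.} The assignment $P\mapsto(P,\Omega^1_{P/k})$ is a functor $\Poly_k\to\MOD_k^{\cn}$, and in fact it lands in $\mathsf{PolyFree}_k$. By the universal property of animation it extends uniquely to a sifted-colimit-preserving functor $\aRing_k\to\MOD_k^{\cn}$ of the form $A\mapsto(A,\LL_{A/k})$. The first coordinate is the identity, since both functors preserve sifted colimits and agree on $\Poly_k$.

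\textbf{Step 3: verifying the universal property.} Write $A\simeq\colim_{i} P_i$ as a sifted colimit of polynomial algebras. On the module side, colimits in $\MOD_k^{\cn}$ are computed via the coCartesian fibration, so
\[
\LL_{A/k}\simeq\colim_i\bigl(\Omega^1_{P_i/k}\otimes_{P_i}A\bigr).
\]
It follows that $\Map_A(\LL_{A/k},M)\simeq\lim_i\Map_{P_i}(\Omega^1_{P_i/k},M)$. On the derivation side,
\[
\Der_k(A,M)=\Map_{/A}(A,A\oplus M)\simeq\lim_i\Map_{/A}(P_i,A\oplus M),
\]
and the term $\Map_{/A}(P_i,A\oplus M)$ is the space of derivations $\Der_k(P_i,M)$, where $M$ is viewed as a $P_i$-module by restriction. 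The Step 1 argument applies verbatim to a map $P\to A$ with target $A\oplus M$: lifts of $P\to A$ through $A\oplus M\to A$ are given by tuples in $\Omega^\infty M$. This yields natural equivalences with $\Map_{P_i}(\Omega^1_{P_i/k},M)$, and passing to the limit gives the universal property.

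\textbf{Main obstacle.} The hardest part is making these identifications natural coherently in $i$ and in $M$, rather than pointwise. My plan is to handle this by exhibiting both sides as functors out of $\MOD_k^{\cn}$ or $\mathsf{PolyFree}_k$ that preserve sifted colimits. Alternatively, the existence could simply be cited to the adjoint functor theorem: the functor $M\mapsto\Der_k(A,M)$ preserves limits and is accessible, so it is corepresentable. This route matches the reference given, and the animation argument above then identifies the corepresenting object explicitly.
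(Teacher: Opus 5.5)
The paper gives no argument for this proposition beyond a citation to Lurie. Your fallback, corepresentability of $M\mapsto\Der_k(A,M)$ via the adjoint functor theorem, is that abstract existence argument. It works for any animated $k$, once you check that $M\mapsto A\oplus M$ preserves limits and filtered colimits.

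Your main route is instead the explicit construction the paper carries out later, in the proposition identifying the left Kan extension $\bfL$ of $\Omega^1$ with the cotangent complex. There, exactly as in your Step~3, the universal property is reduced to polynomial algebras via $\colim_{P\to A}(P,\Omega^1_{P/k})\simeq(A,\colim_{P\to A}\Omega^1_{P/k}\otimes_P A)$. The coherence problem you flag is delegated to \cite[Proposition~2.50]{yaylali2022notes}.

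You can settle that coherence yourself as follows.
\begin{itemize}
\item Animate the universal derivation $P\to P\oplus\Omega^1_{P/k}$. This is a natural transformation of functors on the $1$-category $\Poly_k$, compatible with the projections to $P$.
\item The result is $d_A\colon A\to A\oplus\LL_{A/k}$ over $A$, natural in $A$.
\item Then $\phi\mapsto(\id\oplus\phi)\circ d_A$ is a map $\Map_A(\LL_{A/k},M)\to\Der_k(A,M)$, natural in $M$.
\item Your colimit formulas reduce checking that this map is an equivalence to Step~1.
\end{itemize}
The explicit route buys the identification of $\LL_{A/k}$ with the animation of $\Omega^1$, functorial in the arrow $k\to A$, which is what the paper uses later. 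The abstract route buys existence with no bookkeeping.

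One point your main route does not cover: the statement allows $k$ to be any animated ring. Steps~1--3 use classical $\Omega^1_{P/k}$, the $1$-category $\Poly_k$, $\mathsf{PolyFree}_k$, and the universal property of animating a $1$-category. All of these need $k$ discrete; for animated $k$ the free algebras $k[x_1,\dots,x_n]$ have non-discrete mapping spaces.

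The paper handles this the way you should: run the argument over $\ZZ$ and put $\LL_{A/k}:=\cofib(\LL_{k/\ZZ}\otimes_k A\to\LL_{A/\ZZ})$. This is connective and corepresents $\Der_k(A,-)$. Indeed, $\Der_k(A,M)$ is the fiber over the trivial derivation of the restriction map $\Der_{\ZZ}(A,M)\to\Der_{\ZZ}(k,M)$. By naturality of the $\ZZ$-case, that map is identified with $\Map_A(\LL_{A/\ZZ},M)\to\Map_A(\LL_{k/\ZZ}\otimes_k A,M)$. Alternatively, simply invoke your fallback.
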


If we want to encode the cotangent complex in a more functorial manner, there are two options. The first is to, as described above, fix a base ring $k$ and consider the animation of the K\"ahler differentials functor $\Omega^1_{-/k}$ (valued in $\Mod_k$). On the other hand, if we want to capture the functoriality of the cotangent complex while remembering that $L_{A/k}$ has the structure of an $A$-module (and not just a $k$-module), we may also consider the cotangent complex as a functor
\[
\bfL : \Arr(\aRing)\to\MOD
\]
in the following way. (Note that via the canonical embedding $\MOD\to\QC,$ we could also view $\bfL$ as valued in $\QC.$)

Let $\PolyPoly$ denote the $1$-category of maps of commutative rings of the form
\[
\ZZ[x_1,\dots, x_m]\to\ZZ[x_1,\dots, x_m,x_{m+1},\dots, x_n]
\]
where $x_i\mapsto x_i$ for $1\leq i\leq m,$ let $\PolyCAlg$ denote the $1$-category of maps of commutative rings of the form $k\to k[x_1,\dots, x_n]$ (including $k$ as constant polynomials), and let $\PolyAlg$ denote the full sub-$\infty$-category of $\Arr(\aRing)$ spanned by maps of the form $k\to k[x_1,\dots, x_n]$ given by the canonical map $k\to k\otimes_{\ZZ}\ZZ[x_1,\dots, x_n].$

\begin{proposition}\label{Kan extension represents derivations}
    The left Kan extension (or equivalently, the animation) of the functor
    \begin{align*}
    \mathbf{\Omega} : \PolyPoly&\to\MOD\\
    (\ZZ[x_1,\dots, x_m]\to \ZZ[x_1,\dots, x_m,x_{m+1},\dots, x_n])&\mapsto (\ZZ[x_1,\dots, x_n], \Omega^1_{\ZZ[x_1,\dots, x_n]/\ZZ[x_1,\dots, x_m]})
    \end{align*}
    along the inclusion $\PolyPoly\to\Arr(\aRing)$ recovers the cotangent complex as defined above in the sense that for a morphism of animated rings $k\to A,$ $\bfL(k\to A)\simeq (A, \LL_{A/k}).$
\end{proposition}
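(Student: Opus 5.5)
Let $\bfL' := \Lan(\mathbf{\Omega})$ denote the left Kan extension along $\PolyPoly\to\Arr(\aRing)$. The plan is to show that $\bfL'(k\to A)$ is a connective $A$-module which corepresents $\Der_k(A,-)$, naturally in $(k\to A)$. This pins down $\LL_{A/k}$ by the universal property in the definition of the cotangent complex.

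\textbf{Step 1: identify $\Arr(\aRing)$ as an animation.} First I will show that $\Arr(\aRing)$ is the animation of the $1$-category of arrows between polynomial rings, with compact projective generators the objects of $\PolyPoly$. Concretely:
\begin{itemize}
\item Each map $\ZZ[x_1,\dots,x_m]\to\ZZ[x_1,\dots,x_n]$ in $\PolyPoly$ corepresents $(k\to A)\mapsto \Map(\ZZ[x_1..x_m],k)\times\Map(\ZZ[x_{m+1}..x_n],A)$, i.e. it is the free object on the pair $(\{x_1,\dots,x_m\},\{x_{m+1},\dots,x_n\})$.
\item Mapping out of it therefore preserves sifted colimits, because $\Arr(\aRing)=\Fun(\Delta^1,\aRing)$ computes colimits pointwise and $\aRing\to\Ani$ preserves sifted colimits.
\item Every object of $\Arr(\aRing)$ is a sifted colimit of such objects: resolve $k$ by polynomial rings, then resolve $A$ relatively (the standard bar-type resolution works).
\end{itemize}
By \cite[5.5.8.22, 5.5.8.25]{HTT}, this gives $\Arr(\aRing)\simeq\mathcal{P}_\Sigma(\PolyPoly)$. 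Hence the left Kan extension agrees with the animation and preserves sifted colimits.

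\textbf{Step 2: compare derivation functors as functors on $\MOD^{\cn}$.} Next I will compare
\[
\Map_{\MOD}(\bfL'(k\to A),(A,M))\qquad\text{and}\qquad \Der_k(A,M),
\]
viewing both as functors on pairs $(k\to A,\,M)$. I will encode derivations through maps over $A$ into $A\oplus M$ in $\Arr(\aRing)$, using the arrow $k\to A\oplus M$. On generators $P\to P[y]$ (abbreviating the new variables $x_{m+1},\dots,x_n$ as $y$), classical Kähler theory gives
\[
\Map(\Omega^1_{P[y]/P}\otimes A, M)\simeq\prod \Omega^\infty M \simeq \Der_P(P[y],M).
\]
This holds because both sides reduce to choosing images of the $y_j$ in $\Omega^\infty M$, and the square-zero extension $A\oplus M$ is computed correctly in the animated setting by its definition via $\mathsf{PolyFree}$. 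I will then extend to general $(k\to A)$ by writing it as a sifted colimit $\colim (P_\alpha\to Q_\alpha)$. The map-into-$(A,M)$ side turns this colimit into a limit, since morphisms in the coCartesian fibration $\MOD$ out of $(Q,N)$ are $\Map_{A}(N\otimes_Q A,M)$, and base change commutes with colimits. The $\Der$ side also turns colimits in the source arrow into limits, since $\Der_k(A,M)$ is a fiber of mapping spaces in $\Arr(\aRing)_{/(k\to A)}$ and mapping spaces convert colimits into limits. Comparing these two limits gives the equivalence.

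\textbf{Step 3 (main obstacle): naturality.} The hardest step is constructing a \emph{natural} comparison map $\bfL'\to\bfL$, rather than merely matching values objectwise. My first attempt will be to produce a universal derivation
\[
A\to A\oplus \bfL'(k\to A)
\]
functorially over $\Arr(\aRing)$, as follows:
\begin{itemize}
\item On $\PolyPoly$ the map $d:Q\to Q\oplus\Omega^1_{Q/P}$ is functorial at the $1$-categorical level.
\item Since square-zero extension commutes with sifted colimits, animating this $1$-categorical natural transformation gives a natural transformation on all of $\Arr(\aRing)$.
\item Precomposing with it yields the natural map $\Map(\bfL'(k\to A),M)\to\Der_k(A,M)$, which Step 2 shows is an equivalence on generators and hence everywhere.
\end{itemize}
Connectivity of $\bfL'(k\to A)$ is automatic, since it is a colimit of free modules. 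Together these give $\bfL(k\to A)\simeq(A,\LL_{A/k})$.
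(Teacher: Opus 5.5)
Your proposal is correct, but it follows a different route from the paper.

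\textbf{What the paper does.} It never builds a universal derivation on all of $\Arr(\aRing)$. Instead it proceeds in four steps:
\begin{enumerate}
\item It computes $\bfL$ by hand on $\PolyCAlg$, writing $k$ as a colimit of polynomial rings to get $\bfL(k\to k[y])\simeq(k[y],\Omega^1_{k[y]/k})$.
\item It uses \cite[4.3.3.9]{HTT} to restrict the Kan extension to the fiber over a fixed discrete base $k$. There the colimit formula in the coCartesian fibration $\MOD$ identifies $\bfL$ with the classical derived functor $\colim_{P\to A}\Omega^1_{P/k}\otimes_P A$.
\item It imports corepresentability of $\Der_k(A,-)$ from \cite[Proposition~2.50]{yaylali2022notes}.
\item It treats an animated base afterwards, using the transitivity cofiber sequence $\LL_{k/\ZZ}\otimes_k A\to\LL_{A/\ZZ}\to\LL_{A/k}$.
\end{enumerate}

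\textbf{What you do instead.} You prove corepresentability directly and uniformly. Animating the $1$-categorical natural transformation $d\colon Q\to Q\oplus\Omega^1_{Q/P}$ gives a natural universal derivation. You check the resulting comparison on $\PolyPoly$. You then propagate it using that both sides send the canonical sifted colimit over $\PolyPoly_{/(k\to A)}$ to a limit; this slice is sifted because $\PolyPoly$ has finite coproducts.

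\textbf{What each route buys.} Yours is self-contained and handles discrete and animated bases at once. It also yields an identification natural in the arrow, which is what a statement about the functor $\bfL$ really calls for. The price is the coherence bookkeeping in your Step 3: extending the natural transformation through $\mathcal{P}_\Sigma(\PolyPoly)$ and matching it with the square-zero functor on $\MOD^{\cn}$. The paper's route is shorter because it outsources the fixed-base case to known results and the base change via the cofiber sequence.

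\textbf{One correction.} The space $\Map_{\MOD}(\bfL'(k\to A),(A,M))$ is not the right object to compare with $\Der_k(A,M)$. It lies over all of $\Map_{\aRing}(A,A)$, not just $\id_A$. Fix it in two steps:
\begin{itemize}
\item First note that the underlying animated ring of $\bfL'(k\to A)$ is $A$. This holds because the projection $\MOD\to\aRing$ preserves colimits and agrees with the target functor on $\PolyPoly$.
\item Then take the fiber over $\id_A$. Writing $\bfL'(k\to A)=(A,N)$, this means working with $\Map_{\cMod_A}(N,M)$. Likewise, on generators, work over the given map $Q_\alpha\to A$.
\end{itemize}
With this adjustment, the rest of your argument goes through.
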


Before we prove the result, let us prove an important result about about this construction.

\begin{proposition}
    Let $k\to A\to B$ be maps of animated rings. Then there is a cofiber sequence
    \[
    \LL_{A/k}\otimes_A B\to\LL_{B/k}\to\LL_{B/A}
    \]
    in $\Mod_B.$
\end{proposition}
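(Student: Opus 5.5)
The plan is to prove the transitivity sequence directly from the universal property of the cotangent complex, via the Yoneda lemma in the stable $\infty$-category $\cMod_B$. We cannot yet use \ref{Kan extension represents derivations}, whose proof is deferred. The three maps come from functoriality of derivations. The map $\LL_{A/k}\otimes_A B\to\LL_{B/k}$ corresponds, by adjunction, to the $A$-linear map $\LL_{A/k}\to\LL_{B/k}$ representing restriction $\Der_k(B,M)\to\Der_k(A,M)$ along $A\to B$. The map $\LL_{B/k}\to\LL_{B/A}$ represents the inclusion $\Der_A(B,M)\to\Der_k(B,M)$, where an $A$-linear derivation is viewed as a $k$-linear one.

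The key step is to show that for every connective $B$-module $M$ there is a natural fiber sequence of anima
\[
\Der_A(B,M)\to\Der_k(B,M)\to\Der_k(A,M).
\]
We read $\Der_k(A,M)$ as $\Map_{(\aRing_k)_{/B}}(A,B\oplus M)$; this is equivalent to $\Map_{\cMod_A^{\cn}}(\LL_{A/k},M)\simeq\Map_{\cMod_B^{\cn}}(\LL_{A/k}\otimes_A B,M)$, using that $B\oplus M\to B$ is pulled back from the universal square-zero extension. The fiber sequence itself is formal. In the slice $(\aRing_k)_{/B}$, a map $B\to B\oplus M$ over $B$ restricts along $A\to B$ to a map $A\to B\oplus M$ over $B$. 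The fiber over the map $A\to B\to B\oplus M$ (zero section) is precisely the space of maps $B\to B\oplus M$ in $(\aRing_A)_{/B}$, i.e.\ $\Der_A(B,M)$. This uses the standard fact that mapping spaces in an undercategory are fibers of mapping spaces in the ambient category.

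Combining these, we get a natural fiber sequence
\[
\Map(\LL_{B/A},M)\to\Map(\LL_{B/k},M)\to\Map(\LL_{A/k}\otimes_AB,M)
\]
for all connective $M$. Since each of these anima is naturally an infinite loop space, it upgrades to a fiber sequence of connective spectra, and the composite $\LL_{A/k}\otimes_A B\to\LL_{B/A}$ is canonically nullhomotopic. We then compare the cofiber $C:=\cofib(\LL_{A/k}\otimes_AB\to\LL_{B/k})$ with $\LL_{B/A}$ via the induced map $C\to\LL_{B/A}$. The map $C\to\LL_{B/A}$ induces an equivalence on $\Map(-,M)$ for all connective $M$, obtained by applying the fiber sequence to $M[n]$ for $n\ge0$. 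Since $C$ and $\LL_{B/A}$ are both connective, Yoneda in $\cMod_B^{\cn}$ shows $C\simeq\LL_{B/A}$.

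The main obstacle is making the mapping-space identifications natural and compatible in $M$, in particular producing the nullhomotopy of the composite coherently rather than just objectwise. The approach is to define all three derivation functors as corepresented functors on $\cMod_B^{\cn}$, valued in $\Ani$, via the functorial square-zero extension $\MOD_k^{\cn}\to\aRing_k$. This makes the restriction maps natural transformations, so Yoneda produces the maps and the homotopy at once. A minor point is verifying that the fiber sequence of anima is an exact sequence of spectra. This follows from compatibility with the loop structure, $\Omega\Der(B,M[1])\simeq\Der(B,M)$, which comes from $B\oplus M\simeq B\times_{B\oplus M[1]}B$.
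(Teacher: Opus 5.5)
Your argument is correct, but it proves the statement by a different route from the paper. The paper does not use derivations here. It animates $(P\to Q\to R)\mapsto(R,\Omega^1_{Q/P}\otimes_Q R)$ on the category $\Poly^3$ of composable polynomial extensions. It then observes that every triple $(k\to A\to B)$ sits in a pushout square with the triples $(k\to B\to B)$, $(A\to A\to B)$, $(A\to B\to B)$; this is evident for polynomial triples and extends by sifted colimits. Finally it uses that the animated functor preserves colimits (through its factorization via $\Ani(\mathsf{PolyPolyFree})$), which gives a pushout square in $\MOD$ whose corners all lie over $B$; since $\LL_{A/A}\simeq 0$, this is the cofiber sequence.

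That argument proves the sequence for the left Kan extended functor $\bfL$, together with its $\MOD$-valued functoriality in the triple. This is the version the paper actually needs: the deferred proof of \ref{Kan extension represents derivations} applies it to $\ZZ\to k\to A$ to reduce a non-discrete base $k$ to the base $\ZZ$, at a point where $\bfL(k\to A)$ is not yet known to corepresent derivations.

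Your proof instead establishes the sequence for the corepresenting objects. It uses only the universal property, the description of mapping spaces in undercategories as fibers, and Yoneda, with no resolutions and no colimit bookkeeping. It is exactly the other half of the comparison needed at the end of that deferred proof, which the paper handles by citation.

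Two small remarks. First, what you need for the third term is the identification $A\times_B(B\oplus M)\simeq A\oplus M$ (with $M$ restricted to $A$), combined with the adjunction between composing with $A\to B$ and pulling back along it. It is not really a statement about a universal square-zero extension. Second, the passage through $M[n]$ and spectra is unnecessary. Once the fiber sequence is a pullback of functors on $\cMod_B^{\cn}$, with the constant functor $*$ corepresented by $0$, the Yoneda embedding already exhibits $\LL_{B/A}$ as the pushout $\LL_{B/k}\sqcup_{\LL_{A/k}\otimes_A B}0$ in $\cMod_B^{\cn}$. Such pushouts are computed in $\cMod_B$.
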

\begin{proof}
    Consider the $1$-category $\Poly^3$ whose objects are composable morphisms of finitely presented polynomial $\ZZ$-algebras of the form
    \[
    \ZZ[x_1,\dots, x_n]\to\ZZ[x_1,\dots,x_n,y_1,\dots, y_m]\to\ZZ[x_1,\dots, x_n,y_1,\dots, y_m,z_1,\dots, z_\ell]
    \]
    where the maps send $x_i\mapsto x_i$ and $y_j\mapsto y_j,$ and whose morphisms are diagrams
    \[\begin{tikzcd}
    P\ar[r]\ar[d] &Q\ar[r]\ar[d] &R\ar[d]\\
    P'\ar[r] &Q'\ar[r] &R',
    \end{tikzcd}\]
    where $P\to Q\to R$ and $P\to Q'\to R'$ are in $\Poly^3.$

    Consider the functor
    \begin{align*}
        \mathbf{\Omega}' : \Poly^3&\to\mathsf{MOD}\\
        (P\to Q\to R)&\mapsto (R,\Omega^1_{Q/P}\otimes_Q R)
    \end{align*}
    and its animation $\bfL' : \Ani(\Poly^3)\to\MOD.$\footnote{Here, $\mathsf{MOD}$ is the $1$-category of pairs $(A,M)$ where $A$ is a commutative ring and $M$ is an $A$-module, and whose animation is $\MOD.$}

    For any map $k\to A\to B\in\Ani(\Poly^3),$ there is a pushout square of the form
    \[\begin{tikzcd}
        (k\to A\to B)\ar[r]\ar[d] &(k\to B\to B)\ar[d]\\
        (A\to A\to B)\ar[r] &(A\to B\to B)
    \end{tikzcd}\]
    (pushout squares of this form evidently exist when $k\to A\to B\in\Poly^3,$ so we may rewrite $(k\to A\to B)$ as a sifted colimit of objects in $\Poly^3,$ and then the sifted colimit of the pushout squares will yield the desired pushout square).

    We claim that applying the animated functor $\bfL'$ to the square gives a pushout square
    \[\begin{tikzcd}
        (B,\bfL_{A/k}\otimes_A B)\ar[r]\ar[d] &(B,\bfL_{B/k}\otimes_B B)\ar[d]\\
        (B,\bfL_{A/A}\otimes_A B)\ar[r] &(B,\bfL_{B/A}\otimes_B B)
    \end{tikzcd}\]
    and since every object of this diagram has animated ring $B,$ it corresponds to the desired pushout square
    \[\begin{tikzcd}
        \bfL_{A/k}\otimes_A B \ar[r]\ar[d] & \bfL_{B/k}\ar[d]\\
        0\ar[r] & \bfL_{B/A}.
    \end{tikzcd}\]
    This follows because the functor $\bfL'$ factors as
    \[
    \Ani(\Poly^3)\xrightarrow{\bfL'_1}\Ani(\mathsf{PolyPolyFree})\xrightarrow{\bfL'_2}\MOD
    \]
    where $\mathsf{PolyPolyFree}$ is the $1$-category whose objects are pairs $(P\to Q,F)$ such that $P\to Q$ is a map of the form $\ZZ[x_1,\dots, x_n]\to\ZZ[x_1,\dots, x_n,y_1,\dots, y_m]$ and $F\cong P^{\oplus r}\in\Mod_P.$ The factorization in question arises as the composition of the animations of the functors
    \[
    (P\to Q\to R)\mapsto (Q\to R,\Omega^1_{Q/P})
    \]
    and
    \[
    (Q\to R, F)\mapsto (R, F\otimes_Q R)
    \]
    (note that the animation of the compositions is the composition of the animations by \cite[Proposition~5.1.5(b)]{cesnavicius2023purity}).

    Finally, for a map $A\to B$ of animated rings, we have a commutative diagram
    \[\begin{tikzcd}
        &\cMod_A\ar[d]\ar[r,"-\otimes_A B"] &\cMod_B\ar[d]\\
        \Ani(\Poly^3)\ar[d,"{(P\to Q\to R)\mapsto (P\to Q)}"']\ar[r,"{\bfL'_1}"] &\Ani(\mathsf{PolyPolyFree})\ar[r,"{\bfL'_2}"]\ar[d,"{(Q\to R,F)\mapsto (Q,F)}"] &\MOD\\
        \Arr(\aRing)\ar[r,"{\bfL}"] &\MOD
    \end{tikzcd}\]
    where the functor $\Mod_A\to\Ani(\mathsf{PolyPolyFree})$ identifies $\Mod_A$ with the full sub-$\infty$-category of $\Ani(\mathsf{PolyPolyFree})$ of objects of the form $(A\to B,M)$ (via $M\mapsto (A\to B,M)$). These functors are readily seen to preserve colimits, which completes the argument.
\end{proof}

We are now ready to prove \ref{Kan extension represents derivations}.

\begin{proof}
    Consider the diagram
    \[\begin{tikzcd}
        \PolyPoly\ar[d]\ar[r,"\mathbf{\Omega}"] &\MOD\\
        \PolyCAlg\ar[d]\\
        \PolyAlg\ar[d]\\
        \Arr(\aRing)\ar[ruuu,"\bfL"']
    \end{tikzcd}\]
    The restriction of $\bfL : \Arr(\aRing)\to\MOD$ to $\PolyAlg$ (respectively $\PolyCAlg$) is the left Kan extension of $\mathbf{\Omega}$ to $\PolyAlg$ (respectively $\PolyCAlg$) by general facts about Kan extensions (see for example \cite[\href{https://kerodon.net/tag/0308}{Tag 0308}]{kerodon}).

    Now let $k\to k\otimes_{\ZZ}\ZZ[y_1,\dots, y_n]\in\PolyCAlg,$ and choose a system $P_\lambda\in\Poly$ of finitely generated polynomial rings over $\ZZ$ such that $\colim_{\lambda}P_{\lambda}\simeq k$ (note that $\colim_{\lambda}(P_{\lambda}\to P_{\lambda}\otimes_{\ZZ}\ZZ[y_1,\dots, y_n])\simeq (k\to k\otimes_{\ZZ}\ZZ[y_1,\dots, y_n])$).
    
    Using the fact that the left Kan extension is the unique colimit preserving functor $\PolyCAlg\to\MOD$ which restricts to $\mathbf{\Omega}$ on $\PolyPoly,$ we compute $\bfL(k\to k\otimes_{\ZZ}\ZZ[y_1,\dots, y_n]):$ we have
    \begin{align*}
        \bfL(k\to k\otimes_{\ZZ}\ZZ[y_1,\dots, y_n])&\simeq\colim_{\lambda}(P_\lambda\otimes_{\ZZ}\ZZ[y_1,\dots, y_n],\Omega^1_{P_\lambda\otimes_{\ZZ}\ZZ[y_1,\dots, y_n]/P_{\lambda}})\\
        &\simeq \colim_{\lambda} (P_{\lambda}\otimes_{\ZZ}\ZZ[y_1,\dots, y_n],P_\lambda\otimes_{\ZZ}\Omega^1_{\ZZ[y_1,\dots, y_n]})\\
        &\simeq \colim_{\lambda} \left((P_{\lambda},0)\coprod(\ZZ[y_1,\dots, y_n],\Omega^1_{\ZZ[y_1,\dots, y_n]})\right)\\
        &\simeq \left(\colim_{\lambda} (P_{\lambda},0)\right)\coprod(\ZZ[y_1,\dots, y_n],\Omega^1_{\ZZ[y_1,\dots, y_n]})\\
        &\simeq (k,0)\coprod(\ZZ[y_1,\dots, y_n],\Omega^1_{\ZZ[y_1,\dots, y_n]})\\
        &\simeq (k\otimes_{\ZZ}\ZZ[y_1,\dots, y_n],\Omega^1_{k\otimes_{\ZZ}\ZZ[y_1,\dots, y_n]/k})
    \end{align*}
    so that $\left.\bfL\right|_{\PolyCAlg}$ agrees with $\mathbf{\Omega}$ on $\PolyCAlg.$

    We can then form the diagram
    \[\begin{tikzcd}
        \PolyCAlg\ar[r]\ar[rd] &(\CRing/\aRing)\ar[r]\ar[d] &\Arr(\aRing)\ar[d]\ar[r] &\MOD\\
        &\CRing\ar[r] &\aRing,
    \end{tikzcd}\]
    where the vertical maps send $(k\to A)\mapsto k$ (i.e, they are the source map). Then \cite[4.3.3.9]{HTT} implies that forming the pullback along $\{k\}\to\CAlg$ produces the diagram
    \[\begin{tikzcd}
        \Poly_{k/}\ar[r]\ar[rd] &\aRing_{k/}\ar[r,"\sim"]\ar[d] &\aRing_{k/}\ar[d]\ar[r] &\MOD\\
        &\{k\}\ar[r,"\id"] &\{k\},
    \end{tikzcd}\]
    and the induced functors in the top row are given by the left Kan extensions of $\left.\mathbf{\Omega}\right|_{\Poly_{k/}}.$

    We now show that this functor satisfies the desired universal property. Let $k\to A$ be a map from a commutative ring to an arbitrary animated ring, and let $M$ be a connective $A$-module.
    
    Note first that
    \[
    \colim_{P\to A}(P,\Omega^1_{P/k})\simeq\colim_{P\to A}(A,\Omega^1_{P/k}\otimes_P A)\simeq (A,\colim_{P\to A}\Omega^1_{P/k}\otimes_P A),
    \]
    where the colimit is taken over finitely presented polynomial $k$-algebras $P$ mapping to $A.$ This is essentially no more than an explicit description of colimits in a coCartesian fibration, see for example the statement and proof of \cite[\href{https://kerodon.net/tag/06B2}{Tag 06B2}]{kerodon}. Intuitively, let $(B,N)\in\MOD.$ A morphism $(P,\Omega^1_{P/k})\to (B,N)$ is equivalent to the data of a map $P\to B$ and a map of $B$-modules $\Omega^1_{P/k}\otimes_P B\to N.$ A map $\colim (P,\Omega^1_{P/k})\to (B,N)$ is equivalent to the data of a compatible family of maps $P\to B$ and $\Omega^1_{P/k}\otimes_P B\to N$ as $P$ ranges over all the maps $P\to A$ in question. However, since $A\simeq\colim P,$ it follows that the maps $P\to B$ factor as $P\to A\to B,$ and thus the maps of modules also may be written $\Omega^1_{P/k}\otimes_P A\otimes_A B\to N.$ Therefore we have
    \[
    \Map_{\MOD}(\colim (P,\Omega^1_{P/k}),(B,N))\simeq\Map_{\MOD}(\colim (A,\Omega^1_{P/k}\otimes_P A),(B,N)),
    \]
    which shows that $\colim_{P\to A}(P,\Omega^1_{P/k})\simeq\colim_{P\to A}(A,\Omega^1_{P/k}\otimes_P A),$ as desired.
    
    Using the above, \cite[Proposition~2.50]{yaylali2022notes} completes the proof that $\LL_{A/k}$ satisfies the desired universal property (when $k\in\CRing$).

    Finally, let $k\to A$ be a general morphism of animated rings. Considering the composition $\ZZ\to k\to A,$ \ref{cotangent complex cofiber sequence} implies that we have a cofiber sequence
    \[
    \LL_{k/\ZZ}\otimes_k A\to \LL_{A/\ZZ}\to\LL_{A/k}
    \]
    of $A$-modules. We now appeal to \cite[Remarks~2.44,2.46]{yaylali2022notes} to complete the argument.
\end{proof}

In an analogous manner, we may also define functors
\begin{align*}
    \bfL^n : \Arr(\aRing)&\to\MOD\\
    (k\to A)&\mapsto (A, \wedge^n_A \LL_{A/k})
\end{align*}
encoding the $A$-module structure on the higher (derived) exterior powers of the cotangent complex for any nonnegative integer $n\geq 0,$ which arise as the animation of the functors
\begin{align*}
    \mathbf{\Omega}^n : \PolyPoly&\to\mathsf{MOD}\\
    (P\to Q)&\mapsto (Q, \Omega^n_{Q/P}).
\end{align*}

\begin{proposition}
    Let
    \[\begin{tikzcd}
        k\ar[r]\ar[d] & A\ar[d]\\
        k'\ar[r] & A'
    \end{tikzcd}\]
    be a commutative diagram of animated rings. Then there is a canonical morphism
    \[A'\otimes_{A} \LL_{A/k}\to \LL_{A'/k'}\]a
    which is an equivalence if $A'\simeq A\otimes_{k} k'.$
\end{proposition}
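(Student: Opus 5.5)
The canonical morphism comes straight from the functoriality of $\bfL : \Arr(\aRing)\to\MOD$ established in \ref{Kan extension represents derivations}. The commutative square is a morphism $(k\to A)\to(k'\to A')$ in $\Arr(\aRing)$. Applying $\bfL$ gives a morphism $(A,\LL_{A/k})\to(A',\LL_{A'/k'})$ in $\MOD$. By the description of morphisms in the coCartesian fibration $\MOD\to\aRing$, this is the same as the map $A\to A'$ together with a map of $A'$-modules $\LL_{A/k}\otimes_A A'\to\LL_{A'/k'}$, and that module map is our canonical morphism. Equivalently, one can build it from the universal property: a derivation $A'\to A'\oplus M$ over $k'$ restricts, by precomposing with $A\to A'$, to a $k$-linear derivation of $A$ into $M$. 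This gives $\Der_{k'}(A',M)\to\Der_k(A,M)$, and Yoneda produces the map.

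For base change I would proceed in three steps.

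\textbf{Polynomial case.} First take $A = k\otimes_{\ZZ}\ZZ[x_1,\dots,x_n]$ with $k$ arbitrary. The proof of \ref{Kan extension represents derivations} gives $\LL_{A/k}\simeq\Omega^1_{\ZZ[x]}\otimes_{\ZZ[x]}A$, a free $A$-module on the $dx_i$. Then $A' = A\otimes_k k' = k'[x_1,\dots,x_n]$, and $\LL_{A'/k'}$ is free on the same $dx_i$. The canonical map sends basis to basis, so it is an equivalence.

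\textbf{General $A$ over fixed $k$.} Next resolve a general $A$ as a sifted colimit of polynomial $k$-algebras $P_\lambda = k[x]$, using that $\aRing_{k/}$ is generated under sifted colimits by these. Two facts are needed:
\begin{itemize}
\item $-\otimes_k k' : \aRing_{k/}\to\aRing_{k'/}$ preserves sifted colimits, being a left adjoint.
\item $(k\to A)\mapsto (A,\LL_{A/k})$ on $\aRing_{k/}$ preserves sifted colimits, since it is the left Kan extension of $\mathbf{\Omega}$ restricted to $\Poly_{k/}$, as in the proof above.
\end{itemize}
The colimit in $\MOD$ of the $(P_\lambda,\LL_{P_\lambda/k})$ is $(A,\colim_\lambda \LL_{P_\lambda/k}\otimes_{P_\lambda}A)$, by the coCartesian-fibration colimit description used in that proof. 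Base change $-\otimes_A A'$ commutes with these colimits. So the canonical map for $A$ is the colimit of the canonical maps for the $P_\lambda$, each of which is an equivalence by the polynomial case.

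\textbf{Main obstacle.} The hardest step is making sure the transformation in the second step is genuinely natural, i.e.\ a map of functors $\aRing_{k/}\to\MOD_{k'}$, so that it commutes with the colimit. I would obtain it by viewing both sides as sifted-colimit-preserving functors out of $\aRing_{k/}$:
\[
A\mapsto \bigl(A\otimes_k k',\ \LL_{A/k}\otimes_A(A\otimes_k k')\bigr)
\qquad\text{and}\qquad
A\mapsto \bigl(A\otimes_k k',\ \LL_{A\otimes_k k'/k'}\bigr).
\]
The transformation between them is induced by $\bfL$ applied to the natural morphism $(k\to A)\to(k'\to A\otimes_k k')$ in $\Arr(\aRing)$. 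A natural transformation between sifted-colimit-preserving functors is an equivalence once it is so on generators, and the generators are exactly the polynomial case.
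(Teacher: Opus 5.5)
Your proposal is correct and follows the same route as the paper. The map is read off from the functoriality of $\bfL$ with values in $\MOD$, and the equivalence is reduced to the polynomial case via the fact that $\bfL$ is left Kan extended from polynomial objects and so commutes with sifted colimits. Your version is a more explicit rendering of the paper's one-line sketch: you fix $k\to k'$, resolve only $A$ by polynomial $k$-algebras, and make the comparison natural transformation precise.
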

\begin{proof}
    The existence of the morphism follows from the fact that $\bfL$ is a functor with values in $\MOD,$ since a morphism $(A,M)\to (A',M')\in\MOD$ is given by a map $\phi, A\to A'$ of animated rings and a morphism $\phi^* M = M\otimes_{A,\phi}A'\to A'$ in $\cMod_{A'}.$

    Thus the square
    \[\begin{tikzcd}
        k\ar[r]\ar[d] & A\ar[d]\\
        k'\ar[r] & A'
    \end{tikzcd}\]
    induces a morphism $\bfL(k\to A)\to \bfL(k'\to A')$ which is $(A, \LL_{A/k})\to (A', \LL_{A'/k'}),$ where $\phi : A\to A'$ is the morphism given by the commutative square, so that $\phi^* \LL_{A/k}\simeq A'\otimes_{A} \LL_{A/k}.$

    That this morphism is an equivalence is a consequence of the corresponding fact for $\Omega^1$ and the fact that $\bfL$ is the left Kan extension of $\Omega^1.$
\end{proof}

The cotangent complex provides us with derived ``differential criteria'' for \'etaleness and smoothness of animated rings.

\begin{proposition}[\protect{\cite[Theorem~2.2.2.6]{toen2006homotopical}}]
    Let $f : k\to A$ be a morphism of animated rings.
    \begin{enumerate}
        \item The morphism $f$ is smooth if and only if the $A$-module $\LL_{A/k}$ is finite projective and $\pi_0 A$ is of finite presentation over $\pi_0 k.$
        \item The morphism $f$ is \'etale if and only if $\LL_{A/k}\simeq 0$ and $\pi_0 A$ is of finite presentation over $\pi_0 k.$
    \end{enumerate}
\end{proposition}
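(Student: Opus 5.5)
The plan is to reduce both statements to classical facts about $\pi_0 f : \pi_0 k\to\pi_0 A$, using the fact that flatness and smoothness of animated rings are defined by conditions on homotopy groups. The link between the two is the base change property of the cotangent complex established just above. We also use the cofiber sequence for composites and a connectivity estimate of the following form: if $k\to A$ is $n$-connective on fibers, then so is $\LL_{A/k}$ (equivalently, $\pi_0 A\otimes_A\LL_{A/k}\simeq\LL_{\pi_0 A/\pi_0 k}$ in low degrees). This estimate can be checked on the generators $\PolyPoly$ and then extended through sifted colimits.

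\emph{Forward direction.} Suppose $f$ is smooth. Flatness means $A\simeq k\otimes_{\pi_0 k}\pi_0 A$ on homotopy groups. More precisely, I would show that flatness gives an equivalence $\pi_0 A\simeq \pi_0 k\otimes_k A$. This holds because the Tor spectral sequence degenerates when $\pi_*A\cong\pi_*k\otimes_{\pi_0 k}\pi_0 A$. Base change then gives $\LL_{A/k}\otimes_A\pi_0 A\simeq\LL_{\pi_0 A/\pi_0 k}$. Classically, for smooth $\pi_0 f$, the latter is $\Omega^1$, which is finite projective. I would then lift this: a connective $A$-module $M$ with $M\otimes_A\pi_0 A$ finite projective over $\pi_0 A$ is itself finite projective. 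To see this, lift a projective presentation, i.e.\ take a summand of $A^n$ whose reduction matches. The cofiber $C$ satisfies $C\otimes_A\pi_0A\simeq 0$, and Nakayama on the lowest homotopy group of the connective module $C$ forces $C\simeq 0$. The étale case is the same argument with $\Omega^1=0$.

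\emph{Converse.} Suppose $\LL_{A/k}$ is finite projective (respectively $0$) and $\pi_0 A$ is finitely presented over $\pi_0 k$. I would first reduce to showing flatness. Once $f$ is flat, the base change $\pi_0 k\otimes_k A\simeq\pi_0A$ gives $\LL_{\pi_0A/\pi_0k}$ finite projective (respectively $0$), and the classical criterion shows that $\pi_0 f$ is smooth (respectively étale). Flatness is the main obstacle.

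My approach to flatness is to construct a smooth (respectively étale) $k$-algebra $B$ with a map $B\to A$ inducing $\pi_0 B\cong\pi_0 A$. This requires first showing that $\pi_0 f$ is smooth, via the low-degree estimate $\pi_1$-$\pi_0$ of $\LL_{\pi_0A/\pi_0k}$, which is controlled by $\LL_{A/k}\otimes\pi_0A$ being projective. Smooth and étale $\pi_0 k$-algebras lift uniquely to smooth and étale $k$-algebras; in the étale case this is the equivalence $\dAff^{\et}_k\simeq\dAff^{\et}_{\pi_0 k}$ used earlier. Formal smoothness of $B$ gives the lift $B\to A$, built inductively along the Postnikov tower of $A$, where the obstructions lie in $\Ext^1(\LL_{B/k},-)=0$. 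Finally, the cofiber sequence for $k\to B\to A$ shows that $\LL_{A/B}$ is built from $\LL_{B/k}\otimes_B A\to\LL_{A/k}$, a map of finite projectives that is an iso on $\pi_0$, hence an equivalence. So $\LL_{A/B}\simeq0$ with $\pi_0B\to\pi_0A$ an isomorphism. The connectivity estimate (a Hurewicz-type statement: if $B\to A$ is $n$-connective then $\LL_{A/B}$ is $(n+1)$-connective with $\pi_{n+1}$ detecting the fiber) then forces $B\simeq A$ by induction on $n$. Therefore $A$ is smooth (respectively étale).
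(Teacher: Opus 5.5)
The paper gives no proof of this proposition; it is quoted from To\"en--Vezzosi, so there is no internal argument to match. Your sketch is a correct, self-contained proof along standard lines.

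\emph{Forward direction.} Flatness gives $\pi_0 k\otimes_k A\simeq\pi_0A$. Base change then gives $\LL_{A/k}\otimes_A\pi_0A\simeq\LL_{\pi_0A/\pi_0k}$. Your lowest-homotopy-group argument transports finite projectivity (resp.\ vanishing) back to $\LL_{A/k}$.

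\emph{Converse.} Smoothness (resp.\ \'etaleness) of $\pi_0k\to\pi_0A$ does follow from the cofiber sequences for $k\to A\to\pi_0A$ and $k\to\pi_0k\to\pi_0A$. Since $\LL_{\pi_0A/A}$ and $\LL_{\pi_0k/k}$ are $2$-connective, you get $\pi_1\LL_{\pi_0A/\pi_0k}=0$ and $\Omega^1_{\pi_0A/\pi_0k}\cong\pi_0(\LL_{A/k}\otimes_A\pi_0A)$ finitely generated projective. Together with finite presentation, this is the classical criterion. The remaining steps are sound:
\begin{itemize}
\item the Postnikov construction of $B\to A$, with obstructions in $\Ext^{n+1}_B(\LL_{B/k},\pi_nA)=0$ for $n\geq1$;
\item the fact that $\LL_{B/k}\otimes_BA\to\LL_{A/k}$ is a $\pi_0$-isomorphism of flat modules, hence an equivalence;
\item the Hurewicz induction killing $\cofib(B\to A)$.
\end{itemize}

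\emph{Comparison.} Your route only ever uses finite presentation of $\pi_0A$; finite presentation of $A$ is inherited from the smooth lift $B$. By contrast, deriving the stated form from a criterion phrased for finitely presented morphisms needs the separate fact in the paper's Warning: lfp holds iff $\pi_0$ is lfp and $\LL_{A/k}$ is perfect. The price is that you rely on three inputs the paper never develops: a Hurewicz theorem for the cotangent complex, obstruction theory along square-zero extensions, and existence of smooth lifts.

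Three points need repair.
\begin{enumerate}
\item The connectivity estimate cannot be ``checked on $\PolyPoly$ and extended through sifted colimits.'' A map in $\PolyPoly$ with $1$-connective cofiber is an isomorphism, and sifted colimits of isomorphisms are isomorphisms, so the hypothesis never propagates. Instead, use a cellular resolution of $A$ over $k$ with generators only in degrees $\geq n$, or cite Lurie's Hurewicz theorem: if $\cofib(k\to A)$ is $n$-connective with $n\geq1$, then $\LL_{A/k}$ is $n$-connective and $\pi_n\LL_{A/k}\cong\pi_n(A\otimes_k\cofib(k\to A))$. Those two facts are exactly what you use. Your ``equivalently'' clause is not equivalent to them and should go.
\item Smooth $\pi_0k$-algebras do not lift \emph{uniquely}; lifts are unique only up to non-unique isomorphism. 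You only need existence, but it must be justified. One way is to deform along $\tau_{\leq n}k\to\tau_{\leq n-1}k$, where the obstructions lie in positive-degree $\Ext$-groups out of a projective cotangent complex and so vanish. Alternatively, use Zariski-local standard smooth presentations.
\item The opening ``reduce to flatness'' remark is idle. Flatness is never established independently; it is an output of $A\simeq B$, so delete the remark.
\end{enumerate}
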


A crucial fact about the cotangent complex is that it is an fpqc sheaf. 

\begin{proposition}[\protect{\cite[Corollary~2.7]{bhatt2012completionsderivedrhamcohomology}}]
    Let $k\to A$ be a map of (simplicial) commutative rings. If $A_\bullet$ denotes the \v{C}ech conerve of $k\to A,$ then there is an equivalence
    \[
    \Tot(\wedge^i_k \LL_{A_\bullet/k})\simeq \wedge^i_k \LL_{A/k}
    \]
    in $\cMod_k.$ That is, $\wedge^i_k L_{-/k} : \CRing_k\to\cMod_k$ satisfies fpqc descent.
\end{proposition}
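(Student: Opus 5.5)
The plan is to follow Bhatt's argument. The first step is a reduction via base change: the statement is flat descent for $\wedge^i \LL_{-/k}$ along a map $k\to A$ that we take to be faithfully flat. Every term of the \v{C}ech conerve $A_\bullet = A^{\otimes_k(\bullet+1)}$ is an $A$-algebra via the first factor. The augmented cosimplicial object $A\to A_\bullet\otimes_k A$ admits an extra codegeneracy, hence is split. It therefore suffices to prove that $\Tot$ commutes with the relevant constructions after the faithfully flat base change $-\otimes_k A$. Faithfully flat base change is conservative, but it does not commute with totalizations in general. So the strategy is instead to prove the statement as an equivalence of $k$-modules by working with the HKR/conjugate-type filtration in the second variable. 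Concretely, for $k\to A\to A_n$ we use the transitivity cofiber sequence proved above,
\[
\LL_{A/k}\otimes_A A_n\to\LL_{A_n/k}\to\LL_{A_n/A}.
\]
Here $A\to A_n$ is the base change of $k\to A^{\otimes n}$, so $\LL_{A_n/A}\simeq \LL_{A^{\otimes n}/k}\otimes_{A^{\otimes n}}A_n$ by the base change proposition.

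The second step filters $\wedge^i\LL_{A_n/k}$ using the cofiber sequence. This gives a finite filtration with graded pieces $\wedge^j\LL_{A/k}\otimes_A\wedge^{i-j}\LL_{A_n/A}$, functorial in $[n]\in\Delta$. Since $\Tot$ is exact on finite filtrations in the stable category $\cMod_k$, it suffices to compute $\Tot$ of each graded piece. For $j=i$ the piece is $\wedge^i\LL_{A/k}\otimes_A A_\bullet$. Its totalization is $\wedge^i\LL_{A/k}$ by flat descent for modules: $\cMod_{(-)}$ is an fpqc hypersheaf, and $A_\bullet$ viewed over $A$ is split after base change along $A\to A\otimes_k A$. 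For $j<i$ the pieces are $\wedge^j\LL_{A/k}\otimes_A\wedge^{i-j}\LL_{A_\bullet/A}$, which are again cosimplicial modules arising from a split augmented object. Indeed, $A\to A_\bullet$ (as $A$-algebras via the first factor) is the \v{C}ech conerve of $A\to A\otimes_k A$, which has a section. So $\wedge^{m}\LL_{A_\bullet/A}$ totalizes to $\wedge^m\LL_{A/A}=0$ for $m\ge 1$, by the standard fact that split augmented cosimplicial objects are limit diagrams, applied after a flat base change.

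The main obstacle is making the preceding step honest, because it mixes two structures. The splitting exists only after base change along $k\to A$, whereas the totalization we want is computed in $\cMod_k$. I would handle this as follows. Every term is an $A_\bullet$-module, so the whole cosimplicial object is a quasi-coherent sheaf on the \v{C}ech nerve. Faithfully flat descent for modules (the fpqc hypersheaf property of $\cMod_{(-)}$) lets us test the totalization after applying $-\otimes_k A$. For cosimplicial \emph{connective} modules with uniformly bounded-below terms, $\Tot$ commutes with flat base change: one checks this on Postnikov truncations, where $\Tot$ of each $\tau_{\le m}$ is a finite limit. After base change the cosimplicial object acquires an extra codegeneracy and is contractible onto its augmentation. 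Finally, the resulting statement for $\CRing_k$ extends to simplicial commutative rings by writing $k\to A$ as a sifted colimit. Alternatively, when the maps are flat, one can run the same argument directly with animated rings, since the cofiber sequence and base change results above are stated for animated rings.
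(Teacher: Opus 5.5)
\textbf{There is a genuine gap.} The claim you set out to prove is false, so the argument has to break somewhere. You take $A_\bullet$ to be the \v{C}ech conerve of $k\to A$ and put $\wedge^i\LL_{A/k}$ on the right. Now take $k\to A=k[x]$. The retraction $x\mapsto 0$ gives the augmented cosimplicial $k$-algebra $k\to A_\bullet$ an extra codegeneracy. Hence $\LL_{-/k}\colon\aRing_k\to\cMod_k$ carries it to a split augmented object, and $\Tot(\LL_{A_\bullet/k})\simeq\LL_{k/k}=0$, whereas $\LL_{A/k}\simeq k[x]\,dx\neq 0$.

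The printed statement is garbled. Its second sentence (fpqc descent) and the paper's use of the result in the proof of Theorem~\ref{cotangent complex is fpqc sheaf on morphisms} make the intended claim clear: for fixed $k$ and a faithfully flat $A\to B$ with \v{C}ech conerve $B^\bullet$, one has $\Tot(\wedge^i\LL_{B^\bullet/k})\simeq\wedge^i\LL_{A/k}$. The paper gives no proof of its own; it only cites Bhatt.

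The step that fails in your argument is functoriality in $[n]\in\Delta$. The first-factor maps $A\to A_n$ do not commute with the coface $d^0$, which inserts $1$ in the first slot. So there is no map of cosimplicial rings from the constant object $A$ to $A_\bullet$. Consequently the triangles $\LL_{A/k}\otimes_AA_n\to\LL_{A_n/k}\to\LL_{A_n/A}$ do not form a cofiber sequence of cosimplicial modules, and the filtration whose graded pieces you totalize does not exist.

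You have conflated $A_\bullet$ with $A_\bullet\otimes_kA$. The latter is the \v{C}ech conerve of $A\to A\otimes_kA$ (termwise $A_{n+1}$, the d\'ecalage of $A_\bullet$). It is indeed a split cosimplicial $A$-algebra, but it is not $A_\bullet$. For the same reason your $j=i$ step is not an instance of descent. Descent along $k\to A$ recovers a $k$-module $N$ from $N\otimes_kA_\bullet$, and $\wedge^i\LL_{A/k}$ is not of that form.

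\textbf{The fix.} Your outline works once the setup is corrected, and it is then Bhatt's argument, which is also essentially what the paper runs in the proof of Theorem~\ref{cotangent complex is fpqc sheaf on morphisms}.
\begin{enumerate}
\item Take $A\to B$ faithfully flat with \v{C}ech conerve $B^\bullet$. Now $A\to B^\bullet$ is a map of cosimplicial rings, so $\LL_{A/k}\otimes_AB^\bullet\to\LL_{B^\bullet/k}\to\LL_{B^\bullet/A}$ is a cofiber sequence of cosimplicial modules.
\item It induces a finite filtration of $\wedge^i\LL_{B^\bullet/k}$ with graded pieces $\wedge^j\LL_{A/k}\otimes_A\wedge^{i-j}\LL_{B^\bullet/A}$.
\item The piece $j=i$ totalizes to $\wedge^i\LL_{A/k}$ by faithfully flat descent for modules.
\item For $j<i$, test after $-\otimes_AB$. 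This functor is conservative, and it commutes with $\Tot$ of uniformly bounded-below cosimplicial modules, since each homotopy group of $\Tot$ is computed by a finite stage of the Tot tower.
\item After this base change, the piece is a functor of the augmented cosimplicial $B$-algebra $B\to B^\bullet\otimes_AB$, the \v{C}ech conerve of $B\to B\otimes_AB$. That map has a section, so the augmented object is split and totalizes to its value on $\LL_{B/B}=0$.
\end{enumerate}
This works verbatim for animated rings. You should drop the reduction via sifted colimits: it is unnecessary, and it would require commuting $\Tot$ with geometric realizations, which fails in general.
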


We require a generalization of this statement.

\begin{theorem}\label{cotangent complex is fpqc sheaf on morphisms}
    For all nonnegative integers $n,$ the functors
    \[
    \bfL^n : \Arr(\dAff)^{\op}\to\QC
    \]
    are fpqc sheaves.\footnote{See \cite[Section 7.1]{BLPO_HKR} for a definition of the $\tau$-topology on $\Arr(\cC)$ for an $\infty$-category $\cC$ with a topology $\tau.$}
\end{theorem}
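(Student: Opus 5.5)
We need to show that for an fpqc cover in $\Arr(\dAff)$, that is, a map of arrows $(\Spec A\to\Spec k)\to(\Spec B\to\Spec R)$ which is a cover in the sense of \cite[Section 7.1]{BLPO_HKR}, the functor $\bfL^n$ sends the \v{C}ech nerve to a limit diagram in $\QC$. The plan is to reduce to two basic cases. In the first, the cover changes only the target ring: the base $k$ is fixed and $A\to A'$ is faithfully flat. In the second, the cover is a flat base change: $k\to k'$ is faithfully flat and $A'\simeq A\otimes_k k'$. A general covering map of arrows $(k\to A)\to(k'\to A')$ in the arrow topology has $k\to k'$ and $A\to A'$ (or $A\otimes_k k'\to A'$) faithfully flat. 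It factors as
\[(k\to A)\to(k'\to A\otimes_k k')\to(k'\to A'),\]
a base change cover followed by a cover with fixed base. Since a composite of covers whose \v{C}ech nerves satisfy descent again satisfies descent (a standard argument with bisimplicial \v{C}ech nerves), it suffices to treat the two cases separately.

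We describe limits in $\QC$ using the explicit formula recalled earlier. The limit over $\Delta$ of $(\Spec A_m,\cF_m)$ has underlying stack the colimit of the $\Spec A_m$, which after sheafification is $\Spec A$ because $A\to A'$ is an fpqc cover. Its module part is $\lim_m (f_m)_*\cF_m$, computed in $\QC(\Spec A)=\cMod_A$. So the claim reduces to the equivalence
\[\wedge^n_A\LL_{A/k}\simeq\Tot\big(\wedge^n_{A^\bullet}\LL_{A^\bullet/k^\bullet}\big)\]
in $\cMod_A$, where $k^\bullet\to A^\bullet$ is the \v{C}ech conerve of the cover of arrows.

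In the base change case, the preceding proposition on base change of cotangent complexes gives
\[\wedge^n\LL_{A^m/k^m}\simeq\wedge^n\LL_{A/k}\otimes_A A^m,\]
with $A^m=A\otimes_k k'^{\otimes_k(m+1)}$. Descent then follows from faithfully flat descent for modules, namely that $\cMod_{(-)}$ is an fpqc hypersheaf (\cite[D.6.3.3]{SAG}). In particular $M\simeq\Tot(M\otimes_A A^\bullet)$ for any $A$-module $M$, because $A\to A\otimes_k k'$ is faithfully flat.

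In the fixed-base case, we argue as Bhatt does, but keep track of the $A$-module structure. Take the transitivity cofiber sequence
\[\LL_{A/k}\otimes_A A^m\to\LL_{A^m/k}\to\LL_{A^m/A}\]
and the induced finite filtration on $\wedge^n\LL_{A^m/k}$ with graded pieces $\wedge^{i}\LL_{A/k}\otimes_A\wedge^{n-i}\LL_{A^m/A}$. Totalization is exact and commutes with these finite filtrations. The $i=n$ piece totalizes to $\wedge^n\LL_{A/k}$ by flat descent as above. The remaining pieces vanish: $\Tot(N\otimes_A\wedge^j\LL_{A^\bullet/A})\simeq0$ for $j>0$. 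To see this, base change along the faithfully flat map $A\to A'$, which is conservative on totalizations. After base change the cosimplicial object acquires an extra codegeneracy, so it is split and its totalization is $\wedge^j\LL_{A'/A'}\otimes N'=0$. This is exactly where Bhatt's \cite[Corollary~2.7]{bhatt2012completionsderivedrhamcohomology} argument lives, and we could simply cite it for the underlying $k$-modules. The key check is that the equivalence Bhatt produces is the canonical comparison map in $\cMod_A$, because the forgetful functor $\cMod_A\to\cMod_k$ is conservative and preserves limits.

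The main obstacle is the bookkeeping that ensures the filtrations and the \v{C}ech conerve are functorial in the arrow category and interact correctly with pushforwards in $\QC$. In particular, the transitivity filtration must be natural in the cosimplicial direction. We handle this by constructing it through animation on $\Poly^3$, as in the proof of the cofiber sequence proposition, so that it is automatically functorial.
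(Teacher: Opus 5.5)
\textbf{The reduction step fails for the covers the paper's proof treats.} There, a cover of arrows $(k\to A)\to(k'\to B)$ only requires $k\to k'$ and $A\to B$ to be faithfully flat. Nothing then forces $A\otimes_k k'\to B$ to be flat. So the second map in your factorization $(k\to A)\to(k'\to A\otimes_k k')\to(k'\to B)$ is in general not a fixed-base cover.

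For instance, take $k=\ZZ$, $k'=\ZZ[x]$ and $A=B=\ZZ[y]$, with $A\to B$ the identity and $k'\to B$ given by $x\mapsto y$. Then $A\otimes_k k'=\ZZ[x,y]\to\ZZ[y]$ is the quotient by $x-y$, which is not flat.

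This cannot be repaired by refining the cover either. Any composite of your two kinds of covers is a map $(k\to A)\to(k''\to C)$ with $k\to k''$ and $A\otimes_k k''\to C$ faithfully flat. If such a map factored through the cover above, the element $y\otimes 1-1\otimes\bar{x}$ of $\pi_0(A\otimes_k k'')=\pi_0(k'')[y]$, where $\bar x$ is the image of $x$, would map to zero in $\pi_0 C$. That contradicts injectivity of a faithfully flat map. So if covers are defined by your parenthetical condition ($A\otimes_k k'\to A'$ faithfully flat), your argument is fine. But it then proves descent only for a strictly coarser topology than the one the paper's proof handles.

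\textbf{What is missing is a term coupling the two \v{C}ech conerves.} The paper does not factor the cover. It sets $B^\bullet=\Cech(A\to B)$, $k^\bullet=\Cech(k\to k')$ and $A^\bullet=A\otimes_k k^\bullet$, and uses the transitivity sequences for $k^\bullet\to A^\bullet\to B^\bullet$ and for $A\to A^\bullet\to B^\bullet$.
\begin{itemize}
\item The terms $\LL_{A^\bullet/k^\bullet}\otimes_{A^\bullet}B^\bullet\simeq\LL_{A/k}\otimes_A B^\bullet$ and $\LL_{B^\bullet/A}$ are handled by flat descent and by Bhatt's result, i.e.\ the same ingredients as your two cases.
\item The remaining term $\LL_{A^\bullet/A}\otimes_{A^\bullet}B^\bullet$, together with its analogues in the exterior-power filtrations, must be shown to have vanishing totalization. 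The paper does this by a splitting argument after faithfully flat base change.
\end{itemize}

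Alternatively, you can keep your strategy by factoring instead as $(k\to A)\to(k\to B)\to(k'\to B)$, which always consists of covers. The first map is your fixed-base case. The second changes only the base and needs its own argument. For example, write $\LL_{k^\bullet/k}\otimes_{k^\bullet}B\simeq\LL_{R^\bullet/k'}\otimes_{R^\bullet}B$ with $R^\bullet=k'\otimes_k k^\bullet$, where $R^\bullet\to B$ multiplies all factors via $k'\to B$. This cosimplicial module is split with zero augmentation, and the same holds for the terms in the exterior-power filtrations. Hence $\Tot\wedge^n\LL_{B/k^\bullet}\simeq\wedge^n\LL_{B/k}$.

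Your treatment of the two special cases themselves, including the check that Bhatt's equivalence is the canonical $A$-linear comparison map, is fine.
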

\begin{proof}
    For $n=0,$ the result is trivial.
    
    Let $k\to A$ be a map of animated rings, and consider a diagram
    \[\begin{tikzcd}
        A\ar[r] & B\\
        k\ar[u]\ar[r] & k'\ar[u]
    \end{tikzcd}\]
    such that $A\to B$ and $k\to k'$ are faithfully flat. We need to show that the canonical map
    \[
    (\Spec A, \LL_{A/k})\to \lim_n (\Spec B^n, \LL_{B^n/k^n})
    \]
    is an equivalence, where $B^\bullet = \Cech(A\to B)$ and $k^\bullet = \Cech(k\to k')$ are the \v{C}ech conerves of the indicated maps. Since $\Spec B\to\Spec A$ is an effective epimorphism, $\colim_n \Spec B^n\simeq\Spec A,$ so we must show that the canonical map
    \[
    \LL_{A/k}\to\lim_n \LL_{B^n/k^n}
    \]
    is an equivalence, where all of the objects are considered as $A$-modules.

    Let $A^\bullet = A\otimes_{k} k^\bullet,$ so that we obtain a diagram
    \[\begin{tikzcd}
        A\ar[r] & A^\bullet\ar[r] & B^\bullet\\
        k\ar[u]\ar[r] & k^\bullet\ar[u]\ar[ru]
    \end{tikzcd}\]
    where the square is a pushout. By construction, we have that $A^\bullet = \Cech(A\to A\otimes_k k').$ Now, consider the cofiber sequence obtained by considering the composition of morphisms $k^\bullet\to A^\bullet\to B^\bullet.$ We obtain
    \[
    \LL_{A^\bullet/k^\bullet}\otimes_{A^\bullet} B^\bullet\to \LL_{B^\bullet/k^\bullet}\to \LL_{B^\bullet/A^\bullet},
    \]
    and it follows from base change that $\LL_{A^\bullet/k^\bullet}\otimes_{A^\bullet} B^\bullet\simeq \LL_{A/k}\otimes_{A} B^\bullet.$ Faithfully flat descent then implies that the totalization of this term is $\LL_{A/k},$ so it suffices to prove that the totalization of $\LL_{B^\bullet/A^\bullet}$ vanishes.

    Next, consider the cofiber sequence
    \[
    \LL_{A^\bullet/A}\otimes_{A^\bullet}B^\bullet\to \LL_{B^\bullet/A}\to \LL_{B^\bullet/A^\bullet}
    \]
    obtained from the composition $A\to A^\bullet\to B^\bullet.$ By \cite[Corollary 2.7]{bhatt2012completionsderivedrhamcohomology} we find that the totalization of $\LL_{B^\bullet/A}$ vanishes, so we are reduced to showing $\Tot\left(\LL_{A^\bullet/A}\otimes_{A^\bullet}B^\bullet\right)\simeq 0.$

    For this, note that it will suffice to show that the totalization of 
    $A'\otimes_A \LL_{A^\bullet/A}\otimes_{A^\bullet}B^\bullet$ vanishes, since $A\to A'$ is faithfully flat. Then we have $A'\otimes_A \LL_{A^\bullet/A}\simeq \LL_{\Cech(A'\to A'\otimes_A A')^\bullet/A'},$ but since $A'\to A'\otimes_A A'$ admits a section, there is a homotopy between the identity on $\LL_{\Cech(A'\to A'\otimes_A A')^\bullet/A'}$ and the zero map, which implies that the totalization of this object vanishes. Then, taking the tensor product of this homotopy up to $B^\bullet$ produces a homotopy between the identity on $\LL_{\Cech(A'\to A'\otimes_A A')^\bullet/A'}\otimes_{A^\bullet} B^\bullet$ and the zero map, so the totalization of this object vanishes as well, which completes the proof for $n=1.$

    For higher $n,$ we proceed via induction analogously to the argument of \cite[Theorem 3.1]{BMS}. Suppose that we know $\Tot(\wedge^m_A \LL_{B^\bullet/k^\bullet})\simeq \wedge^m_A \LL_{A/k}$ and $\Tot(\wedge^m_A \LL_{B^\bullet/A^\bullet})\simeq 0$ for all $0\leq m < n.$ Applying $\wedge^n_A$ to the cofiber sequence
    \[
    \LL_{A^\bullet/k^\bullet}\otimes_{A^\bullet} B^\bullet\to \LL_{B^\bullet/k^\bullet}\to \LL_{B^\bullet/A^\bullet}
    \]
    induces a filtration
    \[
    \wedge^n_A\left(\LL_{A^\bullet/k^\bullet}\otimes_{A^\bullet} B^\bullet\right)\simeq  F^0\to F^1\to\dots\to F^{n-1}\to F^n\simeq\wedge^n_A \LL_{B^\bullet/k^\bullet}
    \]
    with $i$th associated graded piece given by
    \[
    F^{i-1}\to F^i\to \left(\wedge^{n-i}_A\left(\LL_{A^\bullet/k^\bullet}\otimes_{A^\bullet} B^\bullet\right)\right)\otimes_A\left(\wedge^i_A \LL_{B^\bullet/A^\bullet}\right)
    \]
    As before, since $\LL_{A^\bullet/k^\bullet}\otimes_{A^\bullet} B^\bullet\simeq \LL_{A/k}\otimes_A B^\bullet,$ Bhatt's result applies, and we are reduced to showing that $\Tot(\wedge^n \LL_{B^\bullet/A^\bullet})\simeq 0.$ To show this, we proceed as we did in the $n=1$ case: use the cofiber sequence
    \[
    \LL_{A^\bullet/A}\otimes_{A^\bullet}B^\bullet\to \LL_{B^\bullet/A}\to \LL_{B^\bullet/A^\bullet}
    \]
    and consider the induced filtration
    \[
    \wedge^n_A\left(\LL_{A^\bullet/A}\otimes_{A^\bullet}B^\bullet\right)\simeq  F^0\to F^1\to\dots\to F^{n-1}\to F^n\simeq\wedge^n_A \LL_{B^\bullet/A}.
    \]
    Again using Bhatt's result that $\Tot\left(\wedge^n_A \LL_{B^\bullet/A}\right)\simeq 0,$ we are reduced to showing that
    \[\Tot\left(\wedge^n_A\left(\LL_{A^\bullet/A}\otimes_{A^\bullet}B^\bullet\right)\right)\simeq 0.\]
    The argument for this proceeds as in the $n=1$ case, which completes the proof.
\end{proof}

\subsection{The Cotangent Complex of a Morphism of Derived Stacks}

\begin{mydef}
    Let $f : \cX\to\cY$ be a morphism of derived stacks, let $x : \Spec A\to\cX$ be a point, and let $M\in\cMod_A^{\cn}.$ Then we define the \emph{module of derivations at $x$} to be
    \[\Der_x(f,M) := \Fib(\cX(A\oplus M)\to \cX(A)\times_{\cY(A)}\cY(A\oplus M)),\]
    where the fiber is taken at the point of $\cX(A)\times_{\cY(A)}\cY(A\oplus M))$ corresponding to $x : \Spec A\to \cX$ and $\Spec A\to\Spec (A\oplus M)\to\cY$ corresponding to the canonical projection $A\oplus M\to A.$
\end{mydef}

\begin{mydef}
    Let $f : \cX\to\cY$ be a morphism of derived stacks. We say that $L\in\QC(\cX)$ is a \emph{cotangent complex for $f$} if $L$ is $(-n)$-connective for some $n\geq 0,$ and for any $x : \Spec A\to\cX,$ there is an equivalence
    \[\Der_x(f,M)\simeq\Map_{\cMod_A^{\cn}}(x^*L, M).\]
\end{mydef}

For the time being, if $L$ is a cotangent complex for a morphism $f : \cX\to\cY$ of derived stacks, we will denote $\LL^{\der}_{\cX/\cY} := L.$ We begin by outlining important properties of the cotangent complex.

\begin{lemma}[\protect{\cite[Lemma 1.4.1.16]{toen2006homotopical}}]\label{cotangent complex properties 1}
    Let $f : \cX\to\cY$ be a morphism of derived stacks over $\Spec k$.
    \begin{enumerate}
        \item If $\cX$ and $\cY$ admit cotangent complexes relative to $\Spec k,$ then $f$ admits a cotangent complex, and for any $x : \Spec A\to\cX,$ there is a cofiber sequence
        \[
        (f\circ x)^*\LL^{\der}_{\cY/\Spec k}\to x^* \LL^{\der}_{\cX/\Spec k}\to x^* \LL^{\der}_{\cX/\cY}.
        \]
        \item If $f$ admits a cotangent complex, then for any morphism of derived stacks $\cZ\to\cY$ and any $x : \Spec A\to\cX\times_{\cY}\cZ,$ we have
        \[
        (x\circ\pr_1)^* \LL^{\der}_{\cX/\cY}\simeq x^* \LL^{\der}_{\cX\times_{\cY}\cZ/\cZ}
        \]
        \item If for any morphism $x : \Spec A\to X$ the projection $\pr : X\times_{Y, f\circ x}\Spec A\to\Spec A$ admits a cotangent complex, then $f$ admits a cotangent complex and further we have
        \[
        x^* \LL^{\der}_{\cX/\cY}\to (x,\id_{\Spec A})^* \LL^{\der}_{\cX\times_{\cY}\Spec A/\Spec A}
        \]
        \item If for any $x : \Spec A\to\cX$ the stack $\cX\times_{\cY,f\circ x}\Spec A$ admits a cotangent complex, then $f$ has a cotangent complex and we have a cofiber sequence
        \[
        \id_{\Spec A}^* \LL^{\der}_{\Spec A/\Spec k}\to (x,\id_{\Spec A})^* \LL^{\der}_{\cX\times_{\cY}\Spec A/\Spec k}\to x^* \LL^{\der}_{\cX/\cY}.
        \]
    \end{enumerate}
\end{lemma}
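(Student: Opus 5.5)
The plan is to argue directly from the definition: a cotangent complex for $f$ is a $(-n)$-connective object of $\QC(\cX)$ that corepresents $\Der_x(f,-)$ at every point $x:\Spec A\to\cX$, and $\QC(\cX)\simeq\lim_{\Spec A\to\cX}\cMod_A$. Each of the four items therefore reduces to two tasks. The first is to exhibit, for every point $x$, an $A$-module that corepresents the relevant derivation functor on $\cMod_A^{\cn}$, with these modules compatible under base change along $\Spec B\to\Spec A$. The second is to identify the derivation spaces using fiber sequences of anima. Connectivity will follow because cofibers, base changes and fibers of $(-n)$-connective objects are $(-n-1)$-connective.

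For (1), I fix $x:\Spec A\to\cX$ and $M\in\cMod_A^{\cn}$, and write the target of $\Der_x(f,M)$ as $\cX(A)\times_{\cY(A)}\cY(A\oplus M)$. The maps $\Spec A\oplus M\to\cX\to\cY\to\Spec k$ give a fiber sequence
\[
\Der_x(f,M)\to\Der_x(\cX/k,M)\to\Der_{f\circ x}(\cY/k,M).
\]
This comes from pasting the two pullback squares defining the fibers; since $\Spec k$ is affine, $\Spec k(A\oplus M)\simeq\Spec k(A)$ is unaffected. By assumption the second and third terms are $\Map(x^*\LL^{\der}_{\cX/k},M)$ and $\Map((f\circ x)^*\LL^{\der}_{\cY/k},M)$. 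I then define $x^*\LL^{\der}_{\cX/\cY}$ as the cofiber of the natural map between these, where the map is induced by Yoneda from the derivation map. Mapping out of a cofiber gives a fiber sequence of mapping spectra, so this cofiber corepresents $\Der_x(f,-)$ on connective $M$. The step I expect to be the real obstacle is gluing these pointwise cofibers into an object of $\QC(\cX)$. My approach is to construct the morphism $f^*\LL^{\der}_{\cY/k}\to\LL^{\der}_{\cX/k}$ globally in $\QC(\cX)$: since $\Der$ is functorial in $x$ and in $M$, Yoneda applied in the limit $\lim\cMod_A$ supplies a coherent map. I then take the cofiber in $\QC(\cX)$, and because $x^*$ is exact the cofiber sequence in (1) follows.

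For (2), I take $x:\Spec A\to\cX\times_\cY\cZ$. For a pullback, $(\cX\times_\cY\cZ)(A\oplus M)\simeq\cX(A\oplus M)\times_{\cY(A\oplus M)}\cZ(A\oplus M)$, and a direct fiber computation then gives $\Der_x(\cX\times_\cY\cZ/\cZ,M)\simeq\Der_{\pr_1x}(f,M)$. Consequently $\pr_1^*\LL^{\der}_{\cX/\cY}$ satisfies the defining property, and uniqueness of corepresenting objects gives the equivalence.

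For (3), I apply the same computation to the pullback along $f\circ x$. This gives $\Der_x(f,M)\simeq\Der_{(x,\id)}(\cX\times_\cY\Spec A/\Spec A,M)$, so I set $x^*\LL^{\der}_{\cX/\cY}:=(x,\id)^*\LL^{\der}_{\cX\times_\cY\Spec A/\Spec A}$. Compatibility under $\Spec B\to\Spec A$ comes from (2), applied to the pullback $\cX_B\simeq\cX_A\times_{\Spec A}\Spec B$, and this assembles the pointwise objects into an object of $\QC(\cX)$. For (4), I combine (3) with (1) applied to $\cX\times_\cY\Spec A\to\Spec A\to\Spec k$, using that $\LL^{\der}_{\Spec A/\Spec k}$ exists by the affine theory; the cofiber sequence then follows by pulling back along $(x,\id)$.
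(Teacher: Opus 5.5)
Your strategy is the standard direct verification that the citation to Toën--Vezzosi stands for. It consists of the fiber square of derivation spaces, pointwise corepresentability, the base-change identity $(\cX\times_{\cY}\cZ)(A\oplus M)\simeq\cX(A\oplus M)\times_{\cY(A\oplus M)}\cZ(A\oplus M)$, and then gluing. The paper's own proof of \ref{cotangent complex cofiber sequence} uses exactly your pullback square of derivation spaces.

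Two points you use implicitly should be made explicit:
\begin{enumerate}
\item These cotangent complexes are only bounded below, so ``Yoneda'' must mean that $L\mapsto \Map(L,-)|_{\cMod_A^{\cn}}$ is fully faithful on bounded-below $A$-modules. This holds because the spaces $\Map(L,M[k])$, $k\geq 0$, recover the mapping spectrum.
\item The gluing into $\QC(\cX)=\lim_x\cMod_A$ needs this fully faithfulness in a form natural over the $\infty$-category of pairs $(x,M)$. A family of pointwise maps or objects that is compatible only up to homotopy does not by itself give a point of the limit.
\end{enumerate}

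The step that actually fails is your claim that ``connectivity will follow'' in (3) and (4). There you define $x^*\LL^{\der}_{\cX/\cY}$ as $(x,\id)^*\LL^{\der}_{\cX\times_{\cY}\Spec A/\Spec A}$. The hypothesis only gives a bound $n_x$ depending on $x$, and no cofiber or base-change argument produces a uniform $n$. The paper's definition of a cotangent complex does require a uniform bound: $L\in\QC(\cX)$ must be $(-n)$-connective for a single $n$.

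This cannot be repaired under that definition. Take $\cY=\coprod_{n\geq 0}\Spec k$ and $\cX=\coprod_{n\geq 0}\mathrm{B}^n\mathbf{G}_a$, mapping componentwise. For a point $x$ with $\Spec A$ connected and landing in the $n$th component, $\cX\times_{\cY}\Spec A\simeq \mathrm{B}^n\mathbf{G}_a\times\Spec A$. This stack has cotangent complex $\cO[-n]$ over $A$, and a uniformly bounded absolute one over $k$, so the hypotheses of (3) and (4) hold. However, your own computation shows $\Der_x(f,-)\simeq\Map(A[-n],-)$. Hence any bounded-below corepresenting object must restrict to $A[-n]$ on the $n$th component, and no $(-N)$-connective $L$ can exist.

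So your argument proves (3) and (4) only for the pointwise notion, where the connectivity bound may depend on the point $x$. That is the convention of the cited source. With the paper's uniform definition you must either add a uniform bound to the hypotheses or weaken the conclusion. Items (1) and (2) are unaffected, since there the bound is inherited from $\LL^{\der}_{\cX/k}$, $\LL^{\der}_{\cY/k}$, or $\LL^{\der}_{\cX/\cY}$.
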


Note that the cotangent complex of an arbitrary morphism of derived stacks may not exist. However, the fundamental cofiber sequence of cotangent complexes generalizes to compositions of morphisms of derived stacks, and similarly to the affine case, it may be used to show that a cotangent complex exists if other cotangent complexes are known to exist.

\begin{proposition}\label{cotangent complex cofiber sequence}
    Suppose $f : \cX\to\cY$ and $g : \cY\to\cZ$ are morphisms of derived stacks, and suppose that $g$ admits a cotangent complex $\LL^{\der}_{\cY/\cZ}.$ Then $f$ admits a cotangent complex if and only if $g\circ f$ does. Moreover, if all three cotangent complexes exist, then there is a cofiber sequence
    \[
    f^* \LL^{\der}_{\cY/\cZ}\to \LL^{\der}_{\cX/\cZ}\to \LL^{\der}_{\cX/\cY}.
    \]
\end{proposition}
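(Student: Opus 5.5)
The plan is to argue pointwise on affines. By definition, a cotangent complex for a morphism is determined by its pullbacks along points $x : \Spec A\to\cX$, together with the derivation functors $\Der_x(-,M)$ for $M\in\cMod_A^{\cn}$. So I would first build a fiber sequence of derivation spaces and then translate it into statements about representing objects.

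First, fix $x : \Spec A\to\cX$ and $M\in\cMod_A^{\cn}$. Unwinding the definition of $\Der_x$ as a fiber, I expect a fiber sequence of anima
\[
\Der_x(f,M)\to\Der_x(g\circ f,M)\to\Der_{f\circ x}(g,M).
\]
This should come from the pasting law for pullbacks applied to the commutative diagram formed by
\[
\cX(A\oplus M)\to\cX(A)\times_{\cZ(A)}\cZ(A\oplus M)
\]
and its factorization through $\cX(A)\times_{\cY(A)}\cY(A\oplus M)$. Concretely, the fiber of the composite is an extension of the fiber of the second map by the fiber of the first. The fiber of the second map is
\[
\cY(A\oplus M)\to\cY(A)\times_{\cZ(A)}\cZ(A\oplus M),
\]
taken over $f\circ x$, which is exactly $\Der_{f\circ x}(g,M)$. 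Since $M\mapsto\Der_x(-,M)$ is functorial in $M$ and (for derived stacks, which are sheaves) commutes with the finite limits used to deloop, these sequences should upgrade to fiber sequences of functors $\cMod_A^{\cn}\to\Ani$ compatible with loops, $\Omega\Der(M)\simeq\Der(M[-1])$ where defined. In other words, they are fiber sequences of spectrum-valued functors after delooping in $M$.

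Now suppose $g$ has a cotangent complex, so that $\Der_{f\circ x}(g,M)\simeq\Map(x^*f^*\LL^{\der}_{\cY/\cZ},M)$.

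\emph{Case 1: $f$ has a cotangent complex.} Then $\Der_x(g\circ f,M)$ is an extension of two corepresentable functors. I would define the candidate $L$ as the cofiber of a map $\delta : f^*\LL^{\der}_{\cY/\cZ}\to\LL^{\der}_{\cX/\cY}[?]$. Rather than construct that map directly, I would set $L := \Fib\bigl(\LL^{\der}_{\cX/\cY}\to f^*\LL^{\der}_{\cY/\cZ}[1]\bigr)$, where the connecting map is induced by Yoneda from the boundary map of the fiber sequence of derivation functors. That boundary map, viewed on spectrum-valued functors, is a natural transformation between corepresentable functors, so it is corepresented. Then $\Map(x^*L,M)\simeq\Der_x(g\circ f,M)$. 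Bounded-below-ness ($(-n)$-connectivity) is preserved under fibers, up to shifting $n$ by one.

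\emph{Case 2: $g\circ f$ has a cotangent complex.} Symmetrically, set $L := \Cofib\bigl(f^*\LL^{\der}_{\cY/\cZ}\to\LL^{\der}_{\cX/\cZ}\bigr)$. Here the map is obtained by Yoneda from $\Der_x(g\circ f,-)\to\Der_{f\circ x}(g,-)$, compatibly in $x$ because base change along $\Spec B\to\Spec A$ carries the fiber sequences to each other. In either case, the cofiber sequence
\[
f^*\LL^{\der}_{\cY/\cZ}\to\LL^{\der}_{\cX/\cZ}\to\LL^{\der}_{\cX/\cY}
\]
holds by construction.

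The main obstacle is the passage from anima-valued derivation functors on connective modules to a map of representing objects in $\QC(\cX)$, when those objects need not be connective. I would handle this by using the spectrum-valued (excisive) extension of $\Der_x$. I would verify excisiveness from the fact that $\cX$, $\cY$, $\cZ$ send the pullbacks $A\oplus M\simeq A\times_{A\oplus M[1]}A$ to pullbacks, which is valid for stacks by the infinitesimal-cohesiveness-type property I would assume as in \cite[1.4.1]{toen2006homotopical}. The corepresentability statements then become statements in the stable category $\cMod_A$, which is where fibers and cofibers make sense. I would also have to check that the construction is compatible over all $x$, so that it glues to an object of $\QC(\cX)=\lim\cMod_A$. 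For this I would rely on the fact that the cotangent complexes of $f$ and $g\circ f$ (or of $g$) already glue, and that the maps defined via Yoneda are natural in $x$.
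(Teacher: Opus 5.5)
Your route is the paper's: the pasting law gives the fiber sequence $\Der_x(f,M)\to\Der_x(g\circ f,M)\to\Der_{f\circ x}(g,M)$ over the trivial derivation (the paper's pullback square over $\{\triv\}$). One then transfers corepresentability, which the paper does in one line by citing the $\infty$-categorical analogue of \cite[1.2.11.3]{toen2006homotopical}.

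Your Case 2 ($g$ and $g\circ f$ have cotangent complexes) is fine, and it needs no cohesiveness at all. A natural transformation $\Map(L',-)\to\Map(L,-)$ of functors on $\cMod_A^{\cn}$, with $L,L'$ bounded below, is automatically compatible with the identifications $\Map(L,M[-1])\simeq\Omega\Map(L,M)$, since these come from applying the functors to the pullback $M[-1]\simeq 0\times_M 0$. Hence it comes from a map $L\to L'$, and the fiber over the zero map is $\Map(\cofib(x^*f^*\LL^{\der}_{\cY/\cZ}\to x^*\LL^{\der}_{\cX/\cZ}),M)$.

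Case 1 ($f$ and $g$ have cotangent complexes) has a genuine gap. To get the boundary map $\Der_{f\circ x}(g,M)\to\Der_x(f,M[1])$, you need $M\mapsto\Der_x(g\circ f,M)$ to be excisive, i.e.\ $\Der_x(g\circ f,M)\simeq\Omega\Der_x(g\circ f,M[1])$. Your first justification, that derived stacks ``are sheaves'', is wrong: \'etale descent says nothing about the squares $A\oplus M\simeq A\times_{A\oplus M[1]}A$. Your fallback assumes an infinitesimal cohesiveness property that is not among the hypotheses, and it cannot be dropped.

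Here is a counterexample. Let $\cX\subseteq\Spec\ZZ[t]$ have $\cX(B)$ equal to the union of the path components of $\Omega^\infty B$ whose class in $\pi_0B$ is idempotent. This is an \'etale sheaf and $f:\cX\to\Spec\ZZ[t]$ is a monomorphism, so $\LL^{\der}_{\cX/\Spec\ZZ[t]}\simeq 0$ by the paper's lemma on monomorphisms. Also $g:\Spec\ZZ[t]\to\Spec\ZZ$ has a cotangent complex. But $(a,m)\in\pi_0A\oplus\pi_0M$ is idempotent iff $a$ is idempotent and $m=0$. Hence $\Der_x(g\circ f,M)$ is the base-point component $\Omega^\infty\tau_{\geq 1}M$ of $\Omega^\infty M$. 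It is contractible for discrete $M$, yet its loop space at $M[1]$ is $\Omega^\infty M$, so no $L$ corepresents it.

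So the ``if'' direction of the statement is false for arbitrary derived stacks, and the paper's citation does not close this either. Your Case 1 argument is correct once you add a hypothesis such as infinitesimal cohesiveness of $\cX$ and $\cZ$ for these squares (satisfied, e.g., by geometric stacks). You should state it as an added hypothesis, not as a consequence of the sheaf condition.
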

\begin{proof}
    Let $x : \Spec A\to\cX$ be arbitrary and let $M\in\cMod_A^{\cn}.$ The result follows from the fact that there is a pullback diagram
    \[\begin{tikzcd}
        \Der_x(f,M)\ar[r]\ar[d] &\Der_x(g\circ f,M)\ar[d]\\
        \{\triv\}\ar[r] &\Der_{f\circ x}(g,M),
    \end{tikzcd}\]
    where $\triv$ is the trivial derivation together with (the $\infty$-categorical analogue of) \cite[1.2.11.3]{toen2006homotopical}.
\end{proof}

Importantly, cotangent complexes of monomorphisms always exist, and are trivial.

\begin{lemma}
    Let $j : \cX\to\cY$ be a monomorphism of derived stacks over $k.$ Then $j$ admits a cotangent complex and $\LL^{\der}_{\cX/\cY}\simeq 0.$
\end{lemma}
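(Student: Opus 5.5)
The plan is to verify the defining universal property of the cotangent complex directly, with $L = 0$ as the candidate. Since $0$ is connective (in particular $(-n)$-connective for every $n$), it suffices to show that for every point $x : \Spec A\to\cX$ and every $M\in\cMod_A^{\cn}$ the anima $\Der_x(j,M)$ is contractible. Indeed, $\Map_{\cMod_A^{\cn}}(x^*0,M)\simeq *$, so the required equivalence then holds. The identification is canonical and natural in $x$ and $M$, because any map between contractible anima is an equivalence.

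To see the contractibility, recall that $\Der_x(j,M)$ is the fiber of
\[
\cX(A\oplus M)\to\cX(A)\times_{\cY(A)}\cY(A\oplus M)
\]
over the point determined by $x$ and the composite $\Spec(A\oplus M)\to\Spec A\xrightarrow{x}\cX\xrightarrow{j}\cY$. The key step is to observe that this map is an equivalence, and for this I would use that $j$ is a monomorphism. In the $\infty$-topos $\dStk$ this means $\cX\to\cX\times_{\cY}\cX$ is an equivalence. Evaluating at any animated ring $B$, limits of sheaves are computed pointwise, so $\cX(B)\to\cY(B)$ is a monomorphism of anima, i.e.\ $(-1)$-truncated: its fibers are empty or contractible, and it is the inclusion of a union of path components.

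With this in hand, I would compare the commutative square formed by $\cX(A\oplus M)\to\cY(A\oplus M)$ and $\cX(A)\to\cY(A)$, whose vertical maps are induced by the zero section $A\to A\oplus M$ (or by the projection, depending on the convention fixing the base point). Given a point of $\cY(A\oplus M)$ whose image in $\cY(A)$ lifts to $\cX(A)$, I need to check that it lifts uniquely to $\cX(A\oplus M)$. Uniqueness holds because $\cX(A\oplus M)\to\cY(A\oplus M)$ is a monomorphism, and the map to the fiber product is again a monomorphism. So the only thing left is that the relevant fiber is nonempty.

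Nonemptiness is the main, though minor, obstacle. It is handled by the canonical point in the definition: the point of $\cY(A\oplus M)$ we take the fiber over is the pullback of $j\circ x$ along the projection $A\oplus M\to A$. It therefore already lifts to $\cX(A\oplus M)$, namely as $x$ composed with $\Spec(A\oplus M)\to\Spec A$, compatibly with $x$ under the section. So the fiber is a nonempty $(-1)$-truncated anima, hence contractible. Therefore $\Der_x(j,M)\simeq *\simeq\Map(x^*0,M)$, and $0$ is a cotangent complex for $j$, i.e.\ $\LL^{\der}_{\cX/\cY}\simeq 0$.
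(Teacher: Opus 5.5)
Your proposal is correct and is essentially the paper's proof: the comparison map $\cX(A\oplus M)\to\cX(A)\times_{\cY(A)}\cY(A\oplus M)$ is a monomorphism, so $\Der_x(j,M)$ is empty or contractible, and the trivial derivation shows it is nonempty. One correction: your stated ``key step'' that this map is an \emph{equivalence} is false for a general monomorphism. For example, take the subsheaf of $\Spec\ZZ[t]$ formed by the path components lying over $0\in\pi_0$, with $A=M=\ZZ$; then the point $\epsilon\in\ZZ[\epsilon]/(\epsilon^2)$ lies over $0\in\cX(A)$ but does not lift. This does no harm, because your argument only uses the monomorphism property together with nonemptiness of the particular fiber over the trivial deformation, and that is exactly what the lemma needs.
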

\begin{proof}
    We will show that $0$ satisfies the desired universal property; i.e., that $\Der_x(f,M)\simeq 0$ for any $x : \Spec A\to \cX.$
    
    Consider the canonical map
    \[
    \cX(A\oplus M)\to\cX(A)\times_{\cY(A)}\cY(A\oplus M).
    \]
    We claim that this map is itself a monomorphism. Indeed, consider the diagram
    \[\begin{tikzcd}
        \cX(A)\ar[d]\ar[r] &\cX(A)\ar[d] &\ar[l]\cX(A\oplus M)\ar[d]\\
        \cX(A)\ar[r] &\cY(A) &\ar[l]\cY(A\oplus M)
    \end{tikzcd}\]
    from which the canonical map is induced. Then clearly $\cX(A)\to\cX(A)$ and $\cX(A\oplus M)\to\cY(A\oplus M)$ are both monomorphisms, as is the diagonal of $\cX(A)\to\cY(A).$ Since monomorphisms are preserved under pullbacks, the claim follows.

    This implies that $\Der_x(f,M)$ is either empty or contractible (see \cite[5.5.6.8]{HTT}), but as there is always the trivial derivation, it is nonempty. Since $\Der_x(f,M)$ is contractible for any $x$ and $M,$ it follows that $0$ is a representing object. This completes the proof, as the pullback of $0\in\QC(\cX)$ along any $x : \Spec A\to\cX$ is $0.$
\end{proof}

Most importantly, a geometric morphism of derived stacks (thus any morphism between geometric derived stacks) admits a cotangent complex, and knowledge about the geometricity of said morphism produces information about the connectivity of the cotangent complex. We will not prove this result, but refer to either \cite[2.2.3.3]{toen2006homotopical}\footnote{In \cite{toen2006homotopical}, the proof is split up into many parts, and 2.2.3.3 only constitutes one specific portion of the result}, \cite[4.45]{AG2014}, or \cite[3.76]{yaylali2022notes}.

\begin{theorem}\label{existence of cotangent complex for geometric morphism}
    If $f : \cX\to\cY$ is an $n$-geometric morphism, then $L^{\der}_f$ exists and is $(-n)$-connective. If $f$ is moreover of finite presentation, then $f$ is smooth if and only if $\LL^{\der}_f$ has Tor-amplitude in $[-n-1,0].$
\end{theorem}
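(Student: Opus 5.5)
The plan is to argue by induction on $n$, starting from the affine case, where everything follows from the universal property of $\LL_{A/k}$ and the differential criteria already recorded. For $n=-1$, the morphism $f$ is affine, so for every $\Spec A\to\cY$ the base change is $\Spec B\to\Spec A$, which has the connective cotangent complex $\LL_{B/A}$. By the base-change compatibility of $\bfL$ (the proposition with $A'\simeq A\otimes_k k'$), the modules $\LL_{B/A}$ are compatible under pullback in $A$. Part (3) of \ref{cotangent complex properties 1} then glues them to a cotangent complex for $f$, which is connective, i.e.\ $1$-connective in the shifted sense the statement uses. For the smoothness clause, \cite[Theorem~2.2.2.6]{toen2006homotopical} says that, for $f$ of finite presentation, $f$ is smooth exactly when $\LL_{B/A}$ is finite projective. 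Finite projective is equivalent to Tor-amplitude in $[0,0]$ for a perfect module, which is the claimed range.

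For the inductive step, assume the result for $(n-1)$-geometric morphisms and let $f$ be $n$-geometric. By part (3) of \ref{cotangent complex properties 1} again, it suffices to treat $\cY=\Spec A$, so that $\cX$ is $n$-geometric with an $n$-atlas $u:U=\coprod\Spec A_i\to\cX$ whose maps are smooth and $(n-1)$-geometric.
\begin{itemize}
\item By induction, $u$ has a cotangent complex $\LL_u$ that is $(-(n-1))$-connective, with Tor-amplitude in $[-n,0]$ because $u$ is smooth.
\item The composite $U\to\Spec A$ is affine on each component, so $\LL_{U/A}$ exists and is connective.
\item \ref{cotangent complex cofiber sequence} applied to $U\to\cX\to\Spec A$ then gives a cotangent complex $\LL_{\cX/A}$, with $u^*\LL_{\cX/A}\to\LL_{U/A}\to\LL_u$ a cofiber sequence, i.e.
\[u^*\LL_{\cX/A}\simeq\operatorname{fib}(\LL_{U/A}\to\LL_u).\]
\end{itemize}
Since $\LL_u$ is $(-n+1)$-connective, its fiber lowers connectivity by one, so $u^*\LL_{\cX/A}$ is $(-n)$-connective. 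Connectivity is detected on a smooth atlas by the Gaitsgory–Rozenblyum t-structure proposition, so $\LL_{\cX/A}$ itself is $(-n)$-connective.

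The delicate point is that \ref{cotangent complex cofiber sequence} only transfers existence of a cotangent complex between $f$ and $g\circ f$. To apply it here I need a descent statement, or the actual Toën–Vezzosi argument: derivations at a point $x:\Spec B\to\cX$ can be computed after lifting $x$ smooth-locally to $U$. The lift exists because $u$ is a smooth effective epimorphism, using \ref{refinement through atlas}, and the resulting descriptions glue by the sheaf property of $\QC$. I expect this to be the main obstacle, and I would follow \cite[2.2.3.3]{toen2006homotopical} and \cite[3.76]{yaylali2022notes} for it.

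For the smoothness criterion at level $n$, again work on the atlas.
\begin{itemize}
\item If $f$ is smooth, the atlas can be chosen with each $U\to\Spec A$ smooth. Then $\LL_{U/A}$ has Tor-amplitude $[0,0]$ and $\LL_u$ has Tor-amplitude $[-n,0]$, so the fiber $u^*\LL_{\cX/A}$ has Tor-amplitude in $[-n-1,0]$. Tor-amplitude is fpqc-local, so the same holds for $\LL_{\cX/A}$.
\item Conversely, suppose $\LL_{\cX/A}$ has Tor-amplitude in $[-n-1,0]$ and $f$ is of finite presentation. The same cofiber sequence shows that $\LL_{U/A}$ has Tor-amplitude $\le 0$ on top and is connective, hence has amplitude $[0,0]$. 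The affine case then gives that $U\to\Spec A$ is smooth, so $f$ is smooth.
\end{itemize}
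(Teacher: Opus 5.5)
\paragraph{The gap: existence in the inductive step.}
Almost all of the content of the statement is the existence of $\LL_{\cX/A}$ in the inductive step, and your proposal does not supply it. You are right that \ref{cotangent complex cofiber sequence} presupposes that $\LL_{\cX/A}$ exists. But your replacement (lift points of $\cX$ locally to $U$, then glue with the sheaf property of $\QC$) leaves out the mechanism that actually makes it work.

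Lift $x\colon\Spec B\to\cX$ \'etale-locally to $x'\colon\Spec B\to U$. The pullback square in the proof of \ref{cotangent complex cofiber sequence} then only gives a fiber sequence of spaces
\[
\Der_{x'}(u,M)\to\Der_{x'}(U/A,M)\to\Der_x(\cX/A,M).
\]
A fiber sequence of spaces whose first two terms are corepresentable does not force the third to be corepresentable.

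The missing input is linearity of $M\mapsto\Der_x(\cX/A,M)$, namely $\Der_x(\cX/A,M)\simeq\Omega\,\Der_x(\cX/A,M[1])$. Equivalently, $\cX$ is infinitesimally cartesian: it sends $B\oplus M\simeq B\times_{B\oplus M[1]}B$ to a pullback. For an $n$-geometric $\cX$ this is itself proved by the same induction, using smoothness of the atlas.

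Only with this in hand does the sequence deloop to a cofiber sequence of spectrum-valued functors. That cofiber sequence exhibits $\Der_x(\cX/A,-)$ as corepresented by $\operatorname{fib}(x'^*\LL_{U/A}\to x'^*\LL_u)$. You then check independence of the lift and compatibility with base change in $x$, and only at that point does descent for $\QC$ do the gluing.

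You defer exactly this step to \cite{toen2006homotopical} and \cite{yaylali2022notes}. That is also all the paper does: it gives no proof and cites \cite[2.2.3.3]{toen2006homotopical}, \cite[4.45]{AG2014}, \cite[3.76]{yaylali2022notes}. So your outline matches the standard induction, but it is not a proof of the existence claim.

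\paragraph{The connectivity bound.}
Your base case also contains an error. ``Connective, i.e.\ $1$-connective'' is false: $\LL_{A[t]/A}\simeq A[t]\,dt$ sits in degree $0$.

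The underlying problem is that the connectivity bound in the statement is off by one under the paper's conventions (homological indexing, affine $=(-1)$-geometric). The correct bound, as in To\"en--Vezzosi, is $(-n-1)$-connective. This is also what Tor-amplitude in $[-n-1,0]$ already forces. For example, $\mathrm{B}\mathbf{G}_m$ is $0$-geometric in these conventions, but $\LL_{\mathrm{B}\mathbf{G}_m/\ZZ}\simeq\cO[-1]$ is not connective. You should flag this rather than reinterpret ``connective''.

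Your fiber argument, run from the correct base case, proves the correct bound. If $\LL_{U/A}$ is connective and $\LL_u$ is $(-n)$-connective, then $u^*\LL_{\cX/A}\simeq\operatorname{fib}(\LL_{U/A}\to\LL_u)$ is $(-n-1)$-connective.

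Granting existence, your Tor-amplitude criterion is fine in both directions. It uses:
\begin{itemize}
\item perfectness of $\LL_{U/A}$ for $U\to\Spec A$ locally of finite presentation;
\item flatness of a connective module with Tor-amplitude $\le 0$;
\item \cite[Theorem~2.2.2.6]{toen2006homotopical}.
\end{itemize}
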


For the next result, we require a categorical lemma.

\begin{lemma}\label{cofiber square}
    Suppose that
    \[\begin{tikzcd}
        X\ar[d]\ar[r] & X'\ar[d]\ar[r] & X''\ar[d]\\
        Y\ar[d]\ar[r] & Y'\ar[d]\ar[r] & Y''\ar[d]\\
        Z      \ar[r] & Z'      \ar[r] & Z''
    \end{tikzcd}\]
    is a commutative diagram in a stable $\infty$-category where each row and column is a (co)fiber sequence. Then there is a canonical equivalence
    \[
    \cofib(X'\oplus_X Y\to Y')\simeq (Z'\oplus_{Z} X[1])\oplus X[2].
    \]
\end{lemma}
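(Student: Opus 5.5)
The plan is to reduce everything to the \emph{total cofiber} of the upper-left square
\[\begin{tikzcd}
    X\ar[r]\ar[d] & X'\ar[d]\\
    Y\ar[r] & Y'
\end{tikzcd}\]
and then rewrite that total cofiber using the remaining rows and columns of the diagram. I read $X'\oplus_X Y$ as the pushout of $X'\leftarrow X\to Y$. By definition, $\cofib(X'\oplus_X Y\to Y')$ is the total cofiber of this square.

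\textbf{Step 1 (iterated cofibers).} Since colimits commute with colimits, the total cofiber can be computed in either order. Taking cofibers along the vertical maps first gives the map $\cofib(X\to Y)\to\cofib(X'\to Y')$, which is $Z\to Z'$ by the columns. Taking cofibers along the horizontal maps first gives $X''\to Y''$. Hence
\[
\cofib(X'\oplus_X Y\to Y')\simeq\cofib(Z\to Z')\simeq\cofib(X''\to Y'').
\]
The last row then identifies this with $Z''$. This step is formal: it is the pasting law for pushouts in a stable $\infty$-category.

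\textbf{Step 2 (matching the stated form).} I would use the rotated cofiber sequences $Z\to X[1]\to Y[1]$ and $Z'\to X'[1]\to Y'[1]$, together with the cofiber sequence $X\to X'\to X''$ shifted to $X[1]\to X'[1]\to X''[1]$ and $X''[1]\to X[2]$. The idea is to form the pushout $Z'\oplus_Z X[1]$ of $Z'\leftarrow Z\to X[1]$, where the second map is the boundary map. The pasting law, applied to the rectangle
\[
Z\to Z'\to Z'\oplus_Z X[1]
\]
stacked over
\[
X[1]\to X'[1],
\]
produces a comparison map from $Z'\oplus_Z X[1]$ into $X'[1]$. The cofiber of this map should be computable in terms of $\cofib(Z\to Z')$ and $X''[1]$, with a residual boundary term landing in $X[2]$.

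I would then assemble these into a single cofiber sequence
\[
Z'\oplus_Z X[1]\to\cofib(X'\oplus_X Y\to Y')\to X[2]
\]
and argue that it splits, which would give the direct-sum decomposition in the statement.

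\textbf{Main obstacle.} The hard part is Step 2, and specifically the splitting of that sequence, i.e.\ showing the boundary map into $X[2]$ is null. Step 1 forces the left-hand side to be $Z''$, so the stated decomposition holds exactly when $Z''$ decomposes in this way. I would therefore first test the identity on the degenerate diagrams $X=0$ and $Y=X$. If the formula does not survive those checks, I would adjust the interpretation of $\oplus_X$ to whatever makes the identity consistent with Step 1 before attempting the general splitting.
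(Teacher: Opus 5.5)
Your Step~1 is correct, and it already decides the matter: the total cofiber of the upper-left square is $\cofib(X''\to Y'')\simeq\cofib(Z\to Z')\simeq Z''$. Step~2 cannot be carried out, because the statement is false as written.

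The sequence you want does exist. Pasting pushouts gives $\cofib\bigl(X[1]\to Z'\oplus_Z X[1]\bigr)\simeq\cofib(Z\to Z')\simeq Z''$, and rotating gives $Z'\oplus_Z X[1]\to Z''\to X[2]$. But this sequence splits only if the canonical map $X[1]\to Z'\oplus_Z X[1]$ is null, which fails in general.

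Your own proposed test $Y=X$ exposes this. If $X\to Y$ is an equivalence, then $Z=0$, the left side is $Z''\simeq Z'$, and the right side is $Z'\oplus X[1]\oplus X[2]$. For a fully concrete case, let $X=X'=Y=Y'$ be a nonzero object with all maps among them the identity, and set $X''=Y''=Z=Z'=Z''=0$. Every row and column is a cofiber sequence, the left side is $\cofib(\mathrm{id}_X)=0$, and the right side is $X[1]\oplus X[2]$. Since $Z=0$ there, $Z'\oplus_Z X[1]$ is an honest direct sum, so no reinterpretation of the notation rescues the formula. You should not adjust definitions; you should conclude that the correct statement is simply $\cofib(X'\oplus_X Y\to Y')\simeq Z''$, which is exactly your Step~1.

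For comparison, the paper argues by a chain of pushout manipulations. Its steps (1), (2), (5), (6) are fine; in particular they give $Z'\oplus_Z X[1]\simeq Z'\oplus_{Y'}Y''\simeq\cofib(Y\to Z')$. The break is in passing from (3) to (4). After splitting $Z'\oplus_X Y''\simeq Z'\oplus Y''\oplus X[1]$, the proof treats the map from the $X[1]$ summand to $Z'\oplus_{Y'}Y''$ as zero. That map is induced by $X\to Y'$ and is not null in general. In the example above it is an equivalence $X[1]\to X[1]$, which is why the true answer is $0$ rather than $X[1]\oplus X[2]$.

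The error is harmless for the paper's only use of the lemma, Proposition~\ref{cotangent complex of fiber product}. There $X=0$ and $Z''=E\simeq 0$, so your Step~1 alone shows that $X'\oplus Y\to Y'$ is an equivalence.
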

\begin{proof}
    \begin{align}
        \cofib(X'\oplus_X Y\to Y')&\simeq (0\oplus_0 0)\oplus_{(X'\oplus_X Y)} (Y'\oplus_{Y'}Y') \nonumber\\
        &\simeq Z'\oplus_{(0\oplus_X Y')} Y''\label{eq:1}\\
        &\simeq (Z'\oplus Y'')\oplus_{(Z'\oplus_X Y'')} (Z'\oplus_{Y'} Y'')\label{eq:2}\\
        &\simeq (Z'\oplus Y'')\oplus_{(Z'\oplus Y''\oplus X[1])} (Z'\oplus_{Y'} Y'')\label{eq:3}\\
        &\simeq (Z'\oplus_{Y'}Y'')\oplus X[2]\label{eq:4}\\
        &\simeq (Z'\oplus_{Y} 0)\oplus X[2]\label{eq:5}\\
        &\simeq (Z'\oplus_Z X[1])\oplus X[2]\label{eq:6}.
    \end{align}
    \ref{eq:1} follows from consideration of the commutative diagram
    \[\begin{tikzcd}
        0 & \ar[l] X'\ar[r] & Y'\\
        0\ar[u]\ar[d] & \ar[l] X\ar[u]\ar[d]\ar[r] & Y'\ar[u]\ar[d]\\
        0 & \ar[l] Y\ar[r] & Y'
    \end{tikzcd}\]
    \ref{eq:2} follows from 
    \[\begin{tikzcd}
        Z' & \ar[l] 0\ar[r] & Y''\\
        Z'\ar[u]\ar[d] & \ar[l] X\ar[u]\ar[d]\ar[r] & Y''\ar[u]\ar[d]\\
        Z' & \ar[l] Y'\ar[r] & Y''
    \end{tikzcd}\]
    In \ref{eq:3}, $Z'\oplus_X Y''\simeq Z'\oplus Y''\oplus X[1]$ follows from
    \[\begin{tikzcd}
        X\ar[d]\ar[r] & 0\ar[d]\ar[r] & Y''\ar[d]\\
        0\ar[d]\ar[r] & X[1]\ar[d]\ar[r] & X[1]\oplus Y''\ar[d]\\
        Z'\ar[r] & Z'\oplus X[1]\ar[r] & (Z'\oplus X[1])\oplus_{X[1]} (X[1]\oplus Y'')
    \end{tikzcd}\]
    where every square is coCartesian, together with the equivalence
    \[(Z'\oplus X[1])\oplus_{X[1]} (X[1]\oplus Y'')\simeq Z'\oplus Y''\oplus X[1]\]
    coming from
    \[\begin{tikzcd}
        Z' & \ar[l] 0\ar[r] & Y''\\
        0\ar[u]\ar[d] & \ar[l] 0\ar[u]\ar[d]\ar[r] & 0\ar[u]\ar[d]\\
        X[1] & \ar[l] X[1]\ar[r] & X[1]
    \end{tikzcd}\]
    
    \ref{eq:4} follows from 
    \[\begin{tikzcd}
        Z'\oplus Y'' & \ar[l] Z'\oplus Y''\ar[r] & Z'\oplus_{Y'} Y''\\
        0\ar[u]\ar[d] & \ar[l] 0\ar[u]\ar[d]\ar[r] & 0\ar[u]\ar[d]\\
        0 & \ar[l] X[1]\ar[r] & 0
    \end{tikzcd}\]
    
    \ref{eq:5} follows from considering
    \[\begin{tikzcd}
        Y\ar[d]\ar[r] & Y'\ar[d]\ar[r] & Z'\ar[d]\\
        0\ar[r] & Y''\ar[r] & Z'\oplus_{Y'}Y''
    \end{tikzcd}\]
    and \ref{eq:6} follows from
    \[\begin{tikzcd}
        Y\ar[d]\ar[r] & Z\ar[d]\ar[r] & Z'\ar[d]\\
        0\ar[r] & X[1]\ar[r] & Z'\oplus_{Z} X[1].
    \end{tikzcd}\]
\end{proof}

\begin{proposition}\label{cotangent complex of fiber product}
    Suppose that $\cX\to\cS$ and $\cY\to\cS$ are morphisms of derived stacks which admit cotangent complexes. Then $\cX\times_{\cS}\cY\to\cS$ also admits a cotangent complex, and
    \[
    \LL^{\der}_{\cX\times_{\cS}\cY/\cS}\simeq p_1^* \LL^{\der}_{\cX/\cS}\oplus p_2^* \LL^{\der}_{\cY/\cS}.
    \]
\end{proposition}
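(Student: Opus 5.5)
We prove the statement directly from the definition of a cotangent complex through derivations at points, checking the universal property for the candidate $L := p_1^* \LL^{\der}_{\cX/\cS}\oplus p_2^* \LL^{\der}_{\cY/\cS}\in\QC(\cX\times_{\cS}\cY)$. There are three steps: connectivity of $L$, a computation of derivations at a point, and the identification with maps out of $x^*L$.

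\textbf{Connectivity.} If $\LL^{\der}_{\cX/\cS}$ is $(-n)$-connective and $\LL^{\der}_{\cY/\cS}$ is $(-m)$-connective, then $L$ is $(-\max(n,m))$-connective. This holds because pullbacks $p_i^*$ preserve connectivity: connectivity is tested after pulling back to affines, and there $p_i^*$ is base change of modules.

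\textbf{Derivations at a point.} Fix $x : \Spec A\to\cX\times_{\cS}\cY$ with components $x_1 = p_1\circ x$ and $x_2 = p_2\circ x$, and fix $M\in\cMod_A^{\cn}$. Write $\cZ := \cX\times_{\cS}\cY$. Fiber products of stacks are computed pointwise, so $\cZ(B)\simeq\cX(B)\times_{\cS(B)}\cY(B)$ for every $B$. Hence
\[
\cZ(A)\times_{\cS(A)}\cS(A\oplus M)\simeq \bigl(\cX(A)\times_{\cS(A)}\cS(A\oplus M)\bigr)\times_{\cS(A\oplus M)}\bigl(\cY(A)\times_{\cS(A)}\cS(A\oplus M)\bigr).
\]
The map $\cZ(A\oplus M)\to\cZ(A)\times_{\cS(A)}\cS(A\oplus M)$ is then the fiber product over $\cS(A\oplus M)$ of the two corresponding maps for $\cX$ and $\cY$. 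Taking fibers over the base point determined by $x$ commutes with these limits, because limits commute with limits. This gives
\[
\Der_x(\cZ\to\cS,M)\simeq \Der_{x_1}(\cX\to\cS,M)\times\Der_{x_2}(\cY\to\cS,M).
\]
The step that needs care, and which I expect to be the main obstacle, is the base-point bookkeeping. One must check that the fiber over the common point of $\cS(A\oplus M)$ turns the fiber product over $\cS(A\oplus M)$ into a plain product. The cleanest way is to present $\Der_x$ as an iterated pullback of anima and rearrange by pasting of pullback squares, rather than to argue elementwise.

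\textbf{Identification with maps out of $x^*L$.} By the universal properties of $\LL^{\der}_{\cX/\cS}$ and $\LL^{\der}_{\cY/\cS}$, the right-hand side is
\[
\Map(x_1^*\LL^{\der}_{\cX/\cS},M)\times\Map(x_2^*\LL^{\der}_{\cY/\cS},M)\simeq\Map_{\cMod_A^{\cn}}(x^*L,M).
\]
Here we use $x_i^* \simeq x^*p_i^*$ and the fact that $x^*$ commutes with finite direct sums. These equivalences are natural in $M$ and compatible with pullback along maps $\Spec B\to\Spec A$. Naturality in $M$ is enough to pin down the representing object at each point. Compatibility in $x$ then follows because each equivalence is built functorially from those of the factors. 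This shows that $L$ is a cotangent complex for $\cZ\to\cS$. Existence and the displayed formula follow simultaneously, and by uniqueness of representing objects, $\LL^{\der}_{\cX\times_{\cS}\cY/\cS}\simeq L$.
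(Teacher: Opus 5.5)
Your proposal is correct, but it follows a different route from the paper.

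\textbf{The paper's route.} It never works with derivation spaces. It builds a $3\times 3$ diagram whose middle row and middle column are the transitivity sequences of Proposition \ref{cotangent complex cofiber sequence}, for $\cX\times_{\cS}\cY\to\cY\to\cS$ and for $\cX\times_{\cS}\cY\to\cX\to\cS$. Base change (Lemma \ref{cotangent complex properties 1}) shows that $p_2^*\LL^{\der}_{\cY/\cS}\to\LL^{\der}_{\cX\times_{\cS}\cY/\cX}$ and $p_1^*\LL^{\der}_{\cX/\cS}\to\LL^{\der}_{\cX\times_{\cS}\cY/\cY}$ are equivalences, so the corner term vanishes. The formal stable-category Lemma \ref{cofiber square} then shows that $p_1^*\LL^{\der}_{\cX/\cS}\oplus p_2^*\LL^{\der}_{\cY/\cS}\to\LL^{\der}_{\cX\times_{\cS}\cY/\cS}$ has zero cofiber. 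Existence of the cotangent complex of the composite is left implicit there; it follows from base change together with Proposition \ref{cotangent complex cofiber sequence}.

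\textbf{Your route.} You check the defining universal property directly. The base-point step you were worried about goes through. Write $P_{\cX}=\cX(A)\times_{\cS(A)}\cS(A\oplus M)$, and define $P_{\cY}$ and $P_{\cX\times_{\cS}\cY}$ the same way. Associativity of fiber products gives $P_{\cX\times_{\cS}\cY}\simeq P_{\cX}\times_{\cS(A\oplus M)}P_{\cY}$. The map from $(\cX\times_{\cS}\cY)(A\oplus M)$ to this is induced by a map of cospans that is the identity on the apex $\cS(A\oplus M)$. Its fiber is therefore the product of the fibers of the two legs, since fibers commute with limits and the fiber of the identity is contractible.

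\textbf{Trade-off.} Your argument gives existence and the formula at once. It needs neither base change, nor the transitivity sequence, nor Lemma \ref{cofiber square}. It also makes visible that the equivalence is the sum of the maps induced by $p_1$ and $p_2$, which is what is used later in Proposition \ref{cotangent complex and diagonal}. The paper's argument is shorter given the machinery already in place, and it stays entirely at the level of quasi-coherent sheaves, with no pointwise fiber bookkeeping.
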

\begin{proof}
    Consider the commutative diagram
    \[\begin{tikzcd}
        0\ar[r]\ar[d] & p_1^* \LL^{\der}_{\cX/\cS}\ar[r]\ar[d] & p_1^* \LL^{\der}_{\cX/\cS}\ar[d]\\
        p_2^* \LL^{\der}_{\cY/\cS}\ar[r]\ar[d] & \LL^{\der}_{\cX\times_{\cS}\cY/\cS}\ar[r]\ar[d] & \LL^{\der}_{\cX\times_{\cS}\cY/\cY}\ar[d]\\
        p_2^* \LL^{\der}_{\cY/\cS}\ar[r] & \LL^{\der}_{\cX\times_{\cS}\cY/\cX}\ar[r] & E
    \end{tikzcd}\]
    in which each row and column is a cofiber sequence. In particular, the middle row is the canonical cofiber sequence associated to
    \[
    \cX\times_{\cS}\cY\xrightarrow{p_2}\cY\to\cS
    \]
    and the middle column is the canonical cofiber sequence associated to
    \[
    \cX\times_{\cS}\cY\xrightarrow{p_1}\cX\to\cS.
    \]
    It follows from applying \ref{cotangent complex properties 1} that both $p_2^* \LL^{\der}_{\cY/\cS}\to \LL^{\der}_{\cX\times_{\cS}\cY/\cX}$ and $p_1^* \LL^{\der}_{\cX/\cS}\to \LL^{\der}_{\cX\times_{\cS}\cY/\cY}$ are equivalences, and thus $E\simeq 0.$ We then apply \ref{cofiber square} to deduce the result.
\end{proof}

\begin{proposition}\label{cotangent complex and diagonal}
    Let $f : \cX\to\cY$ be a map of geometric derived stacks. Then there is a canonical equivalence
    \[
    \LL^{\der}_{\cX/\cY}\to\LL^{\der}_{\cX/\cX\times_{\cY}\cX}[-1]
    \]
    where the map $\cX\to\cX\times_{\cY}\cX$ in question is the diagonal.
\end{proposition}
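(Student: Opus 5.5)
The plan is to combine the transitivity cofiber sequence of \ref{cotangent complex cofiber sequence} with the fiber product formula of \ref{cotangent complex of fiber product}. All the relevant cotangent complexes exist because $\cX$ and $\cY$ are geometric. By \ref{geometricity facts}, $f$ is geometric, and so are the projections $p_1,p_2 : \cX\times_{\cY}\cX\to\cX$ and the diagonal $\Delta : \cX\to\cX\times_{\cY}\cX$. Hence \ref{existence of cotangent complex for geometric morphism} shows that $\LL^{\der}_{\cX/\cY}$, $\LL^{\der}_{\cX\times_{\cY}\cX/\cY}$ and $\LL^{\der}_{\cX/\cX\times_{\cY}\cX}$ all exist.

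First apply \ref{cotangent complex cofiber sequence} to the composite $\cX\xrightarrow{\Delta}\cX\times_{\cY}\cX\to\cY$, whose composite is $f$. This gives a cofiber sequence
\[
\Delta^*\LL^{\der}_{\cX\times_{\cY}\cX/\cY}\to\LL^{\der}_{\cX/\cY}\to\LL^{\der}_{\cX/\cX\times_{\cY}\cX}.
\]
By \ref{cotangent complex of fiber product}, we have $\LL^{\der}_{\cX\times_{\cY}\cX/\cY}\simeq p_1^*\LL^{\der}_{\cX/\cY}\oplus p_2^*\LL^{\der}_{\cX/\cY}$. Since $p_i\circ\Delta\simeq\id$, the first term becomes $\LL^{\der}_{\cX/\cY}\oplus\LL^{\der}_{\cX/\cY}$.

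The key step is to identify the first map of the sequence with the fold (sum) map $\nabla:\LL\oplus\LL\to\LL$, where $\LL:=\LL^{\der}_{\cX/\cY}$. The map is induced by functoriality: on the $i$th summand it is the composite
\[
\LL\simeq\Delta^*p_i^*\LL\to\Delta^*\LL^{\der}_{\cX\times_{\cY}\cX/\cY}\to\LL.
\]
This composite is the map on cotangent complexes induced by $p_i\circ\Delta\simeq\id_{\cX}$ over $\cY$, hence is the identity. To check this, I would evaluate at points $x:\Spec A\to\cX$ and use the derivation description: a derivation of $\cX$ over $\cY$ at $x$ pushes forward along $\Delta$ and then along $p_i$ to the same derivation. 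Alternatively, one can use functoriality of the transitivity sequence for the composable pair $\cX\to\cX\times_{\cY}\cX\xrightarrow{p_i}\cX$ over $\cY$.

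With this in hand, the cofiber of $\nabla$ is computed formally in the stable category $\QC(\cX)$. The fiber of $\nabla$ is $\LL$, embedded antidiagonally via $(\id,-\id)$, so the cofiber of $\nabla$ is $\LL[1]$. Therefore $\LL^{\der}_{\cX/\cX\times_{\cY}\cX}\simeq\LL^{\der}_{\cX/\cY}[1]$, and shifting by $-1$ gives the claimed canonical equivalence. As written, the statement's map goes from $\LL^{\der}_{\cX/\cY}$ to $\LL^{\der}_{\cX/\cX\times_{\cY}\cX}[-1]$; I would take it to be the canonical map produced by this identification, namely the boundary map of the cofiber sequence composed with the antidiagonal splitting.

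I expect the main obstacle to be making the identification of the first map with $\nabla$ canonical and coherent, rather than only correct on homotopy groups. Equivalently, the issue is choosing the equivalence of \ref{cotangent complex of fiber product} so that it is compatible with the maps $p_i^*\LL\to\LL^{\der}_{\cX\times_{\cY}\cX/\cY}$ from the transitivity sequences. The proof of \ref{cotangent complex of fiber product} via \ref{cofiber square} in fact builds the equivalence out of exactly these maps, so I would invoke that construction explicitly. The remainder of the argument is formal stable-category bookkeeping.
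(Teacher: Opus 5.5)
Your proposal is correct and follows the paper's proof essentially step for step. Both use the transitivity sequence for $\cX\xrightarrow{\Delta}\cX\times_{\cY}\cX\to\cY$, the splitting of $\Delta^*\LL^{\der}_{\cX\times_{\cY}\cX/\cY}$ from \ref{cotangent complex of fiber product}, the identification of the first map with the fold map, and the computation of its cofiber. Your shift, that the cofiber of the fold map is $\LL^{\der}_{\cX/\cY}[1]$, is the one consistent with the statement; the paper's displayed comparison of cofiber sequences writes $\LL^{\der}_{\cX/\cY}[-1]$ in that position, which is a slip.
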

\begin{proof}
    Consider the cofiber sequence
    \[
    \Delta^*\LL^{\der}_{\cX\times_{\cY}\cX/\cY}\to\LL^{\der}_{\cX/\cY}\to\LL^{\der}_{\cX/\cX\times_{\cY}\cX}
    \]
    induced by the composition
    \[
    \cX\xrightarrow{\Delta}\cX\times_{\cY}\cX\to\cY.
    \]
    By \ref{cotangent complex of fiber product}, there is a canonical equivalence
    \[
    \LL^{\der}_{\cX/\cY}\oplus\LL^{\der}_{\cX/\cY}\to\Delta^*\LL_{\cX\times_{\cY}\cX/\cY},
    \]
    and the map
    \[
    \LL^{\der}_{\cX/\cY}\oplus\LL^{\der}_{\cX/\cY}\to\Delta^*\LL_{\cX\times_{\cY}\cX/\cY}\to\LL^{\der}_{\cX/\cY}
    \]
    is the universal codiagonal map. This means we have a cofiber sequence
    \[
    \LL^{\der}_{\cX/\cY}\oplus\LL^{\der}_{\cX/\cY}\to\LL^{\der}_{\cX/\cY}\to\LL^{\der}_{\cX/\cX\times_{\cY}\cX}
    \]
    and a morphism of cofiber sequences
    \[\begin{tikzcd}
        \LL^{\der}_{\cX/\cY}\oplus\LL^{\der}_{\cX/\cY}\ar[d]\ar[r] &\LL^{\der}_{\cX/\cY}\ar[d]\ar[r] &\LL^{\der}_{\cX/\cX\times_{\cY}\cX}\ar[d]\\
        \LL^{\der}_{\cX/\cY}\oplus\LL^{\der}_{\cX/\cY}\ar[r] &\LL^{\der}_{\cX/\cY}\ar[r] &\LL^{\der}_{\cX/\cY}[-1]
    \end{tikzcd}\]
    induced by the identities on the first two components. The third map is therefore also an equivalence.
\end{proof}

We now show that the cotangent complex of a morphism of derived geometric stacks admits another description as a Kan extension.

\begin{construction}
    Consider the cotangent complex of a morphism of affine schemes, viewed as a functor
    \begin{align*}
        \bfL : \Arr(\dAff)^\op&\to\QC\\
        (\Spec A\to\Spec k)&\mapsto (\Spec A,\LL_{A/k}).
    \end{align*}
    Let $\bfL^{\Kan}$ denote a right Kan extension of this functor as in the diagram
    \[\begin{tikzcd}
        \Arr(\dAff)^{\op}\ar[r,"\bfL"]\ar[d]&\QC\\
        \Arr(\dStk)^{\op},\ar[ru,"\bfL^{\Kan} := \Ran\bfL"']
    \end{tikzcd}\]
    and let $\bfL^{\Kan}(\cX\to\cY) = (\cX,\LL^{\Kan}_{\cX/\cY}).$
\end{construction}

\begin{remark}
    Explicitly unwinding how limits/colimits in a coCartesian fibration are computed, this means that for $f : \cX\to\cY\in\Arr(\dStk),$
    \[
    \LL^{\Kan}_{\cX/\cY}\simeq\lim_{\Spec(B)/\Spec(A)\in\dAff_{\cX/\cY}}p^B_* \LL_{B/A},
    \]
    where $\dAff_{\cX/\cY}$ is the full subcategory of $\Arr(\dStk)_{/f}$ spanned by $X\to Y$ (over $f$) with $X$ and $Y$ affine, $p^B : \Spec B\to\cX,$ and the limit is calculated in $\QC(\cX).$ 
\end{remark}

Note that by the universal property of the right Kan extension, there exists an essentially unique comparison morphism $\rho_f : \LL^{\der}_f\to \LL^{\Kan}_f$ for any $f : \cX\to\cY\in\dStk$ admitting a cotangent complex. In particular, there exists a comparison morphism $\rho_f : \LL^{\der}_f\to \LL^{\Kan}_f$ for any morphism of geometric derived stacks by \ref{existence of cotangent complex for geometric morphism}.

\begin{proposition}
    Let $f : \cX\to \cY$ be a morphism of $n$-geometric derived $k$-stacks. Then 
    \[
    \rho_f : \LL^{\der}_{\cX/\cY}\to \LL^{\Kan}_{\cX/\cY}
    \]
    is an equivalence.
\end{proposition}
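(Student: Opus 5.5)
The plan is to compute both sides after pulling back along an arbitrary point $x : \Spec B\to\cX$, since equivalences in $\QC(\cX)$ can be detected after pullback to affines. The right Kan extension is a limit over the $\infty$-category $\dAff_{\cX/\cY}$ of squares $(\Spec B\to\Spec A)$ mapping to $(\cX\to\cY)$. I would first show that the subcategory of such squares in which $\Spec A\to\cY$ is smooth (lying in an atlas) and $\Spec B\to\cX\times_{\cY}\Spec A$ is smooth is cofinal, or at least computes the same limit. This uses \ref{refinement through atlas} repeatedly and \ref{cotangent complex is fpqc sheaf on morphisms}. Concretely, given any square $(\Spec B\to\Spec A)\to(\cX\to\cY)$, one can refine $\Spec A$ by a smooth cover factoring through an atlas of $\cY$, and then refine $\Spec B$ accordingly. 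Descent for $\bfL$ on $\Arr(\dAff)$ then identifies the limit with a limit over a \v{C}ech-type diagram of "chart" squares.

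Next I would compare $\LL^{\der}$ on charts. For a square with $\Spec A\to\cY$ smooth and $p : \Spec B\to\cX$ smooth, consider the composite $\Spec B\to\cX\to\cY$ and $\Spec B\to\Spec A\to\cY$. \ref{cotangent complex cofiber sequence} gives
\[ p^*\LL^{\der}_{\cX/\cY}\to\LL_{B/\cY}\to\LL^{\der}_{\Spec B/\cX},\qquad q^*\LL^{\der}_{\Spec A/\cY}\to\LL_{B/\cY}\to\LL_{B/A}. \]
The goal is to show that the totalization over a smooth hypercover (in both source and target) of $\LL_{B_\bullet/A_\bullet}$, pushed forward to $\cX$, recovers $\LL^{\der}_{\cX/\cY}$. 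I would argue as in the proof of \ref{cotangent complex is fpqc sheaf on morphisms}: the error terms $\LL^{\der}_{\Spec B_\bullet/\cX}$ and $\LL^{\der}_{\Spec A_\bullet/\cY}$ are relative cotangent complexes of \v{C}ech nerves of smooth effective epimorphisms, and I would show that their totalizations vanish. This should follow by reducing, via base change (\ref{cotangent complex properties 1}(2)), to the case where the cover admits a section, where the cosimplicial object is split and hence has contractible totalization.

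Induction on $n$ then handles the fact that the charts are only $(n-1)$-geometric. For $n=-1$ both sides are $\LL_{B/A}$ by the Yoneda/terminal-object argument, since the identity square is terminal in $\dAff_{\cX/\cY}$. For the inductive step, one uses an $n$-atlas, for which the \v{C}ech nerve terms are $(n-1)$-geometric, and applies the inductive hypothesis together with the fact that $\bfL^{\Kan}$, being a right Kan extension of an fpqc sheaf, converts these colimits into limits, by the cited \cite[Proposition~3.48]{yaylali2022notes}. $\LL^{\der}$ satisfies the same descent because it is characterized pointwise by derivations and commutes with base change. I would then check that $\rho_f$ is compatible with these identifications, which is formal from the universal property.

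I expect the main obstacle to be the cofinality/descent step for the two-variable index category $\dAff_{\cX/\cY}$: one must vary both source and target simultaneously. The fact that $\Arr(\dAff)$ is given the arrow topology of \cite[Section 7.1]{BLPO_HKR} is what makes \ref{cotangent complex is fpqc sheaf on morphisms} applicable. I would try first to factor the Kan extension as an iterated one, first along the target $\cY$ with the source fixed as affine, then along the source, and to prove the equivalence in each variable separately.
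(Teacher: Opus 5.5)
Your argument is sound and uses the same two mechanisms as the paper. First, descent for $\bfL$ rewrites $\LL^{\Kan}$ as a limit over the \v{C}ech nerve of a smooth atlas. Second, the relative-cotangent error term vanishes because that \v{C}ech nerve splits after base change along the cover. Induction on $n$ handles the $(n-1)$-geometric nerve terms; you make this explicit, whereas the paper leaves it implicit.

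The decomposition, however, differs. The paper first reduces to $\cY=\Spec A$ affine. Base change gives $\LL^{\der}_{\cX/\cY}\simeq\lim_{\Spec A\to\cY}(p_A)_*\LL^{\der}_{\cX_A/A}$. The analogous formula for $\LL^{\Kan}$ is formal: the target projection $\dAff_{\cX/\cY}\to\dAff_{/\cY}$ is a coCartesian fibration (push forward by composing with maps of affine targets). So the limit is computed fiberwise, and the fiber over $\Spec A\to\cY$ is $\dAff_{/\cX_A}$. After this only the source varies. Single-variable descent over a fixed base then suffices (Bhatt's theorem in module form, plus the single-variable Kan-extension statement you cite), and there is only one error term, $\lim_m(p_m)_*\LL^{\der}_{U_m/\cX}$.

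Treating both variables at once, as you do, needs two further inputs:
\begin{itemize}
\item a two-variable version of ``the right Kan extension of a sheaf is a sheaf'' for the arrow topology, which the statement you cite does not give directly;
\item the vanishing of a second error term $\lim_m(p_m)_*q_m^*\LL^{\der}_{V_m/\cY}$, which requires the \v{C}ech nerve to split in $\Arr(\dStk)$, i.e.\ base change along $(U\to V)\to(\cX\to\cY)$ rather than along $U\to\cX$ alone.
\end{itemize}
What you gain is descent for $\LL^{\der}$ on arrows directly; the paper only records this afterwards as a corollary.

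The obstacle you anticipate is exactly what the paper's reduction removes, but the order must be the reverse of your suggested fallback. Holding the source affine and extending along the target does not decouple the variables: the fiberwise formula then involves $\Spec B\times_{\cY}\Spec A$, which is not affine. One must fix an affine target and extend along the source. Finally, your opening idea of checking after pullback along points $x\colon\Spec B\to\cX$ plays no role in your argument, and $x^*$ would not commute with the limit defining $\LL^{\Kan}$ anyway.
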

\begin{proof}
    Consider a Cartesian square of derived stacks
    \[\begin{tikzcd}
        \cX_A\ar[r,"p_A"]\ar[d] &\cX\ar[d,"f"]\\
        \Spec A\ar[r] &\cY.
    \end{tikzcd}\]
    By base change for the cotangent complex, we find that the canonical map
    \[
    (p_A)^* L^{\der}_{\cX/\cY}\to L^{\der}_{\cX_A/A}
    \]
    is an equivalence. Varying the diagram over all $\Spec A\to \cY,$ the collection $\{p_A : \cX_A\to\cX\}$ induces an effective epimorphism to $\cX,$ and thus
    \[
    \lim_{A} (p_A)_* (p_A)^* \LL^{\der}_{\cX/\cY}\simeq \LL^{\der}_{\cX/\cY},
    \]
    so it suffices to prove the result when $\cY\simeq\Spec A$ is affine.

    Fix a ground ring $k.$ \cite[Corollary 2.7]{bhatt2012completionsderivedrhamcohomology} shows that the functor
    \begin{align*}
        \Gamma_k(-,\LL_{-/k}) = \Gamma_k\circ\left.\bfL\right|_{\dAff_k^{\op}} : \dAff_k^{\op}&\to\cMod_k\\
        (\Spec A\to\Spec k)&\mapsto \Gamma_k(\Spec A, \LL_{A/k})
    \end{align*}
    is an fpqc sheaf. However, the same proof also shows that the variant of this functor
    \begin{align*}
        \bfL_{k} = \left.\bfL\right|_{\dAff_k^{\op}} : \dAff_k^{\op}&\to\QC\\
        (\Spec A\to\Spec k)&\mapsto (\Spec A, \LL_{A/k})
    \end{align*}
    is also an fpqc sheaf. In particular, $\bfL_{k}$ factors through $\QC^{\Aff}_k\hookrightarrow\QC$ (here $\QC_k^{\Aff}$ denotes the full sub-$\infty$-category of $\QC_k$ consisting of objects $(\cX\to\Spec k,\cF)$ with $\cX$ affine), and the restriction functor $\cMod_A\to\cMod_k$ is conservative for any map of animated rings $k\to A,$ so that a faithfully flat map of rings $A\to B$ with \v{C}ech conerve $\check{C}(B/A)_\bullet$ satisfies $\lim_n (p_n)_*\LL_{\check{C}(B/A)_n/A}\simeq 0$ in $\cMod_A,$\footnote{Rather than just in $\cMod_k.$} where $p_n : \Spec(\check{C}(B/A)^n)\to\Spec A$ is the structure morphism.

    This implies that the functor
    \begin{align*}
        \bfL^{\Kan}_{k} : \dStk_k^{\op}&\to\QC\\
        (\cX\to\Spec k)&\mapsto (\cX, \LL^{\Kan}_{\cX/k})
    \end{align*}
    is an fpqc sheaf. As a corollary, we find that for a derived geometric stack $\cX,$ $\LL^{\Kan}_{\cX/k}\simeq\lim_{p : \Spec A\to\cX} p_* \LL_{A/k}$ where the limit is taken over maps $p : \Spec A\to\cX$ which are smooth, or even that $\LL^{\Kan}_{\cX/k}\simeq\lim_n (p_n)_* \LL^{\Kan}_{U_n/k},$ where $p : U\to\cX$ is a smooth effective epimorphism and $p_\bullet : U_\bullet\to\cX$ is its \v{C}ech nerve.

    Now, we will show that $\LL^{\der}_{\cX/A}$ can be computed by the same limit. Let $p : U\to \cX$ be a smooth $n$-atlas of $\cX,$\footnote{More precisely, let $\{f_i : U_i\to\cX\}$ be a smooth $n$-atlas of $\cX$ and let $U = \coprod_i U_i$ with induced effective epimorphism $U\to\cX.$} and let $p_\bullet : U_\bullet\to\cX$ be its \v{C}ech nerve.

    Consider the canonical cofiber sequence
    \[
    p_n^* \LL^{\der}_{\cX/A}\to \LL^{\der}_{U_n/A}\to \LL^{\der}_{U_n/\cX}.
    \]
    Pushing forward along $p_n$ and taking the limit, it follows from the fact that $\colim_n U_n\simeq\cX$ that we have a cofiber sequence
    \[
    \LL^{\der}_{\cX/k}\to \lim_n (p_n)_* \LL^{\der}_{U_n/k}\to\lim_n (p_n)_* \LL^{\der}_{U_n/\cX},
    \]
    so it suffices to prove that $\lim_n (p_n)_* \LL^{\der}_{U_n/\cX}\simeq 0.$ 

    However, by further base change along $U\to\cX,$ we reduce to the case when the simplicial derived stack $U_\bullet\to\cX$ admits a section, so that the cosimplicial object $(p_n)_* \LL^{\der}_{U_n/\cX}$ admits a contracting homotopy. It then follows formally that $\lim_n (p_n)_* \LL^{\der}_{U_n/\cX}\simeq 0,$ and we are finished.
\end{proof}

\begin{warning}
    Despite the above proposition, for an arbitrary composition of derived stacks $\cX\xrightarrow{f}\cY\xrightarrow{g}\cZ,$ it may not be the case that
    \[
    f^* \LL^{\Kan}_{\cY/\cZ}\to \LL^{\Kan}_{\cX/\cZ}\to \LL^{\Kan}_{\cX/\cY}
    \]
    forms a cofiber sequence. The source of this problem is that $\LL^{\der}_{\cX/\cY}$ or $\LL^{\der}_{\cY/\cZ}$ may not exist, and in this situation $\LL^{\Kan}$ does not necessarily inherit the standard properties of $\LL^{\der}.$
\end{warning}

Finally, we obtain our desired generalization of \cite[Corollary~2.7]{bhatt2012completionsderivedrhamcohomology}.

\begin{cor}
    The functor
    \[
    \bfL^{\Kan} : \Arr(\dStk)^{\op}\to\QC
    \]
    is an fpqc sheaf.
\end{cor}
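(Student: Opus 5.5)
The strategy is to reduce the statement to the affine case, which is \ref{cotangent complex is fpqc sheaf on morphisms}, by using the general principle that a right Kan extension of a sheaf along the inclusion of a basis is again a sheaf (\cite[Propositions~A.5,A.6]{Aoki2023-mt}). The basis here is $\Arr(\dAff)$, sitting inside $\Arr(\dStk)$ with the fpqc topology on arrows from \cite[Section 7.1]{BLPO_HKR}. The functor $\bfL^{\Kan}$ is by definition the right Kan extension of $\bfL = \bfL^1$ along $\Arr(\dAff)^{\op}\to\Arr(\dStk)^{\op}$. By \ref{cotangent complex is fpqc sheaf on morphisms}, $\bfL$ is an fpqc sheaf on $\Arr(\dAff)^{\op}$ valued in $\QC$.

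Since $\QC$ is a fibration rather than a presentable target, I will first address this so the cited results apply. Limits in $\QC$ are computed as $(\lim \cX_i, \lim (f_i)_*\cF_i)$. The sheaf condition for a functor to $\QC$ over $\dStk^{\op}$ therefore splits into two parts:
\begin{enumerate}
\item the underlying functor $(\cX\to\cY)\mapsto\cX$ is a sheaf, which holds tautologically because $\colim$ of a \v{C}ech nerve of a cover recovers $\cX$ in $\dStk$;
\item for each fixed $\cX$, the module part is a sheaf.
\end{enumerate}
Concretely, I will rewrite $\bfL^{\Kan}$ using the explicit formula $\LL^{\Kan}_{\cX/\cY}\simeq\lim p^B_*\LL_{B/A}$ over $\dAff_{\cX/\cY}$. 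I will then check that, for a cover $(\cX'\to\cY')\to(\cX\to\cY)$ in $\Arr(\dStk)$ with \v{C}ech nerve $(\cX_\bullet\to\cY_\bullet)$, the natural map
\[
\LL^{\Kan}_{\cX/\cY}\to\lim_n (q_n)_*\LL^{\Kan}_{\cX_n/\cY_n}
\]
is an equivalence in $\QC(\cX)$.

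The key step is a cofinality or Kan-extension-of-sheaves argument. First, I will show that the right Kan extension of a sheaf on the basis computes the same value as the right Kan extension of its sheafified presheaf. Second, I will show that maps from affine arrows into $(\cX\to\cY)$ which factor through the cover are cofinal, after refining by the affine sheaf condition. To do this, I will use \ref{refinement through atlas}-style refinement on arrows: given $(\Spec B\to\Spec A)\to(\cX\to\cY)$, base change the cover to get a cover of $(\Spec B\to\Spec A)$ by arrows that factor through $\cX'\to\cY'$. The affine sheaf property then lets me replace $\LL_{B/A}$ by the totalization over this refinement. A standard double-limit (Fubini) interchange then identifies both sides with the same limit. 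This is precisely the content of \cite[A.5, A.6]{Aoki2023-mt}, applied fiberwise to the module part, and I would cite it after the reduction in the previous paragraph.

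I expect the main obstacle to be that the covers in $\Arr(\dStk)$ need not have affine refinements landing in arrows of affines with faithfully flat maps. Base changing an affine arrow along a general stack cover yields stacks, not affines, so a further affine cover is needed. Here I would use that effective epimorphisms are local: cover the fiber products by affines and iterate, which produces a hypercover. If needed, I would assert only the sheaf (not hypersheaf) statement and rely on descent of $\cMod_{(-)}$ to absorb the difference.

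A second point needing care is that pushforwards $(q_n)_*$ commute with the limits defining $\LL^{\Kan}$. This holds because $(q_n)_*$ are right adjoints, so they commute with all limits, which makes the interchange legitimate.
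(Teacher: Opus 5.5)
Your proposal is correct and follows the paper's proof, which is exactly the combination of \ref{cotangent complex is fpqc sheaf on morphisms} on $\Arr(\dAff)$ with the fact that the right Kan extension of a sheaf from a basis is again a sheaf. The extra material you describe (separating the underlying stack from the module part in $\QC$, the refinement and cofinality argument, the hypercover concern) only unpacks that general principle in more detail than the paper gives; it is not a different route.
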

\begin{proof}
    This follows from \ref{cotangent complex is fpqc sheaf on morphisms}, together with the fact that the right Kan extension of a sheaf is a sheaf.
\end{proof}

\begin{cor}
    The functor
    \begin{align*}
        \bfL^{\der} : \Arr_{\mathrm{geom}}(\dStk)^{\op}&\to\QC\\
        (\cX\to\cS)&\mapsto (\cX,\LL_{\cX/\cS})
    \end{align*}
    is an fpqc sheaf, where $\Arr_{\mathrm{geom}}(\dStk)$ is the full sub-$\infty$-category of $\Arr(\dStk)$ spanned by the geometric morphisms.
\end{cor}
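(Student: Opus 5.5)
The plan is to deduce this from the preceding corollary, that $\bfL^{\Kan}$ is an fpqc sheaf on $\Arr(\dStk)^{\op}$. We combine it with the proposition showing that the comparison map $\rho_f : \LL^{\der}_{\cX/\cY}\to\LL^{\Kan}_{\cX/\cY}$ is an equivalence for every morphism of geometric derived stacks. First we make sure the statement is well posed: $\bfL^{\der}$ must be a functor on $\Arr_{\mathrm{geom}}(\dStk)^{\op}$ landing in $\QC$. By \ref{existence of cotangent complex for geometric morphism}, every geometric morphism admits a cotangent complex. The maps $\rho_f$ come from the universal property of the right Kan extension, so they assemble into a natural transformation $\rho : \bfL^{\der}\to\left.\bfL^{\Kan}\right|_{\Arr_{\mathrm{geom}}(\dStk)^{\op}}$. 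In fact one may simply \emph{define} the functor $\bfL^{\der}$ as this restriction, since $\rho$ is objectwise an equivalence and hence an equivalence of functors.

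Next we check that restriction to geometric morphisms does not break the sheaf condition. An fpqc cover of a geometric morphism $f : \cX\to\cY$ in the topology on $\Arr(\dStk)$ is a square to $f$ from $f' : \cX'\to\cY'$ with $\cX'\to\cX$ and $\cY'\to\cY$ fpqc covers. Its \v{C}ech nerve has terms $f'_n : \cX'_n\to\cY'_n$, where $\cX'_n$ and $\cY'_n$ are the iterated fiber products over $\cX$ and $\cY$. We need each $f'_n$ to be geometric, so that it lies in $\Arr_{\mathrm{geom}}(\dStk)$. If the covers are taken to be geometric, or we restrict to covers by geometric stacks (in the extreme case by disjoint unions of affines), this follows from \ref{geometricity facts}. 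Fiber products of geometric stacks along geometric maps are geometric, and maps between $n$-geometric stacks are $n$-geometric.

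Once the \v{C}ech nerve lies in the subcategory, the descent map for $\bfL^{\der}$ is identified through $\rho$ with the descent map for $\bfL^{\Kan}$, which is an equivalence by the previous corollary. Here limits in $\QC$ are computed by first forming the colimit in $\dStk$ and then the limit of pushforwards in $\QC(\cX)$. This identifies the required equivalence as
\[
\LL^{\der}_{\cX/\cY}\simeq\lim_n (q_n)_*\LL^{\der}_{\cX'_n/\cY'_n},
\]
where $q_n : \cX'_n\to\cX$ denotes the projection.

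The main obstacle is the middle step: pinning down which covers are permitted in the topology on $\Arr_{\mathrm{geom}}(\dStk)$ and checking that the \v{C}ech nerves stay geometric. This matters because a general fpqc cover need not be geometric. I would first try to restrict to covers whose components are geometric, which suffices since they are cofinal for computing the sheaf condition on geometric objects. If that fails, I would reduce the descent statement to affine covers, using that $\bfL^{\Kan}$ is right Kan extended from affines.
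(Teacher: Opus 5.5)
Your proposal is essentially the paper's argument. The paper states this corollary without proof, as an immediate consequence of the fpqc sheaf property of $\bfL^{\Kan}$ together with the proposition that $\rho_f : \LL^{\der}_{\cX/\cY}\to\LL^{\Kan}_{\cX/\cY}$ is an equivalence, and transporting the former along the latter is exactly what you do.

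Two small points. First, your worry about \v{C}ech nerves is unnecessary, so no restriction on the covers is needed. If $f$ and $f'$ are geometric, each $f'_n : \cX'_n\to\cY'_n$ is a composite of base changes of $f'$ and of the relative diagonal of $f$, hence geometric. Second, that proposition is stated only for morphisms between geometric stacks. For a geometric morphism with non-geometric target, you should use its first step to get $\rho_f$ objectwise an equivalence: the reduction to affine targets via $\cX\simeq\colim_{\Spec A\to\cY}\cX_A$, which applies equally to $\LL^{\Kan}$.
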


We end by noting that the constructions above specialize to the usual cotangent complex when the map of stacks $\cX\to\cY$ is given by a map of classical algebraic stacks.

\begin{theorem}
    Suppose that $\cX\to\cY$ is a map of classical algebraic stacks. Then the derived cotangent complex of $\cX\to\cY$ agrees with the classical cotangent complex of $\cX\to\cY,$ in the sense that the ``homotopy category'' functor
    \[h : \QC(\cX)\to h\QC(\cX)\simeq\N(\sfD_{\qc}(\cX))\]
    sends the derived cotangent complex to the classical cotangent complex.
\end{theorem}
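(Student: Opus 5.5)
Since both sides are computed by descent along smooth atlases, I will reduce the comparison to the affine case and then identify the relevant classical objects there. First, I fix what ``classical cotangent complex'' means for a map of classical algebraic stacks. For a morphism of classical algebraic stacks I take the Laumon--Moret-Bailly/Olsson cotangent complex, which lives in $\sfD_{\qc}(\cX)$. It is characterized by the fact that for a smooth atlas $p : U\to\cX$ with $U$ a scheme, there is a distinguished triangle $p^*\LL^{\cl}_{\cX/\cY}\to\LL^{\cl}_{U/\cY}\to\Omega^1_{U/\cX}$, together with cohomological descent along the \v{C}ech nerve. On the derived side, $\iota\cX\to\iota\cY$ is a map of geometric derived stacks, so $\LL^{\der}$ exists by \ref{existence of cotangent complex for geometric morphism}. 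By the preceding proposition it agrees with $\LL^{\Kan}$, and the previous corollaries show it is an fpqc sheaf on $\Arr_{\mathrm{geom}}(\dStk)$.

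The first real step is the affine case. Take a map of discrete rings $k\to A$. The derived $\LL_{A/k}$ is the animation of $\Omega^1$ by \ref{Kan extension represents derivations}. Under $\cMod_A\simeq\cD(A)$ it is computed by $\Omega^1_{P_\bullet/k}\otimes_{P_\bullet}A$ for the standard simplicial polynomial resolution, and this is exactly Illusie's definition. From there I pass to schemes and algebraic spaces over an affine base using Zariski/\'etale descent: both sides are sheaves (the classical one by Illusie's \'etale localization), and they agree on affines compatibly with pullback.

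Next I handle a classical algebraic stack $\cX$ over an affine $\Spec k$. I choose a smooth atlas $U\to\cX$ by a scheme, with \v{C}ech nerve $U_\bullet$; each $U_n$ is an algebraic space smooth over $\cX$. On the derived side the cofiber sequence of \ref{cotangent complex cofiber sequence}, pushed forward and totalized as in the proof of the preceding proposition, gives $\LL^{\der}_{\cX/k}\simeq\lim_n(p_n)_*\LL^{\der}_{U_n/k}$. Olsson's construction of the classical cotangent complex gives the same formula in $\sfD_{\qc}$. The identification is then termwise from the algebraic space case, provided the transition maps match. This needs care: the classical cotangent complex is built on the lisse-\'etale site, so I will use that $\QC(\cX)\simeq\lim\QC(U_n)$ and that Olsson's comparison $\sfD_{\qc}(\cX_{\text{lis-\'et}})\simeq\lim\sfD_{\qc}(U_n)$ holds at the level of $\infty$-categorical enhancements (for bounded-above objects, which suffices since both sides are connective up to a shift). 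For a general base $\cY$, I reduce via the cofiber sequence of \ref{cotangent complex cofiber sequence} for $\cX\to\cY\to\Spec\ZZ$. The classical theory has the same distinguished triangle, and the derived and classical triangles should be compatible, so the cone identification follows from the previous cases.

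The main obstacle is the naturality needed for gluing. One must produce a \emph{coherent} comparison map, rather than merely objectwise isomorphisms in the homotopy category, since the homotopy category does not see limits. To get it, I would construct the map on the affine/polynomial level, where both sides are literally $\Omega^1$ of polynomial resolutions. I would then propagate it by the universal property of $\LL^{\Kan}$ as a right Kan extension, using the classical cotangent complex's functoriality under smooth base change to view the classical side as a cone over the same diagram. Only after that would I apply $h$.
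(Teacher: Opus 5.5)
Your proposal is correct and is essentially the paper's argument: you produce a comparison map from the universal property of $\bfL^{\Kan}$ as a right Kan extension and show it is an equivalence locally, using $\LL^{\Kan}\simeq\LL^{\der}$ for geometric morphisms together with base change and localization along atlases. Your version makes explicit what the paper's very short proof leaves implicit: the affine comparison with Illusie's complex, the \v{C}ech gluing, and the coherence of the comparison map. On the last point, a map into a right Kan extension naturally runs $\LL^{\cl}\to\LL^{\Kan}$, which is what your cone construction gives, rather than $\LL^{\Kan}\to\LL^{\cl}$ as the paper writes it.

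The one slip is your boundedness parenthetical. As far as I know, Olsson's lisse-\'etale descent equivalence is for cohomologically bounded-below complexes, while cotangent complexes are bounded on the other side. So either compare the bounded truncations $\tau_{\leq n}$ and pass to the limit, or simply use the identification $h\QC(\cX)\simeq\sfD_{\qc}(\cX)$ that the statement takes as given.
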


\begin{proof}
    Because $\bfL^{\Kan}$ is defined as a right Kan extension, for any $\cX\to\cY,$ we obtain an essentially unique morphism $\delta : \LL^{\Kan}_{\cX/\cY}\to \LL^{\cl}_{\cX/\cY}.$ However, because $\bfL^{\Kan}$ agrees with $\bfL^{\der}$ for geometric stacks, base change and \'etale invariance of the latter implies this morphism is an equivalence after pullback to sufficiently small affine schemes \'etale over $\cX,$ so that the map is an equivalence in $\sfD_{\qc}(\cX).$
\end{proof}
\chapter{Logarithmic Geometry}

\section{Classical Logarithmic Geometry}

The ideas here are due originally to Fontaine and Illusie, and were first formally recorded in \cite{KatoLogStructures}.

\subsection{Commutative Monoids}

One of the foundational objects in logarithmic geometry is that of the commutative monoid. We first summarize the basic definitions and results on commutative monoids which we will need in the rest of this article. For a more thorough development of the theory of commutative monoids, along with proofs of any results we omit, we recommend \cite[Chapter~I]{OgusLectures} (unless otherwise indicated).

\begin{mydef}
    A \emph{monoid} is a triple $(M,\star,e_M),$ where $M$ is a set, $\star : M\times M\to M$ is an associative binary operation on $M,$ and $e_M$ is a two-sided identity element of $M$ for the operation $\star.$ A \emph{homomorphism of monoids} $(M,\star,e_M)\to (M',\star',e_{M'})$ is a function $\theta : M\to M'$ such that $\theta(e_M) = e_{M'}$ and $\theta(m \star m') = \theta(m)\star'\theta(m').$ 
\end{mydef}

We will often denote a monoid $(M,\star,e_M)$ simply by $M$ when the binary operation and unit are understood. For us, monoids will always be commutative unless otherwise stated, and by ``monoid'' we will always mean ``commutative monoid'' unless otherwise stated. The category of commutative monoids and monoid homomorphisms will be denoted by $\CMon.$ We will often write monoids additively and denote the binary operation by $+$ and the unit by $0.$

The category of monoids is pointed, with zero object given by the trivial monoid $M = \{0\}.$ Perhaps the most important monoid is the monoid $\NN = \{0,1,2,\dots\}$ of the natural numbers under addition, and $\NN$ is the free monoid on a single generator -- for any monoid $M$ and any $m\in\NN,$ there is a unique homomorphism of monoids $\NN\to M$ which maps $1\mapsto m.$

The following proposition is easy to prove.
\begin{proposition}
    The forgetful functor
    \begin{align*}
        \CMon&\to\Set\\
        (M,\star, e_M)&\mapsto M
    \end{align*}
    admits a left adjoint, given by
    \begin{align*}
        \Set&\to\CMon\\
        X&\mapsto \NN^X := \Hom_{\Set}(X,\NN)
    \end{align*}
    where the binary operation on $\NN^X$ is given by pointwise addition of functions. We call $\NN^X$ the \emph{free monoid on $X.$}
\end{proposition}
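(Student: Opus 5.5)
We need to show that the forgetful functor $\CMon\to\Set$ has left adjoint $X\mapsto\NN^X$. Note that for infinite $X$ the correct object is the finitely supported functions $\NN^{(X)}$, i.e. functions $X\to\NN$ with finite support. We read $\Hom_{\Set}(X,\NN)$ in that sense, since full functions are not free when $X$ is infinite. With this reading, the plan is to exhibit the unit and verify the universal property directly.

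First, check that $\NN^X$ (finitely supported functions under pointwise addition, with unit the zero function) is a commutative monoid. Associativity and commutativity are inherited pointwise from $\NN$. A map $f\colon \NN^X\to\NN^Y$ induced by a set map $X\to Y$ is given by pushforward, $(f_*\phi)(y)=\sum_{x\mapsto y}\phi(x)$; this is a finite sum by finite support. Next, define the unit $\eta_X\colon X\to\NN^X$ by sending $x$ to the indicator function $e_x=\delta_x$.

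The key step is the universal property. Given a monoid $M$ and a function $g\colon X\to M$, define
\[
\tilde g(\phi)=\sum_{x\in X}\phi(x)\,g(x),
\]
where $n\cdot m$ denotes the $n$-fold sum of $m$, and the sum is finite and well defined by commutativity and associativity of $M$. Then:
\begin{itemize}
\item $\tilde g$ is a monoid homomorphism: $\tilde g(0)=0$, and additivity follows from $(a+b)m=am+bm$ in a commutative monoid.
\item $\tilde g\circ\eta_X=g$.
\item Uniqueness: every $\phi$ is a finite sum $\sum_x\phi(x)e_x$ of the elements $e_x$, so a homomorphism is determined by its values on the $e_x$.
\end{itemize}

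Hence $\Hom_{\CMon}(\NN^X,M)\cong\Hom_{\Set}(X,M)$. Naturality in $M$ is immediate from the formula, and naturality in $X$ holds by construction of the pushforward. This gives the adjunction. The only subtle point is the finite-support issue noted at the start; the rest is routine.
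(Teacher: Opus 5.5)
Your argument is correct and is the routine verification the paper leaves implicit; the paper gives no proof, only the remark that the statement is easy to prove. Your finite-support reading is a genuine correction rather than a pedantic one: with all functions $X\to\NN$ and $X$ infinite, the map $g\colon X\to\NN$ constant at $1$ has no extension $\tilde g$, since writing the constant function $\mathbf{1}$ as $\sum_{x\in F}\delta_x+\mathbf{1}_{X\setminus F}$ for a finite $F\subseteq X$ forces $\tilde g(\mathbf{1})\ge |F|$ for every such $F$; for finite $X$, which is the case the paper relies on (finitely generated monoids, free prelog algebras on finite sets), the two readings coincide.
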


Using free monoids, we may define the notion of a finitely generated monoid: a monoid $M$ is \emph{finitely generated} if there exists a finite set $X$ and a surjection $\NN^X\to M.$

Importantly, the category $\CMon$ is complete and cocomplete, and binary (hence finite) products and coproducts coincide.

Clearly, every abelian group is in particular also a monoid, so we obtain a forgetful functor
\[
U : \Ab\to\CMon.
\]
There are two natural ways we may associate to a monoid $M$ an abelian group.

\begin{mydef}
    Let $M$ be a monoid.
    \begin{enumerate}
        \item The \emph{units of $M,$} denoted $M^\times,$ is the submonoid of $M$ consisting of elements of $M$ which are invertible.
        \item The \emph{group completion of $M,$} denoted $M^{\gp},$ is defined to be
        \[
        M^{\gp} := M\times M/\sim,
        \]
        where $(m,m')\sim (n,n')$ if $m + n' + k = n + m' + k$ for some $k\in M.$
    \end{enumerate}
\end{mydef}

From the definition, it is clear that $M^\times$ is an abelian group, and a standard verification shows that $M^{\gp}$ is indeed an abelian group with group operation given by $[m,m'] + [n,n'] = [m+n,m'+n']$ (the unit is $[0,0]$ and the inverse of $[m,m']$ is given by $[m',m]$). We may think of the class $[m,m']$ as representing $m - m'.$ It is also simple to check that both constructions provide functors $\CMon\to\Ab.$

We can also check that given an abelian group $A$ and a monoid $M,$ any map of monoids $A\to M$ in fact factors uniquely through the inclusion $M^{\times}\to M,$ as the image of any $a\in A$ must be invertible in $M.$

On the other hand, any map $M\to A$ factors uniquely through the map
\begin{align*}
    M&\to M^{\gp}\\
    m&\mapsto [m,0].
\end{align*}
Indeed, let $\theta : M\to A$ be any morphism. We may define $\theta^{\gp}$ to be the map
\begin{align*}
    M^{\gp}&\to A\\
    [m,m']&\mapsto \theta(m) - \theta(m').
\end{align*}
If $[m,m']= [n,n'],$ let $k\in M$ be such that $m + n' + k = n + m' + k.$ We check that
\begin{align*}
    \theta^{\gp}([m,m']) &= \theta(m) - \theta(m')\\
    &= \theta(m) + \theta(n'+ k) - \theta(n'+k) - \theta(m')\\
    &= \theta(m + n' + k) - \theta(n' + k) - \theta(m')\\
    &= \theta(n + m' + k) - \theta(n' + k) - \theta(m')\\
    &= \theta(n) - \theta(n')\\
    &= \theta^{\gp}([n,n']),
\end{align*}
so that $\theta^{\gp}$ is well-defined. It is clear that it is a morphism of groups, and we can verify uniqueness as follows: suppose that $\theta' : M^{\gp}\to A$ is another morphism which factors $\theta : M\to A$ through the given map $M\to M^{\gp}.$ We must have
\[
\theta'([m,m']) = \theta'([m,0]) + \theta'([0,m]) = \theta(m) + \theta'([0,m']),
\]
and since $[0,0] = [m,m],$ we must also have
\[
0 = \theta'([0,0]) = \theta'([m,m]) = \theta(m) + \theta'([0,m]),
\]
and thus $\theta'([0,m]) = -\theta(m).$ This shows that $\theta' = \theta^{\gp},$ as desired.

Going forward, we will write simply $[m]$ for the class $[m,0]$ in $M^{\gp},$ so that a general element of $M^{\gp}$ can be written $[m] - [m']$ for some $m,m'\in M.$

From the above, we deduce the following proposition:

\begin{proposition}
    The forgetful functor $U : \Ab\to\CMon$ admits both a left and right adjoint, given by $M\mapsto M^{\gp}$ and $M\to M^{\times},$ respectively.
    \[\begin{tikzcd}
        \CMon\ar[r, shift left=2,"{(-)^{\gp}}"]\ar[r, shift right=2,"{(-)^\times}"'] &\ar[l]\Ab : U
    \end{tikzcd}\]
    The unit and counit of the adjunction $((-)^{\gp}\dashv U)$ are given by
    \begin{align*}
    M&\to M^{\gp}\\
    m&\mapsto [m,0]
    \end{align*}
    and
    \[
    A\xrightarrow{\id_A} A
    \]
    respectively, and the unit and counit of $(U\dashv (-)^\times)$ are given by
    \[
    A\xrightarrow{\id_A} A^\times = A
    \]
    and
    \begin{align*}
        M^\times&\to M\\
        m&\mapsto m
    \end{align*}
    respectively.
\end{proposition}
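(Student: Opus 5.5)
To prove the proposition, I will reduce both adjunctions to the universal properties already verified in the discussion preceding it. In each case I will exhibit the stated unit and counit and check the triangle identities, or equivalently show that composing with the unit (respectively counit) induces a bijection on hom-sets.

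For $((-)^{\gp}\dashv U)$, I first check that $M^{\gp}$ is an abelian group and that $\eta_M : M\to U(M^{\gp})$, $m\mapsto [m,0]$, is a monoid homomorphism; both are the standard verifications already noted. The key input is the universal property established just above. Every monoid map $\theta : M\to U(A)$ factors uniquely as $U(\theta^{\gp})\circ\eta_M$, where $\theta^{\gp}([m,m']) = \theta(m)-\theta(m')$. Well-definedness and uniqueness of $\theta^{\gp}$ were shown in the text. It only remains to check that $\theta^{\gp}$ is additive, which is immediate from $[m,m']+[n,n'] = [m+n,m'+n']$.

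This unique factorization says precisely that $\eta_M$ is a universal arrow from $M$ to $U$. Hence $(-)^{\gp}$ extends uniquely to a functor left adjoint to $U$, with unit $\eta$. The counit at $A$ is the map $(UA)^{\gp}\to A$ corresponding to $\id_{UA}$. It sends $[a,a']\mapsto a-a'$, and it is an isomorphism because every class $[a,a']$ equals $[a-a',0]$ when $A$ is a group. Under this canonical identification $(UA)^{\gp}\cong A$, the counit is $\id_A$, as stated.

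For $(U\dashv(-)^\times)$, I first check that $M^\times$ is a submonoid which is an abelian group, using the facts that inverses in a commutative monoid are unique and that $(m+n)^{-1} = m^{-1}+n^{-1}$. A monoid map $\phi : UA\to M$ sends each $a$ to an invertible element, since $\phi(a)+\phi(-a)=\phi(0)=0$. It therefore factors uniquely through the inclusion $\epsilon_M : U(M^\times)\to M$, and the factored map is a group homomorphism because monoid maps between groups automatically preserve inverses. This unique factorization makes $\epsilon_M$ a universal arrow from $U$ to $M$, so $(-)^\times$ is right adjoint to $U$ with counit the inclusion. The unit $A\to (UA)^\times$ is the identity, since every element of a group is a unit.

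None of these steps is a genuine obstacle. The only point requiring care is bookkeeping: the counit of the first adjunction and the unit of the second are identities only after the canonical identifications $(UA)^{\gp}\cong A$ and $(UA)^\times = A$.
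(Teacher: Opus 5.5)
Your proposal is correct and follows the paper's route. The paper deduces the proposition from the preceding discussion, which shows that every monoid map $M\to A$ to an abelian group factors uniquely through $M\to M^{\gp}$, and that every monoid map $A\to M$ factors uniquely through $M^\times\to M$; your universal-arrow argument is exactly this. Your additional checks fill in details the paper leaves implicit: that $\theta^{\gp}$ is additive, and that $(UA)^{\gp}\to A$ is an isomorphism, so that the counit is $\id_A$ only after the canonical identification.
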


Thus, $M\to M^{\gp}$ is initial among maps from $M$ to abelian groups, and $M^\times\to M$ is terminal among maps from abelian groups to $M.$

There is also a close relationship between commutative monoids and commutative rings. The forgetful functor $(-,\,\cdot\,) : \CRing\to\CMon$ which sends a commutative ring to its underlying multiplicative monoid admits a left adjoint given by the \emph{monoid algebra} construction. In particular, let $P$ be a commutative monoid (written additively). Let $\ZZ[P]$ denote the free $\ZZ$-module on the set $P,$ and write $e^p$ for the basis element corresponding to $p\in P.$ We endow $\ZZ[P]$ with the multiplication determined by setting $e^{p}e^{q} := e^{p+q}.$

\begin{remark}
    We may identify $\ZZ[P]$ with the ring of functions $f : P\to\ZZ$ with finite support, with addition given by
    \[
    (f + g)(p) := f(p) + g(p)
    \]
    and multiplication given by
    \[
    (fg)(p) = \sum_{a+b = p}f(a)g(b).
    \]
    The isomorphism is given by sending $e^p$ (for any $p\in P$) to the function which maps $p\mapsto 1$ and $p'\mapsto 0$ for any $p'\neq p.$
\end{remark}

The monoid algebra construction also extends to base rings other than $\ZZ.$ In particular, if we set $k[P] := k\otimes_{\ZZ}\ZZ[P],$ then we also have an adjunction
\[\begin{tikzcd}
    k[-] : \CMon\ar[r,shift left] &\CRing_k\ar[l,shift left] : (-,\,\cdot\,).
\end{tikzcd}\]
Of course, $k[P]$ is also isomorphic to the ring of functions $P\to k$ with finite support, analogous to the case of $\ZZ.$

We often restrict our attention to certain subclasses of monoids, as general monoids can become too unwieldy. The most important for our purposes are the following.

\begin{mydef}
    Let $M$ be a monoid.
    \begin{enumerate}
        \item We say $M$ is \emph{integral} if it is cancellative; i.e., if $m + n = m + n'$ implies $n = n'.$
        \item We say $M$ is \emph{fine} if it is finitely generated and integral.
        \item We say $M$ is \emph{saturated} if given $m\in M^{\gp}$ such that $n\cdot m\in M$ for some $n\in\NN,$ then $m\in M.$
        \item We say $M$ is \emph{fs} if it is both fine and saturated.
    \end{enumerate}
\end{mydef}

Integral monoids play a crucial role in classical logarithmic geometry. However, it is not necessarily the case that if $\theta : P\to M$ and $\psi : P\to N$ are maps of integral monoids, that the pushout $M\oplus_P N$ is itself integral.

\begin{mydef}
    A homomorphism $\theta : P\to M$ of integral monoids is \emph{integral} if it satisfies the following equivalent conditions.
    \begin{enumerate}
        \item Whenever $m_1,m_2\in M$ and $p_1,p_2\in P$ satisfy
        \[
        \theta(p_1) + m_1 = \theta(p_2) + m_2,
        \]
        there exist $m'\in M$ and $p_i'\in P$ such that
        \[
        m_i = \theta(p_i') + m' \quad\textrm{and}\quad p_1 + p_1' = p_2 + p_2'.
        \]
        \item For every homomorphism $P\to P'$ of integral monoids, the pushout $P'\oplus_P M$ is integral.
    \end{enumerate}
\end{mydef}

\begin{example}
    Let $P$ and $Q$ be integral monoids, and let $\theta : P\to P\oplus Q$ be the canonical map. Then $\theta$ is integral. This is because
    \[
    Q'\oplus_{P}(P\oplus Q)\simeq Q'\oplus Q
    \]
    is integral.
\end{example}

\begin{mydef}
    Let $P$ be a monoid. An \emph{$P$-set} is a set $S$ and a map of monoids $\rho : P\to\End_{\Set}(S)$ (note that $\End_{\Set}(S)$ is a monoid under composition). If $S$ is an $P$-set, a \emph{basis} of $S$ is a map $i : T\to S$ of sets such that the map
    \begin{align*}
        P\times T&\to S\\
        (p,t)&\mapsto \rho_p(i(t))
    \end{align*}
    is a bijection. We say that $S$ is a \emph{free} $P$-set if a basis of $S$ exists.
\end{mydef}

In particular, any morphism of monoids $\theta : P\to M$ allows us to consider $M$ as a $P$-set via the action of $P$ induced by $\theta.$

\begin{example}
    Let $P$ and $Q$ be monoids, and consider the canonical map $P\to P\oplus Q.$ Then $P\oplus Q$ is a free $P$-set, with basis $Q\to P\oplus Q.$
\end{example}

Just as with modules over rings, we may speak of when an $M$-set $S$ is flat. The definition may be found as \cite[Definition~I.4.5.2]{OgusLectures}, but we will make use of the following equivalent characterization.

\begin{mydef}
    Let $M$ be a monoid and let $S$ be an $M$-set. We say $S$ is \emph{flat} if $\ZZ[S]$ is a flat $\ZZ[P]$-module.
\end{mydef}

\begin{example}
    If $S$ is a free $M$-set, then $S$ is flat. Indeed, a basis $T\to S$ provides a basis for the $\ZZ[M]$-module $\ZZ[S],$ so that $\ZZ[S]$ is a free $\ZZ[M]$-module.
\end{example} 

\begin{mydef}
    Let $\theta : P\to M$ be a morphism of monoids. We say that $\theta$ is \emph{logarithmic} if the canonical map
    \[
    \theta^{-1}(M^\times)\to M^\times
    \]
    is an equivalence.
\end{mydef}

\begin{mydef}
    Let $\theta : N\to M$ be a map of monoids. We say $\theta$ is \emph{exact} if the diagram
    \[\begin{tikzcd}
        N\ar[d,"\theta"]\ar[r] &N^{\gp}\ar[d,"\theta^{\gp}"]\\
        M\ar[r] &M^{\gp}
    \end{tikzcd}\]
    is Cartesian.
\end{mydef}

\begin{mydef}
    Let $\theta : N\to M$ be a map of monoids. We define the \emph{pre-exactification} $\theta^{\ex}$ of $\theta$ to be the canonical map $N^{\ex}_{\theta} := M\times_{M^{\gp}} N^{\gp}\to M.$ We say that $\theta^{\ex} : N^{\ex}_{\theta}\to M$ is the \emph{exactification} of $\theta$ if $\theta^{\ex}$ is exact.
\end{mydef}

\begin{warning}
    In general, it may not be the case that $N^{\ex}_{\theta}$ is itself exact. However, in the following special cases, the pre-exactification as defined above is indeed exact.
\end{warning}

\begin{lemma}[\protect{\cite[Lemma~3.8]{Rognes_2009}}]
    Let $\theta : N\to M$ be a map of monoids. Then the canonical map $\theta^{\ex} : N^{\ex}_{\theta}\to N^{\gp}$ induces an isomorphism $(N^{\ex}_{\theta})^{\gp}\to N^{\gp}$ if either of the following hold:
    \begin{enumerate}
        \item $M$ is integral, or
        \item $\theta^{\gp}$ is surjective.
    \end{enumerate}
    In particular, in either of these cases, the map $\theta^{\ex} : N^{\ex}_{\theta}\to M$ is exact.
\end{lemma}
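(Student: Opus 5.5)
The plan is to work directly with the explicit description
\[
N^{\ex}_{\theta} = M\times_{M^{\gp}}N^{\gp} = \{(m,x)\in M\times N^{\gp} : [m] = \theta^{\gp}(x)\},
\]
a submonoid of $M\times N^{\gp}$. The map in question is the second projection $\pi : N^{\ex}_{\theta}\to N^{\gp}$. It is a monoid map into an abelian group, so by the universal property of group completion (the adjunction $(-)^{\gp}\dashv U$ above) it factors uniquely through $\pi^{\gp} : (N^{\ex}_{\theta})^{\gp}\to N^{\gp}$, given by $[(m,x)]-[(m',x')]\mapsto x-x'$. I would prove surjectivity of $\pi^{\gp}$ in general, injectivity in each of the two cases, and then deduce exactness.

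\emph{Surjectivity.} This needs no hypothesis. For $n\in N$, the pair $(\theta(n),[n])$ lies in $N^{\ex}_{\theta}$, since $[\theta(n)] = \theta^{\gp}([n])$. An arbitrary $x\in N^{\gp}$ can be written $[n]-[n']$, so it is the image of $[(\theta(n),[n])]-[(\theta(n'),[n'])]$.

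\emph{Injectivity.} Suppose an element $[(m,x)]-[(m',x')]$ maps to $0$, so $x=x'$. It suffices to show $(m,x)$ and $(m',x)$ have the same class in $(N^{\ex}_{\theta})^{\gp}$. We know $[m]=\theta^{\gp}(x)=[m']$ in $M^{\gp}$, so there is $k\in M$ with $m+k=m'+k$.
\begin{enumerate}
    \item If $M$ is integral, cancellation gives $m=m'$, so the two pairs are already equal.
    \item If $\theta^{\gp}$ is surjective, choose $y\in N^{\gp}$ with $\theta^{\gp}(y)=[k]$. Then $(k,y)\in N^{\ex}_{\theta}$, and
    \[
    (m,x)+(k,y)=(m+k,x+y)=(m'+k,x+y)=(m',x)+(k,y).
    \]
    Hence the classes of $(m,x)$ and $(m',x)$ agree in the group completion.
\end{enumerate}
This injectivity step is where the hypotheses enter, and it is the main point to handle carefully. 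The key idea is to produce a common summand inside $N^{\ex}_{\theta}$ itself, not merely in $M$.

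\emph{Exactness.} Use the isomorphism $\pi^{\gp}$ to identify $(N^{\ex}_{\theta})^{\gp}$ with $N^{\gp}$. Under this identification the unit $N^{\ex}_{\theta}\to(N^{\ex}_{\theta})^{\gp}$ becomes $\pi$, since $(m,x)\mapsto[(m,x)]\mapsto x$. Likewise $(\theta^{\ex})^{\gp}$ becomes $\theta^{\gp}$, because both agree with $[m]=\theta^{\gp}(x)$ on generators. The square required for exactness of $\theta^{\ex}$ is therefore identified with the defining pullback square of $N^{\ex}_{\theta}=M\times_{M^{\gp}}N^{\gp}$. So the square is Cartesian, and $\theta^{\ex}$ is exact.
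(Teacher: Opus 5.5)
Your proof is correct and follows the paper's argument essentially step for step. Surjectivity comes from the elements $(\theta(n),[n])$, which the paper phrases as $N\to N^{\gp}$ factoring through $N^{\ex}_{\theta}$. Injectivity uses the same kernel computation: cancellation when $M$ is integral, and the common summand $(k,y)$ when $\theta^{\gp}$ is surjective. Exactness follows by identifying the square with the defining pullback. Your last step is slightly more explicit than the paper's, since you check that $(\theta^{\ex})^{\gp}$ corresponds to $\theta^{\gp}$ under the identification, which the paper leaves implicit.
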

\begin{proof}
    In both cases, the map $(N^{\ex}_{\theta})^{\gp}\to N^{\gp}$ is surjective. Indeed, the canonical map $N\to N^{\gp}$ factors through $N^{\ex}_{\theta},$ so that passing to group completions provides a factorization
    \[
    N^{\gp}\to (N^{\ex}_{\theta})^{\gp}\to (N^{\gp})^{\gp}\cong N^{\gp}
    \]
    of the identity map.

    Consider an element $[(m,g)]-[(m',g')]\in (N^{\ex})^{\gp}$ which is in the kernel of $(N^{\ex}_{\theta})^{\gp}\to N^{\gp}.$ It follows immediately that $g = g'$ in $N^{\gp},$ and thus we have $[m] = \theta^{\gp}(g) = [m']$ in $M^{\gp}.$
    
    If $M$ is integral, the canonical map $M\to M^{\gp}$ is an injection, so that $m = m'$ in $M$ and thus $[(m,g)]-[(m',g')] = 0$ in $(N^{\ex})^{\gp}.$

    Suppose on the other hand that $\theta^{\gp}$ is surjective. Then $[m] = [m']$ implies that $m + k = m' + k$ for some $k\in M.$ Choose $h\in N^{\gp}$ such that $\theta^{\gp}(h) = [k].$ Then
    \[
    (m,g) + (k,h) = (m',g') + (k,h)
    \]
    in $N^{\ex},$ which implies that $[(m,g)] - [(m',g')] = 0,$ as desired.

    Finally, note that if $N^{\ex}_{\theta}\to N^{\gp}$ induces an equivalence $(N^{\ex}_{\theta})^{\gp}\to N^{\gp},$ then the map $N^{\ex}_{\theta}\to N^{\gp}$ exhibits $N^{\gp}$ as the group completion of $N^{\ex}_{\theta},$ whence the desired diagram is Cartesian.
\end{proof}

\begin{example}
    In particular, note that the exactification of the relative codiagonal $\sigma : M\oplus_P M\to M$ of a morphism $\theta : P\to M$ of monoids is always exact, since $\sigma$ is surjective. Moreover, we have a canonical isomorphism
    \[
    (M\oplus_P M)^{\ex}_{\sigma} \to M\oplus M^{\gp}/P^{\gp}
    \]
    induced by the shear map
    \begin{align*}
        s : M\oplus_P M&\to M\oplus M^{\gp}/P^{\gp}\\
        [m,m'] &\mapsto (m,[m']),
    \end{align*}
    which we may see from the commutative diagram
    \[\begin{tikzcd}
        M\oplus_P M\arrow[dd,"\sigma"']\arrow[rrr]\arrow[rd,"s"]&&&M^{\gp}\oplus_{P^{\gp}}M^{\gp}\arrow[d,"s^{\gp}"']\arrow[dd,"\sigma^{\gp}",bend left=80]\\
        &M\times M^{\gp}/P^{\gp}\arrow[rr]\arrow[d,"\pr_1"']&&M^{\gp}\oplus M^{\gp}/P^{\gp}\arrow[d,"\pr_1"']\\
        M\arrow[r,"\id"]&M\arrow[rr]&&M^{\gp}
    \end{tikzcd}\]
    in which the unlabeled arrows are the canonical maps from a monoid to its group completion, and in which the lower right square is evidently Cartesian (note that $s^{\gp}$ is an isomorphism as a consequence of the $9$-lemma).
\end{example}

\subsection{Affine Logarithmic Geometry}

In the classical theory of logarithmic geometry, log schemes and their morphisms are modeled by the spectra of prelog rings\footnote{At least, if we restrict our attention to only fine or fs log schemes.}. In this section, we develop the basic theory of prelog rings which will be sufficient for our applications.

\begin{mydef}
    The category $\PreLog$ of \emph{prelogarithmic rings} (or \emph{prelog rings}) is the comma category $(\id_{\CMon}\downarrow (-,\,\cdot\,)).$ Explicitly, the objects of $\PreLog$ are triples $(k,P,\beta),$ where $k\in\CRing,$ $P\in\CMon,$ and $\beta : P\to(k,\,\cdot\,)$ is a morphism of monoids\footnote{The data of $\beta$ is equivalent to the data of a morphism $\overline{\beta} : \ZZ[\beta]\to k$ of rings by adjunction.}, and a morphism of prelog rings $f : (k,P,\beta)\to (A,M,\alpha)$ is a pair $(\underline{f},f^\flat),$ where $\underline{f} : k\to A$ is a map of commutative rings and $f^{\flat} : P\to M$ is a map of monoids such that the diagram
    \[\begin{tikzcd}
        P\ar[r,"f^{\flat}"]\ar[d,"\beta"] &M\ar[d,"\alpha"]\\
        (k,\,\cdot\,) \ar[r,"\underline{f}"] &(A,\,\cdot\,)
    \end{tikzcd}\]
    commutes.
\end{mydef}

Note that the category $\PreLog$ comes equipped with two forgetful functors:
\begin{align*}
    U_{\CMon} : \PreLog&\to\CMon\\
    (k,P,\beta) &\mapsto P,
\end{align*}
and
\begin{align*}
    U_{\CRing} : \PreLog&\to\CRing\\
    (k,P,\beta)&\mapsto k.
\end{align*}
We define the category of \emph{prelogarithmic structures on $k$} (or \emph{prelog structures on $k$}) to be the fiber of $U_{\CRing}$ over $\{k\}\to \CRing;$ explicitly, a prelogarithmic structure on $k$ is a pair $(P,\beta)$ such that $(k,P,\beta)$ is a prelog ring. A morphism $(P,\beta)\to (P',\beta')$ of prelog structures on $k$ is a morphism $f^\flat : P\to P'$ of monoids such that $\beta'\circ f^\flat= \beta.$\footnote{The data of a morphism $(P,\beta)\to (P',\beta')$ of prelog structures on $k$ is equivalent to the data of a morphism of prelog rings $f : (k,P,\beta)\to (k,P',\beta')$ such that the underlying map $\underline{f} : k\to k$ is the identity.}

\begin{remark}
    In \cite[Definition III.1.2.3]{OgusLectures}, what we call prelog rings are instead called log rings -- we use the former terminology to conform more closely with the terminology of \cite{BLPO_HKR}.
\end{remark}

\begin{lemma}\label{initial prelog ring under monoid}
    Let $(A,M)$ be a prelog ring, and let $f^\flat : M\to N$ be a morphism of monoids. Then the category of morphisms of prelog rings $(A,M)\to (A',M')$ under $(A,M)$ equipped with a morphism of monoids $N\to M'$ and a commutative diagram
    \[\begin{tikzcd}
        M\ar[r]\ar[rd] &N\ar[d]\\
        &M'
    \end{tikzcd}\]
    admits an initial object, given by
    \[
    (A,M)\to (A\otimes_{\ZZ[M]}\ZZ[N],N).
    \]
    Thus, any morphism of prelog rings $(A,M)\to (A',M')$ equipped with a given factorization $M\to N\to M'$ of the morphism of monoids canonically admits a factorization through $(A\otimes_{\ZZ[M]}\ZZ[N],N).$
\end{lemma}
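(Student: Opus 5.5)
We first have to specify the prelog structure on $A\otimes_{\ZZ[M]}\ZZ[N]$ and the structure map, and then check the universal property directly. Here $\ZZ[M]\to A$ is the ring map $\overline{\alpha}$ adjoint to $\alpha : M\to (A,\cdot)$, and $\ZZ[M]\to\ZZ[N]$ is $\ZZ[f^\flat]$. The structure map $N\to (A\otimes_{\ZZ[M]}\ZZ[N],\cdot)$ is $n\mapsto 1\otimes e^n$. It is a monoid map because $e^ne^{n'}=e^{n+n'}$ and $e^0=1$. The morphism $(A,M)\to (A\otimes_{\ZZ[M]}\ZZ[N],N)$ is the pair $(a\mapsto a\otimes 1,\ f^\flat)$. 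The required square commutes because, for $m\in M$,
\[
\alpha(m)\otimes 1 = \overline{\alpha}(e^m)\otimes 1 = 1\otimes e^{f^\flat(m)}
\]
in the tensor product over $\ZZ[M]$. Together with the identity $N\to N$ this is an object of the category in the statement.

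For initiality, take an object: a morphism $g=(\underline{g},g^\flat) : (A,M)\to (A',M')$ with structure map $\alpha'$, together with $h : N\to M'$ satisfying $h\circ f^\flat = g^\flat$. We must produce a unique morphism $\phi : (A\otimes_{\ZZ[M]}\ZZ[N],N)\to (A',M')$ such that $\phi$ precomposed with the structure morphism is $g$, and whose monoid component is compatible with $h$ as a map out of $N$. Compatibility with the given map $N\to M'$ forces $\phi^\flat = h$. On rings, $\phi$ is determined by the universal property of the pushout of commutative rings. Its two components are $\underline{g} : A\to A'$ and $\overline{\alpha'\circ h} : \ZZ[N]\to A'$, the ring map adjoint to the monoid map $\alpha'\circ h$ (using the adjunction $\ZZ[-]\dashv(-,\cdot)$).

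These two components agree on $\ZZ[M]$. It suffices to check this on the basis elements $e^m$, and there
\[
\underline{g}(\alpha(m)) = \alpha'(g^\flat(m)) = \alpha'(h(f^\flat(m))).
\]
This yields a unique ring map $\underline{\phi}$. It is compatible with the log structures because $\underline{\phi}(1\otimes e^n)=\alpha'(h(n))$. It restricts correctly along $(A,M)$ because $\underline{\phi}(a\otimes 1)=\underline{g}(a)$ and $h\circ f^\flat=g^\flat$.

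For uniqueness, any other such $\phi'$ has $\phi'^\flat=h$ by compatibility. Its ring map agrees with $\underline{g}$ on $A$. It also sends $1\otimes e^n$ to $\alpha'(h(n))$, by the compatibility square for $\phi'$. These elements generate the tensor product as a ring, so $\phi'=\phi$.

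The last sentence of the lemma is just a restatement of initiality. I expect no real obstacle. The only point needing care is to set up the morphisms of the comma category correctly: they are morphisms of prelog rings under $(A,M)$ that also commute with the maps from $N$. Once that is fixed, everything else is adjunction bookkeeping.
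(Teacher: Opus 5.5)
Your proof is correct and takes the same approach as the paper. The paper uses the adjunction $\ZZ[-]\dashv(-,\cdot)$ to recast a morphism of prelog rings as a commutative square of rings through $\ZZ[M]\to\ZZ[M']$, and then reads off initiality from the universal property of the pushout $A\otimes_{\ZZ[M]}\ZZ[N]$; you carry out the existence and uniqueness checks that the paper leaves as ``clear,'' including that the monoid component is forced to be the given map $N\to M'$.
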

\begin{proof}
    A morphism of prelog rings $(A,M)\to (A',M')$ is equivalent to the data of a commutative diagram of rings
    \[\begin{tikzcd}
        \ZZ[M]\ar[d]\ar[r] &\ZZ[M']\ar[d]\\
        A\ar[r] & A'
    \end{tikzcd}\]
    where the top morphism is induced by a morphism of monoids $M\to M',$ by adjunction. The result is then clear.
\end{proof}

One class of prelog rings which is particularly nice is that of free prelog rings and free prelog algebras, which are defined as follows. Let $(k,P,\beta)$ be a prelog ring. Composing the two adjunctions above with the further forgetful functors to $\Set,$ we obtain an adjunction
\[\begin{tikzcd}[row sep=small]
    (X,S)\ar[r,maps to] &(k[X,\NN^S], P\oplus \NN^S,\beta\oplus \id)\\
    F_{(k,P,\beta)} : \Set\times\Set\ar[r,shift left] &\PreLog_{(k,P,\beta)}\ar[l,shift left] : U\\
    (A,M)&\ar[l,maps to](A,M,\alpha).
\end{tikzcd}\]

\begin{remark}
    Our description above omits the structure maps. For the right adjoint, they are irrelevant, but for the left adjoint, the map $(k,P,\beta)\to F_{(k,P,\beta)}(X,S)$ is given by the canonical map $k\to k[X,\NN^S]$ of rings and the canonical inclusion $P\to P\oplus \NN^S.$
\end{remark}

\begin{mydef}\label{free prelog rings/algebras definition}
    We call an object $F_{(k,P)}(X,S)$ in the image of $F_{(k,P)}$ a \emph{free $(k,P)$-algebra.} If $(k,P) = (\ZZ,\{1\})$ is the initial prelog ring, we write $F_{(k,P)} = F.$ We will call a free $(k,P)$-algebra simply a \emph{free prelog ring.}
\end{mydef}

Free prelog rings and free prelog algebras over free prelog rings will be critical to our investigation of the relationship between the two notions of logarithmic Hochschild homology. Let us define $\PolyPreLog$ to be the subcategory of $\Arr(\PreLog)$ consisting of free prelog algebras over free prelog rings; i.e., morphisms of the form $F(X,S)\to F_{F(X,S)}(Y,T).$\footnote{Note that this is slightly different from the category of polynomial prelog rings considered in \cite[Remark~2.11]{BLPO_HKR}. In particular, for us, the base prelog ring is also free.}

\subsection{Global Logarithmic Geometry}

There is also a rich global theory of logarithmic geometry generalizing the affine theory discussed previously. For the most part, we do not need to consider general logarithmic schemes, but for completeness we recall some of the important aspects of the theory. A brief account of the theory can be found in the original source, \cite{KatoLogStructures}, and a more complete account in great generality can be found in \cite{OgusLectures}.

\begin{mydef}
    Let $X$ be a scheme.
    \begin{enumerate}
        \item A \emph{prelogarithmic structure} (or \emph{prelog structure}) on $X$ is a map $\alpha : \cM\to(\OX,\,\cdot\,)$ of sheaves of monoids on the \'etale site of $X.$
        \item A prelogarithmic structure $\alpha : \cM\to (\OX,\,\cdot\,)$ on $X$ is a \emph{logarithmic structure} (or \emph{log structure}) if $\alpha^{-1}(\OX^{\times})\to\OX^{\times}$ is an isomorphism. 
        \item A morphism of prelog structures on $X$ $(\alpha : \cM\to(\OX,\,\cdot\,))\to (\alpha' : \cM'\to (\OX,\,\cdot\,))$ is a homomorphism $f^\flat : \cM\to \cM'$ of sheaves of monoids such that $\alpha'\circ f^\flat = \alpha.$
        \item The category $\PreLogStr_X$ is the category whose objects are prelog structures $\alpha$ on $X$ and whose morphisms are morphisms of prelog structures as defined above.
        \item The category $\LogStr_X$ is the full subcategory of $\PreLogStr_X$ spanned by those prelog structures which are log structures.
    \end{enumerate}
\end{mydef}

As in the affine case, there is an adjunction between log and prelog structures on $X.$
\begin{proposition}[\protect{\cite[Proposition~III.1.1.3]{OgusLectures}}]
    Let $X$ be a scheme.
    \begin{enumerate}
        \item The inclusion functor $\LogStr_X\to\PreLogStr_X$ admits a left adjoint, denoted $(\alpha : \cM\to\OX)\mapsto (\alpha^a : \cM^a_{\alpha}\to\OX)$ (or simply by $(\cM^a\to\OX)$ if $\alpha$ is understood).
        \item The identity $\OX\to\OX$ is the final object of both $\PreLogStr_X$ and $\LogStr_X.$ The initial object of $\PreLogStr_X$ is $\{1\}\to\OX,$ and the initial object of $\LogStr_X$ is $\OX^\times\to\OX.$
        \item The categories $\PreLogStr_X$ and $\LogStr_X$ both admit pushouts, the formation of which is compatible with the forgetful functor $\LogStr_X\to\PreLogStr_X.$
    \end{enumerate}
\end{proposition}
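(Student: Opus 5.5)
We construct the adjoint explicitly by the standard pushout formula, then check the remaining claims directly. Given a prelog structure $\alpha : \cM\to(\OX,\,\cdot\,)$, set
\[
\cM^a_\alpha := \cM\oplus_{\alpha^{-1}(\OX^\times)}\OX^\times,
\]
where the pushout is taken in the category of sheaves of monoids on the étale site of $X$. Concretely, it is the sheafification of the presheaf pushout $U\mapsto \cM(U)\oplus_{\alpha^{-1}(\OX^\times)(U)}\OX^\times(U)$. The map $\alpha^a$ is induced by $\alpha$ and the inclusion $\OX^\times\to\OX$; the two agree on $\alpha^{-1}(\OX^\times)$, so $\alpha^a$ is well defined. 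The first task is to check that $\alpha^a$ is a log structure.

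This reduces to stalks at geometric points, since sheafification commutes with stalks and stalks of $\OX$ are strictly henselian local rings. On a stalk, an element of the pushout is represented by a pair $(m,u)$. Its image $\alpha(m)u$ is a unit exactly when $\alpha(m)$ is a unit, that is, when $m\in\alpha^{-1}(\OX^\times)$. In that case $(m,u)$ is identified with $(0,\alpha(m)u)$. Hence $(\alpha^a)^{-1}(\OX^\times)$ is the image of $\OX^\times$. Injectivity of $\OX^\times\to\cM^a$ on the unit locus needs a small check of the pushout relation: if $(0,u)\sim(0,u')$, then $u=u'$, because the pushout is along a map into a group, so the relations are generated by translations by elements of $\alpha^{-1}(\OX^\times)$, which act on $\OX^\times$ via $\alpha$. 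I expect this stalkwise verification of $(\alpha^a)^{-1}(\OX^\times)\cong\OX^\times$ to be the only real obstacle. The point is to write the pushout of monoids along a map to a group as a quotient of $\cM\oplus\OX^\times$ by the equivalence relation $(m+a,u)\sim(m,\alpha(a)u)$ for $a\in\alpha^{-1}(\OX^\times)$.

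For the universal property, let $\beta:\cN\to\OX$ be a log structure and $f^\flat:\cM\to\cN$ a map of prelog structures. Then $f^\flat$ sends $\alpha^{-1}(\OX^\times)$ into $\beta^{-1}(\OX^\times)\cong\OX^\times$, compatibly with the inclusion $\OX^\times\to\OX$. So $f^\flat$ extends uniquely through the pushout by its universal property, and the extension is a map over $\OX$. This proves (1).

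Part (2) is immediate. For any $\alpha$, the map $\alpha$ itself is the unique morphism to $\id:\OX\to\OX$, and $\id$ is a log structure. The trivial monoid $\{1\}$ maps uniquely to any $\cM$, and $\{1\}^a=\OX^\times$. Left adjoints preserve initial objects, so $\OX^\times\to\OX$ is initial in $\LogStr_X$; one can also see this directly, since $\OX^\times\cong\beta^{-1}(\OX^\times)\subseteq\cN$.

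For (3), pushouts in $\PreLogStr_X$ are computed as pushouts of sheaves of monoids, with the map to $\OX$ induced by the universal property. Pushouts in $\LogStr_X$ are obtained by applying $(-)^a$ to the prelog pushout, because left adjoints preserve colimits and the inclusion is fully faithful. The stated compatibility with the forgetful functor is this description: the log pushout is the associated log structure of the prelog pushout.
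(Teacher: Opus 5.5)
Parts (1) and (2) are correct and follow the standard construction. The paper itself gives no proof here and only cites Ogus. One simplification for (1): injectivity of $\OX^\times\to\cM^a_\alpha$ needs no analysis of the congruence, because the composite $\OX^\times\to\cM^a_\alpha\xrightarrow{\alpha^a}\OX$ is the inclusion. The step you expected to be the main obstacle is therefore immediate.

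The gap is in (3). The clause ``the formation of which is compatible with the forgetful functor'' asserts that the forgetful functor $\LogStr_X\to\PreLogStr_X$ preserves pushouts. Concretely: if $\phi_i:\cM\to\cM_i$ ($i=1,2$) are morphisms of log structures, then the pushout $\cP:=\cM_1\oplus_{\cM}\cM_2$ in sheaves of monoids, with its induced map $\gamma:\cP\to\OX$, is already a log structure.

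What you prove instead is that the pushout in $\LogStr_X$ is $\cP^a$. That holds for any reflective full subcategory, and it is a statement about $(-)^a$, not about the forgetful functor. Preservation of pushouts is not formal: the forgetful functor does not preserve the initial object ($\{1\}$ versus $\OX^\times$) or binary coproducts ($\cM_1\oplus\cM_2$ versus $\cM_1\oplus_{\OX^\times}\cM_2$).

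The missing argument runs as follows.
\begin{enumerate}
\item Since $\cM$ and $\cM_i$ are log structures and $\alpha_i\circ\phi_i=\alpha$, each $\phi_i$ restricts to an isomorphism $\alpha^{-1}(\OX^\times)\to\alpha_i^{-1}(\OX^\times)$.
\item Locally, every section of $\cP$ is a class $[m_1,m_2]$. Suppose $\gamma([m_1,m_2])=\alpha_1(m_1)\alpha_2(m_2)$ is a unit. Then each factor is a unit, so $m_i=\phi_i(n_i)$ with $n_i\in\alpha^{-1}(\OX^\times)$.
\item Hence $[m_1,m_2]=[\phi_1(n_1+n_2),0]$, which lies in the image of $\alpha^{-1}(\OX^\times)\cong\OX^\times$.
\item The composite $\OX^\times\to\cP\to\OX$ is the inclusion, so $\OX^\times\to\gamma^{-1}(\OX^\times)$ is injective. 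Together with step 3 it is an isomorphism.
\end{enumerate}
So $\cP$ is a log structure, and by fullness it is the pushout in $\LogStr_X$; in particular $\cP^a=\cP$.

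This argument genuinely uses that the base $\cM$ is a log structure. For $\cM=\{1\}$ the same construction produces $\cM_1\oplus\cM_2$, which is in general not logarithmic.
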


We can also transfer (pre)log structures along a morphism of schemes.

\begin{mydef}
    Let $f : X'\to X$ be a morphism of schemes.
    \begin{enumerate}
        \item Let $\alpha' : \cM'\to\cO_{X'}$ be a prelog structure on $X'.$ Let $f^{\log}_*\alpha' : f^{\log}_*\cM'\to\OX$ be defined by the Cartesian diagram
        \[\begin{tikzcd}
            f^{\log}_*\cM'\ar[d]\ar[r,"{f^{\log}_*\alpha'}"] &\OX\ar[d,"f^\sharp"]\\
            f_*\cM'\ar[r] &f_*\cO_{X'}.
        \end{tikzcd}\]
        \item Let $\alpha : \cM\to\OX$ be a prelog structure on $X.$ Let $f^*_{\pre}\alpha : f^*_{\pre}\cM\to\cO_{X'}$ be the prelog structure on $X'$ given by the composition
        \[
        f^*_{\pre}\cM := f^{-1}\cM\xrightarrow{f^{-1}\alpha}f^{-1}\OX\to\cO_{X'}.
        \]
        \item Let $\alpha : \cM\to\OX$ be a prelog structure on $X.$ Let $f^*_{\log}\alpha : f^*_{\log}\cM\to\cO_{X'}$ be the log structure on $X'$ associated to $f^{-1}_{\pre}\alpha.$
    \end{enumerate}
\end{mydef}

As we saw in the affine case, if $\alpha' : \cM'\to\cO_{X'}$ is a log structure on $X',$ then $f^{\log}_*\alpha' : f^{\log}_*\cM'\to\OX$ is a log structure on $X,$ and we have a commutative diagram of adjunctions
\[\begin{tikzcd}
\PreLogStr_X\arrow[rr,"f^*_{\pre}",shift left]\arrow[dd,"(-)^a"',shift right]&&\PreLogStr_{X'}\arrow[dd,"(-)^a"',shift right]\arrow[ll,"f_*^{\log}",shift left]\\
&&\\
\LogStr_X\arrow[rr,"f^*_{\log}",shift left]\arrow[uu,"U"',shift right]&&\LogStr_{X'}\arrow[uu,"U"',shift right]\arrow[ll,"f^{\log}_*",shift left]
\end{tikzcd}\]
(meaning that the diagram of left adjoints commutes and the diagram of right adjoints commutes).

\begin{mydef}
    The category $\PreLogSch$ of \emph{prelogarithmic schemes} (or \emph{prelog schemes}) is the category whose objects $(X,\cM,\alpha)$ are schemes $X$ equipped with a prelog structure $\alpha : \cM\to\OX$ on $X,$ and whose set of morphisms $(X',\cM',\alpha')\to (X,\cM,\alpha)$ consists of pairs $(f,f^\flat)$ such that $f : X'\to X$ is a morphism of schemes and $f^\flat : \cM\to f_*\cM'$ is a morphism of sheaves of monoids such that the diagram
    \[\begin{tikzcd}
        \cM\ar[d,"\alpha"']\ar[r,"f^\flat"] & f_*\cM'\ar[d,"f_*\alpha'"]\\
        \OX\ar[r,"f^\sharp"] &f_*\cO_{X'}
    \end{tikzcd}\]
    commutes. The category $\LogSch$ of \emph{logarithmic schemes} (or \emph{log schemes}) is the full subcategory of $\PreLogSch$ whose objects are those $(X,\cM,\alpha)$ such that $\alpha : \cM\to\OX$ is a log structure on $X.$
\end{mydef}

\begin{remark}
    The data of the morphism $f^\flat$ is equivalent to any of the following:
    \begin{itemize}
        \item A morphism $f^{-1}\cM\to\cM'$ such that the appropriate diagram commutes,
        \item A morphism $f^*_{\pre}\cM\to\cM'$ of prelog structures on $X',$
        \item A morphism $f^*_{\log}\cM\to\cM'$ of log structures on $X',$ in the case of $\LogSch,$
        \item A morphism $\cM\to f^{\log}_*\cM'$ of prelog structures on $X.$
    \end{itemize}
\end{remark}

Going forward, we will often denote a log scheme simply by $X,$ and write $\underline{X}$ for its underlying scheme. Given a morphism $f : X\to Y$ of (pre)log schemes, we will let $\underline{f} : \underline{X}\to\underline{Y}$ be the underlying morphism of schemes.

There is an evident functor $\PreLogSch\to\Sch$ given by sending a prelog scheme $X$ to its underlying scheme $\underline{X}$ and a morphism $f = (\underline{f},f^\flat)$ of prelog schemes to its underlying morphism of schemes $\underline{f}.$ We then obtain adjunctions:
\[\begin{tikzcd}
    \PreLogSch\ar[r,shift left] &\Sch\ar[l,shift left,"\triv"]
\end{tikzcd}\]
where the left adjoint $\PreLogSch\to\Sch$ is the functor which sends a prelog scheme to its underlying scheme, and the right adjoint $\triv : \Sch\to\PreLogSch$ sends a scheme $X$ to the trivial prelog scheme $(X,\{1\}).$

We also have an adjunction
\[\begin{tikzcd}
    \LogSch\ar[r,shift left] &\Sch\ar[l,shift left,"\triv"]
\end{tikzcd}\]
where the left adjoint is also given by the forgetful functor, and the right adjoint is the functor $X\mapsto (X,\OX^\times).$ Both right adjoints are fully faithful embeddings. If $X$ is an ordinary scheme, $X$ will also denote either the trivial prelog scheme $(X,\{1\})$ or the trivial log scheme $(X,\OX^\times)$ (depending on whether we are working with prelog schemes or log schemes).

\begin{remark}
    The forgetful functors $\PreLogSch\to\Sch$ $\LogSch\to\Sch$ also admit left adjoints, both given by $X\mapsto (X,\OX).$
\end{remark}

Now let us describe how to associate a log scheme to a prelog ring.

\begin{mydef}
    Let $(A,M,\alpha)$ be a prelog ring. We define $\Spec(A,M)$ to be the log scheme whose underlying scheme is $\Spec A$ and whose log structure $\cM_{\Spec(A,M)}$ is the logification of $M_{\Spec A}\to\mathcal{O}_{\Spec A}$ (induced by $\alpha : M\to A$).
\end{mydef}

\begin{mydef}
    Let $f : X\to Y$ be a morphism of prelog schemes. We say that $f$ is \emph{strict} if the canonical morphism
    \[
    \underline{f}^*_{\log}((\cM_Y)^a)\to(\cM_X)^a
    \]
    is an equivalence of log structures on $X.$
\end{mydef}

Intuitively, a map $f : X\to Y$ is strict if it does not encode any logarithmic information besides the log structure $\cM_X.$ We will see that many constructions, when applied to strict morphisms of log schemes, produce the analogous construction for the underlying morphism of schemes.

\begin{remark}
    In classical logarithmic algebraic geometry, one typically only defines strict morphisms of log schemes, in which case the logifications in the above definition are redundant. 
\end{remark}

\begin{proposition}\label{strictness pullback equivalence}
    Let $f : X\to Y$ be a morphism of log schemes. Then $f$ is strict if and only if the diagram
    \[\begin{tikzcd}
        X\ar[d]\ar[r,"f"] &Y\ar[d]\\
        \underline{X}_{\triv}\ar[r,"\underline{f}"] &\underline{Y}_{\triv}
    \end{tikzcd}\]
    is Cartesian. Here, $\underline{X}_{\triv}$ denotes the trivial log structure on $\underline{X}.$
\end{proposition}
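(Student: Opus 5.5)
To prove the statement, I would unwind what it means for the square with $X\to\underline{X}_{\triv}$ and $Y\to\underline{Y}_{\triv}$ to be Cartesian in $\LogSch$. The plan is to first compute fiber products of this shape explicitly, and then compare the resulting log structure with $\cM_X$.

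\emph{Step 1: the fiber product.} For a morphism of schemes $\underline{f}:\underline{X}\to\underline{Y}$ and a log scheme $Y$, I claim that
\[
Y\times_{\underline{Y}_{\triv}}\underline{X}_{\triv}\simeq(\underline{X},\underline{f}^*_{\log}\cM_Y).
\]
To check this, take a log scheme $T$, a morphism $T\to Y$, and a morphism $\underline{T}\to\underline{X}$ of schemes that agree over $\underline{Y}$. Maps into a trivial log scheme $\underline{Z}_{\triv}=(\underline{Z},\cO_{\underline{Z}}^\times)$ carry no log data beyond the underlying scheme map, since $\cO_Z^\times$ is initial in $\LogStr_Z$. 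So the data amounts to $\underline{T}\to\underline{X}$ together with a morphism $\underline{f}^{-1}\cM_Y\to\cM_T$, pulled back along $\underline{T}\to\underline{X}$ and compatible with the structure maps to $\cO$. By the remark listing the equivalent descriptions of $f^\flat$, together with the adjunction $f^*_{\log}\dashv f^{\log}_*$ (and the fact that log pullbacks compose), this is exactly a morphism $T\to(\underline{X},\underline{f}^*_{\log}\cM_Y)$.

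\emph{Step 2: comparison.} Under this identification, the canonical map from $X$ to the fiber product has identity underlying scheme map. On log structures it is the morphism $\underline{f}^*_{\log}\cM_Y\to\cM_X$ that $f^\flat$ induces by adjunction. A morphism of log schemes whose underlying scheme map is the identity is an isomorphism exactly when its map of log structures is an isomorphism. Hence the square is Cartesian if and only if $\underline{f}^*_{\log}\cM_Y\to\cM_X$ is an isomorphism.

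\emph{Step 3: matching the definition of strictness.} Since $X$ and $Y$ are log schemes, $(\cM_Y)^a=\cM_Y$ and $(\cM_X)^a=\cM_X$. The condition from Step 2 is therefore exactly the definition of $f$ being strict.

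The main obstacle is Step 1, specifically the bookkeeping of the universal property. One must verify that maps to $\underline{X}_{\triv}$ are determined by the scheme map, which holds because $\cO^\times$ is the initial log structure. One must also verify that the induced log morphism $T\to(\underline{X},\underline{f}^*_{\log}\cM_Y)$ is unique, which uses that logification is a left adjoint and so commutes with $\underline{g}^{-1}$ in the relevant sense.
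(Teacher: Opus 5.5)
Your proposal is correct and follows essentially the same route as the paper. The paper's argument appears in the remark after the definition of strict maps of animated prelog rings, which it says carries over verbatim to log schemes. There it likewise identifies $Y\times_{\underline{Y}_{\triv}}\underline{X}_{\triv}$ with $(\underline{X},\underline{f}^*_{\log}\cM_Y)$, then applies the pasting lemma to the factorization $X\to(\underline{X},\underline{f}^*_{\log}\cM_Y)\to Y$; this amounts to your Step 2 comparison of $X$ with that fiber product via $\underline{f}^*_{\log}\cM_Y\to\cM_X$.
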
 

If we allow ourselves to work locally, we can often avoid the sheaf of monoids of a log structure, and instead work with ordinary monoids. This can be accomplished through the use of charts, which we now define.

\begin{mydef}
    Let $X$ be a scheme, and let $\alpha : \cM\to\OX$ be a log structure on $X.$ A \emph{chart} of $\alpha$ is any of the equivalent data:
    \begin{itemize}
        \item A morphism of monoids $\beta : M\to\Gamma(X,\cM)$ such that the associated map $(M_X)^a\to\cM$ is an isomorphism,
        \item A strict morphism of prelog schemes $(X,\cM,\alpha)\to(\Spec\ZZ[M],M_{\Spec\ZZ[M]}),$
        \item A strict morphism of log schemes $(X,\cM,\alpha)\to\Spec(\ZZ[M],M).$
    \end{itemize}

    If $f : (X',\cM',\alpha')\to (X,\cM,\alpha)$ is a morphism of log schemes, a \emph{chart} of $f$ is a tuple $(M,\beta,M',\beta',\theta),$ where:
    \begin{itemize}
        \item $M$ and $M'$ are monoids,
        \item $\beta : M\to\Gamma(X,\cM)$ and $\beta' : M'\to\Gamma(X',\cM')$ are charts of $\alpha$ and $\alpha',$ respectively, and
        \item $\theta : M\to M'$ is a map of monoids such that the diagram
        \[\begin{tikzcd}
            M\ar[r,"\theta"]\ar[d,"\beta"] &M'\ar[d,"\beta'"]\\
            \Gamma(X,\cM)\ar[r] &\Gamma(X',\cM')
        \end{tikzcd}\]
        commutes.
    \end{itemize}
\end{mydef}

\begin{remark}
    We can also view a chart of a morphism $f : X'\to X$ of log schemes as a commutative diagram
    \[\begin{tikzcd}
        X'\ar[r,"f"]\ar[d] &X\ar[d]\\
        \Spec(\ZZ[M'],M')\ar[r] &\Spec(\ZZ[M],M)
    \end{tikzcd}\]
    of log schemes such that the vertical maps are strict and the map $\Spec(\ZZ[M'],M')\to\Spec(\ZZ[M],M)$ is induced by a map $\theta : M\to M'$ of monoids.
\end{remark}

\begin{mydef}
    Let $X$ be a log scheme, and let $(M,\beta : M\to\Gamma(X,\cM))$ be a chart of $X.$ We say that the chart is \emph{coherent} (respectively \emph{integral}, \emph{fine}, \emph{saturated}, \emph{fs}) if $M$ is finitely generated (respectively integral, fine, saturated, fs).
\end{mydef}

The existence of a chart, at least locally, for a log structure is of great importance, so we have the following definition to be able to talk about this situation more concisely.

\begin{mydef}
    A log structure $\alpha : \cM\to\OX$ is \emph{quasi-coherent} if \'etale locally on $X,$ there exists a monoid $M$ and a chart $\beta : M\to\Gamma(X,\cM).$ The log structure is \emph{coherent} if \'etale locally on $X,$ there exists a finitely generated monoid $M$ and a chart $\beta : M\to\Gamma(X,\cM).$
\end{mydef}

Given an arbitrary log scheme $X,$ the question still remains whether its log structure is (quasi)-coherent. This is not the case in general, but if we restrict the class of log structures we consider, the situation becomes much more reasonable. To that end, we make the following definition.

\begin{mydef}
    Let $X = (\underline{X},\cM)$ be a log scheme, and let $x\in X.$ We say that $\cM$ is
    \begin{enumerate}
        \item \emph{integral at $x$} if the stalk $\cM_x$ is integral.
        \item \emph{integral} if it is integral at all $x\in X.$
        \item \emph{fine} if $\cM$ is coherent and integral.
        \item \emph{saturated at $x$} if the stalk $\cM_x$ is saturated.
        \item \emph{saturated} if it is saturated at all $x\in X.$
        \item \emph{fs} if it is fine and saturated.
    \end{enumerate}
    Let $f : X=(\underline{X},\cM)\to Y=(\underline{Y},\cN)$ be a map of log schemes. We say that $f$ is \emph{integral} if the map of monoids $f^\flat_x : (f^*_{\log}\cN_Y)_x\to\cM_x$ is integral for all $x\in X.$
\end{mydef}

We will also abusively say that a log scheme $X$ is itself integral, fine, saturated, or fs if the sheaf of monoids of its log structure is so.

\begin{proposition}[\protect{\cite[Proposition~II.1.1.3]{OgusLectures}}]
    Let $X = (\underline{X},\cM)$ be a log scheme. Then $X$ is integral (respectively, saturated) if and only if $\cM(U)$ is integral (respectively, saturated) for all $U\subseteq X$ open.
\end{proposition}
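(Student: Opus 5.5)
The statement to prove is: for a log scheme $X = (\underline{X},\cM)$, $X$ is integral (resp. saturated) if and only if $\cM(U)$ is integral (resp. saturated) for every open $U\subseteq X$. The plan is to pass between stalks and sections using two facts. Integrality and saturation are defined by universal first-order conditions on elements. Filtered colimits of monoids, and products and submonoids of monoids, are computed in a way compatible with these conditions.

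\emph{Sections to stalks.} Assume every $\cM(U)$ is integral (resp. saturated). The stalk $\cM_x$ is a filtered colimit of the $\cM(U)$ over étale neighborhoods $U$ of $x$. This colimit is computed on underlying sets, and $\cM_x^{\gp}$ is the filtered colimit of the $\cM(U)^{\gp}$, since group completion is a left adjoint. For integrality, suppose $m+n=m+n'$ in $\cM_x$. Represent $m,n,n'$ by sections over a common $U$; the equality then holds after shrinking $U$, and cancellation in $\cM(U)$ gives $n=n'$ there, hence in the stalk. For saturation, take $g\in\cM_x^{\gp}$ with $kg\in\cM_x$ for some $k\ge 1$. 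Write $g=[a]-[b]$ with $a,b\in\cM(U)$ and choose $c\in\cM(U)$ representing $kg$. After shrinking $U$, the image of $[c]$ equals $k([a]-[b])$ in $\cM(U)^{\gp}$, because an equality in the colimit group holds at some stage. Saturation of $\cM(U)$ then gives $[a]-[b]\in\cM(U)$, so $g\in\cM_x$.

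\emph{Stalks to sections.} Assume every stalk $\cM_x$ is integral (resp. saturated), and fix $U$. The map $\cM(U)\to\prod_{x\in U}\cM_x$ is injective because $\cM$ is a sheaf. A submonoid of a product of integral monoids is integral: if $m+n=m+n'$ in $\cM(U)$, then $n_x=n'_x$ for all $x$ by cancellation in the stalks, so $n=n'$. For saturation, first note that $\cM(U)$ is integral, since saturation is intended here together with integrality, as in Ogus. Hence $\cM(U)\to\cM(U)^{\gp}$ is injective. Now take $g=[a]-[b]\in\cM(U)^{\gp}$ with $kg=[c]$ for some $c\in\cM(U)$. At each $x$, the germ $g_x\in\cM_x^{\gp}$ satisfies $kg_x=[c_x]$, so $g_x=[d_x]$ for a unique $d_x\in\cM_x$, using saturation and integrality of the stalk. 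Lift $d_x$ to sections $d^{(x)}$ on neighborhoods $V_x$. Shrinking $V_x$, we can arrange $a=d^{(x)}+b$ on $V_x$, since that equality holds in the integral stalk $\cM_x$. By cancellation in the integral monoids $\cM(V_x\times_U V_y)$, the $d^{(x)}$ agree on overlaps, so they glue to $d\in\cM(U)$ with $a=d+b$. This gives $g=[d]$, as required.

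\emph{Main obstacle.} The delicate step is the gluing in the saturated case. It needs cancellation on all overlaps, which uses integrality of sections. That integrality is supplied by the integral half of the argument, and it requires handling saturation as "integral and saturated" throughout. A secondary point to check is that the étale topology causes no problems: the stalks should be taken at geometric points, and "open $U$" read as étale $U\to X$. The same arguments go through verbatim in that setting.
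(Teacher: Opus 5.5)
Your proof is correct. The paper gives no argument of its own beyond citing Ogus, and your route is the standard proof of that result: filtered colimits of sections for one direction, and for the other, injectivity of $\cM(U)\to\prod_x\cM_x$ plus gluing via cancellation on the overlaps $V_x\times_U V_y$.

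Your reading of ``open'' as \'etale $U\to X$ is forced, not just convenient, once stalks are taken at geometric points as the paper's \'etale log structures require. Over $\Spec k$ with $k$ not separably closed, $\cM(\Spec k)$ is only the Galois-invariant part of the geometric stalk, and it can be integral while the stalk is not. For example, take stalk $\bar{k}^{\times}\times Q$ with $Q=\langle e_1,e_2,f\mid e_1+f=e_2+f\rangle$ and $Q\setminus\{0\}$ sent to $0$, with Galois acting on $Q$ through a quadratic quotient by swapping $e_1,e_2$; the invariants of $Q$ embed in $\NN^2$. So with Zariski opens only, the sections-to-stalks implication would fail.
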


In the case of fine or fs log schemes, the following proposition guarantees us that charts exist, at least \'etale locally.

\begin{proposition}[\protect{\cite[Corollary~2.3.6]{OgusLectures}}]
    Let $X = (\underline{X},\cM)$ be a log scheme. Then $\cM$ is fine (respectively, fs) if and only if \'etale locally it admits a fine (respectively fs) chart.
\end{proposition}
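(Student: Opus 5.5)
The plan is to prove the two directions separately and handle the fine and fs cases in parallel, since saturation turns out to be a condition on the sharpening $\overline{\cM}_x := \cM_x/\cO_{X,x}^\times$ in both directions.

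\emph{Charts imply fine (resp.\ fs).} Suppose that étale locally there is a fine chart $\beta : M\to\Gamma(U,\cM)$. Then $\cM$ is coherent by definition, so it remains to check integrality (resp.\ saturation) of the stalks. At a geometric point $x$, let $F := \beta_x^{-1}(\cO_{X,x}^\times)\subseteq M$. Since $(M_U)^a\cong\cM|_U$, the stalk is the pushout
\[
\cM_x\cong M\oplus_F\cO_{X,x}^\times .
\]
\begin{enumerate}
\item For integrality, I would identify this pushout with the image of $M\oplus\cO_{X,x}^\times$ in $(M^{\gp}\oplus\cO_{X,x}^\times)/F^{\gp}$. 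The key point is that $F$ is a face of $M$: if $m+m'\in F$ then $\beta(m)\beta(m')$ is a unit, so both $\beta(m)$ and $\beta(m')$ are units. Using this, the equivalence relation defining the pushout is the same as equality in that group, so the pushout embeds in a group and is cancellative.
\item For saturation, the same description gives $\overline{\cM}_x\cong M/F$, the sharpening of the localization $M_F = M - F$. Localization of a saturated monoid at a face is saturated, and so is the quotient by its units.
\item Finally, an integral monoid $\cM_x$ is saturated as soon as $\overline{\cM}_x$ is. Indeed, $\cM_x = \cM_x^{\gp}\times_{\overline{\cM}_x^{\gp}}\overline{\cM}_x$ holds for integral $\cM_x$, and this reduces the saturation test to $\overline{\cM}_x$.
\end{enumerate}

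\emph{Fine (resp.\ fs) implies charts.} Fix a geometric point $x$. Coherence gives, étale locally, a chart $P\to\Gamma(U,\cM)$ with $P$ finitely generated. Hence $\overline{\cM}_x$ is finitely generated, and it is integral because $\cM_x$ is, so $\overline{\cM}_x$ is fine. I would then proceed as follows.
\begin{enumerate}
\item Choose a finitely generated subgroup $G\subseteq\cM_x^{\gp}$ surjecting onto $\overline{\cM}_x^{\gp}$. For example, take the image of $P^{\gp}$.
\item Set $Q := G\times_{\overline{\cM}_x^{\gp}}\overline{\cM}_x$.
\item Because $\cM_x$ is integral, $\cM_x = \cM_x^{\gp}\times_{\overline{\cM}_x^{\gp}}\overline{\cM}_x$, so $Q$ maps naturally to $\cM_x$.
\item $Q$ is integral, being a submonoid of the group $G$.
\item $Q$ is finitely generated: it is an extension of the fine monoid $\overline{\cM}_x$ by the finitely generated group $G\cap\cO_{X,x}^\times$. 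Lifts of generators of $\overline{\cM}_x$, together with generators of this group and their inverses, generate $Q$.
\item If $\cM$ is fs, then $\overline{\cM}_x$ is saturated, and since $Q^{\gp}=G$ the saturation test for $Q$ reduces to $\overline{\cM}_x$, so $Q$ is fs.
\item By construction, $Q\oplus_{Q\cap\cO^\times}\cO_{X,x}^\times\to\cM_x$ is an isomorphism, so $Q\to\cM_x$ is a chart at the stalk.
\end{enumerate}

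\emph{Expected main obstacle: spreading out.} It remains to pass from the stalk to an étale neighborhood. Since $Q$ is finitely presented, the map $Q\to\cM_x$ extends to $Q\to\Gamma(V,\cM)$ on some étale neighborhood $V$ of $x$. One must then show that the locus where the induced map $(Q_V)^a\to\cM|_V$ is an isomorphism is an open set containing $x$. My first attempt would be to compare $Q$ with the given finitely generated chart $P$ on a common neighborhood. Since both $P$ and $Q$ are finitely generated, after shrinking $V$ we can produce maps $P\to Q\oplus\cO^\times$ and $Q\to P\oplus\cO^\times$ realizing the isomorphism at $x$. Finitely many unit and equation conditions then propagate this isomorphism to a neighborhood. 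This is essentially the standard argument that a chart at a point of a coherent log structure extends to a neighborhood, and it is the step requiring the most care.
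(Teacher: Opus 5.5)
Your argument is correct. The paper gives no proof of its own, only the citation to Ogus. The relevant comparison is therefore with the usual argument, which is shorter in the harder direction because it never builds a chart at a stalk.

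That argument goes as follows. Start from the chart $\beta\colon P\to\Gamma(U,\cM)$ with $P$ finitely generated, supplied by coherence. The monoid $\Gamma(U,\cM)$ is integral, and saturated when $\cM$ is, since both properties pass from stalks to sections. Hence $\beta$ factors through $P'=P^{\mathrm{int}}$ (resp.\ $P'=(P^{\mathrm{int}})^{\mathrm{sat}}$, which is fs by Gordan's lemma). The map $(P_U)^a\to(P'_U)^a$ is surjective on stalks. For the saturation this goes as follows: at $y$, put $F=\beta_y^{-1}(\cO_{X,y}^\times)\subseteq P^{\mathrm{int}}$; saturation of $\cM_y=P^{\mathrm{int}}\oplus_F\cO_{X,y}^\times$ forces every $s\in P'$ to lie in $P^{\mathrm{int}}+F^{\gp}$. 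The composite of this map with $(P'_U)^a\to\cM|_U$ is the given isomorphism, so $P'$ is a chart on the same $U$.

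Your route, Kato's construction $Q=G\times_{\overline{\cM}_x^{\gp}}\overline{\cM}_x$ followed by spreading out, proves more. It gives a chart near $x$ with $Q/Q^\times\cong\overline{\cM}_x$ and prescribed $Q^{\gp}=G$, which is the form needed for charts of morphisms. The price is the spreading-out step, which you only sketch. It can be made precise as follows.
\begin{enumerate}
\item $P$ and $Q$ are finitely presented (R\'edei). So after shrinking $V$, the maps $P\to\cM_x\cong(Q_V)^a_x$ and $Q\to\cM_x\cong(P_V)^a_x$ lift to homomorphisms $P\to\Gamma(V,(Q_V)^a)$ and $Q\to\Gamma(V,(P_V)^a)$.
\item Shrink further to impose finitely many identities on generators. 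These identities hold at $x$; for the one on $Q$, this is because $(Q_V)^a_x\to\cM_x$ is injective.
\item You may then assume that $(Q_V)^a\to(P_V)^a\to(Q_V)^a$ is the identity, and that $(P_V)^a\to(Q_V)^a\to\cM|_V$ is the given isomorphism.
\item The second identity makes $(P_V)^a\to(Q_V)^a$ injective, and the first makes it split surjective. Hence it is an isomorphism, and therefore so is $(Q_V)^a\to\cM|_V$.
\end{enumerate}

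A minor point concerns your integrality step. Cancellativity of $M\oplus_F\cO_{X,x}^\times$ only uses that $M$ is integral and that $\cO_{X,x}^\times$ is a group. The face property of $F$ is really needed in the saturation step, to identify $\overline{\cM}_x$ with the sharpening of $M_F$.
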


Under reasonable assumptions, we can also guarantee that a chart of a morphism $f : X\to S$ of log schemes exists (in fact, a chart extending a given chart of $S$).

\begin{proposition}[\protect{\cite[Proposition~2.4.2]{OgusLectures}}]
    Let $f : X\to S$ be a map of log schemes, and let $\beta_S : P\to\Gamma(S,\cM_S)$ be a chart of $\cM_S,$ where $P$ is finitely generated. If $\cM_X$ is coherent, then \'etale locally on $X,$ there exists a commutative diagram
    \[\begin{tikzcd}
        P\ar[d,"\beta_S"]\ar[r,"\theta"] &M\ar[d,"\beta_X"]\\
        \Gamma(S,\cM_S)\ar[r,"f^\flat"] &\Gamma(X,\cM_X)
    \end{tikzcd}\]
    such that the tuple $(M,\beta_X,P,\beta_S,\theta)$ is a chart of $f.$
\end{proposition}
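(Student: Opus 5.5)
The plan is to build the chart of $f$ from an arbitrary finitely generated chart of $\cM_X$ by gluing $P$ onto it with a pushout of monoids. The pushout guarantees that $\theta$ is a homomorphism, and we check that it does not change the associated log structure.

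\emph{Step 1: reduce to a neighbourhood carrying a chart.} Fix a geometric point $\bar x$ of $X$. Since $\cM_X$ is coherent, after replacing $X$ by an étale neighbourhood of $\bar x$ we may fix a finitely generated monoid $Q$ and a chart $\beta_Q : Q\to\Gamma(X,\cM_X)$. Then $\cM_X\simeq (Q_X)^a$, which is the pushout $Q_X\oplus_{\beta_Q^{-1}\OX^\times}\OX^\times$. Choose generators $p_1,\dots,p_n$ of $P$ and write $m_i := f^\flat(\beta_S(p_i))\in\Gamma(X,\cM_X)$.

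\emph{Step 2: write each $m_i$ through $Q$.} From the description of $(Q_X)^a$, the stalk of $m_i$ at $\bar x$ has the form $\beta_Q(q_i)\cdot u_i$ with $q_i\in Q$ and $u_i\in\cO_{X,\bar x}^\times$. After shrinking the étale neighbourhood (there are only finitely many $i$), we may assume $m_i = \beta_Q(q_i)u_i$ holds in $\Gamma(X,\cM_X)$, with $u_i\in\Gamma(X,\OX^\times)$.

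\emph{Step 3: enlarge $Q$ by units.} Set $Q' := Q\oplus\ZZ^n$ and $\beta_{Q'}(q,e_i) := \beta_Q(q)\,u_i$. Since $\ZZ^n$ maps into $\OX^\times$, the associated log structure is unchanged: $(Q'_X)^a\simeq (Q_X)^a\simeq\cM_X$. So $\beta_{Q'}$ is again a finitely generated chart.

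\emph{Step 4: form the pushout.} Let $\NN^n\to P$ send $e_i\mapsto p_i$, and $\NN^n\to Q'$ send $e_i\mapsto (q_i,e_i)$. Put
\[
M := P\oplus_{\NN^n}Q',
\]
which is finitely generated. Define $\theta : P\to M$ to be the canonical map, and $\beta_X : M\to\Gamma(X,\cM_X)$ by $f^\flat\circ\beta_S$ on $P$ and $\beta_{Q'}$ on $Q'$. By Step 2 these two maps agree on $\NN^n$, so $\beta_X$ is well defined. The square in the statement commutes by construction.

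\emph{Step 5: check that $\beta_X$ is a chart.} This is the main point. Logification is a left adjoint, so it commutes with pushouts, giving
\[
(M_X)^a\simeq (P_X)^a\oplus_{(\NN^n_X)^a}(Q'_X)^a\simeq (P_X)^a\oplus_{(\NN^n_X)^a}\cM_X .
\]
Because $\NN^n\to P$ is surjective, the structure map $\cM_X\to(M_X)^a$ is surjective. On the other hand, the composite $\cM_X\to(M_X)^a\to\cM_X$ is the identity. Hence $(M_X)^a\to\cM_X$ is an isomorphism, and $\beta_X$ is a chart.

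\emph{Expected difficulties.} The delicate points are in Steps 2 and 5. In Step 2, the stalkwise factorization has to be spread out to a single étale neighbourhood on which the $u_i$ are global units; this uses finiteness of the generating set. In Step 5, one must confirm that the pushout is computed correctly in the category of log structures, i.e. that surjectivity of $\NN^n\to P$ really makes $\cM_X\to (M_X)^a$ surjective at the level of sheaves of monoids. If that step gives trouble, I would instead verify the chart condition stalkwise at each geometric point.
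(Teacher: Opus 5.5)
Your proposal is correct and complete as a proof of the statement as written. The paper gives no proof of its own here; it only cites Ogus. So there is no internal argument to compare against, and your construction stands as a valid self-contained proof: adjoin units so that the images of generators of $P$ land in the image of the chart monoid, then amalgamate along $\NN^n$.

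\textbf{Step~5 is easier than you fear.} Since $\NN^n\to P$ is surjective, the canonical map $Q'\to M=P\oplus_{\NN^n}Q'$ is already surjective. Indeed, any class $[p,q']$ equals $[0,\psi(a)+q']$ for a lift $a\in\NN^n$ of $p$, where $\psi$ is your map $\NN^n\to Q'$. So $M$ is just a quotient of $Q'$ through which $\beta_{Q'}$ factors. The argument then runs as follows.
\begin{enumerate}
\item Logification preserves surjections: every local section of $(M_X)^a$ is locally the class of a pair $(m,u)$ with $m\in M$ and $u\in\OX^\times$, and such a pair lifts to $Q'\oplus\OX^\times$. Hence $(Q'_X)^a\to(M_X)^a$ is surjective.
\item The composite $(Q'_X)^a\to(M_X)^a\to\cM_X$ is the isomorphism given by the chart $\beta_{Q'}$.
\item A surjection followed by a map whose composite is an isomorphism forces both maps to be isomorphisms, so $\beta_X$ is a chart.
\end{enumerate}
This route needs neither the fact that pushouts of log structures are computed in sheaves of monoids nor a stalkwise check. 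Your version is still valid, since that fact is true and the paper quotes it from Ogus.

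\textbf{A remark on refinements.} Your $M$ is only guaranteed to be finitely generated, which is all the statement asks. If you also want $M$ fine when $\cM_X$ is fine, the same quotient argument lets you replace $M$ by $M^{\mathrm{int}}$. This works because $\Gamma(X,\cM_X)$ is then integral, so $\beta_X$ factors through the surjection $M\to M^{\mathrm{int}}$.
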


As one would expect, it is also desirable to construct new log schemes from known ones in categorical ways. To that end, we can ask whether limits or colimits of log schemes exist.

\begin{proposition}[\protect{\cite[Proposition~III.2.1.2]{OgusLectures}}]
    The category $\LogSch$ of log schemes admits fiber products, and the forgetful functor $X\mapsto\underline{X}$ commutes with fiber products. Moreover, the fiber product of coherent log schemes is coherent.
\end{proposition}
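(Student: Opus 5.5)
The statement has three parts. The category $\LogSch$ admits fiber products. The forgetful functor $X\mapsto\underline{X}$ commutes with them. Fiber products of coherent log schemes are coherent.

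For a diagram $X\xrightarrow{f}S\xleftarrow{g}Y$ of log schemes, I will build the fiber product explicitly. Let $\underline{Z}:=\underline{X}\times_{\underline{S}}\underline{Y}$ be the fiber product of underlying schemes, with projections $p,q$ and structure map $r : \underline{Z}\to\underline{S}$. On $\underline{Z}$ I will take the pushout of log structures
\[
\cM_Z := p^*_{\log}\cM_X\oplus_{r^*_{\log}\cM_S} q^*_{\log}\cM_Y ,
\]
formed in $\LogStr_{\underline{Z}}$. This pushout exists by the earlier proposition that $\LogStr_X$ admits pushouts compatible with $\PreLogStr_X$. Concretely, it is the logification of the pushout of prelog structures.

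The maps $f^\flat$ and $g^\flat$, transported via the equivalent descriptions of $f^\flat$ in the remark after the definition of $\LogSch$, give the two arrows $r^*_{\log}\cM_S\to p^*_{\log}\cM_X$ and $r^*_{\log}\cM_S\to q^*_{\log}\cM_Y$. I will use that $(-)^*_{\log}$ is compatible with composition, so that $r^*_{\log}\cong p^*_{\log}f^*_{\log}$. This follows from the commuting square of adjunctions and from the compatibility of $f^{-1}$ with composition.

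To verify the universal property, I will take a log scheme $T$ with maps $a:T\to X$ and $b:T\to Y$ agreeing over $S$. By the universal property of schemes there is a unique $h:\underline{T}\to\underline{Z}$. A morphism of log structures $h^*_{\log}\cM_Z\to\cM_T$ must then be given. Since $h^*_{\log}$ is a left adjoint (to $h^{\log}_*$), it commutes with pushouts. So $h^*_{\log}\cM_Z\cong a^*_{\log}\cM_X\oplus_{(r h)^*_{\log}\cM_S} b^*_{\log}\cM_Y$, and such a morphism is exactly a compatible pair $(a^\flat,b^\flat)$. This gives existence and uniqueness, and it also shows that the underlying scheme functor preserves the fiber product, by construction.

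For coherence, I will work étale locally on $\underline{Z}$. The hypothesis grants finitely generated charts $P\to\cM_S$, and by the chart-extension proposition (Ogus 2.4.2) the coherence of $\cM_X$ and $\cM_Y$ gives, after étale localization, charts $P\to M$ and $P\to N$ of $f$ and $g$ with $M,N$ finitely generated. The natural candidate chart is then $M\oplus_P N\to\Gamma(Z,\cM_Z)$, and $M\oplus_P N$ is finitely generated as a quotient of $M\oplus N$.

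To see that it is a chart, I will use that the pullback of a chart is a chart, since logification commutes with $(-)^*_{\pre}$. Together with the fact that logification, as a left adjoint, commutes with pushouts, this gives
\[
(M\oplus_P N)^a\cong p^*_{\log}\cM_X\oplus_{r^*_{\log}\cM_S}q^*_{\log}\cM_Y .
\]
A subtle point arises when localizing. The charts on $X$ and $Y$ live on étale neighborhoods of $X$ and $Y$ that lie over the same étale neighborhood of $S$, so I must localize $S$ first and then $X$ and $Y$. Their fiber product then gives an étale cover of $\underline{Z}$.

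I expect the main obstacle to be the bookkeeping of the functoriality isomorphisms: compatibility of $(-)^*_{\log}$ with composition and with pushouts, and the fact that the logification of a pushout of prelog structures agrees with the pushout computed in $\LogStr$. The rest is formal.
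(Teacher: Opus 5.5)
Your proposal is correct and follows the route the paper indicates. The paper omits the proof but records that the underlying scheme is $\underline{X}\times_{\underline{S}}\underline{Y}$, with log structure the pushout of the pulled-back log structures. Your coherence argument, pushing out charts of $f$ and $g$ subordinate to a common finitely generated chart of $S$, is Ogus's argument; the one thing to confirm is that the chart-extension result you invoke lets $M$ and $N$ be taken finitely generated, which Ogus's statement provides but the paper's paraphrase of that proposition omits.
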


We omit the proof, but it is worth noting that given morphisms of log schemes $f : X\to Z$ and $g : Y\to Z,$ the log structure on the $X\times_Z Y$ is given by the coproduct of the morphisms of log structures $h^*_{\log}\alpha_Z\to (\pi_{\underline{X}})^*_{\log}\alpha_X$ and $h^*_{\log}\alpha_Z\to (\pi_{\underline{Y}})^*_{\log}\alpha_Y$ (which exists by \cite[Proposition~III.2.1.1]{OgusLectures}), where $h :\underline{X}\times_{\underline{Z}}\underline{Y}\to\underline{Z}$ is the canonical map from the fiber product to $\underline{Z}.$

Of course, since we often want to restrict our attention to certain classes of log schemes which are sufficiently nice, one would ask if the fiber product of such log schemes is again sufficiently nice. This is not generally the case if we form the fiber product in the category of all log schemes, but the subcategories we are interested in do themselves admit fiber products.

\begin{proposition}[\protect{\cite[Corollary~III.2.1.5]{OgusLectures}}]
    Let $\LogSch^{\mathrm{coh}},\LogSch^{\fine},\LogSch^{\fs}$ denote the categories of coherent, fine, and fs log schemes, respectively. Then there are adjunctions
    \[\begin{tikzcd}
        \LogSch^{\fs}\ar[r, shift left] &\ar[l,shift left,"{(-)^{\mathrm{sat}}}"]\LogSch^{\fine}\ar[r,shift left] &\ar[l,shift left,"{(-)^{\mathrm{int}}}"] \LogSch^{\mathrm{coh}}
    \end{tikzcd}\]
    in which the left adjoints are the forgetful functors.
\end{proposition}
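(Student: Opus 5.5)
The plan is to construct the two right adjoints \'etale locally using charts. The universal property will then make the construction independent of the chart, so the local pieces glue. I treat integralization first and saturation second, since the arguments are parallel. In both cases the inclusion functors are fully faithful, so it suffices to produce, for each coherent (respectively fine) log scheme $X$, a fine (respectively fs) log scheme $X^{\mathrm{int}}$ (respectively $X^{\mathrm{sat}}$) with a morphism to $X$ that is terminal among morphisms from fine (respectively fs) log schemes.

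\emph{Local construction of $X^{\mathrm{int}}$.} Suppose $X$ admits a chart $\beta : P\to\Gamma(X,\cM)$ with $P$ finitely generated, i.e.\ a strict morphism $X\to\Spec(\ZZ[P],P)$. Let $P^{\mathrm{int}}$ be the image of $P$ in $P^{\gp}$; it is fine. Set
\[
X^{\mathrm{int}} := X\times_{\Spec\ZZ[P]}\Spec\ZZ[P^{\mathrm{int}}],
\]
with the log structure associated to $P^{\mathrm{int}}$. Since $\ZZ[P]\to\ZZ[P^{\mathrm{int}}]$ is surjective, $X^{\mathrm{int}}\to X$ is a strict-up-to-$P\to P^{\mathrm{int}}$ closed immersion, and $X^{\mathrm{int}}$ is fine by construction.

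\emph{Universal property of $X^{\mathrm{int}}$.} Let $Y$ be fine and $f : Y\to X$ a morphism. The composite $P\to\Gamma(Y,\cM_Y)$ factors uniquely through $P^{\mathrm{int}}$, because $\cM_Y$ is integral and so injects into $\cM_Y^{\gp}$ (this can be checked on stalks, using the earlier proposition that integrality is detected on sections). If $p,q\in P$ have the same image in $P^{\mathrm{int}}$, then they have the same image in $\cM_Y$, hence in $\cO_Y$. Therefore $\ZZ[P]\to\Gamma(Y,\cO_Y)$ factors through $\ZZ[P^{\mathrm{int}}]$. Together these give the factorization $Y\to X^{\mathrm{int}}$ over $X$, and it is unique because $X^{\mathrm{int}}\to X$ is a monomorphism: a closed immersion on underlying schemes together with the surjection $P\to P^{\mathrm{int}}$ on charts.

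\emph{Saturation.} The construction is the same: for a fine chart $P$, set
\[
X^{\mathrm{sat}} := X\times_{\Spec\ZZ[P]}\Spec\ZZ[P^{\mathrm{sat}}],
\]
where $P^{\mathrm{sat}}\subseteq P^{\gp}$ is the saturation, which is fs. For $Y$ fs, the map $P^{\gp}\to\Gamma(Y,\cM_Y^{\gp})$ sends $P^{\mathrm{sat}}$ into $\cM_Y$, since $\cM_Y$ is saturated, so any $x$ with $nx$ in $\cM_Y$ already lies in $\cM_Y$. This gives the monoid factorization. Its compatibility with $\alpha$ follows from the integrality of $\cM_Y$: the value of $\alpha$ on such $x$ is forced. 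Uniqueness holds because the map on $P^{\mathrm{sat}}$ is determined by its extension to $P^{\gp}$, and hence by $P$.

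\emph{Gluing, and the main obstacle.} I expect the main obstacle to be this step rather than either local construction. On one hand, the universal property shows the local construction is independent of the chart up to unique isomorphism, and is compatible with strict \'etale base change, since charts pull back along strict maps and fiber products commute with this. On the other hand, two different charts are only related \'etale locally, so I will compare each to a common refinement through the universal property, not compare the charts directly. Uniqueness of factorizations then supplies cocycle-compatible isomorphisms, and the local pieces descend to $X^{\mathrm{int}}$ and $X^{\mathrm{sat}}$ by \'etale descent for schemes together with the sheaf nature of log structures. Finally, functoriality and the adjunction follow formally from the universal property.
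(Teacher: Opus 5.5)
Your proposal is correct and follows the standard argument behind the cited result of Ogus; the paper gives no proof of its own beyond that citation. You construct $X\times_{\Spec\ZZ[P]}\Spec\ZZ[P^{\mathrm{int}}]$, resp.\ $X\times_{\Spec\ZZ[P]}\Spec\ZZ[P^{\mathrm{sat}}]$, from a chart, check the universal property, and glue. In the gluing step no common refinement of charts is needed, since any two local constructions over the same \'etale open are already uniquely isomorphic by the universal property. You should, however, say explicitly that the descent is effective because the local pieces are affine over $X$ (a closed immersion, resp.\ a finite morphism).
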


\begin{remark}
    Similarly to \ref{initial prelog ring under monoid}, the maps of underlying schemes $X^{\mathrm{sat}}\to X^{\mathrm{int}}$ and $X^{\mathrm{int}}\to X$ induced by the counits of the above adjunctions are not in general homeomorphisms. In particular, we do not have $\cM_{X^{\mathrm{int}}}\cong\cM_{X}^{\mathrm{int}}$ or $\cM_{X^{\mathrm{sat}}}\cong\cM_{X}^{\mathrm{sat}}.$
\end{remark}

\begin{proposition}[\protect{\cite[Corollary~III.2.1.6]{OgusLectures}}]
    The category of fine log schemes admits finite projective limits, as does the category of fs log schemes, given by
    $X\times^{\fine}_Z Y = (X\times_Z Y)^{\mathrm{int}}$ and $X\times^{\fs}_Z Y = (X\times_Y Z)^{\mathrm{sat}}.$ If $X$ and $Y$ are fine log schemes over a fine log scheme $Z,$ then the natural map
    \[
    \underline{X\times^{\mathrm{fine}}_Z Y}\to\underline{X}\times_{\underline{Z}}\underline{Y}
    \]
    is a closed immersion, and the natural map
    \[
    \underline{X\times^{\mathrm{fs}}_Z Y}\to\underline{X}\times_{\underline{Z}}\underline{Y}
    \]
    is a finite morphism.
\end{proposition}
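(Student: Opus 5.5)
The plan is to obtain the existence statements formally from the adjunctions of \cite[Corollary~III.2.1.5]{OgusLectures}. I would then prove the two geometric claims about underlying schemes étale locally using charts.

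\textbf{Existence of finite limits.} Finite projective limits are generated by a terminal object and fiber products. The terminal object $\Spec\ZZ$ with the trivial log structure is fs, hence also terminal in $\LogSch^{\fine}$ and $\LogSch^{\fs}$.

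For fiber products, the key fact is that $(-)^{\mathrm{int}}$ and $(-)^{\mathrm{sat}}$ are right adjoints, so they preserve limits. Let $X\to Z\leftarrow Y$ be fine. The fiber product $X\times_Z Y$ in $\LogSch$ is coherent by \cite[Proposition~III.2.1.2]{OgusLectures}, so it is also the fiber product in $\LogSch^{\mathrm{coh}}$. Applying the right adjoint $(-)^{\mathrm{int}}$ then gives $(X\times_Z Y)^{\mathrm{int}}$ as a fiber product of $X^{\mathrm{int}}\to Z^{\mathrm{int}}\leftarrow Y^{\mathrm{int}}$ in $\LogSch^{\fine}$.

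The inclusion $\LogSch^{\fine}\to\LogSch^{\mathrm{coh}}$ is fully faithful, so the counit $W^{\mathrm{int}}\to W$ is an isomorphism for fine $W$. Hence $X^{\mathrm{int}}\cong X$, and likewise for $Y$ and $Z$, which gives the claimed formula. The fs case is identical, using $(-)^{\mathrm{sat}}$ applied to the fine fiber product.

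\textbf{Closed immersion and finiteness.} Both properties are étale local on $\underline{X}\times_{\underline{Z}}\underline{Y}$. Étale locally I would choose a fine chart $P$ of $Z$ and, by \cite[Proposition~2.4.2]{OgusLectures}, charts $P\to Q$ and $P\to R$ of $X\to Z$ and $Y\to Z$ extending it. Then $N:=Q\oplus_P R$ is a (finitely generated) chart of the coherent log scheme $X\times_Z Y$.

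The main step, and the one I expect to be the real obstacle, is the local description of integralization. I expect that $(X\times_Z Y)^{\mathrm{int}}$ is the strict base change
\[
(X\times_Z Y)\times_{\Spec\ZZ[N]}\Spec\ZZ[N^{\mathrm{int}}],
\]
with chart $N^{\mathrm{int}}$, and similarly $\Spec\ZZ[N^{\mathrm{sat}}]$ for saturation. To prove this, I would verify the universal property of the right adjoint directly. A map from a fine log scheme $T$ to $X\times_Z Y$ locally factors the chart map $N\to\Gamma(T,\cM_T)$ through $N^{\mathrm{int}}$, because $\Gamma(T,\cM_T)$ is integral. By the universal property of monoid algebras (compare \ref{initial prelog ring under monoid}), this gives a unique factorization through the base change. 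One must also check that these local constructions glue, which follows from uniqueness.

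Granting this, the geometric claims are monoid facts. The map $N\to N^{\mathrm{int}}$ is surjective (its image in $N^{\gp}$ is all of $N^{\mathrm{int}}$). So $\ZZ[N]\to\ZZ[N^{\mathrm{int}}]$ is surjective, and the base change is a closed immersion.

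For saturation, $N^{\mathrm{sat}}$ is finitely generated over $N^{\mathrm{int}}$ by Gordan's lemma, and every $m\in N^{\mathrm{sat}}$ has some multiple $km\in N^{\mathrm{int}}$. Thus $e^m$ is integral over $\ZZ[N^{\mathrm{int}}]$, and $\ZZ[N^{\mathrm{int}}]\to\ZZ[N^{\mathrm{sat}}]$ is finite. The composite of the closed immersion with this finite base change is finite.
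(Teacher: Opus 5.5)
Your proposal is correct and is essentially the argument behind the cited result, since the paper itself supplies no proof. Existence is formal from the right adjoints $(-)^{\mathrm{int}}$ and $(-)^{\mathrm{sat}}$ of \cite[Corollary~III.2.1.5]{OgusLectures} applied to the coherent fiber product, and the closed-immersion and finiteness claims come from the chart-local description $W^{\mathrm{int}}\cong W\times_{\Spec\ZZ[N]}\Spec\ZZ[N^{\mathrm{int}}]$ (and its analogue with $N^{\mathrm{sat}}$), which you re-derive via the universal property.

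Your gluing step is easier than you indicate: for any strict $W'\to W$, the adjunction alone gives $W^{\mathrm{int}}\times_W W'\cong (W')^{\mathrm{int}}$, so you may compute directly on a strict \'etale cover that carries charts.
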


\begin{proposition}[\protect{\cite[Proposition~III.2.5.3]{OgusLectures}}]
    Let $f : X\to Y$ be a morphism of fine log schemes. If $f$ is integral and $Y'\to Y$ is any morphism of fine log schemes, then $X' := X\times_Y Y'$ is integral, and the natural morphism $X'\to Y'$ is integral.
\end{proposition}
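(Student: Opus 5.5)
The plan is to reduce everything to statements about stalks of log structures, where integrality becomes a property of monoid homomorphisms, and then use characterization (2) in the definition of an integral homomorphism: pushouts along integral maps stay integral.

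\emph{Step 1: reduce to a chart.} Fix a geometric point $x'$ of $X'$, with images $x\in X$, $y'\in Y'$ and $y\in Y$. By the description of fiber products recalled after \cite[Proposition~III.2.1.2]{OgusLectures}, $\cM_{X'}$ is the log structure associated to the pushout of prelog structures
\[
h^*_{\log}\cM_Y\to (\pi_{\underline{X}})^*_{\log}\cM_X\ \text{ and }\ h^*_{\log}\cM_Y\to (\pi_{\underline{Y'}})^*_{\log}\cM_{Y'}.
\]
Since log structures commute with stalks at geometric points, it suffices to show two things:
\[
\cM_{X',x'}\simeq\bigl(\cM_{X,x}\oplus_{\cM_{Y,y}}\cM_{Y',y'}\bigr)^a
\]
is integral, and $\cM_{Y',y'}\to\cM_{X',x'}$ is an integral homomorphism. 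Here $(-)^a$ denotes the stalkwise logification $Q\mapsto Q\oplus_{\alpha^{-1}\cO^\times}\cO^\times_{X',x'}$. Coherence of $X'$ is already given by \cite[Proposition~III.2.1.2]{OgusLectures}, so integrality plus coherence yields fineness.

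\emph{Step 2: the pushout is integral.} By hypothesis, $\cM_{Y,y}\to\cM_{X,x}$ is integral (on pulled back log structures) and $\cM_{Y',y'}$ is integral. Condition (2) in the definition of integral homomorphism then makes $Q:=\cM_{Y',y'}\oplus_{\cM_{Y,y}}\cM_{X,x}$ integral. Moreover, the map $\cM_{Y',y'}\to Q$ is integral, because integral maps are stable under pushout. To see this, take $Q'\leftarrow \cM_{Y',y'}$ integral and compute
\[
Q'\oplus_{\cM_{Y',y'}}Q\simeq Q'\oplus_{\cM_{Y,y}}\cM_{X,x},
\]
which is integral, again by condition (2) applied to $\cM_{Y,y}\to Q'$.

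\emph{Step 3: logification, which I expect to be the main obstacle.} I must show that passing from $Q$ to $Q^a=Q\oplus_{U}\cO^\times$, with $U=\alpha^{-1}(\cO^\times)\subseteq Q$, preserves integrality, and that composing with this step keeps $\cM_{Y',y'}\to Q^a$ integral.

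For the first claim, I would argue directly. The quotient $Q^a$ identifies $(q,u)\sim(q+v,u\alpha(v)^{-1})$ for $v\in U$. Suppose $(q_1,u_1)+(q,u)=(q_2,u_2)+(q,u)$ in $Q^a$. Unwinding the relation gives $v_1,v_2\in U$ with
\[
q_1+q+v_1=q_2+q+v_2.
\]
Cancelling $q$ in the integral monoid $Q$ gives the required equality, and the unit components match since $\cO^\times$ is a group. An alternative is to cite the standard fact that the log structure associated to an integral prelog structure is integral, since $Q^a$ is then a quotient of the integral monoid $Q\oplus\cO^\times$ by a congruence induced by a group action.

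For the second claim, I would use that $U\to\cO^\times$ is a homomorphism to a group. Then $Q\to Q^a$ is a pushout along a map of integral monoids whose target is a group, and such a map is integral (for instance via condition (1) in the definition, taking $m'$ to be the unit part). A composite of integral maps is integral by condition (2). This gives the integrality of $X'\to Y'$ at $x'$, and hence everywhere.
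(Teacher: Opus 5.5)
The paper gives no proof of this statement; it only cites Ogus. So there is no in-text argument to compare against. Your stalkwise argument is correct and is essentially the standard proof. Its ingredients all check out:
\begin{itemize}
\item condition (2) gives integrality of $Q$;
\item integral maps are stable under pushout and composition;
\item a homomorphism from an integral monoid to a group is integral;
\item the associated log structure of an integral prelog structure is integral, via the one-step description of the congruence, valid because $\cO^\times$ is a group.
\end{itemize}

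One bookkeeping step is missing. The paper defines integrality of $f$ through $(f^*_{\log}\cM_Y)_x\to\cM_{X,x}$. You instead work with $\cM_{Y,y}\to\cM_{X,x}$ and conclude integrality of $\cM_{Y',y'}\to\cM_{X',x'}$.

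The conversion at the start is harmless. The map $\cM_{Y,y}\to(f^*_{\log}\cM_Y)_x=\cM_{Y,y}\oplus_{\cO^\times_{Y,y}}\cO^\times_{X,x}$ is a pushout along a group homomorphism, hence integral, so the composite is integral.

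At the end, however, you need the reverse direction. You have integrality of the composite $\cM_{Y',y'}\to(f'^*_{\log}\cM_{Y'})_{x'}\to\cM_{X',x'}$, and you need integrality of its second factor. This does not follow formally from conditions (1) or (2).

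The claim is true, and there are two ways to close the step.
\begin{itemize}
\item \textbf{Characteristic monoids.} For a homomorphism $\theta$ of integral monoids, $\theta$ is integral if and only if the induced map $\overline{\theta}$ on quotients by units is. This is a short check with condition (1): lift a relation in $\overline{Q}$, and use cancellation in the target to show that the unit discrepancy lies in $\theta(P^\times)$. Moreover, $(f'^*_{\log}\cM_{Y'})_{x'}$ and $\cM_{Y',y'}$ have the same characteristic monoid $\overline{\cM}_{Y',y'}$, with the same map to $\overline{\cM}_{X',x'}$.
\item \textbf{Pulled-back log structures.} Run Steps 2 and 3 directly with $(h^*_{\log}\cM_Y)_{x'}\to((\pi_{\underline{X}})^*_{\log}\cM_X)_{x'}$ and $(h^*_{\log}\cM_Y)_{x'}\to((\pi_{\underline{Y'}})^*_{\log}\cM_{Y'})_{x'}$. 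The first map is obtained from $(f^*_{\log}\cM_Y)_x\to\cM_{X,x}$ by pushout along $\cO^\times_{X,x}\to\cO^\times_{X',x'}$, so it is integral. A pushout of log structures is already a log structure, so the logification step disappears. The target map is then literally $(f'^*_{\log}\cM_{Y'})_{x'}\to\cM_{X',x'}$.
\end{itemize}
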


\begin{remark}
    If $M$ is a fine (resp. fs) monoid and $(A,M,\alpha)$ is a prelog ring, then $\Spec(A,M)$ is a fine (resp. fs) log scheme.
\end{remark}

With the previous definition, we can distinguish a class of prelog rings and log schemes which will be of particular interest to us in our comparison.

\begin{mydef}
    An \emph{fs semi-log ring} is a prelog ring $(A,M)$ such that $\overline{M} := M/M^\times$ is fs and the naturally induced map
    \[
    M\to\Gamma(\Spec(A,M),\cM_{\Spec(A,M)})
    \]
    is an isomorphism. We denote the full subcategory of $\PreLog$ spanned by fs semi-log rings $(A,M)$ such that $A$ is noetherian and has finite Krull dimension by $\SemiLog.$
\end{mydef}

\begin{proposition}[\protect{\cite[Proposition~7.7]{BLPO_HKR}}]
    The functor $\Spec : \SemiLog^{\op}\to\LogSch^{\fs,\mathrm{noeth}}$ is fully faithful, where $\LogSch^{\fs,\mathrm{noeth}}$ denotes the full subcategory of $\LogSch^{\fs}$ consisting of fs log schemes whose underlying scheme is Noetherian and of finite Krull dimension.
\end{proposition}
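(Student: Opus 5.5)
We need to prove that $\Spec : \SemiLog^{\op}\to\LogSch^{\fs,\mathrm{noeth}}$ is fully faithful. Concretely, for fs semi-log rings $(A,M)$ and $(B,N)$ with $A,B$ noetherian of finite Krull dimension, we must show that the natural map
\[
\Hom_{\PreLog}((A,M),(B,N))\to\Hom_{\LogSch}(\Spec(B,N),\Spec(A,M))
\]
is a bijection. The plan is to split a morphism of log schemes into its two components: the underlying morphism of schemes and the morphism of sheaves of monoids. We then show that each component is recovered by taking global sections.

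\textbf{Step 1: the scheme component.} Write $X=\Spec(B,N)$ and $Y=\Spec(A,M)$. A morphism $X\to Y$ consists of a morphism $\underline{f}:\Spec B\to\Spec A$, which by affineness is the same as a ring map $A\to B$. It also consists of a morphism $f^\flat:\underline{f}^*_{\log}\cM_Y\to\cM_X$ of log structures compatible with $\alpha$. By the adjunctions recorded earlier, this is the same as a map $\cM_Y\to\underline{f}^{\log}_*\cM_X$ of log structures on $Y$.

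\textbf{Step 2: the monoid component.} Since $\cM_Y$ is the logification of the constant prelog structure $M_Y\to\cO_Y$, a map of log structures $\cM_Y\to\underline f^{\log}_*\cM_X$ is equivalent to a map of prelog structures $M_Y\to \underline f^{\log}_*\cM_X$. This in turn is equivalent to a monoid map
\[
M\to\Gamma(Y,\underline f^{\log}_*\cM_X)=\Gamma(Y,\cO_Y)\times_{\Gamma(Y,f_*\cO_X)}\Gamma(X,\cM_X)=A\times_B\Gamma(X,\cM_X).
\]
Here we use that global sections commute with the defining fiber product and that $\Gamma(Y,\cO_Y)=A$. Now apply the semi-log hypothesis $\Gamma(X,\cM_X)\cong N$. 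Under this identification, the data above becomes a ring map $A\to B$ together with a monoid map $M\to N$ whose composite with $N\to B$ agrees with $M\to A\to B$. This is exactly a morphism of prelog rings $(A,M)\to(B,N)$.

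\textbf{Step 3: compatibility and the main obstacle.} Finally, we check that the bijection just built is inverse to the map induced by $\Spec$. This amounts to unwinding the adjunctions from Step 2, which is routine. The only substantive input is the isomorphism $N\cong\Gamma(X,\cM_X)$, and the argument uses it for the target only. The hard step is making sure every identification is natural, especially the passage through logification: we need that maps out of $(M_Y)^a$ into a log structure correspond to maps out of the constant sheaf $M_Y$, and hence to maps out of $M$ into global sections. The source-side hypothesis, together with fs-ness and the noetherian assumption, seem only needed for the target category to be as stated (so that $\Spec(A,M)$ lands in $\LogSch^{\fs,\mathrm{noeth}}$), not for bijectivity.
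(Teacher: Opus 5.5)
Your proof is correct and takes the same route as the argument behind the cited \cite[Proposition~7.7]{BLPO_HKR}. Your Step 2 is a hands-on derivation of the standard adjunction $\Hom_{\LogSch}(X,\Spec(A,M))\cong\Hom_{\PreLog}((A,M),(\Gamma(X,\cO_X),\Gamma(X,\cM_X)))$, and the semi-log isomorphism $N\cong\Gamma(\Spec(B,N),\cM)$ then identifies the right-hand side with $\Hom_{\PreLog}((A,M),(B,N))$. You are also right that bijectivity uses only that hypothesis on the source of the morphism; the fs, Noetherian and finite-dimensionality conditions only serve to make the functor land in $\LogSch^{\fs,\mathrm{noeth}}$.
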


\begin{mydef}
    Let $\LogAff$ denote the full subcategory of $\LogSch^{\fs,\mathrm{noeth}}$ consisting of the spectra of fs semi-log rings.
\end{mydef}

\begin{remark}
    Every $X\in\LogSch^{\fs,\mathrm{noeth}}$ admits a Zariski covering $\{U_i\to X\}_{i\in I}$ such that $U_i\in\LogAff$ for all $i$ by \cite[Lemmas~1.3.2,1.3.3]{Tsuji1999-su}.
\end{remark}

\begin{mydef}
    Let $f : X\to Y$ be a morphism of log schemes, and let
    \begin{equation}\label{log thickening}\begin{tikzcd}
            S\ar[r]\ar[d] &T\ar[d]\ar[ld,dashed]\\
            X\ar[r,"f"] &Y
    \end{tikzcd}\end{equation}
    be a commutative diagram of log schemes, where $S\to T$ is a strict closed immersion such that
    \begin{enumerate}
        \item the ideal $\cI$ of $S$ in $T$ is a nil ideal, and
        \item the subgroup $1 + \cI\subseteq\cO_{T}^\times\cong\cM_{T}^\times$ operates freely on $\cM_T.$
    \end{enumerate}
    \begin{enumerate}
        \item We say $f$ is \emph{formally log smooth} if for any diagram of solid arrows \ref{log thickening}, there exists locally on $X$ at least one dotted arrow $T\to X$ making the diagram commute.
        \item We say $f$ is \emph{log smooth} if it is formally log smooth, $\cM_X,$ $\cM_Y$ are coherent, and $\underline{f}$ is locally of finite presentation.
        \item We say $f$ is \emph{formally log unramified} if for any diagram of solid arrows \ref{log thickening}, there exists locally on $X$ at most one dotted arrow $T\to X$ making the diagram commute.
        \item We say $f$ is \emph{log unramified} if it is formally log unramified, $\cM_X,$ $\cM_Y$ are coherent, and $\underline{f}$ is locally of finite presentation.
        \item We say $f$ is \emph{formally log \'etale} if for any diagram of solid arrows \ref{log thickening}, there exists locally on $X$ exactly one dotted arrow $T\to X$ making the diagram commute.
        \item We say $f$ is \emph{log \'etale} if it is formally log \'etale, $\cM_X,$ $\cM_Y$ are coherent, and $\underline{f}$ is locally of finite presentation.
    \end{enumerate}
\end{mydef}

The fundamental example of a smooth (respectively \'etale) morphism of log schemes is a morphism of the form
\[
\Spec(k[M],M)\to\Spec(k[P],P),
\]
induced by a morphism $\theta : P\to M$ of finitely generated integral monoids such that $\ker(\theta^{\gp})$ and the torsion part of $\cok(\theta^{\gp})$ (respectively, $\ker(\theta^{\gp})$ and $\cok(\theta^{\gp})$) are finite groups whose orders are invertible in $k.$

In fact, such morphisms provide a local characterization of smooth (respectively \'etale) morphisms. More precisely, we have the following theorem.

\begin{theorem}[\protect{\cite[Theorem~3.5]{KatoLogStructures}}]
    Let $f : X\to S$ be a morphism of fine log schemes, and let $\beta_S : P\to\Gamma(S,\cM_S)$ be a chart of $\cM_S.$ Then the following are equivalent.
    \begin{enumerate}
        \item $f$ is smooth (respectively \'etale).
        \item \'Etale locally on $X,$ there exists a chart $(M,\beta_X,P,\beta_Y,\theta)$ of $f$ (extending the given chart of $\cM_S$) such that
        \begin{itemize}
            \item The kernel and torsion part of the cokernel (respectively, the kernel and cokernel) of $\theta^{\gp}$ are finite groups whose orders are invertible in $X.$
            \item The induced morphism $\underline{X}\to \underline{Y}\times_{\Spec\ZZ[P]}\Spec\ZZ[M]$ is an \'etale map of schemes.
        \end{itemize}
    \end{enumerate}
\end{theorem}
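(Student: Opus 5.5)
The plan is to prove both implications through the infinitesimal lifting property and the sheaf of logarithmic differentials $\omega^1_{X/S}$ (Kato's $\Omega^1_{X/S}(\log)$). For (2)$\Rightarrow$(1) the statement is étale local on $X$, so we may assume a global chart $(M,\beta_X,P,\beta_S,\theta)$ as in (2). Then $f$ factors as
\[
X\xrightarrow{g} Y:=S\times_{\Spec(\ZZ[P],P)}\Spec(\ZZ[M],M)\xrightarrow{h} S.
\]
Here $g$ is strict with étale underlying map, so it is log étale, since for strict maps the lifting problem \ref{log thickening} reduces to the classical one. The map $h$ is a base change of $\Spec(\ZZ[M],M)\to\Spec(\ZZ[P],P)$, and the formal log properties are stable under base change and composition. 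It therefore suffices to treat the model morphism. For a thickening $S'\hookrightarrow T$ as in \ref{log thickening}, a lift amounts to a monoid map $M\to\Gamma(T,\cM_T)$ lifting the given $M\to\Gamma(S',\cM_{S'})$ and compatible on $P$.

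To construct this lift, I will use that $1+\cI$ acts freely on $\cM_T$, so that locally $\cM_T\to\cM_{S'}$ is a $(1+\cI)$-torsor. The obstruction to lifting is then a class in an $\Ext^1$ of $\cok(\theta^{\gp})$, restricted to $M$ via integrality, with coefficients in the group $1+\cI$. Since $\cI$ is nil, $1+\cI$ is uniquely $n$-divisible for $n$ invertible on $X$. Hence the obstruction and the ambiguity vanish when $\ker\theta^{\gp}$ and $\cok\theta^{\gp}$ are finite of invertible order, which gives log étaleness. In the smooth case, the torsion-free part of $\cok\theta^{\gp}$ is free, so lifts exist but need not be unique. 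The finite presentation and coherence hypotheses hold automatically.

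For (1)$\Rightarrow$(2), fix a geometric point $\bar x$ of $X$ and work étale locally near it. I will first show that log smoothness forces $\omega^1_{X/S}$ to be locally free, by the usual argument with square-zero thickenings representing log derivations $\Der$. Next, I will pick $t_1,\dots,t_r\in\cM_{X,\bar x}$ whose images $d\log t_i$ form a basis of $\omega^1_{X/S}\otimes k(\bar x)$. Using the chart-extension result \cite[Proposition~2.4.2]{OgusLectures}, I will start from a fine chart $P\to M_0$ of $f$ near $\bar x$. I will then replace it with $M:=$ the preimage in $(\text{an appropriate } M_0\oplus\NN^r)$ of a subgroup $G\subseteq M^{\gp}_{\bar x}$ generated by $\theta(P^{\gp})$, the $t_i$, and $\cM_{\bar x}^\times$-corrections.

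I will choose $G$ so that $P^{\gp}\to G$ has kernel and torsion cokernel killed by an integer invertible near $\bar x$. This uses that $\overline{\cM}^{\gp}_{\bar x}/\mathrm{im}$ is controlled by the $d\log t_i$ together with the exact sequence relating $\omega^1$ to $\ZZ\otimes\overline{\cM}^{\gp}$ over the residue field. With this chart, $g : X\to Y$ is strict, and $g^*\omega^1_{Y/S}\to\omega^1_{X/S}$ is an isomorphism by the choice of the $t_i$. Since $Y\to S$ is log smooth by the first part and $X\to S$ is log smooth, the map $g$ is log étale. As $g$ is strict, this means $\underline g$ is classically étale, which is the second bullet of (2).

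I expect the main obstacle to be the construction of the chart in (1)$\Rightarrow$(2): choosing $M$ fine, with $\theta^{\gp}$ having the required kernel and cokernel, while still being a chart. This requires care with torsion in $\overline{\cM}^{\gp}_{\bar x}$ whose order is not invertible, because such torsion would obstruct étaleness. I would first try to follow Kato's device of adjoining to $P^{\gp}$ only the $t_i$ and then saturating within the subgroup of invertible-order index. The identification $\cM_X\cong(M_X)^a$ would then be checked on stalks at $\bar x$ and spread out by coherence.
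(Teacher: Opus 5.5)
The paper gives no proof of this statement; it only cites Kato. So the comparison is with Kato's argument. Your $(2)\Rightarrow(1)$ is Kato's argument for the model morphism and is fine, modulo two small points. First, $\Spec(\ZZ[M],M)\to\Spec(\ZZ[P],P)$ is only log smooth after inverting $n$, so solve the lifting problem for $f$ directly, where the test object lies over $X$ and $n$ is invertible. Second, $\ker\theta^{\gp}$ enters not through an $\Ext^1$ but through the requirement that the given map $\ker\theta^{\gp}\to 1+\cI$ vanish.

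Your outline of $(1)\Rightarrow(2)$ also follows Kato: choose the $t_i$, show $g^*\omega^1_{Y/S}\cong\omega^1_{X/S}$, apply the log Jacobian criterion, and use that strict log \'etale implies \'etale. The genuine gap is the step you flag, and the construction you write down fails in two ways.

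First, a preimage $M$ inside $M_0\oplus\NN^r$ keeps the directions of $M_0$, which already surject onto $\overline{\cM}_{X,\bar x}$, alongside those of the $t_i$. If $t_1$ is a non-unit coming from a generator of $M_0$, both copies lie in $M$. Then $M$ modulo its unit face no longer maps isomorphically to $\overline{\cM}_{X,\bar x}$, so $M$ is not a chart. Moreover $\operatorname{rank}(M^{\gp}/P^{\gp})>r=\operatorname{rank}\omega^1_{X/S}$, which rules out $g^*\omega^1_{Y/S}\cong\omega^1_{X/S}$.

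Second, if instead $M^{\gp}$ is a subgroup $G\subseteq\cM^{\gp}_{X,\bar x}$, then $\ker(P^{\gp}\to G)=\ker(P^{\gp}\to\cM^{\gp}_{X,\bar x})$ is not yours to choose. It can be infinite, e.g.\ for a chart $\NN^2\to\cO_S^\times$ sending both generators to the same unit.

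The missing idea is to build the chart group abstractly, with no initial chart $M_0$. Let $G':=P^{\gp}\oplus\ZZ^r$, with $\phi:G'\to\cM^{\gp}_{X,\bar x}$ given by $\beta_S$ and the $t_i$. Consider the residue sequence
\[
k(\bar x)\otimes\Omega^1_{\underline X/\underline S}\to k(\bar x)\otimes\omega^1_{X/S}\to k(\bar x)\otimes_{\ZZ}\bigl(\overline{\cM}^{\gp}_{X,\bar x}/\operatorname{im}\overline{\cM}^{\gp}_{S,\bar s}\bigr)\to 0,
\]
with $da\mapsto 0$ and $d\log m\mapsto 1\otimes\bar m$. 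It shows that the cokernel $C$ of $G'\to\overline{\cM}^{\gp}_{X,\bar x}$ satisfies $k(\bar x)\otimes C=0$. Hence $C$ is finite of order $n$ invertible near $\bar x$.

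Since $\cO^\times_{X,\bar x}$ is $n$-divisible, you can lift generators of $C$ to $g_1,\dots,g_m\in\cM^{\gp}_{X,\bar x}$ with $ng_j=\phi(h_j)$ for some $h_j\in G'$. Set $G:=(G'\oplus\ZZ^m)/\langle ne_j-h_j\rangle$. Then $G'\hookrightarrow G$ with cokernel $(\ZZ/n)^m$, $\phi$ extends to $G$, and $G\to\overline{\cM}^{\gp}_{X,\bar x}$ is surjective.

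Now take $M:=G\times_{\cM^{\gp}_{X,\bar x}}\cM_{X,\bar x}$. It is fine, receives $P$, and has $M^{\gp}=G$. Its quotient by the face $\phi^{-1}(\cO^\times_{X,\bar x})$ is $\overline{\cM}_{X,\bar x}$, so it extends to a chart on an \'etale neighbourhood of $\bar x$ by the chart-extension lemma in \cite{KatoLogStructures}.

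With this chart, $\theta^{\gp}$ is injective. Its cokernel is an extension of $(\ZZ/n)^m$ by $\ZZ^r$, so its torsion has order dividing $n^m$, and it is finite of invertible order when $r=0$. Finally, $k(\bar x)\otimes M^{\gp}/P^{\gp}\cong k(\bar x)^r\to k(\bar x)\otimes\omega^1_{X/S}$ is the isomorphism given by the $t_i$. From here your argument goes through.
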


\subsection{Olsson's Log Stack}

In \cite{OlssonLogGeomandAlgStacks}, Olsson defines an algebraic stack $\cLog_S$ associated to a fine log stack $S$ whose fiber over an $\underline{S}$-scheme $\underline{X}\to\underline{S}$ is the groupoid of fine log structures on $\underline{X}.$ He proves that the association $S\rightsquigarrow\cLog_S$ provides a fully faithful embedding of logarithmic schemes into algebraic stacks, and that a number of properties of morphisms of fine log schemes $f : X\to S$ can be detected via the corresponding morphisms $\cLog_X\to\cLog_S$ or even the classifying morphism $\underline{X}\to\cLog_S.$ In this section, we review the definitions and main results, along with a few of the useful properties that we will make use of in the remainder of this paper.

\begin{mydef}
    Let $S$ be a fine log scheme with underlying scheme $\underline{S}.$ Let
    \[
    \cLog_S\to\Sch_{\underline{S}}
    \]
    be the fibered category whose objects are morphisms are fine log schemes $f : X\to S,$ and whose morphisms from $(X'/S)$ to $(X/S)$ are given by morphisms of fine $S$-log schemes $g : X'\to S$ such that $g^\flat : g^*\cM_X\to\cM_{X'}$ is an isomorphism. The structure map sends $f : X\to S$ to the underlying morphism of schemes $\underline{f} : \underline{X}\to\underline{S},$ and similarly for morphisms.
\end{mydef}

\begin{theorem}[\protect{\cite[Theorem~1.1]{OlssonLogGeomandAlgStacks}}]
    The fibered category $\cLog_S$ is an algebraic stack locally of finite presentation over $\underline{S}.$ Moreover, the diagonal $\Delta : \cLog_S\to\cLog_S\times_{\underline{S}}\cLog_S$ is of finite presentation (in particular, $\Delta$ is qcqs\footnote{In \cite[remark 3.17]{OlssonLogGeomandAlgStacks}, Olsson states that the stack $\cLog_S$ is not quasi-separated. However, this is a matter of differing conventions -- Olsson requires a quasi-separated stack to have \emph{separated} diagonal, rather than simply a qcqs diagonal as in \cite[\href{https://stacks.math.columbia.edu/tag/050D}{Tag 050D}]{stacks-project}.}).
\end{theorem}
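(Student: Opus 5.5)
This is Olsson's theorem; I would reprove it by exhibiting a smooth atlas of $\cLog_S$ by toric stacks and then checking the finiteness of the diagonal directly. All claims are \'etale local on $\underline{S}$, because $\cLog_S$ is a stack for the \'etale topology: fine log structures and their morphisms descend, since log structures are \'etale sheaves of monoids. So the first step is to reduce to the case where $S$ has a global fine chart $P\to\Gamma(S,\cM_S)$.

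\textbf{The diagonal.} Take two objects $\cM_1,\cM_2$ over an $\underline{S}$-scheme $\underline{X}$. The sheaf $\mathrm{Isom}_S(\cM_1,\cM_2)$ on $\Sch_{\underline X}$ should be representable by a scheme of finite presentation over $\underline X$. To see this, work \'etale locally on $\underline X$ and choose fine charts $Q_i\to\cM_i$ compatible with $P$, which exist by \cite[Proposition~2.4.2]{OgusLectures}. An isomorphism of log structures is then determined by where it sends the finitely many generators of $Q_1$. Each generator goes to a section of $\cM_2$, which locally has the form $u\cdot q$ with $u\in\cO^\times$ and $q\in Q_2$. The conditions are:
\begin{enumerate}
\item compatibility with the structure maps to $\cO$;
\item the relations of $Q_1$, which are finitely many by fineness;
\item invertibility, imposed by also giving an inverse map.
\end{enumerate}
These carve out a locally closed, finitely presented subscheme of a product of copies of $\mathbb{G}_m$ over a finite stratification indexed by the possible combinatorial types. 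This gives a diagonal of finite presentation, in particular qcqs, and representable by algebraic spaces.

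\textbf{The smooth atlas.} For every morphism $\theta: P\to Q$ of fine monoids, consider the stack quotient
\[\cS_\theta:=[\underline{S}\times_{\Spec\ZZ[P]}\Spec\ZZ[Q]/\Spec\ZZ[Q^{\gp}/P^{\gp}]].\]
It carries a tautological fine log structure coming from the chart $Q$, so it gives a morphism $\cS_\theta\to\cLog_S$. Two things then need checking.

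First, these maps are jointly surjective. Every fine log structure over $S$ locally admits a chart $P\to Q$ by the chart-extension proposition above, and the chart yields a lift to $\underline S\times_{\Spec\ZZ[P]}\Spec\ZZ[Q]$.

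Second, each map is smooth. This is the main obstacle. The plan is to compute the fibre product $\underline X\times_{\cLog_S}\cS_\theta$ for a given $(\underline X,\cM_X)$. It parametrizes charts $Q\to\cM_X$ over $P$ modulo the torus action. Its smoothness should follow from the infinitesimal lifting criterion, and the key input is that for a nilpotent thickening $T_0\subset T$ the log structures on $T$ and on $T_0$ have the same ghost sheaf $\overline{\cM}$. Given this, lifting a chart amounts to lifting units, which is unobstructed.

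\textbf{Conclusion.} Each $\cS_\theta$ is a quotient of a finitely presented $\underline S$-scheme by a diagonalizable group, hence an algebraic stack locally of finite presentation over $\underline S$. Combining the representable diagonal with the smooth surjection $\coprod_\theta\cS_\theta\to\cLog_S$ shows that $\cLog_S$ is algebraic and locally of finite presentation over $\underline S$.
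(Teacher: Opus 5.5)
The paper gives no argument for this statement; it cites Olsson. Your architecture (a finitely presented representable diagonal plus a cover by the quotient stacks $\cS_\theta$) is the one behind the \'etale cover $\coprod\cS_P\times_{\cS_Q}U\to\cLog_S$ that the paper quotes later. The gap is in the step you flag as the main obstacle. Lifting a chart over a square-zero thickening is \emph{not} just lifting units, because the lifted units must still satisfy the relations of $Q$ relative to $P$. The obstruction lies in $\Ext^1(\cok\theta^{\gp},I)$, with $I$ the ideal of $T_0$ in $T$, and it is nonzero when $\cok\theta^{\gp}$ has torsion of order divisible by a residue characteristic.

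Here is a concrete failure. Let $k$ be a perfect field of characteristic $p$, $S=\Spec k$ with $P=0$, and $Q=\langle x,y\mid px=py\rangle$, so $Q^{\gp}\cong\ZZ\oplus\ZZ/p$. Let $T_0=\Spec k\subset T=\Spec k[\epsilon]/(\epsilon^2)$, and give $T$ the fine log structure with chart $\langle x,y,\pm u\mid px=py+u\rangle$, $x,y\mapsto 0$, $u\mapsto 1+\epsilon$. On $T_0$ this has the chart $Q$. A lift $x\mapsto a\cdot x$, $y\mapsto b\cdot y$ would need units with $a^p(1+\epsilon)=b^p$. This is impossible on every \'etale cover of $T$, since $p$-th powers in $K[\epsilon]/(\epsilon^2)$ lie in $K$. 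The group $\Spec\ZZ[Q^{\gp}]$ does not help either: $\mu_p(k)=1$, so its $k$-points only rescale $x$ and $y$ simultaneously. A lift exists only fppf-locally, after adjoining $\epsilon^{1/p}$. Consequently $\Spec k[Q]\to\cLog_S$ is not smooth, and the fiber of $\cS_\theta\to\cLog_S$ is not ``charts modulo the torus'' in any \'etale-local sense.

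Independently, when $\theta^{\gp}$ is not injective your quotient is not $\cA_Q\times_{\cA_P}\underline S$, and the smoothness claim is simply false. For $P=\NN\to Q=0$ you get the closed subscheme of $\underline S$ where the image of $1\in\NN$ equals $1$, sitting inside the open substack $\underline S\subset\cLog_S$. This is not flat, e.g.\ for $S=\Spec(\ZZ[\NN],\NN)$.

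There are two ways to repair this.

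(i) Restrict to $\theta^{\gp}$ injective and argue with the ghost sheaf rather than with units. Take a $P$-compatible homomorphism $\bar\phi:Q\to\overline{\cM}_X$ satisfying the chart condition, which depends only on $\bar\phi$ and is open. Its $P$-compatible chart lifts form a torsor under $\Spec\ZZ[\cok\theta^{\gp}]$. This torsor is nonempty fppf-locally, because $\Ext^1(A,\cO^\times)$ vanishes fppf-locally for every finitely generated abelian group $A$. Hence $\underline X\times_{\cLog_S}\cS_\theta$ is an open subspace of the \'etale space of the sheaf of such $\bar\phi$, and the map is \'etale, not merely smooth. Your observation about ghost sheaves is the right input, but what it gives is unique lifts of $\bar\phi$, not lifts of charts.

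(ii) Alternatively, use only charts with $\theta^{\gp}$ injective and torsion-free cokernel. These still exist \'etale locally: take $Q=(P^{\gp}\oplus\ZZ^r)\times_{\cM^{\gp}_{X,\bar x}}\cM_{X,\bar x}$, with $\ZZ^r$ mapping to lifts of generators of $\overline{\cM}^{\gp}_{X,\bar x}$. For such charts your unit-lifting argument is correct. One only has to lift $\ZZ^r\to\cM^{\gp}_{T_0}$ along the surjection $\cM^{\gp}_T\to\cM^{\gp}_{T_0}$. The restriction to $Q$ then lands in $\cM_T$ and is a chart, because both conditions only involve $\overline{\cM}_T=\overline{\cM}_{T_0}$. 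This even makes $\underline S\times_{\Spec\ZZ[P]}\Spec\ZZ[Q]\to\cLog_S$ smooth, with no quotient needed.

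Finally, in the diagonal step, replace the ``finite stratification by combinatorial types''. Instead, fiber $\mathrm{Isom}$ over the space of isomorphisms of ghost sheaves, which is \'etale and quasi-finite but not separated. The same isomorphism occurs in several of your types wherever generators become units. This gluing is precisely why the diagonal is of finite presentation but not separated.
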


\begin{remark}
    If $S$ is an fs log scheme, one may also consider the substack of $\cLog_S$ which classifies fs log schemes over $S.$ In \cite{OlssonLogGeomandAlgStacks}, this substack is denoted by $\cT\mathrm{or}_S,$ and the canonical inclusion $\cT\mathrm{or}_S\to\cLog_S$ is an open immersion. 
\end{remark}

By definition, there is a canonical morphism $\underline{S}\to\cLog_S$ classifying $\id : S\to S.$

\begin{lemma}
    Let $S$ be a fine log scheme. Then the canonical map $S\to\cLog_S$ is an open immersion.
\end{lemma}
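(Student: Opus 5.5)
To show that the canonical map $\underline{S}\to\cLog_S$ (classifying $\id : S\to S$) is an open immersion, we use the characterization of open immersions as étale monomorphisms. Concretely, we show that for any scheme $\underline{T}\to\cLog_S$ the base change $\underline{S}\times_{\cLog_S}\underline{T}\to\underline{T}$ is represented by an open subscheme of $\underline{T}$.

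The first step is to unwind the fibre product. A map $\underline{T}\to\cLog_S$ is a fine log scheme $T=(\underline{T},\cM_T)$ with a morphism $g : T\to S$. A $\underline{T}'$-point of the fibre product, for $h : \underline{T}'\to\underline{T}$, is an isomorphism of fine log structures over $S$ between $h^*_{\log}\cM_T$ and $(g\circ h)^*_{\log}\cM_S$. Since $h^*_{\log}\cM_T$ receives the map $(g\circ h)^*_{\log}\cM_S\to h^*_{\log}\cM_T$ from its structure over $S$, such an isomorphism is unique if it exists. So the fibre product is the subfunctor of $\underline{T}$ consisting of those $h$ for which the pullback of $g^\flat : g^*_{\log}\cM_S\to\cM_T$ is an isomorphism, i.e.\ for which $g\circ h$ is strict. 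In particular the map is a monomorphism.

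The second step, which is the main point, is to show that the locus $U\subseteq\underline{T}$ where $g^\flat$ is an isomorphism on stalks is open and pulls back correctly. Working étale locally, we choose fine charts $P\to\cM_S$ and $P\to Q\to\cM_T$ of $g$, using the chart-extension result. Then $g^\flat$ is an isomorphism at a geometric point $t$ iff $\overline{g^*\cM_S}_t\to\overline{\cM}_{T,t}$ is an isomorphism. For fine log structures this is the standard fact (Olsson, Kato) that the set of points where a morphism of fine log structures is an isomorphism is open. We prove it as follows. If $\overline{\cM}_{S,t}\to\overline{\cM}_{T,t}$ is an isomorphism, then the finitely many generators of $Q$ lift, near $t$, to sections of $g^*\cM_S$ times units. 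The equalities among generators witnessing injectivity persist on a neighbourhood because $\cM$ is integral and the charts are finitely generated. Compatibility with arbitrary base change $h$ follows because $h^*_{\log}$ commutes with forming $\overline{\cM}$ on stalks: $\overline{h^*_{\log}\cM}_{t'}=\overline{\cM}_{h(t')}$. Hence $h$ factors through the fibre product iff $h$ lands in $U$.

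Finally, $U\to\underline{T}$ is an open immersion of schemes, so $\underline{S}\to\cLog_S$ is representable by open immersions. It is therefore flat, locally of finite presentation and a monomorphism, i.e.\ an open immersion. The openness argument in the second step is where care is needed, especially in verifying that isomorphy of characteristic monoids at a point spreads to a neighbourhood using coherence.
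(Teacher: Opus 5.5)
The paper gives no proof of this lemma. It is recorded without proof in the review of Olsson's stack and imported from \cite{OlssonLogGeomandAlgStacks}. Your argument is the standard one from that source, and most of it is sound:
\begin{itemize}
\item $\underline{S}\times_{\cLog_S}\underline{T}$ is the subfunctor of $\underline{T}$ on which the pullback of $g^\flat\colon g^*\cM_S\to\cM_T$ is an isomorphism. The isomorphism is forced to be that pullback, which gives the monomorphism property.
\item Integrality lets you test isomorphy on characteristic monoids.
\item The compatibility $\overline{h^*_{\log}\cM}_{t'}=\overline{\cM}_{h(t')}$ handles base change.
\item Spreading surjectivity by lifting the generators of $Q$ is fine.
\end{itemize}

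The one step that does not work as written is spreading injectivity. Persistence of finitely many equalities cannot yield injectivity. What has to be controlled is that no new identifications appear at nearby points, where more chart elements become units and the characteristic monoids change.

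Here is the clean way to do it. Take charts $\beta_S\colon P\to g^*\cM_S$ and $\beta_T\colon Q\to\cM_T$ with $g^\flat\circ\beta_S=\beta_T\circ\theta$, and set $F_s=\beta_T^{-1}(\cO^\times_{T,s})$. Then $\overline{\cM}_{T,s}\cong Q/F_s$, and since both are log structures, $\overline{g^*\cM_S}_s\cong P/\theta^{-1}(F_s)$. Let $V$ be the open neighbourhood of $t$ on which the finitely many generators of the face $F_t$ stay invertible. On $V$ we have $F_t\subseteq F_s$, so the map at $s$ is obtained from the map at $t$ by quotienting $Q/F_t$ by the image of $F_s$ and $P/\theta^{-1}(F_t)$ by its preimage. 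Here one uses that $F_s$ is a face to see that the image of $\theta^{-1}(F_s)$ is exactly that preimage. Hence an isomorphism at $t$ remains an isomorphism at every $s\in V$.

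This handles surjectivity and injectivity together, so the lifting step becomes unnecessary. With this fix, or simply by citing the standard openness lemma for morphisms of fine log structures as you already do, your proof is complete.
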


In general, a morphism of fine log schemes $f : X\to S$ corresponds to a morphism $f_{\cM_X} : \underline{X}\to\cLog_S.$ We may detect various properties of the morphism $f$ via properties of the morphism $f_{\cM_X}.$

\begin{lemma}[\protect{\cite[Proposition~3.19]{OlssonLogGeomandAlgStacks}}]
    A morphism $f : X\to S$ of fine log schemes is strict if and only if the map $f_{\cM_X} : \underline{X}\to \cLog_S$ factors through the canonical map $\underline{S}\to\cLog_S.$
\end{lemma}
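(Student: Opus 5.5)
Here $\cM_X$ denotes the log structure of $X$. By definition of $\cLog_S$, the morphism $f_{\cM_X} : \underline{X}\to\cLog_S$ sends an $\underline{X}$-point $t : T\to\underline{X}$ to the fine log scheme $(T, t^*_{\log}\cM_X)$ over $S$. The canonical map $\underline{S}\to\cLog_S$ classifies $\id : S\to S$. Its $T$-points over a given $g : T\to\underline{S}$ are therefore the pair $(T, g^*_{\log}\cM_S)$ with its canonical map to $S$, which is a strict morphism.

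The plan is to prove both implications by unwinding what a factorization means on the level of groupoids, reducing everything to a comparison of log structures on $\underline{X}$ itself.

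First, the ``only if'' direction. Assume $f$ is strict, so $f^\flat : \underline{f}^*_{\log}\cM_S\to\cM_X$ is an isomorphism of log structures on $\underline{X}$. Then $f$, viewed as an object of $\cLog_S(\underline{X})$, is isomorphic to $(\underline{X}, \underline{f}^*_{\log}\cM_S)\to S$. The isomorphism is the identity on $\underline{X}$ together with $f^\flat$, and it is a morphism in $\cLog_S$ because it is strict. The latter object is exactly the image of the point $\underline{f}\in\underline{S}(\underline{X})$ under $\underline{S}\to\cLog_S$. By the $2$-Yoneda lemma, this $2$-isomorphism yields a factorization $\underline{X}\xrightarrow{\underline{f}}\underline{S}\to\cLog_S$ of $f_{\cM_X}$, up to a specified $2$-morphism.

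Conversely, suppose $f_{\cM_X}$ factors through $\underline{S}\to\cLog_S$ via some $g : \underline{X}\to\underline{S}$ and a $2$-isomorphism. Since $\cLog_S$ is fibered over $\Sch_{\underline{S}}$, composing with the structure maps to $\underline{S}$ forces $g = \underline{f}$. The $2$-isomorphism is then an isomorphism in the fiber $\cLog_S(\underline{X})$ between $(\underline{X}, \cM_X)\to S$ and $(\underline{X}, \underline{f}^*_{\log}\cM_S)\to S$. Here one must check that morphisms in $\cLog_S$ lying over $\id_{\underline{X}}$ are isomorphisms of log structures compatible with the maps to $S$. Compatibility with the maps to $S$ then means that this isomorphism, composed with the canonical map $\underline{f}^*_{\log}\cM_S\to\underline{f}^*_{\log}\cM_S$, recovers $f^\flat$. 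Hence $f^\flat$ is an isomorphism, and $f$ is strict.

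The only delicate step is this last compatibility. One must make sure the isomorphism produced by the $2$-cell really is $f^\flat$ (or differs from it by an automorphism compatible with the structure map), rather than some abstract isomorphism of log structures. I would handle it by noting that a morphism $(\underline{X},\cM)\to(\underline{X},\cM')$ over $S$ with identity underlying map is determined by its map on log structures, and that it must commute with the given maps from $\underline{f}^*_{\log}\cM_S$. For the target $\underline{f}^*_{\log}\cM_S$, that given map is the identity. The isomorphism $\cM_X\cong\underline{f}^*_{\log}\cM_S$ therefore has inverse equal to $f^\flat$, so $f^\flat$ is invertible.
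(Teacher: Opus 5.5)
Your proof is correct. The ``only if'' direction is the $2$-Yoneda lemma applied to the isomorphism $f^\flat$. The ``if'' direction comes down to the observation you make explicit: an isomorphism in the fiber groupoid $\cLog_S(\underline{X})$ is an isomorphism of log structures compatible with the maps from $\underline{f}^*_{\log}\cM_S$, so strictness is invariant under such isomorphisms. The paper gives no argument of its own and simply cites Olsson's Proposition~3.19; your direct unwinding of the definitions is the standard verification.
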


\begin{theorem}[\protect{\cite[Theorem~4.6]{OlssonLogGeomandAlgStacks}}]
    Let $f : X\to S$ be a morphism of fine log schemes, with corresponding morphism $f_{\cM_X} : \underline{X}\to\cLog_S.$
    Then $f$ is log flat (respectively log smooth, log \'etale) if and only if $f_{\cM_X}$ is flat (respectively smooth, \'etale).
\end{theorem}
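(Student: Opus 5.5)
The plan is to compare the two sides through charts and the infinitesimal lifting criterion, working étale locally throughout. First I would reduce to the case where $S$ admits a fine chart $\beta_S : P\to\Gamma(S,\cM_S)$. Both sides of the equivalence are étale local on $\underline{X}$ and on $\underline{S}$. For the log side this is standard. For the stack side, flatness, smoothness and étaleness of a morphism to an algebraic stack are étale local on the source. Moreover, for a strict étale $S'\to S$ there is a canonical equivalence $\cLog_{S'}\simeq\cLog_S\times_{\underline{S}}\underline{S'}$: a fine log scheme over $S'$ is the same thing as a fine log scheme over $S$ together with a lift of its underlying map to $\underline{S'}$. Using this, I would further reduce to $S=\Spec(\ZZ[P],P)$, and then work étale locally on $X$ with a chart $\theta : P\to M$ of $f$ extending $\beta_S$, supplied by \cite[Proposition~2.4.2]{OgusLectures}.

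Given such a chart, $f_{\cM_X}$ factors as
\[
\underline{X}\xrightarrow{\;g\;}\underline{S}\times_{\Spec\ZZ[P]}\Spec\ZZ[M]=:\underline{S_M}\xrightarrow{\;h_\theta\;}\cLog_S .
\]
Here $g$ underlies a strict morphism of log schemes, and $h_\theta$ classifies the canonical log structure on $S_M$. The key intermediate step is to understand $h_\theta$. The strategy is to compare both $f$ and $f_{\cM_X}$ with infinitesimal lifting problems. Let $S_0\hookrightarrow T$ be a thickening with nil ideal $\cI$ over $\underline{S}$, and let $\underline{T}\to\cLog_S$ be a map, which amounts to a fine log structure on $T$ over $S$. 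The condition that $1+\cI$ acts freely on $\cM_T$ holds automatically for integral log structures. Under this correspondence, a log thickening as in \ref{log thickening} is exactly a lifting problem for $\underline{X}\to\cLog_S$ along $\underline{S_0}\hookrightarrow\underline{T}$ over $\cLog_S$, provided one shows that any $S$-log map $T\to X$ restricting to a strict map on $S_0$ is itself strict. That claim holds because strictness can be detected on $\overline{\cM}$, and $\overline{\cM}_T=\overline{\cM}_{S_0}$ for a nil thickening. Granting this, formal log smoothness (respectively formal log étaleness) of $f$ becomes equivalent to formal smoothness (respectively formal étaleness) of $f_{\cM_X}$. Finite presentation matches because $\cLog_S$ is locally of finite presentation over $\underline{S}$ with finitely presented diagonal (\cite[Theorem~1.1]{OlssonLogGeomandAlgStacks}), so $\underline{f}$ is locally of finite presentation if and only if $f_{\cM_X}$ is. This settles the smooth and étale cases.

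For the flat case there is no lifting criterion, so I would use the chart factorization instead. The plan is first to show that $h_\theta$ is flat whenever $\theta$ is injective with torsion cokernel and the chart is neat, using a toric computation: the fibre product of $h_\theta$ with a smooth atlas of $\cLog_S$ built from other charts $P\to Q$ is a toric scheme of the form $\Spec\ZZ[Q\oplus_P M]$ localized, which can be checked to be flat. Kato's chart criterion for log flatness identifies $f$ being log flat with $g$ being flat for a suitable chart. Flatness of $f_{\cM_X}$ then gives flatness of $g$ by descent along the flat cover $h_\theta$, and conversely flatness of $g$ and $h_\theta$ gives flatness of their composite $f_{\cM_X}$.

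I expect the main obstacle to be the strictness claim in the middle step: that every log lift $T\to X$ of a strict $S_0\to X$ is automatically strict, so that the log lifting problem and the stacky lifting problem genuinely coincide. In particular, one must check carefully that automorphisms of log structures on $T$ restricting to the identity on $S_0$ do not introduce extra lifts on the stack side. I would attack this through the free action of $1+\cI$, which forces $\cM_T^{\gp}\to\overline{\cM}^{\gp}_T$ to be a $\cO_T^\times$-torsor that is rigid relative to $S_0$.
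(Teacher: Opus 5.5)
Your smooth and \'etale cases are sound. Matching Kato's lifting problem for $f$ with the lifting problem for the representable morphism $f_{\cM_X}$, and matching finite presentation via \cite[Theorem~1.1]{OlssonLogGeomandAlgStacks}, is the standard proof; the thesis itself only cites Olsson. The point you single out as the main obstacle is not one:
\begin{itemize}
\item A lift on the stack side is by definition a pair $(g\colon\underline T\to\underline X,\ g^*\cM_X\simeq\cM_T)$ over $S$ restricting to the given data on $T_0$. That is exactly a strict log morphism, so no extra automorphisms enter.
\item Any log morphism $T\to X$ extending the strict $T_0\to X$ is strict. Indeed, $g^\flat$ is an isomorphism on $\overline{\cM}$, because a strict nil-immersion changes neither the \'etale site nor $\overline{\cM}$. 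Moreover $\cO_T^*$ acts freely on the integral $\cM_T$ with quotient $\overline{\cM}_T$.
\end{itemize}
Two small cautions. First, $\cLog_S$ only sees fine test objects $T$, so you are matching Kato's definition rather than the Ogus-style definition in the thesis; they agree for coherent log schemes, but this needs a citation. Second, your ``reduction to $S=\Spec(\ZZ[P],P)$'' is not valid, since composing $f$ with the chart changes both sides of the equivalence. You never actually use it, so this is harmless.

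The genuine gap is in the flat case, in the direction $f_{\cM_X}$ flat $\Rightarrow$ $f$ log flat. From $h_\theta\circ g$ flat and $h_\theta$ flat you cannot conclude that $g$ is flat. Descent lets you test flatness of a map after flat base change of its target, or on a flat cover of its source, but not after postcomposition. For an arbitrary chart the conclusion is false:
\begin{itemize}
\item Take $f=\mathrm{id}$ on $S=X=\Spec(\ZZ[t],\NN)$ with $P=\NN$. Let $\theta\colon\NN\to M=\NN\oplus\ZZ$ be $n\mapsto(n,n)$, with $M\to\cM_X$ given by $(a,b)\mapsto t^a$; this is a chart because $\ZZ$ goes to units.
\item Then $\theta^{\gp}$ is injective, so $h_\theta$ is flat, and $f_{\cM_X}$ is the open immersion $\underline S\to\cLog_S$.
\item But $\underline{S_M}=\Spec\ZZ[x,s^{\pm1}]$ with $t=xs$, and $g$ is the closed immersion $s=1$, which is not flat.
\end{itemize}
The missing idea is to change the chart by passing to an fppf cover, rather than testing the given $g$. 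Put $X':=\underline X\times_{\cLog_S}\underline{S_M}$. Then:
\begin{itemize}
\item $X'\to\underline X$ is flat and locally of finite presentation, as a base change of $h_\theta$.
\item $X'\to\underline X$ is surjective, since it has the section $(\mathrm{id},g)$.
\item The pullback of $\cM_X$ to $X'$ acquires the tautological chart $M$ extending $P$.
\item The induced map $X'\to\underline{S_M}$ is flat, as a base change of $f_{\cM_X}$.
\end{itemize}
This is log flatness in F.~Kato's sense, where charts are only required fppf-locally on $X$. Your argument for the other direction (compose, then descend along the fppf cover of the source) is fine. If you insist on \'etale-local charts, you need an additional argument for choosing the chart, which your descent step does not supply.

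Finally, your flatness lemma for $h_\theta$ is misstated. The map $h_\theta$ factors through the \'etale map $\cS_M\times_{\cS_P}\underline S\to\cLog_S$. After pulling back to an atlas, the first factor becomes a base change of $\Spec\ZZ[M^{\gp}]\to\Spec\ZZ[P^{\gp}]$. So $h_\theta$ is flat as soon as $\theta^{\gp}$ is injective, and neatness plays no role. A torsion cokernel cannot be arranged in general: for $\Spec(\ZZ[t],\NN)\to\Spec\ZZ$ the cokernel surjects onto $\ZZ$. Charts with $\theta^{\gp}$ injective do always exist \'etale-locally: replace $M$ by $M\oplus P^{\gp}$, sending $P^{\gp}$ to $1$ and using $p\mapsto(\theta(p),p)$.
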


\begin{remark}
    One can also define the notion of a prelog structure or log structure on an algebraic stack, either analogously to the definition of a (pre)log structure on a scheme, or in the case of a log structure, via a morphism of stacks to $\cLog_{S}.$ In particular, the identity $\cLog_S\to\cLog_S$ provides a canonical log structure on $\cLog_S.$ In terms of logarithmic Hochschild homology, we can identify Olsson's logarithmic Hochschild homology of $f : X\to S$ with $L\Delta^*R\Delta_*\cO_{\underline{X}},$ where
    \[
    \Delta : \underline{X}\to\underline{X}\times_{\cLog_S}\underline{X}
    \]
    is the relative diagonal of $f_{\cM_X} : \underline{X}\to\cLog_{S}.$ One could consider an appropriate ``derived logarithmic stack'' corresponding to $X\times^R_{X\times^R_{\cLog_S} X}X,$ and investigate how the log structure on this stack is related to the logarithmic Hochschild homology, although we do not investigate this here.
\end{remark}

The stack $\cLog_S$ also admits a convenient \'etale cover by relatively simple stacks, which will be useful for us when calculating and comparing log Hochschild homologies. We recall the result here for convenience of the reader. Fix a base scheme $\underline{S}$ and a fine monoid $P.$ Then the scheme $\Spec_{\underline{S}}(\cO_{\underline{S}[P]})$ represents the functor
\begin{align*}
    \Aff_{\underline{S}}&\to\Set\\
    (\Spec A\to\underline{S})&\mapsto \Hom_{\Mon}(P,A)
\end{align*}
and admits an action of the $\underline{S}$-group scheme $\underline{S}[P^{\gp}],$ induced by the ``diagonal'' map $P\to P\oplus P^{\gp}.$ Let $\cS_P:= [\underline{S}[P]/\underline{S}[P^{\gp}]]$ denote the stack quotient by this action. If $\theta : P\to Q$ is a map of commutative monoids, then there is an induced map of algebraic stacks $\cS_Q\to\cS_P.$

\begin{cor}[\protect{\cite[Corollary~5.22]{OlssonLogGeomandAlgStacks}}]
    The fibered category over the category of $S$-log schemes defined by $\cS_P\times_{\cS_Q}\underline{S}$ is naturally equivalent to that defined by the functor which to any morphism of log schemes $f : T\to S$ associates the set of maps $\rho : P\to\Gamma(T,\overline{\cM}_T)$ for which the diagram
    \[\begin{tikzcd}
        Q\ar[r,"\theta"]\ar[d] &P\ar[d,"\rho"]\\
        \Gamma(S,\overline{\cM}_S)\ar[r,"f^\flat"] &\Gamma(T,\overline{\cM}_T)
    \end{tikzcd}\]
    commutes.
\end{cor}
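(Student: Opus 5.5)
The plan is to compute the $T$-points of both sides explicitly in terms of torsor data and then match them. The identification of the fiber product with a functor on $S$-log schemes goes through the map $\cS_P\times_{\cS_Q}\underline{S}\to\cLog_S$ that Olsson constructs. A $T$-point of the target is a fine log structure $\cM_T$ on $\underline{T}$ together with a morphism $f^*_{\log}\cM_S\to\cM_T$. So the claim is that, for such an $f : T\to S$, the fiber of $\cS_P\times_{\cS_Q}\underline{S}$ over $(\underline{T}\to\cLog_S)$ is the discrete groupoid of maps $\rho : P\to\Gamma(T,\overline{\cM}_T)$ compatible with $\theta$ and $f^\flat$.

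The first step is to describe $\cS_P(\underline{T})$. A map $\underline{T}\to[\underline{S}[P]/\underline{S}[P^{\gp}]]$ is a $P^{\gp}$-graded $\cO_T^\times$-torsor together with a $P^{\gp}$-equivariant map to $\underline{S}[P]$. Unwinding this, it is the same as a symmetric monoidal functor $P\to\mathcal{D}iv_T$, where $\mathcal{D}iv_T$ is the monoidal groupoid of pairs $(L,s)$ consisting of a line bundle and a global section. Concretely, $p\mapsto(L_p,s_p)$, with coherent isomorphisms $L_p\otimes L_q\simeq L_{p+q}$ carrying $s_p\otimes s_q$ to $s_{p+q}$. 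The map $\cS_P\to\cS_Q$ is restriction along $\theta$. The chart $Q\to\Gamma(S,\cM_S)$ gives the map $\underline{S}\to\cS_Q$; it sends $q$ to the trivial line bundle with section $\alpha_S(\beta_S(q))$, together with the trivialization coming from the lift to $\cM_S$. Hence a $T$-point of the fiber product is a monoidal functor $F : P\to\mathcal{D}iv_T$ together with an isomorphism $F\circ\theta\simeq$ (the functor induced from $Q\to\Gamma(S,\cM_S)\to\Gamma(T,\cM_T)$).

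The second step constructs the comparison map. Given a log structure $\cM_T$, each local section $\overline{m}\in\overline{\cM}_T$ determines an $\cO_T^\times$-torsor, namely its fiber in $\cM_T\to\overline{\cM}_T$. Applying $\alpha_T$ to this torsor gives a pair $(L_{\overline{m}},s_{\overline{m}})$, and this defines a monoidal functor $\overline{\cM}_T\to\mathcal{D}iv_T$. Composing with $\rho$ gives a functor $F_\rho : P\to\mathcal{D}iv_T$. Compatibility of $\rho$ with $\theta$ and $f^\flat$ supplies the required isomorphism $F_\rho\circ\theta\simeq F_Q$, because on $\theta(Q)$ the torsors are canonically trivialized by the lifts $f^\flat\beta_S(q)\in\cM_T$. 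I would also check that this construction is compatible with the maps to $\cLog_S$, using the standard description that a monoidal functor $P\to\mathcal{D}iv_T$ generates a log structure as the pushout of torsors.

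The third step shows that the resulting functor is an equivalence onto a discrete groupoid. For faithfulness and discreteness, note that an automorphism of $(F,\phi)$ lying over the identity of $\cM_T$ must act compatibly on the torsors attached to $\overline{\cM}_T$, hence trivially. For essential surjectivity, take a point $(F,\phi)$ whose associated log structure is identified with $\cM_T$. Each $F(p)$ then determines a section of $\overline{\cM}_T$, and additivity of these sections follows from the monoidal structure of $F$. I expect this to be the main obstacle: making precise that the fiber over a fixed log structure $\cM_T$ recovers exactly $\rho$. The $Q$-compatibility isomorphism $\phi$ must pin down both the trivializations and the map to $\overline{\cM}_T$, rather than just an abstract map to the log structure generated by $F$. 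I would first try to reduce étale locally to the case where $\rho$ lifts to a chart $P\to\cM_T$, verify everything there by direct computation with monoid algebras, and then descend using that both sides are étale sheaves.
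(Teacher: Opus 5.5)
The paper gives no argument of its own here; it quotes Olsson. So your proposal has to stand on its own, and as set up it does not: you compute the wrong object, and for that object the claimed answer is false.

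You read ``the fibered category over $S$-log schemes defined by $\cS_P\times_{\cS_Q}\underline{S}$'' as the fiber of $\cS_P\times_{\cS_Q}\underline{S}\to\cLog_S$ over $\underline{T}\to\cLog_S$, i.e.\ as strict points. But the functor $T\mapsto\{\rho\}$ is functorial for \emph{all} morphisms of $S$-log schemes, while fibers over $\cLog_S$ only see strict ones. The left side is instead the functor of points of the log stack $(\cS_P\times_{\cS_Q}\underline{S},\cM)$. Its objects over $T$ are maps $\underline{T}\to\cS_P\times_{\cS_Q}\underline{S}$ together with a \emph{morphism}, not an isomorphism, from the pulled-back canonical log structure to $\cM_T$ over $f^*\cM_S$.

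Under your reading the statement is false. Take $S=\Spec\ZZ$ with the trivial log structure and chart $Q=0$, and $P=\NN$, so the stack is $\cS_{\NN}=[\Spec\ZZ[\NN]/\Spec\ZZ[\ZZ]]$. Let $T=(\Spec k,k^\times\oplus\NN)$ be the standard log point. Then $\Hom(\NN,\Gamma(T,\overline{\cM}_T))=\NN$, whereas the fiber of $\cS_{\NN}\to\cLog_S$ over $T$ is a single point (the origin, $\rho(1)=1$).

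Concretely, your step 2 breaks. $F_\rho$ generates the log structure $\cM^\rho=(P\times_{\overline{\cM}_T}\cM_T)^a$, which has a tautological map $\cM^\rho\to\cM_T$. That map is an isomorphism only when $\rho$ \'etale locally lifts to a chart. For $\rho(1)=0$ you get the trivial log structure; for $\rho(1)=2$ you get $(u,n)\mapsto(u,2n)$. So the obstacle you flag in step 3, and the proposed reduction to $\rho$ lifting to a chart, cannot be overcome: a general $\rho$ is not a chart. Your step 1 description of $\cS_P(\underline{T})$ by monoidal functors is fine.

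With the correct reading your construction goes through and gets simpler.
\begin{itemize}
\item Send $\rho$ to $F_\rho$ together with the tautological map $\cM^\rho\to\cM_T$.
\item Send a log morphism $(F,\phi\colon\cM^F\to\cM_T)$ to the composite $P\to\overline{\cM}^F\to\overline{\cM}_T$.
\item These are mutually inverse: $\phi$ restricted to the torsor attached to $p$ is an $\cO_T^\times$-equivariant map into the fiber of $\cM_T$ over $\rho(p)$, hence an isomorphism.
\item The groupoid is discrete: an automorphism $u\in\Hom(P^{\gp},\Gamma(T,\cO_T^\times))$ of $F$ compatible with $\phi$ satisfies $u_p\phi(x)=\phi(x)$. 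This forces $u=1$, since $\cO_T^\times$ acts freely on the integral $\cM_T$.
\item The same argument for $Q$ shows that log morphisms $T\to\cS_Q$ form a set. Since $\underline{S}\to\cS_Q$ is strict, the fiber-product datum reduces to commutativity of the square in the statement.
\end{itemize}
The strict statement you were aiming at is the subfunctor of those $\rho$ for which $\cM^\rho\to\cM_T$ is an isomorphism. That subfunctor is what underlies the \'etaleness of $\cS_P\times_{\cS_Q}\underline{S}\to\cLog_S$ used later in the paper.
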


Going forward, suppose $f : X\to S$ is a map of prelog schemes, and that there exists a chart of $f:$
\[
(f,\theta) : (X,M,\beta_M)\to (S,P,\beta_P).
\]
we let $\cS_{(f,\theta)} := \cA_M\times_{\cA_P} \underline{S},$ where $\cA_N := [\Spec\ZZ[N]/\Spec\ZZ[N^{\gp}]].$ Note that the canonical map
\[
\cS_{M}\times_{\cS_P}\underline{S}\to\cA_M\times_{\cA_P}\underline{S}
\]
is an equivalence of stacks, and we will work with either object as is most convenient.

\begin{proposition}[\protect{\cite[Corollary~5.25]{OlssonLogGeomandAlgStacks}}]
    Let $S$ be a fine log scheme, and let $J$ be the set of triples $(U,\beta, \ell),$ where $U\to\underline{S}$ is \'etale, $\beta : Q\to\left.\cM_S\right|_U$ is a chart, and $\ell : Q\to P$ is a morphism of monoids. Then the natural morphism
    \[
    \coprod_{(U,\beta,\ell)\in J} \cS_P\times_{\cS_Q} U\to\cLog_S
    \]
    induced by the maps of \cite[corollary 5.24]{OlssonLogGeomandAlgStacks} is representable, \'etale, and surjective.
\end{proposition}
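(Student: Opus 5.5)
The plan is to reduce to the two cases of a chart $\cS_P\times_{\cS_Q}U$ and then verify the three properties (representability, étaleness, surjectivity) separately. Each step uses the modular interpretation of the source given by \cite[Corollary~5.22]{OlssonLogGeomandAlgStacks}.

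First, fix $(U,\beta,\ell)\in J$. By \cite[Corollary~5.22]{OlssonLogGeomandAlgStacks} applied to the fine log scheme $U$ with chart $\beta$, the stack $\cS_P\times_{\cS_Q}U$ classifies log schemes $T\to U$ together with a lift $\rho : P\to\Gamma(T,\overline{\cM}_T)$ of $Q\to\Gamma(U,\overline{\cM}_U)\to\Gamma(T,\overline{\cM}_T)$ through $\ell$. The map to $\cLog_S$ forgets $\rho$ and remembers the log structure on $T$. From this description I will show the map is representable. Given $\underline{T}\to\cLog_S$ corresponding to a fine log scheme $T\to S$, the fiber product is the functor on $\underline{T}$-schemes $\underline{T}'$ consisting of an étale map $\underline{T}'\to U$ over $\underline{S}$ together with a map $P\to\Gamma(T',\overline{\cM}_{T'})$ compatible with $\ell$. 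This is a sheaf of sets, because objects of $\cLog_S$ have no automorphisms over $S$ that fix $\rho$ (automorphisms of a log structure inducing the identity on $\overline{\cM}$ and on $\cO^\times$ are trivial). It is an algebraic space, being an open subfunctor of a sheaf locally of the form $\underline{T}[P^{\gp}]$-torsor data. Concretely, locally on $\underline{T}$ choose a fine chart $N\to\cM_T$. Maps $P\to\overline{\cM}_T$ that lift to genuine charts are then parametrized by $\Hom(P,N^{\gp})$-type discrete data, together with the open condition that $P\to\overline{\cM}_{T,t}$ be surjective up to units where needed.

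Second, I will prove étaleness via the infinitesimal lifting criterion. Olsson's $\cLog_S$ is locally of finite presentation, and $\cS_P\times_{\cS_Q}U$ is locally of finite presentation over $\underline{S}$ since $P,Q$ are finitely generated. So it suffices to lift uniquely along square-zero thickenings $T_0\hookrightarrow T$ of affine schemes. A fine log structure on $T$ has $\overline{\cM}_T=\overline{\cM}_{T_0}$ (étale sheaves are insensitive to nilpotents), and étale maps $T_0\to U$ extend uniquely to $T$. Hence the extra data $\rho$ and the map to $U$ deform uniquely, which gives formal étaleness.

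Third, I will prove surjectivity. Take a field-valued point $\Spec k\to\cLog_S$, that is, a fine log structure $\cM$ on $\Spec k$ over $S$. Étale locally on $\underline{S}$ near the image point, $S$ has a fine chart $\beta : Q\to\cM_S|_U$ by the existence of charts for fine log schemes. By \cite[Proposition~2.4.2]{OgusLectures}, after an étale (hence, for a separably closed point, trivial) extension of $k$, there is a chart $P\to\cM$ extending $Q$ via some $\ell : Q\to P$. Then $(U,\beta,\ell)\in J$ and the point lifts, where $\rho$ is the induced map to $\overline{\cM}$.

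The main obstacle is the representability and étaleness bookkeeping in the first two steps. The issue is that one must check that the data $\rho$ lands in maps to $\overline{\cM}$ rather than $\cM$. This is exactly what quotienting by $\underline{S}[P^{\gp}]$ accomplishes, and it is what makes the map étale rather than merely smooth. I would handle it by reducing, via the identification $\cS_P\times_{\cS_Q}U\simeq\cA_P\times_{\cA_Q}U$, to comparing deformation theory: the relative cotangent complex of $\cA_P\to\cA_Q$ is computed by $\ZZ[P]$, $\ZZ[P^{\gp}]$ terms that cancel against the corresponding terms for $\cLog_S$.
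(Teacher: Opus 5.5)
The paper gives no proof of its own here; it quotes the result from Olsson, where it is derived from the modular description of $\cS_P\times_{\cS_Q}U$ (the corollaries just before it) together with the existence of charts. Your outline follows the same pattern, and your surjectivity step is fine.

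The gap is in your representability step: the justification you give is false. Automorphisms of a fine log structure that induce the identity on $\overline{\cM}$ and on $\cO^\times$ need not be trivial. On the standard log point, $\cM=k^\times\oplus\NN$ with $\alpha(u,n)=0$ for $n\geq 1$, every $c\in k^\times$ gives the automorphism $(u,n)\mapsto(uc^n,n)$, and this automorphism fixes $\rho=\mathrm{id}:\NN\to\overline{\cM}$. Correspondingly, the source has nontrivial inertia: $\cS_{\NN}=[\mathbf{A}^1/\mathbf{G}_m]$ has stabilizer $\mathbf{G}_m$ at the origin. So ``no automorphisms fixing $\rho$'' cannot be why the fibers are sheaves of sets. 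What representability actually needs is that $\cS_P\times_{\cS_Q}U\to\cLog_S$ be faithful, i.e.\ injective on automorphism groups. This holds because in Olsson's description a morphism in the source \emph{is} an isomorphism of log structures; compatibility with $\rho$ and with the map to $U$ are conditions on it, not extra data.

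Your picture of the fiber as built from ``$\underline{T}[P^{\gp}]$-torsor data'' also points the wrong way. The torus has been divided out, so the fiber is discrete over $\underline{T}$. A cleaner argument gives representability and \'etaleness together.

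For $g:\underline{T}'\to\underline{T}$ one has $\overline{g^*\cM_T}=g^{-1}\overline{\cM}_T$. So the fiber over $\underline{T}\to\cLog_S$ sends $\underline{T}'$ to pairs $(u,\rho)$, where:
\begin{itemize}
\item $u$ is a $\underline{T}'$-point of $U\times_{\underline{S}}\underline{T}$ (an arbitrary map to $U$ over $\underline{S}$, not an \'etale one);
\item $\rho$ is a section over $\underline{T}'$ of the pullback of $\mathcal{H}om(P,\overline{\cM}_T)$, compatible with $\ell$ and $\beta$, and satisfying the chart condition of Olsson's description.
\end{itemize}
The chart condition is open, and it has to be imposed. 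Without it, taking $S=\Spec k$ with trivial log structure, $Q=0$, $P=\NN$, the fiber over the standard log point would be all of $\Hom(\NN,\NN)$ rather than a single point.

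Hence the fiber is an open subspace of the \'etal\'e space of a sheaf on the small \'etale site of $U\times_{\underline{S}}\underline{T}$, i.e.\ an algebraic space \'etale over $\underline{T}$. This is the rigorous form of your remark that $\rho$ lands in $\overline{\cM}$ rather than $\cM$. With it, your infinitesimal-lifting Step~2 is correct but redundant. The cotangent-complex comparison you propose at the end is unnecessary, and as sketched it is not an argument.
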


This proposition will be critical for us, as in some situations, it will allow us to work with the stack $\cS_P\times_{\cS_Q} U$ instead of $\cLog_S.$ In particular, we have the following important lemma.

\begin{lemma}
    Let $(k,P)\to (A,M)$ be a map of fine prelog rings. Then the map $\Spec A\to\cLog_{\Spec(k,P)}$ induced by the canonical map
    \[
    \Spec(A,M)\xrightarrow{f}\Spec(k,P)
    \]
    factors through the representable and \'etale map 
    \[
    \cS_M\times_{\cS_P}\Spec k\to\cLog_{\Spec(k,P)}
    \]
    of \cite[Corollary 5.24]{OlssonLogGeomandAlgStacks}
\end{lemma}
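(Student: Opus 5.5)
We construct the factorization by hand, using the moduli description of the fiber product stack in \cite[Corollary 5.22]{OlssonLogGeomandAlgStacks}. Write $X = \Spec(A,M)$ and $S = \Spec(k,P)$. The composition $\cS_M\times_{\cS_P}\Spec k\to\cLog_S$ is the map of \cite[Corollary 5.24]{OlssonLogGeomandAlgStacks}. On an $S$-log scheme $T$ it sends a map $\rho : M\to\Gamma(T,\overline{\cM}_T)$ that is compatible with $P\to\Gamma(S,\overline{\cM}_S)$ to a log structure on $\underline{T}$. Roughly, this log structure is the pullback of $\cM_T$ along $\rho$, formed in $\cM_T\to\overline{\cM}_T$.

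\emph{Step 1.} We specify a point of $\cS_M\times_{\cS_P}\Spec k$ over $\underline{X}=\Spec A$. Take $T = X$ with its log structure $\cM_X$, the logification of $M\to A$. Let $\rho$ be the composite
\[
M\to\Gamma(X,\cM_X)\to\Gamma(X,\overline{\cM}_X).
\]
Commutativity of the square in \cite[Corollary 5.22]{OlssonLogGeomandAlgStacks} is exactly the statement that $(k,P)\to(A,M)$ is a map of prelog rings. It therefore holds after passing to the associated log structures and their quotients by units. This gives a morphism $\underline{X}\to\cS_M\times_{\cS_P}\Spec k$.

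There is an alternative, more concrete description. The map $\Spec A\to\Spec\ZZ[M]$ induced by $M\to A$ composes to $\Spec A\to\cA_M$. Together with $\Spec A\to\Spec k$ and the square $\ZZ[P]\to\ZZ[M]$, $\ZZ[P]\to k\to A$, it gives a map to $\cA_M\times_{\cA_P}\Spec k$. This uses the equivalence $\cS_M\times_{\cS_P}\underline{S}\simeq\cA_M\times_{\cA_P}\underline{S}$ noted above. The commutativity needed for this square is again just the prelog ring condition.

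\emph{Step 2.} We check that composing with the map to $\cLog_S$ recovers $f_{\cM_X}$, i.e. that it classifies $\cM_X$ together with $f^\flat$. By construction, the log structure attached to $\rho$ is the fiber product $\rho^{-1}$ of $\cM_X$, that is $M_X\times_{\overline{\cM}_X}\cM_X$, logified. We must show this is canonically $\cM_X$ itself. This holds because $M\to\overline{\cM}_X$ lifts through $\cM_X$ by the chart map, and because $\cM_X$ is generated by the images of $M$ together with $\cO_X^\times$. Equivalently, in the $\cA_M$ description, the log structure pulled back from the tautological one on $\cA_M$ along $\Spec A\to\Spec\ZZ[M]\to\cA_M$ is the logification of $M$, namely $\cM_X$. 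The map of log structures over $S$ likewise comes from $P\to M$. Hence the triangle to $\cLog_S$ commutes, naturally in 2-morphisms. Representability and étaleness of the second map are \cite[Corollary 5.25]{OlssonLogGeomandAlgStacks} and need no further argument.

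The main obstacle is Step 2. It requires unwinding the definition of the map in \cite[Corollary 5.24]{OlssonLogGeomandAlgStacks} carefully enough to identify the classified log structure with $\cM_X$ up to canonical isomorphism. I would do this in the $\cA_M$ model, where everything reduces to the statement that pullback of the universal log structure along a chart is the associated log structure.
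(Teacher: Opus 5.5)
Your proposal is correct and is essentially the paper's argument. The paper places $\Spec A\to\cS_M\times_{\cS_P}\Spec k\to\cLog_{\Spec(k,P)}$ over $\cS_M\to\cLog_{(\cS_P,\cM_{\cS_P})}\to\cS_P$ with both squares Cartesian (from the proof of Olsson's Corollary 5.24), so the factorization follows from the universal property of the left square, and the only input is your Step 2 in the $\cA_M$ model, namely that $\Spec A\to\cS_M\to\cLog_{(\cS_P,\cM_{\cS_P})}$ classifies $\cM_{\Spec(A,M)}$; in the Corollary 5.22 description, by contrast, the map to $\cLog_{\Spec(k,P)}$ simply forgets $\rho$, so Step 2 is tautological there and the detour through $M_X\times_{\overline{\cM}_X}\cM_X$ is unnecessary.
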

\begin{proof}
    Indeed, this follows by consideration of the diagram
    \[\begin{tikzcd}
        \Spec A\ar[d]\ar[dd,bend right=90]\ar[rd,"\cM_{\Spec(A,M)}"]\ar[rrd, bend left=15,"\underline{f}"]\\
        \cS_M\times_{\cS_P}\Spec k\ar[d]\ar[r] &\cLog_{\Spec(k,P)}\ar[d]\ar[r] &\Spec k\ar[d]\\
        \cS_M\ar[r] &\cLog_{(\cS_P,\cM_{\cS_P})}\ar[r] &\cS_P
    \end{tikzcd}\]
    in which the squares are Cartesian (see the proof of \cite[Corollary 5.24]{OlssonLogGeomandAlgStacks}).
\end{proof}

\subsubsection{Exactification, Revisited}

There is also a connection between Olsson's log stack and the idea of the exactification of a morphism of log schemes. We explain the basic results in this section, but details can be found in \cite[Appendix~A]{Olssonnonabelian}.

Let $f : X\to S$ be a morphism of fine log schemes, and consider the fiber product $X\times^{\fine}_S X$ in the category of fine log schemes, along with the diagonal morphism $\Delta : X\to X\times^{\fine}_S X.$

\begin{proposition}[\protect{\cite[Proposition~A.3]{Olssonnonabelian}}]\label{Martin exactification}
    Let $\sfC$ denote the category whose objects are morphisms $g : T\to X\times^{\fine}_S X$ of fine log algebraic spaces over $S$ such that the composite of $g$ with either projection $X\times^{\fine}_S X\to X$ is strict, and whose morphisms are morphisms over $X\times^{\fine}_S X.$ Then $\sfC$ admits a final object, which is given by $X\times_{\cLog_S} X.$ 
\end{proposition}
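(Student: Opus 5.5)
We will identify the final object of $\sfC$ by checking the universal property directly. The diagonal $\Delta\colon X\to X\times^{\fine}_S X$ and the two projections to $X$ carry all the data. Throughout we regard $X\times_{\cLog_S} X$ as the fine log algebraic space whose underlying algebraic space is $\underline{X}\times_{\cLog_S}\underline{X}$. Its log structure is the pullback of $\cM_X$ along either projection; the two pullbacks are identified by the tautological isomorphism that is part of the data of a point of the $2$-fiber product.

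\textbf{Step 1 (the structure map).} First we check that $\underline{X}\times_{\cLog_S}\underline{X}$ is an algebraic space. This holds because $\cLog_S$ has representable diagonal, so $\underline{X}\times_{\cLog_S}\underline{X}\to\underline{X}\times_{\underline S}\underline{X}$ is representable. An object over $T$ is a triple $(x_1,x_2,\sigma)$ with $x_i\colon \underline T\to\underline X$ and $\sigma\colon x_1^*\cM_X\xrightarrow{\sim}x_2^*\cM_X$ an isomorphism of log structures over $f^*\cM_S$. Put $\cM_T:=x_1^*\cM_X$. The pair $x_1$ and $x_2\circ\sigma$ then gives strict morphisms $T\to X$ that agree over $S$. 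By the universal property of the fine fiber product, they induce a map $g\colon T\to X\times^{\fine}_S X$, which we apply with $T=X\times_{\cLog_S}X$ itself. Both composites with the projections are strict by construction, so this $g$ is an object of $\sfC$.

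\textbf{Step 2 (universality).} Let $h\colon T\to X\times^{\fine}_S X$ be an arbitrary object of $\sfC$. Composing with the projections gives strict maps $p_1h,p_2h\colon T\to X$ over $S$. Strictness yields isomorphisms $(p_ih)^*\cM_X\cong \cM_T$ compatible with the maps from $f^*\cM_S$. Composing one with the inverse of the other gives $\sigma$, hence a point of $\underline{X}\times_{\cLog_S}\underline{X}$ over $\underline T$. Together with $\cM_T$ this defines a morphism $T\to X\times_{\cLog_S}X$ of fine log spaces, which is strict. We then verify two things. First, its composite with $g$ recovers $h$; this follows because a map to $X\times^{\fine}_S X$ from a fine log space is determined by its two projections, and those agree. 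Second, the factorization is unique. Given two factorizations $T\rightrightarrows X\times_{\cLog_S}X$ over $X\times^{\fine}_S X$, they have the same underlying projections and the same induced $\sigma$, since $\sigma$ is recovered from the log maps. Hence they agree as maps to the $2$-fiber product, and the log parts agree by strictness.

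\textbf{Main obstacle.} The delicate point is Step 2's claim that a map to $X\times_{\cLog_S}X$ is determined by its image in $X\times^{\fine}_S X$, and that no extra automorphism data is lost. We must check that the map $X\times_{\cLog_S}X\to X\times^{\fine}_S X$ is a monomorphism on the relevant strict objects, meaning that $\sigma$ is forced by the two log morphisms. We will argue this by noting that $\cLog_S$ classifies log structures up to isomorphism, and that for strict maps the isomorphisms $(p_ih)^*\cM_X\cong\cM_T$ are unique once the log morphism is given. We also need to confirm that $X\times_{\cLog_S}X$ is fine; this follows because its log structure is pulled back from $\cM_X$.
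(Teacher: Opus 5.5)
Your proof is correct. The paper does not prove this statement; it quotes it from \cite[Proposition~A.3]{Olssonnonabelian}. Your write-up therefore supplies the argument the paper omits, and it is the natural one. Points of $\underline{X}\times_{\cLog_S}\underline{X}$ over $\underline{T}$ are triples $(x_1,x_2,\sigma)$, and the datum $\sigma$ is exactly the datum of two strict log maps $T\to X$ that agree over $S$.

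The step your last paragraph flags can be made precise by two-out-of-three for strictness. Suppose $g\circ\phi=h$ with $h\in\sfC$, and let $\sigma$ be the isomorphism classified by $\phi$. With your choice of log structure, $(p_1 g)^\flat$ is the identity, so $\phi^\flat=(p_1h)^\flat$. This is an isomorphism because $p_1h$ is strict. The relation $(p_2h)^\flat=\phi^\flat\circ\sigma^{-1}$ then forces $\sigma=((p_2h)^\flat)^{-1}\circ(p_1h)^\flat$, so both the underlying map and the log part of $\phi$ are determined by $h$.

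This is exactly where membership in $\sfC$ is used. If $\phi$ is not strict, $\phi^\flat$ need not be injective, and then $\sigma$ is not determined. For example, take $X$ to be a point with log structure $\NN^2\oplus k^*$ over a trivial $S$, and map it to a point with log structure $\NN\oplus k^*$ via $e_1,e_2\mapsto e$. Then the identity and the swap of $e_1,e_2$ give two distinct maps with the same image. So $X\times_{\cLog_S}X\to X\times^{\fine}_S X$ is not a monomorphism of fine log spaces in general, and restricting to strict objects, as you did, is necessary rather than a convenience.
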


By the above proposition, we obtain a factorization of the diagonal
\[
X\xrightarrow{\Delta^{\ex}} X\times_{\cLog_S} X\to X\times^{\fine}_S X,
\]
where $\Delta^{\ex}$ is in fact an exact immersion. Olsson calls $\Delta^{\ex}$ the \emph{exactification} of the diagonal.

We will not make explicit use of this construction or its properties, but we mention it here for the following reason: one challenge in generalizing Hochschild homology to the logarithmic situation is that the diagonal morphism of log schemes $\Delta : X\to X\times_S X$ (or $X\to X\times^{\fine}_S X$ is not exact. However, by replacing $X\times^{\fine}_S X$ with $X\times_{\cLog_S} X,$ we obtain a map which is exact and can replace the log diagonal for many purposes. In particular, if we identify Hochschild homology with $L\Delta^* R\Delta_*\OX,$ then Olsson's log Hochschild homology can be identified $L(\Delta^{\ex})^* R\Delta^{\ex}_*\OX.$ This will be made precise when we define log Hochschild homology.

\section{Derived Logarithmic Geometry}

In this section, we review the basic definitions and constructions of derived logarithmic geometry which we will need in what follows. Additional details can be found in \cite{Sagave_2014} (in the language of model structures) and \cite{BLPO_HKR}.

\subsection{Animated Monoids}

\begin{mydef}
    The $\infty$-category $\aMon$ of \emph{animated monoids} is the animation of the ordinary category $\CMon$ of discrete commutative monoids, or alternatively, the $\infty$-category associated to the standard model structure on $\sCMon,$ the category of simplicial commutative monoids.
\end{mydef}

The $\infty$-category $\aMon$ is closely related to the $\infty$-category of animated abelian groups, or equivalently, $\cMod_{\ZZ}^{\cn}.$ There is an evident fully faithful forgetful functor $\cMod_{\ZZ}^{\cn}\to\aMon$\footnote{For example, by animating the classical forgetful functor, or by model category theoretic methods.}, and we call an animated monoid in the image of this functor \emph{grouplike}. Equivalently, $M$ is grouplike if $\pi_0(M)$ is a group. We will also often identify an animated abelian group with the corresponding grouplike animated monoid.

The forgetful functor $\cMod_{\ZZ}^{\cn}\to\aMon$ also admits both a right and a left adjoint, so there are two ways of canonically associating a grouplike animated monoid to an arbitrary animated monoid. 

\begin{mydef}
    Let $M$ be an animated monoid. The \emph{space of units of $M$} is the animated monoid $M^\times$ given by the following pullback
    \[\begin{tikzcd}
        M^\times\ar[d]\ar[r] &M\ar[d]\\
        \pi_0(M)^\times\ar[r] &\pi_0(M).
    \end{tikzcd}\]
\end{mydef}

\begin{remark}
    What we denote by $M^\times$ is sometimes denoted by $\GL_1(M)$ in other references.
\end{remark}

Note that we have 
\begin{align*}
    \Map_{\cMod_{\ZZ}^{\cn}}(A,M^\times)&\cong\Map_{\cMod_{\ZZ}^{\cn}}(A,\pi_0(M)^\times\times_{\pi_0(M)} M)\\
    &\cong\Map_{\cMod_{\ZZ}^{\cn}}(A,\pi_0(M)^\times)\times_{\Map_{\cMod_{\ZZ}^{\cn}}(A,\pi_0(M))}\Map_{\aMon}(A,M)\\
    &\cong \Map_{\cMod_{\ZZ}^{\cn}}(\pi_0 A,\pi_0(M)^\times)\times_{\Map_{\cMod_{\ZZ}^{\cn}}(\pi_0 A,\pi_0(M))}\Map_{\aMon}(A,M)\\
    &\cong \Map_{\Ab}(\pi_0 A,\pi_0(M)^\times)\times_{\Map_{\CMon}(\pi_0 A,\pi_0(M))}\Map_{\aMon}(A,M)\\
    &\cong \Map_{\aMon}(A,M);
\end{align*}
i.e., $M^\times\to M$ is the terminal map from a grouplike animated monoid to $M.$

\begin{mydef}
    Let $\theta : P\to M$ be a map of animated monoids.
    \begin{enumerate}
        \item We say $\theta$ is \emph{logarithmic} if the canonical map
        \[
            \theta^{-1} M^\times := P\times_{M} M^\times\to M^\times
        \]
        is an equivalence.
        \item Let $P^a_{\theta} := P\oplus_{\theta^{-1}M^\times} M^\times.$ The \emph{logification} $\theta^a : P^a_{\theta}\to M$ is the canonical map determined by the maps $\theta : P\to M$ and $M^\times\to M$ via the universal property of the pushout.
    \end{enumerate}
\end{mydef}

\begin{proposition}[\protect{\cite[Proposition~2.5]{BLPO_HKR}}]
    Let $\theta : P\to M$ be a map of animated monoids. The logification $\theta^a : P^a_\theta\to M$ is the initial logarithmic map under $\theta.$ In other words, if $\aMon_{/M}^{\log}$ denotes the full subcategory of $\aMon_{/M}$ of maps $\theta : P\to M$ which are logarithmic, then logification provides an adjunction
    \[\begin{tikzcd}
        (-)^a : \aMon_{/M}\ar[r,shift left] &\ar[l,shift left]\aMon^{\log}_{/M} : U.
    \end{tikzcd}\]
\end{proposition}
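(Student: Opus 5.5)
We have to show that for any logarithmic map $\psi : Q\to M$ and any map $\phi : \theta\to\psi$ in $\aMon_{/M}$ (a map $g : P\to Q$ over $M$), there is an essentially unique factorization through $\theta\to\theta^a$. Equivalently, precomposition with the canonical map $P\to P^a_\theta$ gives an equivalence
\[
\Map_{\aMon_{/M}}(\theta^a,\psi)\xrightarrow{\sim}\Map_{\aMon_{/M}}(\theta,\psi).
\]
Before that, we must check that $\theta^a$ really is logarithmic.

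For the equivalence of mapping spaces, mapping spaces in the slice are fibers of mapping spaces in $\aMon$ over the given structure map to $M$. The pushout defining $P^a_\theta$ therefore gives
\[
\Map_{/M}(\theta^a,\psi)\simeq \Map_{/M}(\theta,\psi)\times_{\Map_{/M}(\theta^{-1}M^\times\to M,\psi)}\Map_{/M}(M^\times\to M,\psi).
\]
So it suffices to show that restriction along $\theta^{-1}M^\times\to M^\times$ induces an equivalence $\Map_{/M}(M^\times,\psi)\to\Map_{/M}(\theta^{-1}M^\times,\psi)$.

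Both sources are grouplike, since $\theta^{-1}M^\times = P\times_M M^\times$ has $\pi_0$ mapping to a group through units. A map from a grouplike $G$ to $Q$ factors through $Q^\times$, by the terminality of $Q^\times\to Q$ established above. Moreover, it lies over $M$ exactly when the composite $G\to Q^\times\to M$ lands in $M^\times$. Since $\psi$ is logarithmic, $Q^\times\simeq\psi^{-1}M^\times\simeq M^\times$ compatibly over $M$. Hence for grouplike $G$ we get
\[
\Map_{/M}(G,\psi)\simeq\Map_{/M^\times}(G,M^\times).
\]
Taking $G=M^\times$, this space is contractible. Taking $G=\theta^{-1}M^\times$, it is also contractible, because the structure map to $M^\times$ is fixed and $M^\times$ is terminal in its own slice. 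So restriction is a map between contractible spaces and hence an equivalence.

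It remains to check that $\theta^a$ is logarithmic; this is the main obstacle. Write $N=P^a_\theta$. We need $N\times_M M^\times\to M^\times$ to be an equivalence. The plan is to first use the pullback square defining units to reduce to $\pi_0$-level statements plus identification of components. Pulling back along $M^\times\to M$ commutes with the pushout, because the pushout is computed in a setting where base change along the inclusion of a union of components preserves colimits. This is the delicate point: colimits in $\aMon$ are not computed on underlying anima. I would try to compute instead using the bar construction $N\simeq |P\oplus (\theta^{-1}M^\times)^{\oplus\bullet}\oplus M^\times|$, a sifted colimit in $\aMon$, and hence computed on underlying anima.

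Restricted to components over $M^\times$, each simplicial term becomes
\[
\theta^{-1}M^\times\oplus(\theta^{-1}M^\times)^{\oplus\bullet}\oplus M^\times.
\]
This is because a sum lies over a unit only if each summand does, which can be checked on $\pi_0$. The realization of this simplicial object is
\[
\theta^{-1}M^\times\oplus_{\theta^{-1}M^\times}M^\times\simeq M^\times,
\]
which gives the required equivalence. This matches the argument of \cite[Proposition~2.5]{BLPO_HKR}, which one may cite for the final verification.
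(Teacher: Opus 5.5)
Your architecture is sound, but one step rests on a false claim. You assert that $\theta^{-1}M^\times=P\times_M M^\times$ is grouplike. It need not be. Its $\pi_0$ is the submonoid of $\pi_0P$ of elements sent to units of $\pi_0M$, which is generally not a group. For $\theta:\NN\hookrightarrow\ZZ$ one has $M^\times=M$ and $\theta^{-1}M^\times=\NN$. So you cannot use terminality of $Q^\times\to Q$ among grouplike objects to factor maps out of $\theta^{-1}M^\times$.

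The repair is easy and avoids grouplikeness entirely. Suppose $X\to M$ factors as $x=(M^\times\hookrightarrow M)\circ x'$. Pasting pullback squares gives
\[
\Map_{\aMon_{/M}}(X,\psi)\simeq\Map_{\aMon}(X,Q)\times_{\Map_{\aMon}(X,M)}\{x\}\simeq\Map_{\aMon}(X,Q\times_M M^\times)\times_{\Map_{\aMon}(X,M^\times)}\{x'\}.
\]
The right side is contractible, because $\psi$ being logarithmic says exactly that $Q\times_M M^\times\to M^\times$ is an equivalence. Both $X=M^\times$ and $X=\theta^{-1}M^\times$ come with such a factorization, so both mapping spaces are contractible and restriction between them is an equivalence. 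This also removes your unproved identification $Q^\times\simeq\psi^{-1}M^\times$. That identification is true, since $\psi^{-1}M^\times\simeq M^\times$ is grouplike and so lies in $Q^\times$, but it is no longer needed.

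With this fix your proof is complete. It differs from the paper, which gives no argument and only cites \cite[Proposition~2.5]{BLPO_HKR}. Your version is self-contained and isolates the one non-formal input. Initiality is formal from the pushout formula for mapping spaces out of $P^a_\theta$, given uniqueness of maps over $M$ from objects lying over $M^\times$. The logarithmic property of $\theta^a$ needs an actual computation, because pushouts in $\aMon$ are not computed on underlying anima.

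Your bar-construction argument for that computation is correct:
\begin{itemize}
\item $P^a_\theta\simeq|P\oplus(\theta^{-1}M^\times)^{\oplus\bullet}\oplus M^\times|$ is a geometric realization, hence computed in $\Ani$.
\item Universality of colimits in $\Ani$ lets you pull back levelwise along $M^\times\to M$.
\item The levelwise pullback is $\mathrm{Bar}(\theta^{-1}M^\times,\theta^{-1}M^\times,M^\times)$, because a sum in $\pi_0M$ is a unit only if each summand is.
\item Its total-sum augmentation to $M^\times$ is the map induced by $\theta^a$, and it becomes an equivalence on realization by the extra degeneracy.
\end{itemize}
It is worth saying explicitly that this augmentation is the map induced by $\theta^a$, since that is what identifies the realization with $P^a_\theta\times_M M^\times\to M^\times$.
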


The other adjunction is provided by the group completion functor.

\begin{mydef}
    Let $M$ be an animated monoid. We say that a morphism $\gamma : M\to M'$ realizes $M'$ as a \emph{group completion of $M$} if $M'$ is grouplike and the map $\gamma$ induces an equivalence
    \[
    \Map_{\cMod_{\ZZ}^{\cn}}(M',A)\to \Map_{\aMon}(M,A)
    \]
    for any grouplike animated monoid/animated abelian group $A.$
\end{mydef}

A group completion of any animated monoid $M$ always exists. One way to see this is to apply \cite[Corollary~2.2.3]{mao2024revisitingderivedcrystallinecohomology} to the adjunction $(-)^{\gp}\dashv U,$ where $(-)^{\gp} : \CMon\to\Ab$ is the classical group completion functor and $U : \Ab\to\CMon$ is the forgetful functor. Since $U$ is also a left adjoint, it satisfies the hypotheses and we obtain an adjunction
\[\begin{tikzcd}
    (-)^{\gp} : \aMon\ar[r,shift left] &\ar[l,shift left] \cMod_\ZZ^{\cn} : U
\end{tikzcd}\]
between the animations, and we will abusively denote $\Ani((-)^{\gp})$ simply by $(-)^{\gp}.$

There are various other equivalent ways to construct the group completion. If $\mathrm{B}\dashv\Omega$ is the looping/delooping adjunction on $\aMon,$ which can be modeled explicitly on simplicial commutative monoids as the tensor and copower $\mathrm{B}M_\bullet = M_\bullet\otimes S^1 = M_\bullet\otimes \Delta^1/\partial\Delta^1$ and $\Omega M = M^{S^1},$ then $M\to M^{\gp}$ is equivalent to $M\to\Omega\mathrm{B}M.$ Another way to model $M^{\gp}$ at the level of simplicial commutative monoids is to form the classical group completion degree-wise on a simplicial monoid $M_\bullet.$ We refer to \cite[Section~2.1]{Sagave_2014} for more details.

From the above discussion, we deduce the following proposition.

\begin{proposition}
    The forgetful functor
    \[
    U : \cMod_\ZZ^{\cn}\to\aMon
    \]
    is left adjoint to the space of units functor
    \[
    \left(-\right)^\times : \aMon\to\cMod_\ZZ^{\cn}
    \]
    and right adjoint to the group completion functor
    \[
    (-)^{\gp} : \aMon\to\cMod_\ZZ^{\cn}
    \]
\end{proposition}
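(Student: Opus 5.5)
The plan is to obtain both adjunctions by animating the corresponding classical adjunctions and then identifying the animated functors with the ones described. On the classical side, the Proposition above stating that $U : \Ab\to\CMon$ has left adjoint $(-)^{\gp}$ and right adjoint $(-)^\times$ supplies the input. The animated version of $U$ is the functor $\cMod_\ZZ^{\cn}\simeq\Ani(\Ab)\to\Ani(\CMon)=\aMon$. Since $U$ preserves finite coproducts (in both $\Ab$ and $\CMon$, finite coproducts coincide with finite products), it sends free abelian groups on finitely many generators to objects whose animation commutes with sifted colimits.

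For the group completion adjunction, I would argue as in the discussion preceding the statement. The pair $(-)^{\gp}\dashv U$ consists of functors that both preserve filtered colimits and reflexive coequalizers, since $U$ is also a left adjoint. Hence \cite[Corollary~2.2.3]{mao2024revisitingderivedcrystallinecohomology} applies, and the animations form an adjunction $\Ani((-)^{\gp})\dashv\Ani(U)$. Comparing with the definition of a group completion shows that $\Ani((-)^{\gp})(M)$ realizes a group completion of $M$, so this is the adjunction asserted.

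For the units adjunction, I would not try to animate $(-)^\times$, because it does not preserve sifted colimits in the needed way. Instead I would use the mapping-space computation given just after the definition of $M^\times$. That computation shows that for any animated abelian group $A$ the map $M^\times\to M$ induces
\[
\Map_{\cMod_\ZZ^{\cn}}(A,M^\times)\simeq\Map_{\aMon}(A,M),
\]
naturally in $A$ and $M$. Here $M^\times$ is grouplike because $\pi_0(M^\times)=\pi_0(M)^\times$. The pullback defining $M^\times$ is functorial in $M$, so this yields a functor $(-)^\times:\aMon\to\cMod^{\cn}_\ZZ$ together with a counit exhibiting it as the right adjoint to $U$, by the pointwise criterion for adjoints \cite[5.2.2.8]{HTT}.

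The main obstacle is justifying the middle step of that computation, namely that a map $A\to\pi_0(M)$ from a grouplike object factors through $\pi_0(M)^\times$. I would reduce this to $\pi_0$: the space $\Map(A,\pi_0 M)$ is discrete and equal to $\Hom_{\CMon}(\pi_0 A,\pi_0 M)$, and any such homomorphism lands in the units because $\pi_0 A$ is a group. It also needs checking that $\pi_0(M)^\times\to\pi_0(M)$ is a monomorphism, so that the fiber product collapses to $\Map_{\aMon}(A,M)$. Finally, I would verify that $U$ is fully faithful, which is needed for the identification of $\cMod_\ZZ^{\cn}$ with the grouplike objects used above.
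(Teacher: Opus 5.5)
Your proposal is correct and follows essentially the same route as the paper. The group completion adjunction comes from animating the classical $(-)^{\gp}\dashv U$ via \cite[Corollary~2.2.3]{mao2024revisitingderivedcrystallinecohomology}, and the units adjunction comes from the mapping-space computation with $M^\times=\pi_0(M)^\times\times_{\pi_0(M)}M$. Your explicit check that $\Map_{\aMon}(A,\pi_0(M)^\times)\to\Map_{\aMon}(A,\pi_0(M))$ is a bijection of discrete sets for grouplike $A$ makes precise a step the paper's displayed computation states somewhat loosely.
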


As in the classical case, there is also a universal way to construct an animated ring from an animated monoid. Recall that there is an adjunction of $1$-categories
\[\begin{tikzcd}
    \ZZ[-] : \CMon\ar[r,shift left] &\CRing\ar[l, shift left] : (-,\,\cdot\,).
\end{tikzcd}\]
If we replace $\CMon$ and $\CRing$ by $\sCMon$ and $\sCRing$ respectively, we can apply this adjunction degree-wise to obtain a Quillen adjunction
\[\begin{tikzcd}
    \ZZ[-] : \sCMon\ar[r,shift left] &\sCRing\ar[l, shift left] : (-,\,\cdot\,),
\end{tikzcd}\]
(see \cite{Sagave_2014} for details on the model structures) and thus an adjunction
\[\begin{tikzcd}
    \ZZ[-] : \aMon\ar[r,shift left] &\aRing\ar[l, shift left] : (-,\,\cdot\,)
\end{tikzcd}\]
between the associated $\infty$-categories by \cite[Proposition~5.2.4.6]{HTT}.

\subsection{Derived Affine Logarithmic Geometry}

\begin{mydef}
    Consider the composition
    \[\begin{tikzcd}
        \aRing\ar[rrr, bend left, "F"]\ar[r,"{(-,\,\cdot\,)}"'] &\aMon\ar[rr,"{\aMon_{/(-)}}"'] &&\Cat_{\infty},
    \end{tikzcd}\]
    where $\aMon_{/(-)}$ is the functor which on objects is given by
    \begin{align*}
        \aMon_{/(-)} : \aMon&\to\Cat_{\infty}\\
        M&\mapsto\aMon_{/M}.
    \end{align*}
    \begin{enumerate}
        \item The $\infty$-category $\aPreLog$ of \emph{animated prelog rings} is the $\infty$-category associated to the composition $F$ via the $\infty$-categorical Grothendieck construction (or in the language of \cite{HTT}, the ``straightening-unstraightening correspondence'').
        \item Let $A$ be an animated ring. The $\infty$-category $\aPreLogStr_A$ of \emph{animated prelog structures on $A$} is defined to be the category $\aMon_{/(A,\,\cdot\,)}$ of maps of animated monoids $\alpha : M\to (A,\,\cdot\,).$ Equivalently, $\aPreLogStr_A\simeq\aPreLog\times_{\aRing}\{A\}.$
    \end{enumerate}
\end{mydef}

Note that $\aPreLog$ is thus equivalent to the comma category $(\id_{\aMon}\downarrow F),$ since the functor $\aMon_{/(-)}$ is the coCartesian straightening of the target functor $\Arr(\aMon)\to\aMon,$ and therefore the Grothendieck construction of the composition is the pullback of the $\Arr(\aMon)\to\aMon$ along $(-,\,\cdot\,),$ which is by definition the comma category.

There are several other equivalent descriptions of $\aPreLog.$ It is the $\infty$-category associated to either the projective or injective model structure on the category of simplicial prelog rings, c.f. \cite[Propositions~3.2,3.3]{Sagave_2014}, or the animation of the category $\PreLog,$ as explained in \cite[Remark~2.11]{BLPO_HKR}. The equivalence between our definition and these alternative descriptions is discussed in \cite[Remark~2.12]{BLPO_HKR}.

If $(A,M,\alpha)$ is an animated prelog ring, we will also write $\underline{(A,M,\alpha)}$ to denote the underlying animated ring $A.$

\begin{mydef}
    The $\infty$-category of \emph{affine derived prelog schemes} $\dPreLogAff$ is the opposite of the $\infty$-category of animated prelog rings. For an animated prelog ring $(A,M),$ the corresponding object of $\dPreLogAff$ will be denoted by $\Spec(A,M).$ If $X = \Spec(A,M)$ is an affine derived prelog scheme, then $\underline{X}$ will denote the underlying affine derived scheme $\Spec A.$ A \emph{derived prelog structure} on an affine derived scheme $\Spec A$ is a prelog structure on the animated ring $A.$
\end{mydef}

\begin{mydef}
    Let $(A,M,\alpha)$ be an animated prelog ring.
    \begin{enumerate}
        \item $(A,M,\alpha)$ is an \emph{animated log ring} if $\alpha$ is logarithmic.
        \item The $\infty$-category $\aLog$ is the full subcategory of $\aPreLog$ consisting of animated prelog rings which are logarithmic.
        \item The $\infty$-category $\aLogStr_A$ is the pullback $\aLog\times_{\aRing}\{A\},$ or equivalently, the full subcategory of $\aPreLog_A$ consisting of animated prelog structures $\alpha : M\to A$ which are logarithmic.
        \item The \emph{logification} $(A,M,\alpha)^a := (A,M^a_{\alpha},\alpha^a)$ is the animated ring determined by the logification of the structure map $\alpha.$
    \end{enumerate}
\end{mydef}

\begin{warning}
    Note that if one models animated prelog rings by simplicial prelog rings, animated log rings are not then modeled by simplicial objects in the category of log rings; see \cite[Remark~3.8]{Sagave_2014}.
\end{warning}

\begin{remark}
    Because $\aPreLog$ and $\aRing$ admit colimits and the forgetful functor $\aPreLog\to\aRing$ preserves colimits, it follows that $\aPreLogStr_A$ admits colimits as well. 
\end{remark}

As before, we have an adjunction
\[\begin{tikzcd}
    (-)^a : \aPreLogStr_A\ar[r,shift left] &\ar[l,shift left] \aLogStr_A.
\end{tikzcd}\]

In fact, $(k,P,\beta)^a$ is the universal animated log ring over $(k,P,\beta)$ in the following sense.
\begin{proposition}
    Let $U : \aLog\to\aPreLog$ denote the canonical fully faithful forgetful functor. Then there is an adjunction
    \[\begin{tikzcd}
        (-)^a : \aPreLog\ar[r,shift left] &\aLog\ar[l, shift left] : U
    \end{tikzcd}\]
    where the left adjoint is given on objects by the logification construction. 
\end{proposition}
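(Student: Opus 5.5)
The plan is to reduce to the fiberwise statement and then globalize. By construction, $\aPreLog$ is the unstraightening of $A\mapsto\aMon_{/(A,\,\cdot\,)}$, and $\aLog$ is the full subcategory whose fiber over each $A$ is $\aLogStr_A$. The cited \cite[Proposition~2.5]{BLPO_HKR} already supplies the fiberwise adjunctions $(-)^a\dashv U$ on $\aMon_{/M}$, and hence on $\aPreLogStr_A$ with $M=(A,\,\cdot\,)$. I would therefore apply the relative adjoint criterion \cite[Proposition~7.3.2.6]{HA}, used earlier for $\gr : \QCF\to\QCGR$. Its hypotheses are that $U$ is a map over $\aRing$ which admits fiberwise left adjoints, and that these adjoints are compatible with the coCartesian pushforward functors. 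Once this holds, $U$ admits a left adjoint relative to $\aRing$, given on objects by fiberwise logification.

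\textbf{Step 1: structure of the fibrations.} First I check that $\aPreLog\to\aRing$ is a coCartesian fibration. For $f : A\to B$, the pushforward sends $(M\to A)$ to the composite $M\to A\to B$; this is postcomposition with $(f,\,\cdot\,)$, as it comes from the straightening. I then check that $\aLog\to\aRing$ is also a coCartesian fibration, with pushforward $f_!^{\log}(M) := (M\to B)^a$.

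\textbf{Step 2: coCartesian edges in $\aLog$.} I need the edge $(A,M)\to(B,(M\to B)^a)$ to be $\aLog$-coCartesian. For a log ring $(B',N)$ over $g : B\to B'$, the following equivalences hold:
\begin{align*}
\Map_{/g}\bigl((M\to B)^a,N\bigr)&\simeq\Map_{\aMon_{/B'}}\bigl(((M\to B)^a\to B'),N\bigr)\\
&\simeq\Map_{\aMon_{/B'}}\bigl((M\to B'),N\bigr).
\end{align*}
The second equivalence needs the natural map $(M\to B\to B')^a\to((M\to B)^a\to B')^a$ to be an equivalence. I prove this from the pushout formula $P^a=P\oplus_{\theta^{-1}B^\times}B^\times$ together with the fact that $B^\times\to B'$ lands in $B'^\times$. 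Concretely, both sides corepresent maps of prelog structures into log structures over $B'$: a map out of $M\to B'$ into a logarithmic $N$ extends uniquely over $B^\times$, because $B^\times\to B'^\times\simeq N\times_{B'}B'^\times$. So both sides satisfy the same universal property.

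\textbf{Step 3: compatibility and conclusion.} The inclusion $U$ then does not preserve coCartesian edges. Instead, the Beck--Chevalley type condition required by the dual of \cite[7.3.2.6]{HA} is precisely the equivalence $(f_!M)^a\simeq f^{\log}_!(M^a)$, and this is again the statement proved in Step 2. Alternatively, I can avoid fibrational bookkeeping altogether and construct the adjunction objectwise. By \cite[Proposition~5.2.2.8]{HTT}, a left adjoint exists once each $(k,P,\beta)$ has a $U$-universal arrow. The unit $(k,P,\beta)\to(k,P^a,\beta^a)$ is universal, since
\begin{align*}
\Map_{\aPreLog}\bigl((k,P),U(A,M)\bigr)&\simeq\textstyle\coprod_{k\to A}\Map_{\aMon_{/A}}(P\to A,M)\\
&\simeq\textstyle\coprod_{k\to A}\Map_{\aMon_{/A}}\bigl((P\to A)^a,M\bigr),
\end{align*}
where the coproduct is taken as a fibration over $\Map(k,A)$. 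The required identification $(P\to A)^a\simeq((P^a\to k)\to A)^a$ is the Step 2 lemma.

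I expect the main obstacle to be making the Step 2 identification honestly $\infty$-categorical. The claim that maps from $B^\times$ into $N$ over $B'$ are uniquely determined needs the adjunction $U\dashv(-)^\times$, not elements. I would first write it as a pushout-of-mapping-spaces computation using that adjunction.
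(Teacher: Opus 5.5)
Your proof is correct. Your Step~2 lemma needs no adjunction: if $Q\to B'$ factors through $(B')^\times$, then pasting pullbacks gives $\Map_{\aMon_{/B'}}(Q,N)\simeq\Map_{\aMon_{/(B')^\times}}\bigl(Q,N\times_{B'}(B')^\times\bigr)$, and this is contractible when $N$ is logarithmic.

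The paper gives no real argument. It cites the model-categorical proof in \cite[Section~3.2]{Sagave_2014}, or says the result follows formally from the fiberwise adjunction together with the change-of-rings adjunction. Your route differs from that hint in which transport you check compatibility with.

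\begin{itemize}
\item You use coCartesian pushforward (postcomposition). Along it, $U$ does not preserve coCartesian edges, so you must prove the Beck--Chevalley identity $(M\to A\to B)^a\simeq((M\to A)^a\to B)^a$. You then build the adjoint objectwise via \cite[Proposition~5.2.2.8]{HTT}. This buys you, as a by-product, that logification commutes with base change.
\item The Cartesian route is shorter. $\aPreLog\to\aRing$ is also a Cartesian fibration, with transport $f_*N=N\times_{(B,\,\cdot\,)}(A,\,\cdot\,)$ for $f:A\to B$. The functor $f_*$ preserves log structures, because $A^\times\to B$ factors through $B^\times$ and $N\times_B B^\times\simeq B^\times$. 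Hence $U$ preserves Cartesian edges, and \cite[Proposition~7.3.2.6]{HA} yields a relative left adjoint with no Beck--Chevalley check.
\end{itemize}

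That Cartesian condition is also what 7.3.2.6 actually requires. Describing its hypotheses as compatibility of the fiberwise adjoints with coCartesian pushforward is a misattribution. It is harmless here, because your objectwise argument is self-contained.

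One caution if you read the paper's hint literally. Its change-of-rings proposition states the log adjunction as $f^*\dashv(-)^a\circ f_*$, and a following remark says $f^*$ preserves log structures. That is transposed. Postcomposition does not preserve log structures: $\{\pm 1\}=\ZZ^\times\to\ZZ\to\ZZ[1/2]$ is not logarithmic. By contrast, $f_*$ does preserve them. The correct form is $(-)^a\circ f^*\dashv f_*$, and that is exactly what your Step~2 and the Cartesian argument use.
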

\begin{proof}
    A proof of this result in the language of model structures on simplicial prelog rings can be found in \cite[Section~3.2]{Sagave_2014}. One can also deduce the result formally from the adjunction between animated prelog structures and animated log structures on a given animated ring $A$ together with \ref{adjunction between (pre)log structures for map of animated rings}.
\end{proof}

As in the classical case, given a morphism of (animated) rings/(affine) schemes, there is a canonical way to associate an animated prelog structure on either ring given an animated prelog structure on the other.

\begin{mydef}
    Let $f : k\to A$ be a map of animated rings.
    \begin{enumerate}
        \item If $(k,P,\beta)$ is an animated prelog ring whose underlying animated ring is $k,$ the \emph{inverse image prelog structure on $A$} is the morphism
        \[
        P\xrightarrow{\beta}(k,\,\cdot\,)\xrightarrow{(f,\,\cdot\,)}(A,\,\cdot\,),
        \]
        and we will denote $A$ equipped with the inverse image prelog structure by $(A,f^*P,f^*\beta).$
        \item If $(A,M,\alpha)$ is an animated prelog ring whose underlying animated ring is $A,$ the \emph{direct image prelog structure on $k$} is the map $f_*M\xrightarrow{f_*\alpha}(k,\,\cdot\,)$ given by the top horizontal morphism in the Cartesian diagram
        \[\begin{tikzcd}
            f_*M\ar[r,"f_*\alpha"]\ar[d] &(k,\,\cdot\,)\ar[d,"{(f,\,\cdot\,)}"]\\
            M\ar[r,"\alpha"] &(A,\,\cdot\,)
        \end{tikzcd}\]
    \end{enumerate}
\end{mydef}

The above constructions yield the following adjunctions.
\begin{proposition}[\protect{\cite[Proposition~3.26]{Sagave_2014}}]\label{adjunction between (pre)log structures for map of animated rings}
    Let $f : k\to A$ be a morphism of animated rings. Then $f$ induces an adjunction
    \[\begin{tikzcd}
        f^* : \aPreLogStr_{k}\ar[r,shift left] &\aPreLogStr_{A}\ar[l, shift left] : f_*
    \end{tikzcd}\]
    between the categories of animated prelog structures on $k$ and animated prelog structures on $A$ and an adjunction
    \[\begin{tikzcd}
        f^* : \aLogStr_{k}\ar[r,shift left] &\aLogStr_{A}\ar[l, shift left] : (-)^a\circ f_*
    \end{tikzcd}\]
    between the categories of animated log structures on $k$ and animated log structures on $A.$
\end{proposition}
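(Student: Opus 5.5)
The plan is to construct both adjunctions directly from universal properties in $\aMon$, reducing everything to facts about slice categories and pullbacks. For the first adjunction, note that $\aPreLogStr_k = \aMon_{/(k,\,\cdot\,)}$ and $\aPreLogStr_A = \aMon_{/(A,\,\cdot\,)}$. The functor $f^*$ is postcomposition with the map of animated monoids $(f,\,\cdot\,) : (k,\,\cdot\,)\to (A,\,\cdot\,)$. The functor $f_*$ is pullback along this map, which exists since $\aMon$ is presentable and hence has all limits. The general fact that, for a map $g : X\to Y$ in an $\infty$-category with pullbacks, postcomposition $g_! : \cC_{/X}\to\cC_{/Y}$ is left adjoint to base change $g^* : \cC_{/Y}\to\cC_{/X}$ (e.g.\ \cite[Section 5.2.5, 6.1.1]{HTT}) then gives $f^*\dashv f_*$ immediately.

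Concretely, for $\beta : P\to k$ and $\alpha : M\to A$ one has
\[
\Map_{/A}(f\circ\beta,\alpha)\simeq \Map_{/k}(\beta, f_*\alpha),
\]
because a map over $A$ from $P$ to $M$ is exactly a map to the pullback $k\times_A M$ over $k$.

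For the second adjunction, I first check that $f^*$ (appropriately understood) carries log structures to log structures. As stated, $f^*\beta = (f,\,\cdot\,)\circ\beta$ need not be logarithmic, so on $\aLogStr$ the left adjoint must be $(-)^a\circ f^*$; I would interpret the statement in that way. The right adjoint is then $(-)^a\circ f_*$, where we must note that $f_*$ of a log structure is already logarithmic: units of $k\times_A M$ are $k^\times\times_{A^\times}M^\times\simeq k^\times$ because $\alpha^{-1}A^\times\simeq M^\times$. This uses that units commute with pullbacks, since $(-)^\times$ is a right adjoint and $\pi_0$ of the pullback behaves correctly on the components involved.

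The adjunction is obtained by composing. For $\beta$ log on $k$ and $\alpha$ log on $A$,
\[
\Map_{\aLogStr_A}((f^*\beta)^a,\alpha)\simeq\Map_{\aPreLogStr_A}(f^*\beta,\alpha)\simeq\Map_{\aPreLogStr_k}(\beta,f_*\alpha)\simeq\Map_{\aLogStr_k}(\beta,(f_*\alpha)^a).
\]
The first equivalence is \cite[Proposition~2.5]{BLPO_HKR}, the second is the first adjunction, and the last uses $U$ fully faithful together with $(f_*\alpha)^a\simeq f_*\alpha$.

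The main obstacle is the units computation for the pullback. Here I would use that $M^\times = \pi_0(M)^\times\times_{\pi_0 M}M$ and that $\pi_0$ of a pullback surjects onto the fiber product of $\pi_0$'s, with the path-components of units determined on $\pi_0$. This lets one verify the claim componentwise.
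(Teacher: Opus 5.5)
The paper gives no proof here; it only cites \cite[Proposition~3.26]{Sagave_2014}, which is phrased model-categorically and with the roles of $f^*$ and $f_*$ swapped. Your argument is the natural $\infty$-categorical proof of that result: postcomposition is left adjoint to base change on slices of $\aMon$, and you then compose with the logification adjunction of \cite[Proposition~2.5]{BLPO_HKR}, using full faithfulness of $\aLogStr_k\subseteq\aPreLogStr_k$. Your reinterpretation of the second adjunction is not optional but forced, because with the paper's definition $f^*$ does not preserve log structures. For example, take $f:\ZZ\to\ZZ[1/2]$ and the trivial log structure $\ZZ^\times=\{\pm1\}\to\ZZ$. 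Its image $\{\pm 1\}\to\ZZ[1/2]$ has preimage of units $\{\pm1\}\not\simeq\ZZ[1/2]^\times$. So the correct statement on log structures is $(-)^a\circ f^*\dashv f_*$, with $(-)^a\circ f_*\simeq f_*$ there. In particular, the remark following the proposition in the paper, asserting that $f^*$ of a log structure is log, is false as written.

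The one step that needs repair is the one you call the main obstacle. For $f_*\alpha$ to be logarithmic, you need $(f_*\alpha)^{-1}(k^\times)=(k\times_A M)\times_k k^\times\to k^\times$ to be an equivalence. Your computation $(k\times_A M)^\times\simeq k^\times\times_{A^\times}M^\times\simeq k^\times$ only shows that the \emph{units} of $f_*M$ map isomorphically onto $k^\times$. That is strictly weaker in general: for the discrete map $\mathbb{N}\to\{0,1\}$ sending every $n$ to $1$, the units match but the preimage of units is all of $\mathbb{N}$. To close the gap you would still have to show that every component of $f_*M$ lying over a unit of $\pi_0 k$ is invertible. Surjectivity of $\pi_0$ of the pullback onto the fiber product of $\pi_0$'s does not give this by itself; you also need a $\pi_1$ argument.

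The clean route is pullback pasting. The composite $k^\times\to k\to A$ factors through $A^\times\to A$, since $A^\times\to A$ is terminal among maps from grouplike animated monoids. Hence
\[
(k\times_A M)\times_k k^\times\simeq k^\times\times_A M\simeq k^\times\times_{A^\times}(A^\times\times_A M)\simeq k^\times\times_{A^\times}A^\times\simeq k^\times .
\]
This uses only that $\alpha^{-1}(A^\times)=A^\times\times_A M\to A^\times$ is an equivalence. With this replacement, the rest of your chain of mapping-space equivalences goes through as written.
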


\begin{remark}
    The previous proposition immediately implies that if $(P,\beta)$ is an animated log structure on $k,$ then $f^*(P,\beta)$ is an animated log structure on $A.$
\end{remark}

\begin{remark}
    Our notation for the direct and inverse image prelog structures is reversed from that of \cite{Sagave_2014}, although the definitions for the terminologies are the same. Our notation is chosen to conform with the geometric inverse and direct images.
\end{remark}

\begin{mydef}
    A map $(f,f^\flat) : (k,P)\to (A,M)$ of animated prelog rings is \emph{strict} if the canonical map $(A,f^*P)^a\to (A,M)^a$ is an equivalence.
\end{mydef}

\begin{remark}
    A map $(k,P)\to (A,M)$ of animated prelog rings is strict if and only if the diagram
    \[\begin{tikzcd}
        (k,\{1\})^a\ar[r]\ar[d] &(A,\{1\})^a\ar[d]\\
        (k,P)^a\ar[r] &(A,M)^a
    \end{tikzcd}\]
    is coCartesian. To see this, observe that the diagram
    \[\begin{tikzcd}
        (k,\{1\})\ar[r]\ar[d] &(A,\{1\})\ar[d]\\
        (k,P)\ar[r] &(A,f^*P)
    \end{tikzcd}\]
    is coCartesian for any animated prelog ring $(k,P)$ and any map of animated rings $k\to A$ (essentially by definition of $f^*P$), and because $(-)^a$ is a left adjoint, 
    \[\begin{tikzcd}
        (k,\{1\})^a\ar[r]\ar[d] &(A,\{1\})^a\ar[d]\\
        (k,P)^a\ar[r] &(A,f^*P)^a
    \end{tikzcd}\]
    is also coCartesian.
    
    We also have a canonical factorization
    \[\begin{tikzcd}
        (k,\{1\})\ar[r]\ar[d] &(A,\{1\})\ar[r]\ar[d] &(A,\{1\})\ar[d]\\
        (k,P)\ar[r] &(A,f^*P)\ar[r] &(A,M)
    \end{tikzcd}\]
    of the original diagram by \ref{adjunction between (pre)log structures for map of animated rings}, and thus a corresponding diagram
    \[\begin{tikzcd}
        (k,\{1\})^a\ar[r]\ar[d] &(A,\{1\})^a\ar[r]\ar[d] &(A,\{1\})^a\ar[d]\\
        (k,P)^a\ar[r] &(A,f^*P)^a\ar[r] &(A,M)^a
    \end{tikzcd}\]
    of the associated animated log rings.
    
    If the diagram 
    \[\begin{tikzcd}
        (k,\{1\})^a\ar[r]\ar[d] &(A,\{1\})^a\ar[d]\\
        (k,P)^a\ar[r] &(A,M)^a
    \end{tikzcd}\]
    is coCartesian, then the associated diagram
    \[\begin{tikzcd}
        (k,\{1\})^a\ar[r]\ar[d] &(A,\{1\})^a\ar[r]\ar[d] &(A,\{1\})^a\ar[d]\\
        (k,P)^a\ar[r] &(A,f^*P)^a\ar[r] &(A,M)^a
    \end{tikzcd}\]
    has left square coCartesian and outer square coCartesian, so that the right square is also coCartesian by the ``pushout lemma.'' In particular, $(A,f^*P)^a\to (A,M)^a$ is an equivalence.
    
    Conversely, if $(A,f^*P)^a\to (A,M)^a$ is an equivalence, then the outer square of
    \[\begin{tikzcd}
        (k,\{1\})^a\ar[r]\ar[d] &(A,\{1\})^a\ar[r]\ar[d] &(A,\{1\})^a\ar[d]\\
        (k,P)^a\ar[r] &(A,f^*P)^a\ar[r] &(A,M)^a
    \end{tikzcd}\]
    is coCartesian. Since the left square is always coCartesian, the other direction of the ``pushout lemma'' implies that the right square is coCartesian.

    The argument above works just as well in the classical situation (affine or global), giving an alternate proof of \ref{strictness pullback equivalence} to the one in \cite{OgusLectures}.
\end{remark}

\begin{lemma}\label{initial animated prelog ring under monoid}
    Let $(A,M)$ be an animated prelog ring, and let $\theta : M\to N$ be a morphism of animated monoids. Then the $\infty$-category of morphisms of animated prelog rings $(A,M)\to (A',M')$ under $(A,M)$ equipped with a morphism of monoids $N\to M'$ and a commutative diagram
    \[\begin{tikzcd}
        M\ar[r]\ar[rd] &N\ar[d]\\
        &M'
    \end{tikzcd}\]
    admits an initial object, given by
    \[
    (A,M)\to (A\otimes_{\ZZ[M]}\ZZ[N],N).
    \]
    Thus, any morphism of animated prelog rings $(A,M)\to (A',M')$ equipped with a given factorization $M\to N\to M'$ of the morphism of animated monoids canonically admits a factorization through $(A\otimes_{\ZZ[M]}\ZZ[N],N).$
\end{lemma}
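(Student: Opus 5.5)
The plan is to mimic the proof of the classical \ref{initial prelog ring under monoid}, replacing the $1$-categorical adjunction by the $\infty$-categorical adjunction $\ZZ[-] : \aMon\rightleftarrows\aRing : (-,\,\cdot\,)$. The first step is to reformulate an animated prelog ring $(A,M,\alpha)$ as a triple consisting of an animated ring $A$, an animated monoid $M$, and a map $\ZZ[M]\to A$ in $\aRing$. We then reformulate a morphism $(A,M)\to (A',M')$ as a map of monoids $M\to M'$ together with a commutative square
\[\begin{tikzcd}
    \ZZ[M]\ar[r]\ar[d] &\ZZ[M']\ar[d]\\
    A\ar[r] & A'
\end{tikzcd}\]
in $\aRing$. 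Precisely, $\aPreLog$ is the comma category $(\id_{\aMon}\downarrow(-,\,\cdot\,))$, and by adjunction this is equivalent to the comma category $(\ZZ[-]\downarrow\id_{\aRing})$. That comma category is the pullback of $\Arr(\aRing)\xrightarrow{\mathrm{source}}\aRing$ along $\ZZ[-]$. The equivalence follows from the standard fact that an adjunction induces an equivalence of the two comma categories, which can be checked on fibers over $\aMon$ via the equivalence $\aMon_{/(A,\cdot)}\simeq$ the corresponding fiber of maps $\ZZ[M]\to A$.

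With this description in hand, the $\infty$-category in question becomes concrete. Its objects are $(A,M)\to(A',M')$ together with a factorization $M\to N\to M'$. In ring-theoretic terms, such an object consists of a map $N\to M'$ of monoids, hence $\ZZ[N]\to\ZZ[M']\to A'$, together with a map $A\to A'$ and a homotopy making
\[\begin{tikzcd}
    \ZZ[M]\ar[r]\ar[d] &\ZZ[N]\ar[d]\\
    A\ar[r] & A'
\end{tikzcd}\]
commute. The compatibility of this square with the given factorization is recorded by the data of the triangle.

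We then identify the category with a coslice of a span category. Forgetting $M'$ and the map $\ZZ[M']\to A'$ except via $N$ shows that the category is a pullback in which the relevant factor is the $\infty$-category of cocones under the span $A\leftarrow\ZZ[M]\to\ZZ[N]$ in $\aRing$. The remaining data $(M',\ZZ[M']\to A',N\to M')$ forms the fiber of a coCartesian-type projection. The pushout $A\otimes_{\ZZ[M]}\ZZ[N]$, which exists because $\aRing$ is presentable, is initial among such cocones. Equipping it with the monoid $N$ and the canonical map $\ZZ[N]\to A\otimes_{\ZZ[M]}\ZZ[N]$ gives an object, and for any other object the mapping space out of it decomposes as
\[
\Map_{\aMon_{N/}}(N,N)\times\Map_{\text{cocones}}(\text{pushout},A') ,
\]
which is contractible. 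Equivalently, the mapping space is computed as a fiber product of mapping spaces, each factor of which is contractible by the universal properties of $\id_N$ and of the pushout.

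The main obstacle is making the comma-category identification and the mapping-space computation honest at the $\infty$-categorical level rather than objectwise. I would handle this by writing $\aPreLog\simeq\aMon\times_{\aRing}\Arr(\aRing)$, with the map to $\aRing$ given by $\ZZ[-]$ and by the source respectively. Mapping spaces in such a fiber product are then fiber products of mapping spaces. This reduces the claim to the universal property of pushouts in $\Arr(\aRing)$ restricted over the fixed source, which is formal. The final sentence of the lemma follows immediately, since initiality gives the essentially unique factorization.
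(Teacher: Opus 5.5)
Your proposal is correct and follows the paper's argument: the paper also uses the adjunction $\ZZ[-]\dashv(-,\,\cdot\,)$ to rewrite a morphism $(A,M)\to(A',M')$ as a commutative square of animated rings with top edge $\ZZ[M]\to\ZZ[M']$, and then concludes from the universal property of the pushout $A\otimes_{\ZZ[M]}\ZZ[N]$; your description $\aPreLog\simeq\aMon\times_{\aRing}\Arr(\aRing)$ simply supplies $\infty$-categorical bookkeeping that the paper leaves implicit. One imprecision is in your mapping-space computation: the monoid factor should be $\Map_{\aMon_{N/}}\bigl((N,\id_N),(M',N\to M')\bigr)$, and contractibility comes from the mapping space fibering over this contractible space with contractible fibers (spaces of maps of cocones out of the pushout), not from being a fiber product of contractible factors, which by itself would not force contractibility.
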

\begin{proof}
    A morphism of animated prelog rings $(A,M)\to (A',M')$ is equivalent to the data of a commutative diagram of animated rings
    \[\begin{tikzcd}
        \ZZ[M]\ar[d]\ar[r] &\ZZ[M']\ar[d]\\
        A\ar[r] & A'
    \end{tikzcd}\]
    where the top morphism is induced by a morphism of animated monoids $M\to M',$ by adjunction. The proof follows from the universal property of the tensor product.
\end{proof}

We end with a remark on animated prelog structures which factor through $A^\times\to A.$

\begin{lemma}
    Suppose that $\alpha : M\to A$ is an animated prelog structure which factors through the canonical monomorphism $A^\times\to A.$ Then $\alpha^a : M^a_{\alpha}\to A$ is equivalent to $A^\times\to A.$ In particular, if $M$ is grouplike, then $M^a\simeq A^\times.$
\end{lemma}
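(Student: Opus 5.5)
The plan is to compute the logification directly from its definition $M^a_\alpha = M\oplus_{\alpha^{-1}A^\times}A^\times$ and show that, under the factorization hypothesis, the pushout collapses. Write $\alpha = \iota\circ\alpha'$ with $\alpha' : M\to A^\times$ and $\iota : A^\times\to A$ the canonical map.

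First, I would compute $\alpha^{-1}A^\times = M\times_{A} A^\times$. Since $\iota$ is a monomorphism, the diagonal $A^\times\to A^\times\times_A A^\times$ is an equivalence. Pasting pullbacks then gives
\[
M\times_A A^\times\simeq M\times_{A^\times}(A^\times\times_A A^\times)\simeq M\times_{A^\times}A^\times\simeq M.
\]
This monomorphism property holds because $A^\times$ is defined as the pullback of $\pi_0(A)^\times\to\pi_0(A)$, which is a monomorphism of discrete monoids, and monomorphisms are stable under pullback. Under this identification, the map $\alpha^{-1}A^\times\to M$ is the identity, and the map $\alpha^{-1}A^\times\to A^\times$ is $\alpha'$.

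Second, the pushout becomes $M\oplus_M A^\times\simeq A^\times$, because pushing out along an equivalence is trivial. The induced map $\alpha^a : A^\times\to A$ is then the one determined by $\alpha$ on $M$ and $\iota$ on $A^\times$. Restricted to the $A^\times$ summand it is $\iota$, so $\alpha^a\simeq\iota$, which gives the first claim.

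For the \emph{in particular}, suppose $M$ is grouplike. The adjunction $U\dashv(-)^\times$ established earlier says that any map from a grouplike $M$ to $(A,\cdot)$ factors canonically through $A^\times\to A$. So the hypothesis holds and the first part applies.

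The only subtle step is the monomorphism claim. I would check it carefully by computing on mapping spaces: $\Map(T,A^\times)\to\Map(T,A)$ is the base change of $\Map(T,\pi_0(A)^\times)\to\Map(T,\pi_0(A))$, which is an inclusion of path components. I also need to make sure the pullback is computed in $\aMon$, where the forgetful functor to anima preserves limits.
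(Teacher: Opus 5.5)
Your proposal is correct and is exactly the unpacking of the paper's one-line ``clear by construction'' argument. Because $A^\times\to A$ is a monomorphism, $\alpha^{-1}A^\times\simeq M$, so the defining pushout $M\oplus_{\alpha^{-1}A^\times}A^\times$ collapses to $A^\times$; the grouplike case then follows from the adjunction $U\dashv(-)^\times$.
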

\begin{proof}
    Clear by construction. Proof in the context of (graded) prelog/log structures on commutative ring spectra can be found as \cite[Lemmas 3.12, 3.13]{Sagave2012LogarithmicSO}.
\end{proof}

\begin{cor}\label{logificiation of direct sum with grouplike prelog structure}
    If $\alpha : M\to A$ and $\alpha' : M'\to A^\times\to A$ are animated prelog structures on $A,$ then the canonical map $M\to M\oplus M'$ determines a strict morphism of animated prelog rings $(A,M,\alpha)\to(A,M\oplus M',\alpha\oplus\alpha').$
\end{cor}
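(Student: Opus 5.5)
We need to show that $(A,M)\to (A,M\oplus M')$ is strict, i.e. that the canonical map $(A,(\id_A)^*M)^a\to (A,M\oplus M')^a$ is an equivalence. Since the underlying ring map is the identity, $(\id_A)^*M$ is just $(M,\alpha)$. So the claim is that the map $M\to M\oplus M'$ over $A$ becomes an equivalence after logification. We work entirely in $\aPreLogStr_A=\aMon_{/(A,\,\cdot\,)}$ and use the logification adjunction $(-)^a\dashv U$ into $\aLogStr_A$.

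\emph{Step 1.} Note that $M\oplus M'$, the coproduct in $\aMon$, is the coproduct of $(M,\alpha)$ and $(M',\alpha')$ in $\aPreLogStr_A$. This holds because colimits in an overcategory $\aMon_{/A}$ are computed in $\aMon$, and the structure map of the coproduct is $\alpha\oplus\alpha'$, the map induced by $\alpha$ and $\alpha'$. Since $(-)^a$ is a left adjoint, it preserves coproducts. Hence
\[
(M\oplus M')^a\simeq M^a\sqcup M'^a,
\]
where the coproduct on the right is taken in $\aLogStr_A$.

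\emph{Step 2.} By the preceding lemma, $M'^a\simeq (A^\times\to A)$, because $\alpha'$ factors through $A^\times$. Moreover, $A^\times\to A$ is the initial object of $\aLogStr_A$. To see this, let $N\to A$ be any log structure. A map $A^\times\to N$ over $A$ is the same as a map $A^\times\to N\times_A A^\times$ over $A^\times$. The log condition says $N\times_A A^\times\simeq A^\times$, so this mapping space is contractible. (Equivalently, $A^\times\to A$ is the logification of the initial prelog structure $\{1\}\to A$, by the same lemma, and left adjoints preserve initial objects.)

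\emph{Step 3.} A coproduct with an initial object is the identity, so $M^a\to M^a\sqcup M'^a\simeq (M\oplus M')^a$ is an equivalence. It remains to check that this equivalence is the canonical map induced by $M\to M\oplus M'$. This follows from naturality of the adjunction unit together with the coproduct inclusion. The main point needing care is Step 2, namely identifying the log condition with contractibility of the relevant mapping space. Once that holds, the result is formal.
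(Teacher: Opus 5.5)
Your proposal is correct and follows the paper's argument. Logification is a left adjoint, so it sends the coproduct $M\oplus M'$ to the coproduct $M^a\sqcup (M')^a$ of log structures, and $(M')^a\simeq A^\times$ by the preceding lemma, which is a unit for this coproduct. The only differences are cosmetic: you check directly that $A^\times\to A$ is initial among animated log structures on $A$, whereas the paper writes $M\simeq M\oplus\{1\}$ and uses $\{1\}^a\simeq A^\times$. You also note explicitly that the resulting equivalence is the one induced by $M\to M\oplus M'$, which the paper leaves implicit.
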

\begin{proof}
    The proof is a simple computation:
    \[
    M^a_{\alpha}\simeq (M\oplus^{\textrm{prelog}}\{1\})^a_{\alpha\oplus i}\simeq M^a_\alpha\oplus^{\log}\{1\}^a_{i}\simeq M^a_\alpha\oplus^{\log} A^\times\simeq M^a_{\alpha}\oplus(M')^a_{\alpha'}\simeq (M\oplus^{\textrm{prelog}} M')^a_{\alpha\oplus\alpha'}.
    \]
\end{proof}

\subsection{Repletions}

The repletion of a map of animated monoids, originally due to Rognes, is a key object in the construction of Rognes' Hochschild homology. In some sense, replete morphisms and repletions of morphisms are meant to be a homotopically meaningful replacement for the concepts of integral or saturated monoids; see \cite[Section~3]{Rognes_2009} for a discussion and proofs of results for discrete monoids, \cite[Section~2.2]{Sagave_2014} for the model-category theoretic setting, or \cite[Section~2.16]{BLPO_HKR} for the animated setting.

Fundamentally, exactness or repleteness of a morphism of monoids is a direct generalization of the definition of exactness from the classical/discrete case. 

\begin{mydef}
    Let $\theta : N\to M$ be a morphism of animated monoids. We say $\theta$ is
    \begin{enumerate}
        \item \emph{exact} if the diagram
        \[\begin{tikzcd}
            N\ar[r]\ar[d,"\theta"]& N^{\gp}\ar[d,"\theta^{\gp}"]\\
            M\ar[r] &M^{\gp}
        \end{tikzcd}\]
        is Cartesian.
        \item \emph{virtually surjective} if $\pi_0(\theta^{\gp}) : \pi_0(N^{\gp})\to\pi_0(M^{\gp})$ is a surjection of classical abelian groups.
        \item \emph{replete} if it is exact and virtually surjective.
    \end{enumerate}
\end{mydef}

We also desire a method by which we can associate to a morphism $\theta : N\to M$ of animated rings a morphism $\theta' : N'\to M$ which is exact in a universal sense. In order to better discuss this process, we make the following definition.

\begin{mydef}
     Let $\theta : N\to M$ be a morphism of animated monoids.
    \begin{enumerate}
        \item The \emph{pre-exactification} of $\theta$ is the canonical morphism
        \[
        M\times_{M^{\gp}} N^{\gp}\to M.
        \]
        \item Suppose we are given a factorization
        \[
        N\xrightarrow{\theta'} N'\xrightarrow{\theta^e} M
        \]
        of $\theta$ such that the following hold:
        \begin{enumerate}
            \item $\theta^e$ is exact.
            \item $(\theta')^{\gp} : N^{\gp}\to (N')^{\gp}$ is an equivalence.
        \end{enumerate}
        In this situation, we call $\theta^{e}$ an \emph{exactification} of $\theta,$ and we call $N'$ an \emph{exactification of $N$ (relative to $\theta$)}.
        \item Suppose we are given a factorization
        \[
        N\xrightarrow{\theta'} N'\xrightarrow{\theta^\rep} M
        \]
        of $\theta$ such that the following hold:
        \begin{enumerate}
            \item $\theta^\rep$ is replete.
            \item $(\theta')^{\gp} : N^{\gp}\to (N')^{\gp}$ is an equivalence.
        \end{enumerate}
        In this situation, we call $\theta^{\rep}$ an \emph{repletion} of $\theta,$ and we call $N'$ an \emph{repletion of $N$ (relative to $\theta$)}.
    \end{enumerate}
\end{mydef}

As in the classical situation, the pre-exactification of a morphism of animated monoids need not itself be exact without additional assumptions. However, if $\theta$ is virtually surjective, then the pre-exactification provides a repletion of $\theta.$ We choose the non-standard ``pre-exactification'' language to emphasize that in general, the map $M\times_{M^{\gp}} N^{\gp}\to M$ may not be exact.

\begin{proposition}[\protect{\cite[Proposition~2.17]{Sagave_2014}}]
    Let $\theta : N\to M$ be a virtually surjective morphism of animated monoids. Then the projection
    \[
    \theta^{\rep} : N^{\rep}_{\theta} := M\times_{M^{\gp}}N^{\gp}\to M
    \]
    provides a repletion of $\theta : N\to M.$
\end{proposition}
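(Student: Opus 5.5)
We must show: for $\theta : N\to M$ virtually surjective, the projection $\theta^{\rep} : N^{\rep}_\theta := M\times_{M^{\gp}} N^{\gp}\to M$ is replete, and the induced map $\theta' : N\to N^{\rep}_\theta$ (coming from $\theta$ and $N\to N^{\gp}$) induces an equivalence on group completions. The plan mirrors the discrete argument in the lemma attributed to \cite[Lemma~3.8]{Rognes_2009} above, case (2). The key reduction is that once $(\theta')^{\gp}$ is an equivalence, exactness and virtual surjectivity of $\theta^{\rep}$ are formal.

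\textbf{Step 1: the main obstacle.} Show that the composite $N^{\gp}\to (N^{\rep}_\theta)^{\gp}\to N^{\gp}$ is the identity, where the second map is $\pr_2^{\gp}$, and that $\pr_2^{\gp}$ is an equivalence. The first part is formal, since $N\to N^{\rep}_\theta\to N^{\gp}$ is the unit map.

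For the second part, I would use that group completion can be computed as $\Omega\mathrm{B}$, or equivalently degreewise on simplicial models. The pullback $M\times_{M^{\gp}}N^{\gp}$ is a pullback of animated monoids, and pullbacks are computed on underlying anima. The goal is to identify $\pr_2 : N^{\rep}_\theta\to N^{\gp}$ as a base change of the unit $M\to M^{\gp}$ along $\theta^{\gp}$.

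I would then use the group-completion theorem style fact that the unit $M\to M^{\gp}$ becomes an equivalence after inverting $\pi_0 M$, i.e. $M^{\gp}\simeq M[\pi_0(M)^{-1}]$ as an $M$-module colimit. Virtual surjectivity says every element of $\pi_0 M^{\gp}$ is hit by $\pi_0 N^{\gp}$. So inverting $\pi_0 N^{\rep}_\theta$ already inverts enough: the telescope $\colim(N^{\rep}_\theta\xrightarrow{+x}N^{\rep}_\theta\to\cdots)$ over $x\in\pi_0 N^{\rep}_\theta$ maps to $N^{\gp}$ by an equivalence. This holds because fibers of $N^{\rep}_\theta\to N^{\gp}$ are fibers of $M\to M^{\gp}$ over points in the image of $N^{\gp}$, and these become contractible in the colimit.

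An alternative I would try first, since it is more concrete, is to model everything by simplicial commutative monoids. Choose a cofibrant model, replace $\theta^{\gp}$ by a fibration so the strict pullback computes the homotopy pullback, and apply the discrete Lemma (case 2) degreewise, checking that degreewise virtual surjectivity can be arranged (e.g. after replacing $N$ by $N\oplus F$ with $F$ free mapping onto generators). This matches \cite[Proposition~2.17]{Sagave_2014}.

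\textbf{Step 2.} Given Step 1, $(\theta')^{\gp}$ is an equivalence, being a section of the equivalence $\pr_2^{\gp}$. Then $\pi_0((\theta^{\rep})^{\gp})$ is identified with $\pi_0(\theta^{\gp})$, which is surjective, so $\theta^{\rep}$ is virtually surjective.

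\textbf{Step 3.} Exactness: the square for $\theta^{\rep}$ has corners $N^{\rep}_\theta$, $(N^{\rep}_\theta)^{\gp}\simeq N^{\gp}$, $M$, $M^{\gp}$. Under this identification it is exactly the defining pullback square, hence Cartesian.
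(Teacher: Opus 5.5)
Your Steps 2 and 3 are correct, and so is the reduction to showing that $\pr_2^{\gp}\colon (N^{\rep}_\theta)^{\gp}\to N^{\gp}$ is an equivalence. But that step is the entire content of the proposition, and neither of your sketches establishes it.

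In the telescope sketch, $M^{\gp}\simeq M[\pi_0(M)^{-1}]$ does not follow from the group completion theorem. That theorem only gives $H_*(M)[\pi_0(M)^{-1}]\cong H_*(M^{\gp})$. As a statement about homotopy types it fails for $E_\infty$-monoids: the telescope of $\coprod_n B\Sigma_n$ is $\ZZ\times B\Sigma_\infty$, not $QS^0$. For animated monoids it is true, but for a reason specific to strict commutativity that you must supply:
\begin{itemize}
\item for $S\to\pi_0(M)$ surjective one has $M^{\gp}\simeq M\oplus_{\NN^{(S)}}\ZZ^{(S)}$, because the pushout has group $\pi_0$ and $\ZZ^{(S)}=(\NN^{(S)})^{\gp}$;
\item the underlying anima of this pushout is a two-sided bar construction, which commutes with presenting $\ZZ$ as the $\NN$-module colimit of $\NN\xrightarrow{+1}\NN\xrightarrow{+1}\cdots$.
\end{itemize}
You should also say where virtual surjectivity enters. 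It makes $\pi_0(N^{\rep}_\theta)\to\pi_0(M)$ surjective, since $\pi_0$ of a pullback surjects onto the fiber product of the $\pi_0$'s. Hence translations by $\pi_0(N^{\rep}_\theta)$ are cofinal among translations of $M$. Because filtered colimits commute with the pullback, the telescope of $N^{\rep}_\theta$ is $M^{\gp}\times_{M^{\gp}}N^{\gp}\simeq N^{\gp}$. Applying the telescope formula to $N^{\rep}_\theta$ itself then gives the claim.

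The simplicial sketch contains a wrong step and omits the key one. Replacing $N$ by $N\oplus F$ with $F$ free on generators changes $N^{\gp}$, hence $N^{\rep}_\theta$, so it does not reduce the problem. Instead, note that $N^{\rep}_\theta$ depends only on $\theta^{\gp}$, and replace $\theta^{\gp}$ by a Kan fibration $G\to M^{\gp}$ of simplicial abelian groups. Being surjective on $\pi_0$, it is surjective in every degree. The strict pullback $E=M\times_{M^{\gp}}G$ then models $N^{\rep}_\theta$ by right properness. The proof of case (2) of the discrete lemma works verbatim for any abelian group surjecting onto $M_n^{\gp}$, giving $E_n^{\gp}\cong G_n$.

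What remains is that levelwise group completion computes the animated one for every simplicial commutative monoid. ``Choose a cofibrant model'' does not give this, since $E$ is not cofibrant. It holds for the following reason. For a discrete commutative monoid $A$, the map $\ZZ[A]\to\ZZ[A^{\gp}]$ is a flat localization with $\ZZ\otimes_{\ZZ[A]}\ZZ[A^{\gp}]\cong\ZZ$. Flat base change for $\Tor$, with coefficients $\ZZ$ and $\ZZ[A^{\gp}]$, then shows $\mathrm{B}A\simeq K(A^{\gp},1)\simeq\mathrm{B}A^{\gp}$. Realizing degreewise identifies $\Omega\mathrm{B}M$ with the levelwise group completion of $M$.

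Alternatively, apply $\mathrm{B}$ to the defining square. Realization preserves this levelwise pullback by the Bousfield--Friedlander theorem, because $\mathrm{B}_\bullet\pi_0(N^{\gp})\to\mathrm{B}_\bullet\pi_0(M^{\gp})$ is a surjection of simplicial groups and hence a Kan fibration. Then $\mathrm{B}M\simeq\mathrm{B}M^{\gp}$ yields $\mathrm{B}N^{\rep}_\theta\simeq\mathrm{B}N^{\gp}$.

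The paper only cites this proposition, so one of these arguments has to be carried out for Step 1 to be a proof.
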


In order to obtain our comparison morphism, it will be helpful to have a universal property of the exactification/repletion of a morphism of animated monoids (and later, a morphism of animated prelog rings). Similar results in more generality can be found in \cite{zhang2026derivedlogarithmicdeformationtheory}; however, we do not require the generality of the theory developed therein.\footnote{Note that some results in \cite{zhang2026derivedlogarithmicdeformationtheory} are shown for $\EE_{\infty}$-monoids rather than animated monoids, but many of the arguments there still hold for animated monoids. It is not immediately obvious that the definition of an exact morphism in loc. cit. agrees with the definition stated here or in \cite{Rognes_2009}, \cite{Sagave_2014}, or \cite{BLPO_HKR} in the general situation; however, it is the case that the notions agree in the case of virtually surjective morphisms.}

\begin{lemma}
    Let $\theta : N\to M$ be a virtually surjective morphism of animated monoids. The factorization $N\to N^{\rep}_{\theta}\xrightarrow{\theta^{\rep}} M$ is initial among factorizations $N\to N'\xrightarrow{\theta'} M$ of $\theta$ such that $\theta'$ is replete. 
\end{lemma}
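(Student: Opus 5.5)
Given a factorization $N\xrightarrow{\psi} N'\xrightarrow{\theta'} M$ of $\theta$ with $\theta'$ replete, I will construct a map $N^{\rep}_\theta\to N'$ over $M$ and under $N$. I will then show that the space of such maps is contractible. Here "initial" is read in the $\infty$-category of factorizations of $\theta$, i.e. $\aMon_{N/\,/M}$, restricted to those whose second map is replete.

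\textbf{Step 1: construct the map.} Since $\theta'$ is exact, the square
\[\begin{tikzcd}
N'\ar[r]\ar[d,"\theta'"] & (N')^{\gp}\ar[d]\\
M\ar[r] & M^{\gp}
\end{tikzcd}\]
is Cartesian. Hence maps $N^{\rep}_\theta\to N'$ over $M$ are equivalent to maps $N^{\rep}_\theta\to (N')^{\gp}$ compatible with the two maps to $M^{\gp}$. Because $(N')^{\gp}$ is grouplike, maps $N^{\rep}_\theta\to(N')^{\gp}$ are equivalent to maps $(N^{\rep}_\theta)^{\gp}\to (N')^{\gp}$. By the proposition of Sagave quoted above (\cite[Proposition~2.17]{Sagave_2014}), $N\to N^{\rep}_\theta$ induces an equivalence $N^{\gp}\xrightarrow{\sim}(N^{\rep}_\theta)^{\gp}$, so these are in turn equivalent to maps $N^{\gp}\to (N')^{\gp}$. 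The canonical choice is $\psi^{\gp}$.

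\textbf{Step 2: organize the data homotopy-coherently.} The plan is to show that the mapping space in $\aMon_{N/\,/M}$ from $(N\to N^{\rep}_\theta\to M)$ to $(N\to N'\to M)$ is contractible. Unwinding it gives a fiber product
\[\Map_{/M}(N^{\rep}_\theta,N')\times_{\Map_{/M}(N,N')}\{\psi\}.\]
Using the Cartesian square for $\theta'$, this becomes
\[\Map_{/M^{\gp}}(N^{\rep}_\theta,(N')^{\gp})\times_{\Map_{/M^{\gp}}(N,(N')^{\gp})}\{\psi^{\gp}\circ\eta\},\]
where $\eta : N\to N^{\gp}$ is the unit and we write $\Map_{/M^{\gp}}$ for maps over $M^{\gp}$. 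By the group-completion adjunction, restriction along $N\to N^{\rep}_\theta$ identifies $\Map_{/M^{\gp}}(N^{\rep}_\theta,(N')^{\gp})$ with $\Map_{/M^{\gp}}(N,(N')^{\gp})$, since both are computed by maps out of $N^{\gp}\simeq (N^{\rep}_\theta)^{\gp}$ in $\cMod_\ZZ^{\cn}$ over $M^{\gp}$. The fiber of an equivalence over a point is contractible, which gives the claim.

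\textbf{Step 3 and the main obstacle.} The hardest part is Step 2's coherence: the identification of slice mapping spaces with fiber products, and the claim that group completion commutes with the relevant slicing over $M^{\gp}$. The first I will handle with the standard formula for mapping spaces in over/under categories (\cite[Section~5.5.5]{HTT}). For the second, I will use that $(-)^{\gp}\dashv U$ induces an adjunction on slices over a grouplike object.

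Finally, note that only the exactness of $\theta'$ was used. Virtual surjectivity of $\theta'$ enters only to place the target in the stated class, while virtual surjectivity of $\theta$ is what makes $N^{\rep}_\theta\to M$ itself replete, via Sagave's proposition.
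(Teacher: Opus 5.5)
Your proposal is correct and follows essentially the same route as the paper. Both arguments use exactness of $\theta'$ to rewrite maps into $N'$ over $M$ as maps into $(N')^{\gp}$ over $M^{\gp}$, then use the group-completion adjunction together with $N^{\gp}\simeq (N^{\rep}_{\theta})^{\gp}$ to show the remaining datum is forced to be $\psi^{\gp}$, so the mapping space of factorizations is contractible; your presentation as the fiber over $\psi$ of an equivalence of slice mapping spaces is a cleaner bookkeeping of the paper's ``determined up to contractible choice'' argument.
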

\begin{proof}
    Suppose $N\to N'\xrightarrow{\theta'} M$ is such a factorization. We first show that the space of morphisms of such factorizations from $N\to N^{\rep}_{\theta}\to M$ to $N\to N'\to M$ is contractible.

    Consider the diagram
    \[\begin{tikzcd}
        N\ar[r]\ar[d] &N^{\gp}\ar[d]\\
        N'\ar[r]\ar[d] &(N')^{\gp}\ar[d]\\
        M\ar[r] &M^{\gp}
    \end{tikzcd}\]
    Since the bottom square is a pullback, we note that there is a canonical projection morphism $N^{\rep}_{\theta}\to N',$ since $N^{\rep}_{\theta}$ is equivalent to the pullback $N'\times_{(N')^{\gp}} N^{\gp}$ (using the fact that the bottom square is a pullback and that $N^{\rep}_{\theta}$ is the pullback of the lower morphism and the composition of the two right vertical morphisms in the diagram).

    Now, consider the mapping space $\Map_{\theta}(N^{\rep}_{\theta},N')$ -- in other words, the space of diagrams of the form
    \[\begin{tikzcd}
        N\ar[r]\ar[rr, bend left,"\theta"]\ar[d,"\id_N"] & N^{\rep}_{\theta}\ar[r,"\theta^{\rep}"']\ar[d] &M\ar[d,"\id_M"]\\
        N\ar[r]\ar[rr, bend right,"\theta"'] &N'\ar[r,"\theta'"] &M.
    \end{tikzcd}\]

    By repleteness of $\theta',$ a general morphism of animated monoids from $N^{\rep}_{\theta}$ is equivalent to a diagram of animated monoids of the form
    \[\begin{tikzcd}
        N^{\rep}_{\theta}\ar[r]\ar[d] &(N')^{\gp}\ar[d,"(\theta')^{\gp}"]\\
        M\ar[r] &M^{\gp}.
    \end{tikzcd}\]
    In other words, the universal property of the fiber product guarantees that
    \[
    \Map_{\aMon}(N^{\rep}_{\theta},N')\simeq \Map(N^{\rep}_{\theta}, M)\times_{\Map(N^{\rep}_{\theta},M^{\gp})}\Map((N^{\rep}_{\theta})^{\gp},(N')^{\gp}).
    \]
    Now, using the fact that this is not simply a map $N_\theta^{\rep}\to N',$ but a map of factorizations of $\theta,$  it follows that the map $N_\theta^{\rep}\to M$ must be (equivalent to) the repletion $\theta^{\rep}$ of $\theta.$ Additionally, because the map to $M^{\gp}$ is the composition of $N^{\rep}_{\theta}\to M\to M^{\gp},$ this map is also determined up to contractible choice. We are then left with the determination of the map $N^{\rep}_{\theta}\to (N')^{\gp},$ which by the universal property of the groupification, must factor $N^{\rep}_{\theta}\to \left(N^{\rep}_{\theta}\right)^{\gp}\to (N')^{\gp},$ where the first map is the canonical morphism from $N^{\rep}_{\theta}$ to its groupification. However, this map further fits into the diagram
    \[\begin{tikzcd}
        N\ar[r]\ar[d] &N^{\gp}\ar[d]\\
        N^{\rep}_{\theta}\ar[r]\ar[d] &\left(N^{\rep}_{\theta}\right)^{\gp}\ar[d]\\
        N'\ar[r] &(N')^{\gp}.
    \end{tikzcd}\]
    Since the map $N^{\gp}\to (N^{\rep}_{\theta})^{\gp}$ is an equivalence, it follows that the map $\left(N^{\rep}_{\theta}\right)^{\gp}\to (N')^{\gp}$ is equivalent to the groupification of the given map $N\to N',$ via the equivalence $\left(N^{\rep}_{\theta}\right)^{\gp}\to N^{\gp}.$ Thus, this map is also determined up to contractible choice, so that the space of morphisms is contractible, as desired.
\end{proof}

We may reframe the above lemma as the following adjunction.

\begin{cor}
    Let $M$ be an animated monoid, let $\aMon_{/M}^{\vsur}$ be the full subcategory of $\aMon_{/M}$ spanned by those maps $\theta : N\to M$ which are virtually surjective, and let $\aMon_{/M}^{\rep}$ denote the full subcategory of $\aMon_{/M}$ spanned by $\theta : N\to M$ which are replete. Then there is an adjunction
    \[\begin{tikzcd}
        (-)^{\rep} : \aMon_{/M}^{\vsur}\ar[r,shift left] &\aMon_{/M}^{\rep}\ar[l,shift left] : U
    \end{tikzcd}\]
    where the right adjoint is given by the forgetful functor $\aMon_{/M}^{\rep}\to\aMon_{/M}^{\vsur}$ and the left adjoint is given by the pre-exactification/repletion $(\theta : N\to M)\mapsto (\theta^{\rep} : N^{\rep}_{\theta}\simeq M\times_{M^{\gp}} N^{\gp}\to M).$
\end{cor}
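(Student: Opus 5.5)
The plan is to deduce the adjunction directly from the preceding lemma via the standard pointwise criterion for left adjoints (\cite[Proposition~5.2.4.2]{HTT} or its dual formulation in terms of initial objects in comma categories). The right adjoint $U : \aMon_{/M}^{\rep}\to\aMon_{/M}^{\vsur}$ is the inclusion of a full subcategory; this is well defined because replete morphisms are by definition virtually surjective. So it suffices to show that for every virtually surjective $\theta : N\to M$ there is a replete object $\theta^{\rep}$ and a unit $\eta_\theta : \theta\to U(\theta^{\rep})$ in $\aMon_{/M}^{\vsur}$ such that composition with $\eta_\theta$ induces an equivalence
\[
\Map_{\aMon_{/M}^{\rep}}(\theta^{\rep},\theta')\to\Map_{\aMon_{/M}}(\theta,U\theta')
\]
for every replete $\theta' : N'\to M$.

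First, take $\theta^{\rep} : N^{\rep}_\theta = M\times_{M^{\gp}}N^{\gp}\to M$. This is replete by \cite[Proposition~2.17]{Sagave_2014}, quoted above. The unit $\eta_\theta$ is the canonical map $N\to N^{\rep}_\theta$ induced by $\theta$ and $N\to N^{\gp}$; by construction it lies over $M$.

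Second, identify both mapping spaces with a common space. A point of $\Map_{\aMon_{/M}}(\theta,\theta')$ is a map $g : N\to N'$ together with a homotopy $\theta'\circ g\simeq\theta$, which is exactly a factorization $N\to N'\to M$ of $\theta$ through the replete map $\theta'$. The lemma identifies $\theta^{\rep}$ as initial among such factorizations. It remains to match the two mapping spaces. Using exactness of $\theta'$, a map $N^{\rep}_\theta\to N'$ over $M$ amounts to a map $(N^{\rep}_\theta)^{\gp}\simeq N^{\gp}\to (N')^{\gp}$ compatible with the maps to $M^{\gp}$, where the identification $(N^{\rep}_\theta)^{\gp}\simeq N^{\gp}$ is part of the definition of repletion. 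By the universal property of group completion, such a map is in turn the same as a map $N\to (N')^{\gp}$ over $M^{\gp}$. Applying exactness of $\theta'$ once more, this is the same as a map $N\to N'$ over $M$. One then checks that this chain of equivalences is inverse to precomposition with $\eta_\theta$.

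The main obstacle is this last bookkeeping step: ensuring that the fiber-product description of mapping spaces into $N'$ is compatible with the slice over $M$, so that the homotopies to $M$ are tracked coherently. I would handle it by writing $\Map_{/M}(X,N')$ as the fiber of $\Map(X,N')\to\Map(X,M)$ and then pulling back the cartesian square
\[
\Map(X,N')\simeq\Map(X,M)\times_{\Map(X,M^{\gp})}\Map(X^{\gp},(N')^{\gp}).
\]
Once this is set up, the rest of the argument is formal.
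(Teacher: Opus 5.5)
Your proposal is correct and follows essentially the paper's route. The paper's preceding lemma proves that $N\to N^{\rep}_\theta\to M$ is initial among factorizations of $\theta$ through a replete map, using exactness of $\theta'$ to write $\Map(N^{\rep}_\theta,N')$ as a fiber product, the equivalence $N^{\gp}\simeq (N^{\rep}_\theta)^{\gp}$, and the universal property of group completion; the corollary is then read off as an adjunction, exactly via the pointwise criterion you invoke, and your explicit fiberwise tracking over $M$ and $M^{\gp}$ is a somewhat more careful rendering of that same computation.
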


We will mainly be concerned with the repletion in the case of the summation or codiagonal morphism $\sigma : M\oplus_P M\to M$ for some fixed morphism $\theta : P\to M$ of animated monoids. In this case, $\sigma$ admits a canonical section (in fact, two canonical sections), and the following lemma provides us with a useful concrete description of the repletion in this situation.

\begin{proposition}[\protect{\cite[Proposition~2.21]{BLPO_HKR}}]\label{repletion with section identification}
    Let $\theta : N\to M$ be a map of animated monoids, and assume that it has a section $\eta : M\to N.$ Then there is an equivalence
    \[
    N^{\rep}_{\theta}\xrightarrow{\sim} M\oplus\cofib(\eta^{\gp})
    \]
    of animated monoids over and under $M,$ induced by the canonical map
    \[
    \theta^{\rep} : N^{\rep}_{\theta}\to M
    \]
    and the composition
    \[
    N^{\rep}_{\theta}\to N^{\gp}\to\cofib(\eta^{\gp}),
    \]
    where the map $N^{\rep}_{\theta} \simeq M\times_{M^{\gp}} N^{\gp}\to N^{\gp}$ is the projection onto $N^{\gp}$ (which can be identified with the group completion of $N^{\rep}_{\theta}$), and the map $N^{\gp}\to\cofib(\eta^{\gp} : M^{\gp}\to N^{\gp})$ is the canonical map.\footnote{Note that in the category of animated monoids, finite products and finite coproducts coincide.}
\end{proposition}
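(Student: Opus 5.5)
First observe that $\theta$ is virtually surjective: since $\theta\circ\eta\simeq\id_M$, we have $\theta^{\gp}\circ\eta^{\gp}\simeq\id_{M^{\gp}}$, so $\pi_0(\theta^{\gp})$ is split surjective. By \cite[Proposition~2.17]{Sagave_2014}, the pre-exactification $N^{\rep}_{\theta}\simeq M\times_{M^{\gp}}N^{\gp}$ is therefore a repletion of $\theta$, and we will work with this explicit pullback model throughout.

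The heart of the argument is the stable splitting in $\cMod_{\ZZ}^{\cn}$ (where the group completions live). Because $\eta^{\gp}$ has the retraction $\theta^{\gp}$, the cofiber sequence $M^{\gp}\xrightarrow{\eta^{\gp}}N^{\gp}\to\cofib(\eta^{\gp})$ splits. Concretely, the map
\[
(\theta^{\gp},q) : N^{\gp}\to M^{\gp}\oplus\cofib(\eta^{\gp}),
\]
with $q$ the canonical map to the cofiber, is an equivalence. To see this, compare the cofiber sequences $M^{\gp}\to N^{\gp}\to\cofib(\eta^{\gp})$ and $M^{\gp}\to M^{\gp}\oplus\cofib(\eta^{\gp})\to\cofib(\eta^{\gp})$; the map is the identity on the outer terms, so it is an equivalence by the five lemma on homotopy groups. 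Moreover, this equivalence lies over $M^{\gp}$: under it, $\theta^{\gp}$ becomes the projection $\pr_1$.

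Next, substitute this into the pullback, using that finite products and coproducts agree in $\aMon$:
\[
M\times_{M^{\gp}}N^{\gp}\simeq M\times_{M^{\gp}}\bigl(M^{\gp}\times\cofib(\eta^{\gp})\bigr)\simeq M\times\cofib(\eta^{\gp})\simeq M\oplus\cofib(\eta^{\gp}).
\]
Tracing the maps shows that this composite is exactly the pair $(\theta^{\rep},\, q\circ\pr_{N^{\gp}})$ from the statement. Compatibility over $M$ is then immediate, since the first coordinate is $\theta^{\rep}$.

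Compatibility under $M$ comes next. The section $M\to N^{\rep}_{\theta}$ is induced by $\eta$ (via $N\to N^{\rep}_\theta$). Its composite with $q\circ\pr_{N^{\gp}}$ is $q\circ\eta^{\gp}\circ(M\to M^{\gp})$, and this is canonically nullhomotopic because $q\circ\eta^{\gp}\simeq 0$ by the definition of the cofiber. So $M$ maps to $M\oplus 0$, as required.

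I expect the main obstacle to be making the identification of the pullback rigorous in the $\infty$-category $\aMon$, rather than just in modules. The splitting lives in $\cMod_{\ZZ}^{\cn}$, so I would use that $U:\cMod_{\ZZ}^{\cn}\to\aMon$ is both a left and a right adjoint, hence preserves products and pullbacks. This lets me transport the equivalence over $M^{\gp}$ into $\aMon$ and then apply pullback pasting. The homotopy witnessing compatibility under $M$ should be recorded at the same time, from the cofiber square.
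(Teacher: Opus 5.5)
Your argument is correct: the section makes $\theta$ virtually surjective, so $N^{\rep}_{\theta}\simeq M\times_{M^{\gp}}N^{\gp}$, and the retraction $\theta^{\gp}$ splits $N^{\gp}\simeq M^{\gp}\oplus\cofib(\eta^{\gp})$ over $M^{\gp}$. Pullback pasting, together with the null-homotopy $q\circ\eta^{\gp}\simeq 0$, then gives the equivalence over and under $M$. The paper gives no proof of its own and only cites \cite[Proposition~2.21]{BLPO_HKR}; your route is the natural one for this statement, and the transport of the splitting into $\aMon$ via the limit-preserving forgetful functor supplies the only detail that needs care.
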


In the specific situation of the codiagonal morphism $\sigma : M\oplus_P M\to M,$ we may simplify the description of the cofiber $\cofib(M^{\gp}\to \left(M\oplus_P M\right)^{\gp})$ (for either section) using the following lemma.

\begin{lemma}
    Let $\cC$ be a stable $\infty$-category, and let $f : X\to Y$ and $g : X\to Y'$ be two maps in $\cC.$ Then there is a canonical equivalence $\cofib(Y\to Y\oplus_X Y')\simeq \cofib(X\xrightarrow{g} Y').$
\end{lemma}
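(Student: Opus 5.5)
We want $\cofib(Y\to Y\oplus_X Y')\simeq\cofib(g)$, where the map $Y\to Y\oplus_X Y'$ is the canonical inclusion into the pushout. The plan is to use pasting of pushout squares in the stable $\infty$-category $\cC$, exactly as in the proof of \ref{cofiber square}.

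First, consider the diagram
\[\begin{tikzcd}
    X\ar[r,"g"]\ar[d,"f"] & Y'\ar[r]\ar[d] & 0\ar[d]\\
    Y\ar[r] & Y\oplus_X Y'\ar[r] & C
\end{tikzcd}\]
Here the left square is the defining pushout square, and the right square is defined to be a pushout. By construction, $C\simeq\cofib(Y'\to Y\oplus_X Y')$. By the pasting lemma for pushouts, the outer rectangle is also a pushout, so $C\simeq Y\oplus_X 0\simeq\cofib(f)$. This shows that the roles of the two maps swap.

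To get the statement as written, run the same argument with the transposed diagram, pasting vertically. The outer square is the pushout of $Y\leftarrow X\to Y'$, and the lower square is the pushout of $Y\oplus_X Y'\leftarrow Y\to 0$. So we paste
\[\begin{tikzcd}
    X\ar[r,"g"]\ar[d,"f"] & Y'\ar[d]\\
    Y\ar[r]\ar[d] & Y\oplus_X Y'\ar[d]\\
    0\ar[r] & D
\end{tikzcd}\]
with $D:=\cofib(Y\to Y\oplus_X Y')$. The top square is coCartesian by definition and the bottom square by definition of $D$. Hence the outer rectangle
\[X\to Y',\quad X\to 0,\quad Y'\to D\]
is coCartesian, i.e.\ $D\simeq 0\oplus_X Y'\simeq\cofib(g)$. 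Canonicity follows because every identification is a pasting of specified pushout squares, which are unique up to contractible choice.

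The only point needing care is the pasting lemma for pushouts in an $\infty$-category, \cite[Lemma 4.4.2.1]{HTT}, applied to the vertical composite. The composite $X\to Y\to 0$ is the zero map, so the outer square computes $\cofib(g)$. Stability is used only to call $0\oplus_X Y'$ the cofiber of $g$. I expect no real obstacle beyond tracking which leg is which; the lemma is then used to identify $\cofib(M^{\gp}\to(M\oplus_P M)^{\gp})\simeq\cofib(P^{\gp}\to M^{\gp})=M^{\gp}/P^{\gp}$.
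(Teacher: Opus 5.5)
Your argument is correct, but it takes a different route from the paper.

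\textbf{The paper's route.} It uses stability to write the pushout as a cofiber, $X\xrightarrow{(f,-g)}Y\oplus Y'\to Y\oplus_X Y'$. It then takes cofibers in a $3\times 3$ diagram whose columns are $0\to X\to X$, $Y\to Y\oplus Y'\to Y'$ and $Y\to Y\oplus_X Y'\to Q$. Commuting cofibers identifies $Q$ both with $\cofib(Y\to Y\oplus_X Y')$ and with the cofiber of the induced map $X\to Y'$. That induced map is the $Y'$-component of $(f,-g)$, namely $-g$, so the final identification with $\cofib(g)$ quietly uses $\cofib(-g)\simeq\cofib(g)$ via $-\id_X$.

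\textbf{Your route.} You paste the defining pushout square with the square defining $D$ and apply the pasting law \cite[Lemma~4.4.2.1]{HTT}. The orientation checks out: the $X$-corner square is a pushout, so the $Y$-corner square being a pushout forces the outer rectangle to be the pushout of $0\leftarrow X\xrightarrow{g}Y'$. This has three advantages:
\begin{itemize}
\item It needs only pointedness and pushouts, not stability, since $\cofib(g)$ is by definition $0\oplus_X Y'$.
\item It produces $\cofib(g)$ directly, with no sign to track.
\item Canonicity is immediate from the uniqueness of pushouts, and your horizontal pasting gives the symmetric statement $\cofib(Y'\to Y\oplus_X Y')\simeq\cofib(f)$ for free.
\end{itemize}

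\textbf{What the paper's version buys.} It stays within the stable, $3\times 3$ toolkit of its preceding lemma. It also makes explicit the cofiber sequence $X\to Y\oplus Y'\to Y\oplus_X Y'$, which is the form in which $(M\oplus_P M)^{\gp}$ is then manipulated through sum and projection maps.
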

\begin{proof}
    This follows from the fact that $X\xrightarrow{(f,-g)} Y\oplus Y'\to Y\oplus_{f,X,g} Y'$ is a cofiber sequence, together with the square
    \[\begin{tikzcd}
        0\ar[r]\ar[d] &Y\ar[r,"\id"]\ar[d] & Y\ar[d]\\
        X\ar[r,"{(f,-g)}"]\ar[d] & Y\oplus Y'\ar[r]\ar[d] &Y\oplus_X Y'\ar[d]\\
        X\ar[r] & Y'\ar[r] & Q
    \end{tikzcd}\]
    so that $Q$ is identified with both $\Cof(Y\to Y\oplus_X Y')$ and $\Cof(X\to Y').$
\end{proof}

\begin{cor}
    Let $\theta : P\to M$ be a map of animated monoids. Then there is a canonical equivalence
    \[\left(M\oplus_P M\right)_{\sigma}^{\rep}\simeq M\oplus M^{\gp}/P^{\gp}.\]
\end{cor}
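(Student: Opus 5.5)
The plan is to apply \ref{repletion with section identification} to the codiagonal $\sigma : M\oplus_P M\to M$ and then use the preceding lemma to identify the cofiber. First, $\sigma$ admits a section: take $\eta = \iota_1 : M\to M\oplus_P M$, the inclusion into the first factor. The composite $\sigma\circ\iota_1$ is the identity of $M$ by the universal property of the pushout.

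Next we check that \ref{repletion with section identification} applies. Because $\sigma$ has a section, $\pi_0(\sigma^{\gp})$ is split surjective. Hence $\sigma$ is virtually surjective, and its repletion is the pre-exactification $M\times_{M^{\gp}}(M\oplus_P M)^{\gp}$. The proposition then gives an equivalence
\[
(M\oplus_P M)^{\rep}_{\sigma}\simeq M\oplus\cofib(\iota_1^{\gp} : M^{\gp}\to (M\oplus_P M)^{\gp}),
\]
and this equivalence is canonical, over and under $M$.

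It remains to compute the cofiber in the stable $\infty$-category $\cMod_{\ZZ}$. Group completion is a left adjoint, so it preserves pushouts, and therefore $(M\oplus_P M)^{\gp}\simeq M^{\gp}\oplus_{P^{\gp}}M^{\gp}$. The pushout in $\cMod_{\ZZ}^{\cn}$ agrees with the one in $\cMod_{\ZZ}$, since connective objects are closed under colimits. Under this identification $\iota_1^{\gp}$ becomes the inclusion of the first summand $Y\to Y\oplus_X Y'$, where $X = P^{\gp}$ and $Y = Y' = M^{\gp}$, with both maps $X\to Y$ and $X\to Y'$ equal to $\theta^{\gp}$. The preceding lemma then gives
\[
\cofib(\iota_1^{\gp})\simeq\cofib(\theta^{\gp} : P^{\gp}\to M^{\gp}) =: M^{\gp}/P^{\gp}.
\]
This cofiber is connective, being the cofiber of a map of connective objects, so it is a grouplike animated monoid and the direct sum $M\oplus M^{\gp}/P^{\gp}$ makes sense in $\aMon$. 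Substituting it into the previous display yields the claimed equivalence.

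The main point needing care is compatibility of the identifications. We must check that the map $(M\oplus_P M)^{\gp}\to\cofib$ used in \ref{repletion with section identification} matches the cofiber map in the lemma, so that the result is canonical rather than merely an abstract equivalence. As a consistency check, on $\pi_0$ for discrete integral monoids the result should recover the classical shear isomorphism $[m,m']\mapsto(m,[m'])$ of the earlier example.
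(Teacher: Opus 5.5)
Your argument is correct and follows essentially the paper's route: the paper gets this corollary by applying the section description of the repletion to $\sigma$ with section $\iota_1$ (or $\iota_2$), using that group completion carries the pushout $M\oplus_P M$ to $M^{\gp}\oplus_{P^{\gp}}M^{\gp}$, and then using the preceding stable-category lemma to identify the cofiber of $M^{\gp}\to(M\oplus_P M)^{\gp}$ with $M^{\gp}/P^{\gp}$. One slip, in your sanity check only: the equivalence lies over $M$ via $\sigma$, so the induced map $M\oplus_P M\to M\oplus M^{\gp}/P^{\gp}$ is $[m,m']\mapsto(m+m',[m'])$, matching the paper's later $\rho_{(f,\theta)}$; the formula $[m,m']\mapsto(m,[m'])$ you quote has the wrong first component and is not even well defined on $M\oplus_P M$.
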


If we identify $\left(M\oplus_P M\right)^{\rep}\simeq M\oplus \left(M\oplus_P M\right)^{\gp}/M^{\gp},$ where $M\to M\oplus_P M$ via the canonical section corresponding to the first component, then the diagram
\[\begin{tikzcd}
    M\oplus_P M\ar[r]\ar[rd] & M\oplus \left(M\oplus_P M\right)^{\gp}/M^{\gp}\ar[r]\ar[d] & \left( M\oplus_P M\right)^{\gp}\ar[d]\\
    &M\ar[r] & M^{\gp}
\end{tikzcd}\]
commutes. Here, the map $M\oplus_P M\to M\oplus \left(M\oplus_P M\right)^{\gp}/M^{\gp}\simeq M\times \left(M\oplus_P M\right)^{\gp}/M^{\gp}$ is induced by summation on the first component and the composition of the groupification followed by the canonical quotient on the second component, the map $M\oplus\left(M\oplus_P M\right)^{\gp}/M^{\gp}\to M$ is the projection onto the first component, and the map $M\oplus \left(M\oplus_P M\right)^{\gp}/M^{\gp}\to (M\oplus_P M)^{\gp}$ is the inverse of the map $(M\oplus_P M)^{\gp}\to M^{\gp}\oplus \left(M\oplus_P M\right)^{\gp}/M^{\gp}$ induced by summation in the first component and the projection in the second. 

On the other hand
\[\begin{tikzcd}
    M\oplus_P M\ar[r]\ar[rd] & M\oplus M^{\gp}/P^{\gp}\ar[r]\ar[d] & \left( M\oplus_P M\right)^{\gp}\ar[d]\\
    &M\ar[r] & M^{\gp}
\end{tikzcd}\]
has $M\oplus_P M\to M\oplus M^{\gp}/P^{\gp}$ induced by summation on the first component and projection onto the second component, $M\oplus M^{\gp}/P^{\gp}\to M$ given by projection onto the first component, and $M\oplus M^{\gp}/ P^{\gp}\to (M\oplus_P M)^{\gp}$ given by groupification composed with the inverse of the map $(M\oplus_P M)^{\gp}\to M^{\gp}\oplus M^{\gp}/P^{\gp}$ which is summation onto the first component and projection onto the second.

Tracing through these identifications, the map $M\oplus M^{\gp}/P^{\gp}\to M\oplus\left(M\oplus_P M\right)^{\gp}/M^{\gp}$ is given by the identity on the $M$ component, and via the diagram
\[\begin{tikzcd}
    P^{\gp}\ar[r]\ar[d] & M^{\gp}\ar[d]\ar[r] &M^{\gp}/P^{\gp}\ar[d]\\
    M^{\gp}\ar[r] &(M\oplus_P M)^{\gp}\ar[r] &(M\oplus_P M)^{\gp}/M^{\gp}
\end{tikzcd}\]
on the second (where the left hand square is obtained by applying groupification to the square defining the pushout).

\begin{mydef}
    Let $(f,f^\flat) : (B,N)\to (A,M)$ be a map of animated prelog rings. We say $(f,f^\flat)$ is \emph{virtually surjective} (respectively, \emph{exact}, \emph{replete}) if the morphism of animated monoids $f^\flat : N\to M$ is virtually surjective (respectively, \emph{exact}, \emph{replete}).
\end{mydef}

We are now ready to state the universal property of the repletion.

\begin{proposition}\label{universal property of repletion}
    Let $(f,f^\flat)  : (B,N)\to (A,M)$  be a virtually surjective map of animated prelog rings. The prelog ring $(B\otimes_{\ZZ[N]}\ZZ[N^{\rep}_{f^\flat}],N^{\rep}_{f^\flat})$ together with the canonical maps
    \[\begin{tikzcd}
        N\ar[r]\ar[d] &N^{\rep}_{f^\flat}\ar[r,"(f^\flat)^{\rep}"]\ar[d] &M\ar[d]\\
        B\ar[r] &B\otimes_{\ZZ[N]}\ZZ[N^{\rep}_{f^\flat}]\ar[r] &A
    \end{tikzcd}\]
    is initial among factorizations $(B,N)\to (B',N')\xrightarrow{(g,g^\flat)} (A,M)$ of $(f,f^\flat)$ such that $g^\flat$ is replete. In other words, let $\aPreLog^{\rep}_{(B,N)//(A,M)}$ denote the sub-$\infty$-category of factorizations $(B,N)\to (B',N')\to (A,M)$ of $(f,f^\flat)$ spanned by those such that $(B',N')\to (A,M)$ is replete. Then the composition $(B,N)\to (B,N)^{\rep}_{(f,f^\flat)}\to (A,M)$ is initial in $\aPreLog^{\rep}_{(B,N)//(A,M)}.$
\end{proposition}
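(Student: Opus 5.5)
The plan is to reduce the statement to two earlier results: the universal property of the monoid repletion (the preceding lemma and corollary giving the adjunction $(-)^{\rep}\dashv U$ on $\aMon_{/M}^{\vsur}$), and \ref{initial animated prelog ring under monoid}, which identifies the initial animated prelog ring under $(B,N)$ carrying a prescribed factorization of the monoid map. Concretely, I will show that for any object $(B,N)\to (B',N')\xrightarrow{(g,g^\flat)} (A,M)$ of $\aPreLog^{\rep}_{(B,N)//(A,M)}$, the mapping space from $(B,N)\to (B,N)^{\rep}_{(f,f^\flat)}\to (A,M)$ to it is contractible.

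First, I would write this mapping space as an iterated fiber product. A map of factorizations consists of a map of animated prelog rings $(B,N)^{\rep}\to (B',N')$ under $(B,N)$ and over $(A,M)$. Using the description of $\aPreLog$ as the comma category $(\id_{\aMon}\downarrow F)$, such a map splits into two pieces of data. The first is a map of animated monoids $N^{\rep}_{f^\flat}\to N'$ under $N$ and over $M$. The second is a compatible map of animated rings $B\otimes_{\ZZ[N]}\ZZ[N^{\rep}_{f^\flat}]\to B'$ under $B$ and over $A$, whose restriction along $\ZZ[N^{\rep}_{f^\flat}]$ is the composite $\ZZ[N^{\rep}_{f^\flat}]\to\ZZ[N']\to B'$. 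This gives a fibration of mapping spaces whose base is the space of maps $N^{\rep}_{f^\flat}\to N'$ in $\aMon_{N//M}$. The fiber over a chosen monoid map is, by the pushout description $B\otimes_{\ZZ[N]}\ZZ[N^{\rep}_{f^\flat}]$ and the universal property of the tensor product, equivalent to the space of ring maps $B\to B'$ under $B$ and over $A$ that are compatible with the given maps from $\ZZ[N]$.

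Second, I would identify each piece. The base is contractible by the preceding lemma, because $g^\flat$ is replete and $N\to N'\to M$ factors $f^\flat$. The fiber is contractible because a map under $B$ is the identity component up to contractible choice. More precisely, the fiber is a pullback of slice mapping spaces of the form $\Map_{B/\,/A}(B,B')$, which is contractible, over a similarly contractible space coming from $\ZZ[N]$. This is the same argument as in \ref{initial animated prelog ring under monoid}, now carried out over $A$. It follows that the total mapping space is contractible.

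The main obstacle is making the first step rigorous $\infty$-categorically. One has to check that mapping spaces in the double slice $\aPreLog_{(B,N)//(A,M)}$ really decompose as a fiber product over the corresponding monoid mapping spaces, and that the over-$A$ and under-$B$ conditions interact correctly with the pushout $B\otimes_{\ZZ[N]}\ZZ[N^{\rep}]$. I would handle this as follows:
\begin{enumerate}
\item Write $\aPreLog$ as the pullback $\Arr(\aMon)\times_{\aMon}\aRing$ along $(-,\,\cdot\,)$.
\item Use the adjunction $\ZZ[-]\dashv(-,\,\cdot\,)$ to present an animated prelog ring as a map $\ZZ[N]\to B$ in $\aRing$, so that $\aPreLog$ becomes a full subcategory of $\Arr(\aRing)$ restricted along $\ZZ[-]$.
\item Compute mapping spaces in slices of $\Arr$ via the standard formula, namely the pullback of the source and target mapping spaces over the mapping space of the arrow composites.
\end{enumerate}
After this, the virtual surjectivity hypothesis enters only through the existence of $N^{\rep}_{f^\flat}$ and the monoid-level lemma.
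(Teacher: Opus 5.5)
Your proposal is correct and follows essentially the paper's route. The paper's proof is the one-line citation of \ref{initial animated prelog ring under monoid}, with the preceding monoid-level initiality lemma for $N\to N^{\rep}_{f^\flat}\to M$ used implicitly. Your fibration argument unpacks exactly that: it shows the mapping space in $\aPreLog^{\rep}_{(B,N)//(A,M)}$ is equivalent to the corresponding one in $\aMon_{N//M}$.

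The only slip is the wording in your step 2. Since $\ZZ[-]$ is not fully faithful, $\aPreLog$ is the pullback $\aMon\times_{\aRing}\Arr(\aRing)$ rather than a full subcategory of $\Arr(\aRing)$. Your computation already treats the monoid maps as separate data, so nothing changes.
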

\begin{proof}
    This follows from \ref{initial animated prelog ring under monoid}.
\end{proof}

We can rephrase \ref{universal property of repletion} as an adjunction.

\begin{cor}
    Let $(A,M)$ be an animated prelog ring. Then there is an adjunction
    \[\begin{tikzcd}
        (-)^{\rep} : \aPreLog_{/(A,M)}^{\vsur}\ar[r,shift left] &\aPreLog_{/(A,M)}^{\rep}\ar[l,shift left] : U
    \end{tikzcd}\]
    where the right adjoint is the forgetful functor/inclusion $\aPreLog_{/(A,M)}^{\rep}\to\aPreLog_{/(A,M)}^{\vsur},$ and the right adjoint is the repletion
    \[
    ((B,N)\xrightarrow{(f,f^\flat)}(A,M))\mapsto (B,N)^{\rep}_{(f,f^\flat)} := (B\otimes_{\ZZ[N]}\ZZ[N^{\rep}_{f^\flat}], N^{\rep}_{f^\flat}).
    \]
\end{cor}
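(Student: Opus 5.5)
The corollary is a repackaging of \ref{universal property of repletion} into adjoint form. I would use the standard criterion, \cite[Proposition~5.2.7.8]{HTT} or its dual: if $U : \cD\to\cC$ is a fully faithful inclusion and every object $c\in\cC$ admits a morphism $c\to Lc$ with $Lc\in\cD$ that is initial among maps from $c$ to objects of $\cD$, then $U$ has a left adjoint given on objects by $c\mapsto Lc$. Here $\cC = \aPreLog^{\vsur}_{/(A,M)}$ and $\cD = \aPreLog^{\rep}_{/(A,M)}$. Both are full subcategories of the slice $\aPreLog_{/(A,M)}$, and every replete map is virtually surjective, so $U$ is a well-defined fully faithful inclusion. (In the statement the left adjoint is $(-)^{\rep}$; the second occurrence of ``right adjoint'' should read ``left adjoint''.)

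First I would check that the unit lands where it should. For a virtually surjective $(f,f^\flat) : (B,N)\to (A,M)$, the composite $(B,N)^{\rep}_{(f,f^\flat)}\to (A,M)$ has monoid part $(f^\flat)^{\rep} : N^{\rep}_{f^\flat}\to M$. By \cite[Proposition~2.17]{Sagave_2014} this map is replete, so the composite is an object of $\cD$. The canonical map $(B,N)\to (B,N)^{\rep}_{(f,f^\flat)}$ over $(A,M)$ serves as the candidate unit.

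The main step is to identify mapping spaces. For any replete $(g,g^\flat) : (B',N')\to (A,M)$, I need
\[
\Map_{/(A,M)}\bigl((B,N)^{\rep},(B',N')\bigr)\xrightarrow{\sim}\Map_{/(A,M)}\bigl((B,N),(B',N')\bigr).
\]
The fiber of the right-hand side over a fixed map $h : (B,N)\to (B',N')$ is the space of factorizations through $h$, namely maps in $\aPreLog_{(B,N)//(A,M)}$. The fiber of the left-hand side over $h$, after precomposing with the unit, is the mapping space in $\aPreLog_{(B,N)//(A,M)}$ from the repletion to $(B,N)\xrightarrow{h}(B',N')\to (A,M)$. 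By \ref{universal property of repletion}, this is a map from an initial object, so it is contractible. Hence the map of spaces has contractible fibers and is an equivalence.

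The step I expect to need the most care is exactly this passage from the ``initial among factorizations under $(B,N)$'' statement to the slice-over-$(A,M)$ mapping spaces. I would handle it with the fiber sequence for mapping spaces in the undercategory $(\aPreLog_{/(A,M)})_{(B,N)/}$ given in \cite[Lemma~5.5.5.12]{HTT}, which identifies $\aPreLog_{(B,N)//(A,M)}$ with that undercategory.
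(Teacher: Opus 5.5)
Your proposal is correct and takes the same route as the paper. The paper gives no separate proof beyond remarking that the corollary rephrases \ref{universal property of repletion}, and your argument via \cite[Proposition~5.2.7.8]{HTT} and the fiber sequence of \cite[Lemma~5.5.5.12]{HTT} is the standard unpacking of that remark (you are also right that the second ``right adjoint'' in the statement should read ``left adjoint'').
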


We end with a few comments about the relationship between the repletion and strictness, in the case of a map $(B,N)\to (A,M)$ such that the map $N\to M$ admits a section.

\begin{lemma}[\protect{\cite[Corollary~2.23]{BLPO_HKR}}]\label{repletion with section is strict}
    Suppose $(f,f^\flat) : (B,N,\beta)\to (A,M,\alpha)$ is a morphism of animated prelog rings such that the map $f^\flat : N\to M$ admits a section $\eta.$ Then the repletion $(f,f^\flat)^{\rep} : (B,N,\beta)^{\rep}_{(f,f^\flat)}\to (A,M,\alpha)$ is strict.
\end{lemma}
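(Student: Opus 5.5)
We must show: if $f^\flat : N\to M$ has a section $\eta$, then $(B,N)^{\rep}\to (A,M)$ is strict, i.e. the canonical map $(A, g^*N^{\rep}_{f^\flat})^a\to (A,M)^a$ is an equivalence, where $g$ denotes the underlying ring map of the repletion. The approach is to use \ref{repletion with section identification} to write $N^{\rep}_{f^\flat}\simeq M\oplus \cofib(\eta^{\gp})$ over and under $M$, so that the prelog structure pulled back to $A$ is $M\oplus C$ with $C := \cofib(\eta^{\gp})$ a grouplike animated monoid. Then \ref{logificiation of direct sum with grouplike prelog structure} should say that adding the grouplike summand does not change the logification.

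The steps are as follows. First, identify the composite $N^{\rep}_{f^\flat}\to M\to (A,\,\cdot\,)$, which is the structure map of $g^*N^{\rep}$ on $A$ (this uses that $(B,N)^{\rep}\to (A,M)$ is a map of prelog rings whose monoid map is $(f^\flat)^{\rep}$). Under the equivalence $N^{\rep}\simeq M\oplus C$ of monoids over $M$, this becomes $\alpha\circ \pr_1 = \alpha\oplus \gamma$, where $\gamma : C\to M\to A$ is the composite through the map $C\to M$ obtained by restricting $\pr_1$ to the summand $C$. Second, argue that $C\to M$ factors through $M^\times$: this holds since $C$ is grouplike and $M^\times\to M$ is terminal among maps from grouplike monoids. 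Hence $\gamma$ factors through $A^\times$. Third, apply \ref{logificiation of direct sum with grouplike prelog structure} to get that $(A,M,\alpha)\to(A,M\oplus C,\alpha\oplus\gamma)$ is strict. Fourth, observe that $M\oplus C\to M$, given by $\pr_1$, is a retraction of the inclusion $M\to M\oplus C$. It follows that $(A,M\oplus C)^a\to (A,M)^a$ is inverse to the equivalence from the third step, so it is itself an equivalence. This is exactly the statement that the canonical map $(A,g^*N^{\rep})^a\to(A,M)^a$ is an equivalence.

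The main subtlety is the first step, checking that the identification of \ref{repletion with section identification} is compatible with the structure maps to $(A,\,\cdot\,)$. The worry is that the summand $C$ might map nontrivially into $M$ rather than trivially. Since the equivalence is "over $M$," though, the composite to $A$ is determined by the map to $M$, and any map from a grouplike object lands in units anyway. I therefore expect this step to reduce to bookkeeping, using only the fact that the equivalence is over $M$.
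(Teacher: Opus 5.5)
Your proposal is correct and follows the same route as the paper: you identify the repletion with $M\oplus\cofib(\eta^{\gp})$ via \ref{repletion with section identification} and then invoke \ref{logificiation of direct sum with grouplike prelog structure}. Your retraction argument in the last step makes explicit why strictness of $(A,M)\to(A,M\oplus\cofib(\eta^{\gp}))$ yields the required equivalence $(A,M\oplus\cofib(\eta^{\gp}))^a\to(A,M)^a$, a step the paper leaves implicit; also, the restriction of $\pr_1$ to the grouplike summand is actually the trivial map, so your second step is immediate.
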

\begin{proof}
    This follows from \ref{repletion with section identification}. In particular, the repletion can be identified with
    \[
    (B\otimes_{\ZZ[N]}\ZZ[M\oplus\cofib(\eta^{\gp})],M\oplus\cofib(\eta^{\gp}))\to (A,M)
    \]
    One can check directly from this description that logification induces an equivalence
    \[
    (f^{\rep})^*(M\oplus\cofib(\eta^{\gp}))^a_{(f^{\rep})^*\beta^{\rep}} \to M^a,
    \]
    or alternatively, one can appeal to \ref{logificiation of direct sum with grouplike prelog structure}.
\end{proof}

Note that only the map $f^\flat$ of monoids is required to admit a section in the previous lemma. In particular, it is not necessary that the map $(f,f^\flat)$ of prelog rings in the previous lemma itself admits a section. However, if the map $(f,f^\flat)$ does admit a section, we can say more.

\begin{proposition}
    Let $(f,f^\flat) : (B,N)\to (A,M)$ be a map of animated prelog rings which admits a section $(g,\eta) : (A,M)\to (B,N).$  Then the composition $(A,M)\to (B,N)\to (B,N)^{\rep}_{(f,f^\flat)}$ is strict.
\end{proposition}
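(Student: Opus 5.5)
The plan is to factor the composite through the maps already shown to be strict, and then use that strict maps compose. By \ref{repletion with section is strict}, $\eta$ is a section of $f^\flat$, so the repletion map $(f,f^\flat)^{\rep} : (B,N)^{\rep}_{(f,f^\flat)}\to (A,M)$ is strict. Write $r$ for the composite $(A,M)\xrightarrow{(g,\eta)}(B,N)\to (B,N)^{\rep}_{(f,f^\flat)}$. Then
\[
(f,f^\flat)^{\rep}\circ r\simeq (f,f^\flat)\circ (g,\eta)\simeq \id_{(A,M)}.
\]
So $r$ is a section of a strict map. The claim reduces to a two-out-of-three statement: if $q\circ r$ and $q$ are strict, then $r$ is strict.

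To prove this reduction, I would use the characterization of strictness stated in the remark after the definition. A map $(k,P)\to (A',M')$ is strict iff the square
\[
\begin{tikzcd}
(k,\{1\})^a \ar[r]\ar[d] & (A',\{1\})^a \ar[d]\\
(k,P)^a \ar[r] & (A',M')^a
\end{tikzcd}
\]
is coCartesian in $\aLog$. Apply this to $r$ and $q := (f,f^\flat)^{\rep}$ and paste the two squares horizontally. The outer square is the square for $q\circ r\simeq\id$, which is trivially coCartesian. The right square, for $q$, is coCartesian by \ref{repletion with section is strict}.

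The pasting lemma runs the wrong way here: it gives "left and outer coCartesian implies right," not "right and outer implies left." So I would instead argue directly with the equivalent criterion that $(R,r^*N_A)^a\to (R,N^{\rep})^a$ is an equivalence. Here $R := B\otimes_{\ZZ[N]}\ZZ[N^{\rep}]$, $N^{\rep} := N^{\rep}_{f^\flat}$ and $N_A := M$.

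The concrete approach uses \ref{repletion with section identification}:
\[
N^{\rep}\simeq M\oplus\cofib(\eta^{\gp}),
\]
where the map $M\to N^{\rep}$ induced by $r^\flat$ is the inclusion of the first summand. To check this, compose $\eta$ with $N\to N^{\rep}$, then with the projection to $\cofib(\eta^{\gp})$; the result is null because $\eta^{\gp}$ followed by its cofiber map is null. The first component is $f^\flat\eta\simeq\id$.

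Next I would show that the prelog structure on $R$ restricted to the summand $\cofib(\eta^{\gp})$ factors through $R^\times$. This holds because that summand is grouplike, and any map from a grouplike animated monoid into $(R,\cdot)$ factors through the units, by the adjunction $U\dashv(-)^\times$. Then \ref{logificiation of direct sum with grouplike prelog structure} gives that $(R,M)\to (R,M\oplus\cofib(\eta^{\gp}))$ is strict. Since $(R,M)$ with its structure through $r$ is exactly $(R,r^*M)$, this is the required equivalence $(R,r^*M)^a\simeq(R,N^{\rep})^a$.

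The main obstacle is confirming, in the $\infty$-categorical setting, that the map $M\to N^{\rep}$ really agrees with the summand inclusion under the identification of \ref{repletion with section identification}. This is the null-homotopy argument above. I would verify it by working over and under $M$, using that the identification in \ref{repletion with section identification} is one of monoids both over and under $M$.
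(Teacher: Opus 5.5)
Your direct argument is the paper's proof. You identify $N^{\rep}_{f^\flat}\simeq M\oplus\cofib(\eta^{\gp})$ via \ref{repletion with section identification}, recognize the monoid map underlying $(A,M)\to (B,N)^{\rep}_{(f,f^\flat)}$ as the summand inclusion, and conclude with \ref{logificiation of direct sum with grouplike prelog structure}; your null-homotopy check and the factorization of the grouplike summand through $R^\times$ just make explicit what the paper leaves implicit. You should delete the opening two-out-of-three reduction rather than merely abandon it, because that statement is false in general. A section of a strict map need not be strict: the log structure on $k[t]$ generated by $t-1$ pulls back to the trivial one along $t\mapsto 0$, but the section $(k,k^\times)\to k[t]$ is not strict.
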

\begin{proof}
    By \ref{repletion with section identification}, we have
    \[
    N^{\rep}_{f^\flat}\simeq M\oplus\cofib(\eta^{\gp}).
    \]
    Via this identification, we may view the map of monoids underlying $(A,M)\to (B,N)^{\rep}_{(f,f^\flat)}$ as being the canonical map $M\to M\oplus\cofib(\eta^{\gp}).$ The result then follows from \ref{logificiation of direct sum with grouplike prelog structure}.
\end{proof}

In particular, this applies to the multiplication map $(A,M)\otimes_{(k,P)}(A,M)\to (A,M).$ 

\begin{cor}
    Let $(k,P)\to (A,M)$ be a morphism of animated prelog rings. Then both of the canonical morphisms $(A,M)\to (A,M)\otimes^{\rep}_{(k,P)}(A,M)$ are strict.
\end{cor}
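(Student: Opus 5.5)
The plan is to deduce the corollary from the preceding proposition, applied to the multiplication map
\[
\mu = (f,f^\flat) : (B,N) := (A,M)\otimes_{(k,P)}(A,M) \to (A,M).
\]
Here $(A,M)\otimes^{\rep}_{(k,P)}(A,M)$ is by definition the repletion $(B,N)^{\rep}_{\mu}$, so the corollary is a statement about the two composites $(A,M)\to(B,N)\to(B,N)^{\rep}_\mu$.

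First I will check that the repletion makes sense, i.e.\ that $\mu$ is virtually surjective. The underlying map of monoids is the codiagonal $\sigma : M\oplus_P M\to M$, since the forgetful functor $\aPreLog\to\aMon$ preserves pushouts. Each coprojection $\iota_j : M\to M\oplus_P M$ is a section of $\sigma$, so $\sigma^{\gp}$ has a section and $\pi_0(\sigma^{\gp})$ is surjective.

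Next I need a section of $\mu$ itself as a map of animated prelog rings, not just of monoids. The two coprojections $\iota_1,\iota_2 : (A,M)\to (A,M)\otimes_{(k,P)}(A,M)$ are maps of animated prelog rings. Composing either with $\mu$ gives the identity of $(A,M)$, by the universal property of the pushout in $\aPreLog$. So each $\iota_j$ is a section $(g,\eta)$ of $(f,f^\flat)=\mu$ whose monoid component is $\eta=\iota_j^\flat : M\to M\oplus_P M$.

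Applying the preceding proposition to each of these sections gives that $(A,M)\xrightarrow{\iota_j}(B,N)\to (B,N)^{\rep}_\mu$ is strict for $j=1,2$. These composites are exactly the two canonical morphisms $(A,M)\to (A,M)\otimes^{\rep}_{(k,P)}(A,M)$.

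The only point that needs care is the second step. One has to check that the pushout in $\aPreLog$ really is computed componentwise, on rings and on monoids. This follows from $\aPreLog$ being the comma category over $(-,\,\cdot\,)$, since colimits in the source are computed via the left adjoint $\ZZ[-]$. Granting this, both the identification of the monoid map with $\sigma$ and the existence of the sections hold. It is also worth noting why the second section is harmless: the preceding proposition identifies $N^{\rep}_\sigma\simeq M\oplus \cofib(\eta^{\gp})$ using whichever section is given, so no compatibility between the two identifications is needed.
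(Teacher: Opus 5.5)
Your proposal is correct and follows the paper's route: the corollary comes from applying the preceding proposition to the multiplication map $(A,M)\otimes_{(k,P)}(A,M)\to(A,M)$, with the two coprojections serving as the required sections. Your additional checks, namely virtual surjectivity of $\sigma$ and that pushouts of animated prelog rings are computed componentwise, are details the paper leaves implicit.
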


\begin{remark}
    A theory of general global derived log schemes or stacks has not yet been thoroughly investigated, although there are candidates for a replacement of Olsson's log stack, see \cite{pridham2013derived} and \cite{mao2024revisitingderivedcrystallinecohomology} for more details. It would be interesting to see whether the fiber product $X\times_{\cL}X$ satisfies a universal property similar to \ref{Martin exactification} for either variant of derived log stack $\cL.$ If there was a derived log stack $\cL$ satisfying such a universal property, this would yield another comparison map $X\times^{\rep}_S X\to X\times_\cL X$ by the above corollary, and one could check what this map produces on Hochschild homology.
\end{remark}

\section{The Logarithmic Cotangent Complexes}

There are two distinct flavors of logarithmic cotangent complexes, and it will be important for us to understand the relationship between them in order to compare the two notions of logarithmic Hochschild homologies which we consider here.

It would be reasonable to ask why there is more than one theory of logarithmic cotangent complex. This can be explained by the following observation, due to W. Bauer.
\begin{claim}[\protect{\cite[Section~7]{Olssoncotangent}}]\label{no theory of log cotangent exists satisfying}
    There does not exist a theory of cotangent complexes for log schemes
    \[
    (X\to S)\mapsto \LL^{\log}_{X/S}\in\sfD_{\qc}(X)
    \]
    satisfying the following properties:
    \begin{enumerate}
        \item If $X\to S$ is strict, then the diagram
        \[\begin{tikzcd}
            X\ar[d]\ar[r] &\underline{X}\ar[d]\\
            S\ar[r] &\underline{S}
        \end{tikzcd}\]
        induces an equivalence $\LL_{\underline{X}/\underline{S}}\to\LL^{\log}_{X/S}$ between the ordinary cotangent complex of the underlying morphism of schemes and the log cotangent complex.
        \item If $X\to S$ is log smooth, then $\LL^{\log}_{X/S}\simeq\Omega^1_{X/S}[0].$
        \item For every commutative diagram
        \[\begin{tikzcd}
            X'\ar[r,"a"]\ar[d,"f'"] &X\ar[d,"f"]\\
            S'\ar[r,"b"] &S
        \end{tikzcd}\]
        there is a natural map $La^*\LL^{\log}_{X/S}\to\LL^{\log}_{X'/S'},$ which is identified with the corresponding map of ordinary cotangent complexes when $f$ and $f'$ are strict.
        \item For every composite $X\xrightarrow{f} Y\xrightarrow{g} Z,$ there is a distinguished triangle
        \[
        Lf^*\LL^{\log}_{Y/Z}\to\LL^{\log}_{X/Z}\to\LL^{\log}_{X/Y}\to Lf^*\LL^{\log}_{Y/Z}[1].
        \]
    \end{enumerate}
\end{claim}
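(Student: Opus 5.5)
Suppose such a theory $\LL^{\log}$ exists; we will derive a contradiction from a single explicit example. The example is the log point over the standard log line. Let $k$ be a field, $S = \Spec(k[\NN],\NN)$ the affine line with its toric log structure, and $X = \Spec(k,\NN)$ the standard log point. The map $X\to S$ is the strict closed immersion at the origin. Condition 1 then gives $\LL^{\log}_{X/S}\simeq \LL_{k/k[t]}\simeq k[1]$, the conormal of the origin shifted.

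Next we compute the two remaining terms of a triangle from condition 4. Factor through the base $\Spec k$ with trivial log structure:
\[
X\to S\to \Spec k .
\]
Since $S\to\Spec k$ is log smooth, condition 2 gives $\LL^{\log}_{S/k}\simeq\Omega^1_{S/k}\simeq k[t]\,\mathrm{dlog}\,t$, a free module of rank one. Its pullback to $X$ is $k\cdot\mathrm{dlog}\,t$. The log point $X\to\Spec k$ is also log smooth: it comes from the chart $0\to\NN$, whose $\theta^{\gp}$ has cokernel $\ZZ$, torsion free. So condition 2 gives $\LL^{\log}_{X/k}\simeq\Omega^1_{X/k}\simeq k\cdot\mathrm{dlog}\,t$ in degree $0$.

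Now apply condition 4 to the composite. It gives a distinguished triangle
\[
k\,\mathrm{dlog}\,t\to k\,\mathrm{dlog}\,t\to k[1]\to .
\]
The first map is the functoriality map of condition 3. Under the identifications of condition 2, it is the usual map on log differentials, which sends $\mathrm{dlog}\,t\mapsto\mathrm{dlog}\,t$. This is the main point to justify: we need naturality of the identification in condition 2 with the map of condition 3. I would assume this compatibility, as Bauer's argument does, or else deduce it by testing on the identity and the chart maps. Granting it, the map is an isomorphism, so its cofiber is $0$, while the third term is $k[1]\neq 0$. This is a contradiction.

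If one does not want to rely on the compatibility in condition 3, the fallback is an Euler characteristic (rank) count. Whatever the first map is, its cofiber is a complex with $H^{-1}$ and $H^0$ of equal $k$-dimension. But $k[1]$ has $H^{-1}=k$ and $H^0=0$, so the dimensions do not match. In fact the long exact sequence alone forces the map $k\to k$ to have kernel $k$ and cokernel $0$, which is impossible. This version needs only conditions 1, 2 and 4, and is the route I would write up.
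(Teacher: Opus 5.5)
Your argument fails at the claim that the standard log point $X=\Spec(k,\NN)$ is log smooth over $\Spec k$. You checked only the condition on $\theta^{\gp}$ in Kato's criterion. The criterion also requires the induced map $\underline{X}\to\Spec k\times_{\Spec\ZZ[P]}\Spec\ZZ[M]$ to be \'etale. For the chart $P=0\to M=\NN$ this map is the inclusion of the origin $\Spec k\to\Spec k[t]$, which is neither \'etale nor smooth. No other chart helps, because $X\to\Spec k$ is not even formally log smooth. Take the strict thickening $X\hookrightarrow T=\Spec(k[\epsilon]/(\epsilon^2))$ with $\cM_T=\NN\oplus k[\epsilon]^{\times}$ and $\alpha(n,u)=u\epsilon^n$. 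A lift $T\to X$ of the identity would have to send the generator of $\NN$ to some $(1,u)\in\cM_T$ with $\alpha(1,u)=u\epsilon=0$, which is impossible.

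So condition 2 says nothing about $\LL^{\log}_{X/k}$. Both your main route and your Euler characteristic fallback rest on $\LL^{\log}_{X/k}\simeq k[0]$, so both collapse. In fact no contradiction can be extracted from this diagram at all. The only log smooth map in it is $S\to\Spec k$, a free prelog algebra, where Gabber's complex equals $\Omega^1_{S/k}[0]$. Since $\LGab$ satisfies conditions 1, 3 and 4, it gives a consistent assignment on your whole diagram, with $\LGab_{X/k}\simeq k\,\mathrm{dlog}\,t\oplus k[1]$. Here the connecting map $k\to k$ vanishes because $H_0(\LGab_{X/k})=\Omega^1_{X/k}\neq 0$.

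The missing idea is that a contradiction has to involve a log smooth map that is \emph{not} integral, since on integral log smooth maps Gabber's complex is already $\Omega^1[0]$. A natural way to exploit such a map is to base change it along a strict map that is not flat. One way to do this:
\begin{enumerate}
\item Let $Z=\Spec(k[x,y],\NN^2)$ and $Y=\Spec(k[x,u],\NN^2)$. Define $Y\to Z$ by $y\mapsto xu$, and by $e_1\mapsto e_1$, $e_2\mapsto e_1+e_2$ on monoids; this is a chart of the log blow-up of the origin.
\item This monoid map is an isomorphism on $(-)^{\gp}$, and the map in Kato's criterion is the identity of $\underline{Y}$. Hence $Y\to Z$ is log \'etale, but it is not integral.
\item Let $Z'\hookrightarrow Z$ be the origin with the pulled-back log structure, and set $Y'=Y\times_Z Z'=\Spec(k[u],\NN^2)$. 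Then $Y'\to Y$ is strict, and $Y'\to Z'$ is still log \'etale, since its Kato map is the identity of $\Spec k[u]$.
\item Conditions 1 and 2 give $\LL^{\log}_{Y'/Y}\simeq k[u][1]$, $\LL^{\log}_{Z'/Z}\simeq k^{\oplus 2}[1]$, and $\LL^{\log}_{Y/Z}\simeq 0\simeq \LL^{\log}_{Y'/Z'}$.
\item Condition 4 for $Y'\to Y\to Z$ gives $\LL^{\log}_{Y'/Z}\simeq k[u][1]$. Condition 4 for $Y'\to Z'\to Z$ gives $\LL^{\log}_{Y'/Z}\simeq k[u]^{\oplus 2}[1]$. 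These have different ranks, which is the contradiction.
\end{enumerate}
This uses only conditions 1, 2 and 4, so your worry about compatibility in condition 3 disappears. It also sits exactly where the two existing theories give way: $\LGab_{Y'/Z'}\neq 0$, while Olsson's triangle fails for $Y'\to Z'\to Z$.
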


The two theories of log cotangent complex must each then compromise on one of these properties. Olsson's log cotangent complex sacrifices the existence of a distinguished triangle for all compositions of fine log schemes, and on the other hand, Gabber's log cotangent complex compromises by relaxing the condition that $\LGab_{X/S}$ be represented by $\Omega^1_{X/S}[0]$ for $X/S$ log smooth.

\subsection{Logarithmic Differentials}

The theory of logarithmic geometry has a notion of differentials, similar to the ordinary theory of algebraic geometry. Just as for schemes, the theory of differentials is closely related to \'etale and smooth morphisms, and is also closely tied to the logarithmic cotangent complex(es), which play a key role in our study of logarithmic Hochschild homology. In this section, we recall the definition of the module of log differentials of a map of (pre)log rings or schemes, and the relation to the logarithmic versions of \'etale and smooth morphisms. We begin with a brief overview of this theory. 

\begin{mydef}
    Let $(f,f^\flat) : (k,P,\beta)\to (A,M,\alpha)$ be a morphism of prelog rings, and let $E$ be an $A$-module. A \emph{log derivation of $(f,f^\flat)$ with values in $E$} is a pair $(D,\log D),$ where $D : A\to E$ is a morphism of $k$-modules, and $\log D : M\to E$ is a morphism of monoids, such that:
    \begin{enumerate}
        \item $D$ is a derivation of $f : k\to A$ valued in $E,$
        \item $D(\alpha(m)) = \alpha(m)\log D(m)$ for all $m\in M;$ and
        \item $\log D(f^\flat(p)) = 0$ for all $p\in P.$
    \end{enumerate}
\end{mydef}

The set of log derivations of $(f,f^\flat)$ with values in $E$ will be denoted $\Der_{(f,f^\flat)}(E).$ Notice that $\Der_{(f,f^\flat)}(E)$ is equivalent to the pullback
\[\begin{tikzcd}
    \Der_{(f,f^\flat)}(E)\ar[r]\ar[d] &\Hom_{\PreLog_{(k,P)/}}((A,M),(A\oplus E,M\oplus E))\ar[d,"\varpi_J\circ -"]\\
    \{\id_{(A,M)}\}\ar[r] &\Hom_{\PreLog_{(k,P)/}}((A,M),(A,M))
\end{tikzcd}\]
where $(A\oplus E,M\oplus E)$ is the prelog ring whose underlying ring is the square zero extension $A\oplus E$ of $A$ by $E,$ and whose underlying monoid is $M\oplus(1 + E),$ with structure map
\begin{align*}
    M\oplus (1 + E)&\to A\oplus E\\
    (m,1 + x)&\mapsto (\alpha(m),x).
\end{align*}
The map $\varpi_J : (A\oplus E,M\oplus E)\to (A,E)$ is the canonical projection. The $(k,P)$-algebra structure on $(A\oplus E,M\oplus E)$ is induced by the $(k,P)$-algebra structure on $(A,M)$ and the canonical section $(A,M)\to (A\oplus E,M\oplus E)$ of $\varpi_J.$

\begin{proposition}[\protect{\cite[Proposition~IV.1.1.2]{OgusLectures}}]
    Let $(f,f^\flat) : (k,P,\beta)\to (A,M,\alpha)$ be a morphism of prelog rings. The functor
    \[
    \Der_{(f,f^\flat)} : \Mod_{A}\to\Set
    \]
    is representable by a universal derivation
    \[
    (d,\log d) : (A,M)\to\Omega^1_{(f,f^\flat)} = \Omega^1_{(A,M)/(k,P)}
    \]
    whose target is called the \emph{module of log differentials of $(\phi,\phi^\flat).$}
\end{proposition}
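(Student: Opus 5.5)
The plan is to construct the module directly, by generators and relations, and then check that it represents $\Der_{(f,f^\flat)}$.

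\textbf{Construction.} Let $\Omega^1_{(A,M)/(k,P)}$ be the quotient of
\[
\Omega^1_{A/k}\oplus\bigl(A\otimes_{\ZZ}M^{\gp}\bigr)
\]
by the $A$-submodule generated by two families of elements:
\begin{enumerate}
    \item $\bigl(d\alpha(m),\,-\alpha(m)\otimes[m]\bigr)$ for $m\in M$;
    \item $\bigl(0,\,1\otimes[f^\flat(p)]\bigr)$ for $p\in P$.
\end{enumerate}
Define $d$ as the composite $A\to\Omega^1_{A/k}\to\Omega^1_{(A,M)/(k,P)}$. Define $\log d(m)$ as the class of $1\otimes[m]$.

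\textbf{$(d,\log d)$ is a log derivation.}
\begin{itemize}
    \item $d$ is a $k$-linear derivation because it factors through the universal derivation $A\to\Omega^1_{A/k}$.
    \item $\log d$ is a monoid homomorphism because $m\mapsto[m]$ is one and $1\otimes-$ is additive.
    \item Relation (1) gives $d\alpha(m)=\alpha(m)\log d(m)$.
    \item Relation (2) gives $\log d(f^\flat(p))=0$.
\end{itemize}

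\textbf{Universality.} Let $(D,\log D)$ be a log derivation valued in an $A$-module $E$.
\begin{itemize}
    \item $D$ factors uniquely through an $A$-linear map $\Omega^1_{A/k}\to E$, by the universal property of Kähler differentials.
    \item $\log D:M\to E$ lands in an abelian group, so it extends uniquely to a group map $M^{\gp}\to E$ (the universal property of group completion established earlier). This then extends $A$-linearly to $A\otimes_{\ZZ}M^{\gp}\to E$.
    \item The sum of these two maps kills relation (1) by the compatibility condition $D(\alpha(m))=\alpha(m)\log D(m)$. It kills relation (2) by the condition $\log D\circ f^\flat=0$.
\end{itemize}
So the sum descends to an $A$-linear map $\Omega^1_{(A,M)/(k,P)}\to E$. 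Composing it with $(d,\log d)$ recovers $(D,\log D)$.

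Uniqueness holds because $\Omega^1_{(A,M)/(k,P)}$ is generated as an $A$-module by the elements $da$ and $\log d(m)$. Indeed, $M^{\gp}$ is generated by the classes $[m]$. The resulting bijection $\Hom_A(\Omega^1_{(A,M)/(k,P)},E)\cong\Der_{(f,f^\flat)}(E)$ is visibly natural in $E$.

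The only mildly delicate step is the passage through $M^{\gp}$. If $\log D$ is instead extended over $A\otimes_{\ZZ}\ZZ[M]$, additivity fails. Using the group completion adjunction resolves this cleanly.
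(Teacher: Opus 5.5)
Your proof is correct and follows the standard route. You build exactly the presentation the paper records right after the statement (following Ogus), namely $\bigl(\Omega^1_{A/k}\oplus (A\otimes M^{\gp}/P^{\gp})\bigr)/R$, and then verify the universal property by hand. The only cosmetic difference is that you impose $1\otimes[f^\flat(p)]=0$ as an extra relation instead of passing to $M^{\gp}/P^{\gp}$ first. The two quotients agree by right exactness of $A\otimes_{\ZZ}-$, because the image of $P^{\gp}$ in $M^{\gp}$ is generated by the classes $[f^\flat(p)]$.
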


As is the case for the usual module of K\"ahler differentials, there are many equivalent presentations of $\Omega^1_{(f,f^\flat)}.$ One possibility is
\[
\Omega^1_{(A,M)/(k,P)} = \left(\Omega^1_{A/k}\oplus (A\otimes M^{\gp}/P^{\gp}\right)/R,
\]
where $R$ is the submodule of $\left(\Omega^1_{A/k}\oplus (A\otimes M^{\gp}/P^{\gp}\right)$ generated by elements of the form $(d\alpha(m),-\alpha(m)\otimes\pi(m),$ for $m\in M,$ where $\pi : M\to M^{\gp}/P^{\gp}$ is the canonical map. In this case, we have
\begin{align*}
    d : A&\to\left(\Omega^1_{A/k}\oplus (A\otimes M^{\gp}/P^{\gp}\right)/R\\
    a&\mapsto [da,0]
\end{align*}
and
\begin{align*}
    \log d : M&\to \left(\Omega^1_{A/k}\oplus (A\otimes M^{\gp}/P^{\gp}\right)/R\\
    m&\mapsto [0,1\otimes\pi(m)].
\end{align*}

The module of log differentials enjoys many properties directly analogous to the usual module of K\"ahler differentials. We record the most important ones here.

First of all, formation of $\Omega^1_{(f,f^\flat)}$ is functorial in the morphism $(f,f^\flat).$
\begin{proposition}[\protect{\cite[Proposition~IV.1.1.3]{OgusLectures}}]\label{log differentials functoriality}
    Let 
    \[\begin{tikzcd}
        (A,M,\alpha)\ar[r,"{(g,g^{\flat})}"] & (A',M',\alpha')\\
        (k,P,\beta)\ar[r]\ar[u,"{(f,f^\flat)}"] &(k',P',\beta')\ar[u,"{(f',(f')^{\flat})}"]
    \end{tikzcd}\]
    be a commutative diagram of prelog rings. Then there is a canonical homomorphism
    \[
    A'\otimes_A\Omega^1_{(f,f^\flat)}\to\Omega^1_{(f',(f')^\flat)},
    \]
    compatible with the universal derivations. This map is an isomorphism if the square is coCartesian, or if $(k,P,\beta)\to (k',P',\beta')$ is an isomorphism, $M\to M'$ is an isomorphism, and $A\to A'$ is a localization.
\end{proposition}

Let us explain how the map in question is constructed. Let $E$ be an $A'$-module, which we may consider as an $A$-module via the map $g : A\to A'.$ Then we obtain a map
\begin{align*}
    \Der_{(f',(f')^\flat)}(E)&\to\Der_{(f,f^\flat)}(E)\\
    (\partial,\log\partial)&\mapsto (\partial\circ\rho,\log\partial\circ \rho^{\flat}).
\end{align*}
If we apply this with $E = \Omega^1_{(f',(f')^\flat)},$ we obtain
\[
\Hom_{A'}(\Omega^1_{(f',(f')^\flat)},\Omega^1_{(f',(f')^\flat)})\to\Hom_{A}(\Omega^1_{(f,f^{\flat})},\Omega^1_{(f',(f')^\flat)})
\]
and thus
\[
\Der_{(f',(f')^\flat)}(\Omega^1_{(f',(f')^\flat)})\to \Der_{(f,f^{\flat})}(\Omega^1_{((f',(f')^\flat)}).
\]
The image of $\id : \Omega^1_{(f',(f')^\flat)}\to\Omega^1_{(f',(f')^\flat)}$ produces a canonical map $\Omega^1_{(f,f^\flat)}\to\Omega^1_{(f',(f')^\flat)}$ of $A$-modules, and thus by adjunction
\[
A'\otimes_A\Omega^1_{(f,f^{\flat})}\to\Omega^1_{(f',(f')^\flat)}.
\]

\begin{proposition}\label{fundamental exact sequence for log differentials}
    Let $(k,P)\to (A,M)\xrightarrow{(f,f^{\flat})} (B,N)$ be morphisms of prelog rings. Then there is an exact sequence
    \[
    B\otimes_{A}\Omega^1_{(A,M)/(k,P)}\to \Omega^1_{(B,N)/(k,P)}\to\Omega^1_{(B,N)/(A,M)}\to 0
    \]
    of $B$-modules, where the maps are induced by the squares in the diagram
    \[\begin{tikzcd}
        (A,M)\ar[r] &(B,N)\ar[r] &(B,N)\\
        (k,P)\ar[u]\ar[r] &(k,P)\ar[u]\ar[r] &(A,M)\ar[u]
    \end{tikzcd}\]
    via \ref{log differentials functoriality}.
\end{proposition}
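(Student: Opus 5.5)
The plan is to argue directly from the universal properties, using the representability result just cited (\cite[Proposition~IV.1.1.2]{OgusLectures}): for every $B$-module $E$, $\Hom_B(\Omega^1_{(B,N)/(k,P)},E)\cong\Der_{(k,P)\to(B,N)}(E)$, and similarly for the other two modules. A sequence $M_1\to M_2\to M_3\to 0$ of $B$-modules is exact precisely when, for every $B$-module $E$, the induced sequence
\[
0\to\Hom_B(M_3,E)\to\Hom_B(M_2,E)\to\Hom_B(M_1,E)
\]
is exact. The tensor--hom adjunction gives $\Hom_B(B\otimes_A\Omega^1_{(A,M)/(k,P)},E)\cong\Hom_A(\Omega^1_{(A,M)/(k,P)},E)$. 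Using this, I reduce the statement to exactness of
\[
0\to\Der_{(A,M)\to(B,N)}(E)\to\Der_{(k,P)\to(B,N)}(E)\to\Der_{(k,P)\to(A,M)}(E),
\]
where $E$ is viewed as an $A$-module via $f$. The maps in this sequence are the restriction maps from the construction following \ref{log differentials functoriality}. By the description of that construction, they correspond to the maps on differentials induced by the squares in the displayed diagram. I will note this compatibility explicitly so that the reduction is legitimate.

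Next I would identify the maps concretely. The first map views a log derivation $(D,\log D)$ of $(A,M)\to(B,N)$ as a log derivation over $(k,P)$. This is valid because $D$ is $A$-linear, hence $k$-linear, and $\log D$ kills $f^\flat(M)$, hence kills the image of $P$. The second map sends $(D,\log D)$ to $(D\circ f,\log D\circ f^\flat)$. Here I check that $D\circ f$ is a $k$-derivation into $E$ and satisfies $D(f(\alpha(m)))=D(\beta_N(f^\flat m))=f(\alpha(m))\log D(f^\flat m)$, using that $(f,f^\flat)$ is a map of prelog rings.

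For exactness: injectivity of the first map is clear, since it is the identity on underlying pairs. The composite is zero because a derivation over $(A,M)$ kills $f(A)$ (as $D(f(a)\cdot 1)=f(a)D(1)=0$) and $\log D$ kills $f^\flat(M)$. For exactness in the middle, take $(D,\log D)$ over $(k,P)$ with $D\circ f=0$ and $\log D\circ f^\flat=0$. Then $D$ is $A$-linear by the Leibniz rule, $D(f(a)b)=f(a)D(b)+bD(f(a))=f(a)D(b)$, and $\log D$ vanishes on $f^\flat(M)$. So it lies in $\Der_{(A,M)\to(B,N)}(E)$.

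The only mildly delicate point is the bookkeeping that the abstractly defined maps of \ref{log differentials functoriality} agree with these restriction maps under representability. Apart from that, every step is elementary.
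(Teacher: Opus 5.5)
Your proposal is correct and follows the same route as the paper. The paper's proof consists of the observation that $\Der_{(B,N)/(A,M)}(E)$ is the kernel of the restriction map $\Der_{(B,N)/(k,P)}(E)\to\Der_{(A,M)/(k,P)}(E)$, verified by the same argument you give. Your version simply spells out what the paper leaves implicit: the $\Hom(-,E)$ criterion for right exactness, the tensor--hom adjunction, and the compatibility of the restriction maps with the maps of \ref{log differentials functoriality}.
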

\begin{proof}
    This follows from the observation that
    \[
    \Der_{(B,N)/(A,M)}(E)\cong\Ker\left(\Der_{(B,N)/(k,P)}(E)\to\Der_{(A,M)/(k,P)}(E)\right),
    \]
    where the map 
    \[
    \Der_{(B,N)/(k,P)}(E)\to\Der_{(A,M)/(k,P)}(E)
    \]
    sends a derivation $(\partial,\log\partial) : (B,N)\to E$ to the derivation determined by
    \[
    A\to B\xrightarrow{\partial} E
    \]
    and
    \[
    M\to N\xrightarrow{\log\partial} E.
    \]
    Indeed, if $(\partial,\log\partial)$ is in the kernel, then $\partial(f(a)) = 0$ for all $a\in A,$ and $\log\partial(f^{\flat}(m)) = 0$ for all $m\in M,$ which immediately implies that $(\partial,\log\partial)$ in fact defines a log derivation of $(f,f^{\flat}).$
\end{proof}

Using the previous results, we can deduce the following.

\begin{cor}
    Suppose that
    \[\begin{tikzcd}
        (A,M,\alpha)\ar[r,"{(h,h^{\flat})}"] & (A',M',\alpha')\\
        (k,P,\beta)\ar[r,"{(g,g^{\flat})}"]\ar[u,"{(f,f^{\flat})}"] &(k',P',\beta')\ar[u,"{(f',(f')^{\flat})}"']
    \end{tikzcd}\]
    is a coCartesian diagram of prelog rings. Then there is a canonical isomorphism
    \[
    (A'\otimes_A \Omega^1_{(f,f^{\flat})})\oplus (A'\otimes_{k'}\Omega^1_{(g,g^{\flat})})\to \Omega^1_{(A',M')/(k,P)}
    \]
    induced by the maps $A'\otimes_A \Omega^1_{(f,f^{\flat})}\to \Omega^1_{(A',M')/(k,P)}$ and $A'\otimes_{k'}\Omega^1_{(k',P')/(k,P)}\to \Omega^1_{(A',M')/(A,M)}$ of \ref{fundamental exact sequence for log differentials}.
\end{cor}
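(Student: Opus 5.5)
The plan is to prove the isomorphism by a Yoneda argument: for every $A'$-module $E$, I will identify log derivations of $(A',M')$ over $(k,P)$ with pairs of log derivations of $(f,f^\flat)$ and of $(g,g^\flat)$. Representability of $\Der$, from \cite[Proposition~IV.1.1.2]{OgusLectures}, then turns this into the claimed decomposition of $\Omega^1$.

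\textbf{Step 1: pushouts in $\PreLog$.} First I will record how coCartesian squares in $\PreLog$ look. The underlying ring and monoid of the pushout are computed componentwise, so $A' \cong A\otimes_k k'$ and $M'\cong M\oplus_P P'$, with the induced structure map. This follows directly from the comma-category description of $\PreLog$, using that $\ZZ[-]$ is a left adjoint.

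\textbf{Step 2: derivations of the pushout.} Fix an $A'$-module $E$ and form the square-zero prelog ring $(A'\oplus E, M'\oplus(1+E))$, together with its projection $\varpi_J$ onto $(A',M')$. The pullback description of $\Der_{(A',M')/(k,P)}(E)$ given above identifies it with the space of $(k,P)$-algebra maps $(A',M')\to (A'\oplus E,M'\oplus(1+E))$ whose composite with $\varpi_J$ is the identity.

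By the universal property of the pushout, such a map is the same as a pair of maps: one out of $(A,M)$ lying over $(h,h^\flat)$, and one out of $(k',P')$ lying over $(A',M')$. These two maps must agree on $(k,P)$, where both are forced to be the structure map, since the target is a $(k,P)$-algebra via the canonical section.

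The first map amounts to a log derivation of $(f,f^\flat)$ valued in $E$, with $E$ viewed as an $A$-module. The second amounts to a log derivation of $(g,g^\flat)$ valued in $E$, with $E$ viewed as a $k'$-module. This gives a natural bijection
\[
\Der_{(A',M')/(k,P)}(E)\cong \Der_{(f,f^\flat)}(E)\times \Der_{(g,g^\flat)}(E).
\]

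\textbf{Step 3: conclude by representability.} Next I will rewrite the right-hand side using the universal derivations and extension of scalars:
\[
\Hom_A(\Omega^1_{(f,f^\flat)},E)\times\Hom_{k'}(\Omega^1_{(g,g^\flat)},E)\cong \Hom_{A'}\bigl((A'\otimes_A\Omega^1_{(f,f^\flat)})\oplus(A'\otimes_{k'}\Omega^1_{(g,g^\flat)}),E\bigr).
\]
Combined with Step 2, the Yoneda lemma in $\Mod_{A'}$ yields the isomorphism.

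It remains to check that the Yoneda isomorphism agrees with the canonical map built from \ref{log differentials functoriality} via the squares of \ref{fundamental exact sequence for log differentials}. That map restricts a derivation of $(A',M')$ along $(A,M)\to(A',M')$ and along $(k',P')\to(A',M')$. Tracing the identity of $\Omega^1_{(A',M')/(k,P)}$ through the bijection of Step 2 produces exactly these two restrictions, so the two maps coincide.

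\textbf{Main obstacle.} I expect the only delicate point to be Step 2. One must make sure that the monoid components behave correctly: the map $\log D$ on $M\oplus_P P'$ has to be additive and must kill the image of $P$. One must also check that condition (2), $D(\alpha(m))=\alpha(m)\log D(m)$, holds on all of $M'$ once it holds on $M$ and on $P'$.

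The additivity and the vanishing on $P$ follow from the universal property of $M\oplus_P P'$ in $\CMon$. For condition (2), the key point is that both sides are multiplicative-to-additive in $m$: if the identity holds for $m_1$ and $m_2$, it holds for $m_1+m_2$ by the Leibniz rule. Since $M'$ is generated by the images of $M$ and $P'$, the identity then holds everywhere.
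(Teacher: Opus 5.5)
Your argument is correct, but it takes a different route from the paper's proof. The paper never computes derivations of the pushout directly. It arranges the two fundamental exact sequences of \ref{fundamental exact sequence for log differentials}, for $(k,P)\to(A,M)\to(A',M')$ and for $(k,P)\to(k',P')\to(A',M')$, in a cross-shaped diagram. Into that diagram it places the base-change isomorphisms $A'\otimes_A\Omega^1_{(A,M)/(k,P)}\cong\Omega^1_{(A',M')/(k',P')}$ and $A'\otimes_{k'}\Omega^1_{(k',P')/(k,P)}\cong\Omega^1_{(A',M')/(A,M)}$ from \ref{log differentials functoriality}, which are cited from Ogus. It then asserts that the triangles commute, deduces that both maps into $\Omega^1_{(A',M')/(k,P)}$ are injective and split the sequences, and concludes with the splitting lemma.

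You instead identify $\Der_{(A',M')/(k,P)}(E)$ with $\Der_{(f,f^\flat)}(E)\times\Der_{(g,g^\flat)}(E)$ using the universal property of the pushout, and then apply Yoneda. The paper's route is shorter, given the cited base-change result. Yours is self-contained: it uses only representability and pushouts in $\PreLog$, and the same computation also yields the two base-change isomorphisms. In your version the isomorphism is the canonical restriction map by construction, whereas the paper relies on a triangle commutativity that it only asserts.

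Your reading of the maps is the right one. The statement's second map should land in $\Omega^1_{(A',M')/(k,P)}$, not in $\Omega^1_{(A',M')/(A,M)}$, which is a typo in the statement.

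Finally, the checks you list under the main obstacle are only needed if you construct the inverse by hand. If you glue the two maps into $(A'\oplus E, M'\oplus(1+E))$ using the pushout's universal property in $\PreLog$, the following all hold automatically: additivity of $\log D$, its vanishing on $P$, and the identity $D(\alpha'(m'))=\alpha'(m')\log D(m')$.
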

\begin{proof}
    Consider the diagram
    \[\begin{tikzcd}
        &A'\otimes_A\Omega^1_{(A,M)/(k,P)}\ar[d]\ar[rd]\\
        A'\otimes_{k'}\Omega^1_{(k',P')/(k,P)}\ar[r]\ar[rd] &\Omega^1_{(A',M')/(k,P)}\ar[d]\ar[r] & \Omega^1_{(A',M')/(k',P')}\ar[r] &0\\
        &\Omega^1_{(A',M')/(A,M)}\ar[d]\\
        &0
    \end{tikzcd}\]
    where the diagonal maps are the canonical isomorphisms of \ref{log differentials functoriality} and the vertical and horizontal morphisms are the short exact sequences of \ref{fundamental exact sequence for log differentials}.

    It follows from the construction of the morphisms that the diagram commutes. Thus, the maps $A'\otimes_A\Omega^1_{(A,M)/(k,P)}\to\Omega^1_{(A',M')/(k,P)}$ and $A'\otimes_{k'}\Omega^1_{(k',P')/(k,P)}\to\Omega^1_{(A',M')/(k,P)}$ are injective, and the splitting lemma implies the result.
\end{proof}

\begin{proposition}[\protect{\cite[Proposition~IV.1.1.4]{OgusLectures}}]
    Let $f^{\flat} : P\to M$ be a morphism of monoids, and let $f : k[P]\to k[M]$ be the induced map of monoid algebras. If $k[P]$ and $k[M]$ are endowed with the obvious prelog structures $P\to k[P]$ and $M\to k[M],$ then we have a natural isomorphism
    \[
    \Omega^1_{(k[M],M)/(k[P],P)}\cong k[M]\otimes M^{\gp}/P^{\gp}.
    \]
\end{proposition}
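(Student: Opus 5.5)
The plan is to compute $\Omega^1_{(k[M],M)/(k[P],P)}$ directly from the explicit presentation recalled above,
\[
\Omega^1_{(A,M)/(k,P)} = \left(\Omega^1_{A/k}\oplus (A\otimes M^{\gp}/P^{\gp})\right)/R,
\]
where the base ring is now $k[P]$ and $A = k[M]$. The idea is that the relations in $R$ let us eliminate the $\Omega^1_{k[M]/k[P]}$ summand completely. Since $M$ is a monoid rather than a group, we cannot simply write $e^m$ as a unit, so the isomorphism is best produced through the universal property rather than through the presentation alone.

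The first step is to construct a log derivation with values in $E_0 := k[M]\otimes M^{\gp}/P^{\gp}$. Put
\[
D(e^m) := e^m\otimes\pi(m), \qquad \log D(m) := 1\otimes\pi(m),
\]
where $\pi : M\to M^{\gp}/P^{\gp}$ is the canonical map, and extend $D$ $k$-linearly.

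We then check the axioms of a log derivation in turn.
\begin{itemize}
\item The Leibniz rule holds: $D(e^me^{m'}) = e^{m+m'}\otimes(\pi(m)+\pi(m'))$, and this equals $e^mD(e^{m'}) + e^{m'}D(e^m)$.
\item $D$ is $k[P]$-linear, because $D(e^p) = e^p\otimes\pi(p) = 0$ for $p\in P$.
\item The compatibility $D(\alpha(m)) = \alpha(m)\log D(m)$ holds by construction.
\item $\log D$ vanishes on the image of $P$.
\end{itemize}
By representability (\cite[Proposition~IV.1.1.2]{OgusLectures}), this yields a $k[M]$-linear map $\phi : \Omega^1_{(k[M],M)/(k[P],P)}\to E_0$ sending $\log d(m)\mapsto 1\otimes\pi(m)$.

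The second step is to construct an inverse $\psi : E_0\to\Omega^1$. Since $E_0$ is the base change of the abelian group $M^{\gp}/P^{\gp}$, it suffices to give a group homomorphism $M^{\gp}/P^{\gp}\to\Omega^1$. We would like to send $\pi(m)\mapsto \log d(m)$. This is well defined for the following reasons.
\begin{itemize}
\item $\log d : M\to\Omega^1$ is a monoid map into an abelian group, so it factors through $M^{\gp}$ by the adjunction recalled earlier.
\item It kills the image of $P$, by the third condition in the definition of a log derivation, so it descends to $M^{\gp}/P^{\gp}$.
\end{itemize}
Extending $k[M]$-linearly gives $\psi$.

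The third step is to check that $\phi$ and $\psi$ are mutually inverse.
\begin{itemize}
\item The composite $\phi\circ\psi$ is the identity on generators $1\otimes\pi(m)$, hence the identity.
\item For $\psi\circ\phi$, note that $\Omega^1$ is generated as a $k[M]$-module by the elements $d(e^m)$ and $\log d(m)$. This holds because $k[M]$ is generated as a $k[P]$-algebra (indeed as a $k$-algebra) by the $e^m$, so $\Omega^1_{k[M]/k}$ is spanned by the $d e^m$.
\item The relation $d(e^m) = e^m\log d(m)$ shows that $\Omega^1$ is in fact generated by the $\log d(m)$ alone. Both composites fix these generators, since $\psi\phi(\log d(m)) = \log d(m)$.
\end{itemize}
Naturality in $f^\flat$ then follows from the description of the functoriality map in \ref{log differentials functoriality}, because everything is defined on the generators $\log d(m)$.

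The only real subtlety is the generation claim in the third step, namely that $d(e^m)$ lies in the span of the $\log d$'s. This is exactly the second log-derivation axiom applied to $\alpha(m) = e^m$, so I expect no genuine obstacle.
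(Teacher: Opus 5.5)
Your proof is correct. The pair $(D,\log D)$ with $D(e^m)=e^m\otimes\pi(m)$ is a log derivation, and it is universal because $\log d$ factors through $M^{\gp}/P^{\gp}$ and the relation $d(e^m)=e^m\log d(m)$ shows the $\log d(m)$ generate; this is the standard direct verification, essentially Ogus's own argument. The paper gives no proof of its own and only cites \cite[Proposition~IV.1.1.4]{OgusLectures}. The one cosmetic slip is writing $D(e^p)$ where you mean $D(e^{f^\flat(p)})$.
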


\begin{example}
    Let us calculate the log differentials of the map
    \[
    (f,f^{\flat}) : (k_0[P],P)\to (A_0[M],M)
    \]
    where $k_0[P]$ and $A_0[M]$ are given the obvious prelog structures, and $f$ is induced by the map $f^{\flat} : P\to M$ of monoids and a map $f_0 : k_0\to A_0$ of rings.

    Observe that there is a pushout diagram
    \[\begin{tikzcd}
        (k_0[P],P)\ar[r]\ar[d] &(k_0[M],M)\ar[d]\\
        (A_0[P],P)\ar[r] &(A_0[M],M).
    \end{tikzcd}\]

    Then we have
    \begin{align*}
        \Omega^1_{(A_0[M],M)/(k_0[P],P)} &\cong \left(A_0[M]\otimes_{A_0[P]}\Omega^1_{(A_0[P],P)/(k_0[P],P)}\right)\oplus\left(A_0[M]\otimes_{k_0[M]}\Omega^1_{(k_0[M],M)/(k_0[P],P)}\right).
    \end{align*}
    Since $(k_0[P],P)\to (A_0[P],P)$ is strict, we have
    \[
    \Omega^1_{(A_0[P],P)/(k_0[P],P)} \cong \Omega^1_{A_0[P]/k_0[P]}\cong k_0[P]\otimes_{k_0}\Omega^1_{A_0/k_0}.
    \]
    Moreover, \cite[Proposition~1.1.4]{OgusLectures} implies that
    \[
    \Omega^1_{(k_0[M],M)/(k_0[P],P)}\cong k_0[M]\otimes M^{\gp}/P^{\gp}.
    \]
    Putting these together, we have
    \begin{align*}
        \Omega^1_{(A_0[M],M)/(k_0[P],P)} &\cong \left(A_0[M]\otimes_{A_0[P]}\Omega^1_{(A_0[P],P)/(k_0[P],P)}\right)\oplus\left(A_0[M]\otimes_{k_0[M]}\Omega^1_{(k_0[M],M)/(k_0[P],P)}\right)\\
        &\cong \left(k_0[M]\otimes_{k_0[P]}\otimes k_0[P]\otimes_{k_0}\Omega^1_{A_0/k_0}\right)\oplus\left(A_0[M]\otimes_{k_0[M]}k_0[M]\otimes M^{\gp}/P^{\gp}\right) \\
        &\cong \left(k_0[M]\otimes_{k_0}\Omega^1_{A_0/k_0}\right)\oplus\left(A_0[M]\otimes_{k_0[M]}k_0[M]\otimes M^{\gp}/P^{\gp}\right)\\
        &\cong \left(k_0[M]\otimes_{k_0}\Omega^1_{A_0/k_0}\right)\oplus\left(A_0[M]\otimes M^{\gp}/P^{\gp}\right).
    \end{align*}

    In particular, for a map of the form $(k_0[X,\NN^S],\NN^S)\to (k_0[X,Y,\NN^{S}\oplus\NN^{T}],\NN^{S}\oplus\NN^{T}),$ we have
    \[
    \Omega^1_{(k_0[X,Y,\NN^{S}\oplus\NN^{T}],\NN^{S}\oplus\NN^{T})/(k_0[X,\NN^S],\NN^S)}\simeq k_0[X,Y,\NN^S\oplus\NN^T]^{\oplus Y}\oplus k_0[X,Y,\NN^S\oplus\NN^T]^{\oplus T}.
    \]
\end{example}

For a morphism $f : X\to Y$ of prelog schemes, we may also define the notion of a log derivation of $f$ with values in a sheaf $\cE$ of $\OX$-modules analogously to the definition above for prelog rings. As in the ``affine'' case, there is also a sheaf $\Omega^1_{X/Y}$ and a universal log derivation valued in $\Omega^1_{X/Y}$ which represents the functor sending a sheaf $\cE$ of $\OX$-modules to the set of log derivations of $f$ valued in $\cE.$ Similarly to the affine situation, we have
\[
\Omega^1_{X/Y}\cong \left(\Omega^1_{\underline{X}/\underline{Y}}\oplus(\OX\otimes\cM_X^{\gp})\right)/\cR
\]
where $\cR$ is the sub $\OX$-module generated by sections of the form $(d\alpha_X(m),-\alpha_X(m)\otimes m)$ for $m\in\cM_X,$ and by $(0,1\otimes f^\flat(n))$ for $n\in f^{-1}\cM_Y,$ with the maps $d : \OX\to\Omega^1_{X/Y}$ and $\log d : \cM_X\to\Omega^1_{X/Y}$ defined analogously to the affine situation.

Additionally, the global version is a direct generalization of the affine version.

\begin{proposition}[\protect{\cite[Corollary~IV.1.2.6]{OgusLectures}}]
    Let $(f,f^{\flat}) : (k,P)\to (A,M)$ be a map of prelog rings and let $\Spec(A,M)\to\Spec(k,P)$ be the corresponding morphism of log schemes. Then $\Omega^1_{\Spec(A,M)/\Spec(k,P)}$ is the quasi-coherent sheaf associated to the $A$-module $\Omega^1_{(f,f^{\flat})}.$
\end{proposition}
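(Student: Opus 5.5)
The plan is to compare the two sides through their universal properties. Set $X = \Spec(A,M)$ and $Y=\Spec(k,P)$. I would first build a presheaf on the small \'etale site of $\underline{X}=\Spec A$ and show it equals the quasi-coherent sheaf $\widetilde{\Omega^1_{(f,f^\flat)}}$. Then I would show it represents log derivations of $X\to Y$ valued in arbitrary sheaves of $\cO_X$-modules.

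The presheaf sends an \'etale $U=\Spec A'\to \Spec A$ to $\Omega^1_{(A',M)/(k,P)}$, where $A'$ carries the inverse image prelog structure. The first step is to show $A'\otimes_A\Omega^1_{(A,M)/(k,P)}\to\Omega^1_{(A',M)/(k,P)}$ is an isomorphism. This extends the localization case of \ref{log differentials functoriality} to \'etale maps. For this I apply \ref{fundamental exact sequence for log differentials} to $(k,P)\to(A,M)\to(A',M)$. The third term is $\Omega^1_{(A',M)/(A,M)}$, which equals $\Omega^1_{A'/A}=0$ because the monoid is unchanged and $A\to A'$ is \'etale; this gives surjectivity. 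For injectivity I would use the lifting property of \'etale maps directly on derivations. Given a log derivation $(D,\log D)$ of $(A,M)$ into an $A'$-module $E$, the map $A\to A\oplus E$ extends uniquely along $A\to A'$ to $A'\to A'\oplus E$. Keeping $\log D$ on $M$ then yields a unique log derivation of $(A',M)$. Hence $\Der_{(A',M)/(k,P)}(E)\cong\Der_{(A,M)/(k,P)}(E)$ for all $A'$-modules $E$, and Yoneda gives the isomorphism. So the presheaf is exactly $\widetilde{\Omega^1_{(f,f^\flat)}}$, in particular an \'etale sheaf.

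The second step is to compare with the sheaf $\Omega^1_{X/Y}$, which is defined using the \emph{log} structures $\cM_X=(M_X)^a$ and $f^{-1}\cM_Y$ rather than the constant prelog structures $M_X$ and $P_X$. I would show that log derivations are insensitive to logification. Since $\cM_X = M_X\oplus_{\alpha^{-1}\cO_X^\times}\cO_X^\times$, a log derivation on $\cM_X$ is determined by its restriction to $M_X$ together with its values on $\cO_X^\times$. On $\cO_X^\times$ the compatibility $D(u)=u\log D(u)$ forces $\log D(u)=u^{-1}D(u)$, and conversely this formula always defines a compatible extension. The same reasoning applied to $f^{-1}\cM_Y$ versus the image of $P$ shows the two vanishing conditions on the base coincide. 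Sheafifying is harmless because derivations into a sheaf already form a sheaf. Therefore log derivations of $(\underline{X},\cM_X)\to(\underline{Y},\cM_Y)$ into $\cE$ are the same as compatible families, over \'etale $\Spec A'\to\underline{X}$, of log derivations of $(A',M)\to(k,P)$ into $\cE(A')$. By the first step these are the same as $\cO_X$-linear maps $\widetilde{\Omega^1_{(f,f^\flat)}}\to\cE$. The universal derivations correspond under this identification, which finishes the proof.

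I expect the main obstacle to be the injectivity half of the \'etale base change. It needs care that the unique lift of $D$ along $A\to A'$ remains compatible with $\log D$, i.e.\ $D(\alpha(m))=\alpha(m)\log D(m)$ after extension. I believe this holds because the relation only involves elements coming from $A$. The logification bookkeeping for $\cO_X^\times$ should be routine once one works with the explicit presentation of $\Omega^1_{X/Y}$ recalled above.
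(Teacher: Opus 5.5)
Your proof is correct. The paper gives no argument here beyond the citation to Ogus, so what you have written is a self-contained proof rather than a reconstruction of something in the text. Its two ingredients are what the paper's conventions actually require: invariance of log derivations under logification on both source and base, and compatibility of $\Omega^1$ of prelog rings with \'etale base change of the underlying ring.

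Your Step~1 is not superfluous. The paper puts (pre)log structures on the \'etale site, but Proposition~\ref{log differentials functoriality} only records the isomorphism for localizations $A\to A'$. So the \'etale base change $A'\otimes_A\Omega^1_{(A,M)/(k,P)}\cong\Omega^1_{(A',M)/(k,P)}$ has to be supplied, and your Yoneda argument does this cleanly. Two simplifications are available here:
\begin{itemize}
\item The bijection $\Der_{(A',M)/(k,P)}(E)\cong\Der_{(A,M)/(k,P)}(E)$ for all $A'$-modules $E$ already yields the isomorphism, so the separate surjectivity step via Proposition~\ref{fundamental exact sequence for log differentials} is redundant.
\item The compatibility you single out as the main obstacle is immediate, because the structure map of $(A',M)$ is $M\to A\to A'$.
\end{itemize}

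You do omit one check in Step~2. A log derivation of $X\to Y$ must kill all of $f^{-1}\cO_Y$, not just $k$. Sections of $f^{-1}\cO_Y$ over an \'etale $\Spec A'\to\Spec A$ are \'etale-locally images of elements of \'etale $k$-algebras $k'$. This is harmless: a derivation killing $k$ restricts to a $k$-derivation of $k'$, which vanishes because $\Omega^1_{k'/k}=0$.

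An alternative to your Step~2 is to work directly with the presentation $\Omega^1_{X/Y}\cong(\Omega^1_{\underline{X}/\underline{Y}}\oplus(\cO_X\otimes\cM_X^{\gp}))/\cR$ recalled in the paper. There, the relations attached to units $u$ identify $1\otimes u$ with $u^{-1}du$, so the $\cO_X^\times$-parts of $\cM_X^{\gp}$ and of $f^{-1}\cM_Y$ contribute nothing new. What remains is a quasi-coherent sheaf modulo a subsheaf generated by global sections. Your universal-property route avoids this bookkeeping and identifies the universal derivations at no extra cost.
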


If $f : X\to Y$ is a map between reasonably well-behaved log schemes, then the sheaf $\Omega^1_{X/Y}$ is also reasonably well-behaved, as made precise by the following proposition.

\begin{proposition}[\protect{\cite[Corollary~IV.1.2.8]{OgusLectures}}]
    If $f : X\to Y$ is a morphism of coherent log schemes, $\Omega^1_{X/Y}$ is quasi-coherent, and it is of finite type (respectively of finite presentation) if $\underline{f}$ is of finite type (respectively of finite presentation).
\end{proposition}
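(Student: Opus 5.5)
The statement is Ogus' Corollary IV.1.2.8: for a morphism $f : X\to Y$ of coherent log schemes, $\Omega^1_{X/Y}$ is quasi-coherent, and of finite type (respectively finite presentation) when $\underline{f}$ is. All three assertions are étale local on $X$. Since $\cM_X$ and $\cM_Y$ are coherent, we may work étale locally on $X$ and $Y$ and choose a chart of $f$: finitely generated monoids $P$ and $M$, charts $\beta_Y : P\to\Gamma(Y,\cM_Y)$ and $\beta_X : M\to\Gamma(X,\cM_X)$, and a compatible map $\theta : P\to M$. We obtain such a chart by first charting $Y$ by a finitely generated $P$, then using Ogus' Proposition 2.4.2 to extend it to a chart of $f$. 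If that result needs integrality, we instead use the amalgamation trick: take a finitely generated chart $M_0$ of $X$ and set $M = M_0\oplus P$. We may also take $X=\Spec A$ and $Y=\Spec k$ affine.

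The key step is to compare $\Omega^1_{X/Y}$ with the module $\Omega^1_{(A,M)/(k,P)}$ of the prelog ring map $(k,P)\to(A,M)$. Using the global presentation
\[
\Omega^1_{X/Y}\cong\bigl(\Omega^1_{\underline X/\underline Y}\oplus(\OX\otimes\cM_X^{\gp})\bigr)/\cR,
\]
we check that replacing $\cM_X$ by the prelog structure $M_X$ does not change the quotient. Indeed, $\cM_X = M_X\oplus_{\alpha^{-1}\OX^\times}\OX^\times$. For a unit $u$, the relation imposed by $\cR$ identifies $1\otimes u$ with $u^{-1}du$, so units contribute nothing new. The same argument on $Y$ shows that killing $f^{-1}\cM_Y$ is the same as killing the image of $P$. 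This identifies $\Omega^1_{X/Y}$ with the sheaf attached to $\Omega^1_{(A,M)/(k,P)}$, which is essentially the content of Ogus' Corollary IV.1.2.6 quoted just above, so it is quasi-coherent. This comparison with the logification is the step needing the most care: one must make sure the relations coming from units and from $\cM_Y$ are exactly absorbed.

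For finiteness we use the explicit presentation
\[
\Omega^1_{(A,M)/(k,P)}=\bigl(\Omega^1_{A/k}\oplus(A\otimes M^{\gp}/P^{\gp})\bigr)/R.
\]
Since $M$ is finitely generated, $M^{\gp}/P^{\gp}$ is a finitely generated abelian group, so $A\otimes M^{\gp}/P^{\gp}$ is finitely presented over $A$. The module $R$ is generated by the elements $(d\alpha(m),-\alpha(m)\otimes\pi(m))$, and since $m\mapsto(d\alpha(m),-\alpha(m)\otimes\pi(m))$ is additive-to-multiplicative by the Leibniz rule, it suffices to take $m$ among a finite set of generators of $M$; hence $R$ is finitely generated. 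If $\underline f$ is of finite type, then $\Omega^1_{A/k}$ is finitely generated, so the quotient is of finite type. If $\underline f$ is of finite presentation, then $\Omega^1_{A/k}$ is finitely presented, and a quotient of a finitely presented module by a finitely generated submodule is finitely presented. Both properties descend along the étale cover, which completes the proof.
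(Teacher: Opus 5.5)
Your main line is correct, and it is essentially the standard argument behind the cited corollary; the paper gives no proof of its own, only the reference to Ogus. You pass \'etale locally to a finitely generated chart $(k,P)\to(A,M)$ of $f$, identify $\Omega^1_{X/Y}$ with the sheaf attached to $\Omega^1_{(A,M)/(k,P)}$ via Corollary IV.1.2.6, and read off finiteness from the presentation $\bigl(\Omega^1_{A/k}\oplus(A\otimes M^{\gp}/P^{\gp})\bigr)/R$. There is one small slip: the relation map $r(m)=(d\alpha(m),-\alpha(m)\otimes\pi(m))$ is not ``additive-to-multiplicative''. What holds, and what you actually need, is $r(m+m')=\alpha(m')\,r(m)+\alpha(m)\,r(m')$, which shows that $R$ is generated by the $r(m_i)$ for generators $m_i$ of $M$.

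The one false statement is your fallback. The map $M_0\oplus P\to\cM_X$, $(m,p)\mapsto\beta_0(m)\cdot f^\flat\beta_Y(p)$ (with $\beta_0$ the chart map of $M_0$), is in general not a chart. At a geometric point $\bar x$, sharpness of $\overline{\cM}_{X,\bar x}$ shows that the logification of $M_0\oplus P$ has characteristic stalk $\overline{\cM}_{X,\bar x}\oplus P/F_P$, where $F_P$ is the face of elements of $P$ mapping to units. The induced map to $\overline{\cM}_{X,\bar x}$ is the sum map, which fails to be injective as soon as some element of $P$ maps to a non-unit. For example, with $X=Y$ the standard log point, $f=\id$ and $M_0=P=\NN$, you get the sum $\NN^2\to\NN$.

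Since the quoted Proposition 2.4.2 already covers coherent $\cM_X$, you can simply drop the fallback. Note, however, that your finiteness step needs $M$ itself to be finitely generated, and the paper's paraphrase of that proposition does not state this.

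If you want a self-contained construction, proceed as follows.
\begin{enumerate}
\item Pick generators $p_1,\dots,p_r$ of $P$ and write, \'etale locally, $f^\flat\beta_Y(p_i)=\beta_0(m_i)\,u_i$ with $u_i\in\OX^\times$.
\item Take $M=(M_0\oplus\ZZ^r)\oplus_{\NN^r}P$, where $\NN^r\to M_0\oplus\ZZ^r$ sends $e_i\mapsto(m_i,e_i)$, $\NN^r\to P$ sends $e_i\mapsto p_i$, and $\ZZ^r\to\OX^\times$ sends $e_i\mapsto u_i$. This $M$ is finitely generated and receives a compatible map from $P$.
\item Since $\ZZ^r$ lands in units, $(M_0\oplus\ZZ^r)^a\cong\cM_X$.
\item The map $\cM_X\to M^a$ is a pushout of the epimorphism $(\NN^r)^a\to P^a$, so it is an epimorphism. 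It also has a left inverse, hence it is an isomorphism, and $M$ is a chart.
\end{enumerate}
Alternatively, you can skip charts of $f$ altogether. Apply the exact sequence $f^*\Omega^1_{Y/\underline Y}\to\Omega^1_{X/\underline Y}\to\Omega^1_{X/Y}\to 0$, with the trivial log structure on $\underline Y$. Its middle term is attached to $\Omega^1_{(A,M_0)/(k,\{1\})}$, and its first term is generated by the $d\log$ of the generators of $P$. Quasi-coherence and both finiteness statements then follow directly.
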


We also have the following direct generalization of \ref{log differentials functoriality}.

\begin{proposition}[\protect{\cite[Proposition~IV.1.2.15]{OgusLectures}}]
    Consider a commutative diagram
    \[\begin{tikzcd}
        X'\ar[d,"f'"]\ar[r,"g"] & X\ar[d,"f"]\\
        Y'\ar[r,"h"] &Y
    \end{tikzcd}\]
    of prelog schemes. Then there is a unique homomorphism
    \[
    g^*\Omega^1_{X/Y}\to\Omega^1_{X'/Y'}
    \]
    associated to the diagram which maps $1\otimes da\mapsto dg^\sharp(a)$ for $a\in g^{-1}\OX,$ and $1\otimes d\log m\mapsto d\log g^\flat(m)$ for every section $m\in g^{-1}\cM_X.$ The map is an isomorphism in the following cases:
    \begin{enumerate}
        \item $f'$ is the morphism of log schemes associated to the morphism $f$ of prelog schemes;
        \item The diagram is cartesian in the category of prelog schemes.\footnote{For this condition, the category $\PreLogSch$ may be replaced by any of $\LogSch,\LogSch^{\mathrm{coh}},\LogSch^{\fine},$ or $\LogSch^{\fs}.$}
    \end{enumerate}
    In either case, there is a commutative diagram of isomorphisms
    \[\begin{tikzcd}
        f'^*\Omega^1_{Y'/Y}\oplus g^*\Omega^1_{X/Y}\ar[rr]\ar[rd] &&\Omega^1_{X'/X}\oplus\Omega^1_{X'/Y'}\ar[ld]\\
        &\Omega^1_{X'/Y}.
    \end{tikzcd}\]
\end{proposition}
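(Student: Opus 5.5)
The plan is to reduce everything to the affine statements already recorded (\ref{log differentials functoriality}, \ref{fundamental exact sequence for log differentials} and the coCartesian corollary following it) by working with the functors of log derivations on the étale site. For the \emph{construction}, fix an $\cO_{X'}$-module $\cE$. Composition with $g^\sharp : g^{-1}\OX\to\cO_{X'}$ and $g^\flat : g^{-1}\cM_X\to\cM_{X'}$ sends a log derivation $(\partial,\log\partial)$ of $f'$ valued in $\cE$ to a log derivation of $f$ valued in $g_*\cE$. The three defining conditions are preserved because the square commutes, so that $g^\flat$ kills the image of $f^{-1}\cM_Y$ modulo the image of $f'^{-1}\cM_{Y'}$. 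Applying this to $\cE=\Omega^1_{X'/Y'}$ and the universal derivation gives $\Omega^1_{X/Y}\to g_*\Omega^1_{X'/Y'}$, and adjunction gives $g^*\Omega^1_{X/Y}\to\Omega^1_{X'/Y'}$. By construction this map sends $1\otimes da\mapsto dg^\sharp(a)$ and $1\otimes d\log m\mapsto d\log g^\flat(m)$. \emph{Uniqueness} follows from the explicit presentation $\Omega^1_{X/Y}\cong(\Omega^1_{\underline X/\underline Y}\oplus\OX\otimes\cM_X^{\gp})/\cR$: the sections $da$ and $d\log m$ generate, so any map with the prescribed values on them agrees with ours.

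For case (1), where $f'$ is the log morphism associated to the prelog morphism $f$, I will show directly that restriction $\Der_{X^a/Y^a}(\cE)\to\Der_{X/Y}(\cE)$ is bijective. Here $\cM^a=\cM\oplus_{\alpha^{-1}\OX^\times}\OX^\times$. Injectivity holds because $\cM^a$ is generated by $\cM$ and $\OX^\times$, and on a unit $u$ the log-derivation axiom forces $\log\partial(u)=u^{-1}\partial u$. For surjectivity, extend a given $\log\partial$ by $u\mapsto u^{-1}\partial u$ on $\OX^\times$. The two definitions agree on $\alpha^{-1}\OX^\times$ precisely by the axiom $\partial\alpha(m)=\alpha(m)\log\partial(m)$, so they glue through the pushout. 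On the base, logifying $\cM_Y$ adds only units of $\cO_Y$, whose $d\log$ already vanishes because $\partial$ is $\cO_Y$-linear. Yoneda then identifies the two differential modules.

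For case (2), the question is étale local, so I may assume the log structures are induced from the underlying rings' structures via charts. The fiber product in $\PreLogSch$ is then the affine pushout of prelog rings, and the isomorphism is exactly the coCartesian clause of \ref{log differentials functoriality}. For $\LogSch$, combine this with case (1), since the log fiber product is the logification of the prelog one. The displayed triangle of isomorphisms comes from the two exact sequences of \ref{fundamental exact sequence for log differentials} for $X'\to X\to Y$ and $X'\to Y'\to Y$, together with the base change isomorphisms just obtained. As in the proof of the coCartesian corollary, each arrow into $\Omega^1_{X'/Y}$ is split injective, and the splitting lemma finishes.

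\textbf{Main obstacle.} I expect the difficulty to be the footnoted variants $\LogSch^{\fine}$ and $\LogSch^{\fs}$. There the fiber product is $(X\times_Y Y')^{\mathrm{int}}$ or its saturation, and the underlying scheme changes by a closed immersion or a finite map. My first attempt is a universal-property argument. Integralization and saturation are right adjoints to the forgetful functors, so for a square-zero extension of a fine (respectively fs) log scheme $T$, the relevant log maps factor uniquely through $(-)^{\mathrm{int}}$ and $(-)^{\mathrm{sat}}$. Log derivations are exactly sections of such square-zero extensions, provided the extension $(\cO\oplus\cE,\cM\oplus\cE)$ is itself fine (respectively fs). That proviso is what I would verify carefully, using that $\cM\oplus(1+\cE)$ is integral and saturated whenever $\cM$ is. Given it, $\Der$ is unchanged by these operations, and the fine and fs cases reduce to the coherent case.
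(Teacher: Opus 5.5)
The paper gives no proof of this statement; it is quoted from Ogus, so I am judging your argument on its own terms. Your construction, your uniqueness argument, case (1), the triangle via the two exact sequences, and the adjunction argument for the fine and fs variants are all sound. The gap is the first step of case (2): ``the question is \'etale local, so I may assume the log structures are induced from the underlying rings' structures via charts.''

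In the generality of the statement this step is false. A prelog structure is an arbitrary map of sheaves of monoids $\cM\to\cO_X$ on the \'etale site, and it need not agree with the structure coming from a constant monoid on any \'etale neighborhood. Even for log schemes, \'etale-local charts exist exactly when the log structure is quasi-coherent. So as written you only prove case (2) for quasi-coherent structures. Your $\LogSch$ case, which you deduce from the prelog case, inherits the same restriction. For the coherent, fine and fs variants the chart reduction is legitimate, given compatible charts of $f$ and $h$ over a common chart of $Y$.

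The repair is to argue as you did in case (1), directly with derivation functors and no charts. Put $r = f\circ g = h\circ f'$. The fiber product in $\PreLogSch$ has underlying scheme $\underline{X}\times_{\underline{Y}}\underline{Y'}$. Its prelog structure is the pushout of sheaves of monoids $\cM_{X'} = g^{-1}\cM_X\oplus_{r^{-1}\cM_Y}f'^{-1}\cM_{Y'}$. Now let $(\partial,\log\partial)$ be a log derivation of $f'$ with values in $\cE$.
\begin{itemize}
\item Since $\log\partial$ kills $f'^{-1}\cM_{Y'}$, it amounts to a homomorphism $g^{-1}\cM_X\to\cE$ killing $r^{-1}\cM_Y$.
\item The compatibility with $\partial$ on $f'^{-1}\cM_{Y'}$ is automatic, because $\partial$ kills $f'^{-1}\cO_{Y'}$.
\item By the classical isomorphism $g^*\Omega^1_{\underline{X}/\underline{Y}}\cong\Omega^1_{\underline{X'}/\underline{Y'}}$, $\partial$ amounts to an $r^{-1}\cO_Y$-linear derivation $g^{-1}\cO_X\to\cE$.
\end{itemize}
Hence $\Der_{X'/Y'}(\cE)\cong\Der_{X/Y}(g_*\cE)$ naturally in $\cE$, and Yoneda gives case (2) for $\PreLogSch$. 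Your reductions for $\LogSch$ (via case (1)), $\LogSch^{\mathrm{coh}}$, $\LogSch^{\fine}$ and $\LogSch^{\fs}$ then go through unchanged.

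Three smaller points should also be recorded.
\begin{itemize}
\item In the fine/fs step, the square-zero extension $(\underline{X'},\cO_{X'}\oplus\cE)$ is a scheme only for quasi-coherent $\cE$. So run Yoneda in quasi-coherent sheaves; both sides are quasi-coherent by the cited finiteness result.
\item Fineness of $\cM\oplus(1+\cE)$ also needs coherence. This holds because a chart of $\cM$ remains a chart.
\item In case (1), the triangle needs $\Omega^1_{X^a/X}=\Omega^1_{Y^a/Y}=0$. This follows from the same unit computation you already did.
\end{itemize}
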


There is also a canonical exact sequence of sheaves of logarithmic differentials associated to a composition of (pre)log schemes.

\begin{proposition}[\protect{\cite[Proposition~2.3.1]{OgusLectures}}]
    Let $f : X\to Y$ and $g : Y\to Z$ be morphisms of (pre)log schemes. Then there is an exact sequence of sheaves of $\OX$-modules
    \[
    f^*\Omega^1_{Y/Z}\to\Omega^1_{X/Z}\to\Omega^1_{X/Y}\to 0
    \]
    induced by the functoriality morphisms associated to the diagram
    \[\begin{tikzcd}
        X\ar[r]\ar[d,"f"] &X\ar[r,"f"]\ar[d,"g\circ f"] &Y\ar[d,"g"]\\
        Y\ar[r,"g"] &Z\ar[r] &Z.
    \end{tikzcd}\]
\end{proposition}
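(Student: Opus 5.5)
The strategy is to build the sequence on affine pieces and then glue, as in the classical statement for Kähler differentials. The last statement says that for morphisms $f : X\to Y$ and $g : Y\to Z$ of (pre)log schemes, the sequence
\[
f^*\Omega^1_{Y/Z}\to\Omega^1_{X/Z}\to\Omega^1_{X/Y}\to 0
\]
is exact. Exactness of a sequence of sheaves of $\OX$-modules can be checked étale locally on $X$. Each term is defined as the sheaf representing log derivations, or equivalently by the explicit presentation $(\Omega^1_{\underline{X}/\underline{Y}}\oplus(\OX\otimes\cM_X^{\gp}))/\cR$. I therefore plan to reduce the statement to a sheafified version of the argument in \ref{fundamental exact sequence for log differentials}, working with derivation functors directly rather than with charts.

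First, I would identify the two maps as the functoriality maps of the global version of \ref{log differentials functoriality}, namely \cite[Proposition~IV.1.2.15]{OgusLectures}, applied to the two squares in the displayed diagram. These maps are characterized by $1\otimes da\mapsto da$ and $1\otimes d\log m\mapsto d\log f^\flat(m)$, which shows the composite is zero: $d$ and $d\log$ of sections coming from $Y$ vanish in $\Omega^1_{X/Y}$. Second, for any sheaf $\cE$ of $\OX$-modules, I would apply $\mathcal{H}om_{\OX}(-,\cE)$ and identify the result, via the representing property, with the sequence of sheaves of log derivations
\[
0\to\Der_{X/Y}(\cE)\to\Der_{X/Z}(\cE)\to\Der_{Y/Z}(f_*\cE).
\]
Here the last term is identified with $\mathcal{H}om(f^*\Omega^1_{Y/Z},\cE)$ by adjunction. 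Third, I would prove that this sequence is left exact pointwise, exactly as in the proof of \ref{fundamental exact sequence for log differentials}. A log derivation $(\partial,\log\partial)$ of $X/Z$ restricts to zero on $Y$ precisely when $\partial$ kills $f^{-1}\cO_Y$ and $\log\partial$ kills $f^{-1}\cM_Y$, and these are exactly the conditions defining a log derivation of $X/Y$. Since a sequence $A\to B\to C\to 0$ of modules is exact if and only if $\Hom(-,\cE)$ turns it into a left exact sequence for every $\cE$, right exactness follows.

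The main obstacle is the third step in the sheaf setting. The Yoneda-type criterion must be applied in the abelian category of $\OX$-modules, and one must check that the derivation functors are compatible with the adjunction $f^*\dashv f_*$ and with sheafification. This compatibility is what makes $\mathcal{H}om(f^*\Omega^1_{Y/Z},\cE)\cong\Der_{Y/Z}(f_*\cE)$ natural. If this becomes awkward, my fallback is to use the explicit presentation: surjectivity of $\Omega^1_{X/Z}\to\Omega^1_{X/Y}$ is clear, since the generators $da$ and $d\log m$ map to generators. The kernel of that map is generated by the images of the extra relations $(0,1\otimes f^\flat(n))$ and of $d$ of sections of $f^{-1}\cO_Y$, and these lie in the image of $f^*\Omega^1_{Y/Z}$.
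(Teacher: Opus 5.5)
Your proposal is correct and follows essentially the same route as the paper. The paper cites Ogus for the global statement, and proves the affine version (\ref{fundamental exact sequence for log differentials}) exactly as you do: it identifies log derivations of $X/Y$ with the kernel of restricting log derivations of $X/Z$ to those of $Y/Z$. The obstacle you anticipate disappears if you apply the Yoneda criterion with global $\Hom_{\OX}(-,\cE)$ in the abelian category of $\OX$-modules instead of the sheaf $\mathcal{H}om$. Then $\Hom_{\OX}(f^*\Omega^1_{Y/Z},\cE)\cong\Der_{Y/Z}(f_*\cE)$ is just the adjunctions $f^*\dashv f_*$ and $f^{-1}\dashv f_*$, the latter for the monoid part.
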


\begin{remark}
    There are also differential criteria for log smoothness and log \'etaleness which parallel the differential criteria for classical smoothness and \'etaleness. We do not require these results, and refer the interested reader to \cite[Section~IV.3.2]{OgusLectures}.
\end{remark}


\subsection{Olsson's Logarithmic Cotangent Complex}

We begin by recalling the definition and basic properties of the logarithmic cotangent complex defined by Olsson in \cite{Olssoncotangent}. The definition of Olsson's logarithmic cotangent complex is simple to state, now that we have the stack of log structures.

\begin{mydef}
    Let $f : X\to S$ be a morphism of fine log schemes, with corresponding morphism $f_{\cM_X} : \underline{X}\to\cLog_S.$ The \emph{Olsson logarithmic cotangent complex} of $f$ is defined to be
    \[
    \LOls_{X/S} := \LL_{\underline{X}/\cLog_S}\in\QC(\underline{X}).
    \]
\end{mydef}

\begin{remark}
    We note that the Olsson logarithmic cotangent complex was originally defined as a projective system $(\dots\to\LL^{\Ols,\geq -n-1}_{X/S}\to\LL^{\Ols,\geq -n}_{X/S}\to\dots\to\LL^{\Ols,\geq 0}_{X/S})$ where each $\LL^{\Ols,\geq -n}_{X/S}$ is an essentially constant ind-object in $\sfD^{[-n,0]}(\OX).$ However, this was a technical limitation -- later developments would allow one to define the cotangent complex of a morphism of stacks as an object in an appropriate derived category rather than as a projective system (see \cite{laszlo2006operations}). Moreover, with modern $\infty$-categorical methods, we may enhance the cotangent complex of a map $f : \cX\to\cY$ to live in $\QC(\cX)$ and as we have already seen, this enhanced variant satisfies $\infty$-categorical properties analogous to the ones satisfied by its classical counterpart. One can recover the classical variants by either passing to the homotopy category, or by first doing so and then forming appropriate truncations.
\end{remark}

By \ref{cotangent complex and diagonal}, it follows that $\LOls_{X/S}\simeq\LL_{\underline{X}/\underline{X}\times^R_{\cLog_{S}}\underline{X}}[-1],$ generalizing \cite[Remark~3.4]{Olssoncotangent} to the non-log flat case. 

\begin{theorem}[\protect{\cite{Olssoncotangent}}]
    Let $f : X\to S$ be a morphism of fine log schemes. The logarithmic cotangent complex $\LOls_{X/S}$ satisfies the following properties:
    \begin{enumerate}
        \item If $X$ and $S$ are endowed with the trivial log structures, there is a canonical equivalence $\LL_{\underline{X}/\underline{S}}\simeq\LOls_{X/S}.$
        \item If $f$ is log smooth, then $\LOls_{X/S}$ is represented by $\Omega^1_{X/S}[0].$
        \item If 
        \[\begin{tikzcd}
            X'\ar[r,"a"]\ar[d,"f'"] &X\ar[d,"f"]\\
            S'\ar[r,"b"] &S
        \end{tikzcd}\]
        is a commutative diagram of fine log schemes, then there is a natural map
        \begin{equation}\label{LOls functoriality 1}
        La^*\LOls_{X/S}\to\LOls_{X'/S'}.
        \end{equation}
        If moreover the diagram is Cartesian and $f$ is log flat, then \ref{LOls functoriality 1} is an equivalence, and if in addition the composite $X'\to S'\to S$ satisfies condition (T) (\ref{condition T}) below, then the map
        \[
        Lf'^*\LOls_{S'/S}\oplus La^*\LOls_{X/S}\to\LOls_{X'/S}
        \]
        is an equivalence.
        \item Given a composite
        \[
        X\xrightarrow{f} Y\xrightarrow{g} Z
        \]
        satisfying condition \ref{condition T} below, there is a natural map
        \[
        \LOls_{X/Y}\to Lf^*\LOls_{Y/Z}[1]
        \]
        such that
        \[
        Lf^*\LOls_{Y/Z}\to\LOls_{X/Z}\to\LOls_{X/Y}\to Lf^*\LOls_{Y/Z}[1]
        \]
        is a cofiber sequence.
    \end{enumerate}
\end{theorem}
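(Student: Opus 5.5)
Since $\LOls_{X/S}$ is by definition $\LL_{\underline{X}/\cLog_S}$, the plan is to deduce each of the four properties from the general theory of the cotangent complex of a morphism of geometric derived stacks developed in the previous chapter. We use that $\cLog_S$ is an algebraic stack, locally of finite presentation over $\underline{S}$ with finitely presented diagonal. Hence $\underline{X}\to\cLog_S$ is a geometric morphism, and its cotangent complex exists by \ref{existence of cotangent complex for geometric morphism}. By the last theorem of that chapter, it agrees with the classical cotangent complex used in \cite{Olssoncotangent}. So throughout we may argue with $\LL^{\der}$ and its universal properties.

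For (1), with trivial log structures the classifying map $\underline{X}\to\cLog_S$ factors through the open immersion $\underline{S}\to\cLog_S$ (strictness, \cite[Proposition~3.19]{OlssonLogGeomandAlgStacks}). The cofiber sequence of \ref{cotangent complex cofiber sequence} for $\underline{X}\to\underline{S}\to\cLog_S$ then identifies $\LL_{\underline{X}/\underline{S}}\simeq\LL_{\underline{X}/\cLog_S}$. This uses that an open immersion is an étale monomorphism, so its cotangent complex vanishes.

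For (2), log smoothness of $f$ means $\underline{X}\to\cLog_S$ is smooth by \cite[Theorem~4.6]{OlssonLogGeomandAlgStacks}. Since it is also representable, the cotangent complex is a finite projective module in degree $0$ by \ref{existence of cotangent complex for geometric morphism}. To identify it with $\Omega^1_{X/S}$, I would work étale locally using Kato's chart theorem together with the étale cover $\cS_M\times_{\cS_P}U\to\cLog_S$. For the quotient stack $[\Spec\ZZ[M]/\Spec\ZZ[M^{\gp}]]$ relative to $\cA_P$, the cotangent complex is computed from the $\Spec\ZZ[M]$ and group terms. This reduces to $\ZZ[M]\otimes M^{\gp}/P^{\gp}$, matching the computation of $\Omega^1_{(k[M],M)/(k[P],P)}$ above. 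One must still check that these local identifications glue via the universal derivation.

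For (3), the natural map comes from functoriality of $\bfL^{\der}$ on $\Arr(\dStk)$ applied to the square $\underline{X}'\to\cLog_{S'}\to\cLog_S$ over $\underline{X}\to\cLog_S$. When the square is Cartesian and $f$ is log flat, $\cLog_{S'}\simeq\cLog_S\times_{\underline{S}}\underline{S}'$ and $\underline{X}'\simeq\underline{X}\times_{\cLog_S}\cLog_{S'}$. Flatness makes the derived fiber product agree with the classical one, so base change (\ref{cotangent complex properties 1}(2)) gives the equivalence. The direct-sum statement then follows from \ref{cotangent complex of fiber product}, provided condition (T) makes the relevant square of log stacks Cartesian.

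For (4), condition (T) should exactly guarantee that $\cLog_Y\to\cLog_Z$ and $\underline{Y}\to\cLog_Z$ interact well. In that case $\underline{X}\to\cLog_Y\to\cLog_Z$ recovers $\LL_{\underline{X}/\cLog_Z}$, and the cofiber sequence of \ref{cotangent complex cofiber sequence} yields the triangle. The main obstacle is precisely this step: identifying $f^*\LL_{\cLog_Y/\cLog_Z}$ with $Lf^*\LOls_{Y/Z}$. This is a base change along $\underline{Y}\to\cLog_Y$ that requires (T), and I would attempt it by reducing via the charts $\cS_P\times_{\cS_Q}U$.
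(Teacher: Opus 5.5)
Your parts (1) and (2) are fine in outline. The paper gives no proof and simply imports the whole theorem from \cite{Olssoncotangent}. In (2) the gluing can be bypassed: $\Omega^1_{\underline{X}/\cLog_S}$ corepresents $\cLog_S$-derivations, and such a derivation is exactly a log derivation. Such a derivation consists of a retraction $\underline{X}[E]\to\underline{X}$ together with an isomorphism, trivial modulo $E$, between the two induced log structures on $\underline{X}[E]$ over $S$.

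The genuine gap is in (3), and it propagates to (4). You apply $\bfL^{\der}$ to the square with $\underline{X'}\to\underline{X}$ and $\underline{X'}\to\cLog_{S'}$ over $\underline{X}\to\cLog_S$ and $\cLog_{S'}\to\cLog_S$, but this square does not commute unless $a$ is strict. The two composites $\underline{X'}\to\cLog_S$ classify $\underline{a}^*\cM_X$ and $\cM_{X'}$, with their maps from $\cM_S$. Morphisms in $\cLog_S$ are strict, and $a^\flat$ is an isomorphism only for strict $a$, so these are in general non-isomorphic objects of $\cLog_S(\underline{X'})$. Hence there is no morphism in $\Arr(\dStk)$ inducing your map.

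Your Cartesian identifications are also false. The equivalence $\cLog_{S'}\simeq\cLog_S\times_{\underline{S}}\underline{S'}$ holds only for strict $b$. The equivalence $\underline{X'}\simeq\underline{X}\times_{\cLog_S}\cLog_{S'}$ fails already for $f=\id_S$, which is log flat with $X'=S'$. Take $S=\Spec\ZZ$ with trivial log structure and $S'=\Spec\ZZ[t]$ with the log structure from $\NN\to\ZZ[t]$, $1\mapsto t$. Then $\underline{S}\times_{\cLog_S}\cLog_{S'}$ classifies maps to $S'$ whose log structure is trivial, so it is $\Spec\ZZ[t,t^{-1}]$, not $\underline{S'}$. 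Likewise $\underline{X}\times_{\cLog_S}\underline{S'}$ classifies maps $T\to X\times^{\fine}_S S'$ with both projections strict (cf.\ \ref{Martin exactification}), not $\underline{X'}$. So \ref{cotangent complex of fiber product} cannot give the direct-sum formula either.

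What is missing is a mechanism for non-strict morphisms. The diagram that does commute, and is Cartesian, is one level up: $\cLog_{X'}\simeq\cLog_X\times_{\cLog_S}\cLog_{S'}$ for $X'=X\times^{\fine}_S S'$. Since $\underline{X}\to\cLog_X$ is an open immersion, $\LOls_{X/S}$ is the restriction of $\LL_{\cLog_X/\cLog_S}$ to $\underline{X}$. But the composite $\underline{X'}\to\cLog_{X'}\to\cLog_X$ classifies $a$ itself, not $\underline{X'}\to\underline{X}\subset\cLog_X$. So you would still have to show that restricting $\LL_{\cLog_X/\cLog_S}$ along this non-strict point gives $La^*\LOls_{X/S}$, with log flatness entering to make the relevant fiber product Tor-independent. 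That comparison is the real content of (3), and it does not appear in your proposal.

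Your ``main obstacle'' in (4) is the same problem, and it is mislocated. The cofiber sequence for $\underline{X}\to\cLog_Y\to\cLog_Z$ exists with no hypothesis, since the composite classifies $X\to Z$; (T) plays no role there. What needs (T), a local Tor-vanishing condition, is identifying the pullback of $\LL_{\cLog_Y/\cLog_Z}$ along the point classifying $f$ with $Lf^*\LOls_{Y/Z}$. That point does not factor through $\underline{Y}\subset\cLog_Y$ unless $f$ is strict, so this is not a base change along $\underline{Y}\to\cLog_Y$. The identification must also be compatible with the map $Lf^*\LOls_{Y/Z}\to\LOls_{X/Z}$ of (3), which you have not constructed. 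Once (3) and (4) are available, the direct-sum formula follows by applying (4) to $X'\to S'\to S$ and splitting the triangle with the base-change equivalence $La^*\LOls_{X/S}\simeq\LOls_{X'/S'}$.
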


We obtain the following invariance result for strict morphisms as an immediate corollary.

\begin{cor}
    If $f$ is strict, then the canonical morphism
    \[
    \LOls_{\underline{X}/\underline{S}}\to\LOls_{X/S}
    \]
    induced by \ref{LOls functoriality 1} via the diagram
    \[\begin{tikzcd}
        X\ar[r]\ar[d,"f"] &\underline{X}\ar[d,"\underline{f}"]\\
        S\ar[r] &\underline{S}
    \end{tikzcd}\]
    is an equivalence. In particular, there is an equivalence $\LL_{\underline{X}/\underline{S}}\simeq\LOls_{X/S},$ where $\LL_{\underline{X}/\underline{S}}$ denotes the usual cotangent complex.
\end{cor}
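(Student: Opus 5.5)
The plan is to deduce the corollary from property (3) of the preceding theorem, applied to the square formed by $f$, its underlying map $\underline{f}$, and the two canonical maps $X\to\underline{X}$, $S\to\underline{S}$ to the trivial log structures. The first point is that, since $f$ is strict, \ref{strictness pullback equivalence} shows this square is Cartesian in the category of log schemes. So $X\simeq S\times_{\underline{S}}\underline{X}$, where $\underline{X},\underline{S}$ carry the trivial log structures. Viewing the square with $\underline{f}$ as the base morphism, $f$ is the base change of $\underline{f}$ along $S\to\underline{S}$. The natural map \ref{LOls functoriality 1} then takes the form $\LOls_{\underline{X}/\underline{S}}|_X\to\LOls_{X/S}$, and property (3) says it is an equivalence once $\underline{f}$ is known to be log flat.

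The main obstacle is that $\underline{f}$ need not be flat, so property (3) does not apply directly. I would therefore argue on the level of the stacks. The key claim is that, for a strict $f$, the classifying map $\underline{X}\to\cLog_S$ factors as $\underline{X}\to\underline{S}\to\cLog_S$ by \cite[Proposition~3.19]{OlssonLogGeomandAlgStacks}, and $\underline{S}\to\cLog_S$ is an open immersion by the earlier lemma. An open immersion is a monomorphism, so by the lemma on monomorphisms its cotangent complex vanishes. Then \ref{cotangent complex cofiber sequence}, applied to $\underline{X}\to\underline{S}\to\cLog_S$, gives
\[
\LOls_{X/S}=\LL_{\underline{X}/\cLog_S}\simeq\LL_{\underline{X}/\underline{S}}.
\]
Similarly, for trivial log structures, $\cLog_{\underline{S}}$ contains $\underline{S}$ as an open substack through which $\underline{X}$ factors. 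The same argument, which is property (1), gives $\LOls_{\underline{X}/\underline{S}}\simeq\LL_{\underline{X}/\underline{S}}$.

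It remains to check that the map \ref{LOls functoriality 1} matches these two identifications. This map is induced by the map of stacks $\cLog_S\to\cLog_{\underline{S}}$ that forgets down to the underlying scheme, composed with the log structure on $S$ so that it is compatible with the classifying maps. On the open substacks $\underline{S}$ this map is the identity of $\underline{S}$. Hence, by naturality of the transitivity sequence, the induced map $\LL_{\underline{X}/\cLog_{\underline{S}}}\to\LL_{\underline{X}/\cLog_S}$ restricts to the identity on $\LL_{\underline{X}/\underline{S}}$. This gives both statements of the corollary. I expect the bookkeeping of this compatibility to be the only step needing care; everything else follows from results already stated.
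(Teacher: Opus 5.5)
Your argument for the ``in particular'' statement is correct, and it takes a different and more robust route than the paper. The paper's proof only records that strictness makes the square with $\underline{X}_{\triv},\underline{S}_{\triv}$ Cartesian. It therefore leans on the base-change clause of property (3), which as stated requires $\underline{f}$ to be (log) flat; you correctly flag this. Your argument instead factors $\underline{X}\to\cLog_S$ through the open immersion $\underline{S}\to\cLog_S$ and uses $\LL_{\underline{S}/\cLog_S}\simeq 0$ with \ref{cotangent complex cofiber sequence}. This gives $\LOls_{X/S}\simeq\LL_{\underline{X}/\underline{S}}$ with no flatness hypothesis, and it does not involve the comparison map at all.

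The gap is in your last paragraph, where you identify the map \ref{LOls functoriality 1}. The forgetful map $\cLog_S\to\cLog_{\underline{S}}$, $(T\to S)\mapsto(T\to S\to\underline{S})$, keeps the log structure $\cM_T$, and this causes two problems.

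First, it is not compatible with the classifying maps. The composite $\underline{X}\to\cLog_S\to\cLog_{\underline{S}}$ classifies $\cM_X$, whereas the classifying map of $\underline{X}_{\triv}\to\underline{S}_{\triv}$ classifies $\cO_{\underline{X}}^{\times}$. Morphisms in $\cLog$ are isomorphisms of log structures, so these are different points once $\cM_X$ is nontrivial. Hence no transitivity map $\LL_{\underline{X}/\cLog_{\underline{S}}}\to\LL_{\underline{X}/\cLog_S}$ arises from this map of stacks.

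Second, it does not restrict to the identity on the open substacks. It sends the strict locus $\underline{S}\subset\cLog_S$ via the map $\underline{S}\to\cLog_{\underline{S}}$ classifying $\cM_S$. That map does not factor through the trivial locus $\underline{S}\subset\cLog_{\underline{S}}$ except where $\cM_S$ is trivial.

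If ``composed with the log structure on $S$'' meant the map in the other direction, $N\mapsto N\oplus_{\cO^\times}\cM_S$, that map is indeed compatible with the classifying maps when $f$ is strict and restricts to the open immersion. However, it produces a map $\LOls_{X/S}\to\LOls_{\underline{X}/\underline{S}}$ in the opposite direction, which you would still have to relate to \ref{LOls functoriality 1}.

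To prove the first assertion, you must use how \cite{Olssoncotangent} actually constructs \ref{LOls functoriality 1} for squares whose top arrow is not strict; here that arrow is $X\to\underline{X}_{\triv}$, given by $\cO_{\underline{X}}^{\times}\to\cM_X$. You then need to check that over the open loci $\underline{S}\subset\cLog_{\underline{S}}$ and $\underline{S}\subset\cLog_S$ it reduces to the identity of $\LL_{\underline{X}/\underline{S}}$. The relevant input is that the trivial log structure admits a unique map to any log structure. As written, your proof establishes $\LL_{\underline{X}/\underline{S}}\simeq\LOls_{X/S}$, but not that the specified canonical morphism is an equivalence.
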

\begin{proof}
    This follows from the fact that $X\to S$ is strict if and only if the diagram
    \[\begin{tikzcd}
        X\ar[r]\ar[d,"f"] &\underline{X}\ar[d,"\underline{f}"]\\
        S\ar[r] &\underline{S}
    \end{tikzcd}\]
    is Cartesian.
\end{proof}

The desirable property which $\LOls$ fails to satisfy is the existence of a distinguished triangle for all compositions $X\to Y\to Z$ of fine log schemes. This is where condition (T) comes in.

\begin{mydef}\label{condition T}
    Let $X\xrightarrow{f} Y\xrightarrow{g} Z$ be a composition of fine log schemes. We say that the composition satisfies \emph{condition (T)} if there exists a family of of commutative diagrams
    \[\begin{tikzcd}
        X\ar[d,"f"] &\ar[l] X\times_Z Z_i\ar[d] & \ar[l] X\times_Y Y_i\ar[d] & \ar[l,"\pi_{X_i}"']\ar[ld,"f_i"] X_i\\
        Y\ar[d,"g"] &\ar[l] Y\times_Z Z_i\ar[d] & \ar[l,"\pi_{Y_i}"']\ar[dl,"g_i"] Y_i\\
        Z&\ar[l,"\pi_{Z_i}"] Z_i
    \end{tikzcd}\]
    such that
    \begin{enumerate}
        \item The schemes $\underline{X_i},\underline{Y_i},$ and $\underline{Z_i}$ are all affine.
        \item The maps $\pi_{X_i},\pi_{Y_i},$ and $\pi_{Z_i}$ are all strict, and their underlying morphisms of schemes are flat and locally of finite presentation.
        \item The induced map $\coprod_i \underline{X_i}\to\underline{X}$ is surjective.
        \item There exist charts
        \begin{align*}
            &\beta_{X_i} : Q_{X_i}\to\cM_{X_i}, &\beta_{Y_i} : Q_{Y_i}\to\cM_{Y_i}, &&\beta_{Z_i} : Q_{Z_i}\to\cM_{Z_i}
        \end{align*}
        and injective maps
        \begin{align*}
            &\theta_{1,i} : Q_{Z_i}\to Q_{Y_i}, &\theta_{2,i} : Q_{Y_i}\to Q_{X_i}
        \end{align*}
        such that the diagrams
        \[\begin{tikzcd}
            Q_{X_i}\ar[r,"\beta_{X_i}"] &\cM_{X_i} &Q_{Y_i}\ar[r,"\beta_{Y_i}"] &\cM_{Y}\\
            Q_{Y_i}\ar[u,"{\theta_{2,i}}"]\ar[r,"{\beta_{Y_i}}"] &f_i^*\cM_{Y_i}\ar[u,"{f_i^\flat}"] &Q_{Z_i}\ar[u,"{\theta_{1,i}}"]\ar[r,"\beta_{Z_i}"] &g_i^*\cM_{Z_i}\ar[u, "{g_i^\flat}"]
        \end{tikzcd}\]
        commute and
        \[
        \Tor^j_{\cO_{Z_i}\otimes_{\ZZ[Q_{Z_i}]}\ZZ[Q_{Y_i}]}(\cO_{Z_i}\otimes_{\ZZ[Q_{Z_i}]}\ZZ[Q_{X_i}],\cO_{Y_i}[G_i]) = 0
        \]
        for all $j > 0,$ where $G_i = \cok(Q_{Z_i}^{\gp}\to Q_{Y_i}^{\gp})$ and $\cO_{Y_i}[G_i]$ is viewed as a $\cO_{Z_i}\otimes_{\ZZ[Q_{Z_i}]}\ZZ[Q_{Y_i}]$-algebra via the map
        \begin{align*}
            \cO_{Z_i}\otimes_{\ZZ[Q_{Z_i}]}\ZZ[Q_{Y_i}]&\to\cO_{Y_i}[G_i]\\
            t\otimes e^q&\mapsto g^*(t)\beta_{Q_{Y_i}}(q)\cdot [q],
        \end{align*}
        where $[q]$ denotes the image of $q$ in $G.$
    \end{enumerate}
\end{mydef}

Condition (T) for $X\xrightarrow{f} Y\xrightarrow{g} Z$ is rather involved, but it is automatically satisfied if
\begin{itemize}
    \item $g$ is log flat, (\cite[3.14]{Olssoncotangent}) or
    \item $f$ is an integral morphism (\cite[3.15]{Olssoncotangent}).
\end{itemize}

\subsection{Gabber's Logarithmic Cotangent Complex}

The second variant of log cotangent complex is due to Gabber. The construction of Gabber's log cotangent complex directly parallels the construction of the usual cotangent complex: in short, it is the nonabelian derived functor of the functor of logarithmic differentials
\[
((A,M)/(k,P))\mapsto \Omega^1_{(A,M)/(k,P)}.
\]

\begin{remark}
    One can also define Gabber's log cotangent complex in the generality of (pre)log ringed structures in a topos (with enough points), and it is this generality which Olsson uses to talk about the value of $\LGab_{X/S}$ when $X$ and $S$ are log algebraic spaces (which is important for the construction of the comparison morphism $\LGab\to\LOls$). However, similarly to the situation with the usual cotangent complex, we may also define $\LGab_{X/S}$ for log algebraic spaces via sheafification/descent from prelog rings, and we will adopt this perspective.
\end{remark}

For additional information, we refer the reader to \cite[Section 8]{Olssoncotangent}, \cite[Section 4.2]{Sagave_2014}, and \cite[Section 3.3]{BLPO_HKR}.

Gabber's cotangent complex also represents derived log derivations in the following sense.

\begin{mydef}
    Let $(A,M,\alpha)$ be an animated prelog ring, and let $E\in\cMod_A^{\cn}.$
    \begin{enumerate}
        \item The \emph{derived log square zero extension of $(A,M,\alpha)$ by $E$} is the animated prelog ring whose underlying animated ring is the derived square zero extension $A\oplus E$ of $A$ by $E,$ and whose animated prelog structure is given by
        \[
            (s\circ \alpha)\oplus\pi_E : M\oplus\fib((A\oplus E)^\times\xrightarrow{\pi_E} A^\times) \to A\oplus E,
        \]
        where $\pi_E : A\oplus E\to A$ is the canonical projection and $s : A\to A\oplus E$ is its canonical section. Observe that there is a map $\varpi_E : (A\oplus E,M\oplus E)\to (A,M)$ which has a canonical section $(A,M)\to (A\oplus E,M\oplus E).$
        \item Suppose that $(k,P)\to (A,M)$ is a map of animated prelog rings, and that $E\in\cMod_A^{\cn}.$ The \emph{space of derived log derivations of $(A,M)$ over $(k,P)$ valued in $E$} is defined to be the pullback
        \[\begin{tikzcd}
            \Der_{(A,M)/(k,P)}(E)\ar[r]\ar[d] &\Map_{\aPreLog_{(k,P)/}}((A,M),(A\oplus E,M\oplus E))\ar[d,"\varpi_E\circ -"]\\
            \{\id_{(A,M)}\}\ar[r] &\Map_{\aPreLog_{(k,P)/}}((A,M),(A,M))
        \end{tikzcd}\]
    \end{enumerate}
\end{mydef}

Notice that $\Der_{(A,M)/(k,P)}(E)$ reduces to the classical notion of log derivation when $(k,P)\to (A,M)$ is a map of discrete prelog rings, and $E$ is a discrete $A$-module. 

\begin{mydef}
    Let $(f,f^\flat) : (k,P)\to (A,M)$ be a map of prelog rings. The \emph{Gabber log cotangent complex} is a connective $A$-module $\LGab_{(f,f^{\flat})} =\LGab_{(A,M)/(k,P)}$ such that there is a natural equivalence of functors
    \[
    \Map_{\Mod_A^{\cn}}(\LGab_{(A,M)/(k,P)}, -)\simeq\Der_{(A,M)/(k,P)}(-)
    \]
\end{mydef}

\begin{proposition}
    Let $(k,P)\to (A,M)$ be a map of prelog rings. Then the Gabber log cotangent complex $\LGab_{(A,M)/(k,P)}$ exists.
\end{proposition}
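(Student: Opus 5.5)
We will show that the functor $\Der_{(A,M)/(k,P)}(-)\colon \cMod_A^{\cn}\to\Ani$ is corepresentable. The most direct route is to build the candidate object by animation and then check its universal property on free objects, exactly as in the proof of \ref{Kan extension represents derivations}.

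\textbf{Step 1: define the candidate.} Consider the classical functor $\PolyPreLog\to\mathsf{MOD}$ sending a free prelog algebra over a free prelog ring $F(X,S)\to F_{F(X,S)}(Y,T)$ to the pair $(F_{F(X,S)}(Y,T),\Omega^1_{F_{F(X,S)}(Y,T)/F(X,S)})$. By the example computing log differentials of such maps, the module here is free of rank $|Y|+|T|$. Left Kan extend (animate) this functor along $\PolyPreLog\to\Arr(\aPreLog)$. This produces a functor $\bfL^{\Gab}\colon\Arr(\aPreLog)\to\MOD^{\cn}$. For a map of prelog rings $(k,P)\to(A,M)$ we set $\LGab_{(A,M)/(k,P)}$ to be the module component of $\bfL^{\Gab}$. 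The underlying ring of that value is $A$, by the same coCartesian-fibration argument used for $\bfL$.

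\textbf{Step 2: the free case.} For a free prelog algebra $(k,P)\to(k[Y,\NN^T],P\oplus\NN^T)$, we check directly that $\Der$ is corepresented by the free module on $dY$ and $d\log T$. A map of $(k,P)$-algebras into $(A\oplus E,M\oplus E)$ lifting the identity is determined by two choices:
\begin{itemize}
\item lifts of the polynomial generators, which give a point of $E^{Y}$;
\item lifts of the monoid generators in $\NN^T$ to $M\times_{A}(\text{the fibre } \fib((A\oplus E)^\times\to A^\times)\simeq E)$, which give a point of $E^{T}$.
\end{itemize}
To justify this, use the adjunction $F_{(k,P)}\dashv U$ in its animated form. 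The free $(k,P)$-algebra on $(Y,T)$ corepresents underlying pairs (anima of ring elements, anima of monoid elements). The fibre of $U(A\oplus E,M\oplus E)\to U(A,M)$ over a point is $E\times E$, because the monoid part is $M\oplus E$ by construction.

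\textbf{Step 3: reduce a general base to a free base.} Next handle a general base $(k,P)$ mapping to a free algebra over it. Write $(k,P)$ as a sifted colimit of free prelog rings. Then use the fact that the derivation functor converts colimits in the source (under the fixed target $(A,M)$) into limits, mirroring \cite[Proposition~2.50]{yaylali2022notes}.

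\textbf{Step 4: general maps.} For a general map, resolve $(k,P)\to(A,M)$ as a sifted colimit of objects of $\PolyPreLog$. The functor $(B,N)\mapsto\Map_{(k,P)/\,\cdot\,/(A,M)}((B,N),(A\oplus E,M\oplus E))$ sends such colimits to limits. On the module side, the colimit formula
\[
\colim (B,\Omega)\simeq (A,\colim \Omega\otimes_B A)
\]
in the coCartesian fibration $\MOD$ identifies the two sides. This yields
\[
\Map(\LGab_{(A,M)/(k,P)},E)\simeq\Der_{(A,M)/(k,P)}(E).
\]
Connectivity of $\LGab$ is automatic, since it is a colimit of free modules.

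\textbf{Expected main obstacle.} The hardest step is making the derivation functor behave well as the \emph{base} $(k,P)$ varies, that is, the analogue of the cofiber-sequence argument used in the non-log case. If the direct colimit argument is awkward, the first fallback is to prove a transitivity sequence $\LGab_{(k,P)/F}\otimes A\to\LGab_{(A,M)/F}\to\LGab_{(A,M)/(k,P)}$ for a free $F$. This would be done via the three-term category $\Poly^3$-type argument in the prelog setting. One would then deduce representability over $(k,P)$ from representability over free bases, using the corresponding fibre sequence of derivation spaces.
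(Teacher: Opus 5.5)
Your proposal is correct and follows essentially the paper's route. The paper obtains $\LGab_{(A,M)/(k,P)}$ by applying log differentials to a resolution by free $(k,P)$-prelog algebras (functorially, by animating $\mathbf{\Omega}^{\log}$ on $\PolyPreLog$), checks the universal property as in the proof of Proposition~\ref{Kan extension represents derivations}, and treats animated bases via the transitivity cofiber sequence, which is your fallback. The one thing to tidy is Step~4: you resolve the arrow by objects of $\PolyPreLog$, whose bases vary, but then use mapping spaces under the fixed base $(k,P)$. Either resolve $(A,M)$ by free $(k,P)$-algebras, where Step~3 identifies $\bfL^{\Gab}$, or view derivations as maps from $(k,P)\to(A,M)$ to $(A,M)\to(A\oplus E,M\oplus E)$ in $\Arr(\aPreLog)$ over $\id_{(A,M)}$, which tolerates a varying base.
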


Ignoring functoriality, the object $\LGab_{(A,M)/(k,P)}$ can be constructed similarly to the case of animated rings; namely, if $(k,P)$ is a discrete prelog ring, $\LGab_{(A,M)/(k,P)}\simeq\Omega^1_{(A_\bullet,M_\bullet)/(k,P)}\otimes_{A_\bullet} A,$ where $(A_\bullet,M_\bullet)$ is a simplicial resolution of $(A,M)$ by free $(k,P)$-prelog algebras. One can then define $\LGab_{(A,M)/(k,P)}$ for a general morphism of animated prelog rings as the cofiber of the canonical morphism $\LGab_{(k,P)/(\ZZ,\{1\})}\otimes_k A\to\LGab_{(A,M)/(\ZZ,\{1\})},$ for example.

If we do want to encode all relevant functoriality for Gabber's log cotangent complex, we may also define it in the following way.

\begin{proposition}
    Let
    \[
    \bfL^{\Gab} : \Arr(\aPreLog)\to\MOD
    \]
    be the animation of the functor
    \begin{align*}
        \mathbf{\Omega}^{\log} : \PolyPreLog&\to\mathsf{MOD}\\
        (F(X,S)\to F_{F(X,S)}(Y,T))&\mapsto \left(F_{F(X,S)}(Y,T),\Omega^1_{F_{F(X,S)}(Y,T)/F(X,S)}\right),
    \end{align*}
    or equivalently, the left Kan extension of the composition
    \[
    \PolyPreLog\xrightarrow{\mathbf{\Omega}^{\log}}\mathsf{MOD}\to\MOD
    \]
    along the inclusion $\PolyPreLog\to\Arr(\aPreLog),$ where $F(X,S)$ denotes the free prelog ring on the sets $X$ and $S$ and $F_{F(X,S)}(Y,T)$ denotes the free $F(X,S)$-prelog algebra on the sets $Y$ and $T,$ defined in \ref{free prelog rings/algebras definition}. Then $\bfL^{\Gab}$ is given on objects by
    \[
    ((k,P)\to (A,M))\mapsto \left(A,\LGab_{(A,M)/(k,P)}\right).
    \]
\end{proposition}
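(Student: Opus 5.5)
The plan is to mirror the proof of \ref{Kan extension represents derivations} for the ordinary cotangent complex, replacing polynomial rings by free prelog algebras and Kähler differentials by log differentials. Two things have to be checked: that the animation of $\mathbf{\Omega}^{\log}$ agrees with the left Kan extension along $\PolyPreLog\to\Arr(\aPreLog)$, and that its value on $(k,P)\to(A,M)$ is a pair $(A,L)$ where $L$ represents $\Der_{(A,M)/(k,P)}(-)$ on $\cMod_A^{\cn}$.

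\emph{Step 1: identifying the animation with the Kan extension.} Here $\Arr(\aPreLog)$ is the animation of $\PolyPreLog$. Indeed, $\aPreLog$ is the animation of $\PreLog$, with compact projective generators the finitely generated free prelog rings $F(X,S)$ (see \cite[Remark~2.11]{BLPO_HKR}). The objects of $\PolyPreLog$ are compact projective in $\Arr(\aPreLog)$, because maps out of $F(X,S)\to F_{F(X,S)}(Y,T)$ are computed by evaluating the underlying rings and monoids on the finite sets $X,Y,S,T$. They also generate under sifted colimits: I would resolve the source $(k,P)$ by free prelog rings and then the target relative to it, exactly as for $\PolyPoly$. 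Granting this, the animation is the sifted-colimit-preserving extension of $\mathbf{\Omega}^{\log}$, which is the left Kan extension.

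\emph{Step 2: discrete base.} First restrict to the slice over a fixed free $(k,P)=F(X,S)$. The argument of \ref{Kan extension represents derivations}, using \cite[4.3.3.9]{HTT}, identifies the restricted functor with the left Kan extension of $\mathbf{\Omega}^{\log}$ along free $(k,P)$-algebras inside $\aPreLog_{(k,P)/}$. Computing colimits in the coCartesian fibration $\MOD\to\aRing$ as before gives
\[
\bfL^{\Gab}((k,P)\to(A,M))\simeq\Bigl(A,\ \colim_{(A',M')\to(A,M)}\Omega^1_{(A',M')/(k,P)}\otimes_{A'}A\Bigr),
\]
where the colimit runs over free $(k,P)$-algebras. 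This is the simplicial-resolution formula for $\LGab$ described above. For a free algebra $(A',M')=F_{(k,P)}(Y,T)$ and connective $E$, the log derivations are $\Map(\Omega^1_{(A',M')/(k,P)},E)\simeq E^{Y}\times E^{T}$, by the example computing these log differentials. I would check that $\Der_{(A',M')/(k,P)}(E)\simeq E^Y\times E^T$ as well. This uses that $\fib((A\oplus E)^\times\to A^\times)\simeq E$ as grouplike animated monoids, so a monoid map $\NN^T\to M\oplus E$ lifting a given one amounts to a point of $E^T$. Since $\Der_{(-)/(k,P)}(E)$ turns sifted colimits of $(k,P)$-algebras into limits, the universal property follows for every $(A,M)$ over a free base, and then for any discrete or animated $(k,P)$ by commuting colimits in the base.

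\emph{Step 3: general base.} For a general morphism I would use the transitivity cofiber sequence
\[
\LGab_{(k,P)/(\ZZ,\{1\})}\otimes_k A\to\LGab_{(A,M)/(\ZZ,\{1\})}\to\LGab_{(A,M)/(k,P)}.
\]
On the functor side, this comes from the $\Poly^3$-style argument of the cofiber sequence proposition, run with free prelog triples. On the derivation side, $\Der_{(A,M)/(k,P)}(E)$ is the fiber of $\Der_{(A,M)/(\ZZ,\{1\})}(E)\to\Der_{(k,P)/(\ZZ,\{1\})}(E)$. Comparing the two sequences gives the identification.

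I expect the main obstacle to be Step 2: the equivalence $\fib((A\oplus E)^\times\to A^\times)\simeq E$, and the check that derived log derivations carry sifted colimits of prelog algebras to limits. That check involves the monoid component, where square-zero extensions interact with units.
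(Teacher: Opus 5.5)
Your proposal is correct and is essentially the argument the paper intends. The paper's proof consists only of the remark that one argues as for \ref{Kan extension represents derivations}, and your Steps 1--3 are the log analogues of that proof, with your explicit computation of derived log derivations of free prelog algebras (via $\fib((A\oplus E)^\times\to A^\times)\simeq E$) standing in for the citation of \cite[Proposition~2.50]{yaylali2022notes}; the one streamlining is that you work over a free base directly, which avoids the paper's detour through arbitrary discrete bases, and once Step 3 is in place Step 2 only needs the base $(\ZZ,\{1\})$.
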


One proves this result in the same way the analogous result is proved for the cotangent complex of animated rings. In fact, there are a number of properties that follow formally from the above definition and the fact that the object $\LGab_{(A,M)/(k,P)}$ corepresents derived log derivations.

\begin{proposition}\label{Gabber cotangent cofiber sequence}
    Let $(k,P)\to (A,M)\to (B,N)$ be a composition of animated prelog rings. Then the diagram
    \[\begin{tikzcd}
        (A,M)\ar[r] &(B,N)\ar[r] &(B,N)\\
        (k,P)\ar[r]\ar[u] &(k,P)\ar[r]\ar[u] &(A,M)\ar[u]
    \end{tikzcd}\]
    induces morphisms
    \[
    (A,\LGab_{(A,M)/(k,P)})\to (B,\LGab_{(B,N)/(k,P)})\to (B,\LGab_{(B,N)/(A,M)})
    \]
    in $\MOD$ via $\bfL^{\Gab},$ which determine a canonical cofiber sequence
    \[
    \LGab_{(A,M)/(k,P)}\otimes_A B\to \LGab_{(B,N)/(k,P)}\to \LGab_{(B,N)/(A,M)}
    \]
    in $\Mod_B.$
\end{proposition}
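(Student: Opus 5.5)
The plan is to imitate the proof of the transitivity sequence for the ordinary cotangent complex given earlier (the proposition right after \ref{Kan extension represents derivations}). We use the animated three-step category in place of $\Poly^3$. Let $\mathsf{PolyPreLog}^3$ be the $1$-category of composable morphisms of free prelog rings of the form
\[
F(X,S)\to F_{F(X,S)}(Y,T)\to F_{F_{F(X,S)}(Y,T)}(Z,U),
\]
with the evident inclusions on generators. By the same sifted-colimit argument as before, every composable pair $(k,P)\to(A,M)\to(B,N)$ of animated prelog rings is a sifted colimit of such objects. Concretely, one resolves $(k,P)$ by free prelog rings, then $(A,M)$ relative to that resolution, and then $(B,N)$ relative to the second. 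On $\mathsf{PolyPreLog}^3$ we consider the functor
\[
(P_0\to Q\to R)\mapsto \bigl(R,\ \Omega^1_{Q/P_0}\otimes_Q R\bigr),
\]
together with its animation $\bfL'$.

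The key steps are as follows. First, for objects of $\mathsf{PolyPreLog}^3$ the sequence
\[
R\otimes_Q\Omega^1_{Q/P_0}\to\Omega^1_{R/P_0}\to\Omega^1_{R/Q}\to 0
\]
of \ref{fundamental exact sequence for log differentials} is split short exact. This follows from the explicit computation in the example following the monoid-algebra proposition. There $\Omega^1_{R/P_0}$ is free on $dx,dy,dz$ and the $\log$ generators of $S,T,U$ modulo those of the base, so the left map is the inclusion of a direct summand. Second, the square
\[\begin{tikzcd}
    (k\to A\to B)\ar[r]\ar[d] &(k\to B\to B)\ar[d]\\
    (A\to A\to B)\ar[r] &(A\to B\to B)
\end{tikzcd}\]
is a pushout in $\Ani(\mathsf{PolyPreLog}^3)$. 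This holds levelwise on free objects, where it is a pushout of free prelog algebras, and then passes to sifted colimits.

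Third, we factor $\bfL'$ through an auxiliary category of pairs consisting of a free prelog map and a free module over its source. This is exactly as in the ordinary case, and again uses \cite[Proposition~5.1.5(b)]{cesnavicius2023purity} to identify animation of a composite with the composite of animations. Applying $\bfL'$ to the square then gives a pushout square in $\MOD$ all of whose objects lie over $B$. Identifying its vertices via $\bfL^{\Gab}$ yields
\[
\LGab_{(A,M)/(k,P)}\otimes_A B,\quad \LGab_{(B,N)/(k,P)},\quad \LGab_{(A,M)/(A,M)}\otimes_AB\simeq 0,\quad \LGab_{(B,N)/(A,M)}.
\]
This gives the claimed cofiber sequence in $\cMod_B$. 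The maps are those induced by the displayed diagram, because the identifications come from the functoriality of $\bfL^{\Gab}$ on $\Arr(\aPreLog)$.

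I expect the main obstacle to be the identification of the vertices of $\bfL'$ with $\bfL^{\Gab}$ applied to the corresponding arrows. This requires that left Kan extending $\mathbf{\Omega}^{\log}$ from $\PolyPreLog$ commutes with base change along $Q\to R$. The first thing to try is to reduce, on free objects, to the coCartesian base change isomorphism of \ref{log differentials functoriality}. One then checks that the forgetful functors involved preserve sifted colimits; colimits in $\MOD$ are computed fiberwise after base change, as in the proof of \ref{Kan extension represents derivations}. Alternatively, one could bypass all of this with the corepresentability of $\LGab$. There the sequence reduces to the fiber sequence of derivation spaces
\[
\Der_{(B,N)/(A,M)}(E)\to\Der_{(B,N)/(k,P)}(E)\to\Der_{(A,M)/(k,P)}(E),
\]
which follows formally from the definition of $\Der$ as a fiber of mapping spaces in $\aPreLog_{(k,P)/}$ versus $\aPreLog_{(A,M)/}$.
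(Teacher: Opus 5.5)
Your proposal is correct and follows essentially the same route as the paper. The paper gives no separate argument for this proposition: it defers to the proof of the ordinary transitivity sequence (three-step free category, the pushout square of composable pairs, and the factorization through pairs consisting of a free map with a free module over its source) and notes that the result also follows formally from corepresentability of derived log derivations, which are exactly your main and alternative arguments. Your split-exactness check on free objects is a harmless addition that makes explicit why the image square is a pushout before you pass to sifted colimits.
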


\begin{proposition}\label{cocartesian square LGab induces equivalence}
    Let
    \[\begin{tikzcd}
        (A,M)\ar[r] &(A',M')\\
        (k,P)\ar[r]\ar[u] &(k',P')\ar[u]
    \end{tikzcd}\]
    be a coCartesian diagram of animated prelog rings. Then the morphism
    \[
    (A,\LGab_{(A,M)/(k,P)})\to (A',\LGab_{(A',M')/(k',P')})
    \]
    obtained by applying $\bfL^{\Gab}$ induces an equivalence
    \[
    \LGab_{(A,M)/(k,P)}\otimes_A A'\to\LGab_{(A',M')/(k',P')}.
    \]
\end{proposition}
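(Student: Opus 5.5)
We prove the statement by reducing to the representability of derived log derivations, in the same way that base change for the ordinary cotangent complex follows from the corresponding property of $\Omega^1$ and left Kan extension.

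\emph{Step 1: construction of the map.} The commutative square gives a morphism $((k,P)\to(A,M))\to((k',P')\to(A',M'))$ in $\Arr(\aPreLog)$. Applying $\bfL^{\Gab}$ yields a morphism $(A,\LGab_{(A,M)/(k,P)})\to(A',\LGab_{(A',M')/(k',P')})$ in $\MOD$. By the description of morphisms in $\MOD$, this is the same as an $A'$-linear map $\LGab_{(A,M)/(k,P)}\otimes_A A'\to\LGab_{(A',M')/(k',P')}$.

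\emph{Step 2: reduction to free objects.} Both sides, viewed as functors of the coCartesian square, preserve sifted colimits. The right side is $\bfL^{\Gab}$ composed with the pushout functor, and pushouts commute with colimits. The left side is $\bfL^{\Gab}$ followed by coCartesian transport in $\MOD$. Now write $(k,P)\to(k',P')$ and $(k,P)\to(A,M)$ as sifted colimits of morphisms between free prelog rings and free prelog algebras. We then reduce to the following case:
\begin{itemize}
\item $(k,P)=F(X,S)$;
\item $(A,M)=F_{F(X,S)}(Y,T)$;
\item $(k',P')$ is a free $(k,P)$-algebra $F_{F(X,S)}(X',S')$.
\end{itemize}
The pushout is then the free $(k',P')$-algebra on $(Y,T)$. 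I expect this reduction to be the main obstacle. One must check that the functor sending a span to its pushout square, and the comparison map, are compatible with animation. This amounts to the category of spans being the animation of the category of spans of free objects, in the style of the argument for \ref{SAG 25.2.1.2} and the $\Poly^3$ argument given earlier for the cotangent cofiber sequence. Alternatively, one can avoid general spans entirely, as Step 3 does.

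\emph{Step 3: the free case, via mapping spaces.} For any $N\in\cMod_{A'}^{\cn}$, compare $\Der_{(A',M')/(k',P')}(N)$ with $\Der_{(A,M)/(k,P)}(N)$. By the universal property of the pushout in $\aPreLog$, a $(k',P')$-algebra map $(A',M')\to(A'\oplus N,M'\oplus N)$ lying over the identity is the same as a $(k,P)$-algebra map $(A,M)\to(A'\oplus N,M'\oplus N)$ over $(A,M)\to(A',M')$. The point here is that the map from $(k',P')$ is forced to be the canonical section composed with $(k',P')\to(A',M')$.

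Restricting along $(A'\oplus N)\to A'$, such maps are equivalent to derivations of $(A,M)$ over $(k,P)$ valued in $N$, regarded as an $A$-module. To see this, use that the log square-zero extension $(A'\oplus N,M'\oplus N)$ pulled back along $(A,M)\to(A',M')$ is $(A\oplus N,M\oplus N)$. This holds because $\fib((A'\oplus N)^\times\to A'^\times)\simeq\fib((A\oplus N)^\times\to A^\times)$, both being equivalent to $N$.

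Hence, by corepresentability,
\[
\Map_{A'}(\LGab_{(A',M')/(k',P')},N)\simeq\Map_A(\LGab_{(A,M)/(k,P)},N)\simeq\Map_{A'}(\LGab_{(A,M)/(k,P)}\otimes_AA',N).
\]
One must also check that this equivalence is induced by the map from Step 1, which follows from naturality of the construction. By Yoneda, among connective modules, the comparison is an equivalence.

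This argument does not in fact require the free case. It works directly for animated prelog rings, which makes Step 2 unnecessary. I would therefore present Step 3 as the proof and keep the reduction in Step 2 only as a fallback.
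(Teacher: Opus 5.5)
Your Step 3 argument is correct, and it is the proof to present. It takes a different route from the one the paper indicates. The paper gives no proof of this proposition beyond saying it follows formally from the definition of $\bfL^{\Gab}$ as an animation together with corepresentability. For the ordinary analogue, it deduces base change from the classical statement for differentials plus left Kan extension. In the log setting that means the isomorphism $A'\otimes_A\Omega^1_{(A,M)/(k,P)}\cong\Omega^1_{(A',M')/(k',P')}$ for coCartesian squares of free objects, propagated along $\PolyPreLog\to\Arr(\aPreLog)$, which is essentially your Step 2. That route has one advantage: the comparison map and the equivalence come from a single functor, so no compatibility check is needed. But it requires exactly the input you flagged, namely that spans of animated prelog rings are generated under sifted colimits by spans of free objects, and that both sides preserve such colimits.

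Your corepresentability argument bypasses this and applies to arbitrary animated prelog rings. It uses only the base-change/restriction adjunction between $\aPreLog_{(k,P)/}$ and $\aPreLog_{(k',P')/}$, and the stability of the derived log square-zero extension under pullback along $(A,M)\to(A',M')$. For the latter, you should also record the ring-level statement $(A'\oplus N)\times_{A'}A\simeq A\oplus N$. Check it on underlying modules, using that the forgetful functor is conservative and limit-preserving. For the monoid part, argue that the canonical map between the fibres of units is an equivalence, by pasting pullbacks and using that units of a square-zero extension are detected modulo the ideal. It is not enough that both fibres are abstractly $N$.

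The one point you treat too quickly is the last. Yoneda shows that the map induced by your derivation comparison is an equivalence. To transfer this to the map of Step 1, you need the identification of $\bfL^{\Gab}$ with the corepresenting object to be natural in morphisms of $\Arr(\aPreLog)$, whereas the paper asserts it only on objects. This naturality is standard, but you should state it as an explicit input, since it is precisely what the Kan-extension route gets for free.
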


One convenient property of Gabber's log cotangent complex is that it is insensitive to logification. More precisely, the following holds.

\begin{lemma}[\protect{\cite[Lemma~3.11]{BLPO_HKR}}]\label{logification invariance of Gabber log cotangent complex}
    Let $(k,P)\to (A,M)$ be a morphism of animated prelog rings. Logification induces equivalences
    \[
    \LGab_{(A,M)/(k,P)}\xrightarrow{\sim}\LGab_{(A,M)^a/(k,P)}\xrightarrow{\sim}\LGab_{(A,M)^a/(k,P)^a}
    \]
    in $\Mod_A.$
\end{lemma}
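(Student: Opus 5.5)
The plan is to show that both maps induced by logification are equivalences by comparing corepresented functors. Since $\LGab_{(A,M)/(k,P)}$ corepresents derived log derivations $\Der_{(A,M)/(k,P)}(-)$ on $\cMod_A^{\cn}$, it suffices to produce natural equivalences
\[
\Der_{(A,M)^a/(k,P)^a}(E)\simeq\Der_{(A,M)^a/(k,P)}(E)\simeq\Der_{(A,M)/(k,P)}(E)
\]
for all $E\in\cMod_A^{\cn}$, compatible with the maps induced by $(A,M)\to(A,M)^a$ and $(k,P)\to(k,P)^a$. The Yoneda lemma then gives the claimed equivalences in $\Mod_A$. Note that logification does not change the underlying animated ring, so all three objects are $A$-modules.

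The key observation is that the derived log square zero extension $(A\oplus E, M\oplus E)$ is logarithmic whenever $(A,M)$ is: replacing $M$ by $M^a$ produces a logarithmic prelog structure on $A\oplus E$. Indeed, the units of $A\oplus E$ sit in a fiber sequence $\fib((A\oplus E)^\times\to A^\times)\to(A\oplus E)^\times\to A^\times$. Moreover $\pi_0(A\oplus E)\to\pi_0 A$ has nilpotent kernel, so an element is a unit iff its image is, and the preimage of the units in $M^a\oplus\fib(\cdot)$ is $(M^a)^\times$-part $\oplus\fib$, which maps isomorphically onto $(A\oplus E)^\times$.

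Granting this, I would carry out the following steps.

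\textbf{Step 1.} By the adjunction $(-)^a\dashv U$ between $\aPreLog$ and $\aLog$, maps $(A,M)^a\to(A\oplus E,M^a\oplus E)$ over $(k,P)$ correspond to maps $(A,M)\to(A\oplus E,M^a\oplus E)$. It remains to compare the target $(A\oplus E, M^a\oplus E)$ with $(A\oplus E, M\oplus E)$ in the fiber over $\id$. I would use the projection to $(A,M^a)$ and the identification of the fiber over a fixed map with maps of monoids into the fiber term $\fib((A\oplus E)^\times\to A^\times)\simeq E$. This fiber term does not depend on the monoid.

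\textbf{Step 2.} Concretely, write $\Der_{(A,M)/(k,P)}(E)$ as the fiber of $\Map_{\aPreLog_{(k,P)/}}((A,M),(A\oplus E,M\oplus E))$ over $\id$. Then split a map into its ring component, a $k$-derivation $A\to A\oplus E$, and its monoid component $M\to M\oplus\fib$ over $M$, subject to compatibility. This exhibits $\Der$ as a pullback of $\Der_k(A,E)$ and $\Map_{\aMon_{P/}}(M,E)$ over $\Map(M,E)$, with the latter compatibility through $\alpha$. Under this description, replacing $M$ by $M^a=M\oplus_{\alpha^{-1}A^\times}A^\times$ changes only the monoid term. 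The extra data on the $A^\times$ summand is forced: a map $A^\times\to\fib((A\oplus E)^\times\to A^\times)$ compatible with the ring derivation is determined by the derivation, namely the map $u\mapsto u^{-1}Du$, via the pushout square. Hence the comparison map is an equivalence.

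\textbf{Step 3.} The base change $(k,P)\to(k,P)^a$ is handled the same way. Maps under $(k,P)^a$ are the same as maps under $(k,P)$ into logarithmic targets, by the adjunction, and $(A\oplus E,M^a\oplus E)$ is logarithmic. So $\Der_{(A,M)^a/(k,P)^a}\simeq\Der_{(A,M)^a/(k,P)}$.

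The main obstacle is Step 2 in the $\infty$-categorical setting, namely making rigorous that the fiber over $\id$ of the map between mapping spaces is insensitive to the pushout along $\alpha^{-1}A^\times\to A^\times$. I would first try to reduce, by the animation/left Kan extension description of $\bfL^{\Gab}$ and its commutation with sifted colimits, to the case where $(k,P)\to(A,M)$ is a discrete free prelog algebra. Logification does not preserve free objects, though, so more likely I would argue directly with the pushout formula for $M^a$ and the fact that mapping spaces convert pushouts to pullbacks.
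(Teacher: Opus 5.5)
The paper gives no proof of its own here; it imports the statement from \cite[Lemma~3.11]{BLPO_HKR}, so there is no in-text argument to compare against. Your corepresentability argument is correct in outline and gives a self-contained proof.

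The obstacle you flag in Step~2 disappears once Step~1 is made precise. $\aPreLog$ is the comma category over $(-,\,\cdot\,)$, which is a right adjoint, so limits in $\aPreLog$ are computed componentwise. The ring $A\oplus E\to A$ does not depend on the monoid, and neither does the extra factor $\fib((A\oplus E)^\times\to A^\times)$. It follows that $(A\oplus E,M\oplus E)$ is the pullback of the projection $(A\oplus E,M^a\oplus E)\to (A,M)^a$ along the unit $(A,M)\to (A,M)^a$.

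Consequently $\Der_{(A,M)/(k,P)}(E)$, which is the fiber over $\id_{(A,M)}$, equals the fiber over the unit map of
$\Map_{\aPreLog_{(k,P)/}}((A,M),(A\oplus E,M^a\oplus E))\to\Map_{\aPreLog_{(k,P)/}}((A,M),(A,M)^a)$.
Both targets are logarithmic (your key observation). So restriction along the unit, via $(-)^a\dashv U$, identifies this fiber with $\Der_{(A,M)^a/(k,P)}(E)$. No ring/monoid decomposition and no reduction to free objects is needed.

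If you keep Step~2, state precisely what the base of the pullback is. It is the space of monoid maps $M\to (A\oplus E,\,\cdot\,)$ lying over $\alpha$, not $\Map(M,E)$. The logarithmic part maps equivalently onto this space for monoids lying over $A^\times$, but not in general. The pushout argument needs this equivalence for $\alpha^{-1}A^\times$, which need not be grouplike, as well as for $A^\times$.

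Your Step~3 is fine, but the second equivalence also follows formally from the first. Applied to $\id_{(k,P)}$, the first equivalence gives $\LGab_{(k,P)^a/(k,P)}\simeq\LGab_{(k,P)/(k,P)}\simeq 0$. The transitivity sequence of Proposition~\ref{Gabber cotangent cofiber sequence} for $(k,P)\to (k,P)^a\to (A,M)^a$ then shows that $\LGab_{(A,M)^a/(k,P)}\to\LGab_{(A,M)^a/(k,P)^a}$ is an equivalence.
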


From \ref{cocartesian square LGab induces equivalence} and \ref{logification invariance of Gabber log cotangent complex}, we can also deduce that Gabber's log cotangent complex reduces to the ordinary cotangent complex for strict morphisms of animated prelog rings.

\begin{cor}
    Let $f : (k,P)\to (A,M)$ be a strict morphism of animated prelog rings. Then $\LGab_{(A,M)/(k,P)}\simeq\LL_{A/k}.$
\end{cor}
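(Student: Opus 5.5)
The corollary follows in two steps from the two results stated just before it: the logification invariance \ref{logification invariance of Gabber log cotangent complex} and the base change property \ref{cocartesian square LGab induces equivalence}. The plan is to replace $(A,M)$ by a prelog ring that is a base change of $(k,P)$ along the underlying ring map. Let $f : k\to A$ be the underlying map of animated rings. I consider the prelog ring $(A,f^*P)$ with the inverse image prelog structure. The map $(k,P)\to (A,M)$ factors as $(k,P)\to (A,f^*P)\to (A,M)$, where the second map is the one supplied by the adjunction \ref{adjunction between (pre)log structures for map of animated rings}.

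First I would compute $\LGab_{(A,f^*P)/(k,P)}$. As observed in the remark following the definition of strictness, the square
\[\begin{tikzcd}
    (k,\{1\})\ar[r]\ar[d] &(A,\{1\})\ar[d]\\
    (k,P)\ar[r] &(A,f^*P)
\end{tikzcd}\]
is coCartesian in $\aPreLog$. Applying \ref{cocartesian square LGab induces equivalence} gives
\[
\LGab_{(A,f^*P)/(k,P)}\simeq \LGab_{(A,\{1\})/(k,\{1\})}\otimes_A A\simeq\LGab_{(A,\{1\})/(k,\{1\})}.
\]
For trivial prelog structures, a log derivation is just a derivation: the data $\log D$ is a monoid map out of $\{1\}$, and $\fib((A\oplus E)^\times\to A^\times)$ plays no role because the source monoid is trivial. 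Comparing the functors corepresented by the two objects then gives $\LGab_{(A,\{1\})/(k,\{1\})}\simeq\LL_{A/k}$. Alternatively, one can see this from the animation description, since on free prelog algebras with $S=T=\emptyset$ the functor $\mathbf{\Omega}^{\log}$ restricts to $\mathbf{\Omega}$.

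Second, I would compare $(A,f^*P)$ with $(A,M)$ over $(k,P)$. By \ref{logification invariance of Gabber log cotangent complex}, $\LGab_{(A,f^*P)/(k,P)}\simeq\LGab_{(A,f^*P)^a/(k,P)}$ and $\LGab_{(A,M)/(k,P)}\simeq\LGab_{(A,M)^a/(k,P)}$. Strictness means exactly that $(A,f^*P)^a\to(A,M)^a$ is an equivalence, so the middle terms agree and
\[
\LGab_{(A,M)/(k,P)}\simeq\LGab_{(A,f^*P)/(k,P)}\simeq\LL_{A/k}.
\]

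The main point to verify is naturality: the equivalences from logification invariance have to be compatible with the map $(A,f^*P)\to(A,M)$, so that the equivalence of logifications really induces the identification rather than an unrelated abstract isomorphism. I would handle this by applying the functor $\bfL^{\Gab}$ to the commutative square formed by $(A,f^*P)\to(A,M)$ and the two logification maps, all over $(k,P)$. The logification maps induce equivalences by the lemma, and the bottom map $(A,f^*P)^a\to(A,M)^a$ is an equivalence, so two-out-of-three shows that the top map $\LGab_{(A,f^*P)/(k,P)}\to\LGab_{(A,M)/(k,P)}$ is an equivalence.
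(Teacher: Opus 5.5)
Your proof is correct. It uses the same three ingredients as the paper (base change along coCartesian squares, logification invariance, and the trivial-structure case), but arranges them differently, and the arrangement is slightly more economical.

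\textbf{The paper's route.} The paper logifies first. It uses the remark after the definition of strictness, which says $(k,P)\to(A,M)$ is strict exactly when the square formed by $(k,\{1\})^a\to(A,\{1\})^a$ and $(k,P)^a\to(A,M)^a$ is coCartesian. It applies base change to that logified square. It then needs the full two-step logification invariance, logifying both base and target, to pass between $\LGab_{(A,\{1\})/(k,\{1\})}$ and $\LGab_{(A,\{1\})^a/(k,\{1\})^a}$, and between $\LGab_{(A,M)/(k,P)}$ and $\LGab_{(A,M)^a/(k,P)^a}$. Finally it checks the trivial case $\LGab_{(A,\{1\})/(k,\{1\})}\simeq\LL_{A/k}$ through the animation description.

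\textbf{Your route.} You apply base change to the unlogified pushout square that defines $(A,f^*P)$; that square is only an input to the paper's remark. You then use the definition of strictness directly, and you need only the first half of logification invariance (logifying the target over the fixed base $(k,P)$). This bypasses both the pasting-lemma argument of the remark and the logification of the base. Your Yoneda argument for the trivial case is also more direct than animation: $\{1\}$ is initial among animated monoids, so maps out of $(A,\{1\})$ to an animated prelog ring are just maps of animated rings, and log derivations reduce to ordinary derivations.

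\textbf{A small bonus.} Your two-out-of-three step shows that the canonical map $\LGab_{(A,\{1\})/(k,\{1\})}\to\LGab_{(A,M)/(k,P)}$, induced by the evident map of arrows, is itself the equivalence. The paper reaches the same identification only through a zigzag, inverting the logification equivalence $\LGab_{(A,M)/(k,P)}\to\LGab_{(A,M)^a/(k,P)^a}$ at the end.
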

\begin{proof}
    A proof in the case when $(k,P)\to (A,M)$ is a map of discrete prelog rings is given in \cite[Corollary~3.12]{BLPO_HKR}. In the general case, consider the diagram
    \[\begin{tikzcd}
        (A,\{1\})\ar[r] &(A,\{1\})^a\ar[r] &(A,M)^a\\
        (k,\{1\})\ar[r]\ar[u] &(k,\{1\})^a\ar[r]\ar[u] &(k,P)^a\ar[u]
    \end{tikzcd}\]
    where the left pair of horizontal maps are the logifications, and the right square is coCartesian. Then we obtain a composition
    \[
    \LGab_{(A,\{1\})/(k,\{1\})}\to\LGab_{(A,\{1\})^a/(k,\{1\})^a}\to\LGab_{(A,M)^a/(k,P)^a}
    \]
    in which both maps are equivalences.

    Then $\LGab_{(A,\{1\})/(k,\{1\})}$ is easily seen to be equivalent to $\LL_{A/k},$ as given a system of maps $P_{\lambda}\to Q_{\lambda}\in\PolyPoly$ such that $\colim_{\lambda}(P_{\lambda}\to Q_{\lambda})\simeq (k\to A),$ the corresponding system $(P_{\lambda},\{1\})\to (Q_{\lambda},\{1\})$ has colimit $(k,\{1\})\to (A,\{1\}),$ and we have canonical isomorphisms $\Omega^1_{Q_{\lambda}/P_{\lambda}}\cong \Omega^1_{(Q_{\lambda},\{1\})/(P_{\lambda},\{1\})}.$ Thus, the colimit computing $\LGab_{(A,\{1\})/(k,\{1\})}$ canonically computes $\LL_{A/k}$ as well.

    Finally, there is also a canonical equivalence $\LGab_{(A,M)/(k,P)}\to\LGab_{(A,M)^a/(k,P)^a}$ induced by logification, and composing the constructed equivalence $\LL_{A/k}\to \LGab_{(A,M)^a/(k,P)^a}$ with the inverse of the canonical equivalence $\LGab_{(A,M)/(k,P)}\to\LGab_{(A,M)^a/(k,P)^a}$ gives the desired equivalence
    \[
    \LL_{A/k}\to \LGab_{(A,M)^a/(k,P)^a}\to \LGab_{(A,M)/(k,P)}.
    \]
\end{proof}

As mentioned above, no theory of logarithmic cotangent complex is able to satisfy all of the desirable properties listed in \ref{no theory of log cotangent exists satisfying}. The property that Gabber's cotangent complex compromises on is the second: it is not necessarily the case that for a log smooth morphism $f : X\to S,$ Gabber's log cotangent complex is represented by the corresponding module of logarithmic differential forms, and in particular, it need not be the case that if $f : X\to S$ is log \'etale, that $\LGab_f\simeq 0.$ This prompts the following definition.

\begin{mydef}
    Let $(f,f^{\flat}) : (k,P)\to (A,M)$ be a map of animated prelog rings. We say that
    \begin{enumerate}
        \item $(f,f^{\flat})$ is \emph{derived formally log \'etale} if $\LGab_{(A,M)/(k,P)}\simeq 0.$
        \item $(f,f^{\flat})$ is \emph{derived log \'etale} if it is derived formally log \'etale and the underlying map $k\to A$ of animated rings is locally of finite presentation.
        \item $(f,f^{\flat})$ is \emph{derived formally log smooth} if $\LGab_{(A,M)/(k,P)}$ is a projective animated $A$-module.
        \item $(f,f^{\flat})$ is \emph{derived log smooth} if it is derived formally log smooth and the underlying map $k\to A$ of animated rings is locally of finite presentation.
    \end{enumerate}
\end{mydef}

The notions of derived log \'etale-ness and derived log smooth-ness are stronger than their classical counterparts. In particular, we have the following.

\begin{proposition}[\protect{\cite[Theorems~5.6,6.4]{Sagave_2014}}]
    Let $(f,f^{\flat}) : (A,M)\to (B,N)$ be a map of animated prelog rings. Then
    \begin{enumerate}
        \item If $(f,f^{\flat})$ is derived log \'etale and $\pi_0 f^{\flat}$ is of finite presentation, then the induced morphism $(\pi_0 A,\pi_0 M)\to (\pi_0 B,\pi_0 N)$ is a log-\'etale morphism of discrete prelog rings.
        \item If $(f,f^{\flat})$ is derived log smooth, then the induced morphism $(\pi_0 A,\pi_0 M)\to (\pi_0 B,\pi_0 N)$ is a log-\'etale morphism of discrete prelog rings.
    \end{enumerate}
\end{proposition}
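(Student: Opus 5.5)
The plan is to go through the lifting characterization of classical log smoothness and log étaleness, using that $\LGab$ corepresents derived log derivations and so controls deformation problems.

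First, reduce to discrete targets. Every test object in the definition of formal log smoothness or étaleness is a diagram of discrete prelog rings. Since $\pi_0$ is left adjoint to the inclusion of discrete objects (on rings and on monoids, hence on prelog rings), a lifting problem for $(\pi_0 A,\pi_0 M)\to(\pi_0 B,\pi_0 N)$ against a discrete thickening is the same as a lifting problem for $(A,M)\to(B,N)$ itself. Finite presentation of $\pi_0 A\to\pi_0 B$ follows because $A\to B$ is locally of finite presentation (see the Warning after the definition). Coherence of the monoids comes from the hypothesis that $\pi_0 f^\flat$ is of finite presentation in (1), and from the analogous finiteness assumed in \cite{Sagave_2014} for (2). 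It therefore suffices to solve lifting problems for $(A,M)\to(B,N)$ against strict closed immersions $(T',M_{T'})\to(T,M_T)$ of discrete prelog rings whose kernel $I$ is a nil ideal on which $1+I$ acts freely.

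Second, reduce to the square-zero case and identify the thickening with a derived log square-zero extension. Working locally and using finite presentation, write the lifting problem as a filtered colimit of problems with $I$ nilpotent. Then filter by $I^n$ to reduce to $I^2=0$. In that case $M_T\to M_{T'}$ is a torsor under $1+I\cong I$, because $1+I$ acts freely and the map is strict and surjective. This is the key identification: it says $(T,M_T)\to(T',M_{T'})$ is a square-zero extension of $(T',M_{T'})$ by $I$ in the sense of the derived log square-zero extension defined above, where the monoid part $M\oplus\fib((A\oplus E)^\times\to A^\times)$ plays the role of $M_T$. More precisely, it is classified by a map $(T',M_{T'})\to(T'\oplus I[1],M_{T'}\oplus I[1])$, and $(T,M_T)$ is the fiber product of this map with the trivial section. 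Given this, the standard obstruction argument applies: the space of lifts is a torsor (possibly empty) under
\[
\Der_{(B,N)/(A,M)}(I)\simeq\Map(\LGab_{(B,N)/(A,M)},I),
\]
and the obstruction to existence lies in $\pi_0\Map(\LGab_{(B,N)/(A,M)},I[1])$.

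Third, conclude. In (1), $\LGab\simeq0$, so lifts exist and are unique at each square-zero step. Uniqueness lets the lifts glue through the induction on $I^n$ and pass to the colimit, so the map is formally log étale, hence log étale. In (2), $\LGab$ is projective, so $\Ext^1(\LGab,I)=0$ for discrete $I$; the obstruction vanishes and lifts exist locally. Without uniqueness, we run the induction on $I^n$ affine-locally and use that existence only needs to hold locally on $X$. This gives formal log smoothness, hence log smoothness; I read the statement's ``log-étale'' in (2) as a typo for ``log smooth''.

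The main obstacle is the second step. One must check that the classical condition ``$1+I$ acts freely on $M_T$'' exactly recovers the fiber $\fib((T'\oplus I)^\times\to T'^\times)\simeq I$ appearing in the derived log square-zero extension, and that classical log thickenings are pullbacks of the trivial derived log square-zero extension along a derivation into $I[1]$. I would check this first on free prelog algebras, where both sides are explicit, and then extend by sifted colimits.
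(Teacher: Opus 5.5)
The paper does not prove this statement; it simply imports it from \cite[Theorems~5.6,6.4]{Sagave_2014}. Your proposal supplies an actual argument: the lifting definition of log \'etale and log smooth maps, combined with the corepresentability $\Der_{(B,N)/(A,M)}(E)\simeq\Map(\LGab_{(B,N)/(A,M)},E)$. It is correct in substance, and you are right that the conclusion of (2) should read ``log smooth''. What your route adds over the bare citation is a view of where each hypothesis enters. Vanishing (resp.\ projectivity) of $\LGab_{(B,N)/(A,M)}$ gives unique (resp.\ existing) lifts through square-zero log thickenings. Freeness of the $1+I$-action is what makes the thickening's monoid an $I$-torsor, i.e.\ a homotopy fiber of a map to $\mathrm{B}I$.

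Two adjustments are needed.

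First, your plan to check the key identification on free prelog algebras and extend by sifted colimits does not work. The thickening $(T,M_T)\to(T',M_{T'})$ is an arbitrary discrete log ring, not something left Kan extended from free objects. Argue directly instead. Since $I$ is nil, the preimage of $T'^\times$ in $M_T$ is $M_T^\times\cong T^\times$. So the animated logification of $M_T\to T\to T'$ is the derived quotient $M_T\oplus_{1+I}0$, a bar construction computed on underlying anima. It is discrete and equal to $M_T/(1+I)=M_{T'}$ precisely because the action is free. Hence the map is strict and $\LGab_{(T',M_{T'})/(T,M_T)}\simeq\LL_{T'/T}$. The truncation $\LL_{T'/T}\to\pi_1(\LL_{T'/T})[1]\simeq I[1]$ then corresponds to the classifying derived log derivation relative to $(T,M_T)$. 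The induced map from $(T,M_T)$ to the pullback is an equivalence on rings by the classical square-zero theory. On monoids it is an equivalence because it is an $I$-equivariant map of $I$-torsors over $M_{T'}$.

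Second, the finiteness bookkeeping needs care. Reducing from nil to nilpotent thickenings means lifting finitely many generators and relations, which requires $\pi_0 f^\flat$ to be finitely presented as well. So in (2) you need that extra hypothesis, or a definition using only nilpotent (equivalently square-zero) thickenings as in Kato, rather than an unspecified appeal to \cite{Sagave_2014}. Also, finite presentation of $\pi_0 f^\flat$ is a relative condition. It gives the relative finiteness required of a log \'etale map of prelog rings, not coherence of $\pi_0 M$ itself.
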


Although a log smooth or log \'etale map of discrete prelog rings need not be derived log smooth or derived log \'etale, we do have the following partial converse.

\begin{proposition}[\protect{\cite[Proposition~4.6]{BLPO_HKR}}]\label{integral + log smooth/et implies derived log smooth/et}
    Let $(f,f^{\flat}) : (A_0,M_0)\to (B_0,N_0)$ be an integral map of discrete fine prelog rings.
    \begin{enumerate}
        \item If $(f,f^{\flat})$ is log smooth, then it is derived log smooth.
        \item If $(f,f^{\flat})$ is log \'etale, then it is derived log \'etale.
    \end{enumerate}
\end{proposition}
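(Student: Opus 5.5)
The plan is to reduce, étale locally, to a standard chart model and then compute $\LGab$ there using the cofiber sequence (\ref{Gabber cotangent cofiber sequence}), base change (\ref{cocartesian square LGab induces equivalence}) and logification invariance (\ref{logification invariance of Gabber log cotangent complex}). Both target conditions are étale local on $B_0$:
\begin{itemize}
\item vanishing of a connective module is étale local;
\item so is being projective, i.e. flat with $\pi_0$ projective, for a module of finite presentation over a discrete ring;
\item $\LGab$ commutes with étale base change on the target, since an étale map with the inverse image prelog structure is strict, and we can use the cofiber sequence together with $\LL_{B'/B_0}\simeq 0$.
\end{itemize}
The underlying ring map is of finite presentation, being log smooth, and $\LGab$ is then pseudo-coherent. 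So it suffices to prove the claim after replacing $B_0$ by an étale cover.

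By Kato's chart theorem, étale locally on $B_0$ there is a fine chart $\theta : P\to Q$ of $(f,f^\flat)$, which we may take injective, with the following properties: the kernel and the torsion of the cokernel of $\theta^{\gp}$ (respectively the kernel and the whole cokernel) are finite of order invertible in $B_0$, and
\[
A_0' := A_0\otimes_{\ZZ[P]}\ZZ[Q]\to B_0
\]
is étale. Since $f$ is integral, we may choose $\theta$ integral. Then $\ZZ[Q]$ is flat over $\ZZ[P]$ (Kato's criterion), so the discrete pushout $(A_0,P)\otimes_{(\ZZ[P],P)}(\ZZ[Q],Q)=(A_0',Q)$ coincides with the animated pushout. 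Here we use that the charts make $(A_0,P)\to(A_0,M_0)$ and $(A_0',Q)\to(B_0,N_0)$ into maps that are equivalences after logification, so logification invariance lets us replace $M_0,N_0$ by $P,Q$. Apply the cofiber sequence to
\[
(A_0,P)\to(A_0',Q)\to(B_0,Q).
\]
The second map is strict with étale underlying map, so its $\LGab$ is $\LL_{B_0/A_0'}\simeq0$. The first map is a base change of $(\ZZ[P],P)\to(\ZZ[Q],Q)$ along a coCartesian square, so
\[
\LGab_{(B_0,N_0)/(A_0,M_0)}\simeq B_0\otimes_{\ZZ[Q]}\LGab_{(\ZZ[Q],Q)/(\ZZ[P],P)}.
\]

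The main step, and the expected obstacle, is the formula
\[
\LGab_{(\ZZ[Q],Q)/(\ZZ[P],P)}\simeq \ZZ[Q]\otimes_{\ZZ}\cofib(P^{\gp}\to Q^{\gp}),
\]
with the cofiber taken in $\cMod_\ZZ$. I would prove it by writing both sides as sifted-colimit-preserving functors of the animated monoid map $P\to Q$, using the animation of $\PolyPreLog$ restricted to monoid algebras. They agree on free maps $\NN^S\to\NN^S\oplus\NN^T$ by the computation $\Omega^1\cong\ZZ[\NN^{S\sqcup T}]^{\oplus T}$ of the example above. One must check that the animated group completion of a discrete integral monoid is discrete, so that $P^{\gp},Q^{\gp}$ mean the classical groups. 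This holds because $\ZZ[M^{\gp}]$ is a flat localization of $\ZZ[M]$; alternatively one can cite the derived-monoid computations of Sagave–Schürg–Vogt.

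Granting this, we conclude from the homotopy of the cofiber. We have
\[
\pi_1\cofib(P^{\gp}\to Q^{\gp})=\ker\theta^{\gp},\qquad \pi_0\cofib(P^{\gp}\to Q^{\gp})=\cok\theta^{\gp}.
\]
After $B_0\otimes-$, the finite kernel of invertible order dies. The cokernel is a finitely generated free group plus torsion of invertible order, so $B_0\otimes\cok$ is finite free. Hence $\LGab$ is a finite projective module concentrated in degree $0$ in the smooth case, and it is $0$ in the étale case, where the cokernel is finite. This gives derived log smoothness (respectively log étaleness), the finite presentation hypothesis being inherited from the classical notion.
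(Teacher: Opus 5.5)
The paper gives no proof of this statement (it only cites \cite[Proposition~4.6]{BLPO_HKR}), so your argument has to stand on its own. It has a genuine gap in its last clause, ``the finite presentation hypothesis being inherited from the classical notion''.

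In this paper, derived log smooth (resp.\ \'etale) requires $A_0\to B_0$ to be locally of finite presentation \emph{as a map of animated rings}, i.e.\ compact in $\aRing_{A_0}$. By the Warning following that definition, this means finite presentation on $\pi_0$ \emph{plus} perfectness of $\LL_{B_0/A_0}$. Your own reduction shows that \'etale locally $\LL_{B_0/A_0}\simeq B_0\otimes_{\ZZ[Q]}\LL_{\ZZ[Q]/\ZZ[P]}$, and this is usually not perfect.

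For example, let $Q\subset\ZZ^2$ be the monoid generated by $(1,0),(1,1),(1,2),(1,3)$.
The map $(\ZZ,\{1\})\to(\ZZ[Q],Q)$ is integral and log smooth, with $\LGab\simeq\ZZ[Q]^{\oplus 2}$. However, $\ZZ[Q]$ is the cone over the twisted cubic, whose ideal needs three quadrics in codimension two. So it is not a local complete intersection, and $\LL_{\ZZ[Q]/\ZZ}$ is not perfect (already after base change to the rationals, by Avramov's solution of Quillen's conjecture). Hence $\ZZ\to\ZZ[Q]$ is not compact.

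Likewise, take $P=\langle(1,0),(1,3)\rangle\cong\NN^2$; then $Q$ is a free $P$-set with basis $0,(1,1),(1,2)$. The map $(\ZZ[1/3][P],P)\to(\ZZ[1/3][Q],Q)$ is integral log \'etale with $\LGab\simeq 0$, but its underlying map is not compact.

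So with the paper's definitions this clause is false in general, not just unproved. What your argument actually establishes is derived \emph{formal} log smoothness (resp.\ \'etaleness), together with finite presentation of the underlying discrete ring map. That is presumably the intended content of the cited result.

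The rest is the standard route and is sound:
\begin{itemize}
\item a Kato chart;
\item integrality making $\ZZ[P]\to\ZZ[Q]$ flat, so the animated pushout is the discrete $(A_0',Q)$;
\item the strict \'etale step contributing nothing;
\item the formula $\LGab_{(\ZZ[Q],Q)/(\ZZ[P],P)}\simeq\ZZ[Q]\otimes_{\ZZ}\cofib(P^{\gp}\to Q^{\gp})$.
\end{itemize}
That last formula is quicker from the universal property than by animating. Since $(\ZZ[Q],Q)$ is free on the monoid $Q$, you have $\Der_{(\ZZ[Q],Q)/(\ZZ,\{1\})}(E)\simeq\Map_{\aMon}(Q,E)\simeq\Map(Q^{\gp},E)$, and then you apply the cofiber sequence.

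Two minor points. First, you need a form of Kato's theorem producing a chart that is simultaneously injective, integral, and has $A_0\otimes_{\ZZ[P]}\ZZ[Q]\to B_0$ \'etale. Injectivity is essential for flatness: the integral monoid map $\ZZ\to\{0\}$ gives the non-flat ring map $\ZZ[t^{\pm 1}]\to\ZZ$. Second, pseudo-coherence of $\LGab$ is neither automatic over a non-Noetherian $A_0$ nor needed, because finite presentation of $\pi_0$ descends along faithfully flat maps.
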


We end this section by discussing a descent result for $\LGab.$

Consider the functor
\[
R\Gamma(-,\bfL^{\Gab}) : \Arr(\LogAff)^{\op}\to\QC\to\cMod_{\ZZ}
\]
given by composing the Gabber cotangent complex functor with the global sections functor.

\begin{proposition}[\protect{\cite[Proposition~7.20]{BLPO_HKR}}]
    Consider the presheaf given by
    \begin{align*}
        R\Gamma(-,\bfL^{\Gab}) : \Arr(\LogAff)^{\op}&\to\cMod_{\ZZ}\\
        (X\to S)&\mapsto R\Gamma(X,\LGab_{X/S}).
    \end{align*}
    Then $R\Gamma(-,\bfL^{\Gab})$ satisfies integral log \'etale descent.
\end{proposition}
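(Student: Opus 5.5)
The plan is to reduce integral log \'etale descent for $R\Gamma(-,\bfL^{\Gab})$ to two inputs: fpqc descent for the ordinary cotangent complex functors, established in \ref{cotangent complex is fpqc sheaf on morphisms} together with Bhatt's result, and the invariance of $\LGab$ under integral log \'etale base change, which comes from \ref{integral + log smooth/et implies derived log smooth/et}. Let $(X\to S)\to(X'\to S')$ be an integral log \'etale covering in $\Arr(\LogAff)$. First I would use the cofiber sequence \ref{Gabber cotangent cofiber sequence}, applied to $X\to S\to\Spec\ZZ$ (trivial log structure). This writes $\LGab_{X/S}$ as a cofiber of $\LGab_{S}\otimes\cO_X\to\LGab_X$. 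Since totalization is exact, it then suffices to treat the absolute case $X\mapsto R\Gamma(X,\LGab_{X/\ZZ})$ and the base-changed term separately. The base-changed term is handled by the same argument, using flat descent for $\cO$.

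In the absolute case, take a covering $\{U_i\to X\}$ by integral log \'etale maps of semi-log affines and form its \v{C}ech nerve $U_\bullet$. Each $U_n\to X$ is integral and log \'etale, since integrality is stable under fs base change by the Ogus result quoted earlier. By \ref{integral + log smooth/et implies derived log smooth/et}, $\LGab_{U_n/X}\simeq 0$. Using \ref{Gabber cotangent cofiber sequence} again, this gives $\LGab_{U_n}\simeq \LGab_X\otimes_{\cO_X}\cO_{U_n}$. So the cosimplicial object $R\Gamma(U_\bullet,\LGab_{U_\bullet})$ is identified with $R\Gamma(U_\bullet,\LGab_X\otimes\cO_{U_\bullet})$, and the claim becomes descent of the quasi-coherent sheaf $\LGab_X$ along $U_\bullet\to X$. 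One caveat: the fiber products $U_n$ must be taken in fs log schemes. To make the identification above, I will need that the strict pieces of $U_n$ agree with the derived fiber products. This is exactly where integrality is used, since by Tor-vanishing it ensures that the fs and derived fiber products agree.

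The main obstacle is that an integral log \'etale map is generally not flat on underlying schemes, so fpqc descent for quasi-coherent sheaves does not apply directly. To get around this, I would first use Kato's chart theorem to factor each $U_i\to X$ Zariski/\'etale-locally as a strict \'etale map followed by a base change of $\Spec(\ZZ[Q],Q)\to\Spec(\ZZ[P],P)$ with $P\to Q$ integral and $\theta^{\gp}$ having finite kernel and cokernel of invertible order. For the strict \'etale part, descent is ordinary \'etale descent. For the monoid part, integrality makes $\ZZ[P]\to\ZZ[Q]$ flat (Kato), and the cokernel condition gives faithful flatness on the relevant locus. The strategy is therefore to refine an arbitrary covering by coverings of these two types and verify descent for each, invoking \ref{cotangent complex is fpqc sheaf on morphisms}. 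If flatness fails at the boundary, the fallback is to argue instead via Nisnevich/\'etale descent of quasi-coherent cohomology combined with the finite-order Kummer-type cover.
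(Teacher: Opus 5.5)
Your key reduction is the right one: whenever $U_n\to X$ is integral log \'etale, $\LGab_{U_n/X}\simeq 0$, and hence $\LGab_{U_n}\simeq \LGab_X\otimes\cO_{U_n}$. This is the source-invariance input behind the cited proof in \cite{BLPO_HKR}. Your preliminary reduction to the absolute case, via the cofibre sequence over $\Spec\ZZ$, is a harmless substitute for base invariance.

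\textbf{The gap is the final step.} You assert that integral log \'etale maps are generally not flat on underlying schemes. In place of descent you then offer a chart-refinement scheme, with an unspecified ``faithful flatness on the relevant locus'' and a fallback to Nisnevich/\'etale arguments. As written this proves nothing, and its premise is false. By Kato's theorem, an integral log smooth (in particular integral log \'etale) morphism of fine log schemes has flat underlying morphism \cite[Theorem~IV.4.3.5]{OgusLectures}. This is exactly the fact the paper invokes in its parallel proof of integral log \'etale descent for $\LOls$ and $\HHOls$. Your own remark that an integral chart gives a flat map $\ZZ[P]\to\ZZ[Q]$, combined with \'etaleness of $\underline{U}\to\underline{X}\times_{\Spec\ZZ[P]}\Spec\ZZ[Q]$, is essentially Kato's proof. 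So a covering is fppf on underlying schemes, and what remains is ordinary faithfully flat descent for the fixed module $\LGab_X$. You need neither a refinement into two types of covers nor fpqc descent for cotangent complexes.

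\textbf{Two auxiliary claims are also wrong.}

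First, the Ogus result you cite concerns \emph{fine} fibre products, and integrality is not stable under fs base change. Take $\Spec(\ZZ[1/2][s],\NN)\to\Spec(\ZZ[1/2][t],\NN)$ with $t\mapsto s^2$, and base change it along the map induced by $\NN\to\langle x^2,xy,y^2\rangle$, $1\mapsto x^2$. The result is $\ZZ[1/2][x^2,xy,y^2]\to\ZZ[1/2][x,y]$, which is not flat. So you cannot deduce integrality of the terms of an fs \v{C}ech nerve this way.

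Second, integrality does not make fs fibre products agree with derived ones. It makes the \emph{fine} product have underlying scheme $\underline{U}\times_{\underline{X}}\underline{U}$ (Tor-independently, by flatness), but saturation can still change that scheme. For the Kummer cover above, the fs self-product has ring $\ZZ[1/2][s,u]/(u^2-1)$, not $\ZZ[1/2][s,s']/(s^2-s'^2)$. You do not need fs $=$ derived for the identification of $\LGab_{U_n}$. But it means your Tor-vanishing sentence does not connect the saturated \v{C}ech nerve to the \v{C}ech nerve of $\underline{U}\to\underline{X}$. The paper's parallel argument applies flat descent to the latter; descent along the saturated products would need a separate Kummer-type argument.
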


\begin{remark}
    Although \cite[Proposition 7.20]{BLPO_HKR} is stated for the strict Nisnevich global sections, the proof works just as well for the usual global sections.
\end{remark}

\subsection{Comparison of Logarithmic Cotangent Complexes}

In this section, we review the comparison morphism $\LGab_{X/S}\to\LOls_{X/S}$ constructed in \cite{Olssoncotangent}. 

Consider a morphism $f : X\to S$ of fine log schemes, and let $f_{\cM_X} : \underline{X}\to\cLog_S$ be the associated morphism to the log stack. Choose a smooth surjection $u : U\to\cLog_S$ and let $U_\bullet\to\cLog_S$ be the $0$-coskeleton of $u.$ We can then form the pullback
\[\begin{tikzcd}
    V_\bullet\ar[r]\ar[d] &\underline{X}\ar[d,"{f_{\cM_X}}"]\\
    U_\bullet\ar[r] &\cLog_S.
\end{tikzcd}\]
From the definition of $\cLog_S,$ we obtain a simplicial log scheme $(U_\bullet,\cM_{U_\bullet})$ with underlying simplicial scheme $U_\bullet.$ If we then endow $V_\bullet$ with the simplicial log structure $\cM_{V_\bullet}$ given by the pullback of the log structure $\cM_{X}$ on $X,$ then the map $(V_\bullet,\cM_{V_\bullet})\to (U_\bullet,\cM_{U_\bullet})$ is strict. To see this, notice that the log structure on $V_\bullet$ is determined by the composition $V_\bullet\to\underline{X}\to\cLog_X,$ where the map $\underline{X}\to\cLog_X$ is the canonical map classifying the log structure on $X.$ Because $V_\bullet$ is defined as the pullback $U_\bullet\times_{\cLog_S} X,$ the map $V_\bullet\to\cLog_S$ also factors as
\[
V_\bullet\to U_\bullet\to\cLog_{(U_\bullet,\cM_{U_\bullet})}\to\cLog_S
\]
which implies that the map $(V_\bullet,\cM_{V_\bullet})\to (U_\bullet,\cM_{U_\bullet})$ is strict.

We thus have a commutative diagram
\[\begin{tikzcd}
    (V_\bullet,\cM_{V_\bullet})\ar[d,"\pi"]\ar[r] &X\ar[d,"f"]\\
    (U_\bullet,\cM_{U_\bullet})\ar[r] &S
\end{tikzcd}\]
and functoriality of Gabber's cotangent complex gives a composition
\[
\pi^*\LGab_{X/S}\to\LGab_{(V_\bullet,\cM_{V_\bullet})/(U_\bullet,\cM_{U_\bullet})}\simeq\LL_{V_\bullet/U_\bullet}.
\]
However, $\LL_{V_\bullet/U_\bullet}\in\sfD_{\qc}(\cO_{V_\bullet})$ corresponds to $\LOls_{X/S}\in\sfD_{\qc}(\cO_{X})$ via the equivalence $\sfD_{\qc}(\OX)\to \sfD_{\qc}(\cO_{V_\bullet})$ given in \cite{OlssonSheaves}. This gives the desired comparison morphism.

It turns out that the two cotangent complexes agree in general in low degrees.

\begin{theorem}[\protect{\cite[Theorem~8.32]{Olssoncotangent}}]
    The cone of $\LGab_{X/S}\to\LOls_{X/S}$ is cohomologically concentrated in degrees $\leq -3.$
\end{theorem}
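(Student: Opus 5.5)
The statement to prove is Olsson's result that the cone of $\eta : \LGab_{X/S}\to\LOls_{X/S}$ is concentrated in cohomological degrees $\leq -3$, i.e. in homological degrees $\geq 2$. Equivalently, $\pi_0$ and $\pi_1$ of the two complexes agree and $\pi_2$ surjects.

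\textbf{Step 1: reduce to charts.} Both sides satisfy étale descent: $\LGab$ by the descent result for Gabber's complex, and $\LOls$ because it is a cotangent complex of a morphism of stacks. The cone can therefore be checked étale locally on $X$ and $S$. Working locally, I choose a fine chart $P\to M$ for $f$, with $S=\Spec(k,P)$ and $X=\Spec(A,M)$.

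\textbf{Step 2: factor through $\cS_M\times_{\cS_P}\Spec k$.} By the factorization lemma, $\underline X\to\cLog_S$ factors through the étale map $\cS_M\times_{\cS_P}\Spec k\to\cLog_S$. Hence
\[
\LOls_{X/S}\simeq \LL_{\underline X/\cS_M\times_{\cS_P}\Spec k}.
\]
The transitivity triangle for $\underline X\to \Spec k[M]\otimes_{k[P]}k\to [\,\cdot\,/\text{torus}]$ then expresses $\LOls$ as an extension of $\LL_{A/k\otimes_{\ZZ[P]}\ZZ[M]}$ by the cotangent complex of a torus quotient. That torus term is $A\otimes M^{\gp}/P^{\gp}$ in degree $0$ together with a $\mathrm{Lie}$ contribution. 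The flat part of this computation I expect to check directly. When $P\to M$ is not integral, the base change $\Spec k[M]\otimes_{k[P]} k$ is not flat, and there $\LL$ of the non-flat base change intervenes.

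\textbf{Step 3: the same dévissage for $\LGab$.} Apply the cofiber sequence for $\LGab$ along $(k,P)\to(k\otimes_{\ZZ[P]}\ZZ[M],M)\to(A,M)$. The second map is strict, so its $\LGab$ is the ordinary $\LL$. This matches the strict term from Step 2. It remains to compare $\LGab_{(\ZZ[M],M)/(\ZZ[P],P)}$ (base changed) with the analogous term on the Olsson side.

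\textbf{Step 4: the monoid comparison, which I expect to be the main obstacle.} The Gabber side is computed from a free resolution of the monoid $M$ over $P$. Its $\pi_0$ is $A\otimes M^{\gp}/P^{\gp}$, and its higher homotopy measures the derived failure of $\ZZ[M]$ to be free over $\ZZ[P]$. The Olsson side computes the Tor of $\ZZ[M]$ against the torus action. I would compare the two through the bar constructions, showing:
\begin{itemize}
\item they agree on $\pi_0$ and $\pi_1$, since both are governed by the presentation of $M^{\gp}$ and the relations in $M$;
\item the first discrepancy appears in $\pi_2$, coming from syzygies of the monoid relations that the Gabber side sees and the Olsson side does not.
\end{itemize}
In practice I would follow Olsson's section~8 and reduce to the statement that the relevant Tor terms vanish in degrees $\leq 1$, which holds for fine monoids.
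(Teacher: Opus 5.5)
Your Steps 1 and 2 are fine, but the term-by-term matching in Steps 3--4 puts the discrepancy in the wrong place, and the statement you finally reduce to is false.

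For Gabber's sequence to split off the base change of $\LGab_{(\ZZ[M],M)/(\ZZ[P],P)}$, the middle term in Step 3 must be the pushout of animated prelog rings. That is, it must be $B=k\otimes^L_{\ZZ[P]}\ZZ[M]$, and then the strict term is $\LL_{A/B}$. On Olsson's side, $\cLog_S$ and $\cS_M\times_{\cS_P}\Spec k$ are classical stacks, so the strict term is $\LL_{A/\pi_0 B}$. Moreover, when $P^{\gp}\to M^{\gp}$ is not injective, the map $\Spec\pi_0B\to\cS_M\times_{\cS_P}\Spec k$ is in general not flat, so the torus term is not a base change either. Hence ``this matches the strict term from Step 2'' fails whenever $B$ is not discrete, which is exactly the case you care about.

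Conversely, the monoid term you single out in Step 4 is harmless. For any map of monoids, $\LGab_{(\ZZ[M],M)/(\ZZ[P],P)}\simeq\ZZ[M]\otimes^L_{\ZZ}\cofib(P^{\gp}\to M^{\gp})$, because the derived group completion of a discrete monoid is discrete. Its higher homotopy is $\ZZ[M]\otimes\ker(P^{\gp}\to M^{\gp})$, which has nothing to do with freeness of $\ZZ[M]$ over $\ZZ[P]$. This is also exactly the cotangent complex of $\Spec\ZZ[M]\to\cS_M\times_{\cS_P}\Spec\ZZ[P]$, a fiber product along the flat map $\Spec\ZZ[P]\to\cS_P$.

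Finally, no Tor-vanishing holds for fine monoids. Take the log points $X=\Spec(\ZZ,\NN)\to S=\Spec(\ZZ,\NN^2)$ with $\theta$ the sum map. Then $\cS_M\times_{\cS_P}\Spec\ZZ\simeq\Spec\ZZ[w^{\pm1}]$ and $\LOls_{X/S}\simeq\ZZ[1]$, whereas a direct computation gives $\LGab_{X/S}\simeq\ZZ[1]\oplus\ZZ[2]$. So the cone is $\ZZ[3]$, which also shows the bound is sharp.

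The missing idea is to compare \emph{derived} and classical fiber products. Put $\cY:=\cS_M\times^R_{\cS_P}\Spec k$. Three ingredients give $\LGab_{X/S}\simeq\LL_{\underline X/\cY}$: the chart computation above, derived base change along $\Spec k\to\Spec\ZZ[P]$, and the strict transitivity triangle for $\underline X\to\Spec B\to\cY$. On the other side, $\LOls_{X/S}\simeq\LL_{\underline X/t_0\cY}$. Since $\underline X$ is classical, its map to $\cY$ factors as $\underline X\xrightarrow{h}t_0\cY\to\cY$, and one checks that $\eta$ is the induced map. The transitivity triangle then yields $\cofib(\eta)\simeq h^*\LL_{t_0\cY/\cY}[1]$.

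Smooth-locally, $t_0\cY\to\cY$ is $\Spec\pi_0C\to\Spec C$. So $\LL_{t_0\cY/\cY}$ is $2$-connective, with $\pi_2\cong\pi_1\cO_{\cY}$. Hence the cone lives in homological degrees $\geq 3$. (You wrote $\geq 2$, though your reformulation via $\pi_0,\pi_1,\pi_2$ is the correct one.)

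The degree bound therefore comes from the connectivity of a truncation, not from any vanishing. The obstruction $\pi_1\cO_{\cY}$ measures Tor-independence of $\Spec k\to\cS_P$ and $\cS_M\to\cS_P$. It is not $\Tor_1^{\ZZ[P]}(k,\ZZ[M])$, which can be nonzero even when $\cY$ is classical. The obstruction vanishes when $P\to M$ is integral or $S$ is log flat over $\ZZ$, which recovers the subsequent corollary. In the example above, $\cY$ is the derived zero locus of $(s,ws)$ in $\Spec\ZZ[s,w^{\pm1}]$, with $\pi_1\cO_{\cY}\cong\ZZ[w^{\pm1}]$.
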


Moreover, if $f : X\to S$ is sufficiently nice, the two cotangent complexes actually agree.
\begin{proposition}
    Let $f : X\to S$ be a map of fine log schemes. Suppose that either:
    \begin{enumerate}
        \item $S$ is log flat over $(\Spec\ZZ,\cO_{\Spec\ZZ}^\times),$ or
        \item $f$ is integral.
    \end{enumerate}
    Then the map $\LGab_{X/S}\to\LOls_{X/S}$ is an equivalence.
\end{proposition}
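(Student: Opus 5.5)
The plan is to reduce both cases to an \emph{absolute} comparison over the base $B := (\Spec\ZZ,\cO_{\Spec\ZZ}^\times)$, and then to transport that comparison along the transitivity triangles, which Olsson's complex satisfies under condition (T). The ingredients are:
\begin{itemize}
\item Gabber's cofiber sequence \ref{Gabber cotangent cofiber sequence}, which holds for every composite;
\item Olsson's distinguished triangle for composites satisfying \ref{condition T};
\item naturality of the comparison map $\eta : \LGab\to\LOls$ with respect to commutative squares.
\end{itemize}
Both $\LGab$ and $\LOls$ are compatible with strict flat base change. $\LGab$ satisfies the descent recalled above, and $\LOls$ is a cotangent complex of a morphism of algebraic stacks, which satisfies fpqc descent by the sheaf property proven earlier. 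So whether $\eta$ is an equivalence is étale local on $X$, and we may work with charts throughout.

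\emph{Step 1 (absolute case).} We show that $\eta_{X/B}$ is an equivalence for every fine log scheme $X$. Étale locally, choose a fine chart $Q\to\cM_X$; it gives a strict morphism $X\to A_Q := \Spec(\ZZ[Q],Q)$ and a factorization $X\to A_Q\to B$.
\begin{itemize}
\item For the strict map $X\to A_Q$, both complexes agree with the ordinary cotangent complex $\LL_{\underline X/\Spec\ZZ[Q]}$, by the strictness corollaries for $\LGab$ and for $\LOls$. One must check that $\eta$ restricts to this identification; this follows from the construction of $\eta$ via the simplicial strict cover $V_\bullet\to U_\bullet$.
\item For $A_Q\to B$, the monoid map $\{1\}\to Q$ is integral, because pushouts along it are of the form $Q'\oplus Q$. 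The morphism is also log smooth up to the torsion of $Q^{\gp}$. To avoid the torsion issue, I would first use the \emph{absolute} base $(\Spec\ZZ,\{1\})$, where the map is log flat. Then \ref{integral + log smooth/et implies derived log smooth/et} (or a direct computation for monoid algebras, giving $\ZZ[Q]\otimes Q^{\gp}$ on both sides) identifies both complexes with $\Omega^1$ in degree $0$, compatibly with $\eta$.
\item Since $A_Q\to B$ is log flat, the composite $X\to A_Q\to B$ satisfies condition (T). Both complexes therefore sit in compatible cofiber sequences, and the two-out-of-three property in the stable category gives that $\eta_{X/B}$ is an equivalence.
\end{itemize}

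\emph{Step 2 (the two cases).} Consider the composite $X\xrightarrow{f}S\to B$.
\begin{itemize}
\item In case (1), $S\to B$ is log flat, so condition (T) holds by \cite[3.14]{Olssoncotangent}.
\item In case (2), $f$ is integral, so condition (T) holds by \cite[3.15]{Olssoncotangent}.
\end{itemize}
In either case we obtain a morphism of cofiber sequences
\[
Lf^*\LL^{(-)}_{S/B}\to\LL^{(-)}_{X/B}\to\LL^{(-)}_{X/S},
\]
whose first two vertical maps are $Lf^*\eta_{S/B}$ and $\eta_{X/B}$. By Step 1 these are equivalences, so the third map $\eta_{X/S}$ is an equivalence.

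\emph{Main obstacle.} The delicate point is the compatibility used in both steps: Olsson's connecting map $\LOls_{X/Y}\to Lf^*\LOls_{Y/Z}[1]$ must be compatible with Gabber's cofiber sequence under $\eta$, so that we really have a map of cofiber sequences. I would establish this by choosing compatible smooth covers $U_\bullet\to\cLog_Z$ and $U'_\bullet\to\cLog_Y$ lifting over $U_\bullet$. On these covers all the relevant maps become strict, and both triangles reduce to the classical transitivity triangle for $V_\bullet\to U'_\bullet\to U_\bullet$. The comparison maps are built from exactly this cover, so compatibility reduces to functoriality of $\bfL^{\Gab}$ with respect to the resulting diagram of simplicial log schemes.
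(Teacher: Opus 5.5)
The paper gives no proof of this statement; it imports it from \cite[Corollary~8.34]{Olssoncotangent}, so your argument has to stand on its own.

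\textbf{Step 1 is sound.} The composite there is $X\to A_Q\to B$ with $X\to A_Q$ strict. So $\underline X\to\cLog_{A_Q}$ factors through the open $\underline{A_Q}\subset\cLog_{A_Q}$, Olsson's sequence is literally the stack sequence for $\underline X\to\underline{A_Q}\to\cLog_B$, and compatibility with $\eta$ really is just functoriality. One small correction: passing to $(\Spec\ZZ,\{1\})$ changes nothing, since it logifies to $B$. Also, Proposition~\ref{integral + log smooth/et implies derived log smooth/et} needs log smoothness, which fails when $Q^{\gp}$ has torsion. What actually does the work is the direct computation $\ZZ[Q]\otimes Q^{\gp}$ on both sides.

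\textbf{The gap is in Step 2, at exactly the compatibility you flag.} The mechanism you propose does not supply it. Even in the best case $U'_\bullet=\cLog_Y\times_{\cLog_Z}U_\bullet$, functoriality of Gabber's complex only maps his sequence for $X\to Y\to Z$ to the pullback of the stack sequence for $\underline X\to\cLog_Y\to\cLog_Z$.
\begin{itemize}
\item The first term of that sequence is $\LL_{\cLog_Y/\cLog_Z}$ pulled back along the point $\underline X\to\cLog_Y$ classifying $\cM_X$.
\item By contrast, $Lf^*\LOls_{Y/Z}$ is the pullback of the same complex along $\underline X\to\underline Y\to\cLog_Y$, which classifies $f^*_{\log}\cM_Y$. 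These two points agree only when $f$ is strict.
\item Likewise, $\eta_{Y/Z}$ is computed on covers of $\underline Y$, i.e.\ on the part of $U'_\bullet$ over the open $\underline Y\subset\cLog_Y$. Since $V_\bullet$ does not map there, it is false that all the comparison maps come from one cover.
\end{itemize}
Identifying $Lf^*\LOls_{Y/Z}$ with $\LL_{\cLog_Y/\cLog_Z}|_{\underline X}$ is precisely what condition (T) provides. You would have to prove that the first vertical arrow equals Olsson's (T)-isomorphism composed with $Lf^*\eta_{Y/Z}$, and this is not a formal consequence of functoriality. As a sanity check, your sketch never uses (T). If it worked as stated, it would give Olsson's triangle for every composite, contradicting Bauer's observation (Claim~\ref{no theory of log cotangent exists satisfying}).

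\textbf{A way to avoid transitivity altogether.} Work on the cover $V_\bullet=\underline X\times_{\cLog_S}U_\bullet\xrightarrow{\rho}U_\bullet$ that defines $\eta_{X/S}$.
\begin{itemize}
\item Since $\pi\colon V\to X$ is strict and smooth and $\rho$ is strict, apply the octahedral axiom to $\pi^*\LGab_{X/S}\to\LGab_{V/S}\to\LGab_{V/U}$. It shows that $\eta_{X/S}$ is an equivalence if and only if the functoriality map $\rho^*\LGab_{U/S}\to\LGab_{V/X}\simeq\Omega^1_{\underline V/\underline X}$ is one.
\item Because $\underline V\to\underline X$ is a base change of $\underline U\to\cLog_S$ and $\LL_{\underline U/\cLog_S}\simeq\Omega^1_{U/S}$, this reduces to showing $\LGab_{U/S}\simeq\Omega^1_{U/S}$ for the log smooth $U\to S$.
\item In case~(2) this follows from Gabber's theorem (cf.\ Proposition~\ref{integral + log smooth/et implies derived log smooth/et}). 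Here $U\to S$ is integral near the image of $V$, because integrality depends only on $\overline{\cM}$, which $V$ identifies for $X$ and $U$.
\item In case~(1) it follows by base change along a flat chart $\underline S\to\Spec\ZZ[P]$. This gives $\LGab_{U/S}\simeq\cO_U\otimes^L_{\ZZ}\cofib(P^{\gp}\to Q^{\gp})$, which is discrete by Kato's torsion conditions.
\end{itemize}
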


\begin{remark}
    We may also construct the comparison morphism $\LGab\to\LOls$ as follows: \cite[Proposition~3.17]{BLPO_HKR} implies that we have an equivalence $\LGab_{X/S}\simeq\LL_{\underline{X}/\underline{X\times^{\rep}_S X}}[-1].$ Since we similarly have an equivalence $\LOls_{X/S}\simeq\LL_{\underline{X}/\underline{X}\times_{\cLog_S}\underline{X}}[-1],$ it then suffices to functorially construct a map
    \[
    \underline{X}\times^R_{\cLog_S}\underline{X}\to \underline{X\times^{\rep}_S X}.
    \]
    This is essentially the approach we take in our comparison of logarithmic Hochschild homologies: for a log flat map of prelog rings $(f,f^{\flat}) : (k,P)\to (A,M)$ with corresponding morphism of log schemes $X\to S,$ we will functorially construct a map
    \[
    \underline{X}\times_{\cS_{(f,f^{\flat})}}\underline{X}\to\underline{X\times^{\rep}_S X}.
    \]
    Then this map induces morphisms
    \[
    \underline{X}\times^R_{\underline{X}\times_{\cLog_S}\underline{X}}\underline{X}\leftarrow \underline{X}\times^R_{\underline{X}\times_{\cS_{(f,f^{\flat})}}\underline{X}}\underline{X}\to\underline{X}\times^R_{\underline{X\times^{\rep}_S X}}\underline{X}.
    \]
    The left morphism is an equivalence on Hochschild homology because
    \[
    \underline{X}\times_{\cS_{(f,f^{\flat})}}\underline{X}\to\underline{X}\times_{\cLog_S}\underline{X}
    \]
    is \'etale, the the definitions of both versions of logarithmic Hochschild homology then give our map.
\end{remark}
\chapter{Hochschild Homology}

We now turn to Hochschild homology. We will review the classical theory briefly, then turn to the derived setting, which generalizes the classical theory.\footnote{At least, when the morphism in question is flat.} We recommend \cite{WeibelHomological} for a brief overview of the classical situation, \cite{Loday_2010a} for a very thorough classical reference, which also discusses Hochschild homology in more general settings, and \cite{MorrowHHnotes} for a more modern account.

\section{Classical Hochschild Homology}

We begin with a review of classical Hochschild homology.

\subsection{Basic Construction and Results}

Let $k\to A$ be a map of classical commutative rings, and let $M$ be an $A$-bimodule. In the following, let $\otimes$ denote the (underived) tensor product over $k.$

\begin{mydef}[Cyclic Bar Complex]
    $\Bar^\cyc(A/k,M)_n := M\otimes A^{\otimes n}$
    \begin{align*}
        \partial_i (m\otimes a_1\otimes\dots\otimes a_n) &= \begin{cases}
        ma_1\otimes a_2\otimes\dots\otimes a_n, &i=0,\\
        m\otimes a_1\otimes\dots\otimes a_i a_{i+1}\otimes\dots\otimes a_n, &0 < i < n,\\
        (-1)^{|a_n|(|m| + |a_1| + \dots + |a_{n-1}|)}a_n m\otimes a_1\otimes\dots\otimes a_{n-1}, &i=n.
    \end{cases}\\
    \sigma_i (m\otimes a_1\otimes\dots\otimes a_n) &= m \otimes\dots\otimes a_i\otimes 1\otimes a_{i+1}\otimes\dots\otimes a_n.
    \end{align*}
\end{mydef}

For $M = A,$ there is a cyclic structure on $\Bar^\cyc(A/k,A)_\bullet = B^\cyc(A/k)_\bullet$ defined by $t(a_0\otimes\cdots\otimes a_n) := (-1)^{|a_n|(|a_0| + \dots + |a_{n-1}|)}a_n\otimes a_0\otimes\dots\otimes a_{n-1}.$

\begin{remark}
    Hochschild homology makes sense more generally than this as well. If $A$ is simply an associative unital (but not necessarily commutative) $k$-algebra, the above definition works just as well, and we can talk about the cyclic bar complex (and thus Hochschild homology) in this situation as well. One can also define Hochschild homology when $A$ is non-unital, but the definition is slightly different. We will not make use of these generalizations, and will mainly develop the theory of Hochschild homology for a map $k\to A$ of commutative (or animated) rings, and in the situation when $M = A.$
\end{remark}

\begin{mydef}
    Let $k\to A$ be an arbitrary map of rings, and let $M$ be an $A$-bimodule. Let $C^{\Hoch}_*(A/k,M)$ denote the unnormalized chain complex associated to $B^{\cyc}(A/k,M)_{\bullet};$ i.e., the chain complex which in degree $n$ is given by $B^{\cyc}(A/k,M)_n,$ and whose differentials are given by alternating sums of the face maps. More precisely, the differential
    \[
    b : C^{\Hoch}(A/k,M)_n\to C^{\Hoch}(A/k,M)_{n-1}
    \]
    is $b = \sum_{i = 0}^n (-1)^i\partial_i.$\footnote{Note that we use homological indexing here.} We call this chain complex the \emph{Hochschild complex}. If $M = A,$ we write $C^{\Hoch}(A/k)_* := C^{\Hoch}(A/k,A)_*.$ We write $\HH^{\cl}(A/k,M)$ (respectively, $\HH^{\cl}(A/k)$) to denote the image of $C^{\Hoch}(A/k,M)_*$ (respectively $C^{\Hoch}(A/k)_*$) in $\sfD(A).$

    We also define the \emph{(classical) Hochschild homology groups} to be given by
    \[
    \HH^{\cl}_n(A/k,M) := \H_n(C^{\Hoch}(A/k,M)_*) = \H_n(\HH^{\cl}(A/k,M)).
    \]
\end{mydef}

Let $A$ be an associative, unital $k$-algebra. Recall that the bar complex $C^{\Bar}(A/k)_*$ is given by the unnormalized chain complex associated to the simplicial $A$-module $\Bar(A/k)_\bullet$ with
\begin{align*}
    \Bar(A/k)_n &= A^{\otimes n+2}\\
    \partial_i(a_0\otimes\dots\otimes a_{n+1}) &= a_0\otimes\dots\otimes a_i a_{i+1}\otimes\dots\otimes a_{n+1}\\
    \sigma_i(a_0\otimes\dots\otimes a_{n+1}) &= a_0\otimes\dots\otimes a_i\otimes 1\otimes a_{i+1}\otimes\dots\otimes a_{n+1}.
\end{align*}
For the chain complex $C^{\Bar}(A/k)_*,$ the differential $b' : C^{\Bar}(A/k)_n\to C^{\Bar}(A/k)_{n-1}$ is given by
\[
b' = \sum_{i = 0}^{n-1}(-1)^i d_i.
\]

We can define an augmentation $\Bar(A/k)_\bullet\to A[0]$ given by sending
\[
\Bar(A/k)_0 = A\otimes A\to A
\]
by the multiplication map. Let $A^e := A\otimes A^{\op}$ be the enveloping algebra of $A.$ If we give $\Bar(A/k)_\bullet$ the structure of a simplicial $A^e$-module by letting $a\otimes b\in A^e$ act on $\Bar(A/k)_n = A^{\otimes n+1}$ via $(a\otimes b)\star c := a c b,$ then $\Bar(A/k)_\bullet\to A[0]$ is a resolution of $A$ as an $A^e$-module. Indeed, if we let
\begin{align*}
    s : \Bar(A/k)_n&\to\Bar(A/k)_{n+1}\\
    a_0\otimes\dots\otimes a_{n+1}&\mapsto 1\otimes a_0\otimes\dots\otimes a_{n+1}\\
\end{align*}
(sometimes called the \emph{extra degeneracy}), then
\begin{align*}
    \partial_i s(a_0\otimes\dots\otimes a_{n+1})&= \partial_i(1\otimes a_0\otimes\dots\otimes a_{n+1})\\
    &= 1\otimes a_0\otimes\dots\otimes a_{i-1} a_i\otimes\dots a_{n+1})\\
    &= s(a_0\otimes\dots\otimes a_{i-1} a_i\otimes\dots a_{n+1})\\
    &= s\partial_{i-1}(a_0\otimes\dots\otimes a_{n+1})
\end{align*}
and
\begin{align*}
    \partial_0 s(a_0\otimes\dots\otimes a_{n+1}) &= \partial_0(1\otimes a_0\otimes\dots\otimes a_{n+1})\\
    &= a_0\otimes\dots\otimes a_{n+1}.
\end{align*}
From these identities, it follows that
\begin{align*}
    b's + sb' &= \left(\sum_{i = 0}^{n}(-1)^i\partial_i\right) s + s\left(\sum_{i = 0}^{n-1}(-1)^i\partial_i\right)\\
    &= \left(\sum_{i = 0}^{n}(-1)^i\partial_i\right) s + \left(\sum_{i = 0}^{n-1}(-1)^i\partial_{i+1}\right)s\\
    &= \left(\sum_{i = 0}^{n}(-1)^i\partial_i\right) s - \left(\sum_{i = 1}^{n}(-1)^i\partial_{i}\right)s\\
    &= \partial_0 s\\
    &= \id
\end{align*}
so that $s$ is a contracting homotopy for $\Bar(A/k)_\bullet,$ and thus that the complex $C^{\Bar}(A/k)_*$ is acyclic.

The above gives us another way to represent and calculate Hochschild homology. In particular, let $A$ be projective over $k$ (for example, suppose $k\to A$ is flat). Then the bar complex provides a projective resolution of $A$ by $A^e$-modules, so that given an $A$-bimodule $M,$ we have
\[
M[0]\otimes^L_{A^e} A[0]\underset{\mathrm{q.iso}}{\simeq} M\otimes_{A^e}C^{\Bar}(A/k)_*,
\]
and one can check from the definitions that $M\otimes_{A^e} C^{\Bar}(A/k)_*\cong C^{\Hoch}(A/k,M)_*,$ which means that
\[
M\otimes^L_{A^e} A\simeq\HH^{\cl}(A/k,M)
\]
in the derived category $\sfD(A)$ (or $\sfD(k)$). Thus, we find that
\[
\HH^{\cl}_n(A/k,M)\cong\Tor^{A^e}_{n}(M,A).
\]
In particular, we obtain in the commutative case an equivalence
\[
\HH^{\cl}(A/k)\simeq A\otimes^L_{A\otimes A} A
\]
in $\sfD(A)$ (or $\sfD(k)$).

\begin{remark}
    We can also define the Hochschild homology of a sufficiently nice category (for example, a $k$-linear dg-category). When appropriately defined, there is a canonical equivalence between our $\HH^{\cl}(A/k)$ and the Hochschild homology of $\sfD(A)$ (or a suitable enhancement of this category). We do not discuss this here, but refer the interested reader to \cite{MCCARTHY1994251} and \cite{Keller_2021} for more details. 
\end{remark}

\subsection{The Classical HKR Theorem for Affines}

In low degrees, the Hochschild homology groups have familiar interpretations.

\begin{enumerate}
    \item In general, $\HH^{\cl}_{0}(A/k,M)\simeq M/\{am - ma\mid a\in A, m\in M\},$ which is readily seen from the definition of the differential on $\HH^{\cl}(A/k,M).$ In particular, if $A$ is commutative and $M = A,$ then $\HH^{\cl}_0(A/k)\simeq A.$
    \item If $k\to A$ is an arbitrary map of commutative rings, then the map
    \begin{align*}
        \alpha : A\otimes A&\to\Omega^1_{A/k}\\
        a\otimes b&\mapsto a\,db
    \end{align*}
    induces an isomorphism $\HH_1^{\cl}(A/k)\simeq\Omega^1_{A/k}.$ Indeed, if we view $\Omega^1_{A/k}$ as the $A$-module generated by $da,$ where $a\in A,$ modulo the submodule generated by 
    \begin{enumerate}
        \item $d(a_1a_2) - a_1\,da_2 - a_2\,da_1$ for $a_1, a_2\in A,$
        \item $d(a_1 + a_2) - da_1 - da_2$ for $a_1, a_2\in A,$ and
        \item $dc$ for $c\in k,$
    \end{enumerate}
    then the map $\alpha$ is clearly surjective, and $\ker\alpha$ contains the image of of
    \[
    b : C^{\Hoch}(A/k)_2\to C^{\Hoch}(A/k)_1,
    \]
    which is generated by elements of the form $ab\otimes c - a\otimes bc + ca\otimes b,$ where $a,b,c\in A,$ since
    \[
    \alpha(ab\otimes c - a\otimes bc + ca\otimes b) = ab\,dc - a\,d(bc) + ca\,db,
    \]
    which is nothing more than the Leibniz relation $d(bc) - b\,dc - c\,d b$ multiplied by $a.$ Thus, we obtain a well defined map $\HH_1^{\cl}(A/k)\to\Omega^1_{A/k}.$

    The inverse $\epsilon_1$ of this map is given by sending $a\,db\mapsto [a\otimes b].$ This is well-defined, since
    \[
    \epsilon_1(d(ab)) = [1\otimes ab] = [a\otimes b + b\otimes a] = \epsilon_1(a\,db + b\,da).
    \]
    We also check that
    \[
    \epsilon_1(d1) = [1\otimes 1] = [1\otimes 1 + 1\otimes 1] = [2(1\otimes 1)],
    \]
    which implies that $[1\otimes 1] = 0$ in $\HH_1^{\cl}(A/k)$ (and thus that $\epsilon_1(dc) = [1\otimes c] = 0$ for any $c\in k$). It is clear from the construction that $\epsilon_1$ and $\alpha$ are inverses, so that $\HH_1^{\cl}(A/k)\simeq\Omega^1_{A/k}$ as claimed.
\end{enumerate}

An important observation we will need later is that $\epsilon_1$ is actually a natural transformation. More precisely, let $\PolyPoly$ denote the category whose objects are finitely generated polynomial extensions of finitely generated polynomial $\ZZ$-algebras (i.e., maps of commutative rings of the form $\ZZ[x_1,\dots, x_n]\to\ZZ[x_1,\dots, x_n, y_1, \dots, y_{n'}]$) and whose morphisms are commutative squares
\[\begin{tikzcd}
    \ZZ[x_1,\dots, x_n, y_1,\dots, y_{n'}]\ar[r] &\ZZ[x'_1,\dots, x'_m, y'_1,\dots, y'_{m'}]\\
    \ZZ[x_1,\dots, x_n]\ar[u]\ar[r] &\ZZ[x'_1,\dots, x'_n]\ar[u]
\end{tikzcd}\]
where the vertical maps are the obvious ones (but there is no requirement on the horizontal maps) and let $\mathsf{PolyFree}$ denote the category of pairs $(\ZZ[x_1,\dots, x_n], \ZZ[x_1,\dots, x_n]^{\oplus r})$ of finitely generated polynomial $\ZZ$-algebras together with a finitely generated free module over that algebra, and whose morphisms from $(P,P^{\oplus r})\to (Q,Q^{\oplus r'})$ are pairs $(f, \phi)$ where $f : P\to Q$ is a map of commutative rings, and $\phi : P^{\oplus r}\otimes_P Q\to Q^{\oplus r'}$ is a map of $Q$-modules. Then if
\[\begin{tikzcd}
    Q\ar[r] & Q'\\
    P\ar[u]\ar[r] &P'\ar[u]
\end{tikzcd}\]
is a morphism between $P\to Q$ and $P'\to Q'$ in $\PolyPoly,$ one checks from the definition of $\epsilon_1$ that
\[\begin{tikzcd}
    (Q,\Omega^1_{Q/P})\ar[r,"\epsilon_1"]\ar[d] & (Q,\HH^{\cl}_1(Q/P))\ar[d]\\
    (Q',\Omega^1_{Q'/P'})\ar[r,"\epsilon_1"] &(Q',\HH^{\cl}_1(Q'/P'))
\end{tikzcd}\]
commutes, where the horizontal maps are given by $\epsilon_1$ as defined previously and the vertical maps are induced by functoriality of the constructions of K\"ahler differentials and Hochschild homology.

\begin{remark}
    One can also deduce that $\epsilon_1$ is functorial by noting that $\HH^{\cl}_*(A/k)$ has the structure of a graded commutative differential algebra, and so the universal property of $\Omega^*_{A/k}$ and functoriality of $\Omega^*$ produces a map of functors $\Omega^*\to\HH^{\cl}_*.$
\end{remark}

One of the most important results about Hochschild homology is the ``Hochschild-Kostant-Rosenberg theorem,'' which generalizes the two computations above for $\HH^{\cl}_0$ and $\HH^{\cl}_1$ when the morphism $k\to A$ is smooth.

\begin{theorem}[Hochschild-Kostant-Rosenberg Theorem, \protect{\cite{HKRoriginal}}]\label{Classical HKR}
    Let $k\to A$ be a smooth morphism of commutative rings. Then the isomorphism
    \[
    \epsilon_1 : \Omega^1_{A/k}\to\HH^{\cl}_1(A/k)
    \]
    induces an isomorphism of commutative differential graded algebras
    \[
    \epsilon : \Omega^*_{A/k}\to\HH^{\cl}_*(A/k).
    \]
\end{theorem}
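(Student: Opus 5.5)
The approach is to show that $\epsilon$ is a well-defined map of commutative differential graded algebras, and then prove it is an isomorphism after reducing to the case where $A$ is a polynomial ring.

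\emph{Construction of $\epsilon$.} Because $k\to A$ is smooth, it is flat. By the discussion following the bar complex, this gives $\HH^{\cl}_n(A/k)\cong\Tor^{A\otimes_k A}_n(A,A)$. The shuffle product equips $C^{\Hoch}(A/k)_*$ with a graded-commutative multiplication, and $\HH^{\cl}_*(A/k)$ inherits the structure of a strictly graded-commutative $A$-algebra: odd-degree elements square to zero, since the shuffle product of a degree-one class with itself vanishes on the nose. The isomorphism $\epsilon_1:\Omega^1_{A/k}\to\HH^{\cl}_1(A/k)$ constructed above, sending $a\,db\mapsto[a\otimes b]$, therefore extends uniquely to a map of graded $A$-algebras $\epsilon:\Omega^*_{A/k}\to\HH^{\cl}_*(A/k)$, by the universal property of the exterior algebra. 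Explicitly, $a_0\,da_1\wedge\dots\wedge da_n$ goes to the class of $\sum_{\sigma}\mathrm{sgn}(\sigma)\,a_0\otimes a_{\sigma(1)}\otimes\dots\otimes a_{\sigma(n)}$. Compatibility with the de Rham differential and Connes' $B$ can be checked directly on these generators, as in the remark on functoriality of $\epsilon_1$. The real content of the theorem is that $\epsilon$ is bijective.

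\emph{Localization.} Both sides commute with \'etale base change and with localization on $A$. For $\Omega^*$ this is standard. For $\HH$ we use the flat identification $A\otimes^L_{A\otimes A}A$: if $A\to A'$ is \'etale, then $A'\otimes_A(A\otimes^L_{A\otimes A}A)\simeq A'\otimes^L_{A'\otimes A'}A'$. This follows because $A'\otimes_k A'\to A'$ factors through $A'\otimes_A A'$, and $A'\otimes_A A'\to A'$ is an open-and-closed immersion, since $A\to A'$ is unramified and flat. Bijectivity of $\epsilon$ can be checked Zariski-locally on $\Spec A$. Locally, a smooth $A$ is \'etale over a polynomial ring $k[x_1,\dots,x_n]$, and by naturality of $\epsilon$ we reduce to $A=k[x_1,\dots,x_n]$. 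The \'etale invariance of $\HH$ is the step I expect to be the main obstacle. The cleanest route is to identify $A$ as the diagonal summand of $A'\otimes_{A}A'$ and then compute $\Tor$ over $A'\otimes_k A'$ by flat base change from $A\otimes_k A$.

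\emph{Polynomial case.} Let $A=k[x_1,\dots,x_n]$. Then $A\otimes_k A=A[y_1,\dots,y_n]$, and the diagonal $A$ is resolved by the Koszul complex on the regular sequence $(x_i-y_i)$. Tensoring this resolution with $A$ over $A\otimes A$ kills all the differentials. Hence $\Tor_*^{A\otimes A}(A,A)\cong\bigwedge^*_A A^{n}$, free on classes $e_i$ in degree one. Comparing the Koszul resolution with the bar resolution shows that $e_i$ corresponds to $[1\otimes x_i]=\epsilon(dx_i)$. Since both sides are exterior algebras on $dx_1,\dots,dx_n$, and $\epsilon$ is a map of graded-commutative algebras, $\epsilon$ is an isomorphism. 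Alternatively, one can avoid the explicit comparison of resolutions. Reduce to $n=1$ via the Künneth isomorphism $\HH(A\otimes_k B/k)\simeq\HH(A/k)\otimes_k\HH(B/k)$, which is compatible with shuffle products. For $n=1$, the computation is a two-term resolution checked by hand.
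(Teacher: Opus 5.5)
Your proof is correct and has the same skeleton as the paper's: the polynomial case via the Koszul resolution of the diagonal (the paper just cites the regular-sequence computation), then \'etale extensions of polynomial algebras, then the Zariski-local factorization of a smooth map through an \'etale map to affine space.

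The real difference is how you prove \'etale invariance $\HH^{\cl}_*(A/k)\otimes_A A'\cong\HH^{\cl}_*(A'/k)$. The paper deduces it from the derived HKR filtration. By completeness it only checks associated graded pieces, which are exterior powers of $\LL_{A/k}\otimes_A A'\to\LL_{A'/k}$, an equivalence since $\LL_{A'/A}\simeq 0$. That is quick given the machinery, but it forces the remark that the derived HKR theorem uses the classical one only for polynomial algebras, to avoid circularity.

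You instead use the classical Weibel--Geller argument. Flatness of $A'$ over $A$ and flat base change along $A\otimes_k A\to A'\otimes_k A'$ give $\Tor^{A\otimes_k A}_*(A,A)\otimes_A A'\cong\Tor^{A'\otimes_k A'}_*(A'\otimes_A A',A')$. The splitting $A'\otimes_A A'\cong A'\times C$, coming from the open-and-closed diagonal, then isolates the desired summand. This route is elementary, purely classical, and has no circularity issue.

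To complete it, you should say why the complementary summand $\Tor^{A'\otimes_k A'}_*(C,A')$ vanishes. Lift the idempotent of $A'\otimes_A A'$ cutting out the diagonal to $A'\otimes_k A'$. It acts by $1$ on $A'$ and by $0$ on $C$, hence simultaneously by $1$ and by $0$ on this $\Tor$, which is therefore zero.

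One smaller point concerns your first treatment of the polynomial case. Matching $e_i$ with $\epsilon(dx_i)$ in degree one gives the higher degrees only once you know the Koszul exterior product agrees with the shuffle product. One way to see this is that antisymmetrization $e_{i_1}\wedge\cdots\wedge e_{i_r}\mapsto\sum_\sigma\mathrm{sgn}(\sigma)\,1\otimes x_{i_{\sigma(1)}}\otimes\cdots\otimes x_{i_{\sigma(r)}}\otimes 1$ is a chain map from the Koszul to the bar resolution. Your K\"unneth alternative already takes care of this.
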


\begin{remark}
    The maps $\epsilon_n : \Omega^n_{A/k}\to\HH^{\cl}_n(A/k)$ are called the \emph{antisymmetrization maps} and are explicitly given by
    \[
    \epsilon_n(a\mathrm{d}b_1\wedge\dots\wedge\mathrm{d}b_n) = \sum_{\sigma\in S_n} (-1)^{\operatorname{sign}\sigma} [a\otimes b_{\sigma(1)}\otimes\dots\otimes b_{\sigma(n)}].
    \]
\end{remark}

In order to prove \ref{Classical HKR}, we require a lemma.

\begin{lemma}\label{etale localization}
    Suppose that $A$ is a flat $k$-algebra, and that $A\to A'$ is \'etale. Then the canonical map
    \[
    \HH^{\cl}_n(A/k)\otimes_A A'\to\HH^{\cl}_n(A'/k)
    \]
    is an equivalence for all $n\geq 0.$
\end{lemma}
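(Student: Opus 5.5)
We use the description of Hochschild homology as a derived tensor product. Since $A$ is flat over $k$, the preceding discussion gives $\HH^{\cl}(A/k)\simeq A\otimes^L_{A^e}A$ with $A^e = A\otimes_k A$. The composite $k\to A\to A'$ is flat because étale maps are flat, so likewise $\HH^{\cl}(A'/k)\simeq A'\otimes^L_{A'^e}A'$ with $A'^e = A'\otimes_k A'$. We will show that
\[
A'\otimes^L_{A'^e}A' \simeq (A\otimes^L_{A^e}A)\otimes_A A'.
\]
Because $A'$ is flat over $A$, taking homology then gives the claimed isomorphism $\HH^{\cl}_n(A/k)\otimes_A A'\cong\HH^{\cl}_n(A'/k)$ in each degree.

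The key step is to factor the multiplication map $A'^e\to A'$ through the ring $B := A'^e\otimes_{A^e}A = A'\otimes_A A'$. The multiplication map $\mu : A'\otimes_A A'\to A'$ is a surjection of $A'\otimes_A A'$-algebras, and since $A\to A'$ is étale, $\mu$ is itself étale. An étale closed immersion is an open immersion, so $A'$ is a localization of $A'\otimes_A A'$, and in particular it is flat over $B$. This is the classical fact that the diagonal of an unramified morphism is an open immersion. We then compute by base change in stages:
\[
A'\otimes^L_{A'^e}A' \simeq A'\otimes^L_{B}\bigl(B\otimes^L_{A'^e}A'\bigr).
\]
Here $B\otimes^L_{A'^e}A'$ is computed by changing rings along $A^e\to A'^e$. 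Since $A'^e$ is flat over $A^e$ (it is a base change of flat maps), we get $B\simeq A'^e\otimes^L_{A^e}A$. Hence $A'\otimes^L_{A'^e}A'\simeq A'\otimes^L_{A^e}A$, where $A'$ is viewed as an $A^e$-module by restriction. This in turn equals $A'\otimes_A(A\otimes^L_{A^e}A)$, provided we justify that the $A^e$-module structure on $A'$ factors through $A^e\to A$. It does: both copies of $A$ act on $A'$ through the single structure map $A\to A'$, so the action factors through multiplication $A^e\to A$.

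Assembling these steps gives
\[
A'\otimes^L_{A'^e}A' \simeq A'\otimes^L_{A}\bigl(A\otimes^L_{A^e}A\bigr)\simeq (A\otimes^L_{A^e}A)\otimes_A A',
\]
using flatness of $A'$ over $A$ to drop the derived decoration. The main obstacle is the bookkeeping in the middle step, so we will organize it as a single chain
\[
A'\otimes^L_{A'^e}A'\simeq A'\otimes^L_{A'\otimes_A A'}(A'\otimes_A A')\otimes^L_{A'^e}\cdots
\]
and verify carefully that the identification $A'\otimes^L_{A'^e}A'\simeq A'\otimes^L_{A^e}A$ is the one induced by the canonical map. Concretely, we check that the canonical map $\HH^{\cl}(A/k)\otimes_A A'\to\HH^{\cl}(A'/k)$ from functoriality of the Hochschild complex agrees with this chain of equivalences. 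Alternatively, one can avoid this check by observing that the map is one of $A'$-modules and is an isomorphism after the abstract identification, although that route needs a compatibility argument of its own.
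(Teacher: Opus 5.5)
Your argument follows a different route from the paper's, and it works once one elided step (second paragraph) is filled in.

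\textbf{Comparison with the paper.} The paper uses flatness of $A'$ over $A$ to reduce to the derived statement. It then invokes completeness of the HKR filtration on $\HH(-/k)$, so it suffices to check the graded pieces. These are exterior powers of $\LL_{A/k}\otimes_A A'\to\LL_{A'/k}$, which is an equivalence because $\LL_{A'/A}\simeq 0$. You instead give the direct Tor computation, the Weibel--Geller style argument the paper only mentions in passing. Its only inputs are flat base change $A'\otimes_A A'\simeq A'^e\otimes^L_{A^e}A$ and the fact that the diagonal of an \'etale map is an open immersion.

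Your route is elementary and needs neither the cotangent complex nor the HKR filtration. So it avoids the circularity the paper has to explain away: this lemma feeds the classical HKR theorem, which underlies the HKR filtration used in the paper's proof. The paper's route, in exchange, identifies the map as the canonical one for free, by naturality of the filtration. You must instead check that your chain of equivalences is induced by the canonical map. That check is formal, since each step comes from a map of pushout diagrams of rings. Your suggested alternative does not work, though: an abstract isomorphism of $A'$-modules says nothing about whether a given map is an isomorphism.

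\textbf{The elided step.} It must be made explicit, because it is the only place where unramifiedness is used. You correctly obtain $A'\otimes^L_{A'^e}A'\simeq A'\otimes^L_B N$ with $N:=B\otimes^L_{A'^e}A'\simeq A\otimes^L_{A^e}A'$. Your ``Hence'' then silently discards the outer $A'\otimes^L_B(-)$. Everything before that point uses only flatness of $A\to A'$, and for $A'=A[x]$ the conclusion is false: there $B=A[x,y]$ and $A'\otimes^L_B N\simeq N\oplus N[1]$. So this step cannot be automatic. Argue as follows.
\begin{itemize}
\item We have $\pi_0N\cong B\otimes_{A'^e}A'\cong A'$, so each $\pi_nN$ is a $B$-module through the multiplication map $B\to A'$.
\item Your open-immersion observation says $A'$ is flat over $B$, so $\pi_n(A'\otimes^L_BN)\cong A'\otimes_B\pi_nN$.
\item Since $B\to A'$ is surjective, $A'\otimes_BA'\cong A'$, hence $A'\otimes_B\pi_nN\cong(A'\otimes_BA')\otimes_{A'}\pi_nN\cong\pi_nN$.
\end{itemize}
Thus the unit map $N\to A'\otimes^L_BN$ is an equivalence, and with this step inserted your proof is complete.
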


We learned the following slick proof from the excellent set of notes \cite{MorrowHHnotes}.

\begin{proof}
    Since $A\to A'$ is flat, it suffices to show that $\HH^{\cl}(A/k)\otimes_A A'\simeq\HH^{\cl}(A/k)\otimes^L_A A'\to \HH^{\cl}(A'/k)$ is an equivalence at the level of the derived category.
    
    We now appeal to the Hochschild-Kostant-Rosenberg filtration, \ref{HKR for CRing}. Because the HKR filtration is complete, it suffices to show that the morphism induces an equivalence on all associated graded pieces. On the graded pieces, we have (shifts of) the canonical maps
    \[
    \LL^i_{A/k}\otimes^L_A A'\to \LL^i_{A'/k}
    \]
    which are the $i$th exterior powers of the canonical map
    \[
    \LL_{A/k}\otimes^L_A A'\to\LL_{A'/k}
    \]
    on the degree $1$ graded piece. However, this map is part of the cofiber sequence
    \[
    \LL_{A/k}\otimes^L_A A'\to \LL_{A'/k}\to\LL_{A'/A}
    \]
    of \ref{cotangent complex cofiber sequence}, and as $A\to A'$ is \'etale, it follows that $\LL_{A'/A}\simeq 0,$ which implies that the map $\LL_{A/k}\otimes^L_A A'\to\LL_{A'/k}$ is an equivalence, as desired.
\end{proof}

The following argument is illustrative of how many properties of Hochschild homology can be proven -- use the completeness of the HKR filtration to reduce to a statement about exterior powers of the cotangent complex. At this stage, the claim can often be reduced to the statement on just the first associated graded, since in many cases the higher degree maps are induced by taking exterior powers. Of course, one can prove the result without using the HKR filtration. For example \cite{WG91} uses a clever manipulation of the Tor description of $\HH^{\cl}_n(A/k)$ to prove the lemma.

Interestingly, although \ref{etale localization} is used to prove \ref{Classical HKR} which is in turn used to prove \ref{HKR for CRing} and the proof above relies on \ref{HKR for CRing}, we can avoid circularity, as \cite{MorrowHHnotes} explains. Specifically, the proof of \ref{HKR for CRing} only uses \ref{Classical HKR} when the morphism in question is of the form $k\to k[x_1,\dots, x_n],$ but \ref{etale localization} is only used in the proof of \ref{Classical HKR} to extend from the polynomial case to the case of an arbitrary smooth algebra.

We are now prepared to prove \ref{Classical HKR}.

\begin{proof}
    Since $\Omega^*_{A/k} = \bigwedge^*_A\Omega^1_{A/k},$ we must show that the canonical map
    \[
    \bigwedge^*_A\HH_1^{\cl}\to \HH_*^{\cl}(A/k)
    \]
    is an isomorphism, or equivalently, that
    \[
    \bigwedge^*_A\Tor^{A^e}_1(A,A)\to\Tor^{A^e}_*(A,A)
    \]
    is an isomorphism.

    Suppose first that $A$ is a polynomial $k$-algebra. Then the kernel of $A^e\to A$ is generated by a regular sequence, and it is well known that in this situation\footnote{See for example, \cite[Example 8.7.13]{WeibelHomological}.}, the canonical map 
    \[
    \bigwedge^*_A\Tor^{A^e}_1(A,A)\to\Tor^{A^e}_*(A,A)
    \]
    is an isomorphism.

    Next, suppose that $A$ is an \'etale extension of a polynomial $k$-algebra $P$:
    \[
    k\to P\xrightarrow{\et}A.
    \]
    
    By \ref{etale localization}, we then have 
    \[
    \Omega^n_{A/k}\simeq\Omega^n_{P/k}\otimes_{P} A\simeq\HH^{\cl}_n(P/k)\otimes_P A\simeq\HH^{\cl}_{n}(A/k),
    \]
    and one readily checks that the canonical isomorphisms involved preserve the antisymmetrization maps.

    Finally, in the general case, we may reduce to the case of $A$ being an \'etale extension of a polynomial $k$-algebra $P,$ as any smooth morphism $X\to S$ Zariski locally factors through affine $n$-space over $S.$ Here we use the fact that if $f\in A,$ then $\HH^{\cl}(A/k)\otimes_A A[1/f]\simeq\HH^{\cl}(A[1/f]/k),$ again using \ref{etale localization}.
\end{proof}

When $P\to Q$ is an object of $\PolyPoly$ (and in fact for any smooth map $k\to A$ of commutative rings), one can also check directly from the definitions that as before, the diagram
\[\begin{tikzcd}
    (Q,\Omega^n_{Q/P})\ar[r,"\epsilon_n"]\ar[d] & (Q,\HH^{\cl}_n(Q/P))\ar[d]\\
    (Q',\Omega^n_{Q'/P'})\ar[r,"\epsilon_n"] &(Q',\HH^{\cl}_n(Q'/P'))
\end{tikzcd}\]
commutes.

\subsection{Classical Hochschild Homology of Schemes and Stacks}

We can also define the Hochschild homology of a scheme $X$ over a base ring $k.$ The key result which allows us to do so is \ref{etale localization}. The original reference for this is \cite{WG91}. Below, we write $C^{\Hoch}(A/k)^*$ for the cohomologically indexed Hochschild complex; i.e., $C^{\Hoch}(A/k)^n := C^{\Hoch}(A/k)_{-n}.$

\begin{theorem}[\protect{\cite[Corollary~0.4]{WG91}}]
    Let $X$ be a $k$-scheme, and let $\sHH^{\cl}_n$ be the Zariski sheafification of the presheaf $U\mapsto \HH^{\cl}_n(\Gamma(U,\OX)/k)$ on the big Zariski site of $X.$ Then
    \begin{enumerate}
        \item $\sHH_n^{\cl}$ is a quasicoherent sheaf on $X;$
        \item $\sHH_n^{\cl}$ is also a sheaf for the \'etale topology on $X;$
        \item $\H^i_{\et}(X,\sHH_n^{\cl})\cong\H^i_{\Zar}(X;\sHH_n^{\cl})$ for all $i;$ and
        \item If $X = \Spec A$ is affine, then $\H^0(X,\sHH_n^{\cl})\cong\HH_n^{\cl}(A/k)$ and $\H^i(X,\sHH_n^{\cl}) = 0$ for $i\neq 0.$
    \end{enumerate}
\end{theorem}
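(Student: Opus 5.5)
This is Weibel--Geller's result; the plan is to deduce it from \'etale base change for Hochschild homology (\ref{etale localization}), mirroring the standard argument that $\Omega^n$ defines a quasi-coherent \'etale sheaf. The key input is a version of \ref{etale localization} for localizations and \'etale maps \emph{without} flatness hypotheses on $k\to A$. I would first upgrade \ref{etale localization} by working with $\HH(A/k)$ computed with derived tensor products, i.e.\ the simplicial object $A^{\otimes^L_k(\bullet+1)}$. For any \'etale $A\to A'$, the HKR-filtration argument already given shows $\HH(A/k)\otimes^L_A A'\simeq \HH(A'/k)$, since $\LL_{A'/A}\simeq 0$. Because $A\to A'$ is flat, taking homotopy yields $\HH_n(A/k)\otimes_A A'\cong \HH_n(A'/k)$. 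When $k\to A$ is not flat, one must compare with the classical Hochschild homology. For a localization $A\to A[1/f]$ this can be done directly: $\HH^{\cl}$ commutes with filtered colimits, and the Hochschild complex of $A[1/f]$ is the localization of that of $A$ by the standard argument that $A[1/f]\otimes A[1/f]\otimes_{A\otimes A}A \cong A[1/f]$. This gives $\HH^{\cl}_n(A/k)\otimes_A A_f\cong \HH^{\cl}_n(A_f/k)$.

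Using the localization isomorphism, the presheaf $U\mapsto \HH^{\cl}_n(\Gamma(U,\OX)/k)$ restricted to affine opens of an affine $\Spec A$ agrees with $\Spec A_f\mapsto \HH^{\cl}_n(A/k)\otimes_A A_f$. This is the quasi-coherent sheaf $\widetilde{\HH^{\cl}_n(A/k)}$ on the basis of distinguished opens, so the sheafification $\sHH_n^{\cl}$ is locally $\widetilde{\HH^{\cl}_n(A/k)}$. That proves (1), and (4) follows from Serre's vanishing of higher cohomology of quasi-coherent sheaves on affines.

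For (2) and (3), I would show that the values on \'etale $A\to A'$ are again given by base change, $\HH^{\cl}_n(A'/k)\cong \HH^{\cl}_n(A/k)\otimes_A A'$. Granting this, $\sHH_n^{\cl}$ coincides on the \'etale site with the \'etale sheaf associated to a quasi-coherent module. Faithfully flat descent then shows it is an \'etale sheaf, and the standard comparison $\H^i_{\et}(X,\cF)\cong \H^i_{\Zar}(X,\cF)$ for quasi-coherent $\cF$ gives (3).

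The main obstacle is precisely this \'etale base change without flatness of $A$ over $k$, since \ref{etale localization} assumed $A$ flat. My approach is Weibel--Geller's Tor manipulation. Write $A'\otimes_k A'$ as flat over $A\otimes_k A$, and note that $A'\otimes_{A}A'\simeq A'\times C$ by unramifiedness. Then $A'\otimes^L_{A'\otimes A'}\bigl(C^{\Hoch}\text{-resolution}\bigr)$ identifies with $A'\otimes_A(-)$ applied to the bar construction of $A$. Concretely, I would compare the simplicial objects $A'^{\otimes(\bullet+1)}$ and $A'\otimes_A A^{\otimes(\bullet+1)}$ degreewise using that $A'$ is a direct factor of $A'\otimes_A A'$. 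If this proves messy, I would fall back to proving (2) via the localization case together with the local structure of \'etale maps as standard \'etale algebras.
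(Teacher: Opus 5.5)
Your derivation of (1)–(4) from a base-change lemma is sound, but the lemma is the real content and is not proved when $A$ is not $k$-flat. The lemma is the natural isomorphism $\HH^{\cl}_n(A'/k)\cong \HH^{\cl}_n(A/k)\otimes_AA'$ for étale $A\to A'$. The paper gives no proof of this statement; it only cites Weibel–Geller, whose main theorem is exactly this lemma over an arbitrary $k$.

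Each route you sketch breaks at that point:

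\emph{Derived step.} It computes $\HH(A/k)$, not $\HH^{\cl}(A/k)$, so it does not apply.

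\emph{Localization step.} It is wrong as stated. $C^{\Hoch}(A_f/k)_n=A_f^{\otimes_k(n+1)}$, whereas $C^{\Hoch}(A/k)_n\otimes_AA_f=A_f\otimes_kA^{\otimes_kn}$. So the Hochschild complex of $A_f$ is not the localization of that of $A$; at best the map is a quasi-isomorphism, and that is what must be shown. The identity $A_f\otimes_kA_f\otimes_{A\otimes_kA}A\cong A_f$ proves localization for $\Tor^{A\otimes_kA}_*(A,A)$. That agrees with $\HH^{\cl}_*(A/k)$ only when $A^{\otimes_k(\bullet+2)}$ is a flat resolution of $A$ over $A\otimes_kA$, which is essentially the $k$-flat case you are trying to leave. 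Also, $A_f$ is a filtered colimit of copies of $A$ only as an $A$-module, not as a $k$-algebra, so continuity of $\HH^{\cl}$ does not help.

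\emph{Étale step.} The Tor manipulation with $A'\otimes_AA'\cong A'\times C$ is the standard flat-case proof. The objects $A'^{\otimes_k(\bullet+1)}$ and $A'\otimes_AA^{\otimes_k(\bullet+1)}$ are not degreewise isomorphic, so no degreewise comparison suffices. Your fallback does not reduce to localizations either, since a standard étale map $A\to(A[t]/(h))_g$ is not a localization of $A$.

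One way to close the gap using only \ref{etale localization} is to reduce to a universal $\ZZ$-flat case.

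1. Zariski locally on source and target, an étale $A\to A'$ is standard. So $A'\cong A\otimes_PE$ with $P=\ZZ[a_0,\dots,a_{d-1},b_0,\dots,b_e]$, $h=t^d+\sum a_it^i$, $g=\sum b_jt^j$, and $E=(P[t]/(h))_{gh'}$, which is étale over $P$. Localizations are the case $\ZZ[x]\to\ZZ[x^{\pm1}]$.

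2. Degreewise,
\[
A'^{\otimes_k(n+1)}\cong A^{\otimes_k(n+1)}\otimes_{P^{\otimes_{\ZZ}(n+1)}}E^{\otimes_{\ZZ}(n+1)},
\]
and $P^{\otimes_{\ZZ}(n+1)}\to E^{\otimes_{\ZZ}(n+1)}$ is flat. Hence $C^{\Hoch}(A'/k)_\bullet$ is the derived tensor product of $C^{\Hoch}(A/k)_\bullet$ with $C^{\Hoch}(E/\ZZ)_\bullet$ over $C^{\Hoch}(P/\ZZ)_\bullet$.

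3. Since $P$ is $\ZZ$-flat, \ref{etale localization} says $C^{\Hoch}(P/\ZZ)_\bullet\otimes_PE\to C^{\Hoch}(E/\ZZ)_\bullet$ is a weak equivalence. Base changing along $C^{\Hoch}(P/\ZZ)_\bullet\to C^{\Hoch}(A/k)_\bullet$ gives
\[
C^{\Hoch}(A/k)_\bullet\otimes_AA'\cong C^{\Hoch}(A/k)_\bullet\otimes_PE\xrightarrow{\sim}C^{\Hoch}(A'/k)_\bullet .
\]
Flatness of $A\to A'$ then yields $\HH^{\cl}_n(A/k)\otimes_AA'\cong\HH^{\cl}_n(A'/k)$.

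4. Do the localization case first, use it to glue the standard-étale pieces, and the rest of your outline goes through.
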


We may also define a sheafy Hochschild complex on a scheme $X$ in a similar way, by sheafification of the cyclic bar construction on affines. More precisely, suppose that $\cC^{\Hoch,*}_{X/k}$ is the sheafification of the presheaf
\[
U\mapsto C^{\Hoch}(\Gamma(U,\OX)/k)^* := \HH^{\cl}(\Gamma(U,\OX)/k)_{-*}
\]
which sends an open $U\subseteq X$ to the Hochschild chain complex of $\Gamma(U,\OX)$ relative to $k.$ Then we may define
\[
\HH^{\cl}_n(X/k) := \bfH^{-n}_{\Zar}(X,\cC^{\Hoch,*}_{X/k}).
\]
While the components $\cC^{\Hoch,n}_{X/k}$ are not in general quasi-coherent, the complex itself has quasi-coherent cohomology, and we have the following.

\begin{theorem}[\protect{\cite[Theorems~4.1,4.2]{WG91}}]
    If $X = \Spec A,$ then
    \[
    \HH^{\cl}_n(X/k)\cong\HH^{\cl}_n(A/k)
    \]
    for all $n.$ For a general $k$-scheme $X,$ there is a fourth quadrant spectral sequence with
    \[
    E^{p,q}_2 := \H^p_{\Zar}(X,\sHH^{\cl}_{-q})\cong\H^p_{\et}(X,\sHH^{\cl}_{-q})
    \]
    which approaches $\HH^{\cl}_{-p-q}(X),$ and converges if the spectral sequence is bounded; i.e., for any $n$ there are only finitely many nonzero $\H^p(X,\sHH^{\cl}_{q})$ with $-p + q = n.$
\end{theorem}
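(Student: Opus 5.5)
The plan is to treat $\cC^{\Hoch,*}_{X/k}$ as a complex of Zariski sheaves of abelian groups and run the hypercohomology spectral sequence, using the preceding theorem of Weibel--Geller to identify its input. The first step is to identify the cohomology sheaves. Sheafification is exact, so $\cH^{q}(\cC^{\Hoch,*}_{X/k})$ is the sheafification of the presheaf $U\mapsto \H^q(C^{\Hoch}(\Gamma(U,\OX)/k)^*) = \HH^{\cl}_{-q}(\Gamma(U,\OX)/k)$. This sheafification is $\sHH^{\cl}_{-q}$ by definition.

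The second step is the spectral sequence. Take a Cartan--Eilenberg injective resolution of $\cC^{\Hoch,*}_{X/k}$, which is concentrated in cohomological degrees $\leq 0$. Filtering by the row degree gives the standard hypercohomology spectral sequence
\[
E_2^{p,q} = \H^p_{\Zar}(X,\sHH^{\cl}_{-q}) \Rightarrow \bfH^{p+q}_{\Zar}(X,\cC^{\Hoch,*}_{X/k}) = \HH^{\cl}_{-p-q}(X/k).
\]
It lives in the fourth quadrant ($p\geq 0$, $q\leq 0$). By the preceding theorem the sheaves $\sHH^{\cl}_n$ are quasi-coherent, and their Zariski and \'etale cohomologies agree, which gives the stated identification of $E_2$. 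The main obstacle is convergence, since the complex is unbounded below in cohomological degree. I would handle it by writing $\cC^{\Hoch,*}_{X/k}$ as the homotopy limit of its truncations $\tau^{\geq -m}$. Each truncation has a bounded, hence convergent, spectral sequence. Under the boundedness hypothesis the truncations do not change $E_2^{p,q}$ in any fixed total degree once $m$ is large. The $\lim^1$ terms in the Milnor sequence then vanish because the inverse systems are eventually constant in each degree. This is the one point where the finiteness assumption is genuinely used.

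The third step is the affine case, $X=\Spec A$. By part (4) of the preceding theorem, $\H^p(X,\sHH^{\cl}_n)=0$ for $p>0$ and $\H^0(X,\sHH^{\cl}_n)\cong\HH^{\cl}_n(A/k)$. So $E_2$ is concentrated on the column $p=0$, the bounded-convergence argument applies trivially, and we get $\HH^{\cl}_n(X/k)\cong\HH^{\cl}_n(A/k)$. One point needs care here: the resulting isomorphism should be the natural map induced by $C^{\Hoch}(A/k)^*=\Gamma(X,\text{presheaf})\to R\Gamma(X,\cC^{\Hoch,*}_{X/k})$. I would check this on the edge map of the spectral sequence, which is exactly $\HH^{\cl}_n(A/k)\to\H^0(X,\sHH^{\cl}_n)$.
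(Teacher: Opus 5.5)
The paper gives no proof of this statement; it is quoted from Weibel--Geller, so there is no internal argument to compare yours with. Your outline is the standard argument and is essentially sound: exactness of sheafification identifies the cohomology sheaves with $\sHH^{\cl}_{-q}$, then the Cartan--Eilenberg hypercohomology spectral sequence, with quasi-coherence giving the Zariski/\'etale comparison on $E_2$ and the collapse when $X=\Spec A$.

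The one step you assert without justification is $\mathbf{H}^n(X,\mathcal{C})\cong\lim_m\mathbf{H}^n(X,\tau^{\geq -m}\mathcal{C})$, together with its Milnor sequence, where $\mathcal{C}$ is the sheafified Hochschild complex. Boundedness is not what supplies this. For unbounded complexes of sheaves, writing $\mathcal{C}$ as the homotopy limit of its truncations is a left-completeness statement, and it can fail in general.

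In your Cartan--Eilenberg model the step holds at chain level. Let $I^{q,\bullet}$ and $I_B^{q,\bullet}$ be the resolutions of $\mathcal{C}^q$ and of the coboundaries $B^q$ in a CE resolution of $\mathcal{C}$. Keep $I^{q,\bullet}$ for $q>-m$, replace $I^{-m,\bullet}$ by the split quotient $I^{-m,\bullet}/I_B^{-m,\bullet}$, and discard $q<-m$. This is a CE resolution of $\tau^{\geq -m}\mathcal{C}$ and a quotient of the original. So the product total complexes of global sections form a tower of degreewise surjections whose limit is the product total complex for $\mathcal{C}$, and the Milnor sequence applies.

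If you read $\mathbf{H}$ as derived-category hypercohomology instead, $\mathcal{C}\simeq R\lim_m\tau^{\geq -m}\mathcal{C}$ still holds. The reason is that the $\sHH^{\cl}_q$ are quasi-coherent, hence acyclic on every affine open. This also gives the affine case directly, since $\mathbf{H}^{q}(U,\mathcal{C})\cong\Gamma(U,\mathcal{H}^q(\mathcal{C}))$ for $U$ affine.

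In either reading, boundedness is used exactly where you put it. It gives eventual constancy of the towers, so $\lim^1=0$. It also lets you match the spectral sequence of $\mathcal{C}$ with those of its truncations in each fixed total degree.

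Two smaller remarks:
\begin{itemize}
\item With the product totalization you can skip the truncations entirely. Filtering by resolution degree gives an exhaustive and complete filtration, since the double complex lives in resolution degrees $\geq 0$. The usual convergence theorem for complete filtrations then gives convergence once $E_2$ is bounded.
\item Your argument covers only the convergence clause. The unconditional ``approaches'' clause is a statement about that filtration on the product total complex of $\mathcal{C}$ itself, and the truncation argument does not address it.
\end{itemize}
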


In \cite{Olssonloghh}, Olsson generalizes this construction to give Hochschild homology sheaves and groups of morphisms $X\to\cS,$ where $\cS$ is an algebraic stack rather than just a base affine scheme, and $X$ is permitted to be an algebraic space. Let us recall this construction and the most important results. We now return to homological indexing.

The formation of the Hochschild complex $C^{\Hoch}(A/k)_*$ is functorial in the morphism $k\to A$: for a commutative square
\[\begin{tikzcd}
    A\ar[r] &A'\\
    k\ar[r]\ar[u] &k'\ar[u]
\end{tikzcd}\]
of commutative rings, there is a natural map
\[
C^{\Hoch}(A/k)_*\otimes_A A'\to C^{\Hoch}(A'/k')_*.
\]
of chain complexes of $A'$-modules.

Let $X$ be an algebraic space over a ground ring $k.$ As in the case of schemes, let $\cC^{\Hoch}_{X/k,*}$ denote the sheaf of $\OX$-modules on the \'etale site of $X$ which is the sheafification of the presheaf that maps $(\Spec A\xrightarrow{\et} X)\mapsto C^{\Hoch}(A/k)_*.$ We call $\cC^{\Hoch}_{X/k,*}$ the \emph{sheafy Hochschild complex of $X/k$,} and we let $\sHH^{\cl}_{X/k}$ be the image of $\cC^{\Hoch}_{X/k,*}$ in $\sfD(\OX).$

The functoriality of $C^{\Hoch}(-)_*$ implies that $\cC^{\Hoch}_{-,*}$ is also functorial in the morphism $X\to k$ from an algebraic space to a ground ring. In particular, given a commutative square
\[\begin{tikzcd}
    X'\ar[r,"f"]\ar[d] &X\ar[d]\\
    \Spec k'\ar[r] &\Spec k
\end{tikzcd}\]
where $X$ and $X'$ are algebraic spaces, there is a map of $\cO_{X'}$-modules
\[
f^*\cC^{\Hoch}_{X/k,*}\to\cC^{\Hoch}_{X'/k',*}.
\]

If $X\to\cS$ is a flat map from a quasi-compact algebraic space to an algebraic stack with quasi-compact diagonal, Olsson then defines $\cC^{\Hoch}_{X/\cS,*}$ by descent. Specifically, choose a smooth affine hypercover $S_\bullet\to\cS$ with $S_n = \Spec k_n,$ and set $X_\bullet = S_\bullet\times_{\cS} X.$ Then any map $a : [m]\to [n]\in\Delta$ induces a corresponding diagram
\[\begin{tikzcd}
    X_n\ar[r,"x_a"]\ar[d] &X_m\ar[d]\\
    \Spec k_n\ar[r,"S_a"] &\Spec k_m
\end{tikzcd}\]
and thus a map
\[
x_a^*\cC^{\Hoch}_{X_m/k_m,*}\to\cC^{\Hoch}_{X_n/k_n,*}.
\]
The above gives the data of a sheaf $\cC_{X_\bullet/k_\bullet,*}$ of complexes of $\cO_{X_\bullet}$-modules in the topos of sheaves on $X_{\bullet,\et},$ and we let $\sHH_{X_\bullet/S_\bullet}$ be its image in $\sfD(X_{\bullet,\et},\cO_{X_\bullet})$ (see \cite[Section 12.4]{Champs} for details on $X_{\bullet,\et}$ and the corresponding topos and derived category).

\begin{proposition}[\protect{\cite{Olssonloghh}}]\label{Olsson stacky HH}
    The object $\sHH^{\cl}_{X_\bullet/S_\bullet}$ is isomorphic to the pullback of a unique object $\sHH^{\cl}_{X/\cS}\in\sfD_{\qc}(X,\OX),$ and $\sHH^{\cl}_{X/\cS}$ is independent of the choice of hypercover $S_\bullet\to\cS.$
\end{proposition}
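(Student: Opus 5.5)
This is a descent statement, and the plan is to prove it by cohomological descent along the simplicial object $X_\bullet\to X$. Since $X\to\cS$ is flat and $S_\bullet\to\cS$ is a smooth hypercover, $X_\bullet\to X$ is a smooth hypercover of the quasi-compact algebraic space $X$. So \cite{OlssonSheaves} (or \cite[Section 12.4]{Champs}) gives that pullback induces an equivalence between $\sfD_{\qc}(X,\OX)$ and the full subcategory of $\sfD(X_{\bullet,\et},\cO_{X_\bullet})$ of \emph{cartesian} objects with quasi-coherent cohomology. Both existence and uniqueness of $\sHH^{\cl}_{X/\cS}$ therefore reduce to one claim: $\sHH^{\cl}_{X_\bullet/S_\bullet}$ is cartesian with quasi-coherent cohomology. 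Concretely, for every $a:[m]\to[n]$ the transition map
\[
Lx_a^*\sHH^{\cl}_{X_m/k_m}\to\sHH^{\cl}_{X_n/k_n}
\]
must be an isomorphism in $\sfD(\cO_{X_n})$.

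To verify this, work étale locally on $X_n$, so that both sides become affine computations. Take $\Spec A\to X_m$ étale and set $A'=A\otimes_{k_m}k_n$, which is étale over $X_n$. The question is then whether $\HH^{\cl}(A/k_m)\otimes^L_A A'\to\HH^{\cl}(A'/k_n)$ is an equivalence. Here $A$ is flat over $k_m$, because $X\to\cS$ is flat, so the Hochschild complex computes the derived self-intersection of the diagonal:
\[
\HH^{\cl}(A/k_m)\simeq A\otimes^L_{A\otimes_{k_m}A}A.
\]
By flat base change along $k_m\to k_n$, $A'\otimes_{k_n}A'\simeq (A\otimes_{k_m}A)\otimes_{k_m}k_n$, so the target is $(A\otimes^L_{A\otimes A}A)\otimes^L_{k_m}k_n$. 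This agrees with the source because $A\otimes_{k_m}k_n\simeq A\otimes^L_{k_m}k_n$, again by flatness. If one prefers, the same comparison can be checked on the graded pieces of the HKR filtration, as in the proof of \ref{etale localization}, using base change for $\LL$. Quasi-coherence of the cohomology sheaves is immediate on each $X_n$ from the quoted theorem of \cite{WG91} for algebraic spaces over an affine base, which in turn rests on \ref{etale localization}.

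For independence of the hypercover, given two smooth hypercovers $S_\bullet$ and $S'_\bullet$ I will pass to a common refinement $S''_\bullet$ mapping to both, for instance a hypercover refining $S_\bullet\times_{\cS}S'_\bullet$. The descended objects on $X$ are then identified by the uniqueness already established, using that pullback along a map of hypercovers sends the Hochschild object to the Hochschild object. This is the same cartesian-ness argument applied to the maps $S''_n\to S_n$.

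I expect the main obstacle to be a subtle point in the cartesian check: the transition maps $S_a$ are smooth but not étale, so the rings $k_n$ genuinely change. One must be sure that the sheafy Hochschild complex over $k_n$ is computed by étale-local rings of the form $A\otimes_{k_m}k_n$. These only generate the étale site of $X_n$ up to refinement, which forces a sheafification and descent step. The second ingredient that must hold is the flat base change identity above, and that is exactly where flatness of $X\to\cS$ is used.
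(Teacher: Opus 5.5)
The paper gives no proof of its own here. It quotes the result from \cite{Olssonloghh} and frames the construction as cohomological descent along the hypercover, and your argument is the standard descent proof along those lines and is correct: cartesianness reduces \'etale-locally to the map $C^{\Hoch}(A/k_m)\otimes_A A'\to C^{\Hoch}(A'/k_n)$, which is in fact an isomorphism of complexes and also computes the derived pullback, because $A$ is $k_m$-flat and so each term $A\otimes_{k_m}A^{\otimes p}$ is $A$-flat. One small correction to your last paragraph: only the face maps $S_a$ are smooth, while degeneracies are sections of smooth maps (diagonals, for a \v{C}ech nerve) and are generally not flat; since your computation uses only the $k_m$-flatness of $A$, nothing in the argument depends on this.
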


\begin{remark}
    Olsson also provides a construction of $\sHH^{\cl}_{X/\cS}$ which avoids the use of hypercovers entirely, see \cite[Section 6]{Olssonloghh}. However, cohomological descent is more closely connected to our approach via Kan extensions, which is why we describe the above construction.
\end{remark}

\begin{mydef}
    Let $X\to\cS$ be a map from a quasi-compact algebraic space $X$ to an algebraic stack $\cS$ with quasi-compact diagonal. Then the \emph{$n$th Hochschild homology group} of $X/\cS$ is defined to be
    \[
    \HH^{\cl}_n(X/\cS) := \H_{n}(X,\sHH^{\cl}_{X/\cS}),
    \]
    and the \emph{$n$th Hochschild homology sheaf} of $X/\cS$ is defined to be
    \[
    \sHH^{\cl}_{X/\cS,n} := \cH_{n}(\sHH^{\cl}_{X/\cS}).
    \]
\end{mydef}

\begin{remark}
    By construction, these agree with the notions defined in \cite{WG91} when $X$ is a scheme and $\cS$ is an affine scheme.
\end{remark}

As one might expect, there is a close connection between $\sHH_{X/\cS}$ and the diagonal map $\Delta : X\to\cS.$ In particular, we have the following result.

\begin{theorem}[\protect{\cite[Theorem~4.2]{Olssonloghh}}]
    Let $X\to\cS$ be a map from a quasi-compact algebraic space to an algebraic stack with quasi-compact diagonal. Then there is a natural isomorphism
    \[
    L\Delta^*\Delta_*\OX\simeq\OX\otimes^L_{\Delta^{-1}\cO_{X\times_{\cS}X}}\OX\simeq\sHH^{\cl}_{X/\cS}
    \]
    in $\sfD(\OX).$
\end{theorem}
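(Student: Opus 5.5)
The plan is to reduce, by descent along a smooth affine hypercover of $\cS$, to the case of a map from an affine (or quasi-compact) algebraic space to an affine base. In that case the claim is the classical identification $\HH^{\cl}(A/k)\simeq A\otimes^L_{A\otimes A}A$, sheafified.

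\textbf{Step 1 (affine base).} Assume first that $\cS=\Spec k$ with $X\to\Spec k$ flat; this flatness is the standing hypothesis of \ref{Olsson stacky HH}. For an étale $\Spec A\to X$, the algebra $A$ is flat over $k$. The bar complex $C^{\Bar}(A/k)_*$ is then a flat resolution of $A$ over $A^e=A\otimes_k A$, with the extra degeneracy giving the contracting homotopy computed above. Hence
\[
A\otimes^L_{A\otimes_k A}A\simeq A\otimes_{A^e}C^{\Bar}(A/k)_*\cong C^{\Hoch}(A/k)_*,
\]
naturally in étale maps $A\to A'$. Sheafifying over $X_{\et}$, the complex $\cC^{\Bar}_{X/k,*}$ is a flat resolution of $\Delta_*\OX$ by $\Delta^{-1}\cO_{X\times_k X}$-modules, so $\OX\otimes^L_{\Delta^{-1}\cO_{X\times X}}\OX\simeq\cC^{\Hoch}_{X/k,*}$. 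The first isomorphism $L\Delta^*\Delta_*\OX\simeq\OX\otimes^L_{\Delta^{-1}\cO_{X\times_{\cS}X}}\OX$ is the definition $L\Delta^*=\OX\otimes^L_{\Delta^{-1}\cO}\Delta^{-1}(-)$ combined with $\Delta^{-1}\Delta_*\OX\simeq\OX$. The latter holds because $\Delta$ is a monomorphism, is separated (it is locally a closed immersion when $X$ is an algebraic space), and is quasi-compact, so that $\Delta_*$ is exact on stalks.

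\textbf{Step 2 (stacky base).} For general $\cS$, choose a smooth affine hypercover $S_\bullet\to\cS$ with $S_n=\Spec k_n$ and set $X_\bullet=X\times_{\cS}S_\bullet$. Because $X\times_{\cS}X\times_{\cS}S_n\simeq X_n\times_{k_n}X_n$, the diagonal of $X/\cS$ pulls back to the diagonals $\Delta_n$ of $X_n/k_n$. Both $X\to\cS$ and $S_n\to\cS$ are flat, and the relevant squares are Tor-independent, so flat base change gives $\pi_n^*L\Delta^*\Delta_*\OX\simeq L\Delta_n^*\Delta_{n*}\cO_{X_n}$. These equivalences are compatible with the simplicial transition maps. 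By Step 1 the right-hand sides are the $\sHH^{\cl}_{X_n/k_n}$, and the identifications commute with the maps $x_a^*$. By \ref{Olsson stacky HH}, $\sHH^{\cl}_{X/\cS}$ is the unique object of $\sfD_{\qc}(X)$ descending $\sHH^{\cl}_{X_\bullet/S_\bullet}$. Since $L\Delta^*\Delta_*\OX$ lies in $\sfD_{\qc}$ and descends the same system, the two objects agree.

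\textbf{Main obstacle.} The hard part is naturality: the isomorphisms from Step 1 must form a morphism of objects over the simplicial topos $X_{\bullet,\et}$, not merely a levelwise family of isomorphisms. I would handle this by writing down a single map of complexes of $\cO_{X_\bullet}$-modules, the sheafified bar resolution. Its formation is strictly functorial in commutative squares of rings, so the descent datum is respected on the nose. Uniqueness of the descended object then makes the isomorphism independent of the hypercover, and hence natural.
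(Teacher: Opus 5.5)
The paper gives no proof of this statement; it is quoted from Olsson. The only comparison point is the paper's framework. Your Step~2 is the same reduction to an affine base that the paper uses for its derived analogue, Proposition~\ref{relative HH is self intersection of diagonal for relative derived scheme}, combined with the descent characterization of Proposition~\ref{Olsson stacky HH}. You were also right to add flatness of $X\to\cS$: the printed statement omits it, but $\sHH^{\cl}_{X/\cS}$ is only defined under it.

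The problems are in Step~1. First, the sheafified bar complex is not a complex of $\Delta^{-1}\cO_{X\times_k X}$-modules. Its terms are modules over the sheaf of rings $\OX\otimes_k\OX$, which maps to $\Delta^{-1}\cO_{X\times_k X}$ rather than receiving a map from it. On stalks at $\bar x$, with $R=\cO^{\mathrm{sh}}_{X,\bar x}$, this map is the strict henselization of $R\otimes_k R$ at the diagonal point. That strict henselization inverts non-units of $R\otimes_k R$; for $X=\mathbf{A}^1_k$ at the origin, $1\otimes 1+t\otimes 1+1\otimes t$ is one. So no compatible module structure exists, and what the bar complex actually computes is $\OX\otimes^L_{\OX\otimes_k\OX}\OX$. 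You need an extra change-of-rings step. The map $\OX\otimes_k\OX\to\Delta^{-1}\cO_{X\times_k X}$ is flat, and $\Delta^{-1}\cO_{X\times_k X}\otimes_{\OX\otimes_k\OX}\OX\cong\OX$ because strict henselization commutes with the quotient $R\otimes_k R\to R$. Together these give $\OX\otimes^L_{\Delta^{-1}\cO_{X\times_k X}}\OX\simeq\OX\otimes^L_{\OX\otimes_k\OX}\OX$. This intermediate ring also resolves your ``main obstacle''. It is strictly functorial along $X_\bullet\to S_\bullet$, so the comparison maps $\cO_{X_\bullet}\otimes^L_{\cO_{X_\bullet}\otimes_{k_\bullet}\cO_{X_\bullet}}\cO_{X_\bullet}\to\cO_{X_\bullet}\otimes^L_{\Delta_\bullet^{-1}\cO}\cO_{X_\bullet}\leftarrow\pi_\bullet^*(\OX\otimes^L_{\Delta^{-1}\cO_{X\times_\cS X}}\OX)$ are genuine morphisms in $\sfD(X_{\bullet,\et},\cO_{X_\bullet})$ that are levelwise isomorphisms.

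Second, your justification of the first isomorphism through $\Delta^{-1}\Delta_*\OX\simeq\OX$ is false in general. The relative diagonal is a separated, locally quasi-finite monomorphism. It therefore becomes a closed immersion after passing to suitable \'etale neighbourhoods of a point and of its image. That does not give $\Delta^{-1}\Delta_*\OX\simeq\OX$, because $\Delta_*$ sees the whole preimage of an \'etale neighbourhood, not only the sheet through $\bar x$. $\Delta$ is an honest immersion only when $X\to\cS$ is locally separated. A counterexample: take $\operatorname{char}k\neq 2$ and the quasi-compact, quasi-separated, non-locally-separated space $X=\mathbf{A}^1_k/R$ with $R=\Delta\sqcup\{(x,-x):x\neq 0\}$, over $\cS=\Spec k$. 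The stalk of $\Delta_*\OX$ at $\Delta(\bar 0)$ is $\cO^{\mathrm{sh}}_{X,\bar 0}\times K$. Here $K$ is the fraction field of $\cO^{\mathrm{sh}}_{\mathbf{A}^1,0}$, contributed by the antidiagonal branch.

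The first isomorphism still holds, but the argument must use the monomorphism property. Set $B=\cO^{\mathrm{sh}}_{X\times_\cS X,\Delta(\bar x)}$. The base change of $\Delta$ to $\Spec B$ splits as $Z\sqcup W$, with $Z\cong\Spec\cO^{\mathrm{sh}}_{X,\bar x}$ closed in $\Spec B$ and $W$ having no point over the closed point. Since $\Delta$ is a monomorphism, $W\times_{\Spec B}Z=\emptyset$. The projection formula then gives $\cO^{\mathrm{sh}}_{X,\bar x}\otimes^L_B R\Gamma(W,\cO_W)=0$, so only the $Z$-summand contributes to $L\Delta^*\Delta_*\OX$. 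With these two repairs the outline goes through.
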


Olsson also proves the following variant of the HKR theorem for $\sHH_{X/\cS}.$ One can construct the antisymmetrization maps $\epsilon_n : \Omega^n_{A/k}\to\HH^{\cl}_n(A/k)$ by first noting that $\HH^{\cl}_*(A/k)$ has the structure of a graded commutative differential graded algebra (see \cite[2.3.3, 4.2.7]{Loday_2010a}), and hence by the universal property of $\Omega^\bullet_{A/k},$ there is a unique morphism of cdgas $\epsilon : \Omega^\bullet_{A/k}\to\HH^{\cl}_*(A/k)$ extending the canonical isomorphism $\epsilon_0.$ Sheafification then provides $\sHH^{\cl}_{X/\cS,*}$ with the structure of a cdga as well, and the isomorphism $\epsilon_0 : \Omega^0_{X/\cS}\to\sHH^{\cl}_{X/\cS,0}$ thus extends to a map
\[
\epsilon : \Omega^\bullet_{X/\cS}\to\sHH^{\cl}_{X/\cS,*}
\]
of sheaves of cdgas. Olsson then proves the following.

\begin{theorem}[\protect{\cite[Proposition~7.3]{Olssonloghh}}]\label{sheafy classical HKR}
    The map $\epsilon_1 : \Omega^1_{X/\cS}\to\sHH^{\cl}_{X/\cS,1}$ is an isomorphism. If $X\to\cS$ is smooth, the maps $\epsilon_n : \Omega^n_{X/\cS}\to\sHH^{\cl}_{X/\cS,n}$ are isomorphisms for all $n.$
\end{theorem}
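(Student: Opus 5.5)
The statement has two parts. First, $\epsilon_1 : \Omega^1_{X/\cS}\to\sHH^{\cl}_{X/\cS,1}$ is always an isomorphism. Second, when $X\to\cS$ is smooth, every $\epsilon_n$ is an isomorphism. I would prove both by working smooth-locally on $\cS$ and étale-locally on $X$. This reduces everything to the affine results already established, \ref{Classical HKR} and \ref{etale localization}.

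\textbf{Step 1: reduction.} Choose a smooth affine hypercover $S_\bullet\to\cS$ with $S_n=\Spec k_n$, and put $X_\bullet = S_\bullet\times_{\cS}X$. By \ref{Olsson stacky HH}, the pullback of $\sHH^{\cl}_{X/\cS}$ to $X_0$ is $\sHH^{\cl}_{X_0/k_0}$. Since $X_0\to X$ is smooth, hence flat, pullback commutes with taking homology sheaves, so $\sHH^{\cl}_{X/\cS,n}$ pulls back to $\sHH^{\cl}_{X_0/k_0,n}$. Likewise $\Omega^1_{X/\cS}$ pulls back to $\Omega^1_{X_0/k_0}$: the map $X_0\to X$ is a base change of $S_0\to\cS$, and $\Omega^1$ commutes with base change.

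Next I would check that $\epsilon$ is compatible with these identifications. This holds because $\epsilon$ is built from the universal property of $\Omega^\bullet$ as a cdga map, and that universal property is functorial in the squares
\[\begin{tikzcd} X_n\ar[r]\ar[d] & X_m\ar[d]\\ \Spec k_n\ar[r] &\Spec k_m \end{tikzcd}\]
used in the descent. Isomorphism of quasi-coherent sheaves can be detected after pullback along the fppf cover $X_0\to X$. So it suffices to treat $\cS=\Spec k$ affine. Working further étale-locally on $X$, we may take $X=\Spec A$, since Hochschild homology sheaves commute with étale localization by \ref{etale localization}.

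\textbf{Step 2: the case $n=1$, with no smoothness hypothesis.} For $\cS=\Spec k$ and $X=\Spec A$, the explicit computation given earlier shows that $\alpha: a\otimes b\mapsto a\,db$ induces $\HH^{\cl}_1(A/k)\cong\Omega^1_{A/k}$ with inverse $\epsilon_1$. This needs no flatness.

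The one thing to verify is that this identification sheafifies. Here I would use the flatness of étale maps $A\to A'$, which gives
\[
\Omega^1_{A'/k}\cong\Omega^1_{A/k}\otimes_A A',
\]
together with the fact that $\HH^{\cl}_1$ sheafifies to a quasi-coherent sheaf with sections $\HH^{\cl}_1(A/k)$ on affines, by the Weibel–Geller result quoted above.

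A subtlety arises when $X\to\cS$ is not flat. In that case Olsson's descent definition was only stated for flat maps, so I would restrict the whole argument to the setting in which $\sHH^{\cl}_{X/\cS}$ has been defined.

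\textbf{Step 3: the smooth case, all $n$.} If $X\to\cS$ is smooth, then each $X_0\to S_0$ is smooth. Locally on an affine $\Spec A\subseteq X_0$ this is a smooth map $k_0\to A$, and \ref{Classical HKR} gives that $\epsilon:\Omega^*_{A/k_0}\to\HH^{\cl}_*(A/k_0)$ is an isomorphism. Sheafifying, and using the étale base change of \ref{etale localization} on both sides, shows that $\epsilon_n$ is an isomorphism on $X_0$. Descending along $X_0\to X$ then finishes the proof.

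\textbf{Main obstacle.} I expect the main obstacle to be Step 1, specifically identifying the pullback of the homology sheaves $\sHH^{\cl}_{X/\cS,n}$ to $X_0$ with $\sHH^{\cl}_{X_0/k_0,n}$ in a way that is compatible with the cdga structures, and hence with $\epsilon$. My first approach would be to check this compatibility at the chain level on the cyclic bar complexes, where the face maps defining the descent data are visibly multiplicative.
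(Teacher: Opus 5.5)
The paper gives no proof of this statement. It quotes it from Olsson and only remarks that $\epsilon$ is built on Olsson's site $\mathscr{C}_{X/\cS}$, so that it is functorial in $X\to\cS$.

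Your argument is a correct reconstruction, and restricting to the flat setting where $\sHH^{\cl}_{X/\cS}$ was constructed (\ref{Olsson stacky HH}) is the right call. The reduction works for three reasons:
\begin{enumerate}
\item Pullback along the smooth surjection $X_0\to X$ reflects isomorphisms of quasi-coherent sheaves.
\item Both $\Omega^n_{X/\cS}$ and $\sHH^{\cl}_{X/\cS,n}$ pull back to their counterparts for $X_0\to\Spec k_0$.
\item On $X_0$, the map $\epsilon_n$ is the sheafification of the maps $\Omega^n_{A/k_0}\to\HH^{\cl}_n(A/k_0)$ over affine étale $\Spec A\to X_0$. These maps are isomorphisms for $n=1$ by the explicit computation, which needs no flatness. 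In the smooth case they are isomorphisms for all $n$ by \ref{Classical HKR}.
\end{enumerate}

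Three small corrections.

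First, the obstacle you single out is milder than you expect. The maps $\epsilon_n$ are given by the antisymmetrization formula, which is visibly natural in commutative squares of rings. So the locally defined maps already form a morphism of cartesian sheaves on $X_{\bullet,\et}$, compatible with the transition maps, and this morphism descends to $X$. No separate chain-level multiplicativity check is needed. Matching the descended map with Olsson's $\epsilon$ follows from the functoriality of his construction applied to the square $X_0\to X$ over $\Spec k_0\to\cS$.

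Second, in Steps 2 and 3 you do not need étale base change at all. A morphism of presheaves that is an isomorphism on every affine étale $U$ sheafifies to an isomorphism.

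Third, the identification $\Omega^1_{A'/k}\cong\Omega^1_{A/k}\otimes_A A'$ for étale $A\to A'$ does not come from flatness. It comes from étaleness: formal smoothness makes the first fundamental sequence left exact, and unramifiedness gives $\Omega^1_{A'/A}=0$.
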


\begin{remark}
    In the above proposition, the map $\epsilon$ is obtained by viewing the sheaves $\sHH^{\cl}_{X/\cS,n}$ as sheaves on a site $\mathscr{C}_{X/\cS},$ which Olsson defines in \cite[Section 6]{Olssonloghh}. We do not go into the details of the construction of $\scrC_{X/\cS}$ here, but note that there is an equivalence of categories between $\QCoh(X)$ (the classical version) and a certain category of quasi-coherent sheaves on $\scrC_{X/\cS},$ so that we may view the above proposition as a statement about quasi-coherent sheaves on $X$ in the usual sense. However, this construction has the benefit of being obviously functorial in the morphism $X\to\cS.$
\end{remark}

\section{Derived Hochschild Homology}

The definition of Hochschild homology in the previous section is unfortunately not the ``correct'' notion in the situation when $X$ is not flat over the ground ring (or stack). As such, we need to ``derive'' the entire construction to obtain something homotopically meaningful.

We will begin with the definition of Hochschild homology of a morphism of an animated ring over a fixed (animated) ring $k,$ and then extend to the Hochschild homology of a derived stack over a fixed (animated) ring $k.$ We will then define two notions of the Hochschild homology of a morphism of derived stacks, and show its compatibility with the standard definitions in various circumstances. 

\subsection{Basic Construction and Results}

\subsubsection{Hochschild Homology of Animated Rings}

If $k\to A$ is a map of (ordinary) commutative rings, then $\HH^{\cl}(A/k)\simeq A\otimes^L_{A\otimes_k A} A,$ as seen in the previous section. To remedy the issue described above, the fix is actually rather simple: rather than taking the tensor product relative to $A\otimes_k A,$ we instead take the tensor product relative to $A\otimes^L_k A,$ considered as an animated ring (or simplicial commutative ring if a more explicit model is required).

Indeed, taking this as our definition of derived Hochschild homology (henceforth simply Hochschild homology) has many pleasant consequences, extending the original definition to the derived setting in such a way that many of the important results and properties still hold true. Additionally, with this definition there is no reason to restrict $k$ and $A$ to be a map of ordinary commutative rings -- the construction works just as well when $k\to A$ is a map of arbitrary animated rings. In this section we will lay out the details of this definition, and the basic properties it enjoys. The material in this section is well-known, but we provide a relatively self-contained exposition for the benefit of the reader. We learned much of the following from \cite{MorrowHHnotes} and \cite{AntieauDAGnotes}.

\begin{mydef}
    Let $k\to A$ be a morphism of animated rings. Then $\HH(A/k) := A\otimes_{A\otimes_k A} A.$\footnote{As we are now working with animated rings, the tensor products here are implicitly derived.}
\end{mydef}

\begin{remark}
    If $k\to A$ is any map of commutative rings, the canonical map of animated rings $A\otimes^{L}_k A\to A\otimes^{\cl}_k A$ induces a natural map
    \[
    \HH(A/k)\to A\otimes^{L}_{A\otimes^{\cl}_k A} A\simeq\HH^{\cl}(A/k).\]
    If $k\to A$ is flat, then this map is an equivalence, but in general it is not. For example, if $k = \ZZ$ and $A = \FFp,$ then $\HH^{\cl}(A/k)\simeq \FFp[0].$ In contrast, $\HH_*(\FFp/\ZZ)\simeq\FFp\langle x\rangle,$ where $\FFp\langle x\rangle$ is a divided power algebra on a single generator $x$ in (homological) degree $2$ (see for example \cite{HHZFp}).
\end{remark}

Hochschild homology $\HH(A/k)$ admits an action of $S^1,$ although we have not mentioned this previously. This can be seen classically by recognizing that the cyclic bar construction yields not only a simplicial object, but a cyclic object (in the case when $M = A$). Another perspective on $S^1$-actions is the following: let $X\in\Ani$ be arbitrary, and let $x$ be a point of $X$ (i.e., a map $\ast\to X,$ where $\ast$ is terminal in $\Ani$). Then we obtain a functor
\[
x^* : \Fun(X,\aRing_k)\to\Fun(\ast,\aRing_k)\simeq\aRing_k
\]
which commutes with limits. The $\infty$-categorical adjoint functor theorem then implies that $x^*$ admits a left adjoint, which we denote
\[
x_! : \aRing_k\to\Fun(X,\aRing_k).
\]
We call $x_!(A)$ the \emph{free animated $k$-algebra on $A$ with an action of $\Omega_x X.$}

In light of the above perspective, we can define an $S^1$ action on an animated ring $A$ to be a functor $F : \mathrm{B}S^1\to\aRing_k$ such that $F$ maps the (essentially) unique object of $\mathrm{B}S^1$ to $A.$ We then have the following.

\begin{proposition}
    $\HH(-/k) : \aRing_k\to\Fun(\mathrm{B}S^1,\aRing_k)$ is left adjoint to the forgetful functor; i.e., $\HH(A/k)$ is the free animated $k$-algebra on $A$ with an action of $\Omega_x\mathrm{B}S^1\simeq S^1.$
\end{proposition}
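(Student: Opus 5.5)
The plan is to check the universal property of the adjunction directly: for every $B$ carrying an $S^1$-action, maps $\HH(A/k)\to B$ in $\Fun(\mathrm{B}S^1,\aRing_k)$ should agree with maps $A\to B$ in $\aRing_k$. I will do this by identifying $\HH(A/k)$ with the tensor $S^1\otimes A$ of $A$ with the anima $S^1$ in the presentable $\infty$-category $\aRing_k$. Then I will show that the left adjoint $x_!$ of the restriction $x^*$ along $x:\ast\to\mathrm{B}S^1$ sends $A$ to this tensor, with its natural $S^1$-action.

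First, write $S^1$ as the pushout $\ast\sqcup_{\ast\sqcup\ast}\ast$ in $\Ani$. Since $A\mapsto X\otimes A$ preserves colimits in $X$, and coproducts in $\aRing_k$ are tensor products over $k$, this gives
\[
S^1\otimes A\simeq A\otimes_{A\otimes_k A}A=\HH(A/k).
\]
Second, compute $x_!$. The functor $x_!$ is left Kan extension along $\ast\to\mathrm{B}S^1$, so its value at the unique object is $\colim_{\ast\times_{\mathrm{B}S^1}\ast}A$. The fibre $\ast\times_{\mathrm{B}S^1}\ast\simeq\Omega\mathrm{B}S^1\simeq S^1$ is an $\infty$-groupoid, and the colimit of a constant diagram indexed by an anima is exactly the tensor. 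So the underlying animated ring of $x_!A$ is $S^1\otimes A$. The $S^1$-action on it is the one induced by $S^1$ acting on itself by translation, since the fibre carries the action of the loop group.

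Third, the adjunction statement is then just $x_!\dashv x^*$, which is the existence statement already recalled before the proposition. What remains is to match this action with the cyclic-bar $S^1$-action on $\HH(A/k)$. I would reduce this to the case of polynomial $A$, since both constructions commute with sifted colimits in $A$. For $A$ polynomial I would compare simplicial models: the cyclic bar construction is precisely the evaluation of $A^{\otimes(-)}$ on the standard simplicial model $\Delta^1/\partial\Delta^1$ of $S^1$, and under this identification the cyclic operator $t$ corresponds to rotation.

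I expect the main obstacle to be this coherent matching of the two $S^1$-actions, since it needs Connes' cyclic category and the comparison between cyclic objects and $\mathrm{B}S^1$-objects. If that becomes unwieldy, I would instead take $S^1\otimes A$ together with its translation action as the definition of the $S^1$-action on $\HH(A/k)$. With that choice the proposition follows from the first two steps alone.
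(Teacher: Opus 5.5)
Your proposal is correct and is essentially the paper's argument. The paper also computes $x_!(A)$ as the colimit of the constant diagram at $A$ indexed by $\Omega_x\mathrm{B}S^1\simeq S^1$, then writes $S^1$ as the pushout of $\ast\leftarrow S^0\rightarrow\ast$ to identify this colimit with $A\otimes_{A\otimes_k A}A=\HH(A/k)$; it does not attempt your third step comparing with the cyclic-bar action, and instead takes the $S^1$-action supplied by $x_!$, exactly as in your fallback.
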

\begin{proof}
    By the general theory of Kan extensions, the value of the left adjoint
    \[
    x_! : \aRing_k\simeq\Fun(\ast,\aRing_k)\to\Fun(\mathrm{B}S^1,\aRing_k)
    \]
    is given by the left Kan extension of the functor $\ast\to\aRing_k$ which is constant at $A$ along $\ast\to\mathrm{B}S^1.$ We therefore have
    \begin{align*}
        x_!(A)&\simeq\colim_{\ast\downarrow\mathrm{B}S^1} A\\
        &\simeq \colim_{\Omega_x\mathrm{B}S^1} A\\
        &\simeq \colim_{S^1} A.
    \end{align*}
    Writing $S^1$ as the pushout
    \[\begin{tikzcd}
        S^0\ar[d]\ar[r] &\ast\ar[d]\\
        \ast\ar[r] &S^1
    \end{tikzcd}\]
    and recalling that $S^0\simeq\ast\sqcup\ast$ then implies that $\colim_{S^1} A$ is equivalent to the pushout
    \[\begin{tikzcd}
        A\otimes_k A\ar[r]\ar[d] &A\ar[d]\\
        A\ar[r] &A\otimes_{A\otimes_k A} A
    \end{tikzcd}\]
    which is our original definition of $\HH(A/k).$
\end{proof}

Because $\HH(-/k)$ is a left adjoint, we obtain the following corollary.

\begin{cor}\label{HH(-/k) preserves colimits}
    $\HH(-/k) : \aRing_k\to\Fun(\mathrm{B}S^1,\aRing_k)$ preserves colimits.
\end{cor}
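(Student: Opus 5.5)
The corollary follows formally from the preceding proposition. There, $\HH(-/k)$ is identified with the left adjoint $x_!$ of the restriction functor $x^* : \Fun(\mathrm{B}S^1,\aRing_k)\to\aRing_k$. The plan is simply to invoke the general principle that left adjoints between $\infty$-categories preserve all colimits which exist in the source (\cite[Proposition~5.2.3.5]{HTT}).

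Concretely, I would check three things. First, $\aRing_k$ is presentable, hence cocomplete, as noted earlier from its description as the underlying $\infty$-category of simplicial commutative $k$-algebras. Second, $\Fun(\mathrm{B}S^1,\aRing_k)$ is therefore also cocomplete, with colimits computed pointwise. Third, the functor $x^*$ preserves all limits and colimits (limits and colimits in functor categories are computed pointwise). This third point is what licenses the existence of $x_!$ via the adjoint functor theorem, which was already used in the text preceding the proposition. Given the adjunction $x_!\dashv x^*$, for any diagram $A_\bullet : I\to\aRing_k$ and any $F\in\Fun(\mathrm{B}S^1,\aRing_k)$ there is a chain of natural equivalences
\[
\Map(x_!\colim_I A_i, F)\simeq\Map(\colim_I A_i, x^*F)\simeq\lim_I\Map(A_i,x^*F)\simeq\lim_I\Map(x_! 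A_i,F).
\]
The Yoneda lemma then gives $x_!\colim_I A_i\simeq\colim_I x_!A_i$.

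There is no genuine obstacle. The only subtlety is to be careful that the proposition identifies $\HH(-/k)$ with $x_!$ as functors, and not merely objectwise. This holds because the identification $x_!(A)\simeq\colim_{S^1}A$ comes from the left Kan extension formula, which is natural in $A$. As a sanity check on underlying objects, one can also see colimit preservation directly: $\HH(A/k)\simeq\colim_{S^1}A$ is a colimit over $S^1$ of constant diagrams, and colimits commute with colimits.
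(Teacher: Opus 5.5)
Your proposal is correct and follows the paper's argument: the paper deduces the corollary in one line from the preceding proposition identifying $\HH(-/k)$ with the left adjoint $x_!$ of $x^*$, using that left adjoints preserve colimits. Your extra checks (cocompleteness of $\aRing_k$, naturality of the identification $x_!(A)\simeq\colim_{S^1}A$, and the Yoneda argument) spell out what the paper leaves implicit.
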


\begin{remark}
    This can of course also be deduced from the original definition (without the $S^1$-action), using formal properties of the tensor product $\otimes$ in $\aRing.$
\end{remark}

\begin{proposition}
    $\HH(-/k) : \aRing_k\to\Fun(\mathrm{B}S^1,\cMod_k)$ preserves sifted colimits.
\end{proposition}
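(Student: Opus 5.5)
The plan is to reduce the statement to the previous corollary, using the fact that the forgetful functor from animated rings to modules preserves sifted colimits. The functor $\HH(-/k) : \aRing_k\to\Fun(\mathrm{B}S^1,\cMod_k)$ factors as
\[
\aRing_k\xrightarrow{\HH(-/k)}\Fun(\mathrm{B}S^1,\aRing_k)\xrightarrow{U_*}\Fun(\mathrm{B}S^1,\cMod_k),
\]
where $U_*$ is postcomposition with the forgetful functor $U : \aRing_k\to\cMod_k$. By \ref{HH(-/k) preserves colimits}, the first functor preserves all colimits, and in particular sifted ones. So it suffices to show that $U_*$ preserves sifted colimits.

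Colimits in a functor category $\Fun(\mathrm{B}S^1,\cC)$ are computed pointwise, and $\mathrm{B}S^1$ has a single object. Evaluation at that object is moreover conservative and jointly detects colimits. Hence $U_*$ preserves sifted colimits as soon as $U$ does.

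That $U : \aRing_k\to\cMod_k$ preserves sifted colimits is standard: $\aRing_k$ is the animation of $\CRing_k$, and the forgetful functor to connective $k$-modules is the sifted-colimit-preserving extension of the forgetful functor on $\Poly_k$. Alternatively, $U$ factors through $\theta : \aRing_k\to\CAlgcn_k$, which preserves all colimits since it has a right adjoint. The forgetful functor $\CAlgcn_k\to\cMod_k^{\cn}$ preserves sifted colimits by \cite{HA}, Proposition 3.2.3.1. Finally, the inclusion $\cMod_k^{\cn}\subseteq\cMod_k$ preserves all colimits.

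The only point requiring care is the last step: checking that the forgetful functor to modules, not merely to anima, commutes with sifted colimits. I expect to handle it by citing \cite{HA} as above rather than redoing the argument. As a sanity check, the statement can also be seen directly from the formula $\HH(A/k)\simeq A\otimes_{A\otimes_k A}A$. The relative tensor product is a colimit of the two-sided bar construction, whose terms $A^{\otimes n+2}$ are functors of $A$ that preserve sifted colimits because $\otimes_k$ does so in each variable and sifted diagrams have cofinal diagonals.
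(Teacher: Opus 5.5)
Your proof is correct and follows the paper's own argument. The paper likewise deduces the statement from \ref{HH(-/k) preserves colimits} together with the fact that the forgetful functor $\aRing_k\to\cMod_k$ preserves sifted colimits; you additionally spell out the pointwise computation of colimits in $\Fun(\mathrm{B}S^1,-)$ and justify the sifted-colimit preservation (via $\theta$ and \cite{HA}), which the paper takes as known.
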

\begin{proof}
    Follows from \ref{HH(-/k) preserves colimits} and the fact that $\aRing_k\to\cMod_k$ preserves sifted colimits.
\end{proof}

Hochschild homology satisfies a number of convenient properties, some of which we now state.

\begin{proposition}
    Let $k\to A$ be a map of animated rings, and let $B$ be another animated ring. Then the following identities hold:
    \begin{enumerate}
        \item\label{K\"unneth}
        \[\HH(A\otimes_k B/k)\simeq\HH(A/k)\otimes_k\HH(B/k).\]
        \item\label{Base change for affine HH}
        \[\HH(A/k)\otimes_k B\simeq\HH(A\otimes_k B/B).\]
        \item\label{Collapse reduction} Suppose that $B$ is an $A$-algebra. Then
        \[\HH(B/k)\otimes_{\HH(A/k)} A\simeq\HH(B/A).\]
    \end{enumerate}
\end{proposition}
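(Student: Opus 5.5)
The plan is to work directly from the definition $\HH(A/k) := A\otimes_{A\otimes_k A} A$ and to reduce all three identities to formal manipulations of pushouts in $\aRing_k$. Recall that the tensor product of animated rings is the coproduct, so relative tensor products are pushouts, and pushouts commute with pushouts. Equivalently, we may use the description from the previous proposition, $\HH(A/k)\simeq\colim_{S^1} A$, where the colimit is formed in $\aRing_k$. Each identity then becomes a statement that certain colimits commute.

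For (1), the K\"unneth formula, note that $A\otimes_k B$ is the coproduct of $A$ and $B$ in $\aRing_k$. Colimits over $S^1$ of constant diagrams commute with coproducts, since both are colimits. This gives
\[
\colim_{S^1}(A\otimes_k B)\simeq\colim_{S^1}A\otimes_k\colim_{S^1}B.
\]
Here one uses that the constant diagram at $A\sqcup B$ is the coproduct of the two constant diagrams. Concretely, we can instead verify it with the explicit pushout. The ring $(A\otimes_k B)\otimes_k(A\otimes_k B)\simeq(A\otimes_k A)\otimes_k(B\otimes_k B)$, and the pushout of $A\otimes_k B\leftarrow(A\otimes_kA)\otimes_k(B\otimes_kB)\to A\otimes_kB$ splits as a tensor product of the two separate pushouts. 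This splitting is a $3\times 3$ interchange of colimits.

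For (2), base change, we compute $\HH(A\otimes_k B/B)$ as $\colim_{S^1}$ in $\aRing_B$. The functor $-\otimes_k B:\aRing_k\to\aRing_B$ is a left adjoint to restriction, so it preserves colimits. Applying it to $\colim_{S^1}A$ gives $\colim_{S^1}(A\otimes_k B)$ in $\aRing_B$, which is exactly $\HH(A\otimes_k B/B)$. The only point to check is that restricting along $k\to B$ does not change the identification. This holds because the colimit over $S^1$ of a constant diagram in $\aRing_B$ is the one built from $\otimes_B$, and $(A\otimes_kB)\otimes_B(A\otimes_kB)\simeq(A\otimes_kA)\otimes_kB$.

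Part (3) is the step I expect to need the most care. Consider the map $A\otimes_k A\to A$ and the induced map $B\otimes_k B\to B\otimes_A B$, which is the pushout of $B\otimes_kB\leftarrow A\otimes_kA\to A$. Then
\[
\HH(B/A)=B\otimes_{B\otimes_AB}B\simeq B\otimes_{B\otimes_kB\otimes_{A\otimes_kA}A}B.
\]
The goal is to rewrite this as $\bigl(B\otimes_{B\otimes_kB}B\bigr)\otimes_{A\otimes_{A\otimes_kA}A}A$, a pushout pasting argument. View both sides as colimits of one diagram in $\aRing_k$, built from the square $A\otimes_kA\to B\otimes_kB$, $A\to B$ mapped along multiplications. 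Then apply the pasting law: the $S^1$-colimit functor applied to the map $A\to B$ with $A$ regarded as trivial, that is, $\HH(B/k)\otimes_{\HH(A/k)}A$, where $\HH(A/k)\to A$ is the augmentation, namely the $S^1$-colimit of the projection $S^1\to\ast$. Concretely, this means checking that the cube of pushouts commutes. I would verify it by writing everything as iterated pushouts and invoking the fact that in any $\infty$-category with pushouts, a colimit of a diagram can be computed in stages in either order.
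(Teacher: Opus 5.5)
Your proposal is correct and is essentially the paper's argument. The paper also proves all three identities by formally interchanging iterated pushouts of animated rings; your $S^1$-colimit and left-adjoint phrasing of (1) and (2) is a repackaging of that same interchange.

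For (3), the paper makes your ``one diagram'' explicit by rewriting $A$ as $A\otimes_A A$. This gives a $3\times 3$ grid with rows $B\leftarrow B\otimes_k B\to B$, $A\leftarrow A\otimes_k A\to A$ and $A\leftarrow A\to A$, where the middle row maps to the other two. Taking colimits row-first gives $\HH(B/k)\otimes_{\HH(A/k)}A$, and taking them column-first gives $B\otimes_{B\otimes_A B}B=\HH(B/A)$.
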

\begin{proof}
    These properties are easily proven by direct manipulation of the definition.  In the following, let the unadorned tensor product symbol $\otimes$ denote the tensor product $\otimes_k$ over $k.$
    \begin{enumerate}
        \item 
        \begin{align*}
            (A\otimes_{A\otimes A} A)\otimes(B\otimes_{B\otimes B} B)&\simeq(A\otimes B)\otimes_{(A\otimes A)\otimes (B\otimes B)} (A\otimes B)\\
            &\simeq (A\otimes B)\otimes_{(A\otimes B)\otimes (A\otimes B)} (A\otimes B)
        \end{align*}
        \item
        \begin{align*}
            (A\otimes_{A\otimes A} A)\otimes B&\simeq (A\otimes_{A\otimes A} A)\otimes (B\otimes_B B)\\
            &\simeq (A\otimes B)\otimes_{(A\otimes A)\otimes B} (A\otimes B)\\
            &\simeq (A\otimes B)\otimes_{(A\otimes B)\otimes_B (A\otimes B)} (A\otimes B)
        \end{align*}
        \item 
        \begin{align*}
            (B\otimes_{B\otimes B} B)\otimes_{A\otimes_{A\otimes A} A} A&\simeq (B\otimes_{B\otimes B} B)\otimes_{A\otimes_{A\otimes A} A} (A\otimes_A A)\\
            &\simeq (B\otimes_A A)\otimes_{(B\otimes B)_{(A\otimes A)} A} (B\otimes_A A)\\
            &\simeq B\otimes_{(B\otimes_A A)\otimes_{k\otimes A} (B\otimes_A A)} B\\
            &\simeq B\otimes_{B\otimes_A B} B.
        \end{align*}
    \end{enumerate}
\end{proof}

Equally important but significantly less trivial is the following.

\begin{theorem}[\protect{\cite[Theorem~3.1]{BMS}}]
    Let $k$ be a commutative ring. The functor $\HH(-/k) : \aRing_k\to\cMod_k$ is a sheaf for the fpqc topology.
\end{theorem}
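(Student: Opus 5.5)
The plan is to derive fpqc descent for $\HH(-/k)$ from fpqc descent for the wedge powers of the cotangent complex, using the HKR filtration. Two facts are needed. The first is that $\HH(A/k)$ carries a complete, exhaustive, descending $\NN$-indexed filtration, functorial in $A$, whose graded pieces are $\gr^i\HH(A/k)\simeq\wedge^i_A\LL_{A/k}[i]$. The second is Bhatt's descent result \cite[Corollary~2.7]{bhatt2012completionsderivedrhamcohomology}, which says that each $\wedge^i_A\LL_{A/k}$, viewed as a $k$-module, satisfies fpqc descent in $A$. I will construct the filtration first, since it is only referenced (\ref{HKR for CRing}) and not yet established at this point of the paper. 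On polynomial $k$-algebras $P$, I take the Postnikov (Whitehead) filtration $\tau_{\geq i}\HH(P/k)$. By the classical HKR theorem (\ref{Classical HKR}, which applies since $k\to P$ is flat so that $\HH=\HH^{\cl}$), its graded pieces are $\Omega^i_{P/k}[i]$, and these are functorial in $P$. I then left Kan extend along $\Poly_k\to\aRing_k$. Since $\HH(-/k)$ preserves sifted colimits, the Kan-extended filtration converges to $\HH(A/k)$, and its graded pieces are the animations of $\Omega^i$, namely $\wedge^i\LL_{-/k}[i]$.

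Next I check completeness of the filtration. Each $\wedge^i\LL_{A/k}[i]$ is $i$-connective, because $\LL_{A/k}$ is connective. Hence $\Fil^i$ is $i$-connective, being a sifted colimit of $i$-connective objects. So $\lim_i\Fil^i\simeq0$ and the filtration is complete.

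For descent, let $A\to B$ be faithfully flat with \v{C}ech conerve $B^\bullet$. I need $\HH(A/k)\to\Tot\HH(B^\bullet/k)$ to be an equivalence. Since totalization commutes with limits, and both sides carry complete filtrations, it suffices to prove descent for each quotient $\HH/\Fil^n$. By induction on $n$ using the cofiber sequences $\gr^n\to\HH/\Fil^{n+1}\to\HH/\Fil^n$, this reduces further to descent for each graded piece $\wedge^n\LL_{-/k}[n]$, which is exactly Bhatt's corollary. One must also check that $\Tot\Fil^n\HH(B^\bullet/k)$ is complete in $n$, so that $\Tot\HH\simeq\lim_n\Tot(\HH/\Fil^n)$. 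This holds because each $\Tot\Fil^n$ is a limit of $n$-connective objects: it is $(n-\infty)$-coconnective in general, but its limit over $n$ still vanishes, since limits commute with limits and $\lim_n\Fil^n\HH(B^m/k)\simeq0$ levelwise.

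I expect the main obstacle to be the first step: producing the HKR filtration functorially at the $\infty$-categorical level, so that the graded pieces are identified naturally rather than merely objectwise. The plan is to use the Postnikov filtration on polynomial algebras, where the classical HKR isomorphism is natural (the antisymmetrization maps $\epsilon_n$ commute with morphisms in $\PolyPoly$, as noted after \ref{Classical HKR}), and then appeal to the universal property of animation. This requires that the Kan extension preserves the cofiber sequences $\Fil^{i+1}\to\Fil^i\to\gr^i$, which holds because left Kan extension commutes with colimits in the stable target.
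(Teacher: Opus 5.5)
Your proposal is correct. It is essentially the argument the paper uses, following \cite[Corollary~3.3]{BMS}, when it later proves fpqc descent for the HKR-filtered Hochschild functors: descent for the graded pieces $\wedge^n\LL_{-/k}[n]$ via Bhatt's result, then induction on the quotients $\HH/\Fil^n_{\HKR}$, then passage to the limit by completeness. The paper itself only cites the present statement. Your separate check on completeness of $\Tot\Fil^n$ is unnecessary, since levelwise completeness together with limits commuting with limits already gives $\Tot\HH(B^\bullet/k)\simeq\lim_n\Tot\bigl(\HH(B^\bullet/k)/\Fil^n_{\HKR}\bigr)$.
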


In the same way, we can deduce that $\HH(-/k) : \aRing_k\to\cMod_k$ is a sheaf for the fpqc topology when $k$ is an arbitrary animated ring.

The above theorem allows us to make the following definition for the Hochschild homology of a derived stack, which we will generalize in the following sections.

\begin{mydef}\label{Original definition of HH(X/k)}
    Let $\cX$ be a derived stack over a fixed base animated ring $k.$ The \emph{Hochschild homology of $\cX$ relative to $k,$} denoted $\HH(\cX/k),$ is defined as the right Kan extension of the Hochschild homology functor $\HH(-/k) : \dAff^{\op}_k\to\cMod_k$ from (the opposite of) the category of derived affine schemes over $k$ to the category of derived stacks over $k.$
\end{mydef}

Explicitly, this means that the Hochschild homology of $\cX$ relative to $k$ is given by the formula
\[
    \HH(\cX/k)\simeq \lim_{\Spec A\to\cX\in\dAff^{\op}_{/\cX}}\HH(A/k).
\]
It has the convenient property that the induced HKR filtration is complete.

\begin{remark}
    There are alternate definitions for the Hochschild homology of a derived stack which do not necessarily agree with the above in all cases, but we do not require these here. See for example \cite[Section 2]{antieau2017remark} for one alternative definition, based on the Hochschild homology of a stable presentable $k$-linear category. Note that other definitions of Hochschild homology do not necessarily have the property that the HKR filtration is necessarily complete.
\end{remark}

\subsection{The HKR Filtration and Theorem}

The derived version of the HKR theorem is no longer a statement about the individual Hochschild homology groups, but instead guarantees us a filtration with nice pieces. The idea is that for $A$ a polynomial $k$-algebra, the Postnikov filtration on $\HH^{\cl}(A/k)\simeq\HH(A/k)$ has associated graded pieces given by (shifts of) the exterior powers of the K\"ahler differentials by the classical HKR theorem. We then derive (left Kan extend) the Whitehead filtration beyond the polynomial case, to obtain a filtration on $\HH(A/k)$ for a general animated $k$-algebra $A,$ and the associated graded pieces will be given by the left Kan extension of the exterior powers of the K\"ahler differentials, which are the derived exterior powers of the cotangent complex. The precise statement is the following.

\begin{theorem}[Hochschild-Kostant-Rosenberg Theorem, \protect{\cite[Prop.~IV.4.1]{nikolaus2018topological}}]\label{HKR for CRing}
    Let $k\to A$ be a map of commutative unital rings. Then there exists a complete descending $\NN$-indexed filtration $\Fil_{\HKR}^\bullet$ on $\HH(A/k)$ with
    \[\gr^i_{\HKR}(\HH(A/k))\simeq\LL^i_{A/k}[i] := \left(L\bigwedge_A^i \LL_{A/k}\right)[i].\]
\end{theorem}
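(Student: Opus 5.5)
The strategy is to construct the filtration on polynomial inputs and then animate. First restrict to the category $\PolyCAlg$ (or $\PolyPoly$) of polynomial extensions $P\to Q=P[y_1,\dots,y_n]$. Here $Q$ is flat over $P$, so the remark comparing derived and classical Hochschild homology identifies $\HH(Q/P)$ with $\HH^{\cl}(Q/P)$. By \ref{Classical HKR}, the homology groups are $\HH_i(Q/P)\cong\Omega^i_{Q/P}$, which are finite free $Q$-modules. I would take $\Fil^n_{\HKR}\HH(Q/P):=\tau_{\geq n}\HH(Q/P)$, the Whitehead (Postnikov) filtration in $\cMod_Q$. Its graded pieces are $\gr^i=\HH_i(Q/P)[i]\cong\Omega^i_{Q/P}[i]$. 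Functoriality of this identification in $\PolyPoly$ is exactly the naturality of the antisymmetrization maps $\epsilon_n$ recorded after the proof of \ref{Classical HKR}. This gives a functor $\PolyPoly\to\Fun(\ZZ^{\op},\MOD)$, or into filtered objects over $\MOD$, whose associated graded is $(Q,\Omega^i_{Q/P}[i])$.

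Next, left Kan extend (animate) this filtered functor along $\PolyPoly\to\Arr(\aRing)$, and restrict to $\{k\}$ via \cite[4.3.3.9]{HTT}, as in the proof of \ref{Kan extension represents derivations}. Three facts identify the result.
\begin{enumerate}
    \item $\HH(-/-)$ commutes with sifted colimits. This follows from \ref{HH(-/k) preserves colimits} together with the formula $A\otimes_{A\otimes_kA}A$, in which all operations commute with sifted colimits in both variables. So the colimit of the filtered objects recovers $\HH(A/k)$ underlying $\Fil^0$.
    \item Taking associated graded is exact, since $\gr$ is a cofiber and therefore commutes with colimits. So $\gr^i$ of the extension is the animation of $\Omega^i[i]$, which is the functor $\bfL^i[i]$, that is, $\LL^i_{A/k}[i]$.
    \item Each $\Fil^n$ is a filtration starting at $n=0$, giving an $\NN$-indexed descending filtration.
\end{enumerate}

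The main obstacle is completeness, because colimits need not preserve limits. I would use a connectivity estimate. On polynomial inputs, $\Fil^n=\tau_{\geq n}\HH$ is $n$-connective, and sifted colimits of $n$-connective objects remain $n$-connective. So for every $A$, $\Fil^n\HH(A/k)$ is $n$-connective. Hence $\lim_n\Fil^n$ has vanishing homotopy in every degree, since $\pi_j\lim_n\Fil^n$ is controlled by $\pi_j$ and $\pi_{j+1}$ of $\Fil^n$ for $n>j+1$ via Milnor sequences, and these vanish. This gives completeness.

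A secondary point is that $\gr^i$ is exactly $\LL^i_{A/k}[i]$ with its $A$-module structure. This holds by construction, comparing with the definition of $\bfL^n$ as the animation of $\mathbf{\Omega}^n$.
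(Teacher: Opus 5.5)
Your proposal is correct and is essentially the paper's argument. The paper resolves $A$ by a simplicial polynomial $k$-algebra $A_\bullet$ and filters each row $\HH(A_i/k)$ of the bisimplicial cyclic bar construction by canonical truncations, identified with $\Omega^n_{A_i/k}[n]$ via classical HKR; completeness comes from the $n$-connectivity of the $n$th step. This is exactly what your left Kan extension of the Whitehead-filtered functor from polynomial extensions computes, and the only difference is packaging: your extension along polynomial extensions of polynomial rings makes the filtration functorial in the map $k\to A$ (and covers animated $k\to A$ verbatim), whereas the paper fixes $k$ and resolves only $A$.
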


\begin{proof}
    Let $k\to A_\bullet\to A$ be a resolution of $A$ in $\sCRing_{k/}$ by polynomial $k$-algebras. Then $\HH(A/k)$ is computed by taking the totalization/diagonal of the bisimplicial object $B^{\cyc}(A_\bullet/k)_{\bullet},$ which we visualize by the following diagram (note that $B^{\cyc}(A_j/k)_i$ is in homological bidegree $(i,j)$).
    \[
    \begin{tikzcd}
        \vdots\arrow[d,shift right=4]\arrow[d,shift right=2]\arrow[d,shift left=4]\arrow[d,shift left=2]\arrow[d]&\vdots\arrow[d,shift right=4]\arrow[d,shift right=2]\arrow[d,shift left=4]\arrow[d,shift left=2]\arrow[d]&\vdots\arrow[d,shift right=4]\arrow[d,shift right=2]\arrow[d,shift left=4]\arrow[d,shift left=2]\arrow[d]&\vdots\arrow[d,shift left=4]\arrow[d,shift left=2]\arrow[d,shift right=4]\arrow[d,shift right=2]\arrow[d]&\\
        \HH(A_3/k)\arrow[d,shift right=3]\arrow[d,shift right]\arrow[d,shift left]\arrow[d,shift left=3]&B^{\cyc}(A_3/k)_0\arrow[d,shift right=3]\arrow[d,shift right]\arrow[d,shift left]\arrow[d,shift left=3]&B^{\cyc}(A_3/k)_1\arrow[d,shift right=3]\arrow[d,shift right]\arrow[d,shift left]\arrow[d,shift left=3]\arrow[l,shift left]\arrow[l,shift right]&B^{\cyc}(A_3/k)_2\arrow[d,shift right=3]\arrow[d,shift right]\arrow[d,shift left]\arrow[d,shift left=3]\arrow[l,shift right=2]\arrow[l,shift left=2]\arrow[l]&\dots\arrow[l,shift left=3]\arrow[l,shift left]\arrow[l,shift right]\arrow[l,shift right=3]\\
        \HH(A_2/k)\arrow[d,shift right=2]\arrow[d,shift left=2]\arrow[d]&B^{\cyc}(A_2/k)_0\arrow[d,shift right=2]\arrow[d,shift left=2]\arrow[d]&B^{\cyc}(A_2/k)_1\arrow[d,shift right=2]\arrow[d,shift left=2]\arrow[d]\arrow[l,shift left]\arrow[l,shift right]&B^{\cyc}(A_2/k)_2\arrow[d,shift right=2]\arrow[d,shift left=2]\arrow[d]\arrow[l,shift right=2]\arrow[l,shift left=2]\arrow[l]&\dots\arrow[l,shift left=3]\arrow[l,shift left]\arrow[l,shift right]\arrow[l,shift right=3]\\
        \HH(A_1/k)\arrow[d,shift right]\arrow[d,shift left]&B^{\cyc}(A_1/k)_0\arrow[d,shift right]\arrow[d,shift left]&B^{\cyc}(A_1/k)_1\arrow[d,shift right]\arrow[d,shift left]\arrow[l,shift right]\arrow[l,shift left]&B^{\cyc}(A_1/k)_2\arrow[d,shift right]\arrow[d,shift left]\arrow[l,shift right=2]\arrow[l,shift left=2]\arrow[l]&\dots\arrow[l,shift right=3]\arrow[l,shift right]\arrow[l,shift left]\arrow[l,shift left=3]\\
        \HH(A_0/k)&B^{\cyc}(A_0/k)_0&B^{\cyc}(A_0/k)_1\arrow[l,shift right]\arrow[l,shift left]&B^{\cyc}(A_0/k)_2\arrow[l,shift right=2]\arrow[l,shift left=2]\arrow[l]&\dots\arrow[l,shift right=3]\arrow[l,shift right]\arrow[l,shift left]\arrow[l,shift left=3]
    \end{tikzcd}\]
    Each row of the above diagram computes $\HH(A_i/k).$ Since each $A_i$ is a polynomial $k$-algebra, the classical Hochschild-Kostant-Rosenberg theorem implies that the filtration given by the canonical truncations $\Fil_{\HKR}^\bullet\HH(A_i/k) = \tau_{\geq \bullet}\HH(A_i/k)$ has associated gradeds given by
    \[\gr_{\HKR}^n\HH(A_i/k)\simeq\H_{n}(\HH(A_i/k))[n]\simeq\Omega^n_{A_i/k}[n].\]
    Thus, the filtration $\Fil_{\HKR}^\bullet$ on the bisimplicial object given by truncating in the horizontal direction has associated gradeds given by
    \[\gr_{\HKR}^n\HH(A_\bullet/k)\simeq\H_{n}(\HH(A_\bullet/k))[n]\simeq\Omega^n_{A_\bullet/k}[n].\]
    The desired result follows, as
    \[\Omega^n_{A_\bullet/k}\otimes^{L}_{A_\bullet}A\simeq\LL^n_{A/k},\]
    where the last equivalence is given by the Dold-Kan correspondence.

    Noting that the $n$th step in the filtration on the bisimplicial diagram above induced from the row-wise Whitehead filtration is $n$-connective in each row, it follows that the filtration is complete (modeling the totalization/geometric realization by the diagonal).
\end{proof}

The above theorem generalizes to the case of $k\to A$ an arbitrary map of animated rings, and the proof is similar, except we realize the map $k\to A$ as a map $k_\bullet\to A_\bullet$ of simplicial commutative rings, and then resolve $A_\bullet$ by a bisimplicial commutative $k$-algebra which is polynomial over the respective $k_n$ in each degree. In particular, we obtain the following.

\begin{theorem}
    Let $k\to A$ be a map of animated rings. There is a natural complete decreasing $\NN$-indexed filtration $\FilHH{*}(A/k)$ on $\HH(A/k)$ with
    \[
    \gr_{\HKR}^n\HH(A/k)\simeq\LL^n_{A/k}[n] := \bigwedge^n\LL_{A/k}[n].
    \]
\end{theorem}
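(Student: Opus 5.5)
The plan is to obtain the filtration by animating the polynomial case along the source-and-target arrow category, exactly as was done for the cotangent complex in \ref{Kan extension represents derivations}. On the $1$-category $\PolyPoly$ of maps $P\to Q$ of the form $\ZZ[x_1,\dots,x_m]\to\ZZ[x_1,\dots,x_n]$, I will equip $\HH(Q/P)$ with the Whitehead filtration $\tau_{\geq\bullet}\HH(Q/P)$. This is functorial in the square $(P\to Q)\to(P'\to Q')$, since truncations are functorial. Because $Q$ is a polynomial, hence smooth and flat, $P$-algebra, \ref{Classical HKR} identifies $\gr^n$ with $\Omega^n_{Q/P}[n]$. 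Moreover, the antisymmetrization maps $\epsilon_n$ are natural in morphisms of $\PolyPoly$, as remarked after the proof of \ref{Classical HKR}, so this identification is an equivalence of functors $\PolyPoly\to\Fun(\ZZ^{\disc},\cMod_{\ZZ})$, or better into graded objects of $\MOD$-type pairs $(Q,-)$.

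Next I will left Kan extend, equivalently animate, the filtered functor $(P\to Q)\mapsto(Q,\tau_{\geq\bullet}\HH(Q/P))$ from $\PolyPoly$ to $\Arr(\aRing)$. The key points are as follows.
\begin{enumerate}
\item $\HH(-/-)$ itself, as a functor $\Arr(\aRing)\to\MOD$, preserves sifted colimits. Each term $A\otimes_{A\otimes_kA}A$ is built from tensor products, which commute with sifted colimits of arrows, and this extends \ref{HH(-/k) preserves colimits} to varying bases. Hence the underlying object of the animated filtration maps to $\HH(A/k)$ via the colimit of the maps $\Fil^0=\HH$. I will check that this map is an equivalence because $\Fil^0$ is all of $\HH$ in the polynomial case.
\item Taking associated graded commutes with colimits, since cofibers do. Hence $\gr^n$ of the animated filtration is the animation of $(P\to Q)\mapsto(Q,\Omega^n_{Q/P}[n])$. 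By the construction of $\bfL^n$ just before \ref{cotangent complex is fpqc sheaf on morphisms}, this is $(A,\bigwedge^n\LL_{A/k}[n])$. Naturality holds because everything is a left Kan extension of natural constructions.
\end{enumerate}

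The main obstacle is completeness, since left Kan extension does not preserve limits. I will argue via connectivity, as in the proof of \ref{HKR for CRing}. In the polynomial case $\Fil^n$ is $n$-connective. Geometric realizations, and more generally sifted colimits, of $n$-connective objects remain $n$-connective. Thus $\Fil^n\HH(A/k)$ is $n$-connective for all $(k\to A)$. Then $\lim_n\Fil^n$ is $\infty$-connective: by Milnor sequences it is at least $(n-1)$-connective for every $n$, and hence it is zero. This gives completeness, and the filtration is $\NN$-indexed with $\Fil^0=\HH(A/k)$.

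One further technical point needs checking. Filtered objects should be taken in $\cMod_A$ rather than just $\cMod_k$, so I will phrase the animation with values in the filtered analogue of $\MOD$, that is, the fibration $\QCF$ restricted to affines. The colimit computation in that coCartesian fibration reduces to colimits in $\cMod_A$ after base change, as in the proof of \ref{Kan extension represents derivations}.
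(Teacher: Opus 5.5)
Your proposal is correct and follows essentially the same route as the paper. Left Kan extending the Whitehead-filtered $\HH$ from $\PolyPoly$ is the $\infty$-categorical form of the paper's (bi)simplicial polynomial resolution with the row-wise Whitehead filtration, and you identify the graded pieces the same way (classical HKR plus the animation of $\Omega^n$) and get completeness from the same $n$-connectivity of $\Fil^n$; your Milnor-sequence argument and the bookkeeping in the filtered analogue of $\MOD$ just make explicit what the paper handles by passing to the diagonal.
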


\begin{remark}
    In fact, even more is true regarding the HKR filtration. It is a multiplicative filtration, and it is $S^1$-equivariant, and the associated graded pieces have trivial $S^1$-action. For a proof which details the multiplicative structure and the $S^1$-action, see \cite[Theorem~6.3]{AntieauDAGnotes} or \cite[Theorem~6.2.6]{raksit2020hochschild} and \cite{FiltrationsandcohomologyI} for a more involved discussion which discusses the universal property of $\FilHH{*}$ (that is, a universal property of Hochschild homology equipped with its HKR filtration).
\end{remark}

\subsection{Relative Derived Hochschild Homology}

In the previous sections, we have only defined the Hochschild homology of a derived stack relative to a fixed ground animated ring. However, it will be critical for us to consider the Hochschild homology of a morphism of derived stacks where the base is not assumed to be affine or even fixed. In this section, we construct the relative Hochschild homology of a morphism of derived stacks and show that it extends the usual definitions of Hochschild homology.

Consider the functors
\begin{align*}
    \sHoch : \Arr(\dAff)^{\op}&\to\QC\\
    (\Spec A\to\Spec k)&\mapsto (\Spec A,\HH(A/k))
\end{align*}
and
\begin{align*}
    \Hoch : \Arr(\dAff)^{\op}&\to\QC\\
    (\Spec A\to\Spec k)&\mapsto (\Spec k,\HH(A/k)),
\end{align*}
where in the first case, we view $\HH(A/k)$ as an $A$-module and in the second we view $\HH(A/k)$ as a $k$-module.

These assignments on objects can be seen to be functorial by using the functoriality of the diagonal morphism along with functoriality of limits/fiber products. Once one obtains an endofunctor $\Arr(\dAff)\to\Arr(\dAff)$ sending
\[
(\Spec A\to \Spec k)\mapsto (\Spec A\otimes_{A\otimes_k A} A\to\Spec A)
\]
(respectively, $(\Spec A\to \Spec k)\mapsto (\Spec A\otimes_{A\otimes_k A} A\to\Spec k)$), then one can map $\Arr(\dAff)\to\QC$ by using the structure morphism to provide $A\otimes_{A\otimes_k A} A$ with the structure of an $A$-module (respectively, $k$-module).

Explicitly, $\Arr(\aRing)$ is equivalent to the full sub-$\infty$-category of $\Fun(\Delta^1\times\Delta^1,\aRing)$ spanned by those diagrams which
\begin{enumerate}
    \item send the edges $(0,0)\to(1,0)$ and $(0,0)\to (0,1)$ to equivalent morphisms; and
    \item exhibit the image of $(1,1)$ as the colimit of the rest of the diagram.
\end{enumerate}
The desired equivalence follows from the fact that all diagrams in $\aRing$ of this form above admit colimits, together with the universality of colimits. Similar reasoning shows that this is further equivalent to the $\infty$-category of commutative diagrams in $\aRing$ of the form
\[\begin{tikzcd}
    k\ar[r]\ar[d] &A\ar[d]\ar[rd,"\id_A"]\\
    A\ar[r]\ar[rd,"\id_A"'] &A\otimes_k A\ar[r]\ar[d] & A\ar[d,"\iota_2"]\\
    &A\ar[r,"\iota_1"] & A\otimes_{A\otimes_k A} A.
\end{tikzcd}\]

Then restriction to the composition $\iota_1 : A\to A\otimes_{A\otimes_k A}A$ (respectively, $k\to A\otimes_{A\otimes_k A}A$) provides a functor $\Arr(\aRing)\to\Arr(\aRing)$ which on objects maps
\[
(k\to A)\mapsto (A\to A\otimes_{A\otimes_k A}A)
\]
(respectively, $(k\to A)\mapsto (k\to A\otimes_{A\otimes_k A}A)$).

Now, we want to construct the functor $\Arr(\aRing)\to\QC$ which will heuristically forget the animated ring structure on the target, and treat it as a module over the source.

Animating the functor $\Arr(\CRing)\to\mathsf{MOD}$ which associates to a ring homomorphism $(f : A\to B)$ the pair $(A,B)$ where $B$ has $A$-module structure via $f$ produces a functor
\[
\Arr(\aRing)\to\MOD^\cn\hookrightarrow\QC
\]
using \cite[25.2.1.2]{SAG} and \ref{Animation of arrow category} (recall that $\MOD^\cn$ is the full sub-$\infty$-category of $\QC$ spanned by pairs $(\cX,\cF)$ such that $\cX$ is an affine derived scheme $\Spec A,$ and $\cF$ is a connective $A$-module).  

Combining the above gives the desired functor $\Arr(\aRing)\to\QC$ which maps
\[
(k\to A)\mapsto (\Spec A,A\otimes_{A\otimes_k A} A)
\]
(or $(\Spec k,A\otimes_{A\otimes_k A} A)$).

We are now ready to make our definition of relative Hochschild homology of a morphism of derived stacks.
\begin{mydef}[Relative Hochschild homology]\label{relativeHH}
    The \emph{relative Hochschild functors}
    \[
    \sHoch,\Hoch : \Arr(\dStk)^{\op}\to\QC
    \]
    are defined to be the right Kan extensions of $\sHoch,\Hoch : \Arr(\dAff)^{\op}\to\QC$ along the inclusion $\Arr(\dAff)^{\op}\hookrightarrow\Arr(\dStk)^{\op}.$

    As the objects of $\QC$ may be identified with pairs $(\cX,\cF),$ where $\cX\in\dStk$ and $\cF\in\QC(\cX),$ we will write
    \[
    \sHoch(f : \cX\to\cS) = (\cX,\sHH(\cX/\cS))\quad\textrm{ and }\quad\Hoch(\cX\to\cS) = (\cS,\HH(\cX/\cS)).
    \]
    We will call $\sHH(\cX/\cS)$ (respectively, $\HH(\cX/\cS)$) the \emph{Hochschild homology sheaf of $f : \cX\to\cS$} (respectively, the \emph{Hochschild homology module of $f : \cX\to\cS$}).
\end{mydef}

We begin with a result clarifying the relationship between the two Hochschild functors.

\begin{proposition}\label{pushforward compatibility of HH}
    Let $f : \cX\to\cS$ be a morphism of derived stacks. Then the canonical morphism $(\cS,\HH(\cX/\cS))\to (\cX,\sHH(\cX/\cS))$ induces an equivalence
    \[
    \HH(\cX/\cS)\to f_*\sHH(\cX/\cS).
    \]
\end{proposition}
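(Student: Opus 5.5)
We reduce to the affine case by unwinding both right Kan extensions as limits in the fibration $\QC\to\dStk^{\op}$, as in the remark computing limits in $\QC$ and the analogous formula for $\LL^{\Kan}$. For $f:\cX\to\cS$, let $I_f := \dAff_{\cX/\cS}$ denote the category of squares $\Spec A\to\Spec k$ over $f$. Then
\[
\sHH(\cX/\cS)\simeq\lim_{(\Spec A\to\Spec k)\in I_f}(p^A)_*\HH(A/k)\in\QC(\cX),
\qquad
\HH(\cX/\cS)\simeq\lim_{(\Spec A\to\Spec k)\in I_f}(q^k)_*\HH(A/k)\in\QC(\cS),
\]
where $p^A:\Spec A\to\cX$ and $q^k:\Spec k\to\cS$ are the structure maps.

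The canonical morphism in the statement comes from a natural transformation $\Hoch\to\sHoch$ covering the map $\cX\to\cS$ in $\dStk^{\op}$. On affines this transformation is the evident morphism $(\Spec k,\HH(A/k))\to(\Spec A,\HH(A/k))$ in $\QC$. Under the adjunction defining morphisms in $\QC$, it corresponds to the identity equivalence $\HH(A/k)\xrightarrow{\sim}(\Spec A\to\Spec k)_*\HH(A/k)$. We extend it to all of $\Arr(\dStk)^{\op}$ by the universal property of $\Ran$. I would construct it concretely as the composite of the functors $\Arr(\aRing)\to\QC$ built above, remembering the $A$-module versus the restricted $k$-module. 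The adjoint morphism is then $\HH(\cX/\cS)\to f_*\sHH(\cX/\cS)$.

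Next, $f_*:\QC(\cX)\to\QC(\cS)$ is a right adjoint and so commutes with limits. This gives
\[
f_*\sHH(\cX/\cS)\simeq\lim_{I_f}f_*(p^A)_*\HH(A/k)\simeq\lim_{I_f}(q^k)_*(\Spec A\to\Spec k)_*\HH(A/k),
\]
using $f\circ p^A\simeq q^k\circ(\Spec A\to\Spec k)$ and the compatibility of pushforwards with composition. Under this identification the comparison map is the limit over $I_f$ of $(q^k)_*$ applied to the affine equivalences. It is therefore an equivalence.

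The main obstacle is bookkeeping: we must check that the map induced by the transformation $\Hoch\to\sHoch$ really agrees, after applying $f_*$ and passing to limits, with the termwise affine identities. This requires two things. First, both Kan extensions must be computed over the same indexing category $I_f$, since both are right Kan extensions along the same inclusion. Second, the pushforward functors $(p^A)_*$ and $(q^k)_*$ must be coherently functorial over $I_f$. I would handle this by working in the coCartesian fibration $\QC\to\dStk^{\op}$ and using the description of the global sections functor $\Gamma_S$ from \cite[Construction B.9.2]{Calaque2025-hm}. Postcomposing $\sHoch$ with $\Gamma$ over the target gives another right Kan extension of the affine functor that agrees with $\Hoch$. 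Uniqueness of right Kan extensions then yields the equivalence, and it identifies the map with the canonical one.
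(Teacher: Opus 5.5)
Your proposal is correct and follows essentially the same route as the paper: both write $\sHH(\cX/\cS)$ and $\HH(\cX/\cS)$ as limits over the same category of affine squares over $f$, use that $f_*$ commutes with limits, observe that the comparison map is then termwise the identity on $\HH(A/k)$, and conclude that the limit is an equivalence. The paper simply asserts that the induced map is the limit of these termwise identities, so your discussion of the coherence bookkeeping (and the alternative via $\Gamma_S$ and uniqueness of Kan extensions) is more careful than the paper's argument but not different in substance.
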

\begin{proof}
    Writing
    \[
    (\cX,\sHH(\cX/\cS))\simeq\lim (\Spec A, A\otimes_{A\otimes_k A}A)
    \]
    and
    \[
    (\cS,\HH(\cX/\cS))\simeq\lim (\Spec k, A\otimes_{A\otimes_k A} A),
    \]
    where both limits are over maps from $(\Spec A\to\Spec k)$ to $(\cX\to\cS)$ in $\Arr(\dStk)^{\op},$ we have an induced map $(\Spec k,A\otimes_{A\otimes_k A} A)\to(\Spec A, A\otimes_{A\otimes_k A} A)$ for any such $\Spec A\to\Spec k.$ Taking the limit produces a morphism $(\cS,\HH(\cX/\cS))\to (\cX,\sHH(\cX/\cS))$ which gives the morphism $\HH(\cX/\cS)\to f_*\sHH(\cX/\cS)$ in question.

    The morphism $\HH(\cX/\cS)\to f_*\sHH(\cX/\cS)$ is then the limit of the corresponding morphisms $A\otimes_{A\otimes_k A}A\to A\otimes_{A\otimes_k A} A.$ As all of these morphisms are equivalences, so is the limit.
\end{proof}

\begin{remark}
    One can also formulate this compatibility using Cartesian transport for the fibration $\QC\to\dStk^{\op}.$ More precisely, the statement would be that the right Kan extension along $\Arr(\dAff)^{\op}\to\Arr(\dStk)^{\op}$ of the composition
    \[
    \Arr(\dAff)^{\op}\to\QC\times_{\dStk^{\op}}\Arr(\dStk^{\op})\to\QC
    \]
    where the first functor is induced by $\sHoch$ and the inclusion and the second functor is Cartesian transport is canonically identified with the composition of the right Kan extension of $\Arr(\dAff)^{\op}\to\QC\times_{\dStk^{\op}}\Arr(\dStk^{\op})$ composed with Cartesian transport.
\end{remark}

The above definitions do indeed extend the usual notions of Hochschild homology. By properties of Kan extensions along fully faithful functors, we see that the Hochschild homology sheaf/module of a map of affine schemes is given by its original definition (thus our abuse of notation in letting $\sHoch$ and $\Hoch$ denote both the functors on affines and arbitrary stacks is justified); but moreover, our definition agrees with (one of) the standard definitions of the Hochschild homology of a stack over a fixed ground ring.

\begin{proposition}
    Let $\cX\to\Spec k$ be a morphism of derived stacks. Then the relative Hochschild homology $\HH(\cX/\Spec k)$ is equivalent to $\HH(\cX/k)$ as defined in \ref{Original definition of HH(X/k)}. 
\end{proposition}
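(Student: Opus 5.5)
We compare the two right Kan extensions by writing both as explicit limits. By the limit formula for the coCartesian fibration $\QC\to\dStk^{\op}$, $\Hoch(\cX\to\Spec k)$ is computed as a limit in $\QC(\Spec k)\simeq\cMod_k$. The limit is indexed by the $\infty$-category $\dAff_{\cX/\Spec k}$ of squares
\[
\Spec A\to\cX,\qquad \Spec k'\to\Spec k,\qquad \Spec A\to\Spec k'
\]
commuting over $\cX\to\Spec k$. Its value there is $g_*\HH(A/k')$, where $g:\Spec k'\to\Spec k$. On the other side, $\HH(\cX/k)$ of \ref{Original definition of HH(X/k)} is the limit over the smaller category $(\dAff_k)_{/\cX}$ of $\HH(A/k)$. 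There is an evident functor $\iota:(\dAff_k)_{/\cX}\to\dAff_{\cX/\Spec k}$ sending $\Spec A\to\cX$ to the square with $k'=k$ and $g=\id$. This gives a canonical comparison map $\HH(\cX/\Spec k)\to\HH(\cX/k)$, and the claim is that it is an equivalence.

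The key step is to show that restriction along $\iota$ does not change the limit. I would do this by exhibiting a right adjoint $r$ to $\iota$ (for limits one wants $\iota$ to be initial, so we need $r$ with $\iota\dashv r$; let me check the direction). The functor $r$ sends a square $(\Spec A\to\Spec k'\to\Spec k)$ to the composite $\Spec A\to\cX$ viewed over $k$; on the level of rings this simply forgets $k'$. A morphism from $\iota(\Spec B\to\cX)$ to a square amounts to a compatible map involving $\Spec k\to\Spec k'$ over $\Spec k$. Since $\Spec k$ is terminal in $\dAff_k$, the only slot is the identity, so the morphism data are the wrong way. I would therefore instead use the other natural functor $r$, which is left adjoint to $\iota$ in the arrow-category sense. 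Concretely, $(\Spec A\to\Spec k')\mapsto(\Spec A\to\Spec k)$ together with the unit square $\Spec k'\to\Spec k$ gives a universal morphism from any object to the image of $\iota$. Hence $\iota$ admits a left adjoint, and by dual cofinality \cite[HTT 4.1.3.1]{HTT} the limit over $\dAff_{\cX/\Spec k}$ of any diagram $F$ is computed by precomposing with that adjoint. Equivalently, $\lim F\simeq\lim F\circ\iota$ provided $F$ sends the unit maps to the appropriate transition maps.

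It remains to check what the diagram $g_*\HH(A/k')$ does on the unit morphism $(A/k')\to(A/k)$. This is the canonical map $\HH(A/k)\to\HH(A/k')$ of $k$-modules, which is \emph{not} an equivalence in general. So the naive cofinality argument must be replaced by the pointwise formula for the right Kan extension at the object $(\cX\to\Spec k)$. Since $\Spec k$ is affine, the relevant comma category contains the subcategory of squares whose base map is $\id_{\Spec k}$, and I would show this subcategory is \emph{final} (for a limit over $\Arr(\dStk)^{\op}$-slices). Given any square with base $\Spec k'\to\Spec k$ in the indexing category (a map in $\Arr(\dStk)$ to $f$), it factors uniquely through the object with base $\Spec k$ and the same $\Spec A$, because $\Spec k\to\Spec k$ is terminal over $\Spec k$. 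The comma categories witnessing finality then have terminal objects, hence are weakly contractible. I expect the main obstacle to be getting the variance right: one must be careful that morphisms in $\Arr(\dStk)_{/f}$ point toward $f$, so the factorization through $\id_{\Spec k}$ gives the required initial/terminal object in each comma category. Once that is settled, \cite[4.1.3.1]{HTT} identifies the two limits. This is compatible with the module structures, since $g=\id$ on the restricted subcategory, which finishes the proof.
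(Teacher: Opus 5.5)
Your final argument is correct, but the middle paragraph rests on a misconception that you should delete. Restricting a limit along an initial functor imposes no condition on the diagram. The inclusion $\iota\colon(\dAff_k)_{/\cX}\to\Arr(\dAff)_{/f}$ has a left adjoint $L$, sending $(\Spec A\to\Spec k')$ to $(\Spec A\to\Spec k)$, with unit the square $(\id_{\Spec A},g)$; this uses only $\Map_{/\Spec k}(\Spec k',\Spec k)\simeq\ast$. Hence every comma category $x/\iota$ has an initial object, so $\iota$ is final and $\iota^{\op}$ is initial. Then \cite[Theorem~4.1.3.1]{HTT} gives $\lim F\simeq\lim F\circ\iota^{\op}$ for \emph{every} diagram $F$ on $(\Arr(\dAff)_{/f})^{\op}$. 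Two corrections follow. First, one precomposes with $\iota$, not with $L$. Second, the fact that $\HH(A/k)\to\HH(A/k')$ is not an equivalence plays no role; that proviso would only matter if you tried to show $F\simeq F\circ\iota L$, which you never need. Your last paragraph is exactly this cofinality argument again: the unique factorization through the object with base $\id_{\Spec k}$ is the unit of $L\dashv\iota$. So nothing had to be ``replaced''. On variance, the universal object is initial in $x/\iota$ and terminal in its opposite, and either makes the comma category weakly contractible.

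The paper packages the same mechanism differently. Instead of computing the pointwise limit, it factors through $(\Arr(\dStk)_{/\dAff})^{\op}$ and pulls everything back along $\{\Spec k\}\to\dAff^{\op}\to\dStk^{\op}$. It then cites the dual of \cite[Lemma~4.3.3.9]{HTT}: right Kan extension commutes with this base change because the target functor $t^{\op}$ is a Cartesian fibration. Your $L$ is precisely transport along $\Spec k'\to\Spec k$ in the target fibration, which is what makes that lemma work. So your proof is a self-contained unpacking of the paper's: it makes the comparison map explicit and avoids checking the hypotheses of the general lemma, while the paper's version is shorter and more conceptual. Both versions also use, without comment, that the underlying stack of the limit in $\QC$ is $\Spec k$. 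This is what lets the limit of the restricted diagram, which lies over the constant diagram at $\Spec k$, be computed in $\cMod_k$. It holds because $(\dAff_k)_{/\cX}$ is weakly contractible: it has the initial object $\Spec 0\to\cX$, since $\cX(0)\simeq\ast$ for a sheaf.
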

\begin{proof}
    Consider the diagram
    \[\begin{tikzcd}
        \Arr(\dAff)^{\op}\ar[r]\ar[rd,"t^{\op}"'] &\left(\Arr(\dStk)_{/\dAff}\right)^{\op}\ar[r]\ar[d,"t^{\op}"]\arrow[dr, phantom, "\usebox\pullback" , very near start, color=black]&\Arr(\dStk)^{\op}\ar[r,"\Hoch"]\ar[d,"t^{\op}"] & \QC\ar[ld,"p"]\\
        & \dAff^{\op}\ar[r] & \dStk^{\op}.
    \end{tikzcd}\]
    Taking the pullback of this diagram along $\{\Spec k\}\to\dAff^{\op}\to\dStk^{\op}$ yields
    \[\begin{tikzcd}
        \dAff_k^{\op}\ar[r]\ar[rd] &\dStk_k^{\op}\ar[r,"\sim"]\ar[d]\arrow[dr, phantom, "\usebox\pullback" , very near start, color=black]&\dStk_k^{\op}\ar[r]\ar[d] & \Mod_k\ar[ld]\\
        & \{\Spec k\}\ar[r] & \{\Spec k\}
    \end{tikzcd}\]
    using the fact that $\dAff\subseteq\dStk$ is a replete subcategory. The result then follows by (the dual of) \cite[lemma 4.3.3.9]{HTT}: since $t^{\op}$ is Cartesian, the right Kan extension of the pullback of $\Hoch$ along $\{\Spec k\}\to\dAff^{\op}$ is the pullback of the right Kan extension.
\end{proof}

We conclude this section by noting that the Hochschild functors just defined are in fact fpqc sheaves.

\begin{theorem}
    $\Hoch,\sHoch : \Arr(\dStk)^{\op}\to\QC$ are sheaves for the fpqc topology.
\end{theorem}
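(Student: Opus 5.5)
The plan is to reduce everything to fpqc descent for affine Hochschild homology in the arrow category, and then pass to general derived stacks using the principle, already invoked for $\bfL^{\Kan}$, that the right Kan extension of a sheaf along the inclusion of a basis is again a sheaf (\cite[Propositions~A.5,A.6]{Aoki2023-mt}). Concretely, I would first show that the restrictions $\sHoch,\Hoch : \Arr(\dAff)^{\op}\to\QC$ are fpqc sheaves for the arrow-category topology of \cite[Section 7.1]{BLPO_HKR}. The statement for $\Arr(\dStk)^{\op}$ then follows formally, since $\sHoch$ and $\Hoch$ are by definition right Kan extensions of these functors.

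For the affine step, I would mirror the proof of \ref{cotangent complex is fpqc sheaf on morphisms}. Fix $k\to A$ and a square with $k\to k'$ and $A\to B$ faithfully flat. Write $k^\bullet=\Cech(k\to k')$, $B^\bullet=\Cech(A\to B)$, and $A^\bullet=A\otimes_k k^\bullet$.

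For $\sHoch$, limits in $\QC$ are computed by pushing forward to $\Spec A$, as in the description of limits in the fibration. We must therefore show that $\HH(A/k)\to\lim_n \HH(B^n/k^n)$ is an equivalence of $A$-modules. Since the forgetful functor $\cMod_A\to\cMod_k$ is conservative and preserves limits, it suffices to check this after forgetting.

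I would not try to manipulate the bar constructions directly. Instead I would use completeness of the HKR filtration. Each $\HH(B^n/k^n)$ carries the complete filtration $\FilHH{*}$. Totalization commutes with limits of filtrations, and a limit of complete filtered objects is complete. So the comparison map is a map of complete filtered objects, and it suffices to check it on associated graded pieces. There it becomes
\[
\wedge^i\LL_{A/k}[i]\to\lim_n \wedge^i \LL_{B^n/k^n}[i],
\]
which is an equivalence by \ref{cotangent complex is fpqc sheaf on morphisms}, using the fact that $[i]$ commutes with limits.

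For $\Hoch$, I would either repeat the argument over $\Spec k$, pushing forward along $\Spec A\to\Spec k$, or deduce it from the $\sHoch$ case. The deduction uses \ref{pushforward compatibility of HH} together with the fact that $f_*$ preserves limits.

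The main obstacle I expect is functoriality. The HKR filtration and the identification of its graded pieces must be natural in the whole arrow $k\to A$, so that the cosimplicial diagram $n\mapsto \HH(B^n/k^n)$ lifts to a cosimplicial diagram of filtered objects whose graded pieces are the cosimplicial object $\wedge^i\LL_{B^\bullet/k^\bullet}[i]$. My first approach is to build the filtration as the animation, along $\PolyPoly\to\Arr(\aRing)$, of the Postnikov filtration on $\HH$ of polynomial extensions. On $\PolyPoly$, the classical HKR theorem together with naturality of the antisymmetrization maps $\epsilon_n$ identifies the graded pieces with $\mathbf{\Omega}^n$, and \ref{Kan extension represents derivations} and its exterior-power analogues $\bfL^n$ then identify the animated graded pieces with $\wedge^n\LL$. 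A secondary point is that convergence of this filtration is only guaranteed when $k$ is discrete. For animated $k$, I would either resolve $k$ by polynomial rings as described after \ref{HKR for CRing}, or reduce via the base change identity $\HH(A/k)\otimes_k B\simeq \HH(A\otimes_k B/B)$.
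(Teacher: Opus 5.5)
Your proposal is correct and is essentially the paper's argument. You prove descent on $\Arr(\dAff)^{\op}$ from completeness of the HKR filtration together with \ref{cotangent complex is fpqc sheaf on morphisms}, and then use that a right Kan extension of a sheaf from the basis is again a sheaf; the only cosmetic difference is that the paper, following \cite[Corollary~3.3]{BMS}, shows by induction that the quotients $\sHH(-)/\cF\operatorname{il}^n_{\HKR}$ are sheaves and then passes to the limit, whereas you compare complete filtered objects on associated gradeds (and your worry about animated $k$ is unnecessary, since each $\cF\operatorname{il}^n_{\HKR}$ is a sifted colimit of $n$-connective objects, hence $n$-connective, so completeness holds in general).
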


We delay the proof for now, since we will discuss an improved variant (that these functors may be promoted to sheaves for the canonical topology valued in $\QCF$) in the next section.

\subsection{HKR-Filtered Derived Hochschild Homology}

As we have already seen, Hochschild homology of a morphism of animated rings comes equipped with a canonical, functorial Hochschild-Kostant-Rosenberg filtration, which is induced by the Postnikov filtration for a smooth morphism of polynomial $\ZZ$-algebras. By mimicking \ref{relativeHH} and replacing $\QC$ with $\QCF,$ we can view the Hochschild functors
\[
\sHoch,\Hoch : \Arr(\dAff)^{\op}\to\QC
\]
as functors instead valued in $\QCF,$ the source of the coCartesian fibration classifying the functor $\dStk^{\op}\to\Cat_\infty$ sending $X\mapsto \QCF(X) := \Fun(\ZZ^{\op},\QC(X)).$ To distinguish these functors, we will denote the variants valued in $\QCF$ by $\fsHoch$ and $\fHoch.$ We will denote the resulting filtered object(s)/filtration on $\sHH$ (respectively, $\HH$) by $\sFilHH{*}$ (respectively, $\FilHH{*}$).

Composing these filtered Hochschild functors with the ``underlying object'' functor produces the previously defined Hochschild functors. This follows because the Hochschild functors $\Arr(\dAff)^{\op}\to\QCF$ in fact factor through the full sub-$\infty$-category $\QCF_{\NN}$ spanned by those objects whose filtrations are $\NN$-indexed, in the sense that $\Fil^i X\to\Fil^{i-1} X$ is equivalent to the identity for $i\leq 0.$ Since the subcategory of such filtrations is closed under limits, the right Kan extension to $\QCF$ in fact factors through $\QCF_{\NN}$ to tell us that that the Hochschild filtrations are $\NN$-indexed. Moreover, the underlying object functor is then simply given by evaluation at $0,$ and this is a right adjoint, hence it preserves Kan extensions, so that the underlying object of the Kan extended HKR filtration will be the Kan extension of the functor which constructs the HKR-filtered Hochschild homology and then takes the underlying object. One could alternatively (and equivalently) use $\QCF_{\NN}$ in place of $\QCF$ everywhere.

\begin{proposition}\label{Existence of HKR filtration}
    Let $f : \cX\to\cS$ be a map of derived stacks. Then $\sHH(\cX/\cS)$ and $\HH(\cX/\cS)$ admit complete descending $\NN$-indexed filtrations; i.e., $\sFilHH{*}(\cX/\cS)$ and $\FilHH{*}(\cX/\cS)$ are complete.
\end{proposition}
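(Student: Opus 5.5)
The filtered functors $\fsHoch,\fHoch : \Arr(\dStk)^{\op}\to\QCF$ are right Kan extensions, so their values are limits of affine values. The plan is to show that completeness and $\NN$-indexedness are both stable under those limits.

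\textbf{Affine case.} For $\Spec A\to\Spec k$ in $\Arr(\dAff)$, the HKR filtration on $\HH(A/k)$ is complete and $\NN$-indexed by the animated HKR theorem following \ref{HKR for CRing}. This holds whether $\HH(A/k)$ is viewed as an $A$-module or as a $k$-module, since the forgetful functor $\cMod_A\to\cMod_k$ preserves and reflects limits.

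\textbf{Unwinding the limit.} By the description of limits in the Cartesian and coCartesian fibration $\QCF\to\dStk^{\op}$, we have
\[
\sFilHH{*}(\cX/\cS)\simeq\lim_{(\Spec A\to\Spec k)\to(\cX\to\cS)} (p^A)_*\FilHH{*}(A/k),
\]
with the limit computed in $\QCF(\cX)=\Fun(\ZZ^{\op},\QC(\cX))$. Here $p^A:\Spec A\to\cX$ is the structure map. The analogous formula holds for $\FilHH{*}(\cX/\cS)$, with pushforward along $\Spec k\to\cS$.

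\textbf{Stability of the two properties.} Each pushforward $(p^A)_*$ is a right adjoint, so it preserves limits. Applied levelwise, it therefore preserves both completeness, meaning $\lim_i\Fil^i\simeq 0$, and $\NN$-indexedness, meaning the transition maps $\Fil^i\to\Fil^{i-1}$ are equivalences for $i\le 0$. Both conditions are closed under arbitrary limits in $\QCF(\cX)$:
\begin{itemize}
\item for completeness, limits commute with limits, so $\lim_i\lim_\alpha\Fil^i_\alpha\simeq\lim_\alpha\lim_i\Fil^i_\alpha\simeq 0$;
\item for $\NN$-indexedness, a limit of equivalences is an equivalence.
\end{itemize}
This is exactly the closure of $\QCF_{\NN}$ under limits noted before the statement, together with the same observation for complete filtrations.

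\textbf{Main obstacle.} The step I expect to be hardest is the identification of the filtered Kan extension with a levelwise limit, made compatibly for both $\fsHoch$ and $\fHoch$. The complication is that $\QCF$ is a fibration over $\dStk^{\op}$, not a fixed category, and the index category $\Arr(\dAff)_{/f}$ may be large. I would handle this as in the proof of \ref{pushforward compatibility of HH}. The first step is to transport everything into the single fiber $\QCF(\cX)$ (respectively $\QCF(\cS)$) via Cartesian pushforward; the limit formula for $\QC$ already recorded in the paper then applies verbatim with $\QC(\cX)$ replaced by $\Fun(\ZZ^{\op},\QC(\cX))$, whose limits are computed pointwise. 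Size issues can be handled by restricting to a small cofinal subcategory, or by working in a larger universe, as is implicit elsewhere in the paper.

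As a consistency check, completeness and \ref{pushforward compatibility of HH} are compatible: $f_*$ commutes with limits, so it preserves completeness.
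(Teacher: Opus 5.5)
Your proposal is correct and follows the paper's argument. The paper gets $\NN$-indexedness from the closure of $\QCF_{\NN}$ under limits, and gets completeness from the fact that the limit over the filtration index commutes with the limit computing the right Kan extension; your levelwise computation in $\QCF(\cX)$, transported there via the limit-preserving pushforwards, is an explicit unpacking of that same step rather than a different route.
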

\begin{proof}
    The previous discussion implies that the filtrations are $\NN$-indexed. Given a filtered object $\Fil^\bullet X,$ completeness means that
    \[
    \lim_{i\to\infty}\Fil^i X := \lim_{\ZZ^{\op}}\Fil^i X\simeq 0.
    \]
    Thus, completeness of the HKR filtration follows from the fact that right Kan extensions commute with limits.
\end{proof}

\begin{remark}
    The filtered Hochschild homology we define here might be called ``HKR-completed Hochschild homology.'' As mentioned previously, there are multiple possible definitions of Hochschild homology, all of which agree for maps of commutative rings (or animated rings), but which may differ once extended to stacks, and in particular, it can be the case that the HKR filtration on another variant of the Hochschild homology may not be complete. It would be interesting to examine an ``uncompleted Hochschild homology'' for a morphism $f : \cX\to\cS$ of derived stacks, either as a relative Hochschild homology for the stable $\infty$-categories $\QC(\cX)$ and $\QC(\cS),$ or as $\Delta^*\Delta_*\OX.$ We will show that under some hypotheses, the latter agrees with our definition, but we do not pursue this question further.
\end{remark}

As a consequence of the above proposition, we find that when we have a morphism of filtered objects between Hochschild homology and another complete filtered object, to check whether the morphism is an equivalence it suffices to check if the maps on associated graded pieces is an equivalence (this follows from \cite[Lemma 2.15]{GwilliamPavlovEnhancing}\footnote{Note that what we call $\Fil\cC$ is called $\operatorname{Seq}\cC$ in \cite{GwilliamPavlovEnhancing}, and what we call $\cFil\cC$ is called $\operatorname{Fil}\cC$ in \cite{GwilliamPavlovEnhancing}.}).

The following proposition will prove invaluable when proving properties of Hochschild homology.

\begin{proposition}\label{Description of associated graded for HKR filtration}
    Suppose that $f : \cX\to\cS$ is a map of derived stacks admitting a cotangent complex (e.g., $f$ geometric). The associated graded sheaves of the HKR filtrations on the Hochschild homology of $\cX/\cS$ are given by
    \begin{align*}
        \gr_{\HKR}^i\sHH(\cX/\cS)&\simeq \left(\bigwedge^i \LL_{\cX/\cS}\right)[i]\\
        \gr_{\HKR}^i\HH(\cX/\cS)&\simeq f_*\left(\bigwedge^i \LL_{\cX/\cS}\right)[i]
    \end{align*}
\end{proposition}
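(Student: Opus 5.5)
The strategy is to compute the associated graded functor on the Kan-extended filtered functor and match it with the Kan-extended exterior powers of the cotangent complex. Taking associated graded pieces, $\gr : \QCF\to\QCGR$ followed by projection to degree $i$, sends the right Kan extension $\fsHoch$ to a right Kan extension. The reason is that $\gr^i$ is computed fiberwise by a finite colimit (a cofiber) in a stable category, which commutes with the limits computing the Kan extension; over a varying base these limits are formed as $\lim_i (f_i)_*(-)$, and pushforwards are exact. Hence $\gr^i_{\HKR}\circ\fsHoch$ is the right Kan extension along $\Arr(\dAff)^{\op}\hookrightarrow\Arr(\dStk)^{\op}$ of its restriction to affines. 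By the HKR theorem for animated rings, and its naturality in the arrow $k\to A$, that restriction is $(\Spec A\to\Spec k)\mapsto(\Spec A,\bigwedge^i\LL_{A/k}[i])$, i.e.\ the functor $\bfL^i[i]$.

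The second step is to identify the right Kan extension of $\bfL^i$ with $\bigwedge^i\LL_{\cX/\cS}$ when $f$ admits a cotangent complex. Write $\bfL^{i,\Kan}$ for this extension. For $i=1$ this is the earlier comparison $\rho_f:\LL^{\der}_f\to\LL^{\Kan}_f$, and for $i\ge 2$ I would run the same argument. First reduce to affine $\cS=\Spec k$: since the extension is computed as a limit over $\Spec A\to\cS$ and both sides base change well (for $\LL^{\der}$ via \ref{cotangent complex properties 1}, and $\bigwedge^i$ commutes with pullback), descent along the effective epimorphism $\coprod\cX_A\to\cX$ handles this. Then choose a smooth atlas $U\to\cX$ with \v{C}ech nerve $U_\bullet$, and use the fpqc sheaf property \ref{cotangent complex is fpqc sheaf on morphisms} to write $\bigwedge^i\LL^{\Kan}$ as $\lim_n (p_n)_*\bigwedge^i\LL_{U_n/k}$. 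The remaining task is to show $\bigwedge^i\LL_{\cX/k}\simeq\lim_n (p_n)_*\bigwedge^i\LL_{U_n/k}$. For this, filter $\bigwedge^i\LL_{U_n/k}$ using the cofiber sequence $p_n^*\LL_{\cX/k}\to\LL_{U_n/k}\to\LL_{U_n/\cX}$, exactly as in the induction of \ref{cotangent complex is fpqc sheaf on morphisms}. The graded pieces are $p_n^*\bigwedge^{i-j}\LL_{\cX/k}\otimes\bigwedge^j\LL_{U_n/\cX}$. The $j=0$ piece descends to $\bigwedge^i\LL_{\cX/k}$, because $\QC$ satisfies descent and $\colim U_n\simeq\cX$.

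The main obstacle is the vanishing of the pieces with $j>0$. The argument I would try is the one used for $\rho_f$: after base change along $U\to\cX$, the augmented simplicial object acquires a section, so the cosimplicial object $(p_n)_*(\cdots\otimes\bigwedge^j\LL_{U_n/\cX})$ admits an extra codegeneracy and its totalization vanishes. Conservativity of pullback along the effective epimorphism $U\to\cX$ then gives the vanishing on $\cX$. Care is needed that $\bigwedge^j$ and tensoring with $p_n^*\bigwedge^{i-j}\LL_{\cX/k}$ preserve the contracting homotopy, but these are functorial constructions, so the homotopy transports. This yields $\gr^i_{\HKR}\sHH(\cX/\cS)\simeq\bigwedge^i\LL_{\cX/\cS}[i]$.

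Finally, the statement for $\HH$ follows from \ref{pushforward compatibility of HH}, applied in the filtered setting: $\FilHH{*}(\cX/\cS)\simeq f_*\sFilHH{*}(\cX/\cS)$, by the same limit-of-equivalences argument. Since $f_*$ is exact, it commutes with $\gr^i$, so $\gr^i_{\HKR}\HH(\cX/\cS)\simeq f_*\bigwedge^i\LL_{\cX/\cS}[i]$.
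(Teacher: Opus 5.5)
Your first and last steps are exactly the paper's proof. The paper only notes that the claim holds on affines by the HKR theorem, and that $\gr^i$, being a cofiber, commutes with the limits and exact pushforwards computing the right Kan extension. It stops there and tacitly identifies the right Kan extension of $\bfL^i$ with $\bigwedge^i\LL_{\cX/\cS}$. It only proves such a comparison for $i=1$ and geometric $f$ (the map $\rho_f$).

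Your Step 2 is therefore a genuine addition, and its strategy is the right one: filter $\bigwedge^i\LL_{U_n/k}$ as in the proof of \ref{cotangent complex is fpqc sheaf on morphisms} and kill the $j>0$ pieces. It does, however, have limited scope and some omissions:
\begin{enumerate}
\item It uses a smooth atlas, so it covers only geometric $f$, not every $f$ admitting a cotangent complex. The paper does not handle that generality either.
\item The $U_n$ are not affine, so replacing $\bfL^{i,\Kan}(U_n/k)$ by $\bigwedge^i\LL_{U_n/k}$ needs an induction on the geometricity level.
\item In reducing to affine $\cS$, what you use on the Kan side is not base change. It is the fact that the indexing category fibres over $\dAff_{/\cS}$, which gives $\bfL^{i,\Kan}(\cX/\cS)\simeq\lim_A(p_A)_*\bfL^{i,\Kan}(\cX_A/A)$.
\item For higher stacks $\LL_{\cX/k}$ and $\LL_{U_n/\cX}$ are not connective. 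You should say which extension of $\bigwedge^j$ to non-connective modules you use, so that the filtration exists.
\end{enumerate}

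The actual gap is your final step. Pulling back along $q\colon U\to\cX$ and invoking conservativity needs two things you have not justified. First, $q^*$ must commute with $\Tot$, but $q^*$ is only a left adjoint and need not commute with this infinite limit. Second, $q^*(p_n)_*F_n$ must be identified with a pushforward along the base-changed \v{C}ech nerve. That requires flat base change for $p_n$, which can fail when $p_n$ is not quasi-compact, e.g.\ when $U$ is an infinite disjoint union of affines.

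Exchanging limits avoids both problems. Let $V_{n,m}=U_n\times_{\cX}U_m$, with projections $a_{n,m}\colon V_{n,m}\to U_n$ and $b_{n,m}\colon V_{n,m}\to U_m$, and let $F_n=p_n^*E\otimes\bigwedge^j\LL_{U_n/\cX}$. Since $V_{n,\bullet}\to U_n$ is the \v{C}ech nerve of the effective epimorphism $U_n\times_{\cX}U\to U_n$, descent for $\QC$ on $U_n$ and $p_na_{n,m}=p_mb_{n,m}$ give
\[
\Tot_n (p_n)_*F_n\simeq \Tot_n\lim_m (p_m)_*(b_{n,m})_*a_{n,m}^*F_n\simeq \lim_m (p_m)_*\Tot_n (b_{n,m})_*a_{n,m}^*F_n .
\]
Base change for cotangent complexes identifies $a_{n,m}^*F_n$ with $(p_mb_{n,m})^*E\otimes\bigwedge^j\LL_{V_{n,m}/U_m}$, naturally in $n$.

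For fixed $m$, $V_{\bullet,m}\to U_m$ is the \v{C}ech nerve of $U\times_{\cX}U_m\to U_m$. This map has a section because $U_m\to\cX$ factors through $U$. Hence the inner cosimplicial object in $\QC(U_m)$ is split. Its totalization is therefore the coaugmentation term $p_m^*E\otimes\bigwedge^j\LL_{U_m/U_m}$, which is $0$ for $j\geq 1$.

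This argument uses only descent for $\QC$ and base change for $\LL$. The same repair is needed in the last step of the paper's proof of $\rho_f$, which your argument imitates.
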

\begin{proof}
    The result is true for affines by construction. Then the result follows by commuting the cofiber (which is a finite colimit) with the limit.
\end{proof}

We actually need to use a slightly stronger version of the above HKR result when proving various properties of Hochschild homology. In particular, we require the following:

\begin{proposition}
    For any $n\geq 0,$ the composition
    \[
    \Arr(\dStk)^{\op}\xrightarrow{\sFilHH{}(-)} \QCF \xrightarrow{\gr}\QCGR\xrightarrow{\ev_n}\QC\xrightarrow{[-n]}\QC
    \]
    agrees with the functor $\bigwedge^n\bfL^{\Kan} : \Arr(\dStk)^{\op}\to\QC.$
\end{proposition}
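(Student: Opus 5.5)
The plan is to show that both functors are right Kan extensions of the same functor on $\Arr(\dAff)^{\op}$, and then use uniqueness of Kan extensions.

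First I identify the two restrictions to $\Arr(\dAff)^{\op}$.
\begin{enumerate}
    \item By definition, $\bigwedge^n\bfL^{\Kan}$ is the right Kan extension of $\bigwedge^n\bfL=\bfL^n$, viewed as a functor $\Arr(\dAff)^{\op}\to\QC$. Its restriction to affines is therefore $\bfL^n$, because the inclusion $\Arr(\dAff)^{\op}\to\Arr(\dStk)^{\op}$ is fully faithful.
    \item On affines, the composite $[-n]\circ\ev_n\circ\gr\circ\sFilHH{}$ sends $(\Spec A\to\Spec k)$ to $(\Spec A,\gr^n_{\HKR}\HH(A/k)[-n])$. By the HKR theorem for animated rings this is $(\Spec A,\wedge^n\LL_{A/k})$.
\end{enumerate}
The identification in (2) must be natural in the morphism $k\to A$ and must respect the $A$-module structure, not only the $k$-module structure. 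To get this, I reduce to $\PolyPoly$. Both sides are sifted-colimit-preserving functors $\Arr(\aRing)\to\MOD$. For $\bfL^n$ this is its definition as an animation. For the graded piece of filtered $\HH$, it holds because the HKR filtration is itself obtained by left Kan extension from the Postnikov filtration on polynomial pairs, and $\gr$, $\ev_n$ and shifts commute with colimits. So it suffices to produce a natural equivalence on $\PolyPoly$. There, the naturality of the antisymmetrization maps $\epsilon_n\colon(Q,\Omega^n_{Q/P})\to(Q,\HH^{\cl}_n(Q/P))$, noted after the classical HKR theorem, together with the classical HKR isomorphism, gives exactly such a natural equivalence in $\MOD$.

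Next I show that the composite of Hochschild functors is itself a right Kan extension of its restriction to affines.
\begin{enumerate}
    \item $\sFilHH{}$ is by definition a right Kan extension, computed pointwise as a limit in $\QCF$.
    \item Limits in the fibration $\QCF$ are computed by pushing forward to the limit stack and then taking the limit in the fiber $\QCF(\cX)$ (the filtered analogue of the description given for $\QC$).
    \item The functor $\gr$ is built from cofibers of consecutive filtration steps, i.e.\ finite colimits in stable fibers. These commute with the fiberwise limits and with pushforwards, which are exact.
    \item $\ev_n$ and $[-n]$ are exact and are computed fiberwise, so they also commute with these limits.
\end{enumerate}
This is the same reasoning as in the proof of \ref{Description of associated graded for HKR filtration}. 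Hence the composite preserves the pointwise limit formula, and is therefore the right Kan extension of its restriction to $\Arr(\dAff)^{\op}$.

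Combining the two steps, both functors are right Kan extensions of equivalent functors on affines, so they are equivalent. The main obstacle I expect is the naturality in the affine step: getting the HKR identification as an equivalence of functors into $\MOD$ (that is, with $A$-linearity and functoriality in the whole arrow) rather than objectwise. I would handle it through animation from $\PolyPoly$, where the only input needed is that the Postnikov filtration is functorial and that $\epsilon_n$ is natural. A secondary check is that $\gr$, as a map of fibrations, carries the Cartesian-fibration limits of $\QCF$ to those of $\QCGR$. This should follow because $\gr$ has a right adjoint over $\dStk^{\op}$, as shown earlier, and is exact on fibers.
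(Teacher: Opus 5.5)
Your proposal is correct and follows the paper's proof. Both arguments get agreement on $\PolyPoly$ from the classical HKR theorem and the naturality of the antisymmetrization maps, extend to all affines because $[-n]\circ\ev_n\circ\gr$ preserves colimits, and then extend to stacks. The only variation is at the last step. You check directly that these exact functors commute with pushforwards and fiberwise limits, so the composite is itself a right Kan extension; the paper instead cites its ``basis plus fpqc sheaves'' argument. Note that it is this exactness, not the right adjoint of $\gr$ (which only gives colimit preservation), that justifies your ``secondary check''.
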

\begin{proof}
    Consider the functors restricted to $\PolyPoly.$ By the classical HKR theorem, they agree on objects, and by naturality of the antisymmetrization maps, they agree as functors. Forming the left Kan extension along the inclusion $\PolyPoly\to\Arr(\dAff)^{\op}$ gives the functor $\bigwedge^n\bfL^{\Kan}$ by definition.
    
    On the other hand, because $[-n]\circ\ev_n\circ\gr$ is a left adjoint (by virtue of each individual component being a left adjoint), it follows that $[-n]\circ\ev_n\circ\gr$ preserves left Kan extensions, so that the left Kan extension of the composition $[-n]\circ\ev_n\circ\gr\circ\sFilHH{}(-)$ agrees with $[-n]\circ\ev_n\circ\gr$ composed with the left Kan extension of $\sFilHH{}(-).$

    The result then follows as $\Arr(\dAff)^{\op}$ is a basis for $\Arr(\dStk)^{\op}$ and these functors are fpqc sheaves.
\end{proof}

It follows that given a commutative square of derived geometric stacks
\[\begin{tikzcd}
    \cX'\ar[r,"f"]\ar[d] &\cX\ar[d]\\
    \cS'\ar[r] &\cS
\end{tikzcd}\]
the morphism $f^*\sFilHH{}(\cX/\cS)\to\sFilHH{}(\cX'/\cS')$ on Hochschild homology induces on associated gradeds (shifts of) the maps $f^*\bigwedge^n\LL_{\cX/\cS}\to\bigwedge^n\LL_{\cX'/\cS'}$ arising from the same square. In particular, since the map $f^*\bigwedge^n\LL_{\cX/\cS}\to\bigwedge^n\LL_{\cX'/\cS'}$ is the $n$th exterior power of the map $f^*\LL_{\cX/\cS}\to\LL_{\cX'/\cS'},$ it suffices to check that the map on first associated gradeds is an equivalence, and this can be adapted to various situations. One such situation is when we compare $\HH^{\Ols}$ and $\HH^{\Rog},$ although this is not a direct corollary of the above.

\begin{lemma}[\protect{\cite[Proposition~6.3.8]{bhatt2022absoluteprismaticcohomology}}]
    Suppose that $A$ is a commutative ring, and suppose that the absolute cotangent complex $\LL_{A/\ZZ}$ is a flat $A$-module (regarded as a complex concentrated in homological degree zero). Then the HKR filtration on $\HH(A/\ZZ)$ coincides with the Whitehead filtration; i.e., for every integer $n,$ the map $\FilHH{n}(A/\ZZ)\to\HH(A/\ZZ)$ is an isomorphism on homology in degrees $\geq n$ (and the homology groups of $\FilHH{n}(A)$ vanish in degrees $\geq -n$).
\end{lemma}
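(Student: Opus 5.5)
We use the HKR filtration of Theorem~\ref{HKR for CRing} together with the identification of its graded pieces. Write $L := \LL_{A/\ZZ}$, which by hypothesis is a flat $A$-module concentrated in homological degree $0$.

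First we compute the graded pieces. For a flat discrete module, the derived exterior powers coincide with the classical ones, and $\bigwedge^i_A L$ is again flat and discrete. This holds because a flat module is a filtered colimit of finite free modules (Lazard), derived $\bigwedge^i$ commutes with filtered colimits (being a left Kan extension from finite free modules), and on finite free modules it is discrete. Hence
\[
\gr^i_{\HKR}\HH(A/\ZZ)\simeq \Big(\bigwedge^i_A L\Big)[i],
\]
which is concentrated in homological degree exactly $i$.

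Next we prove by descending induction, using completeness, that $\FilHH{n}(A/\ZZ)$ is $n$-connective. We have cofiber sequences $\FilHH{m+1}\to\FilHH{m}\to\gr^m$ in which each $\gr^m$ is $m$-connective. Thus $\FilHH{n}/\FilHH{N}$ is $n$-connective for every $N>n$. Since the filtration is complete, $\FilHH{n}\simeq\lim_N \FilHH{n}/\FilHH{N}$. The transition maps are surjective on $\pi_n$, because their fibers are $\gr^N$ with $N>n$, which is $(n+1)$-connective. The Milnor sequence then shows that this limit is still $n$-connective.

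Similarly, $\HH(A/\ZZ)/\FilHH{n}$ has a finite filtration with pieces $\gr^m$ for $m<n$, each concentrated in degree $m$. It is therefore $(n-1)$-truncated, meaning its homotopy vanishes in degrees $\geq n$.

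Combining these two facts, the fiber sequence $\FilHH{n}\to\HH\to\HH/\FilHH{n}$ shows that $\FilHH{n}\to\HH(A/\ZZ)$ is an isomorphism on $\pi_j$ for $j\geq n$ (and injective on $\pi_{n-1}$), while $\pi_j\FilHH{n}=0$ for $j<n$. So $\FilHH{n}\simeq\tau_{\geq n}\HH(A/\ZZ)$, compatibly in $n$ since all maps are the canonical ones. This gives the claimed coincidence with the Whitehead filtration; the vanishing statement should be read as vanishing in degrees $<n$ in homological indexing.

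The main obstacle is the first step: showing that derived exterior powers of a flat discrete module are discrete and flat. The Lazard plus filtered-colimit argument handles it. The limit argument is routine, but it does need the Mittag-Leffler/surjectivity observation at $\pi_n$ and not only vanishing.
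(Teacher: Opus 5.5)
Your argument is correct and is essentially the standard proof of the cited result, which the paper quotes from Bhatt--Lurie without reproducing. Flatness of $\LL_{A/\ZZ}$ (via Lazard) makes each $\gr^i_{\HKR}\HH(A/\ZZ)\simeq(\bigwedge^i_A\LL_{A/\ZZ})[i]$ discrete in homological degree $i$, and completeness of the filtration then forces $\FilHH{n}(A/\ZZ)\simeq\tau_{\geq n}\HH(A/\ZZ)$. You are also right to read the vanishing clause as $\pi_j\FilHH{n}(A/\ZZ)=0$ for $j<n$; the range ``$\geq -n$'' in the statement is a slip that mixes homological and cohomological indexing.
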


We note that \cite[proposition 6.3.8]{bhatt2022absoluteprismaticcohomology} extends to the following global variant, with no modification in the proof required.

\begin{theorem}\label{Flat cotangent complex implies postnikov filtration}
    Suppose that $\cX\to\cS$ is map of derived stacks admitting a cotangent complex, and suppose that the relative cotangent complex $L_{\cX/\cS}$ is a flat $\cO_{\cX}$-module (regarded as a complex concentrated in cohomological degree zero, e.g., if $f$ is a smooth morphism from a classical algebraic space to a classical algebraic stack). Then the HKR filtration on $\HH(\cX/\cS)$ coincides with the Whitehead filtration; i.e., for every integer $n,$ the map $\FilHH{n}(\cX/\cS)\to\HH(\cX/\cS)$ is an isomorphism on homology in degrees $\geq n$ (and the cohomology groups of $\FilHH{n}(\cX/\cS)$ vanish in degrees $\geq -n$).
\end{theorem}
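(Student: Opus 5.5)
The proof transplants the argument of \cite[Proposition~6.3.8]{bhatt2022absoluteprismaticcohomology} to the global setting. Everything reduces to the associated graded pieces computed in \ref{Description of associated graded for HKR filtration}. By that proposition,
\[
\gr^i_{\HKR}\sHH(\cX/\cS)\simeq \Big(\bigwedge^i \LL_{\cX/\cS}\Big)[i].
\]
If $\LL_{\cX/\cS}$ is flat and concentrated in degree zero, its derived exterior powers are again flat and discrete, by local computation on an atlas. Indeed, over an affine $\Spec A\to\cX$ the pullback is a flat discrete $A$-module $M$. By Lazard's theorem $M$ is a filtered colimit of finite free modules, and derived $\bigwedge^i$ commutes with sifted colimits, so $\bigwedge^i M$ is discrete and flat. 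Since the $t$-structure is detected on a smooth atlas (\cite[Proposition~1.5.4]{GR2019}), it follows that $\gr^i_{\HKR}\sHH(\cX/\cS)$ lies in the heart shifted by $[i]$; that is, it is concentrated in homological degree exactly $i$.

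Next comes the connectivity estimate. Since the filtration is complete (\ref{Existence of HKR filtration}) and $\NN$-indexed, for each $n$ there are fiber sequences
\[
\Fil^{m}\to\Fil^{n}\to \Fil^n/\Fil^m,
\]
and $\Fil^n/\Fil^m$ has a finite filtration with graded pieces concentrated in degrees $n,\dots,m-1$. Passing to the limit over $m$ using completeness, together with a Milnor sequence argument, shows that $\Fil^n_{\HKR}$ has homology only in degrees $\geq n$. Similarly $\HH/\Fil^n\simeq \Fil^0/\Fil^n$ has homology only in degrees $<n$. The long exact sequence of $\Fil^n\to\HH\to\HH/\Fil^n$ then identifies $\Fil^n\to\HH$ with $\tau_{\geq n}\HH\to\HH$, which is the Whitehead filtration. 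This gives the statement for $\sHH$.

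To pass to $\HH(\cX/\cS)=f_*\sHH(\cX/\cS)$ (\ref{pushforward compatibility of HH}), push forward. The cleanest route is to work locally on $\cS$: base change along $\Spec k\to\cS$ by the right Kan extension formula, then take global sections there. If the statement is meant for $\HH$ on a general base, I would interpret ``homology'' sheaf-theoretically on $\cX$ and state it for $\sHH$, noting that $f_*$ preserves the identification $\Fil^n\simeq\tau_{\geq n}$ only as an identification of filtered objects.

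The main obstacle is the completeness/limit step. Limits in $\QC(\cX)$ do not commute with truncation in general, so care is needed: one should verify the connectivity bound for $\Fil^n$ after pulling back to affines along a smooth atlas, where $\Fil^n$ is computed as a limit of connective-enough objects and the Milnor $\lim^1$ terms lie in the correct degrees. I would first check this affine-locally, using that pullback along smooth maps is $t$-exact, and then conclude globally.
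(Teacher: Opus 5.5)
Your argument is the paper's. The paper proves this statement only by saying that the proof of \cite[Proposition~6.3.8]{bhatt2022absoluteprismaticcohomology} goes through verbatim, and you have written out that proof: each $\gr^i$ is a discrete flat module shifted by $[i]$, $\sHH/\Fil^n$ is a finite extension lying in degrees $[0,n-1]$, and completeness plus Milnor gives $\Fil^n\in\QC(\cX)_{\geq n}$. For $\sHH$ this is correct, with two tightenings.
\begin{enumerate}
\item At the limit step, t-exactness of $u^*$ along a smooth chart $u\colon\Spec B\to\cX$ does not by itself let $u^*$ pass through $\lim_m\Fil^n/\Fil^m$. The missing justification is that this tower has fibres $\gr^m\in\QC_{\geq m}$, so its $\pi_i$ stabilise. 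Consequently flat base change of modules commutes with its limit, the pointwise limits over the smooth site form a compatible family, and these compute the limit in $\QC(\cX)\simeq\lim_{\Spec B\to\cX\ \mathrm{smooth}}\cMod_B$.
\item Run the Lazard argument only on the charts of a smooth atlas, which are discrete when $\cX$ is classical. Over a non-discrete $A$ a flat module is not discrete.
\end{enumerate}

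On $\HH(\cX/\cS)=f_*\sHH(\cX/\cS)$ your hesitation is justified, but base changing along $\Spec k\to\cS$ does not help. The stack $\cX\times_{\cS}\Spec k$ is still not affine over $\Spec k$, and its global sections functor is not t-exact. What holds for every $f$ is half of the statement. Since $f_*$ is left t-exact, $\HH/\Fil^n$ is a finite extension of the $f_*(\bigwedge^j\LL_{\cX/\cS})[j]$ with $j<n$, so it lies in $\QC(\cS)_{\leq n-1}$. Hence $\Fil^n\HH\to\HH$ is an isomorphism on $\pi_i$ for all $i\geq n$.

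The vanishing of $\pi_i\Fil^n\HH$ for $i<n$, i.e.\ agreement with the Whitehead filtration, needs $f_*$ to be t-exact, for instance $f$ affine. That holds in the paper's only application, $\Spec A\to\cS_{(f,\theta)}$, which is affine because $\cS_{(f,\theta)}$ has affine diagonal. It fails in general: for $X=\mathbf{P}^1_k\to\Spec k$ one has $\Fil^1\HH=R\Gamma(X,\Omega^1_{X/k})[1]\simeq k[0]$, while $\tau_{\geq 1}\HH=0$. So the correct statement is the one for $\sHH$, or for $\HH$ under an affineness hypothesis, and that is what your argument proves.
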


\begin{cor}\label{Smooth HKR filtration is Whitehead}
    Suppose $\cX\to\cS$ is a smooth map of derived classical stacks. Then the HKR filtration on $\sHH(\cX/\cS)$ is the Whitehead filtration.
\end{cor}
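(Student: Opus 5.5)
The plan is to deduce the corollary from Theorem \ref{Flat cotangent complex implies postnikov filtration}, applied to the sheafy filtration $\sFilHH{*}(\cX/\cS)$ rather than to its pushforward. Since Proposition \ref{pushforward compatibility of HH} identifies $\HH(\cX/\cS)$ with $f_*\sHH(\cX/\cS)$, this is the statement we actually want. The key step is to show that each graded piece is concentrated in a single degree.

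\textbf{Step 1: graded pieces.} By Proposition \ref{Description of associated graded for HKR filtration},
\[
\gr^i_{\HKR}\sHH(\cX/\cS)\simeq \Big(\bigwedge^i \LL_{\cX/\cS}\Big)[i].
\]
For $f$ smooth between classical stacks, I would show that $\LL_{\cX/\cS}$ is a flat $\cO_\cX$-module in homological degree $0$, namely $\Omega^1_{\cX/\cS}$. Both flatness and concentration in degree $0$ are conditions that the earlier proposition of \cite{GR2019} lets us test after pullback along a smooth atlas $p : U\to\cX$. On the atlas I would use the cofiber sequence of Proposition \ref{cotangent complex cofiber sequence} for $U\to\cX\to\cS$, together with the description of $\LL_{U/\cS}$ and $\LL_{U/\cX}$ for the smooth maps $U\to\cS$ and $U\to\cX$. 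If $\LL_{\cX/\cS}$ is flat in degree $0$, then $\bigwedge^i\LL_{\cX/\cS}$ is flat in degree $0$. Consequently $\gr^i$ lies in $\QC(\cX)_{\geq i}\cap\QC(\cX)_{\leq i}$.

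\textbf{Step 2: identify the filtration.} I would check the two defining properties of the Whitehead filtration directly.
\begin{enumerate}
    \item The cofiber $\sHH/\sFilHH{n}$ is a finite extension of $\gr^0,\dots,\gr^{n-1}$. By Step 1 it therefore lies in $\QC(\cX)_{\leq n-1}$.
    \item $\sFilHH{n}$ lies in $\QC(\cX)_{\geq n}$. This uses completeness from Proposition \ref{Existence of HKR filtration}: $\sFilHH{n}\simeq\lim_m \sFilHH{n}/\sFilHH{m}$. Each term of this tower is a finite extension of $\gr^n,\dots,\gr^{m-1}$, hence $n$-connective. The transition fibers $\gr^m$ have connectivity growing with $m$, so the Milnor $\lim^1$ terms vanish in each fixed degree, and the limit stays $n$-connective.
\end{enumerate}
These two properties are exactly the argument of \cite[Proposition~6.3.8]{bhatt2022absoluteprismaticcohomology}, now carried out in $\QC(\cX)$ and verified on an atlas. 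Together they identify $\sFilHH{n}\to\sHH$ with $\tau_{\geq n}\sHH\to\sHH$.

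\textbf{Main obstacle.} The hard part is Step 1, specifically concentration of $\LL_{\cX/\cS}$ in degree $0$. By Theorem \ref{existence of cotangent complex for geometric morphism}, a smooth $n$-geometric map only guarantees Tor-amplitude in $[-n-1,0]$. The inertia of a genuinely stacky $\cX$ over $\cS$ can contribute a negative-degree part, and if it does, the lower bound on $\gr^i$ used in item 2 of Step 2 fails. I would first try to show that, in the intended classical setting, $\LL_{\cX/\cS}$ restricted along the atlas is computed by $\Omega^1$ of a smooth map of schemes, so that the degree $-1$ term vanishes. If it does not vanish, item 1 of Step 2 still goes through, since $\gr^i$ is always $\leq i$. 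Item 2 then has to be replaced by a truncation estimate, and this is the step where the proof of the stated corollary would need the most care.
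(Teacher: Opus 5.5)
Your strategy matches the paper's. The paper deduces the corollary directly from Theorem~\ref{Flat cotangent complex implies postnikov filtration}, and your Step~2 is the Bhatt--Lurie argument run in $\QC(\cX)$. That step is fine once $\LL_{\cX/\cS}$ is known to be flat and concentrated in degree $0$.

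\textbf{The gap.} Step~1 is left open, and the fallback you sketch cannot work, because discreteness of $\LL_{\cX/\cS}$ is \emph{necessary} for the conclusion. If the two filtrations agreed, then $\gr^1_{\HKR}\sHH(\cX/\cS)\simeq\LL_{\cX/\cS}[1]$ (Proposition~\ref{Description of associated graded for HKR filtration}) would have to equal $\pi_1\sHH(\cX/\cS)[1]$. That forces $\LL_{\cX/\cS}$ into degree $0$. For a smooth map with positive-dimensional relative inertia this fails, and so does the corollary as literally stated. Take $\mathrm{B}\mathbf{G}_m\to\Spec k$: it is smooth, and $\LL\simeq\cO[-1]$. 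So $\gr^1_{\HKR}\simeq\cO$ lies in degree $0$ rather than degree $1$, and item~2 of your Step~2 is false. No truncation estimate can repair this.

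\textbf{The missing hypothesis.} What is needed is representability, which the paper also uses tacitly. The theorem it invokes names a smooth map from an algebraic space to an algebraic stack as its instance, and the corollary is only ever applied to $\Spec A\to\cS_{(f,\theta)}$. With this hypothesis, Step~1 is short.
\begin{enumerate}
\item By Proposition~\ref{cotangent complex and diagonal}, $\LL_{\cX/\cS}\simeq\LL_{\Delta}[-1]$ for the relative diagonal $\Delta\colon\cX\to\cX\times_{\cS}\cX$.
\item $\Delta$ is representable by algebraic spaces, so $\LL_\Delta$ is connective. Hence $\LL_{\cX/\cS}$ has homology only in degrees $\geq -1$, with $H_{-1}\cong\Omega^1_{\Delta}$.
\item $\Omega^1_{\Delta}$ vanishes when $\Delta$ is unramified, for example when $f$ is representable by algebraic spaces, or more generally relatively Deligne--Mumford.
\item Then $\LL_{\cX/\cS}$ is connective with Tor-amplitude $\leq 0$ by Theorem~\ref{existence of cotangent complex for geometric morphism}. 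Hence it is flat and discrete, and your Step~2 applies.
\end{enumerate}
In terms of your own atlas $p\colon U\to\cX$, the same obstruction is $\cok(\Omega^1_{U/\cS}\to\Omega^1_{U/\cX})\cong H_{-1}(p^*\LL_{\cX/\cS})$. For $\Spec k\to\mathrm{B}\mathbf{G}_m$ this cokernel is $k$, which is exactly the counterexample above.
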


With the above results in hand, we are ready to prove various important properties of the relative Hochschild homology.

\begin{theorem}\label{HH base change results}
    Suppose that
    \[\begin{tikzcd}
        \cX'\ar[r,"\overline{g}"]\ar[d,"\overline{f}"] &\cX\ar[d,"f"]\\
        \cS'\ar[r,"g"] &\cS
    \end{tikzcd}\]
    is a commutative square of geometric derived stacks. Then:
    \begin{enumerate}
        \item If the square is Cartesian, the induced morphisms
        \[
        \overline{g}^*\sHH(\cX/\cS)\to\sHH(\cX'/\cS')
        \]
        and
        \[
        g^*\HH(\cX/\cS)\to\HH(\cX'/\cS')
        \]
        (arising from $\sHoch(\cX/\cS)\to\sHoch(\cX'/\cS')$ and $\Hoch(\cX/\cS)\to\Hoch(\cX'/\cS'),$ respectively) are equivalences.
        \item If $g$ is \'etale, then the induced morphisms
        \[
        \sHH(\cX'/\cS)\to\sHH(\cX'/\cS')
        \]
        and
        \[
        \HH(\cX'/\cS)\to\HH(\cX'/\cS')
        \]
        are equivalences.
        \item If $\overline{g}$ is \'etale, then the induced morphisms
        \[
        \overline{g}^*\sHH(\cX/\cS)\to\sHH(\cX'/\cS)
        \]
        and
        \[
        \HH(\cX/\cS)\to\HH(\cX'/\cS)
        \]
        are equivalences.
    \end{enumerate}
\end{theorem}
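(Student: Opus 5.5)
The strategy for all three parts is the one the paper has already set up. Both sides carry complete, $\NN$-indexed HKR filtrations (\ref{Existence of HKR filtration}). Pullback along any morphism preserves these filtrations and their completeness, since $g^*$ and $\overline{g}^*$ are exact and the filtrations are bounded below in connectivity. So it suffices to show that the induced map of filtered objects is an equivalence on every associated graded piece. By the proposition identifying $[-n]\circ\ev_n\circ\gr\circ\sFilHH{}(-)$ with $\bigwedge^n\bfL^{\Kan}$, and by the agreement $\LL^{\Kan}\simeq\LL^{\der}$ for geometric morphisms, the $n$th graded map is the shift by $[n]$ of $\bigwedge^n$ of the base change map on cotangent complexes. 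Since $\bigwedge^n$ is functorial and commutes with pullback, everything reduces to the case $n=1$, i.e.\ to a statement about $\LL_{\cX/\cS}$.

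For (1), the graded map is $\bigwedge^n$ of $\overline{g}^*\LL_{\cX/\cS}\to\LL_{\cX'/\cS'}$. This is an equivalence by base change for the cotangent complex (\ref{cotangent complex properties 1}(2)), which applies because the square is Cartesian. For the module version I would not use the filtration directly. Instead I would combine \ref{pushforward compatibility of HH}, $\HH\simeq f_*\sHH$, with the sheaf version, giving $g^*f_*\sHH(\cX/\cS)\to\overline{f}_*\overline{g}^*\sHH(\cX/\cS)\simeq\overline{f}_*\sHH(\cX'/\cS')$. The first map is an equivalence by base change, \ref{Base change for derived algebraic spaces}. This is the step I expect to be the main obstacle. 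That corollary is only available for qcqs derived algebraic spaces, so either the statement tacitly needs such hypotheses on $f$ (qcqs and representable), or one must reduce using fpqc descent. The descent route would pass to an affine atlas of $\cS'$, using that $\Hoch$ is right Kan extended, so that $\HH(\cX'/\cS')$ is computed as a limit over affines mapping to $\cS'$. Even after this reduction we still need $f_*$ to commute with the relevant limits and with $g^*$, and that requires finiteness. I would first try to prove the affine-target case via the limit formula, and then impose qcqs hypotheses where the comparison demands them.

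For (2), consider the composite $\cX'\to\cS'\xrightarrow{g}\cS$. The fiber sequence $\overline{f}^*\LL_{\cS'/\cS}\to\LL_{\cX'/\cS}\to\LL_{\cX'/\cS'}$ from \ref{cotangent complex cofiber sequence}, together with $\LL_{\cS'/\cS}\simeq0$ for étale $g$ (via \ref{existence of cotangent complex for geometric morphism}, or locally via the affine criterion), shows that $\LL_{\cX'/\cS}\to\LL_{\cX'/\cS'}$ is an equivalence. So the graded pieces agree, and completeness gives the sheaf statement. For the module statement, note that the morphism lives over $g$: pushing forward to $\cS$ and using \ref{pushforward compatibility of HH}, the map $\HH(\cX'/\cS)\to g_*\HH(\cX'/\cS')$ is identified with $(g\circ\overline{f})_*$ applied to the sheaf equivalence. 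For (3), apply the same argument to $\cX'\xrightarrow{\overline{g}}\cX\to\cS$. The fiber sequence $\overline{g}^*\LL_{\cX/\cS}\to\LL_{\cX'/\cS}\to\LL_{\cX'/\cX}\simeq0$ gives the sheaf equivalence. The module map is likewise identified with the pushforward of that equivalence, through \ref{pushforward compatibility of HH}.
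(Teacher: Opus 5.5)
Your treatment of the sheaf statements follows the paper's own proof. You identify each graded map with $\bigwedge^n$ of the relevant map of cotangent complexes, kill the error term by base change or by $\LL_{\cS'/\cS}\simeq 0$ and $\LL_{\cX'/\cX}\simeq 0$, and then invoke completeness. The problem is the step you add to make this rigorous: the claim that $g^*$ and $\overline{g}^*$ preserve completeness because the filtrations are bounded below. That step fails in general, and the paper's proof passes over the same point without comment. Pullback is only a left adjoint, so it commutes with the limit of the tower of quotients by $\Fil^n$ only when the connectivity of $\Fil^n$ tends to infinity. That holds when $\LL_{\cX/\cS}$ is connective, since then $\bigwedge^n\LL_{\cX/\cS}[n]$ is $n$-connective; for example it holds for $0$-geometric $f$. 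It does not hold for geometric morphisms in general. Take a field $k$ of characteristic $0$. Then $\LL_{\mathrm{B}\mathbf{G}_m/k}\simeq\cO[-1]$, so every graded piece $\bigwedge^n\LL[n]\simeq\cO$ sits in degree $0$, and $\sHH(\mathrm{B}\mathbf{G}_m/k)$ is $k[[u]]$ in weight $0$. Base change along $k\to k[t]$ gives $k[[u]]\otimes_k k[t]\to k[t][[u]]$, which is not surjective. The open immersion $\mathrm{B}\mathbf{G}_m\times\Spec k[x^{\pm1}]\subset\mathrm{B}\mathbf{G}_m\times\Spec k[x]$ over $\Spec k[x]$ breaks (3) in the same way, via $k[x][[u]]\otimes_{k[x]}k[x^{\pm1}]\to k[x^{\pm1}][[u]]$. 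So (1) and (3) need an extra hypothesis, such as connectivity of $\LL_{\cX/\cS}$. Under that hypothesis your argument is complete.

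On the module statements, your concern about (1) is justified, and the paper's proof does not address it. The graded maps there are $g^*f_*\bigwedge^n\LL_{\cX/\cS}[n]\to\overline{f}_*\bigwedge^n\LL_{\cX'/\cS'}[n]$, and these require base change for $f_*$. Base change already fails for $\cX=\coprod_{\NN}\Spec k$ over $\Spec k$ with $g : \Spec k[t]\to\Spec k$: compare $(\prod_{\NN}k)\otimes_k k[t]$ with $\prod_{\NN}k[t]$. Your plan via \ref{Base change for derived algebraic spaces} under qcqs hypotheses is the right repair, though you leave it unexecuted. Reading the module half of (2) as $\HH(\cX'/\cS)\to g_*\HH(\cX'/\cS')$ is the correct interpretation, and your argument for it works.

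Your argument for the module half of (3) does not work. By \ref{pushforward compatibility of HH}, the map $\HH(\cX/\cS)\to\HH(\cX'/\cS)$ in $\QC(\cS)$ is $f_*$ of $\sHH(\cX/\cS)\to\overline{g}_*\sHH(\cX'/\cS)$. That is $f_*$ of the unit $\sHH(\cX/\cS)\to\overline{g}_*\overline{g}^*\sHH(\cX/\cS)$ followed by your sheaf equivalence, not the pushforward of the sheaf equivalence alone. The unit is not an equivalence for \'etale $\overline{g}$. With $\cX=\cS=\Spec k$ and $\cX'=\Spec k\sqcup\Spec k$, the map is the diagonal $k\to k\times k$. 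So the module half of (3) is false as stated. What your argument actually proves is $\HH(\cX'/\cS)\simeq f_*\overline{g}_*\overline{g}^*\sHH(\cX/\cS)$.
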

\begin{proof}
    The three arguments are all of the same flavor: we examine the induced morphisms on associated graded pieces, show that we have equivalences there, and appeal to completeness of the HKR filtration.
    \begin{enumerate}
        \item On associated graded pieces, we obtain (shifts of) the canonical equivalence
        \[
        \overline{g}^*\bigwedge^n \LL_{\cX/\cS}\simeq\bigwedge^n \overline{g}^* \LL_{\cX/\cS}\to\bigwedge^n \LL_{\cX'/\cS'}.
        \]
        The result then follows by completeness of the HKR filtration.
        \item On associated graded pieces, this becomes the morphism induced by applying $\bigwedge^n$ to the canonical map $\LL_{\cX'/\cS}\to \LL_{\cX'/\cS'}$ (and shifting). We have an identification
        \[
        \fib\left(\bigwedge^n \LL_{\cX'/\cS}\to\bigwedge^n \LL_{\cX'/\cS'}\right)\simeq f^* \LL_{\cS'/\cS}\otimes\bigwedge^n \LL_{\cX'/\cS},
        \]
        and since $\cS'\to\cS$ is \'etale, $L_{\cS'/\cS}$ vanishes. Alternatively, we can realize that the canonical map on the first associated graded piece
        \[
        \LL_{\cX'/\cS}\to \LL_{\cX'/\cS'}
        \]
        is an equivalence, as the fiber is $f^*\LL_{S'/S},$ which vanishes since $S'\to S$ is \'etale.
        \item Considering the composition $\cX'\to\cX\to\cS$ produces the cofiber sequence
        \[
        \overline{g}^* \LL_{\cX/\cS}\to \LL_{\cX'/\cS}\to \LL_{\cX'/\cX},
        \]
        and as in the previous results, $\LL_{\cX'/\cX}$ vanishes by the \'etale assumption. In the same way, we obtain the desired equivalences on all associated graded pieces (being exterior powers and shifts of this equivalence), and completeness of the HKR filtration completes the proof.
    \end{enumerate}
\end{proof}

Consider the functor
\[
D : \Arr(\dStk)^{\op}\to\QC
\]
which is ``given by'' $D(\cX/\cS) = (\cX,\Delta^*\Delta_*\cO_{\cX/\cS}).$\footnote{One can define this functor precisely using coCartesian transport for $\QC.$} Because $D$ and $\sHoch$ agree on $\Arr(\dAff)^{\op},$ the universal property of the right Kan extension guarantees a unique (up to contractible choice) map of functors $\eta : D\to\sHH,$ and thus a unique map $\eta_{\cX/\cS} : \Delta^*\Delta_*\cO_{\cX/\cS}\to\sHH(\cX/\cS)$ (which in fact exhibits $\sHoch$ as the sheafification of $D$).

\begin{proposition}\label{relative HH is self intersection of diagonal for relative derived scheme}
    Let $\cX\to\cS$ be a relative derived scheme (i.e., a map of derived stacks such that for any map $\Spec A\to\cS,$ $\cX\times_{\cS}\Spec A$ is a derived scheme). Then $\Delta^*\Delta_*\cO_{\cX/\cS}$ and $\sHH(\cX/\cS)$ are canonically equivalent via $\eta_{\cX/\cS}.$ In particular, if $\cX$ is a derived scheme and $\cS$ is a derived algebraic stack with affine diagonal, this holds.
\end{proposition}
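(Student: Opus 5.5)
The plan is to show that $D$ restricted to relative derived schemes is already a Zariski (hence, by \ref{HH base change results}, sufficiently local) sheaf, and that $\eta_{\cX/\cS}$ is an equivalence locally. We proceed in three steps.

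\textbf{Reduction to affine base.} Both sides are compatible with base change along $\Spec A\to\cS$. For $\sHH$ this is \ref{HH base change results}(1), or directly the right Kan extension formula. For $D$ we use that the diagonal of $\cX_A\to\Spec A$ is the base change of $\Delta_{\cX/\cS}$, together with base change for $\Delta_*$. Since $\Delta$ is a monomorphism-like closed/separated map of relative derived algebraic spaces, its pushforward satisfies base change by \ref{Base change for derived algebraic spaces} once we work locally. Since $\QC(\cX)\simeq\lim_{\Spec A\to\cS}\QC(\cX_A)$ and $\eta$ is natural, we may assume $\cS=\Spec k$ affine and $\cX$ a derived scheme.

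\textbf{Locality on $\cX$.} Choose an open affine cover $\{U_i=\Spec A_i\to\cX\}$. For $j:U\to\cX$ an open immersion, we have
\[
j^*\Delta^*\Delta_*\cO_{\cX}\simeq\Delta_U^*(j\times j)^*\Delta_*\cO_{\cX}\simeq\Delta_U^*\Delta_{U,*}\cO_U.
\]
This uses flat base change along the open immersion $U\times_k U\to\cX\times_k\cX$, together with $\Delta^{-1}(U\times U)=U$. Again this is base change for a qcqs situation, which holds by \ref{Base change for derived algebraic spaces} after restricting to quasi-compact opens. On the other side, $j^*\sHH(\cX/k)\simeq\sHH(U/k)$ by \ref{HH base change results}(3). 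Naturality of $\eta$ reduces us to checking that $\eta_{U/k}$ is an equivalence for $U$ affine, because equivalences in $\QC(\cX)$ are detected on an open cover.

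\textbf{Affine case.} For $U=\Spec A$ over $\Spec k$, we have $D(U/k)=A\otimes_{A\otimes_kA}A=\sHH(A/k)$, and $\eta$ is the identity by construction, since $D$ and $\sHoch$ agree on $\Arr(\dAff)^{\op}$. This completes the argument.

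The final clause of the statement follows because, if $\cX$ is a derived scheme and $\cS$ has affine diagonal, then $\cX\times_\cS\Spec A$ is an open-and-affine-glued derived scheme. Indeed, $\Spec A\to\cS$ is affine, so $\cX_A\to\cX$ is affine and $\cX_A$ is a derived scheme.

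The main obstacle is the compatibility of $D$ with base change, that is, of $\Delta_*$ along $\Spec A\to\cS$ and along open immersions. We will first restrict to quasi-compact opens of $\cX_A$, so that \ref{Base change for derived algebraic spaces} applies to the separated, qcqs diagonal $U\to U\times_k U$. We will then assemble via Zariski descent of $\QC$, using that $\Delta_*$ commutes with the limit over the cover because it is a right adjoint and pushforward satisfies descent.
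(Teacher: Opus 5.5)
Your proposal follows essentially the same route as the paper's proof, in particular its ``alternatively'' argument. You reduce to an affine base using base change for both $\Delta^*\Delta_*\cO_{\cX/\cS}$ and $\sHH$ together with naturality of $\eta$, then Zariski-localize on $\cX$ using \'etale invariance of $\sHH$ and open base change for $\Delta_*$, and conclude because $D$ and $\sHoch$ agree on affines.

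The one soft spot is base change for $\Delta_*$, which the paper also passes over as a ``standard calculation.'' It requires the diagonal to be quasi-compact, i.e.\ $\cX\to\cS$ quasi-separated, and restricting to quasi-compact opens does not supply this. So you should either add that hypothesis or invoke base change for qcqs morphisms, rather than the corollary for qcqs spaces.
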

\begin{proof}
    To show that $\eta_{\cX/\cS}$ is an equivalence when $\cX\to\cS$ is a relative derived scheme, it is enough to show that for any Cartesian diagram
    \[\begin{tikzcd}
        \cX'\ar[r,"p"]\ar[d] &\cX\ar[d,"f"]\\
        \Spec A\ar[r] & \cS
    \end{tikzcd}\]
    of derived stacks, the map $\eta_{\cX/\cS}$ induces an equivalence
    \[
    p^*\eta_{\cX/\cS} : p^*\Delta^*\Delta_*\cO_{\cX/\cS}\to p^*\sHH(\cX/\cS).
    \]
    A standard calculation shows that $p^*\Delta^*\Delta_*\cO_{\cX/\cS}\simeq\Delta^*\Delta_*\cO_{\cX'/A},$ and by \ref{HH base change results} we have $p^*\sHH(\cX/\cS)\simeq\sHH(\cX'/A).$ Moreover, $p^*\eta_{\cX/\cS}\simeq\eta_{\cX'/A}$ (by naturality), so we reduce to the situation when $\cS = \Spec A$ is affine and $\cX = X$ is a derived scheme.

    The result then follows from the facts that the assignment $U\mapsto \cL_{S} U$ is a Zariski cosheaf on $X,$ that $U\mapsto (U,\sHH(U/S))$ is a Zariski sheaf, and that for $U = \Spec B\to \Spec A,$ the canonical maps 
    \[
    (p_1)_*\cO_{\cL_{S} U}\to\Delta_*\Delta^*\cO_{U/S}\to\sHH(U/S)
    \]
    are equivalences. Alternatively, for $u : U\to X$ an open immersion, we have $u^*\Delta^*\Delta_*\cO_{X/S}\to \Delta^*\Delta_*\cO_{U/S}$ and $u^*\sHH(X/S)\to\sHH(U/S)$ via the canonical maps, so naturality of $\eta$ implies the result.
\end{proof}

\begin{remark}
    We expect that the previous proposition holds in greater generality. In particular, Olsson's result relating classical sheafy Hochschild homology to the pushforward and (derived) pullback of the structure sheaf along the diagonal holds without the assumption that the map is a relative derived scheme. However, we will only apply this result for morphisms $\cX\to\cS$ where $\cX$ is an affine derived scheme and $\cS$ has affine diagonal, so that $\cX\to\cS$ is indeed a relative derived scheme.
\end{remark}

We also have a stronger relative version of the earlier descent results.

\begin{theorem}[\protect{\cite[Theorem~6.2]{FiltrationsandcohomologyII}}]\label{Canonical descent}
    The functors
    \[
    \HH(-) : \Arr(\aRing)\to \aRing
    \]
    and
    \[
    \FilHH{*} : \Arr(\aRing)\to\Fil(\aRing)
    \]
    satisfy descent for the canonical topology on $\Arr(\aRing).$
\end{theorem}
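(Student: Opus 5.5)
The statement to prove is \ref{Canonical descent}: $\HH(-)$ and $\FilHH{*}$ satisfy descent for the canonical topology on $\Arr(\aRing)$. The plan is to reduce the filtered statement to descent for the associated graded pieces, and then to reduce those to cotangent complex descent.

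\textbf{Step 1: reduce to a graded statement.} Take a canonical cover in $\Arr(\aRing)$, that is, a map $(k\to A)\to(k'\to B)$ whose \v{C}ech conerve $(k^\bullet\to B^\bullet)$ is a universal effective descent diagram. We must show that
\[
\FilHH{*}(A/k)\to\Tot\,\FilHH{*}(B^\bullet/k^\bullet)
\]
is an equivalence of filtered objects. Since $\Tot$ commutes with limits, the target is again a complete $\NN$-indexed filtration. By \cite[Lemma 2.15]{GwilliamPavlovEnhancing} it therefore suffices to check the map on associated graded pieces. The cofiber $\gr$ is a finite colimit, and in a stable setting it commutes with $\Tot$. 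By \ref{HKR for CRing} and its animated extension, the $i$th graded map is the shift by $[i]$ of
\[
\bigwedge^i\LL_{A/k}\to\Tot\,\bigwedge^i\LL_{B^\bullet/k^\bullet}.
\]
The unfiltered statement for $\HH(-)$ then follows by evaluating at filtration degree $0$. This is a right adjoint, so it preserves the limit. Equivalently, it follows from completeness.

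\textbf{Step 2: descent for exterior powers of the cotangent complex.} Exterior powers of the cotangent complex satisfy descent on arrows; this is \ref{cotangent complex is fpqc sheaf on morphisms}, with its filtration induction modelled on \cite{BMS}. The difficulty is that the canonical topology is strictly finer than fpqc, so that proof does not apply verbatim. I would rerun it with a single change. Its two inputs were Bhatt's vanishing of $\Tot\,\bigwedge^n\LL_{B^\bullet/A}$ and a contracting homotopy coming from a section after flat base change. I would replace the first by an argument valid for any universal effective descent morphism $A\to B$: if $-\otimes_A B$ is comonadic and the \v{C}ech conerve is a universal limit diagram, then $M\simeq\Tot(M\otimes_A B^\bullet)$ for every $A$-module $M$. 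After base change along $A\to B$ the conerve acquires a section, and the functors $\bigwedge^n\LL_{-/A}$ then give split cosimplicial objects with vanishing totalization. Faithfulness of the descent lets us conclude before base change.

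\textbf{Main obstacle.} I expect the main obstacle to be the last point of Step 2: showing that a \emph{canonical} (not necessarily flat) descent morphism has enough conservativity. Concretely, we need $\Tot$ to commute with $-\otimes_A B$ on the relevant cosimplicial modules, which in general holds only under a uniform connectivity bound. My first attempt would exploit the fact that every term $\bigwedge^n\LL$ is connective, so the totalizations can be computed degreewise through a finite stage. This should make base change along $A\to B$ commute with $\Tot$, and then the split-after-base-change argument goes through. If this fails, I would instead cite \cite{FiltrationsandcohomologyII} directly and only verify that their hypotheses match the definitions used here.
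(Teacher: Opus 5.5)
Your Step~1 is sound, and it is exactly how the paper handles the only version it actually proves: fpqc descent, in the proposition right after this theorem, via completeness of the HKR filtration plus \ref{cotangent complex is fpqc sheaf on morphisms}. For the canonical topology the paper gives no argument at all; it quotes \cite{FiltrationsandcohomologyII}, so your fallback is literally what the paper does.

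As a self-contained proof, however, Step~2 has a genuine gap at the point you flag, and your proposed repair does not work. Connectivity of the terms does not make $\Tot$ computable through finitely many stages. The fibre of $\Tot_m\to\Tot_{m-1}$ is $N^m[-m]$, with $N^m$ a summand of the $m$th term. For connective terms this fibre is merely $(-m)$-connective, so infinitely many stages contribute to each homotopy group. It is a uniform \emph{upper} bound on the terms that gives finite-stage computation. Even that bound is destroyed by $-\otimes_A B$ when $B$ is not flat, e.g.\ for the descendable, hence canonical, cover $A\to A/I$ with $I$ nilpotent. Likewise, you cannot assume comonadicity of $-\otimes_A B$ on all of $\cMod_A$ for an arbitrary canonical cover.

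The missing idea is that you never need $\Tot$ to commute with base change. Apply universal effective descent for $A\to B$ to the square-zero extension $A\oplus M$. This gives $M\simeq\Tot_m(M\otimes_A B^m)$ for every \emph{connective} $A$-module $M$, which is all you need. Every cosimplicial object you must control is degreewise connective. Examples are $\bigwedge^{i-j}\LL_{A/k}\otimes_A\bigwedge^j\LL_{B^\bullet/A^\bullet}$ with $A^\bullet=A\otimes_k k^\bullet$. Hence
\[ \Tot_n X^n\simeq\Tot_n\Tot_m\bigl(X^n\otimes_A B^m\bigr)\simeq\Tot_m\Tot_n\bigl(X^n\otimes_A B^m\bigr). \]
For each fixed $m$ the inner object is split coaugmented with zero coaugmentation (for $j\geq 1$). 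Indeed, after base change to $B^m$, the pair $(A^\bullet,B^\bullet)$ becomes the \v{C}ech conerve in $\Arr(\aRing)$ of $(B^m\to B^m)\to(k'\otimes_k B^m\to B\otimes_A B^m)$. That map has a retraction built from $k'\to B\to B^m$. No hypothesis on $k\to k'$ is needed, consistent with the remark after the theorem. The $j=0$ piece is handled by connective module descent. With this exchange of totalizations replacing flat base change, your induction closes, and it even bypasses Bhatt's vanishing result.
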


\begin{remark}
    A cover $\{(A/k)\to (A_i/k_i)\}_i$ in the canonical topology on $\Fun(\Delta^1,\aRing)$ in the above reference is a collection of maps such that $\{A\to A_i\}$ is a canonical cover of $A$ in $\aRing.$ In particular, note that there is no condition on the maps $k\to k_i,$ so the result cited is stronger than what one obtains using our definition of the canonical topology.
\end{remark}

We do not need the full power of the above; descent for the fpqc topology will suffice. We suspect that the following proposition can be formally deduced for the canonical topology from \ref{Canonical descent}, but we can also obtain the fpqc case as a corollary of \ref{cotangent complex is fpqc sheaf on morphisms}.

\begin{proposition}
    The functors
    \[
    \sFilHH{*} : \Arr(\aRing)\to\QCF
    \]
    and
    \[
    \FilHH{*} : \Arr(\aRing)\to\QCF
    \]
    are sheaves for the fpqc topology on $\Arr(\aRing).$
\end{proposition}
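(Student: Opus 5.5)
The plan is to reduce the statement to the descent result for exterior powers of the cotangent complex, \ref{cotangent complex is fpqc sheaf on morphisms}, by means of the completeness of the HKR filtration on affines. The first step is to make precise what must be checked. Take $(k\to A)$ in $\Arr(\aRing)$, and take a cover $(k\to A)\to(k'\to B)$ with $k\to k'$ and $A\to B$ faithfully flat. Let $k^\bullet=\Cech(k\to k')$ and $B^\bullet=\Cech(A\to B)$. Since limits in $\QCF$ over a fixed base are computed by pushing forward and taking the limit in the fiber, exactly as in the proof of \ref{cotangent complex is fpqc sheaf on morphisms}, we must show two things. First, the canonical map
\[
\sFilHH{*}(A/k)\to \lim_{n\in\Delta}\sFilHH{*}(B^n/k^n)
\]
is an equivalence in $\Fun(\ZZ^{\op},\cMod_A)$, where each term is viewed as an $A$-module via restriction. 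Second, the analogous statement holds for $\FilHH{*}$ in $\Fun(\ZZ^{\op},\cMod_k)$. For the latter, the pushforward description of \ref{pushforward compatibility of HH} lets us deduce it from the former by applying the restriction functor $\cMod_A\to\cMod_k$, which preserves limits. So the main work is the sheafy version.

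The second step is to show that both sides are complete filtered objects. The source is complete by \ref{HKR for CRing} and its animated generalization. Each $\sFilHH{*}(B^n/k^n)$ is complete, and complete filtered objects are closed under limits, because $\lim_i$ commutes with $\lim_\Delta$. By \cite[Lemma 2.15]{GwilliamPavlovEnhancing}, it therefore suffices to check that the map is an equivalence on each associated graded piece. Since $\gr$ is a finite colimit in each degree, computed as a cofiber in a stable category, it commutes with the totalization. The $i$th graded map is then identified, by the naturality statement proved just above (that $[-i]\circ\ev_i\circ\gr\circ\sFilHH{}$ agrees with $\bigwedge^i\bfL^{\Kan}$ as functors), with the $[i]$-shift of
\[
\wedge^i_A\LL_{A/k}\to\lim_{n}\wedge^i_{B^n}\LL_{B^n/k^n}.
\]
This map is an equivalence by \ref{cotangent complex is fpqc sheaf on morphisms}.

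The step I expect to be the main obstacle is the identification of graded pieces functorially in the cosimplicial direction. The equivalences $\gr^i\simeq\bigwedge^i\LL[i]$ must be compatible with the cosimplicial structure maps, including the base change along $k^m\to k^n$ on the base. A merely objectwise identification would not suffice. I would handle this by invoking the functor-level agreement on $\Arr(\dAff)^{\op}$ established above, obtained by left Kan extending from $\PolyPoly$ and using naturality of the antisymmetrization maps. That agreement is an equivalence of functors into $\QC$, so it is automatically compatible with all the transition maps. A secondary check is that $\sFilHH{*}$ lands in $\NN$-indexed filtrations, so the limit over $\ZZ^{\op}$ is taken degreewise. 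Completeness of the limit is then immediate.
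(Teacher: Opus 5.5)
Your proposal is correct and uses the same ingredients as the paper: completeness of the HKR filtration, the functorial identification of its graded pieces with shifted wedge powers of the cotangent complex, and the descent result \ref{cotangent complex is fpqc sheaf on morphisms}. The paper follows \cite[Corollary 3.3]{BMS}, showing by induction that the quotients of $\sHH$ by the steps of the HKR filtration are fpqc sheaves and then passing to the limit by completeness. You instead check the map on associated graded pieces and conclude via \cite[Lemma~2.15]{GwilliamPavlovEnhancing}; this is an equivalent packaging of the same argument, and you are rightly more explicit than the paper that the identification $\gr^i\simeq\bigwedge^i\LL[i]$ must hold as functors on affines.
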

\begin{proof}
    The proof is essentially identical to \cite[Corollary 3.3]{BMS}. In particular, \ref{cotangent complex is fpqc sheaf on morphisms} and induction will imply that the quotients $\sHH(-)/\cF\operatorname{il}^n_{\HKR}$ are fpqc sheaves for all $n,$ and by completeness, taking the limit produces $\sHH(-)$ which is a sheaf. The same proof works for $\FilHH{*}.$
\end{proof}

\section{Logarithmic Hochschild Homology}

In this section, we will recall the definitions of logarithmic Hochschild homology due to Olsson and Rognes, and culminate with our theorem comparing the two.

\subsection{After Olsson}

In \cite{Olssonloghh}, Olsson makes the following definition.

\begin{mydef}
    Let $f : X\to S$ be a log flat morphism of fine log schemes, and consider the associated morphism $\underline{X}\to\cLog_S.$ The \emph{Olsson logarithmic Hochschild homology} of $f$ is given by
    \[
    \sHH^{\Ols,\cl}(X/S) := \sHH^{\cl}(\underline{X}/\cLog_S),
    \]
    where $\sHH^{\cl}$ is the construction of \ref{Olsson stacky HH}.
\end{mydef}

Using this definition, together with \ref{sheafy classical HKR}, we obtain the following logarithmic version of the HKR theorem.

\begin{theorem}
    Let $f : X\to S$ be a log flat map of fine log schemes. Then there are isomorphisms
    \[
    \epsilon_n : \Omega^n_{X/S}\to\sHH^{\Ols,\cl}_{X/\cS,n}
    \]
    are isomorphisms for all $n.$
\end{theorem}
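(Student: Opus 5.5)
The plan is to deduce this from Olsson's sheafy HKR theorem (\ref{sheafy classical HKR}) applied to the morphism of stacks $f_{\cM_X} : \underline{X}\to\cLog_S$. By definition $\sHH^{\Ols,\cl}(X/S) = \sHH^{\cl}(\underline{X}/\cLog_S)$, so \ref{sheafy classical HKR} provides antisymmetrization maps
\[
\epsilon_n : \Omega^n_{\underline{X}/\cLog_S}\to\sHH^{\cl}_{\underline{X}/\cLog_S,n},
\]
and these are isomorphisms for all $n$ as soon as $\underline{X}\to\cLog_S$ is smooth. The argument therefore splits into two steps: identifying the stacky differentials $\Omega^n_{\underline{X}/\cLog_S}$ with the logarithmic differentials $\Omega^n_{X/S}$, and checking the smoothness hypothesis.

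For the identification, I would argue that $\Omega^1_{\underline{X}/\cLog_S}\cong \cH_0(\LL_{\underline{X}/\cLog_S}) = \cH_0(\LOls_{X/S})$. Low-degree agreement with Gabber's complex (\cite[Theorem~8.32]{Olssoncotangent}) then gives $\cH_0(\LOls_{X/S})\cong\cH_0(\LGab_{X/S})\cong\Omega^1_{X/S}$. The last isomorphism holds because $\LGab$ is the animation of the log differentials functor, so its $\pi_0$ is $\Omega^1$. Taking exterior powers gives $\Omega^n_{\underline{X}/\cLog_S}\cong\Omega^n_{X/S}$. This identification is compatible with the cdga structures, since both sides are generated in degree $1$, so the composite map $\Omega^n_{X/S}\to\sHH^{\Ols,\cl}_{X/S,n}$ is the log antisymmetrization map.

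The smoothness step is the one I expect to be the main obstacle. Olsson's \cite[Theorem~4.6]{OlssonLogGeomandAlgStacks} translates log flatness of $f$ into flatness of $f_{\cM_X}$ and log smoothness of $f$ into smoothness of $f_{\cM_X}$. But the statement as written assumes only that $f$ is log flat. With log flatness alone, \ref{sheafy classical HKR} only guarantees that $\epsilon_1$ is an isomorphism, and the higher $\epsilon_n$ can genuinely fail to be isomorphisms, for example for a non-smooth flat map with trivial log structures. So I would add the hypothesis that $f$ is log smooth, which I take to be the intended reading. Under that hypothesis $f_{\cM_X}$ is smooth, \ref{sheafy classical HKR} applies in all degrees, and the theorem follows.

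If one wanted to avoid citing \ref{sheafy classical HKR}, an alternative route is to use \ref{Smooth HKR filtration is Whitehead} together with \ref{Description of associated graded for HKR filtration}. Smoothness of $f_{\cM_X}$ makes the HKR filtration equal to the Whitehead filtration, whose graded pieces are $\bigwedge^n\LOls_{X/S}[n]\simeq\Omega^n_{X/S}[n]$, using Olsson's property that $\LOls_{X/S}\simeq\Omega^1_{X/S}[0]$ for log smooth $f$. Reading this off on homology sheaves gives the claimed isomorphisms. This second route would only require checking that the flat-base definition of $\sHH^{\cl}$ agrees with the derived $\sHH$ here.
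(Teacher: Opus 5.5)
Your approach is the paper's: it obtains the theorem in one line by applying \ref{sheafy classical HKR} to $f_{\cM_X}\colon\underline{X}\to\cLog_S$, and your identification $\Omega^n_{\underline{X}/\cLog_S}\cong\Omega^n_{X/S}$ makes explicit a step the paper leaves tacit. You are also right about the hypothesis. That citation gives all $\epsilon_n$ only when $\underline{X}\to\cLog_S$ is smooth, i.e.\ when $f$ is log smooth by \cite[Theorem~4.6]{OlssonLogGeomandAlgStacks}, and log flatness alone only guarantees $\epsilon_1$; for example, $k\to k[x]/(x^2)$ with trivial log structures has $\Omega^2=0$ but $\HH_2(k[x]/(x^2)/k)\neq 0$, so the statement should assume $f$ log smooth, as the paper's deduction implicitly does.
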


With our definition of relative Hochschild homology for maps of derived stacks, it is simple to extend Olsson's logarithmic Hochschild homology to the non-flat case.

\begin{mydef}\label{HHOls definition}
    Let $f : X\to S$ be a morphism of fine log schemes. Then we define the \emph{Olsson log Hochschild homology} and \emph{Olsson log Hochschild homology sheaves} as
    \begin{align*}
        \HHOls(X/S) &:= R\Gamma(\cLog_S,\HH(\underline{X}/\cLog_S)),\\
        \sHHOls(X/S) &:= \sHH(\underline{X}/\cLog_S).
    \end{align*}
\end{mydef}

Our definition of $\HHOls(X/S)$ differs slightly from the expected definition so that we can view $\HHOls(X/S)$ as an object of $\cD(\ZZ)$ (another option would be to view it as an object of $\QC(\underline{S}),$ but working in $\cD(\ZZ)$ will be more convenient for our later comparisons).

Using our previously established results for the Hochschild homology of derived stacks along with results in \cite{OlssonLogGeomandAlgStacks}, we can immediately deduce a number of nice results for the Olsson-log Hochschild homologies.

First of all, there is a logarithmic version of the HKR theorem, which is an immediate corollary of our previously established HKR theorem and the definition \ref{HHOls definition}.

\begin{theorem}\label{Log HKR for HHOls}
    Suppose that $f : X\to S$ is a map of fine log schemes, with associated map $f_{\cM_X} : \underline{X}\to\cLog_S.$ Then $\HHOls(X/S)$ and $\sHHOls(X/S)$ admit complete, descending, $\NN$-indexed filtrations, with associated graded pieces given by
    \[
    \gr_{\HKR}^i\HHOls(X/S)\simeq R\Gamma\left(\underline{X},\bigwedge^i \LOls_{X/S}\right)[i]
    \]
    and
    \[
    \gr_{\HKR}^i\sHHOls(X/S)\simeq\left(\bigwedge^i \LOls_{X/S}\right)[i].
    \]
\end{theorem}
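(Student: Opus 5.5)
The plan is to deduce the theorem formally from \ref{Existence of HKR filtration} and \ref{Description of associated graded for HKR filtration}, applied to the morphism of classical (hence derived geometric) stacks $f_{\cM_X} : \underline{X}\to\cLog_S$, together with the definition $\LOls_{X/S} := \LL_{\underline{X}/\cLog_S}$.

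For the sheaf version, note that $\cLog_S$ is an algebraic stack by \cite[Theorem~1.1]{OlssonLogGeomandAlgStacks}, and $\underline{X}$ is a scheme. Hence $\iota$ of the morphism $f_{\cM_X}$ is a geometric morphism of derived stacks, and so it admits a cotangent complex by \ref{existence of cotangent complex for geometric morphism}. By the comparison theorem at the end of the section on cotangent complexes of stacks, this derived cotangent complex agrees with the classical cotangent complex defining $\LOls_{X/S}$. Then \ref{Existence of HKR filtration} gives that $\sFilHH{*}(\underline{X}/\cLog_S)$ is a complete, descending, $\NN$-indexed filtration on $\sHHOls(X/S)$, and \ref{Description of associated graded for HKR filtration} identifies its graded pieces as $\bigwedge^i\LOls_{X/S}[i]$.

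For the global version, start with the filtration $\FilHH{*}(\underline{X}/\cLog_S)$ on $\HH(\underline{X}/\cLog_S)\in\QC(\cLog_S)$ and apply $R\Gamma(\cLog_S,-)$. This functor is the right adjoint to pullback along $\cLog_S\to\Spec\ZZ$, so it is exact and preserves limits. The $\NN$-indexing is therefore preserved. Completeness is also preserved, because $R\Gamma$ commutes with $\lim_i\Fil^i$. Exactness lets $R\Gamma$ commute with the cofibers computing $\gr^i$, which gives
\[
\gr^i_{\HKR}\HHOls(X/S)\simeq R\Gamma\bigl(\cLog_S,(f_{\cM_X})_*\textstyle\bigwedge^i\LOls_{X/S}[i]\bigr)\simeq R\Gamma\bigl(\underline{X},\textstyle\bigwedge^i\LOls_{X/S}\bigr)[i],
\]
where the last step uses $R\Gamma_{\cLog_S}\circ (f_{\cM_X})_*\simeq R\Gamma_{\underline{X}}$, since pushforwards compose as right adjoints of composing pullbacks.

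The only step needing care is identifying the derived cotangent complex of $\iota(\underline{X})\to\iota(\cLog_S)$ with Olsson's classical $\LOls_{X/S}$. This is supplied by the earlier comparison theorem for classical algebraic stacks. I would also check that $\iota$ preserves the relevant fiber products, so that the derived and classical constructions match; this holds because everything here is computed via the right Kan extension $\bfL^{\Kan}$. Beyond this identification, the argument is bookkeeping.
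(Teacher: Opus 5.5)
Your proposal is correct and is essentially the paper's argument: the paper just applies \ref{Existence of HKR filtration} and \ref{Description of associated graded for HKR filtration} to $f_{\cM_X}:\underline{X}\to\cLog_S$ and unwinds \ref{HHOls definition}, and your handling of $R\Gamma(\cLog_S,-)$ (exact, limit-preserving, with $R\Gamma_{\cLog_S}\circ(f_{\cM_X})_*\simeq R\Gamma_{\underline{X}}$) spells out that unwinding. The step you flag as needing care is in fact definitional: the paper sets $\LOls_{X/S}:=\LL_{\underline{X}/\cLog_S}\in\QC(\underline{X})$ to be the derived cotangent complex of the morphism of stacks, so you do not need to check any fiber-product compatibility of $\iota$.
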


\begin{lemma}\label{integral log etale invariance on the source for HHOls}
    Let $(k,P)\to (A,M)\to (B,N)$ be morphisms of fs semilog rings such that the induced map $f : \Spec(B,N)\to\Spec(A,M)$ is integral log-\'etale. Then
    \[
    f^*\sHHOls(\Spec(A,M)/\Spec(k,P))\to\sHHOls(\Spec(B,N)/\Spec(k,P))
    \]
    and
    \[
    \HHOls(\Spec(A,M)/\Spec(k,P))\otimes_A B\to\HHOls(\Spec(B,N)/\Spec(k,P))
    \]
    are equivalences.
\end{lemma}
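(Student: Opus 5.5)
Write $X = \Spec(B,N)$, $Y = \Spec(A,M)$, $S = \Spec(k,P)$. We compare $\sHH(\underline{X}/\cLog_S)$ with $f^*\sHH(\underline{Y}/\cLog_S)$ using the HKR filtration, as in the proof of \ref{HH base change results}.

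The comparison map comes from the commutative square in $\Arr(\dStk)$ formed by $\underline{f} : \underline{X}\to\underline{Y}$ over $\id_{\cLog_S}$. This square exists because $f_{\cM_X} : \underline{X}\to\cLog_S$ factors through $f_{\cM_Y}\circ\underline{f}$ up to the canonical isomorphism $f^*_{\log}\cM_Y\to\cM_X$? That is not quite right: $f$ need not be strict. Instead, one uses the $2$-commutative triangle $\underline{X}\to\cLog_Y\to\cLog_S$. Here $\underline{X}\to\cLog_Y$ classifies $X/Y$, and $\cLog_Y\to\cLog_S$ is the natural map. Composing with $\underline{Y}\to\cLog_Y$ is not available directly. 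Instead, I would factor through the relative diagonal, or simply use functoriality of $\sHoch$ along the morphism $(\underline{X}\to\cLog_S)\to(\underline{Y}\to\cLog_S)$. This morphism exists because $\underline{X}\to\cLog_S$ is the composite $\underline{X}\to\cLog_Y\to\cLog_S$, and the comparison is given by the pullback map $\underline{f}^*\LL_{\underline{Y}/\cLog_S}\to\LL_{\underline{X}/\cLog_S}$ arising from the chain $\underline{X}\to\cLog_Y\leftarrow \underline{Y}$. Concretely, I would use the commutative square with $\underline{X}\to\underline{Y}$ on top and $\cLog_S$ on the bottom, which gives the map $f^*\sHHOls(Y/S)\to\sHHOls(X/S)$ by functoriality of $\sHoch$ (\ref{relativeHH}).

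By completeness of the HKR filtration (\ref{Existence of HKR filtration}) and \ref{Description of associated graded for HKR filtration}, it suffices to show that the induced map on associated gradeds is an equivalence. On the $i$th graded piece this map is $\bigwedge^i$ of $\underline{f}^*\LOls_{Y/S}\to\LOls_{X/S}$, shifted by $[i]$. So it suffices to show that this single map is an equivalence. The composite $X\to Y\to S$ satisfies condition (T) because $f$ is integral (\cite[3.15]{Olssoncotangent}). The cofiber sequence from property (4) of Olsson's theorem therefore identifies the cofiber with $\LOls_{X/Y}$, and it remains to show $\LOls_{X/Y}\simeq 0$. Since $f$ is log étale, $\underline{X}\to\cLog_Y$ is étale by \cite[Theorem~4.6]{OlssonLogGeomandAlgStacks}. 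Hence $\LOls_{X/Y}=\LL_{\underline{X}/\cLog_Y}\simeq 0$, by the cotangent-complex criterion for étaleness, transported to stacks via an atlas.

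For the global version, $\HHOls(-)$ is $R\Gamma$ of $\HH(\underline{X}/\cLog_S)$. By \ref{pushforward compatibility of HH} this equals $R\Gamma(\underline{X},\sHHOls(X/S))$, which is $\sHHOls(X/S)$ itself since $\underline{X}$ is affine. The affine tensor statement then follows from the sheaf statement by taking global sections on $\Spec B$, using that pullback along $\Spec B\to\Spec A$ is $-\otimes_A B$.

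The main obstacle is making the comparison map and its identification on graded pieces precise. One must check that the map on $\gr^1$ produced by the functoriality of $\sHoch$ agrees with Olsson's map in the cofiber sequence of property (4). That property was stated only in the homotopy category and under condition (T), so its triangle has to be matched with the $\infty$-categorical cotangent complex of $\underline{X}\to\cLog_Y\to\cLog_S$. I would first try to avoid property (4) altogether: use \ref{cotangent complex cofiber sequence} for the composite $\underline{X}\to\underline{X}\times_{\cLog_S}\cLog_Y$, and argue that integrality makes the relevant fiber product behave as expected.
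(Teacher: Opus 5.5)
Your argument is essentially the paper's: completeness of the HKR filtration reduces everything to $\gr^1$, Olsson's distinguished triangle applies because $f$ is integral so condition (T) holds \cite[3.15]{Olssoncotangent}, and the cofiber $\LOls_{X/Y}=\LL_{\underline{X}/\cLog_Y}$ vanishes because $f$ is log \'etale. One caution: the square you settle on, with $\underline{X}\to\underline{Y}$ over $\id_{\cLog_S}$, does not commute unless $f$ is strict, since the two maps $\underline{X}\to\cLog_S$ classify $\cM_X$ and $\underline{f}^*\cM_Y$ respectively; the comparison map should instead be taken as the canonical functoriality map, which is what the paper implicitly does, and not built from that square.
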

\begin{proof}
    The proof is the same in both cases, the only difference being whether the objects are considered as $B$-modules or as their underlying $k$-modules.

    By \cite[example 3.15]{Olssoncotangent}, the map on the first associated graded piece sits in a distinguished triangle/cofiber sequence
    \[
    \LOls_{\Spec(A,M)/\Spec(k,P)}\otimes_A B\to\LOls_{\Spec(B,N)/\Spec(k,P)}\to\LOls_{\Spec(B,N)/\Spec(A,M)}\xrightarrow{+1}.
    \]
    As $\Spec(B,N)\to\Spec(A,M)$ is log-\'etale, it follows that $\LOls_{\Spec(B,N)/\Spec(A,M )}\simeq 0.$ This proves that the morphism induces an equivalence on the first associated graded piece, and thus that the morphism induces an equivalence on all associated graded pieces, so that the morphism is an equivalence by completeness of the HKR filtration.
\end{proof}

The proof of the following lemma is similar.
\begin{lemma}\label{integral log etale invariance on the base for HHOls}
    Let $(k,P)\to (A,M)\to (B,N)$ be morphisms of fs semilog rings such that the induced map $\Spec(A,M)\to\Spec(k,P)$ is integral log-\'etale. Then
    \[
    \sHHOls(\Spec(B,N)/\Spec(k,P))\to\sHHOls(\Spec(B,N)/\Spec(A,M))
    \]
    and
    \[
    \HHOls(\Spec(B,N)/\Spec(k,P))\to\HHOls(\Spec(B,N)/\Spec(A,M))
    \]
    are equivalences.
\end{lemma}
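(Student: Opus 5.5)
The plan mirrors the proof of \ref{integral log etale invariance on the source for HHOls}: compare HKR filtrations and reduce to the first associated graded piece.

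First, by functoriality of $\fsHoch$ and $\fHoch$ on $\Arr(\dStk)^{\op}$, the square
\[\begin{tikzcd}
    \underline{X}\ar[r,"\id"]\ar[d] &\underline{X}\ar[d]\\
    \cLog_{\Spec(k,P)} &\ar[l]\cLog_{\Spec(A,M)}
\end{tikzcd}\]
induces filtered maps $\sFilHH{*}(\underline{X}/\cLog_{\Spec(k,P)})\to\sFilHH{*}(\underline{X}/\cLog_{\Spec(A,M)})$, where $X=\Spec(B,N)$. Here the bottom map sends a log structure over $\Spec(A,M)$ to the composite log structure over $\Spec(k,P)$. We then apply $R\Gamma$. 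The analogous maps for $\HHOls$ are obtained by pushing forward and taking global sections. By \ref{Log HKR for HHOls}, both filtrations are complete, descending and $\NN$-indexed. By the proposition identifying $[-n]\circ\ev_n\circ\gr$ of $\sFilHH{}$ with $\bigwedge^n\bfL^{\Kan}$, the map on $\gr^i$ is the $i$th exterior power, shifted by $[i]$, of the functoriality map
\[
\LOls_{X/\Spec(k,P)}\to\LOls_{X/\Spec(A,M)}.
\]
Completeness, via \cite[Lemma 2.15]{GwilliamPavlovEnhancing}, then reduces the claim to showing that this single map is an equivalence. Passing to $R\Gamma$ and to $\HHOls$ is compatible, since limits commute with the filtration.

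Second, consider the composite $X\to\Spec(A,M)\to\Spec(k,P)$. The map $\Spec(A,M)\to\Spec(k,P)$ is log \'etale, hence log flat, so condition (T) holds by \cite[3.14]{Olssoncotangent}. The theorem on $\LOls$ then gives a cofiber sequence
\[
g^*\LOls_{\Spec(A,M)/\Spec(k,P)}\to\LOls_{X/\Spec(k,P)}\to\LOls_{X/\Spec(A,M)},
\]
where $g : X\to\Spec(A,M)$. Since $\Spec(A,M)\to\Spec(k,P)$ is log \'etale, it is log smooth, so $\LOls_{\Spec(A,M)/\Spec(k,P)}\simeq\Omega^1_{\Spec(A,M)/\Spec(k,P)}[0]=0$. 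Equivalently, $\Spec A\to\cLog_{\Spec(k,P)}$ is \'etale by \cite[Theorem~4.6]{OlssonLogGeomandAlgStacks}. The second map in the sequence is therefore an equivalence, which completes the argument.

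The main obstacle is checking that the second map in the cofiber sequence really is the functoriality map induced by the square of stacks above. Concretely, $\LOls_{X/\Spec(A,M)}=\LL_{\underline{X}/\cLog_{\Spec(A,M)}}$, and one needs the map $\cLog_{\Spec(A,M)}\to\cLog_{\Spec(k,P)}$ through which Olsson's triangle is constructed. I would settle this directly. Apply \ref{cotangent complex cofiber sequence} to $\underline{X}\to\cLog_{\Spec(A,M)}\to\cLog_{\Spec(k,P)}$. Then identify $\cLog_{\Spec(A,M)}\to\cLog_{\Spec(k,P)}$ as \'etale: by \cite{OlssonLogGeomandAlgStacks} it is the base change of $\Spec A\to\cLog_{\Spec(k,P)}$ along the log structure classification map. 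Its cotangent complex therefore vanishes, and \ref{cotangent complex cofiber sequence} makes the needed map an equivalence. Alternatively, \ref{HH base change results}(2) can be applied directly once this \'etaleness is known.
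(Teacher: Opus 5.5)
Your proposal is correct and is essentially the paper's argument. The paper proves this lemma the same way as the source-side version: completeness of the HKR filtration reduces everything to the first graded piece, and then Olsson's triangle for $\Spec(B,N)\to\Spec(A,M)\to\Spec(k,P)$ together with $\LOls_{\Spec(A,M)/\Spec(k,P)}\simeq 0$ finishes it, with condition (T) supplied here, as you note, by log flatness via \cite[3.14]{Olssoncotangent} rather than by integrality. Your fallback via \'etaleness of $\cLog_{\Spec(A,M)}\to\cLog_{\Spec(k,P)}$ and \ref{HH base change results}(2) is also valid and avoids matching Olsson's triangle maps with the functoriality maps; however, that morphism is not literally a base change of $\Spec A\to\cLog_{\Spec(k,P)}$, so quote its \'etaleness directly from \cite{OlssonLogGeomandAlgStacks}, as the paper does in the subsequent descent proposition.
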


\begin{proposition}
    The functors
    \[
    \HHOls,\LOls : \Arr(\LogAff)^{\op}\to\DF(\ZZ)
    \]
    are integral log-\'etale sheaves.
\end{proposition}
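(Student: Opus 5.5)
We have to show that $\HHOls$ and $\LOls$, viewed as presheaves on $\Arr(\LogAff)$, satisfy descent for integral log-\'etale covers. The strategy mirrors the proof of the analogous statement for $\LGab$ in \cite[Proposition~7.20]{BLPO_HKR}. We reduce to two one-sided descent statements, one on the source and one on the target, and we use completeness of the HKR filtration to pass from $\LOls$ to $\HHOls$.

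\emph{Step 1: reduction to one-sided covers.} A cover in the integral log-\'etale topology on $\Arr(\LogAff)$ is a map of arrows $(X\to S)\to(X_i\to S_i)$ with $\{S_i\to S\}$ and $\{X_i\to X\}$ integral log-\'etale covers. As in \cite[Section~7.1]{BLPO_HKR}, such a cover factors through $X\times_S S_i\to S_i$. So it suffices to treat two kinds of cover separately.
\begin{enumerate}
\item Covers where only the source varies: $(X_i\to S)\to (X\to S)$ with $\{X_i\to X\}$ an integral log-\'etale cover.
\item Covers where only the base varies, with the source replaced by $X\times_S S_i$, or by a further integral log-\'etale refinement of it.
\end{enumerate}
Lemma~\ref{integral log etale invariance on the base for HHOls} and its $\LOls$-analogue (the cofiber sequence of \cite[Example~3.15]{Olssoncotangent}, using that an integral morphism satisfies condition (T)) show that for the second kind of cover the transition maps are equivalences. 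This reduces everything to descent on the source with the base $S$ fixed.

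\emph{Step 2: descent for $\LOls$ on the source.} Fix $S=\Spec(k,P)$ and let $\{X_i\to X\}$ be an integral log-\'etale cover with \v{C}ech nerve $X_\bullet$. Lemma~\ref{integral log etale invariance on the source for HHOls}, or more precisely its first graded piece, gives
\[
\LOls_{X_n/S}\simeq \LOls_{X/S}\otimes_{\cO_X}\cO_{X_n}.
\]
Descent then amounts to showing that $R\Gamma(X_n,\LOls_{X/S}\otimes\cO_{X_n})$ totalizes to $R\Gamma(X,\LOls_{X/S})$, that is, that $\cO$-modules of the form $\cF\otimes\cO$ satisfy integral log-\'etale descent. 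The same statement is needed for the wedge powers $\bigwedge^i\LOls_{X/S}$, and it follows the same way. I would deduce this exactly as in the proof of \cite[Proposition~7.20]{BLPO_HKR}: quasi-coherent modules satisfy strict \'etale (indeed fpqc) descent, and integral log-\'etale covers are refined by strict \'etale covers composed with log modifications of a controlled form (Kato's chart criterion together with integrality). This is the step I expect to be the main obstacle. BLPO handle it for the same class of sheaves, so I would invoke their argument verbatim rather than reprove it. It is the \emph{only} non-formal input, since after this the cotangent complexes of the two theories play identical roles.

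\emph{Step 3: passing to $\HHOls$.} By Theorem~\ref{Log HKR for HHOls}, $\HHOls(X/S)$ carries a complete $\NN$-indexed filtration with graded pieces $R\Gamma(\underline X,\bigwedge^i\LOls_{X/S})[i]$, and this filtration is functorial in $\Arr(\LogAff)^{\op}$. Step 2 shows that each graded piece is a sheaf. Since the quotients $\HHOls/\Fil^n_{\HKR}$ are finite extensions of graded pieces, and totalizations commute with finite limits, induction shows each quotient $\HHOls/\Fil^n_{\HKR}$ is a sheaf. Completeness gives $\HHOls\simeq\lim_n \HHOls/\Fil^n_{\HKR}$, and a limit of sheaves is a sheaf. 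Hence $\HHOls$ is an integral log-\'etale sheaf, and in fact the filtered object is a sheaf valued in $\DF(\ZZ)$.
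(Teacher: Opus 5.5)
Your reductions in Step~1 and the HKR bootstrap in Step~3 are fine. Step~2, however, is the one step you say carries the content, and there the mechanism you describe does not work while the idea that does is missing. Integral log-\'etale covers are not handled by refining through log modifications. Log blow-ups and similar log modifications are not flat (hence, by the theorem below, not integral), so they are not covers in this topology, and faithfully flat descent for quasi-coherent modules says nothing about them.

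What you need is Kato's theorem that an integral log-smooth (in particular integral log-\'etale) morphism has \emph{flat} underlying morphism of schemes \cite[Theorem~IV.4.3.5]{OgusLectures}. The chart criterion plus integrality that you mention is exactly how this is proved, since integrality of $P\to Q$ makes $\ZZ[P]\to\ZZ[Q]$ flat. Consequently an integral log-\'etale cover $\{X_i\to X\}$ of affines is a faithfully flat cover on underlying schemes. The identity $R\Gamma(X,\cF)\simeq\Tot R\Gamma(X_\bullet,\cF\otimes\cO_{X_\bullet})$ for $\cF=\bigwedge^i\LOls_{X/S}$ is then ordinary faithfully flat descent, with no obstacle at all.

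This is precisely the input in the paper's proof. There one notes that $\underline{X'}\to\underline{X}$ is fpqc by the same theorem and that $\cLog_{S'}\to\cLog_S$ is \'etale \cite[Theorem~4.6]{OlssonLogGeomandAlgStacks}. It is also what BLPO's argument rests on. Deferring to them ``verbatim'' while describing a different mechanism leaves your proof without its key step.

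With that repaired, your route is correct but organized differently from the paper's. The paper runs the argument of \cite[Proposition~7.11]{BLPO_HKR} directly on $\HHOls$: Lemmas~\ref{integral log etale invariance on the base for HHOls} and~\ref{integral log etale invariance on the source for HHOls}, then flat descent. It treats $\LOls=\LL_{\underline X/\cLog_S}$ separately via fpqc descent for the ordinary cotangent complex. You instead prove descent for all $\bigwedge^i\LOls$ and assemble $\HHOls$ from the complete HKR filtration. That is a valid alternative, and it is the same bootstrap the paper uses for fpqc descent of filtered Hochschild homology.

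Two small points in Step~1:
\begin{itemize}
\item The factorization through $X\times_S S_i$ is unnecessary, and the maps $X_i\to X\times_S S_i$ need not form a cover. What you actually use is termwise base invariance $\HHOls(X_n/S_n)\simeq\HHOls(X_n/S)$ on the \v{C}ech nerve, valid because $S_n\to S$ is integral log-\'etale.
\item For the $\LOls$ version of base invariance, condition~(T) holds because the second map is log flat \cite[3.14]{Olssoncotangent}, or simply because $\cLog_{S_n}\to\cLog_S$ is \'etale. It does not come from integrality of the first map.
\end{itemize}
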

\begin{proof}
    The proof of this result for $\HHOls$ is identical to the proof of \cite[Proposition 7.11]{BLPO_HKR}, using \ref{integral log etale invariance on the base for HHOls} in place of \cite[Lemma 7.9]{BLPO_HKR} and \ref{integral log etale invariance on the source for HHOls} in place of \cite[Lemma 7.10]{BLPO_HKR}.

    The proof for $\LOls$ is similar. Consider an integral log \'etale cover of $X\to S$ determined by the diagram
    \[\begin{tikzcd}
        X'\ar[d]\ar[r,"\underline{g}"] &X\ar[d]\\
        S'\ar[r] &S.
    \end{tikzcd}\]
    This corresponds to a diagram
    \[\begin{tikzcd}
        \underline{X'}\ar[d]\ar[r,"\underline{g}"] &\underline{X}\ar[d]\\
        \cLog_{S'}\ar[r] &\cLog_S
    \end{tikzcd}\]
    in which $\underline{X'}\to\underline{X}$ is fpqc by \cite[Theorem~IV.4.3.5]{OgusLectures}, and $\cLog_{S'}\to\cLog_S$ is \'etale by \cite[Theorem~4.6]{OlssonLogGeomandAlgStacks}. We are then reduced to considering the canonical map
    \[
    \LL_{X/\cLog_S}\to\lim_{i}\LL_{\Cech(X'\to X)_i/\cLog_S},
    \]
    which is an equivalence by fpqc descent for the ordinary cotangent complex.
\end{proof}

\subsection{After Rognes}

The original source is \cite{Rognes_2009}. However, our exposition follows \cite{BLPO_HKR}, which uses the same language as this paper. See \cite[5.5]{BLPO_HKR} for a proof that the two constructions agree.

\begin{mydef}\label{BLPO HHRog definition}
    Let $(f,f^\flat) : (k,P)\to (A,M)$ be a morphism of animated prelog rings. The \emph{Rognes-logarithmic Hochschild homology of $(f,f^\flat)$} is defined to be the animated ring
    \[
    \HHRog(f,f^\flat) := A\otimes_{\underline{((A,M)\otimes_(k,P) (A,M))^{\rep}_{\sigma}}} A,
    \]
    where $\sigma : M\oplus_P M\to M$ is the codiagonal morphism of $P\to M.$ We will also write $\HHRog((A,M)/(k,P)),$ if the morphism $(f,f^\flat)$ is understood.
\end{mydef}

\begin{remark}
    From our previous description of the repletion, it follows that for $(f,f^\flat) : (k,P)\to (A,M),$ we have
    \[
    \HHRog(f,f^\flat)\simeq A\otimes_{(A\otimes_k A)\otimes_{\ZZ[M\oplus_P M]}\ZZ[(M\oplus_P M)^{\rep}_{\sigma}]}A.
    \]
\end{remark}

One of the main results of \cite{BLPO_HKR} provides an HKR theorem for the Rognes-log Hochschild homology.

\begin{theorem}[\protect{\cite[Theorem~5.15]{BLPO_HKR}}]\label{HHRog HKR}
    Let $(k,P)\to (A,M)$ be a map of animated prelog rings. Then the Rognes-log Hochschild homology $\HHRog((A,M)/(k,P))$ admits a complete, descending filtration $\FilHHRog{*}((A,M)/(k,P))$ with graded pieces
    \[
    \gr^n\FilHHRog{*}((A,M)/(k,P))\simeq\left(\bigwedge^n_A\LGab_{(A,M)/(k,P)}\right)[n].
    \]
\end{theorem}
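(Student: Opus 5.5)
We follow the same template as the HKR theorem for animated rings (\ref{HKR for CRing}): define the filtration on a generating subcategory where everything is discrete and computable, then extend by animation. Concretely, we work on $\PolyPreLog$, the free prelog algebras $F(X,S)\to F_{F(X,S)}(Y,T)$ over free prelog rings. The functor $((k,P)\to(A,M))\mapsto\HHRog((A,M)/(k,P))$ is built from tensor products, pushouts of animated monoids, group completion and the fibre product $M\times_{M^{\gp}}(M\oplus_P M)^{\gp}$ defining the repletion. Using the identification $(M\oplus_P M)^{\rep}_\sigma\simeq M\oplus M^{\gp}/P^{\gp}$ (valid since $\sigma$ has a section), every ingredient commutes with sifted colimits. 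Hence $\HHRog$ is the animation of its restriction to $\PolyPreLog$, and it suffices to produce a functorial filtration there.

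\textbf{Polynomial case.} For $(k,P)=(\ZZ[X,\NN^S],\NN^S)\to(A,M)=(k[Y,\NN^T],\NN^S\oplus\NN^T)$ we have $M^{\gp}/P^{\gp}\cong\ZZ^T$. The repletion ring is
\[
(A\otimes_k A)\otimes_{\ZZ[M\oplus_P M]}\ZZ[M\oplus\ZZ^T]\cong A\otimes_k k[Y]\otimes\ZZ[\ZZ^T],
\]
so that $A\to$ (repletion) is a smooth map of discrete rings. The two structure maps to $A$ correspond to the diagonal in the $Y$ variables together with the augmentation $\ZZ[\ZZ^T]\to\ZZ$ in the log variables. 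Thus $\HHRog$ is $A\otimes_{A[Y',\ZZ^T]}A$, a derived self-intersection along a regular-type section of a smooth map. Its homotopy is $\bigwedge^*_A$ of the free module on $\{dy\}_{y\in Y}\cup\{d\log t\}_{t\in T}$, which is $\Omega^*_{(A,M)/(k,P)}$ by the example computing log differentials of such maps. We then define the filtration as the Whitehead (Postnikov) filtration $\tau_{\geq n}$, whose graded pieces are $\Omega^n_{(A,M)/(k,P)}[n]$.

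\textbf{Functoriality.} Functoriality in $\PolyPreLog$ requires a natural identification of these graded pieces. We construct log antisymmetrization maps $\Omega^n\to\pi_n\HHRog$ extending the degree-one map $da\mapsto[1\otimes a]$, $d\log m\mapsto$ the class of $[m]$ in $M^{\gp}/P^{\gp}$. Naturality follows as in the classical case, via the graded-commutative algebra structure on $\pi_*$ and the universal property of log de Rham forms.

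\textbf{Animation.} Animate the filtered functor to $\Arr(\aPreLog)$. The graded pieces become the animation of $\Omega^n_{\log}[n]$, which by the description of $\bfL^{\Gab}$ as the animation of $\mathbf{\Omega}^{\log}$ (and likewise for exterior powers) is $\bigwedge^n_A\LGab_{(A,M)/(k,P)}[n]$. Completeness holds because $\Fil^n$ is $n$-connective levelwise, so its geometric realization is $n$-connective, exactly as in \ref{HKR for CRing}.

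The main obstacle is the polynomial computation with its naturality. We need to verify that the repletion ring over $A$ is smooth, with $A$ cut out by a regular sequence (the coordinates $y_i\otimes1-1\otimes y_i$ together with $t-1$ for $t\in T$). We also need the comparison of $\pi_*$ with log forms to be compatible with arbitrary maps in $\PolyPreLog$, including those acting on the monoid generators.
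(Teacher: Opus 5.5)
Your proposal is correct and is essentially the argument the paper relies on. The paper itself only cites \cite[Theorem~5.15]{BLPO_HKR}, but in the comparison section it uses $\FilHHRog{*}$ exactly as you build it: as the left Kan extension from $\PolyPreLog$ of the Whitehead filtration, with the same polynomial-case computation of the repletion ring as $(\ZZ[X,Y]\otimes_{\ZZ[X]}\ZZ[X,Y])[M\oplus Q^{\gp}]$. The one check you leave implicit is that your degree-one map is a log derivation into $I/I^2$, where $I$ is the kernel of the repletion ring onto $A$; this holds because $1\otimes\alpha(m)=(\alpha(m)\otimes 1)\,e_{(0,[m])}$ in that ring, so $da\mapsto[1\otimes a-a\otimes 1]$ and $d\log m\mapsto[e_{(0,[m])}-1]$ satisfy $d\alpha(m)=\alpha(m)\,d\log m$.
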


\begin{lemma}
    Let $(k,P)$ be an animated prelog ring, and let $f : k\to A$ be a morphism of animated rings. There is a canonical equivalence of filtered $\ZZ$-modules
    \[
    \FilHHRog{*}((A,f^*P)/(k,P))\simeq\FilHH{*}(A/k).
    \]
\end{lemma}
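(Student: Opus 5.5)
The plan is to show first that the repletion of the codiagonal for the inverse image prelog structure is strict over $A\otimes_k A$ with trivial log structure, so that its underlying animated ring is just $A\otimes_k A$. The filtered statement will then follow from the naturality of the HKR filtrations.

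\textbf{Step 1: the monoid calculation.} Write $f^*P$ for the monoid $P$ with structure map $P\to k\to A$. The codiagonal $\sigma : P\oplus_P P\simeq P\to P$ is then an equivalence. By the description of the repletion of the codiagonal as $M\oplus M^{\gp}/P^{\gp}$ (the corollary following \ref{repletion with section identification}, applied with $M=P$), we get $(P\oplus_P P)^{\rep}_{\sigma}\simeq P\oplus 0\simeq P$. Hence the map $P\oplus_P P\to (P\oplus_P P)^{\rep}_\sigma$ is an equivalence.

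\textbf{Step 2: the underlying ring.} Using the formula after \ref{BLPO HHRog definition}, the ring we tensor over is
\[
(A\otimes_k A)\otimes_{\ZZ[P\oplus_P P]}\ZZ[(P\oplus_P P)^{\rep}_\sigma]\simeq A\otimes_k A .
\]
This equivalence lies under $A\otimes_k A$ and over $A$, and it is natural in $(k,P)\to (A,f^*P)$. It identifies the underlying objects:
\[
\HHRog((A,f^*P)/(k,P))\simeq A\otimes_{A\otimes_kA}A=\HH(A/k).
\]

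\textbf{Step 3: matching the filtrations.} The HKR filtration of \ref{HHRog HKR} is constructed in \cite{BLPO_HKR} by left Kan extension from polynomial prelog algebras. I would argue that it is functorial in the map of prelog rings and compatible with the equivalence of Step 2. Concretely, I plan to consider the functor $\Arr(\aRing)_{/?}\to\Arr(\aPreLog)$ sending $(k\to A)$ to $((k,P)\to(A,f^*P))$, with $P$ varying as well. Restricted to polynomial inputs $(P_0\to Q_0)$ with a free monoid, this sends $\PolyPoly$-type objects to objects of $\PolyPreLog$ with no new log generators. Both filtrations are left Kan extended from such objects, and both functors preserve sifted colimits. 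So it suffices to produce the comparison there and check that it is compatible with the filtration there; on those objects both HKR filtrations are Postnikov/Whitehead filtrations, so any equivalence of the underlying objects respects them.

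\textbf{Main obstacle and sanity check.} The main obstacle is exactly this naturality and Kan extension bookkeeping in Step 3, in particular checking that the BLPO filtration is the animation of the Whitehead filtration over the relevant polynomial objects. As a consistency check, or as a fallback argument once a filtered map exists, I will compare associated graded pieces. By \ref{HHRog HKR} these are $\bigwedge^n\LGab[n]$ on the Rognes side and $\bigwedge^n\LL_{A/k}[n]$ on the classical side. The strictness corollary identifies $\LGab_{(A,f^*P)/(k,P)}\simeq\LL_{A/k}$, since $(k,P)\to(A,f^*P)$ is strict by definition. Because both filtrations are complete, agreement on graded pieces then gives the filtered equivalence.
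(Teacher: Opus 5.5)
Your proposal is correct and follows the paper's proof. The underlying equivalence comes from $\sigma$ being an equivalence, so the repletion is trivial. The filtrations are then matched by reducing the left Kan extension to polynomial objects $(k[X],P)$ with no new log generators, where both HKR filtrations are Whitehead filtrations.

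The only difference is how that reduction is justified. The paper asserts that such objects form a final subcategory of polynomial prelog $(k,P)$-algebras over $(A,f^*P)$. You instead use that $(P\to k\to A)\mapsto((k,P)\to(A,f^*P))$ preserves sifted colimits and sends generators to generators, which avoids the finality check. As you note, your graded-piece fallback still requires a filtered map to exist first.
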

\begin{proof}
    The equivalence between the underlying objects is obtained from the equivalence
    \[
    \HHRog((A,f^*P)/(k,P))\simeq A\otimes_{(A\otimes_k A)\otimes_{\ZZ[f^*P\oplus_P f^*P]}\ZZ[(f^*P\oplus_P f^*P)^{\rep}_\sigma]}A,
    \]
    noting that $f^*P\oplus_P f^*P\simeq P\oplus_P P\simeq P,$ and that under this equivalence, $\sigma : f^*P\oplus_P f^*P\to f^*P$ is the identity, so that the repletion is just $f^*P.$

    Compatibility of the filtrations comes from the observation that the subcategory of $(\aPreLogPoly_{(k,P)})_{/(A,f^*P)}$ spanned by maps of animated prelog rings of the form
    \[
    (P\to k)\otimes_{(\{1\}\to k)} (\{1\}\to k[X])\simeq (P\to k[X])\to(f^*P\to A)
    \]
    (where $X$ is some finite set) is final in $(\aPreLogPoly_{(k,P)})_{/(A,f^*P)}.$
\end{proof}

\begin{proposition}\label{HHR logification invariance}
    Let $(f,f^\flat) : (k,P)\to (A,M)$ be a morphism of animated prelog rings. Then the diagram
    \[\begin{tikzcd}
        (A,M)\ar[r,"a_{(A,M)}"] &(A,M)^a\ar[r] & (A,M)^a\\
        (k,P)\ar[u,"{(f,f^\flat)}"]\ar[r] &(k,P)\ar[u,"a_{(A,M)}\circ {(f,f^\flat)}"]\ar[r,"a_{(k,P)}"] &(k,P)^a\ar[u,"{(f,f^\flat)^a}"'],
    \end{tikzcd}\]
    where $a_{(k,P)}$ denotes the canonical logification map (and similarly for $a_{(A,M)}$) and the unlabeled horizontal maps are given by the relevant identity morphism induces filtered equivalences
    \begin{align*}
        \FilHHRog{*}((A,M)/(k,P))&\xrightarrow{\sim}\FilHHRog{*}((A,M)^a/(k,P))\\
        &\xrightarrow{\sim}\FilHHRog{*}((A,M)^a/(k,P)^a).
    \end{align*}
\end{proposition}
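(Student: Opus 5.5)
The plan is to reduce to associated graded pieces, using completeness of the filtrations. By \ref{HHRog HKR}, each of the three filtered objects $\FilHHRog{*}((A,M)/(k,P))$, $\FilHHRog{*}((A,M)^a/(k,P))$ and $\FilHHRog{*}((A,M)^a/(k,P)^a)$ is complete, descending and $\NN$-indexed. Its $n$th graded piece is $\left(\bigwedge^n_A\LGab\right)[n]$ of the corresponding map. Note that logification does not change the underlying animated ring, so all three live over $A$ (respectively $k$). By \cite[Lemma 2.15]{GwilliamPavlovEnhancing}, a filtered map between complete filtered objects is an equivalence once it is an equivalence on all associated gradeds. So it suffices to show that each of the two maps is a graded equivalence.

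The first step is to make the maps on graded pieces explicit. The square in the statement is a morphism in $\Arr(\aPreLog)$, and $\FilHHRog{*}$ is functorial on $\Arr(\aPreLog)$. I would argue exactly as in the proposition identifying $[-n]\circ\ev_n\circ\gr\circ\sFilHH{}$ with $\bigwedge^n\bfL^{\Kan}$. On the generating category $\PolyPreLog$, the $\gr^n$ of the Rognes filtration is $\Omega^n_{\log}[n]$, naturally in the morphism, via the (log) antisymmetrization maps of \cite{BLPO_HKR}. The functor $[-n]\circ\ev_n\circ\gr$ preserves colimits, and both $\FilHHRog{*}$ and $\bfL^{\Gab}$ are the left Kan extensions (animations) from $\PolyPreLog$. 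Hence $\gr^n$ of the filtered map induced by a square is $\bigwedge^n$ of the map $\bfL^{\Gab}(\text{square})$, shifted by $n$. Here the base change along the identity $A\to A$ is trivial.

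The second step applies \ref{logification invariance of Gabber log cotangent complex}. The maps on first graded pieces are exactly the canonical maps
\[
\LGab_{(A,M)/(k,P)}\to\LGab_{(A,M)^a/(k,P)}\to\LGab_{(A,M)^a/(k,P)^a}
\]
induced by the two squares, and that lemma says both are equivalences. Taking derived exterior powers and shifts gives equivalences on every $\gr^n$. Completeness then finishes the argument.

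The main obstacle is the first step: checking that the graded pieces of the Rognes HKR filtration are identified with $\bigwedge^n\LGab$ functorially in the map of prelog rings, rather than only objectwise as \ref{HHRog HKR} states. If the functorial identification is not directly available from \cite{BLPO_HKR}, I would fall back to $n=1$ alone. The map on $\gr^1$ is induced by the map of repletions $(A,M)\otimes^{\rep}_{(k,P)}(A,M)\to(A,M)^a\otimes^{\rep}_{(k,P)^a}(A,M)^a$. Then I would use the description $\LGab\simeq\LL_{A/\underline{(A,M)\otimes^{\rep}(A,M)}}[-1]$ from \cite[Proposition~3.17]{BLPO_HKR} together with the multiplicativity of the HKR filtration to handle higher $n$.
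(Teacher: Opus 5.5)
Your proposal is correct and follows the same route as the paper. Completeness of the Rognes HKR filtration reduces the claim to associated graded pieces, namely the shifted exterior powers of the maps $\LGab_{(A,M)/(k,P)}\to\LGab_{(A,M)^a/(k,P)}\to\LGab_{(A,M)^a/(k,P)^a}$, and these are equivalences by logification invariance of Gabber's cotangent complex. The paper simply asserts the functorial identification of graded pieces that you single out; your left-Kan-extension argument from free prelog algebras over free prelog rings supplies it, so the fallback is unnecessary.
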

\begin{proof}
    On the associated graded pieces, we obtain the equivalences
    \[
    \LGab_{(A,M)/(k,P)}\to\LGab_{(A,M)^a/(k,P)}\to\LGab_{(A,M)^a/(k,P)^a}
    \]
    of \ref{logification invariance of Gabber log cotangent complex} (and their shifted exterior powers). Because the HKR filtration on $\HHRog$ is complete, this implies that the desired maps are equivalences as well.
\end{proof}

As an application of the above lemmas, we obtain the following corollary.

\begin{cor}
    Let $(k,P)\to (A,M)$ be a strict morphism of animated prelog rings. Then there is a canonical equivalence
    \[
    \FilHHRog{*}((A,M)/(k,P))\simeq\FilHH{*}(A/k).
    \]
\end{cor}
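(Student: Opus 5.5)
The plan is to combine \ref{HHR logification invariance} with the previous lemma on inverse-image prelog structures. Since $(k,P)\to(A,M)$ is strict, by definition the canonical map $(A,f^*P)^a\to(A,M)^a$ is an equivalence of animated prelog rings. I will connect both sides through the logified objects, which gives the chain
\[
\FilHHRog{*}((A,M)/(k,P))\xrightarrow{\sim}\FilHHRog{*}((A,M)^a/(k,P)^a)\xleftarrow{\sim}\FilHHRog{*}((A,f^*P)^a/(k,P)^a)\xleftarrow{\sim}\FilHHRog{*}((A,f^*P)/(k,P)).
\]
The outer two arrows are instances of \ref{HHR logification invariance}, applied to $(k,P)\to(A,M)$ and to $(k,P)\to(A,f^*P)$ respectively. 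For the middle arrow, the commutative square with $\id_{(k,P)^a}$ on the base and the equivalence $(A,f^*P)^a\to(A,M)^a$ on top is an equivalence in $\Arr(\aPreLog)$. Since $\FilHHRog{*}$ is a functor on $\Arr(\aPreLog)$, it sends this square to a filtered equivalence.

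Finally, the preceding lemma identifies $\FilHHRog{*}((A,f^*P)/(k,P))\simeq\FilHH{*}(A/k)$. Composing these equivalences (inverting the backward arrows) gives the claimed equivalence.

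The step that needs the most care is canonicity and naturality. The maps in \ref{HHR logification invariance} are defined via the logification units, and the identification $(A,f^*P)^a\simeq(A,M)^a$ is compatible with those units only through the canonical map $(A,f^*P)\to(A,M)$ coming from \ref{adjunction between (pre)log structures for map of animated rings}. To make the chain canonical, I would instead write it as the image under $\FilHHRog{*}$ of the commutative diagram in $\Arr(\aPreLog)$ relating $(k,P)\to(A,f^*P)\to(A,M)$ to its logification. All maps in that diagram are canonical, and the relevant ones become equivalences after applying $\FilHHRog{*}$. If one only wants an abstract equivalence, it is enough that every arrow is an equivalence, which can be checked on associated graded pieces via \ref{HHRog HKR} and the strictness corollary for $\LGab$.
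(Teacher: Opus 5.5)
Your proposal is correct and is essentially the paper's own argument. The paper derives the corollary directly from \ref{HHR logification invariance} and the preceding lemma identifying $\FilHHRog{*}((A,f^*P)/(k,P))$ with $\FilHH{*}(A/k)$, passing through the strictness equivalence $(A,f^*P)^a\simeq(A,M)^a$ exactly as your chain of equivalences does.
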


We also have \'etale base change and \'etale invariance results for $\HHRog.$

\begin{lemma}[\protect{\cite[Lemma~7.9]{BLPO_HKR}}]
    Let $(k,P)\xrightarrow{(f,f^\flat)} (A,M)\to (B,N)$ be morphisms of fs semilog rings. If $(f,f^\flat)$ is integral log \'etale, then the canonical map
    \[
    \HHRog((B,N)/(k,P))\xrightarrow{\sim}\HHRog((B,N)/(A,M))
    \]
    is an equivalence.
\end{lemma}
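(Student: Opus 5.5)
The plan is to argue exactly as in \ref{integral log etale invariance on the base for HHOls}: reduce to the associated graded pieces and then to the first one. I would consider the commutative square of animated prelog rings
\[\begin{tikzcd}
    (B,N)\ar[r,"\id"] &(B,N)\\
    (k,P)\ar[u]\ar[r,"{(f,f^\flat)}"] &(A,M)\ar[u]
\end{tikzcd}\]
and use the fact that $\HHRog$ is functorial in such squares, being built from the codiagonal and the repletion. This functoriality gives the canonical map $\HHRog((B,N)/(k,P))\to\HHRog((B,N)/(A,M))$. The universal property of the repletion (\ref{universal property of repletion}) then upgrades this to a map of filtered objects $\FilHHRog{*}((B,N)/(k,P))\to\FilHHRog{*}((B,N)/(A,M))$.

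Next I would use \ref{HHRog HKR}, which says the HKR filtrations on both sides are complete. So it suffices to check that the map is an equivalence on each graded piece. I would identify the map on $\gr^n$ with the shift by $[n]$ of $\bigwedge^n_B$ applied to the canonical map
\[
\LGab_{(B,N)/(k,P)}\to\LGab_{(B,N)/(A,M)}.
\]
This identification is the one real technical point. I expect to get it the same way as for ordinary $\HH$: restrict to $\PolyPreLog$, where Rognes' HKR map is natural in the morphism, and then animate. Given that, it is enough to show the map on the first graded piece is an equivalence.

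For the first graded piece I would use the cofiber sequence of \ref{Gabber cotangent cofiber sequence}:
\[
\LGab_{(A,M)/(k,P)}\otimes_A B\to\LGab_{(B,N)/(k,P)}\to\LGab_{(B,N)/(A,M)}.
\]
Fs semilog rings are fine, and $(f,f^\flat)$ is integral and log \'etale. So \ref{integral + log smooth/et implies derived log smooth/et} shows that $(f,f^\flat)$ is derived log \'etale, i.e.\ $\LGab_{(A,M)/(k,P)}\simeq 0$. The first term of the cofiber sequence therefore vanishes, and the second map is an equivalence. Taking exterior powers and shifting gives equivalences on every graded piece, and completeness finishes the proof.

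The main obstacle is the middle step. It must be checked that the comparison map on HKR graded pieces really is the functorially induced map on Gabber cotangent complexes, which requires the naturality of the HKR filtration of \cite{BLPO_HKR} with respect to changing the base. If this is not directly available, I would fall back on the equivalent route of \cite[Lemma~7.9]{BLPO_HKR}. There, $(B,N)\otimes_{(A,M)}(B,N)$ is compared with $(B,N)\otimes_{(k,P)}(B,N)$ via $(A,M)\otimes^{\rep}_{(k,P)}(A,M)\simeq(A,M)$, which holds up to log \'etale-ness.
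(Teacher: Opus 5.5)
Your argument is correct and matches the paper's approach. The paper does not reprove this lemma; it only cites \cite[Lemma~7.9]{BLPO_HKR}. But your steps are exactly the pattern the paper uses for the Olsson analogues \ref{integral log etale invariance on the source for HHOls} and \ref{integral log etale invariance on the base for HHOls}: reduce via the complete HKR filtration of \ref{HHRog HKR} to $\gr^1$, then use the transitivity sequence \ref{Gabber cotangent cofiber sequence} together with $\LGab_{(A,M)/(k,P)}\simeq 0$ from \ref{integral + log smooth/et implies derived log smooth/et}.

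Two minor points:
\begin{itemize}
\item The naturality you flag as the main obstacle comes for free. $\FilHHRog{*}$ is defined on all of $\Arr(\aPreLog)$ by left Kan extension from $\PolyPreLog$, so the filtered map and the identification of its graded pieces are functorial by construction, not by the universal property of the repletion. Your fallback is therefore unnecessary, and its claim $(A,M)\otimes^{\rep}_{(k,P)}(A,M)\simeq(A,M)$ already fails as stated for a strict \'etale $k\to A$, where $A\otimes_k A\not\simeq A$.
\item The monoid of an fs semi-log ring contains $A^\times$ and need not be finitely generated, so such rings are not literally fine. To apply \ref{integral + log smooth/et implies derived log smooth/et} you should first pass to a fine chart of $(k,P)\to(A,M)$; this leaves $\LGab$ unchanged by \ref{logification invariance of Gabber log cotangent complex}.
\end{itemize}
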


\begin{lemma}[\protect{\cite[Lemma~7.10]{BLPO_HKR}}]
    Let $(k,P)\to (A,M)\xrightarrow{(g,g^{\flat})} (B,N)$ be morphisms of fs semilog rings. If $(g,g^\flat)$ is integral log \'etale, then the canonical map
    \[
    B\otimes_A\HHRog((A,M)/(k,P))\to\HHRog((B,N)/(k,P))
    \]
    is an equivalence.
\end{lemma}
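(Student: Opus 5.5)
We argue in the same way as for \ref{integral log etale invariance on the source for HHOls}, using the HKR filtration of \ref{HHRog HKR} in place of \ref{Log HKR for HHOls}. The map
\[
B\otimes_A\HHRog((A,M)/(k,P))\to\HHRog((B,N)/(k,P))
\]
arises from functoriality of $\HHRog$ applied to the square with rows $(k,P)\to(k,P)$ and $(A,M)\to(B,N)$. First we upgrade it to a map of filtered objects
\[
B\otimes_A\FilHHRog{*}((A,M)/(k,P))\to\FilHHRog{*}((B,N)/(k,P)),
\]
using that the HKR filtration of \cite[Theorem~5.15]{BLPO_HKR} is functorial. It is obtained by animating the Postnikov filtration on polynomial prelog algebras, so it is natural in $\Arr(\aPreLog)$. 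Base change along $A\to B$ is exact and commutes with finite colimits, so it preserves associated graded pieces. Completeness of the source needs more care, because $-\otimes_A B$ need not commute with the limit. Here we use that $\Fil^n$ of the HKR filtration is $n$-connective in each case: it is built from the connective pieces $\bigwedge^i\LGab[i]$ with $i\geq n$. Base change preserves $n$-connectivity, so $B\otimes_A\FilHHRog{*}$ remains complete, since the limit of a tower with increasing connectivity vanishes.

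Next we identify the graded pieces. On $\gr^n$ the map is the shifted $n$-th exterior power of the base change map
\[
B\otimes_A\LGab_{(A,M)/(k,P)}\to\LGab_{(B,N)/(k,P)}.
\]
By \ref{Gabber cotangent cofiber sequence}, its cofiber is $\LGab_{(B,N)/(A,M)}$. Since $(g,g^\flat)$ is an integral log étale map of discrete fine prelog rings, \ref{integral + log smooth/et implies derived log smooth/et} shows it is derived log étale, so $\LGab_{(B,N)/(A,M)}\simeq 0$. The degree-one map is therefore an equivalence, and so are all of its exterior powers and their shifts.

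Since the map is a filtered map between complete filtrations inducing equivalences on all graded pieces, it is an equivalence. The main obstacle is the completeness of the base-changed filtration, which rests on the connectivity bound for $\Fil^n$. If that argument is too delicate, the fallback is to check the equivalence on each finite quotient $\Fil^0/\Fil^n$, which base change preserves, and then pass to the limit via connectivity.
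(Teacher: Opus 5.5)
The paper does not prove this lemma itself (it cites \cite[Lemma~7.10]{BLPO_HKR}). Your proposal is correct and follows the same argument the paper gives for the Olsson analogue, Lemma~\ref{integral log etale invariance on the source for HHOls}: HKR filtration, the transitivity sequence reducing every graded piece to the vanishing of $\LGab_{(B,N)/(A,M)}$, then completeness. Your connectivity argument showing that $B\otimes_A\FilHHRog{*}$ stays complete fills in a step the paper leaves implicit.

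The one point to phrase more carefully is that vanishing. Fs semilog rings are not literally fine prelog rings, since their monoids contain a copy of $A^\times$. So \ref{integral + log smooth/et implies derived log smooth/et} should be applied after passing to the associated fs log schemes, using logification invariance of $\LGab$ and \'etale-local fine charts, rather than to $(A,M)\to(B,N)$ verbatim.
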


As a corollary of the above base change and invariance results, the authors of \cite{BLPO_HKR} conclude the following descent results for $\HHRog$ and $\LGab.$

\begin{proposition}[\protect{\cite[Proposition~7.11]{BLPO_HKR}}]
    The presheaf $\HHRog : \Arr(\LogAff)\to\cMod_{\ZZ}$ satisfies integral log \'etale descent.
\end{proposition}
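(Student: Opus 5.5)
The proof mirrors the argument given above for $\HHOls$ (which itself copies \cite[Proposition 7.11]{BLPO_HKR}), now using the two lemmas just recalled: \cite[Lemma~7.9]{BLPO_HKR} for invariance along integral log \'etale maps on the base, and \cite[Lemma~7.10]{BLPO_HKR} for base change along integral log \'etale maps on the source. An integral log \'etale cover in $\Arr(\LogAff)$ of an object $X\to S$ is a family of squares with $X_i\to X$ and $S_i\to S$ integral log \'etale and jointly covering. The plan is to reduce descent for such a cover to two separate descent statements, one for covers of the base alone and one for covers of the source alone. We then form the \v{C}ech nerve in $\Arr(\LogAff)$, noting that fs fiber products of semilog affines stay in $\LogAff$ after the usual Zariski refinement.

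\textbf{Step 1 (base).} Consider a cover where only the base changes, i.e.\ $S'\to S$ integral log \'etale and $X' = X\times^{\fs}_S S'$. By \cite[Lemma~7.9]{BLPO_HKR}, $\HHRog(X'/S)\simeq\HHRog(X'/S')$, and the same holds levelwise on the \v{C}ech nerve. This identifies the \v{C}ech descent problem for $(X'/S')\to(X/S)$ with descent for the source cover $X'\to X$, with the base held fixed at $S$. A general cover factors as a base change followed by a source cover over $S'$, so it suffices to treat covers that fix the base.

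\textbf{Step 2 (source, base fixed).} Let $g\colon X'\to X$ be an integral log \'etale cover over a fixed $S$. Levelwise, \cite[Lemma~7.10]{BLPO_HKR} gives
\[
\HHRog(X'_n/S)\simeq \HHRog(X/S)\otimes_A \mathcal{O}(X'_n),
\]
where $X'_n$ is the $n$-th term of the \v{C}ech nerve and $A=\mathcal{O}(X)$. So the totalization equals $\HHRog(X/S)\otimes_A \Tot\,\mathcal{O}(X'_\bullet)$, provided the tensor product commutes with the totalization. The underlying map $\underline{X'}\to\underline{X}$ is faithfully flat by \cite[Theorem~IV.4.3.5]{OgusLectures}, so ordinary fpqc descent for quasi-coherent modules gives the equivalence with $\HHRog(X/S)$.

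\textbf{Main obstacle.} The obstacle is the interchange of $\otimes_A$ with the infinite totalization, because $\HHRog(X/S)$ is not bounded. The plan is to avoid the interchange by working with the complete HKR filtration of \ref{HHRog HKR}. The maps above are filtered, and on graded pieces they become $\left(\bigwedge^n\LGab\right)[n]$. For these graded pieces, descent is the already established integral log \'etale descent of $R\Gamma(-,\bfL^{\Gab})$ (\cite[Proposition~7.20]{BLPO_HKR}), extended to exterior powers by the filtration argument of \ref{cotangent complex is fpqc sheaf on morphisms}. Equivalently, one can argue directly with faithfully flat descent for the connective modules $\bigwedge^n\LGab_{X/S}$. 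Since limits commute with the quotients $\HHRog/\Fil^n_{\HKR}$ and with completion, descent for each graded piece implies descent for each finite quotient by induction, and then for the full object by completeness. This finishes the proof.
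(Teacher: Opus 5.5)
Your Steps~1 and~2 are the same argument the paper relies on from \cite[Proposition~7.11]{BLPO_HKR}, and the one it copies for $\HHOls$: Lemma~7.9 trades the base cover for a source cover over a fixed $S$, Lemma~7.10 gives $\HHRog(X'_n/S)\simeq\HHRog(X/S)\otimes_A\mathcal{O}(X'_n)$ levelwise, and $\underline{X'}\to\underline{X}$ is faithfully flat. The ``main obstacle'' is not real: faithfully flat descent gives $N\xrightarrow{\sim}\Tot\bigl(N\otimes_A\mathcal{O}(X'_\bullet)\bigr)$ for every $A$-module $N$, bounded or not, and $\HHRog(X/S)$ is connective just like $\bigwedge^n\LGab_{X/S}$, to which your HKR detour applies this same descent statement, so the proof is already complete at the end of Step~2 and the detour can be dropped.
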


\begin{proposition}[\protect{\cite[Proposition~7.11]{BLPO_HKR}}]
    The presheaf $\LGab : \Arr(\LogAff)\to\cMod_{\ZZ}$ satisfies integral log-\'etale descent.
\end{proposition}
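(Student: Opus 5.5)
The plan is to follow the same template as for $\HHRog$ (the proof of \cite[Proposition~7.11]{BLPO_HKR}), but working directly with $\LGab$. We first establish two invariance lemmas for $\LGab$, analogous to \cite[Lemmas~7.9, 7.10]{BLPO_HKR}, and then reduce descent to fpqc descent for quasi-coherent modules. No statement about $\HHRog$ is used.

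\emph{Invariance on the base.} Let $(k,P)\to(A,M)\to(B,N)$ be morphisms of fs semilog rings with $(k,P)\to(A,M)$ integral log \'etale. By \ref{integral + log smooth/et implies derived log smooth/et} this map is derived log \'etale, so $\LGab_{(A,M)/(k,P)}\simeq 0$. The cofiber sequence of \ref{Gabber cotangent cofiber sequence} for the composite then gives
\[
\LGab_{(B,N)/(k,P)}\xrightarrow{\sim}\LGab_{(B,N)/(A,M)}.
\]

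\emph{Invariance on the source.} Now suppose instead that $(A,M)\to(B,N)$ is integral log \'etale. Then $\LGab_{(B,N)/(A,M)}\simeq0$, and the same cofiber sequence gives
\[
B\otimes_A\LGab_{(A,M)/(k,P)}\xrightarrow{\sim}\LGab_{(B,N)/(k,P)}.
\]
By \ref{logification invariance of Gabber log cotangent complex}, it makes no difference whether we use the prelog or the logified structures here.

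\emph{Reduction to descent in the source.} Next, consider an integral log \'etale cover of $X\to S$ in $\Arr(\LogAff)$, i.e.\ a square $X'\to X$, $S'\to S$ over $X\to S$ whose horizontal maps are integral log \'etale covers. Factor it through $X\to S'$ and $X'\to S'$. Base invariance identifies $R\Gamma(-,\LGab_{-/S})$ with $R\Gamma(-,\LGab_{-/S'})$ on every term of the relevant \v{C}ech nerves. The \v{C}ech nerve of $S'\to S$ in the arrow category is handled in the same way, since each $S'_n\to S$ is again integral log \'etale. We are therefore reduced to covers with $S'=S$ and $X'\to X$ an integral log \'etale cover.

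\emph{Descent in the source.} Let $X'_\bullet$ be the \v{C}ech nerve of $X'\to X$, formed in fs log schemes. Each $X'_n\to X$ is integral log \'etale, so source invariance gives
\[
\LGab_{X'_n/S}\simeq \LGab_{X/S}\otimes_{\cO_X}\cO_{X'_n}.
\]
The required limit is thus the totalization of $\LGab_{X/S}$ tensored with $\cO_{X'_\bullet}$. By \cite[Theorem~IV.4.3.5]{OgusLectures}, the underlying map $\underline{X'}\to\underline{X}$ is flat and surjective, so fpqc descent for quasi-coherent modules, with $\QCoh$ a hypersheaf by \cite[D.6.3.3]{SAG}, yields
\[
R\Gamma(X,\LGab_{X/S})\xrightarrow{\sim}\lim_n R\Gamma(X'_n,\LGab_{X'_n/S}).
\]

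\emph{Main obstacle.} The step we expect to be hardest is identifying the underlying schemes of the fs \v{C}ech nerve $X'_\bullet$ with the ordinary \v{C}ech nerve of $\underline{X'}\to\underline{X}$. Without this, fpqc descent for modules does not apply directly. We would argue as follows. Because $X'\to X$ is integral, the fiber products $X'\times_X X'$ are already integral, by \cite[Proposition~III.2.5.3]{OgusLectures}. Saturation of integral log \'etale maps between fs log schemes should then be harmless: saturatedness of the map, or Kato's chart description in which the monoid map is Kummer-free up to invertible orders, should make $\underline{X'\times^{\fs}_X X'}\to\underline{X'}\times_{\underline X}\underline{X'}$ an isomorphism. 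If this fails in general, the fallback is to work with a refinement of the cover by strict \'etale or Kummer pieces, where the comparison holds by hand, and then glue.
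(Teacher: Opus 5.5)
Your two invariance statements and the reduction to covers with $S'=S$ are fine. (The factorization you want there is $(X'\to S')\to(X'\to S)\to(X\to S)$ in the arrow category; there is no map $X\to S'$.) This is the template of the argument the paper cites from \cite{BLPO_HKR}, which the paper also reuses for $\HHOls$ and $\LOls$.

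The gap is the step you flag as the main obstacle, and your proposed resolution is false. For integral log \'etale covers, the underlying schemes of the \v{C}ech nerve formed in fs log schemes are in general \emph{not} the \v{C}ech nerve of $\underline{X'}\to\underline{X}$. Take $X=\Spec(\ZZ[1/n][t],\NN)$ and $X'=\Spec(\ZZ[1/n][s],\NN)$ with $t\mapsto s^n$, $n\geq 2$. This is a Kummer, integral, log \'etale cover. The pushout $\NN\oplus_{\NN}\NN$ along multiplication by $n$ is already integral, and that is all integrality buys you. Its saturation, however, contains the torsion element $e_1-e_2$ of order $n$ and equals $\NN\oplus\ZZ/n$. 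Hence $\underline{X'\times^{\fs}_X X'}\cong\Spec\ZZ[1/n][s][\ZZ/n]\cong\underline{X'}\times\mu_n$. This is the normalization of $\underline{X'}\times_{\underline{X}}\underline{X'}=\Spec\ZZ[1/n][s_1,s_2]/(s_1^n-s_2^n)$, not isomorphic to it. So fpqc descent along the flat cover $\underline{X'}\to\underline{X}$ computes a limit over a different cosimplicial object than the one in the descent statement. Your fallback of refining to strict \'etale or Kummer pieces breaks down precisely on the Kummer pieces, where the comparison fails by this example.

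What is missing is descent for quasi-coherent modules along Kummer (more generally, integral log \'etale) covers with fs fiber products. You can take this from Kato's Kummer log flat descent theorem for quasi-coherent sheaves, or prove it by hand:
\begin{itemize}
\item For a Kummer map given by a chart $P\to Q$, set $G=Q^{\gp}/P^{\gp}$. Then $(Q\oplus_P Q)^{\mathrm{sat}}\cong Q\oplus G$, so the fs \v{C}ech nerve is $X'\times D(G)^{\bullet}$ with $D(G)=\Spec\ZZ[G]$ diagonalizable.
\item The totalization of $\Gamma(X'_\bullet,\cF\otimes\cO_{X'_\bullet})$ is then the $D(G)$-invariants, which is an exact functor. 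Concretely it is the degree-zero part of the $G$-grading on $\Gamma(X,\cF)\otimes_{\ZZ[P]}\ZZ[Q]$.
\item That degree-zero part is $\Gamma(X,\cF)$, because $Q\cap P^{\gp}=P$ by exactness.
\item Strict \'etale pieces are handled by ordinary \'etale descent, where your comparison of \v{C}ech nerves is correct.
\end{itemize}
With this input in place of the \v{C}ech-nerve comparison, the rest of your argument goes through. The identification $\LGab_{X'_n/S}\simeq\LGab_{X/S}\otimes_{\cO_X}\cO_{X'_n}$ from source invariance still holds for the terms of the fs \v{C}ech nerve.
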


\subsection{Comparison}

We will now construct our comparison morphism between $\FilHHRog{*}$ and $\FilHHOls{*}.$

First, note that since $\FilHHRog{*}$ and $\FilHHOls{*}$ are integral log-\'etale sheaves, in order to construct a map $\FilHHRog{*}\to\FilHHOls{*}$ in $\Shv(\Arr(\LogSch),\DF(\ZZ)),$ it suffices to construct a map between their restrictions to $\Arr(\LogAff)\subseteq\Arr(\LogSch).$ Additionally, because $\FilHHRog{*}$ is by definition left Kan extended from the subcategory of ``polynomial extensions of polynomial pre-log rings'' -- i.e., maps of the form
\[
(\ZZ[X, \NN^{S}],\NN^{S})\to (\ZZ[X, Y,\NN^S\oplus\NN^T],\NN^{S}\oplus\NN^{T})
\]
with their obvious log structures (where $X,Y,S,$ and $T$ are finite sets\footnote{$\ZZ[X]\simeq \ZZ[x_1,\dots, x_{\#X}]$ is the free $\ZZ$-algebra on the set $X$ and $\ZZ[X,\NN^S]\simeq\ZZ[x_1,\dots, x_{\#X},s_1,\dots, s_{\#S}]$ is the monoid algebra over $\ZZ[X]$ generated by $\NN^S.$}) -- it suffices to construct a map between their restrictions to the category of maps of pre-log rings, via the logification map to $\Arr(\LogAff).$ Here, we use \ref{HHR logification invariance} to ensure that we can compute and compare $\HHRog$ using the prelog rings themselves, since this will make the computations simpler.

We have seen that if $(f,\theta) : (k,P)\to (A,M)$ is a map of fine prelog rings, the induced map $\Spec(M\to A)\to\cLog_{\Spec(P\to k)}$ factors through the \'etale map $\cS_{(f,\theta)}\to\cLog_{\Spec(P\to k)},$ and because $\cS_{(f,\theta)}\to\cLog_{\Spec(P\to k)}$ is \'etale, the map
\[
\FilHH{*}(\Spec A/\cLog_{\Spec(P\to k)})\to \FilHH{*}(\Spec A/\cS_{(f,\theta)})
\]
is an equivalence. Moreover, this factorization is functorial: given a commutative square of fine prelog rings
\[\begin{tikzcd}
    (A,M)\ar[r] &(A',M')\\
    (k,P)\ar[u,"{(f,\theta)}"]\ar[r] &(k',P')\ar[u,"{(f',\theta')}"],
\end{tikzcd}\]
there is an induced morphism $\cS_{(f',\theta')}\to\cS_{(f,\theta)}$ such that
\[\begin{tikzcd}
    \Spec A'\ar[r]\ar[d] &\cS_{(f',\theta')}\ar[r]\ar[d] &\cLog_{\Spec(k',P')}\ar[d]\\
    \Spec A\ar[r] &\cS_{(f,\theta)}\ar[r] &\cLog_{\Spec(k,P)}
\end{tikzcd}\]
commutes.

So it suffices to construct a map
\[
\FilHHRog{*}((A,M)/(k,P))\to\FilHH{*}(\Spec A/\cS_{(f,\theta)})
\]
which is functorial in the morphism $(k,P)\to (A,M).$ 

\begin{lemma}
    For a map $\Spec(A,M)\to\Spec(k,P)$ of the form 
    \[
    \Spec(\ZZ[X,Y,\NN^S\oplus\NN^T],\NN^S\oplus\NN^T)\to\Spec(\ZZ[X,\NN^S],\NN^S),
    \]
    we have
    \[
    \FilHHOls{*}(\Spec(A,M)/\Spec(k,P))\simeq\tau_{\geq *}\HH(\Spec A/\cS_{(f,\theta)}).
    \]
    I.e., the HKR filtration on Olsson log Hochschild homology of a polynomial extension of a free prelog ring is the Whitehead filtration.
\end{lemma}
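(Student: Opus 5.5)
The approach is to apply \ref{Flat cotangent complex implies postnikov filtration} to the morphism $\Spec A\to\cS_{(f,\theta)}$. To do this we need the relative cotangent complex $\LL_{\Spec A/\cS_{(f,\theta)}}$ to be a flat (indeed free) $A$-module concentrated in degree zero. First, the étale invariance in \ref{HH base change results}(2) identifies $\FilHHOls{*}(\Spec(A,M)/\Spec(k,P))$ with $\FilHH{*}(\Spec A/\cS_{(f,\theta)})$, because $\cS_{(f,\theta)}\to\cLog_{\Spec(k,P)}$ is étale. This equivalence is filtered, since \ref{HH base change results} is proved by comparing HKR graded pieces. Here $f : k=\ZZ[X,\NN^S]\to A=\ZZ[X,Y,\NN^S\oplus\NN^T]$ and $\theta : \NN^S\to\NN^S\oplus\NN^T$ is the inclusion.

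The key step is to show that $\Spec A\to\cS_{(f,\theta)} = \cA_{\NN^S\oplus\NN^T}\times_{\cA_{\NN^S}}\Spec k$ is smooth, with $\Omega^1$ free on $dy$ for $y\in Y$ and $d\log t$ for $t\in T$. I would factor this map explicitly. First, $\cA_{\NN^S\oplus\NN^T}\simeq \cA_{\NN^S}\times\cA_{\NN^T}$, so $\cS_{(f,\theta)}\simeq \Spec k\times \cA_{\NN^T}\simeq \Spec k\times[\mathbb{A}^T/\mathbb{G}_m^T]$. Next, $\Spec A = \Spec k\times\mathbb{A}^Y\times\mathbb{A}^T$, and the map to $\cS_{(f,\theta)}$ is the product of the identity on $\Spec k$, the projection killing $\mathbb{A}^Y$, and the quotient map $\mathbb{A}^T\to[\mathbb{A}^T/\mathbb{G}_m^T]$. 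The quotient map is a $\mathbb{G}_m^T$-torsor, hence smooth, and its relative cotangent complex is the dual of the Lie algebra, $\cO^{\oplus T}$ in degree zero. By \ref{cotangent complex of fiber product} and base change (\ref{cotangent complex properties 1}), $\LL_{\Spec A/\cS_{(f,\theta)}}\simeq A^{\oplus Y}\oplus A^{\oplus T}$, a free module in degree zero. This agrees with the log differentials computed in the Example above, which is a good consistency check.

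Given this, $\Spec A\to\cS_{(f,\theta)}$ is a smooth morphism from a classical affine scheme to a classical algebraic stack. The hypothesis of \ref{Flat cotangent complex implies postnikov filtration} therefore holds, and the HKR filtration on $\HH(\Spec A/\cS_{(f,\theta)})$ is the Whitehead filtration $\tau_{\geq *}$. The main obstacle I expect is the identification of the derived fiber product $\cA_{\NN^S\oplus\NN^T}\times_{\cA_{\NN^S}}\Spec k$ with a classical stack. I would handle it by noting that $\cA_{\NN^S\oplus\NN^T}\to\cA_{\NN^S}$ is the projection off the factor $\cA_{\NN^T}$, hence flat, so the fiber product is underived. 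I would also check that the identification of the torsor's cotangent complex is compatible with the Cartesian-transport conventions for $\QC$.
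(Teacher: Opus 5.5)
Your proposal is correct and follows the paper's proof. You identify \FilHHOls{*} with the HKR-filtered $\HH(\Spec A/\cS_{(f,\theta)})$ by \'etale invariance along $\cS_{(f,\theta)}\to\cLog_{\Spec(k,P)}$ (\ref{HH base change results}), then apply \ref{Flat cotangent complex implies postnikov filtration} to the smooth map $\Spec A\to\cS_{(f,\theta)}$. The only difference is that you check smoothness by hand, via $\cS_{(f,\theta)}\simeq\Spec k\times\cA_{\NN^T}$ and the torsor computation $\LL_{\Spec A/\cS_{(f,\theta)}}\simeq A^{\oplus Y}\oplus A^{\oplus T}$, whereas the paper cites \cite[Corollary~5.31]{OlssonLogGeomandAlgStacks}.
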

\begin{proof}
    Note that the canonical map $\Spec A\to\cS_{(f,\theta)}$ is smooth (e.g., by \cite[corollary 5.31]{OlssonLogGeomandAlgStacks}). Thus, the result follows by combining \ref{HH base change results} and \ref{Flat cotangent complex implies postnikov filtration}.
\end{proof}

We are left with the task of calculating $\HH(\Spec A/\cS_{(f,\theta)})$ in the situation above, and to compare this to $\HHRog((A,M)/(k,P)).$ Because the (pre)log rings/schemes involved are so simple, we can actually calculate the (log) Hochschild homologies directly.

We begin with $\HHRog((A,M)/(k,P)).$ Let us consider first the case when the morphism of prelog rings is of the form
\[\begin{tikzcd}
    P\ar[r,"\theta"]\ar[d] & M\ar[d]\\
    k_0[P]\ar[r,"f"] &A_0[M],
\end{tikzcd}\]
where the log structures are given by the canonical maps from the monoid to its corresponding monoid ring, and the map $f : k_0[P]\to A_0[M]$ is induced by a map $f_0 : k_0\to A_0$ of animated rings together with $\theta.$

In this case, the repletion can be identified with $(A_0\otimes_{k_0} A_0)[(M\oplus_P M)^{\rep}_{\sigma}]$:
\begin{align*}
    \underline{(A,M)\otimes^{\rep}_{(k,P)}(A,M)}&\simeq (A_0[M]\otimes_{k_0[P]}A_0[M])\otimes_{\ZZ[M\oplus_P M]}\ZZ[(M\oplus_P M)^{\rep}_{\sigma}]\\
    &\simeq \left((A_0\otimes_{k_0} A_0)\otimes_{\ZZ}\ZZ[M\oplus_P M]\right)\otimes_{\ZZ[M\oplus_P M]}\ZZ[(M\oplus_P M)^{\rep}_{\sigma}]\\
    &\simeq (A_0\otimes_{k_0} A_0)\otimes_{\ZZ}\ZZ[(M\oplus_P M)^{\rep}_{\sigma}]\\
    &\simeq (A_0\otimes_{k_0} A_0)[(M\oplus_P M)^{\rep}_{\sigma}].
\end{align*}
Let $M_1 = M_2 = M.$ Let
\[\begin{tikzcd}
    P\ar[r,"\theta"]\ar[d,"\theta"] &M_2\ar[d,"\iota_2"]\\
    M_1\ar[r,"\iota_1"] &M_1\oplus_P M_2
\end{tikzcd}\]
be the coCartesian square of animated monoids defining $M_1\oplus_P M_2 = M\oplus_P M$ together with the canonical morphisms $\iota_1,\iota_2 : M\to M\oplus_P M.$ By \ref{repletion with section identification}, we obtain functorial equivalences
\[
M\oplus\cofib(\iota_1)\simeq (M_1\oplus_P M_2)^{\rep}_{\sigma}\simeq M\oplus\cofib(\iota_2),
\]
and either of these give us an identification
\[
(A,M)\otimes^{\rep}_{(k,P)}(A,M)\simeq (A_0\otimes_{k_0} A_0)[(M\oplus_P M)^{\rep}_{\sigma}]\simeq (A_0\otimes_{k_0} A_0)[M\oplus\cofib(\iota_i)].
\]
Suppose moreover $P\to M$ is given by the canonical morphism $P\to P\oplus Q$ for some animated monoid $Q,$ and let $P_1 = P_2 = P$ and $Q_1 = Q_2 = Q,$ so that we may write $M_1 = P_1\oplus Q_1$ and $M_2 = P_2\oplus Q_2.$ If we fix $i\in \{1,2\}$ and let $i'$ denote the other element, we then have
\[
(A,M)\otimes^{\rep}_{(k,P)}(A,M)\simeq (A_0\otimes_{k_0} A_0)[(P\oplus Q)\oplus\cofib(\iota_i)]\simeq (A_0\otimes_{k_0} A_0)[(P\oplus Q)\oplus Q_{i'}].
\]

Next, we calculate $\HH(\Spec A/\cS_{(f,\theta)}).$ Because $\cS_{(f,\theta)}$ has affine diagonal,
\[
\HH(\Spec A/\cS_{(f,\theta)})\simeq R\Gamma(\Spec A,L\Delta^*R\Delta_*\cO_{\Spec A})
\]
by \ref{relative HH is self intersection of diagonal for relative derived scheme}, where
\[
\Delta : \Spec A\to\Spec A\times_{\cS_{(f,\theta)}}\Spec A
\]
is the diagonal map. In fact, since $\cS_{(f,\theta)}$ has affine diagonal, the fiber product
\[
\Spec A\times_{\cS_{(f,\theta)}}\Spec A\simeq\Spec B
\]
is affine, and we can describe it relatively explicitly. Note that since $\Spec A\to\cS_{(f,\theta)}$ is smooth, the ordinary fiber product of stacks will agree with the derived fiber product.

In general, we have that
\[
\Spec A\times_{\cS_{(f,\theta)}}\Spec A\simeq\lim\left(\begin{tikzcd}
    \Spec A\ar[r]\ar[d] &\cS_{M}\ar[d] &\Spec A\ar[l]\ar[d]\\
    \Spec A\ar[r] &\cS_P &\Spec A\ar[l]\\
    \Spec A\ar[r]\ar[u] &\Spec k\ar[u] &\Spec A\ar[l]\ar[u]
\end{tikzcd}\right)
\]

The morphisms $\Spec A\to\cS_M$ and $\Spec A\to\cS_P$ factor as
\[
\Spec A\xrightarrow{\Spec k[\alpha]}\Spec k[M]\xrightarrow{\pi_M}\cS_{M}
\]
and
\[
\Spec A\xrightarrow{f}\Spec k\xrightarrow{\Spec k[\beta]}\Spec k[P]\xrightarrow{\pi_P}\cS_P
\]
respectively, where $f : \Spec A\to\Spec k$ is the map induced by $k\to A,$ $k[\beta] : k[P]\to k$ and $k[\alpha] : k[M]\to A$ are the maps corresponding to the prelog structures $\beta : P\to k$ and $\alpha : M\to A$ under the adjunction
\[\begin{tikzcd}
\CMon\arrow[rr,"{k[-]}",shift left]&&\CRing_k,\arrow[ll,"{(-,\,\,\cdot\,\,)}",shift left]
\end{tikzcd}\]
and $\pi_M$ and $\pi_P$ are the canonical quotient maps. For the sake of brevity, let $b = \Spec k[\beta]$ and $a = \Spec k[\alpha].$ Similarly, let $t : \Spec k[M]\to\Spec k[P]$ be the map corresponding to the map $\theta : P\to M$ of monoids.

Thus, we may realize $\Spec A\times_{\cS_{(f,\theta)}}\Spec A$ as the limit of the diagram
\begin{equation}\label{diagram 2}\begin{tikzcd}
    \Spec A\arrow[d,"\id_A"']\arrow[r,"a"]&{\Spec k[M]}\arrow[r,"\pi_M"]\arrow[d,"t"]&\cS_M\arrow[d]&{\Spec k[M]}\arrow[l,"\pi_M"']\arrow[d,"t"]&\Spec A\arrow[l,"a"']\arrow[d,"\id_A"]\\
    \Spec A\arrow[r,"b\circ f"']&{\Spec k[P]}\arrow[r,"\pi_P"']&\cS_P&{\Spec k[P]}\arrow[l,"\pi_P"]&\Spec A\arrow[l,"b\circ f"]\\
    \Spec A\arrow[u,"\id_A"]\arrow[r,"f"']&\Spec k\arrow[u,"b"]\arrow[r,"\id_k"']&\Spec k\arrow[u]&\Spec k\arrow[u,"b"']\arrow[l,"\id_k"]&\Spec A\arrow[u,"\id_A"']\arrow[l,"f"]
\end{tikzcd}\end{equation}
Recall that we can identify $\Spec k[M]\times_{\cS_M}\Spec k[M]$ with $\Spec k[M]\times_{\Spec k}\Spec k[M^{\gp}]$ where the projection maps then correspond to
\[\begin{tikzcd}
    \Spec k[M]\times_{\Spec k}\Spec k[M^{\gp}]\ar[d,"\pr_M"]\ar[r,"\mathrm{act}_M"] &\Spec k[M]\ar[d]\\
    \Spec k[M]\ar[r] &\cS_M
\end{tikzcd}\]
with $\mathrm{act}_M : \Spec k[M]\times_{\Spec k}\Spec k[M^{\gp}]\to\Spec k[M]$ is the action map and $\pr_M : \Spec k[M]\times_{\Spec k}\Spec k[M^{\gp}]\to\Spec k[M]$ is projection onto the first factor (and of course, $\Spec k[P]\times_{\cS_P}\Spec k[P]\simeq\Spec k[P]\times_{\Spec k}\Spec k[P^{\gp}]$ in a similar way).

Then we may replace diagram \ref{diagram 2} by
\[\begin{tikzcd}
    \Spec A\arrow[d,"\id_A"']\arrow[r,"a"]&{\Spec k[M]}\arrow[d,"t"]&&{\Spec k[M\oplus M^{\gp}]}\arrow[ll,"\pr_M"']\arrow[rr,"\mathrm{act}_M"]\arrow[d,"t\times t^{\gp}"]&&{\Spec k[M]}\arrow[d,"t"]&\Spec A\arrow[l,"a"']\arrow[d,"\id_A"]\\
    \Spec A\arrow[r,"b\circ f"']&{\Spec k[P]}&&{\Spec k[P\oplus P^{\gp}]}\arrow[ll,"\pr_P"]\arrow[rr,"\mathrm{act}_P"']&&{\Spec k[P]}&\Spec A\arrow[l,"b\circ f"]\\
    \Spec A\arrow[u,"\id_A"]\arrow[r,"f"']&\Spec k\arrow[u,"b"]&&\Spec k\arrow[u]\arrow[ll,"\id_k"]\arrow[rr,"\id_k"']&&\Spec k\arrow[u,"b"']&\Spec A\arrow[u,"\id_A"']\arrow[l,"f"]
\end{tikzcd}\]
as the limit of this diagram will be equivalent to $\Spec A\times_{\cS_{(f,\theta)}}\Spec A$ as well. Since all the schemes in this diagram are affine, we may pass to the coordinate rings and take the colimit of the diagram
\[\begin{tikzcd}
    A&{k[M]}\arrow[rr]\arrow[l,"{k[\alpha]}"']&&{k[M\oplus M^{\gp}]}&&{k[M]}\arrow[r,"{k[\alpha]}"]\arrow[ll,"{k[\delta_M]}"']&A\\
    A\arrow[u,"\id_A"]\arrow[d,"\id_A"']&{k[P]}\arrow[d,"{k[\beta]}"]\arrow[u,"{k[\theta]}"]\arrow[rr]\arrow[l,"{f[\beta]}"]&&{k[P\oplus P^{\gp}]}\arrow[u,"{k[\theta\oplus\theta^{\gp}]}"]\arrow[d,"{k[\beta]\otimes k[1]}"']&&{k[P]}\arrow[d,"{k[\beta]}"']\arrow[u,"{k[\theta]}"]\arrow[ll,"{k[\delta_P]}"]\arrow[r,"{f[\beta]}"']&A\arrow[u,"\id_A"']\arrow[d,"\id_A"]\\
    A&k\arrow[l,"f"]\arrow[rr,"\id_k"']&&k&&k\arrow[r,"f"']\arrow[ll,"\id_k"]&A
\end{tikzcd}\]
where $\delta_M : M\to M\oplus M^{\gp}$ is the diagonal map (and similarly for $\delta_P$), and $f[\beta] = f\circ k[\beta].$

We may further simplify this diagram to
\[\begin{tikzcd}
    {A[M^{\gp}]}&{k[M]}\arrow[r,"{k[\alpha]}"]\arrow[l]&A\\
    {A[P^{\gp}]}\arrow[u]\arrow[d]&{k[P]}\arrow[d,"{k[\beta]}"']\arrow[u,"{k[\theta]}"]\arrow[l]\arrow[r,"{f[\beta]}"']&A\arrow[u,"\id_A"']\arrow[d,"\id_A"]\\
    A&k\arrow[r,"f"']\arrow[l,"f"]&A.
\end{tikzcd}\]
It will be helpful to spell out the unlabeled maps in this new diagram. They are given by:
\begin{align*}
    k[M]&\to A[M^{\gp}]\\
    e_m&\mapsto \alpha(m) e_{[m]}\\
    \\
    k[P]&\to A[P^{\gp}]\\
    e_p&\mapsto \alpha(\theta(p)) e_{[\theta(p)]}\\
    \\
    A[P^{\gp}]&\to A[M^{\gp}]\\
    e_{g}&\mapsto e_{\theta^{\gp}(g)}\\
    \\
    A[P^{\gp}]&\to A\\
    e_g&\mapsto 1,
\end{align*}
and one readily verifies that all the squares are indeed commutative using the identities
$f\circ\beta = \alpha\circ\theta$ and $\theta^{\gp}\circ [-] = [-]\circ\theta$ (where $[-]$ denotes taking the image of an element in a monoid in its groupification).

Putting everything together, we find that $\Spec A\times_{\cS_{(f,\theta)}}\Spec A\simeq\Spec B_{(f,\theta)},$ where
\[
B_{(f,\theta)} := (A[M^{\gp}]\otimes_{A[P^{\gp}]} A)\otimes_{k[M]\otimes_{k[P]} k} A\cong \left(A\otimes_k (k[M^{\gp}]\otimes_{k[P^{\gp}]} k)\right)\otimes_{k[M]\otimes_{k[P]} k} A.
\]
Note specifically that $k[M]\otimes_{k[P]} k$ acts on $A$ and $A[M^{\gp}]\otimes_{A[P^{\gp}]}A$ via
\begin{align*}
    k[M]\otimes_{k[P]} k&\to A\otimes_k (k[M^{\gp}]\otimes_{k[P^{\gp}]} k)\\
    be_m\otimes b'&\mapsto \alpha(m)\otimes(b e_{[m]}\otimes b')
\end{align*}
and
\begin{align*}
    k[M]\otimes_{k[P]} k&\to A\\
    be_m\otimes b'&\mapsto f(bb')\alpha(m).
\end{align*}

We now endow $B_{(f,\theta)}$ with a prelog structure $\gamma_{(f,\theta)} : N_{(f,\theta)}\to B_{(f,\theta)}$ which is functorial in the morphism of prelog rings $(f,\theta) : (k,P)\to (A,M),$ and with appropriately defined morphisms, will provide a factorization
\[
(A,M)\otimes_{(k,P)} (A,M)\to (B_{(f,\theta)},N_{(f,\theta)})\to (A,M).
\]
of the multiplication-summation map $(\mu,\sigma) : (A,M)\otimes_{(k,P)} (A,M)\to (A,M).$ Set $N_{(f,\theta)} := M\oplus M^{\gp}/P^{\gp},$ and define
\begin{align*}
    \gamma_{(f,\theta)} : N_{(f,\theta)}&\to \left(A\otimes_k (k[M^{\gp}]\otimes_{k[P^{\gp}]} k)\right)\otimes_{k[M]\otimes_{k[P]} k} A\\
    (m,[g])&\mapsto (\alpha(m)\otimes(e_g\otimes 1))\otimes 1.
\end{align*}

\begin{lemma}
    The map
    \[
    \gamma_{(f,\theta)} : N_{(f,\theta)}\to B_{(f,\theta)}
    \]
    is a well-defined morphism of monoids.
\end{lemma}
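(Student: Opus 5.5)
The plan is to verify two things separately. First, that the formula for $\gamma_{(f,\theta)}$ does not depend on the choice of representative $g\in M^{\gp}$ of a class $[g]\in M^{\gp}/P^{\gp}$. Second, that the resulting map is multiplicative and unital. Since $N_{(f,\theta)} = M\oplus M^{\gp}/P^{\gp}$ is a direct sum, and finite sums and products agree in $\CMon$, it is convenient to write $\gamma_{(f,\theta)}$ as the product (in $B_{(f,\theta)}$) of two maps:
\[
\gamma_1 : M\to B_{(f,\theta)},\quad m\mapsto (\alpha(m)\otimes(1\otimes 1))\otimes 1,
\]
and
\[
\gamma_2 : M^{\gp}/P^{\gp}\to B_{(f,\theta)},\quad [g]\mapsto (1\otimes(e_g\otimes 1))\otimes 1.
\]
Then $\gamma_{(f,\theta)}(m,[g]) = \gamma_1(m)\gamma_2([g])$, and a map out of a direct sum into the multiplicative monoid of a commutative ring is a monoid map exactly when each component is.

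The map $\gamma_1$ is the composite of $\alpha : M\to (A,\,\cdot\,)$ with the ring map $A\to B_{(f,\theta)}$ given by inclusion into the first tensor factor. It is therefore a monoid homomorphism.

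For $\gamma_2$, start from the group homomorphism $M^{\gp}\to (k[M^{\gp}])^\times$, $g\mapsto e_g$, which satisfies $e_ge_{g'}=e_{g+g'}$ and $e_0=1$. Compose it with the ring maps
\[
k[M^{\gp}]\to k[M^{\gp}]\otimes_{k[P^{\gp}]}k\to B_{(f,\theta)}.
\]
This gives a monoid map $M^{\gp}\to B_{(f,\theta)}$. To see that it factors through $M^{\gp}/P^{\gp}$, take $h\in P^{\gp}$. Then $e_{\theta^{\gp}(h)} = e_h\cdot 1$ in $k[M^{\gp}]$ viewed as a $k[P^{\gp}]$-module, so
\[
e_{\theta^{\gp}(h)}\otimes 1 = 1\otimes \epsilon(e_h) = 1\otimes 1,
\]
where $\epsilon : k[P^{\gp}]\to k$ is the augmentation $e_h\mapsto 1$ used in the diagram defining $B_{(f,\theta)}$. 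Hence $e_{g+\theta^{\gp}(h)}\otimes 1 = e_g\otimes 1$, so the map depends only on $[g]$ and is well defined. It is a group homomorphism into the units of $B_{(f,\theta)}$, and in particular a monoid map.

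The one point needing care is to make sure the $k[P^{\gp}]$-module structure on $k$ really is the augmentation, i.e.\ to read off from the spelled-out maps $A[P^{\gp}]\to A$, $e_g\mapsto 1$, and $A[P^{\gp}]\to A[M^{\gp}]$, $e_g\mapsto e_{\theta^{\gp}(g)}$. This is exactly the identification of $B_{(f,\theta)}$ with $(A[M^{\gp}]\otimes_{A[P^{\gp}]}A)\otimes_{k[M]\otimes_{k[P]}k}A$. Once that is checked, the argument is a routine computation with no real obstacle.

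For completeness I would also note, without needing it for this lemma, that the relations imposed by the tensor over $k[M]\otimes_{k[P]}k$ are compatible. For instance, $\alpha(m)\otimes e_{[m]}$ is identified with $\alpha(m)$ in the last factor, and this is what will later make $\gamma_{(f,\theta)}$ compatible with the map from $(A,M)\otimes_{(k,P)}(A,M)$.
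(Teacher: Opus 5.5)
Your proof is correct and follows the same route as the paper. Well-definedness reduces to $e_{\theta^{\gp}(h)}\otimes 1 = 1\otimes 1$ in $k[M^{\gp}]\otimes_{k[P^{\gp}]}k$, using that $k[P^{\gp}]$ acts on $k$ through the augmentation $e_h\mapsto 1$; you additionally spell out multiplicativity via the splitting $\gamma_{(f,\theta)}(m,[g])=\gamma_1(m)\gamma_2([g])$, which the paper simply takes as evident.
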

\begin{proof}
    The only question is whether the map is well defined. To that end, let $[g] = [g']$ in $M^{\gp}/P^{\gp},$ so that $g - g'$ is in the image of $\theta^{\gp} : P^{\gp}\to M^{\gp}$ -- write $g - g' = \theta^{\gp}(h)$ for some $h\in P^{\gp}.$ Then we have
    \begin{align*}
        (\alpha(m)\otimes(e_{[g']}\otimes 1))\otimes 1 &= (\alpha(m)\otimes(e_{[g+\theta^{\gp}(h)]}\otimes 1))\otimes 1\\
        &= (\alpha(m)\otimes(e_{[g]}e_{[\theta^{\gp}(h)]}\otimes 1))\otimes 1\\
        &= (\alpha(m)\otimes(e_{[g]}\otimes 1))\otimes 1,
    \end{align*}
    where the last equality follows from the maps $k[P^{\gp}]\to k[M^{\gp}]$ and $k[P^{\gp}]\to k$ sending $e_h$ to $e_{[\theta^{\gp}(h)]}$ and $1$ respectively.
\end{proof}

\begin{remark}
    We do not know the precise relation between the prelog structure $\gamma_{(f,\theta)} : N_{(f,\theta)}\to B_{(f,\theta)}$ and the prelog structure given by forming the fiber product $\Spec(A,M)\times_{(\cS_{(f,\theta)},\cM_{\cS_{(f,\theta)}})}\Spec(A,M)$ in the category of prelog stacks, where $\cM_{\cS_{(f,\theta)}}$ is the canonical log structure on $\cS_{(f,\theta)}$ induced by the map $\cS_{(f,\theta)}\to\cLog_{\Spec(k,P)}.$
\end{remark}

Geometrically, $\Spec A\times_{\cS_{(f,\theta)}}\Spec A$ participates in a factorization of the diagonal of $\Spec A$ relative to $\Spec k:$
\[
\Spec A\to\Spec A\times_{\cS_{(f,\theta)}}\Spec A\to\Spec A\times_{\Spec k}\Spec A,
\]
where the first map is the relative diagonal and the second map is induced by the structure map $\cS_{(f,\theta)}\to\Spec k.$ Explicitly, the diagonal map $\Delta : \Spec A\to\Spec A\times_{\cS_{(f,\theta)}}\Spec A$ corresponds to the ring morphism
\begin{align*}
    \left(A\otimes_k (k[M^{\gp}]\otimes_{k[P^{\gp}]} k)\right)\otimes_{k[M]\otimes_{k[P]} k} A&\to A\\
    (a\otimes(be_g\otimes c))\otimes a'&\mapsto a f(bc) a',
\end{align*}
and the map  $\Spec A\times_{\cS_{(f,\theta)}}\Spec A\to\Spec A\otimes_k A$ corresponds to the ring morphism
\begin{align*}
    \psi_{(f,\theta)} : A\otimes_k A&\to \left(A\otimes_k (k[M^{\gp}]\otimes_{k[P^{\gp}]} k)\right)\otimes_{k[M]\otimes_{k[P]} k} A\\
    a\otimes a'&\mapsto (a\otimes(1\otimes 1))\otimes a'.
\end{align*}

With the prelog structure $N_{(f,\theta)}\to B_{(f,\theta)}$ defined above, both of these morphisms upgrade to morphisms of prelog rings
\[
(B_{(f,\theta)},N_{(f,\theta)})\to (A,M)
\]
and
\[
(A,M)\otimes_{(k,P)} (A,M)\to (B_{(f,\theta)}, N_{(f,\theta)}),
\]
if we define
$N_{(f,\theta)}\to M$ to be the projection and 
\begin{align*}
    \rho_{(f,\theta)} : M\oplus_P M&\to M\oplus M^{\gp}/P^{\gp}\\
    [m,m']&\mapsto (m+m',[m']).
\end{align*}

We check that 
\begin{align*}
    \psi_{(f,\theta)}\circ (\alpha\otimes\alpha)([m,m']) &= \psi_{(f,\theta)}(\alpha(m)\otimes\alpha(m'))\\
    &= (\alpha(m)\otimes (1\otimes1))\otimes\alpha(m')
\end{align*}
and
\begin{align*}
    \gamma_{(f,\theta)}\circ\rho_{(f,\theta)}([m,m']) &= \gamma_{(f,\theta)}(m+m',[m'])\\
    &= (\alpha(m+m')\otimes(e_{[m']}\otimes1))\otimes1\\
    &= (\alpha(m)\alpha(m') \otimes(e_{[m']}\otimes 1))\otimes 1
\end{align*}

These are indeed equal, as we can verify by checking the diagram of structure maps:
\[\begin{tikzcd}
    k[M]\otimes_{k[P]} k\ar[rr]\ar[dd] &&A\otimes_k (k[M^{\gp}]\otimes_{k[P^{\gp}]} k)\\
    &e_{[m']}\otimes 1\ar[r, maps to]\ar[d, maps to] &\alpha(m')\otimes (e_{[m']}\otimes 1)\\
    A &\alpha(m')
\end{tikzcd}\]

Thus, we have a factorization
\[\begin{tikzcd}
    (A\otimes^{L}_k A, M\oplus_P M)\ar[r] &(B_{(f,\theta)},N_{(f,\theta)})\ar[r] &(A,M)
\end{tikzcd}\]
and it is clear that the second map is replete (when considered as a map of animated prelog rings). It follows from the universal property of $(A,M)\otimes^{\rep}_{(k,P)}(A,M)$ that there exists a unique (up to contractible choice) map $(A,M)\otimes^{\rep}_{(k,P)}(A,M)\to (B_{(f,\theta)},N_{(f,\theta)})$ of animated prelog rings making the diagram
\[\begin{tikzcd}
    (A,M)\otimes^L_{(k,P)}(A,M)\ar[r]\ar[d] &(A,M)\otimes^{\rep}_{(k,P)}(A,M)\ar[r]\ar[d] &(A,M)\ar[d,"\id"]\\
    (A,M)\otimes^{\cl}_{(k,P)}(A,M)\ar[r] &(B_{(f,\theta)},N_{(f,\theta)})\ar[r] &(A,M)
\end{tikzcd}\]
commute, where the leftmost and rightmost vertical maps are the $0$-truncation maps.

It follows immediately that there is a canonical map
\[
\HHRog((A,M)/(k,P))\to \HHOls((A,M)/(k,P))
\]
(note that $A\otimes^L_{B_{(f,\theta)}} A\simeq\HHOls((A,M)/(k,P))$), functorially in the morphism $(f,\theta)$ of prelog rings. Sheafification implies that we obtain a map
\[
\HHRog\to\HHOls
\]
of functors on $\Arr(\LogAff).$

In fact, this is sufficient to obtain the desired comparison map
\[
c : \FilHHRog{*}\to\FilHHOls{*},
\]
as it suffices to define $\left. c\right|_{\PolyPreLog}$ by universal property of the left Kan extension defining $\FilHHRog{}.$ In this situation, both $\FilHHRog{}(f)$ and $\FilHHOls{*}$ have the Whitehead filtration, which is functorial, and so the map $\HHRog\to\HHOls$ naturally upgrades to
\[
c : \FilHHRog{*}\to\FilHHOls{*}
\]

Before we state the main result, let us examine the map
\[
(A,M)\otimes^{\rep}_{(k,P)}(A,M)\to (B_{(f,\theta)},N_{(f,\theta)})
\]
in more detail in the case where $(f,\theta) : (k,P)\to (A,M)$ is of the form $(\ZZ[X,P],P)\to (\ZZ[X,Y,P\oplus Q],P\oplus Q),$ where $X$ and $Y$ are finite sets and $P$ and $Q$ are finitely generated free monoids.

We have already seen that
\[
(A,M)\otimes^{\rep}_{(k,P)}(A,M)\simeq (A_0\otimes_{k_0} A_0)[(M\oplus_P M)^{\rep}_{\sigma}]\simeq (A_0\otimes_{k_0} A_0)[M\oplus\cofib(\iota_i)].
\]
When we take $(k,P) = (\ZZ[X,P],P)$ and $(A,M) = (\ZZ[X,Y,P\oplus Q],P\oplus Q),$ this becomes
\[
\underline{(A,M)\otimes^{\rep}_{(k,P)}(A,M)}\simeq (\ZZ[X,Y]\otimes_{\ZZ[X]} \ZZ[X,Y])[M\oplus Q^{\gp}].
\]
In this case, the derived constructions in the repletion simplify to the classical constructions due to flatness/freeness of the rings and monoids. Then we must in fact have a commutative diagram
\[\begin{tikzcd}
    (A,M)\otimes^{\cl}_{(k,P)}(A,M)\ar[r]\ar[d,"\id"] &(A,M)\otimes^{\rep}_{(k,P)}(A,M)\ar[r]\ar[d] &(A,M)\ar[d,"\id"]\\
    (A,M)\otimes^{\cl}_{(k,P)}(A,M)\ar[r] &(B_{(f,\theta)},N_{(f,\theta)})\ar[r] &(A,M)
\end{tikzcd}\]
which implies that the canonical map $(A,M)\otimes^{\cl}_{(k,P)}(A,M)\to (A,M)$ factors as
\[
(A,M)\otimes^{\cl}_{(k,P)}(A,M)\to (A,M)\otimes^{\rep}_{(k,P)}(A,M)\to (B_{(f,\theta)},N_{(f,\theta)})\to(A,M).
\]

\[\begin{tikzcd}
    M\oplus_P M\ar[r]\ar[d] &\ZZ[X,Y,P\oplus Q]\otimes_{\ZZ[X,P]} \ZZ[X,Y,P\oplus Q]\ar[d]\\
    M\oplus M^{\gp}/P^{\gp}\ar[r]\ar[d] &(\ZZ[X,Y]\otimes_{\ZZ[X]} \ZZ[X,Y])[M\oplus Q^{\gp}]\ar[d]\\
    M\oplus M^{\gp}/P^{\gp}\ar[r]\ar[d] &B_{(f,\theta)}\ar[d]\\
    M\ar[r] &A
\end{tikzcd}\]
We have isomorphisms
\begin{align*}
    (P\oplus Q)\oplus_P (P\oplus Q)&\to P\oplus Q\oplus Q\\
    [(p_1,q_1),(p_2,q_2)]&\mapsto (p_1+p_2,q_1,q_2)
\end{align*}
\begin{align*}
    (P\oplus Q)\oplus (P\oplus Q)^{\gp}/P^{\gp}&\to P\oplus Q\oplus Q^{\gp}\\
    ((p_1,q_1),[p_2,q_2])&\mapsto (p_1,q_1,[q_2])
\end{align*}
\[
\ZZ[X,Y,P\oplus Q]\otimes_{\ZZ[X,P]}\ZZ[X,Y,P\oplus Q]\cong\ZZ[X,Y,Y,P\oplus Q\oplus Q]
\]
\[
(\ZZ[X,Y]\otimes_{\ZZ[X]} \ZZ[X,Y])[M\oplus Q^{\gp}]\cong \ZZ[X,Y,Y,P\oplus Q\oplus Q^{\gp}]
\]

Examining the individual components of $B_{(f,\theta)},$ we have
\begin{align*}
    A\otimes_k ( k[M^{\gp}]\otimes_{k[P^{\gp}]} k)&\cong \ZZ[X,Y,P\oplus Q]\otimes_{\ZZ[X,P]}(\ZZ[X,P,P^{\gp}\oplus Q^{\gp}]\otimes_{\ZZ[X,P,P^{\gp}]}\ZZ[X,P])\\
    &\cong \ZZ[X,Y,P\oplus Q]\otimes_{\ZZ[X,P]}(\ZZ[Q^{\gp}]\otimes \ZZ[X,P,P^{\gp}]\otimes_{\ZZ[X,P,P^{\gp}]}\ZZ[X,P])\\
    &\cong \ZZ[X,Y,P\oplus Q]\otimes_{\ZZ[X,P]} (\ZZ[Q^{\gp}]\otimes\ZZ[X,P])\\
    &\cong \ZZ[Y,Q]\otimes\ZZ[X,P]\otimes_{\ZZ[X,P]} (\ZZ[Q^{\gp}]\otimes\ZZ[X,P])\\
    &\cong \ZZ[Y,Q]\otimes\ZZ[Q^{\gp}]\otimes\ZZ[X,P]\\
    &\cong\ZZ[X,Y,P\oplus Q\oplus Q^{\gp}],
\end{align*}

and

\begin{align*}
    k[M]\otimes_{k[P]} k&\cong \ZZ[X,P,P\oplus Q]\otimes_{\ZZ[X,P,P]}\ZZ[X,P]\\
    &\cong \ZZ[Q]\otimes\ZZ[X,P,P]\otimes_{\ZZ[X,P,P]}\ZZ[X,P]\\
    &\cong\ZZ[X,P\oplus Q]
\end{align*}

Thus, we have

\begin{align*}
    B_{(f,\theta)} &\cong \ZZ[X,Y,P\oplus Q\oplus Q^{\gp}]\otimes_{\ZZ[X,P\oplus Q]}\ZZ[X,Y,P\oplus Q]\\
    &\cong \ZZ[X,Y,P\oplus Q\oplus Q^{\gp}]\otimes_{\ZZ[X,P\oplus Q]}\ZZ[X,P\oplus Q]\otimes\ZZ[Y]\\
    &\cong\ZZ[X,Y,Y,P\oplus Q\oplus Q^{\gp}]\\
    &\cong\left(\ZZ[X,Y]\otimes_{\ZZ[X]}\ZZ[X,Y]\right)[M\oplus Q^{\gp}].
\end{align*}

In particular, $\underline{(A,M)\otimes^{\rep}_{(k,P)}(A,M)}$ and $B_{(f,\theta)}$ are abstractly isomorphic in this situation. In fact, a tedious but not complicated tracing of the isomorphisms shows that the diagram
\[\begin{tikzcd}
    (A,M)\otimes^{\rep}_{(k,P)}(A,M)\ar[r]\ar[d] &\left((\ZZ[X,Y]\otimes_{\ZZ[X]}\ZZ[X,Y])[M\oplus Q^{\gp}],M\oplus Q^{\gp}\right)\ar[d,"\id"]\\
    (B_{(f,\theta)},N_{(f,\theta)})\ar[r] &\left((\ZZ[X,Y]\otimes_{\ZZ[X]}\ZZ[X,Y])[M\oplus Q^{\gp}],M\oplus Q^{\gp}\right)
\end{tikzcd}\]
commutes, where the horizontal arrows are the equivalences previously described.

We are now ready to prove the main theorem.


\begin{theorem}\label{Main Theorem}
    Let $f : X\to S$ be a map of Noetherian fs log schemes of finite Krull dimension. There exists a morphism
    \[
    c_f : \FilHHRog{*}(f)\to\FilHHOls{*}(f)
    \]
    in $\DF(\ZZ),$ functorial in $f,$ such that the induced morphisms on associated graded pieces are
    \[
    \gr^i c_f\simeq R\Gamma(X,\wedge^i\eta_f[i]) : R\Gamma(X,\left(\bigwedge^i\LGab_{f}\right)[i])\to R\Gamma(X,\left(\bigwedge^i\LOls_{f}\right)[i]),
    \]
    where $\eta_f : \LGab_f\to\LOls_f$ is the comparison morphism of \cite[8.31]{Olssoncotangent}.
\end{theorem}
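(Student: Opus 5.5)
We construct $c_f$ on prelog rings and then globalize. First, restrict to $\Arr(\LogAff)$: since $\FilHHRog{*}$ and $\FilHHOls{*}$ are integral log-\'etale sheaves (by \cite[Proposition~7.11]{BLPO_HKR} and the sheaf property established for $\HHOls$), and every Noetherian fs log scheme of finite Krull dimension has a Zariski cover by objects of $\LogAff$, a natural transformation on $\Arr(\LogAff)$ extends uniquely to $\Arr(\LogSch^{\fs,\mathrm{noeth}})$. Functoriality in $f$ then comes for free. By \ref{HHR logification invariance}, and because $\FilHHRog{*}$ is left Kan extended from $\PolyPreLog$, it suffices to give a natural filtered map on $\PolyPreLog$. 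This map must land in $\FilHHOls{*}$ evaluated on the logifications, which by \'etale invariance (\ref{HH base change results}(2)) equals $\FilHH{*}(\Spec A/\cS_{(f,\theta)})$.

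On $\PolyPreLog$, we use the map of prelog rings $(A,M)\otimes^{\rep}_{(k,P)}(A,M)\to (B_{(f,\theta)},N_{(f,\theta)})$ built above from the universal property \ref{universal property of repletion}. It is natural in $(f,\theta)$ because the factorization $(A\otimes_k A,M\oplus_P M)\to(B_{(f,\theta)},N_{(f,\theta)})\to(A,M)$ is natural and the repletion is a left adjoint. Base-changing along the two diagonals gives
\[
\HHRog((A,M)/(k,P))=A\otimes_{\underline{(A,M)\otimes^{\rep}_{(k,P)}(A,M)}}A\to A\otimes_{B_{(f,\theta)}}A\simeq\HH(\Spec A/\cS_{(f,\theta)}),
\]
where the last equivalence uses \ref{relative HH is self intersection of diagonal for relative derived scheme}, since $\cS_{(f,\theta)}$ has affine diagonal. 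On $\PolyPreLog$ both HKR filtrations are the Whitehead filtrations: for $\HHOls$ this is the lemma above, and for $\HHRog$ it follows from \ref{HHRog HKR}, because $\LGab$ is a finite free module in degree $0$ there, so one can argue as in \ref{Flat cotangent complex implies postnikov filtration}. Since $\tau_{\geq *}$ is functorial, the map upgrades canonically to a filtered map. Left Kan extending and then sheafifying produces $c_f$.

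To identify $\gr^i c_f$, observe that both $\gr^i$ functors (after the shift $[-i]$) are left Kan extended from $\PolyPreLog$ (resp.\ computed by descent from it), and that $\gr^i$ commutes with left Kan extension and limits. It therefore suffices to show that on $\PolyPreLog$ the degree-$1$ map
\[
\LGab_{(A,M)/(k,P)}=\Omega^1_{(A,M)/(k,P)}\to\LL_{A/\cS_{(f,\theta)}}\simeq\LOls
\]
is the comparison $\eta$ of \cite[8.31]{Olssoncotangent}, and that the map in degree $i$ is its $i$th exterior power. The exterior power claim holds because, on $\PolyPreLog$, both sides are $\Omega^i[i]$ via antisymmetrization maps, and the map is induced by a map of cdga's $\HH_*$, so it is determined by degree $1$. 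For degree $1$, use the explicit computation above: $\underline{(A,M)\otimes^{\rep}(A,M)}\to B_{(f,\theta)}$ is an isomorphism compatible with the standard presentations $\ZZ[X,Y,Y,P\oplus Q\oplus Q^{\gp}]$. Hence on $\HH_1$ both sides are $A^{\oplus Y}\oplus A\otimes Q^{\gp}$, and the map sends $dy\mapsto dy$ and $d\log q\mapsto d\log q$. This agrees with Olsson's $\eta$, which is characterized by the pullback $\pi^*\LGab_{X/S}\to\LL_{V_\bullet/U_\bullet}$ along the strict chart $\Spec A\to\cS_{(f,\theta)}$ and which is the identity on log differentials. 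After that, descent carries the identification to general $f$ and gives $R\Gamma(X,\wedge^i\eta_f[i])$.

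The main obstacle is this last compatibility, namely that the degree-$1$ graded of our map coincides with Olsson's $\eta$ naturally, rather than only objectwise. My approach is to compare both through the description $\LL_{\underline X/\underline{X\times^{\rep}X}}[-1]\to\LL_{\underline X/\underline X\times_{\cS}\underline X}[-1]$ given in the final remark of the comparison subsection, checking that Olsson's simplicial construction factors through the chart $\cS_{(f,\theta)}$. A secondary concern is checking that the Whitehead-filtration upgrade on $\PolyPreLog$ is compatible with how the HKR filtration of \ref{HHRog HKR} is defined.
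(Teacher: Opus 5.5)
Your proposal is correct and follows the paper's proof essentially step for step. It reduces to $\Arr(\LogAff)$ by integral log-\'etale descent and to $\PolyPreLog$ via logification invariance and the left Kan extension defining $\FilHHRog{*}$, passes through the \'etale chart $\cS_{(f,\theta)}$, maps to $(B_{(f,\theta)},N_{(f,\theta)})$ by the universal property of the repletion, upgrades via the Whitehead filtrations on polynomial objects, and finally identifies $\gr^1$ on $\PolyPreLog$ and takes exterior powers. The only difference is in that last identification: the paper reduces by base change to $(\ZZ,\{1\})\to(\ZZ[Y,Q],Q)$ and reads off from the pushout description in the proof of \cite[Theorem~8.32]{Olssoncotangent} that $\eta$ is the identity on $\Omega^1$, and this objectwise check suffices because $\LGab$ is left Kan extended from $\PolyPreLog$ and all values there are discrete, so the naturality obstacle you flag does not actually arise.
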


\begin{proof}
    We have shown the existence of a comparison morphism
    \[
    c_f : \FilHHRog{*}(f)\to\FilHHOls{*}(f)
    \]
    in our work above: it is simply the canonical map induced by the universal property of the left Kan extension defining $\FilHHRog{*},$ extended to the global case via sheafification.

    It follows that the induced morphism $\gr^1 c_f : \LGab_f\to\LOls_f$ is also induced by the universal property of the left Kan extension and sheafification. We must check that the map defined in \cite[8.31]{Olssoncotangent} is equivalent to this map.

    As $\LGab$ is left Kan extended from $\PolyPreLog,$ we only need to verify the relevant map on morphisms of the form
    \[
    f : (\ZZ[X,P],P)\to (\ZZ[X,Y,P\oplus Q],P\oplus Q),
    \]
    where $X,Y$ are finite sets, and $P$ and $Q$ are finitely generated free monoids (and their associated logifications). In fact, as such a map is equivalent to the base change of a map of the form
    \[
    (\ZZ,\{1\})\to(\ZZ[Y,Q],Q)
    \]
    along the map $(\ZZ,\{1\})\to(\ZZ[X,P],P),$ we may reduce to the case when the base is the initial prelog ring and the other prelog ring $(\ZZ[Y,Q],Q)$ is free.
    
    Following the proof of \cite[Theorem~8.32]{Olssoncotangent}, we see that the map $\LGab_f\to\LOls_{f^a}$ is induced by the diagram
    \[\begin{tikzcd}
        \LL_{\ZZ[Y,Q]}\ar[d] &\ar[l] \ZZ[Y, Q]\otimes_{\ZZ[Q]}\LL_{\ZZ[Q]}\ar[r]\ar[d] & \ZZ[Y,Q]\otimes (Q^{\gp})\ar[d]\\
        \LL_{\ZZ[Y, Q]} &\ar[l] \ZZ[Y,Q]\otimes_{\ZZ[Q]}\LL_{\ZZ[Q]}\ar[r] &\ZZ[Y,Q]\otimes (Q^{\gp})
    \end{tikzcd}\]
    as $\LGab_f$ is the homotopy pushout of the first row and $\LOls_f$ is the homotopy pushout of the second. These maps are all identities, and so the map $\LGab_f\to\LOls_{f^a}$ is the identity (or canonical equivalence)
    \[
    \Omega^1_f\to\Omega^1_{f^a}
    \]
    when $f$ is of the form
    \[
    f : (\ZZ[X,P],P)\to (\ZZ[X,Y,P\oplus Q],P\oplus Q),
    \]
    as desired (note that $(\ZZ,\{1\})\to (\ZZ[Y,Q],Q)$ is integral log smooth, thus derived log smooth by \ref{integral + log smooth/et implies derived log smooth/et}, and so both cotangent complexes reduce to the log differentials).

    Finally, note that the map on the $i$th graded piece is given by the $i$th exterior power of the map on the first graded piece.
\end{proof}

\begin{cor}
    Let $f : X\to S$ be a map of Noetherian fs log schemes of finite Krull dimension. Then the comparison map
    \[
    c_f : \FilHHRog{*}(f)\to\FilHHOls{*}(f)
    \]
    is an equivalence if $\eta_f : \LGab_f\to\LOls_f$ is an equivalence. In particular, $c_f$ is an equivalence if $f$ is integral or if $S$ is log flat over $\Spec\ZZ$ with the trivial log structure.
\end{cor}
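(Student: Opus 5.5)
The corollary should follow from Theorem \ref{Main Theorem} together with completeness of both HKR filtrations. By Theorem \ref{Main Theorem}, the comparison $c_f$ is a morphism of filtered objects in $\DF(\ZZ)$. On the $i$th associated graded piece it is $R\Gamma(X,\wedge^i\eta_f[i])$. Both filtrations are complete and $\NN$-indexed. For $\FilHHRog{*}$ this is Theorem \ref{HHRog HKR}, extended to $X$ by descent. For $\FilHHOls{*}$ it is Theorem \ref{Log HKR for HHOls}, which rests on Proposition \ref{Existence of HKR filtration}. A map between complete filtered objects is an equivalence as soon as it induces equivalences on all associated graded pieces, by the criterion of \cite[Lemma 2.15]{GwilliamPavlovEnhancing} quoted after Proposition \ref{Existence of HKR filtration}. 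So it suffices to show each $\gr^i c_f$ is an equivalence.

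Now assume $\eta_f$ is an equivalence in $\QC(\underline{X})$. Then its derived exterior powers $\wedge^i\eta_f$ are equivalences, since $\wedge^i$ is a functor. Shifts preserve equivalences, and so does $R\Gamma(X,-)$. Hence every $\gr^i c_f$ is an equivalence, and therefore so is $c_f$. The one point to check is that the exterior powers are formed in the same way on both sides, as the animated exterior powers of the respective cotangent complexes. This matches the description of the graded pieces in Theorem \ref{Main Theorem}, so I do not expect a real obstacle here.

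For the final claim, I would invoke the proposition recalled in the section on comparing the logarithmic cotangent complexes. It states that $\eta_f:\LGab_{X/S}\to\LOls_{X/S}$ is an equivalence whenever $f$ is integral or $S$ is log flat over $(\Spec\ZZ,\cO^\times)$. The trivial log structure $\cO^\times$ is exactly the trivial log structure on $\Spec\ZZ$ referred to in the corollary. Fs log schemes are in particular fine, so that proposition applies to the morphisms considered here.

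The only subtle step is making sure completeness holds globally, and not just on affines. Olsson's filtration is complete by construction, as a right Kan extension. Rognes' filtration on $X$ is defined by sheafification or descent from $\LogAff$, and its completeness follows because limits commute with limits. Once this is granted, the rest is formal.
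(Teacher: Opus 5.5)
Your proposal is correct and follows essentially the same route as the paper. The paper's proof also combines the graded description in Theorem~\ref{Main Theorem}, the completeness of the HKR filtrations (Theorem~\ref{HHRog HKR} and its Olsson counterpart), and \cite[Corollary 8.34]{Olssoncotangent} (that $\eta_f$ is an equivalence when $f$ is integral or $S$ is log flat over $\Spec\ZZ$); your remark that completeness survives globalization, because limits commute with limits, simply makes explicit a point the paper leaves implicit.
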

\begin{proof}
    This follows from \cite[Corollary 8.34]{Olssoncotangent}, combined with \ref{HHRog HKR} and the fact that the HKR filtration is complete.
\end{proof}

We conclude with the observation that the above implies that the dividing Nisnevich sheafification of the two log Hochschild homologies agree.

\begin{cor}
    Let $f : X\to S$ be a map of Noetherian fs log schemes of finite Krull dimension. The comparison morphism $c_f$ of \ref{Main Theorem} becomes an equivalence after dividing Nisnevich sheafification.
\end{cor}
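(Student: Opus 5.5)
The statement to prove is the last corollary: after dividing Nisnevich sheafification, the comparison morphism $c_f$ of \ref{Main Theorem} becomes an equivalence. The plan is to reduce to the associated graded pieces, where the claim becomes a statement about Olsson's comparison $\eta$ of cotangent complexes, and then invoke the known result about $\eta$.

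First, we view both filtered objects $\FilHHRog{*}$ and $\FilHHOls{*}$ as presheaves on $\Arr(\LogAff)$, or on fs log schemes over a fixed base, valued in $\DF(\ZZ)$. Since $c_f$ is functorial in $f$, it defines a morphism $c$ of presheaves. Let $a_{\mathrm{dNis}}$ denote dividing Nisnevich sheafification. It is a left exact localization, so it commutes with cofibers. Consequently it commutes with formation of associated graded pieces, and $\gr^i(a_{\mathrm{dNis}} c)\simeq a_{\mathrm{dNis}}(\gr^i c)$.

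By \ref{Main Theorem}, $\gr^i c \simeq R\Gamma(-,\wedge^i\eta[i])$. Applying \cite[Remark 7.29]{BLPO_HKR}, we get that $\eta:\LGab\to\LOls$ becomes an equivalence after dividing Nisnevich sheafification. The mechanism is that every fs log scheme is dividing-Nisnevich locally replaced by one whose morphism to the base is integral, via log blowups and saturation. On such integral loci $\eta$ is already an equivalence, by \cite[Corollary 8.34]{Olssoncotangent} as restated in the preceding corollary. Exterior powers and $R\Gamma$ are functorial in $\eta$, so each $a_{\mathrm{dNis}}\gr^i c$ is also an equivalence.

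The remaining step is to pass from the graded pieces back to the filtered objects. The HKR filtrations are complete before sheafification, by \ref{Existence of HKR filtration} and \ref{HHRog HKR}. Sheafification is a left adjoint and need not preserve limits, so completeness after sheafification has to be checked separately. I would handle this by showing that sheafification preserves completeness here, using that $\Fil^n$ is $n$-connective on stalks. On each quotient $\HH/\Fil^n$ the claim follows from the finitely many graded pieces by induction. Then I would use that $\lim_n$ of connectivity-increasing towers commutes with sheafification, for example by working with hypercomplete sheaves and a Postnikov-convergence argument. Since the dividing Nisnevich site of a Noetherian fs log scheme of finite Krull dimension has finite cohomological dimension, sheafification commutes with such limits. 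The conclusion then follows from \cite[Lemma 2.15]{GwilliamPavlovEnhancing}.

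I expect this last point, the interaction of sheafification with the complete filtration, to be the main obstacle. The finite Krull dimension hypothesis is what I would use to control it.
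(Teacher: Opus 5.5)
Your proposal is correct and follows the paper's route: the paper's proof just combines \ref{Main Theorem} (the graded pieces of $c_f$ are $R\Gamma$ of $\wedge^i\eta_f[i]$) with \cite[Remark 7.29]{BLPO_HKR} and completeness of the HKR filtrations. You also check that the filtrations stay complete after dividing Nisnevich sheafification, using $n$-connectivity of the $n$th filtration step on log affines and finite dividing Nisnevich cohomological dimension; the paper leaves this point implicit, and your check is the right way to justify its ``follows immediately.''
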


\begin{proof}
    This follows immediately from \ref{Main Theorem} and the discussion in \cite[remark 7.29]{BLPO_HKR}.
\end{proof}

\printbibliography

\appendix
\chapter{Animations}

\label{AppendixA}

The theories of derived algebraic geometry and derived logarithmic geometry are most concisely stated in the language of animated rings and animated monoids. Since our paper uses animation of various categories liberally, we recall here the basic definitions and main results that we make use of. The original reference for these ideas (in this language) is \cite[Section~5]{cesnavicius2023purity}, but other useful references include \cite{yaylali2022notes} and \cite{mao2024revisitingderivedcrystallinecohomology}.

\begin{mydef}
    Let $\sfC$ be a cocomplete $1$-category, and let $X\in\sfC.$
    \begin{enumerate}
        \item We say that $X$ is \emph{of finite presentation} or \emph{compact} if the functor $\Hom_{\sfC}(X,-)$ commutes with filtered colimits, and we let $\sfC^{\fp}$ denote the full subcategory of compact objects.
        \item We say that $X$ is \emph{strongly of finite presentation} or \emph{compact projective} if $\Hom_{\sfC}(X,-)$ commutes with $1$-sifted colimits,\footnote{Recall that a small $1$-category $\sfD$ is \emph{$1$-sifted} if $\sfD$-indexed colimits commute with finite products in $\Set,$ and that a functor $F : \sfC\to\sfC'$ whose domain has finite colimits commutes with $1$-sifted colimits if and only if it commutes with filtered colimits and reflexive coequalizers.} and we let $\sfC^{\sfp}$ denote the full subcategory of $\sfC$ spanned by compact projective objects.
    \end{enumerate}
\end{mydef}

\begin{example}
    The following categories are generated under colimits by $\sfC^{\sfp}:$
    \begin{enumerate}
        \item The category $\Set$ of sets -- in this case, $\Set^{\sfp}$ consists of finite sets;
        \item The category $\Ab$ of abelian groups -- in this case, $\Ab^{\sfp}$ consists of finitely generated free abelian groups;
        \item The category $\CMon$ of commutative monoids -- in this case, $\CMon^{\sfp}$ consists of retracts of finitely generated free monoids;
        \item The category $\CRing$ of commutative rings -- in this case, $\CRing^{\sfp}$ consists of retracts of finite type polynomial $\ZZ$-algebras.
    \end{enumerate}
\end{example}

\begin{mydef}
    Let $\sfC$ be a cocomplete $1$-category which is generated under colimits by its full subcategory $\sfC^{\sfp}$ of compact projective objects. We say that a fully faithful functor $i : \sfC\to\cC$ from $\sfC$ to an $\infty$-category $\cC$ exhibits $\cC$ as an \emph{animation of $\sfC$} if the functor $i$ induces an equivalence
    \[
    \Fun_{\mathrm{sift}}(\cC,\cA)\xrightarrow{\sim}\Fun(\sfC^{\sfp},\cA)
    \]
    for any $\infty$-category $\cA$ with sifted colimits, where $\Fun_{\mathrm{sift}}$ denotes the full subcategory of functors that commute with sifted colimits (equivalently, functors that commute with filtered colimits and geometric realizations). We will call any such $\cC$ an animation of $\sfC^{\sfp},$ and denote any such $\cC$ by $\Ani(\sfC).$
\end{mydef}

In other words, $\Ani(\sfC)$ is freely generated by $\sfC^{\sfp}$ under sifted colimits (equivalently, filtered colimits and geometric realizations).

\begin{remark}
    If $\sfC$ is a (small) cocomplete $1$-category generated under colimits by $\sfC^{\sfp},$ an animation of $\sfC$ always exists: one presentation of $\Ani(C)$ is given by the Yoneda embedding
    \[
    i : \sfC^{\sfp}\to\Fun_{\Pi}((\sfC^{\sfp})^{\op},\Ani),
    \]
    where $\Ani$ denotes the $\infty$-category of $\infty$-groupoids (aka spaces aka anima), and $\Fun_{\Pi}$ denotes the full sub-$\infty$-category of $\Fun$ spanned by functors which preserve finite products.

    If one prefers to work with model categories, one can also model the animation of $\sfC$ as the underlying $\infty$-category of the model category $\Fun(\Delta^{\op},\sfC)$ of simplicial objects of $\sfC,$ where the model structure is the \emph{projective model structure}. 
\end{remark}

Animations also come with a canonical notion of truncations.

\begin{mydef}
    Let $\sfC$ be a cocomplete $1$-category which is generated under colimits by $\sfC^{\sfp}.$ We have seen that we may identify
    \[
    \Ani(\sfC)\simeq\Fun_{\Pi}((\sfC^{\sfp})^{\op},\Ani).
    \]
    For any integer $n,$ given any functor $F : (\sfC^{\sfp})^{\op}\to\Ani,$ we may compose with the truncation $\tau_{\leq n} : \Ani\to\Ani,$ which induces a functor
    \[
    \tau_{\leq n} : \Ani(\sfC)\to\Ani(\sfC).
    \]
    We call this functor the \emph{$n$-truncation functor} on $\Ani(\sfC).$ 
\end{mydef}

The $n$-truncation functor on $\Ani(\sfC)$ is left adjoint to the inclusion of the full sub-$\infty$-category $\Ani(\sfC)_{\leq n}$ spanned by the objects of $\Ani(\cC)$ which are $n$-truncated in the sense of \cite[5.5.6.1]{HTT}. In particular, $\sfC\to\Ani(\sfC)$ is identified with $\Ani(\sfC)_{\leq 0},$ and we have an adjunction
\[\begin{tikzcd}
    \tau_{\leq 0} = \pi_0 : \Ani(\cC)\ar[r,shift left] &\ar[l,shift left] \Ani(\sfC)_{\leq 0}\simeq\sfC
\end{tikzcd}\]

Suppose that $F : \sfC\to\sfD$ is a functor between cocomplete $1$-categories which preserves $1$-sifted colimits. Then $F$ induces an essentially unique functor
\[
\Ani(F) : \Ani(\sfC)\to\Ani(\sfD)
\]
which is called the \emph{animation} of $F$ or the \emph{nonabelian left derived functor} of $F$ which preserves sifted colimits and restricts to $\left.F\right|_{\sfC^{\sfp}} : \sfC^{\sfp}\to\sfD\to\Ani(\sfD).$

The animation of a functor also interacts well with truncations, in the sense that for $F : \sfC\to\sfD$ a functor between cocomplete $1$-categories which preserves $1$-sifted colimits, we have
\[
\pi_0\circ\Ani(F) = F\circ\pi_0.
\]

\begin{proposition}[\protect{\cite[Proposition~5.1.5]{cesnavicius2023purity}}]
    Let $F : \sfC\to\sfD$ and $G : \sfD\to\sfE$ be functors between cocomplete $1$-categories which are generated under colimits by their subcategories of compact projective objects which preserve $1$-sifted colimits.
    \begin{enumerate}
        \item There is a natural transformation $\Ani(G)\circ\Ani(F)\to\Ani(G\circ F).$
        \item If either $F(\sfC^{\sfp})\subseteq\Ind(\sfD^{\sfp})$ in $\sfD$ or $\Ani(G)(F(\sfC^{\sfp}))\subseteq\sfE$ in $\Ani(\sfE),$ then the natural transformation
        \[
        \Ani(G)\circ\Ani(F)\to\Ani(G\circ F)
        \]
        is an equivalence.
    \end{enumerate}
\end{proposition}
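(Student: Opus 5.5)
The plan is to build both functors out of their restrictions to $\sfC^{\sfp}$, using the universal property $\Fun_{\mathrm{sift}}(\Ani(\sfC),\cA)\simeq\Fun(\sfC^{\sfp},\cA)$ with $\cA=\Ani(\sfE)$. Both $\Ani(G)\circ\Ani(F)$ and $\Ani(G\circ F)$ preserve sifted colimits: each animated functor does, and a composite of sifted-colimit-preserving functors does too. Hence a natural transformation between them is the same as a natural transformation between their restrictions to $\sfC^{\sfp}$. Moreover, it is an equivalence as soon as it is an equivalence on $\sfC^{\sfp}$.

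On $\sfC^{\sfp}$ the two functors are
\[
c\mapsto \Ani(G)(F(c)) \quad\textrm{and}\quad c\mapsto G(F(c))\in\sfE\subseteq\Ani(\sfE).
\]
For (1), I take the natural map to be the $0$-truncation map
\[
\Ani(G)(F(c))\to\tau_{\leq 0}\Ani(G)(F(c)).
\]
I then identify the target with $\pi_0\Ani(G)(F(c))\simeq G(\pi_0F(c))=G(F(c))$, using the compatibility $\pi_0\circ\Ani(G)=G\circ\pi_0$ recorded above and the fact that $F(c)$ is discrete. Both the truncation unit and this identification are natural in $c$. Extending along the universal property therefore gives the transformation $\Ani(G)\circ\Ani(F)\to\Ani(G\circ F)$.

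For (2), consider first the hypothesis $\Ani(G)(F(\sfC^{\sfp}))\subseteq\sfE$. In that case each $\Ani(G)(F(c))$ is already $0$-truncated, so the truncation map is an equivalence on $\sfC^{\sfp}$. By the remark in the first paragraph, the transformation is then an equivalence everywhere.

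For the hypothesis $F(\sfC^{\sfp})\subseteq\Ind(\sfD^{\sfp})$, I reduce to the previous case. Write $F(c)\simeq\colim_i d_i$ as a filtered colimit in $\sfD$ with $d_i\in\sfD^{\sfp}$. The key point is that the inclusion $\sfD\hookrightarrow\Ani(\sfD)$ preserves filtered colimits. To see this, use the model $\Ani(\sfD)\simeq\Fun_{\Pi}((\sfD^{\sfp})^{\op},\Ani)$. There, filtered colimits are computed pointwise, because filtered colimits commute with finite products in $\Ani$. Pointwise filtered colimits of sets stay discrete. Finally, $\Hom_{\sfD}(d,-)$ commutes with filtered colimits for $d\in\sfD^{\sfp}$.

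Granting this, $\Ani(G)$ preserves filtered colimits and agrees with $G$ on $\sfD^{\sfp}$, so
\[
\Ani(G)(F(c))\simeq\colim_i G(d_i).
\]
This is a filtered colimit of discrete objects in $\Ani(\sfE)$. By the same pointwise argument it is discrete and agrees with $\colim_i G(d_i)$ formed in $\sfE$, which is $G(F(c))$ because $G$ preserves $1$-sifted, in particular filtered, colimits. So $\Ani(G)(F(c))\in\sfE$, and we are in the first case.

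I expect the main obstacle to be exactly this compatibility step: checking that filtered colimits of discrete objects computed in $\Ani(\sfD)$ and in $\Ani(\sfE)$ agree with those computed in the $1$-categories. Everything else is formal from the universal property of animation.
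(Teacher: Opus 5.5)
Your proposal is correct. The paper gives no proof of its own and only cites \cite[Proposition~5.1.5]{cesnavicius2023purity}, and your argument is essentially the standard one from that source: you reduce to $\sfC^{\sfp}$ via the universal property of animation, you use the $0$-truncation map $\Ani(G)(F(c))\to\pi_0\Ani(G)(F(c))\simeq G(F(c))$ for (1), and for (2) you observe that the first hypothesis implies the second because $\Ani(G)$, $\sfD\hookrightarrow\Ani(\sfD)$ and $\sfE\hookrightarrow\Ani(\sfE)$ all preserve filtered colimits. Your pointwise check of that compatibility in the model $\Fun_{\Pi}((\sfD^{\sfp})^{\op},\Ani)$ is exactly what is needed, and it closes the step you flagged as the main obstacle.
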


\begin{proposition}[\protect{\cite[Corollary~2.3]{mao2024revisitingderivedcrystallinecohomology}}]
    Let
    \[\begin{tikzcd}
        F : \sfC\ar[r,shift left] &\ar[l, shift left] \sfD : G
    \end{tikzcd}\]
    be an adjoint pair of functors between $1$-categories such that
    \begin{enumerate}
        \item $\sfD$ admits filtered colimits and reflexive coequalizers and $G$ preserves filtered colimits and reflexive coequalizers;
        \item $\sfC$ is projectively generated;
        \item $G$ is conservative.
    \end{enumerate}
    Then $\sfD$ is projectively generated, and animation produces an adjunction
    \[\begin{tikzcd}
        \Ani(F) : \Ani(\sfC)\ar[r, shift left] &\ar[l,shift left]\Ani(\sfD) : \Ani(G)
    \end{tikzcd}\]
    of $\infty$-categories. The functor $\Ani(G)$ is conservative, preserves sifted colimits, and the canonical map
    \[
    \tau_{\leq 0}\circ\Ani(G)\to G\circ\tau_{\leq 0}
    \]
    is an equivalence. If $G$ preserves small colimits, then so does $\Ani(G).$
\end{proposition}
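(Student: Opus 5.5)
The plan is to identify $\Ani(G)$ with the right adjoint of $\Ani(F)$, and to get everything else from the corepresentability of compact projectives.

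\emph{Step 1: $\sfD$ is projectively generated.} For $P\in\sfC^{\sfp}$ we have $\Hom_{\sfD}(FP,-)\cong\Hom_{\sfC}(P,G(-))$. Since $G$ preserves filtered colimits and reflexive coequalizers, $FP$ is compact projective in $\sfD$. Next, for $Y\in\sfD$ consider the bar diagram $FGFGY\rightrightarrows FGY\to Y$. It is reflexive, and after applying $G$ it becomes a split coequalizer. Because $G$ is conservative and preserves reflexive coequalizers, this is Beck's monadicity argument: $\sfD$ is monadic over $\sfC$, and the diagram is a coequalizer in $\sfD$.

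Since $\sfC$ is projectively generated, $GY$ is a $1$-sifted colimit of objects of $\sfC^{\sfp}$, and $F$ preserves colimits. Hence every $Y$ is a $1$-sifted colimit of objects $FP$ with $P\in\sfC^{\sfp}$. It follows that $\sfD^{\sfp}$ consists of the retracts of such $FP$, and that $\sfD$ is cocomplete, since a monad preserving reflexive coequalizers on a cocomplete category has a cocomplete category of algebras.

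\emph{Step 2: the adjunction.} Present $\Ani(\sfC)=\Fun_{\Pi}((\sfC^{\sfp})^{\op},\Ani)$, and similarly for $\sfD$. Let $\Ani(F)$ be the sifted-colimit-preserving extension of $F|_{\sfC^{\sfp}}\colon\sfC^{\sfp}\to\sfD^{\sfp}$. It preserves all small colimits, since it preserves finite coproducts on generators. Both sides are presentable, so $\Ani(F)$ has a right adjoint $R$, given by $R(Y)(P)=\Map(FP,Y)$.

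Because $FP$ is compact projective in $\Ani(\sfD)$, each functor $\Map(FP,-)$ preserves sifted colimits. Sifted colimits in $\Fun_{\Pi}$ are computed pointwise, so $R$ preserves sifted colimits. On a generator $Q\in\sfD^{\sfp}$, the value $R(Q)(P)=\Hom_{\sfD}(FP,Q)=\Hom_{\sfC}(P,GQ)$ is discrete, so $R(Q)\simeq GQ\in\sfC\subseteq\Ani(\sfC)$. Thus $R$ is the sifted-colimit-preserving extension of $G|_{\sfD^{\sfp}}$, that is, $R=\Ani(G)$.

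\emph{Step 3: conservativity and truncations.} A map $Y\to Y'$ in $\Ani(\sfD)$ is an equivalence if and only if it induces equivalences on $\Map(FP,-)$ for all $P\in\sfC^{\sfp}$, because the $FP$ generate $\sfD^{\sfp}$ up to retracts. These mapping spaces are exactly the values of $\Ani(G)$ at $P$, so $\Ani(G)$ is conservative. For truncations, compute
\[
\pi_0\Ani(G)(Y)(P)=\pi_0\Map(FP,Y)=\Hom_{\sfD}(FP,\pi_0Y)=\Hom_{\sfC}(P,G\pi_0Y),
\]
naturally in $P$. This gives $\tau_{\leq0}\circ\Ani(G)\simeq G\circ\tau_{\leq0}$.

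\emph{Step 4: all colimits.} This is the step I expect to be the main obstacle. Since $\Ani(G)$ already preserves sifted colimits, it suffices to treat finite coproducts, and by sifted resolution only coproducts $Q\sqcup Q'$ with $Q,Q'\in\sfD^{\sfp}$. Here $G(Q\sqcup Q')=GQ\sqcup GQ'$ in $\sfC$, but the issue is that $GQ$ need not lie in $\sfC^{\sfp}$, so the $\infty$-categorical coproduct $GQ\sqcup GQ'$ in $\Ani(\sfC)$ might fail to be discrete.

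I would first try to use that $GQ$ is a retract of $GFP$. Write $GFP$ as a filtered colimit of objects of $\sfC^{\sfp}$ (a filtered colimit of finite coproducts of generators, in the style of Step 1). Filtered colimits of discrete objects are discrete in $\Ani(\sfC)$, and finite coproducts commute with filtered colimits. Hence the coproduct in $\Ani(\sfC)$ is discrete and agrees with the $1$-categorical one. If that presentation is unavailable, I would fall back on $\pi_0$-comparison together with a connectivity argument via the bar resolution.
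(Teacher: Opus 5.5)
Your Steps 1--3 are sound. The crude monadicity theorem together with Linton's theorem gives that $\sfD$ is cocomplete and generated by the $FP$. The right adjoint of $\Ani(F)$ is $Y\mapsto\Map(F(-),Y)$, which preserves sifted colimits and restricts to $G$ on $\sfD^{\sfp}$, so it is $\Ani(G)$. Conservativity and the $\pi_0$-formula then follow by evaluating at the $FP$. The paper gives no argument of its own here; it only cites Mao.

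The gap is in Step 4. Nothing makes $GFP$ a \emph{filtered} colimit of objects of $\sfC^{\sfp}$. Step 1 only presents objects of $\sfC$ as $1$-sifted colimits, which involve reflexive coequalizers, and those do not stay discrete in $\Ani(\sfC)$. Requiring $G(\sfD^{\sfp})\subseteq\Ind(\sfC^{\sfp})$ is a genuine flatness hypothesis; compare the $\Ind(\sfD^{\sfp})$ condition in the cited \cite[Proposition~5.1.5]{cesnavicius2023purity}.

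Your fallback cannot rescue this, because the last clause is false without such a hypothesis. Take $\sfC=\CRing$, let $\sfD$ be the category of pairs $(S,\partial)$ with $\partial$ a derivation of $S$ satisfying $p\partial=0$, and let $G$ be the forgetful functor.
\begin{itemize}
\item $G$ has a left adjoint and is conservative.
\item $G$ preserves all small colimits. Coproducts are $(S\otimes_{\ZZ}S',\partial\otimes1+1\otimes\partial')$. The ideal generated by the elements $f(s)-g(s)$ is $\partial$-stable, so coequalizers are computed in $\CRing$. The initial object is $(\ZZ,0)$.
\item $B:=GF(\ZZ[x])=\ZZ[x_0,x_1,\dots]/(px_1,px_2,\dots)$ has $p$-torsion.
\end{itemize}
Comparing the two sides,
\[
\Ani(G)\bigl(F\ZZ[x]\sqcup F\ZZ[y]\bigr)=GF(\ZZ[x,y])=B\otimes_{\ZZ}B,\qquad \Ani(G)(F\ZZ[x])\sqcup\Ani(G)(F\ZZ[y])=B\otimes^{L}_{\ZZ}B .
\]
The second has $\pi_1=\Tor_1^{\ZZ}(B,B)\neq0$, so $\Ani(G)$ does not preserve binary coproducts.

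The clause is true under an extra hypothesis, and that covers everything the paper needs; it only uses it for $U\colon\Ab\to\CMon$.
\begin{itemize}
\item If $G(\sfD^{\sfp})\subseteq\Ind(\sfC^{\sfp})$, your Step 4 argument goes through verbatim.
\item If $\sfC$ is semiadditive, then $\sfC^{\sfp}$ is semiadditive. Since finite products commute with sifted colimits, $\Ani(\sfC)$ is semiadditive too, so its finite coproducts are finite products. The inclusion $\sfC\simeq\Ani(\sfC)_{\le0}\hookrightarrow\Ani(\sfC)$ preserves limits, so it preserves finite coproducts. Hence $\Ani(G)$ preserves finite coproducts of objects of $\sfD^{\sfp}$, and with sifted colimits this gives all small colimits.
\end{itemize}
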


\begin{example}
    If $k\to A$ is a map of discrete commutative rings, then we may animate the adjunction
    \[\begin{tikzcd}
        -\otimes_k A : \Mod_k\ar[r,shift left] &\ar[l, shift left] \Mod_A : U
    \end{tikzcd}\]
    to obtain the adjunction
    \[\begin{tikzcd}
        -\otimes^L_k A : \cMod_k^{\cn}\ar[r,shift left] &\ar[l, shift left] \cMod_A^{\cn} : U
    \end{tikzcd}\]
\end{example}

\begin{proposition}[\protect{\cite[Corollaries~2.13,2.14]{mao2024revisitingderivedcrystallinecohomology}}]\label{Animation of arrow category}
    Let $\sfC$ be a projectively generated $1$-category. Then there are canonical equivalences
    \begin{align*}
        \Ani(\Fun((\Delta^1)^{\op},\sfC))&\to \Fun((\Delta^1)^{\op},\Ani(\sfC)),\\
        \Ani(\Fun(\Delta^1,\sfC))&\to \Fun(\Delta^1,\Ani(\sfC)),\\
        \Ani(\Fun((\ZZ,\geq),\sfC))&\to\Fun((\ZZ,\geq),\Ani(\sfC)),\\
        \Ani(\Fun((\NN,\geq),\sfC))&\to\Fun((\NN,\geq),\Ani(\sfC)),\\
        \Ani(\Fun(\ZZ,\sfC))&\to\Fun(\ZZ,\Ani(\sfC)),\\
        \Ani(\Fun(\NN,\sfC))&\to\Fun(\NN,\Ani(\sfC)),\\
        \Ani(\Fun(\{0,1\},\sfC))&\to\Fun(\{0,1\},\Ani(\sfC)),\\
        \Ani(\sfC_{X/})&\to \Ani(\sfC)_{X/}
    \end{align*}
    of $\infty$-categories.
\end{proposition}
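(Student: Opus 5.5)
We would prove all of these equivalences by one recognition argument, the criterion of \cite[Proposition~5.5.8.22, 5.5.8.25]{HTT}. It says the following. Let $\cC$ be a cocomplete $\infty$-category and $\cC_0\subseteq\cC$ a small full subcategory such that
\begin{enumerate}
    \item $\cC_0$ is closed under finite coproducts,
    \item every object of $\cC_0$ is compact projective (corepresents a functor preserving sifted colimits), and
    \item $\cC$ is generated under colimits by $\cC_0$; equivalently, the functors $\Map(E,-)$ for $E\in\cC_0$ are jointly conservative.
\end{enumerate}
Then the inclusion induces $\cP_\Sigma(\cC_0)\simeq\Fun_\Pi(\cC_0^{\op},\Ani)\xrightarrow{\sim}\cC$. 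By the presentation of animations recalled in the appendix, it then suffices to find such a $\cC_0$ inside each target and to identify it, as a $1$-category, with (a family of generators whose retract-closure is) the compact projectives of the corresponding $1$-category. Running the same argument one categorical level down, with $\Set$ in place of $\Ani$, also shows that each source $1$-category is projectively generated. So the animation on the left-hand side makes sense.

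\emph{Diagram categories.} Let $I$ be one of the small indexing categories in the statement. For $i\in I$ and $P\in\sfC^{\sfp}$, set $i_!P:=\coprod_{\Hom_I(i,-)}P$, the left Kan extension along $\{i\}\to I$. In each of our cases $\Hom_I(i,j)$ has at most one element, so this is a finite coproduct at each vertex. By adjunction,
\[
\Map_{\Fun(I,\Ani(\sfC))}(i_!P,F)\simeq\Map_{\Ani(\sfC)}(P,F(i)).
\]
Colimits in $\Fun(I,\Ani(\sfC))$ are computed pointwise and $P$ is compact projective in $\Ani(\sfC)$, so $i_!P$ is compact projective. Evaluations are jointly conservative and the $P$ detect equivalences in $\Ani(\sfC)$, which gives generation. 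Take $\cC_0$ to be the closure of the $i_!P$ under finite coproducts. Its mapping spaces are computed by the displayed formula, hence are discrete sets. So $\cC_0$ is the ordinary $1$-category spanned by the same objects inside $\Fun(I,\sfC)$, and these objects are exactly compact projective generators there by the identical argument with $\Set$. For the discrete indexing sets $\ZZ$, $\NN$, $\{0,1\}$ this simply reduces to $\Fun(J,\cD)=\prod_J\cD$, with $\cC_0$ spanned by objects supported at finitely many indices. For $(\Delta^1)^{\op}$, $(\ZZ,\geq)$ and $(\NN,\geq)$ one only needs that $\Hom_I(i,-)$ is a subsingleton-valued functor, so no infinite products or other finiteness issues arise in $i_!$.

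\emph{Under-categories.} Take $\cC_0=\{X\sqcup P : P\in\sfC^{\sfp}\}\subseteq\Ani(\sfC)_{X/}$. This is closed under finite coproducts, since $(X\sqcup P)\sqcup_X(X\sqcup P')=X\sqcup(P\sqcup P')$. We have $\Map_{X/}(X\sqcup P,Y)\simeq\Map(P,Y)$. The forgetful functor $\Ani(\sfC)_{X/}\to\Ani(\sfC)$ preserves sifted colimits (these are weakly contractible), so these objects are compact projective. The forgetful functor is also conservative, which gives generation. Again the mapping sets agree with those computed in $\sfC_{X/}$, whose compact projectives are the retracts of the $X\sqcup P$.

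\emph{Main obstacle.} The step we expect to require the most care is matching $\cC_0$ with the retract-closed subcategory $\Fun(I,\sfC)^{\sfp}$ (respectively $(\sfC_{X/})^{\sfp}$). This matters because the definition of animation uses all compact projectives. We would handle it by noting two facts: $\cP_\Sigma$ is insensitive to idempotent completion, and retracts of compact projectives in the $\infty$-category remain compact projective and discrete. The remaining verifications, namely that $i_!$ and evaluation are adjoint and that colimits are computed pointwise, are formal.
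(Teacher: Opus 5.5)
\textbf{The diagram categories.} Here your argument is correct and is essentially the one in the cited source. Your generators $i_!P$ are exactly the ones the paper records after the statement: for $\Delta^1$, these are the arrows from the initial object to $P$ and $P\xrightarrow{\id}P$. The recognition theorem you quote then does the rest. The point that really needs checking is that the $\infty$-categorical left Kan extensions $i_!P$, and their finite coproducts, are discrete. You do get this, because for the posets in the statement $i_!P$ takes only the values $P$ and the initial object.

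\textbf{The gap: the under-category case.} You assert that ``the mapping sets agree with those computed in $\sfC_{X/}$''. But $\Map_{\Ani(\sfC)_{X/}}(X\sqcup P,X\sqcup P')\simeq\Map_{\Ani(\sfC)}(P,X\sqcup P')$, with the coproduct formed in $\Ani(\sfC)$. This is the set $\Hom_{\sfC}(P,X\sqcup P')$ only if that $\infty$-categorical coproduct is $0$-truncated, in which case it agrees with the coproduct in $\sfC$. Nothing in your argument provides this, and it is the real obstacle, not idempotent completion.

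It does hold in two useful cases:
\begin{itemize}
\item If $X\in\sfC^{\sfp}$, then $X\sqcup P'$ is compact projective, hence discrete.
\item If $\sfC=\CRing$, then $X\otimes^L_{\ZZ}\ZZ[x_1,\dots,x_n]$ is discrete by flatness. This covers $\aRing_k\simeq\Ani(\CRing)_{k/}$.
\end{itemize}

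It fails in general. Take $\sfC$ to be associative rings and $X=\ZZ/p$. The derived free product $\ZZ/p\sqcup\ZZ\langle x\rangle$ has underlying module $\bigoplus_{k\geq 0}(\ZZ/p)^{\otimes^L_{\ZZ}(k+1)}$, so its $\pi_1$ contains $\Tor_1^{\ZZ}(\ZZ/p,\ZZ/p)\neq 0$.

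In this example the canonical functor is not an equivalence at all. It is compatible with the forgetful functors to $\Ani$, since both composites preserve sifted colimits and agree on compact projectives. Yet the free object on one generator is $\FFp[x]$, which is discrete, in $\Ani(\sfC_{X/})$, and $\ZZ/p\sqcup\ZZ\langle x\rangle$, which is not discrete, in $\Ani(\sfC)_{X/}$.

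So the last equivalence, for arbitrary $X\in\sfC$, needs an extra hypothesis. For example, you could assume that $X\sqcup P$ is discrete in $\Ani(\sfC)$ for every $P\in\sfC^{\sfp}$. You should state such a hypothesis explicitly; under it, your argument goes through verbatim.
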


The proof of this result also describes a generating set of $\Fun(\Delta^1,\sfC)$ explicitly: if $S\subseteq\sfC^{\sfp}$ is a set of compact projective generators for $\sfC,$ then
\[
\{1\to X\mid X\in S\}\cup\{X\xrightarrow{\id_X} X\mid X\in S\}
\]
is a set of compact projective generators for $\Fun(\Delta^1,\sfC).$
\chapter{Filtrations}\label{filtrations}

\label{AppendixB}

We provide a brief review of the terminology and basic results regarding filtrations in the $\infty$-categorical setting. The reader who is not familiar with the $\infty$-categorical setting should note that asking for a filtration to be a sequence of nested subobjects is too restrictive in this setting.

We let $\ZZ$ (respectively $\NN$) denote the poset of integers (respectively natural numbers) under $\geq,$ and regard this as a category whose objects are integers $n$ and whose set of morphisms $\Hom(n,m)$ is a single element if $m\geq n$ and empty otherwise. We let $\ZZ^{\disc}$ (respectively $\NN^{\disc}$) denote the category whose objects are the integers (respectively natural numbers) and whose only morphisms are identities.

\begin{mydef}
    Let $\cC$ be an $\infty$-category.
    \begin{enumerate}
        \item The \emph{$\infty$-category of filtered objects of $\cC$} is the category $\Fil(\cC) := \Fun(\N(\ZZ^{\op}),\cC).$ We will denote an object of $\Fil(\cC)$ by $F^\bullet$ or $\Fil^\bullet X.$
        \item The \emph{$\infty$-category of $\NN$-filtered objects of $\cC$} is the category $\Fil_{\NN}(\cC) := \Fun(\N(\NN^{\op}),\cC).$
        \item A filtered object $\Fil^\bullet X\in\Fil(\cC)$ (or $\Fil_{\NN}(\cC)$) is \emph{complete} if $\lim\Fil^\bullet X\simeq 0.$ We will denote by $\cFil(\cC)$ the full sub-$\infty$-category of $\Fil(\cC)$ spanned by the complete filtered objects.
    \end{enumerate}
\end{mydef}

\begin{remark}
    Note that our filtrations are decreasing by definition.
\end{remark}

Suppose that $\cC$ is presentable. We have a functor
\[
\colim : \Fil(\cC)\to\cC
\]
which sends a filtered object to its \emph{underlying object}. 

Let $\cC$ be an $\infty$-category admitting cofibers and countable coproducts (for example, a stable $\infty$-category). Precomposition with $i_n : \Delta^{\{n+1,n\}}\to\ZZ^{\op}$ induces functors
\[
i_n^* : \Fil(\cC)\to\Fun(\Delta^1,\cC).
\]

\begin{mydef}
    Let $\cC$ be an $\infty$-category admitting cofibers and countable coproducts.
    \begin{enumerate}
        \item The \emph{$n$-th graded functor} $\gr^n$ is the composite
        \[\gr^n : \Fil(\cC)\xrightarrow{i_n^*}\Fun(\Delta^1,\cC)\xrightarrow{\cofib}\cC.\]
        That is, $\gr^n(F^\bullet) := \cofib(F^{n+1}\to F^n).$
        \item The \emph{associated graded functor} $\gr$ is the functor
        \[\gr : \Fil(\cC)\to\Fun(\N(\ZZ^{\disc}),\cC)\simeq\prod_{n\in\ZZ}\cC\]
        induced by all the $i_n^*.$
    \end{enumerate}
\end{mydef}

If $\cC$ is pointed, then $\gr$ is left adjoint to the functor $\Fun(\N(\ZZ^{\disc}),\cC)$ sending $(X_n)_n\mapsto (\dots\xrightarrow{0} X_{n+1}\xrightarrow{0} X_n\xrightarrow{0}\dots).$

\begin{example}
    Suppose that $A$ is an animated ring. Then the \emph{filtered derived category of $A$} is $\DF(A) := \Fil(\cMod_A).$
\end{example}

\begin{mydef}
    Let $\cC$ be a $\infty$-category admitting $\ZZ$-indexed limits, and let $F^\bullet\in\Fil(\cC).$ We say $F^\bullet$ is \emph{complete} if $\lim_n F^n\simeq 0.$ Let $\cFil(\cC)$ denote the full sub-$\infty$-category of $\Fil(\cC)$ spanned by the complete filtered objects. 
\end{mydef}

The inclusion $\cFil(\cC)\to\Fil(\cC)$ admits a left adjoint, given by sending $F^\bullet$ to the filtered object $\hat{F}^\bullet$ with $\hat{F}^n :=\cofib(F^{\infty}\to F^n),$ where $F^{\infty} := \lim_n F^n.$

\begin{lemma}[\protect{\cite[Lemma~2.15]{GwilliamPavlovEnhancing}}]\label{gradedequivalencecompleteequivalence}
    Let $f : X^\bullet\to Y^\bullet\in\Fil(\cC)$ be a map of filtered objects in $\cC.$ Then the induced map $\hat{f} : \widehat{X}^\bullet\to\widehat{Y}^\bullet$ is an equivalence if and only if $\gr^*(f) : \gr^* X^\bullet\to \gr^* Y^\bullet$ is an equivalence in $\Gr(\cC).$
\end{lemma}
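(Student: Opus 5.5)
We work in a stable $\infty$-category $\cC$ admitting sequential limits, which is the setting in which completions $\widehat{F}^\bullet$ are defined above. The plan has two steps. First, replace $f$ by $\hat{f}$ on the graded side, using the fact that completion does not change associated gradeds. Second, prove the lemma for a map of complete filtered objects by passing to the fiber.

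\emph{Step 1: $\gr$ is insensitive to completion.} For any $F^\bullet\in\Fil(\cC)$, let $F^\infty=\lim_n F^n$. Each $\widehat{F}^n=\cofib(F^\infty\to F^n)$, and the transition maps are induced by those of $F^\bullet$, compatibly with the identity on $F^\infty$. Hence we have a map of cofiber sequences from $(F^\infty\to F^{n+1}\to\widehat{F}^{n+1})$ to $(F^\infty\to F^n\to\widehat{F}^n)$ whose first component is the identity. Taking cofibers of the vertical maps and using the $3\times 3$ lemma (cofibers commute with cofibers in a stable category) gives a natural equivalence
\[
\gr^n F^\bullet\xrightarrow{\sim}\gr^n\widehat{F}^\bullet .
\]
By naturality of the unit $F^\bullet\to\widehat{F}^\bullet$, the map $\gr^*(f)$ is an equivalence if and only if $\gr^*(\hat f)$ is. We also record that $\widehat{F}^\bullet$ is complete. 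Indeed, $\lim_n\widehat{F}^n\simeq\cofib(F^\infty\to\lim_n F^n)\simeq 0$, because limits commute with cofibers in the stable setting.

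\emph{Step 2: the complete case.} It remains to show that a map $g\colon X^\bullet\to Y^\bullet$ of complete filtered objects is an equivalence if and only if $\gr^*(g)$ is. The forward direction is immediate, since $\gr^*$ is a functor. For the converse, let $K^\bullet=\fib(g)$, computed levelwise.

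\begin{itemize}
\item $K^\bullet$ is complete: $\lim_n K^n\simeq\fib(\lim_n X^n\to\lim_n Y^n)\simeq 0$, since limits commute with fibers.
\item $\gr^*K^\bullet=0$: the functor $\gr^n$ is exact, so $\gr^n K^\bullet\simeq\fib(\gr^n X^\bullet\to\gr^n Y^\bullet)\simeq 0$.
\item Consequently every transition map $K^{n+1}\to K^n$ is an equivalence, its cofiber being $\gr^n K^\bullet=0$.
\end{itemize}

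It follows that for each fixed $n$ the diagram $(K^m)_{m\geq n}$ is equivalent to the constant diagram at $K^n$. The subposet $\{m\geq n\}$ is cofinal for the limit over $\ZZ^{\op}$ and is weakly contractible. Therefore
\[
K^n\simeq\lim_{m\geq n}K^m\simeq\lim_m K^m\simeq 0 .
\]
Thus $K^\bullet\simeq 0$, and $g$ is a levelwise equivalence by stability.

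Applying Step 2 to $g=\hat f$ and combining with Step 1 proves the lemma.

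The only point requiring care is the identification $\gr^*F^\bullet\simeq\gr^*\widehat{F}^\bullet$ in Step 1. It must be natural in $F^\bullet$ so that it transports $\gr^*(f)$ to $\gr^*(\hat f)$. We obtain this by expressing completion as the cofiber of the natural map from the constant filtered object at $F^\infty$ to $F^\bullet$, and by noting that $\gr$ of any constant filtered object vanishes. Exactness of $\gr$ then gives the natural equivalence directly.
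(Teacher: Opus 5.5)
Your proof is correct. The paper gives no argument of its own here; it simply cites Gwilliam--Pavlov. Your self-contained proof is the standard one: completion does not change $\gr^*$, because $\widehat{F}^\bullet$ is the cofiber of the natural counit from the constant filtered object at $F^\infty$, whose $\gr^*$ vanishes. And $\gr^*$ is conservative on complete objects, because a complete $K^\bullet$ with $\gr^*K^\bullet\simeq 0$ is essentially constant, so $K^n\simeq\lim_m K^m\simeq 0$.

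You are right to assume at the outset that $\cC$ is stable with sequential limits. The appendix only posits cofibers, countable coproducts and $\ZZ$-indexed limits. Your argument, like the cited result, genuinely uses stability: to commute $\lim$ with $\fib$ and $\cofib$, and to conclude that maps with vanishing fiber or cofiber are equivalences.
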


We will also make use of the Whitehead filtration arising from a stable $\infty$-category with a t-structure. For the definition of a t-structure on a stable $\infty$-category, we refer to \cite[1.2.1]{HA}

\begin{mydef}
    Suppose that $(\cC,\cC_{\geq0},\cC_{\leq0})$ is a stable $\infty$-category equipped with a t-structure.
    \begin{enumerate}
        \item The \emph{diagonal t-structure} on $\Fil(\cC)$ is defined by declaring that $F^\bullet X\in\Fil(\cC)$ is connective if $F^n X\in\cC_{\geq n}$ for all $n.$
        \item Let $X\in\cC.$ The \emph{Whitehead filtration} on $X$ is given by the $\tau_{\geq 0}$ truncation of the constant filtered object of $X$ in the diagonal t-structure on $\Fil\cC.$ We will denote this object by $\tau_{\geq *}X.$
    \end{enumerate}
\end{mydef}

\begin{example}
    Suppose that $A$ is an animated ring. The $\infty$-category $\cMod_A$ has a natural t-structure $(\cMod_A,\cMod_A^{\cn},\cMod_A^{\mathrm{ccn}}),$ where $\cMod_A^{\cn}$ is the full sub-$\infty$-category of connective objects, and $\cMod_A^{\mathrm{ccn}}$ is the full sub-$\infty$-category of coconnective objects. This t-structure is induced by the t-structure on spectra (see \cite[1.4.3.6]{HA}). For $M\in\cMod_A,$ the Whitehead filtration on $M$ is given by the system of canonical maps
    \[
    \dots\leftarrow\tau_{\geq-2} M\leftarrow\tau_{\geq -1}M\leftarrow\tau_{\geq0}M\leftarrow \tau_{\geq 1} M\leftarrow\tau_{\geq 2}M\leftarrow\dots
    \]
    of truncations. If $A$ is a discrete ring and we choose a chain complex representing $M,$ the Whitehead filtration on $M$ is given by the system of (homological) canonical truncations of $M,$ see \cite[\href{https://stacks.math.columbia.edu/tag/0118}{Tag 0118}]{stacks-project}.
\end{example}

\begin{remark}
    What we call the Whitehead filtration is also often called the Postnikov filtration in the literature, especially when discussing Hochschild homology.
\end{remark}

\end{document}